\newcommand\reallywidehat[1]{%
\savestack{\tmpbox}{\stretchto{%
  \scaleto{%
    \scalerel*[\widthof{\ensuremath{#1}}]{\kern-.6pt\bigwedge\kern-.6pt}%
    {\rule[-\textheight/2]{1ex}{\textheight}}
  }{\textheight}%
}{0.5ex}}%
\stackon[1pt]{#1}{\tmpbox}%
}
\newcommand{\adj}[4]{#1\negmedspace: #2\rightleftarrows #3:\negmedspace #4}
\newextarrow{\xbigtoto}{{20}{20}{20}{20}}
   {\bigRelbar\bigRelbar{\bigtwoarrowsleft\rightarrow\rightarrow}}
\newtheorem{thm}{Theorem}
\newtheorem{cor}[thm]{Corollary}
\newtheorem{lem}[thm]{Lemma}
\newtheorem{obs}[thm]{Observation}
\newtheorem{prop}[thm]{Proposition}
\theoremstyle{plain}
\newtheorem{defn}[thm]{Definition}
\newtheorem{notation}[thm]{Notation}
\newtheorem{example}[thm]{Example}
\def\llp{\mathrel{\ooalign{\hss$\square$\hss\cr$\diagup$}}}
\newtheorem{rem}[thm]{Remark}
\newcommand{\stkout}[1]{\ifmmode\text{\sout{\ensuremath{#1}}}\else\sout{#1}\fi}
\makeatletter\@addtoreset{thm}{chapter}\makeatother
\DeclareMathAlphabet{\mathpzc}{OT1}{pzc}{m}{it}
\newcommand*{\defeq}{\mathrel{\vcenter{\baselineskip0.5ex \lineskiplimit0pt
                     \hbox{\scriptsize.}\hbox{\scriptsize.}}}%
                     =}
\newcommand{\Q}{{\mathbb Q}}
\def\Z{{\mathbb Z}}
\def\C{{\mathbb C}}
\def\R{{\mathbb R}}
\begin{document}
\frontmatter

\title[Homotopy in Exact Categories]{
Homotopy in Exact Categories}
\author[Jack Kelly]{Jack Kelly}
\address{Jack Kelly,
The Hamilton Mathematics Institute,
School of Mathematics,
Trinity College Dublin,
Dublin 2,
Ireland}
\email{jack.kelly@tcd.ie}
\thanks{The majority of this research was conducted during the author's graduate studies, which was supported by the EPSRC studentship BK/13/007, while employed at the University of Oxford on the EPSRC grant `Symmetries and Correspondences', and while supported by the Simons Foundation at Trinity College Dublin under the
program “Targeted Grants to Institutes”.}%
\subjclass[2010]{Primary: 18G35; Secondary: 18G55}

\begin{abstract}
In this monograph we develop various aspects of the homotopy theory of exact categories. We introduce different notions of compactness and generation in exact categories, and use these to study model structures on categories of chain complexes $Ch_{*}(\mathpzc{E})$ which are induced by cotorsion pairs on $\mathpzc{E}$. As a special case we show that under very general conditions the categories $Ch_{+}(\mathpzc{E})$, $Ch_{\ge0}(\mathpzc{E})$, and $Ch(\mathpzc{E})$ are equipped with the projective model structure, and that a generalisation of the Dold-Kan correspondence holds. We also establish conditions under which categories of filtered objects in exact categories are equipped with natural model structures. When $\mathpzc{E}$ is monoidal we also examine when these model structures are monoidal and conclude by studying some homotopical algebra in such categories. In particular we provide conditions under which $Ch(\mathpzc{E})$ and $Ch_{\ge0}(\mathpzc{E})$ are homotopical algebra contexts, thus making them suitable settings for derived geometry.
 \end{abstract}

\maketitle
\tableofcontents

\mainmatter


\chapter{Introduction}\label{sec1}
\section{Background and Motivation}
\subsection*{Derived Geometry}
Derived geometry has proved crucial for understanding intersection theory, deformation theory and moduli theory in algebraic, smooth, and, recently, complex analytic geometry. 

There are two dominating abstract models for derived geometry. Lurie's approach \cite{lurie2009derived} uses a higher-categorical generalization of ringed spaces, namely structured $(\infty,1)$-topoi. This is an $(\infty,1)$-topos $\mathcal{X}$ together with a limit-preserving functor 
$$\mathcal{O}:\mathpzc{G}\rightarrow\mathcal{X}$$
where $\mathpzc{G}$ is a {geometry} - an $(\infty,1)$-category satisfying certain properties. For example taking $(\mathpzc{G})^{op}$ to be the $(\infty,1)$-category of simplicial rings gives a reasonable notion of derived algebraic stacks. David Spivak \cite{spivak2010derived} considers derived smooth manifolds by taking as $(\mathpzc{G})^{op}$ the category of simplicial $C^{\infty}$-rings. Mauro Porta and Tony Yue Yu \cite{porta2016higher},\cite{porta2015derived1} \cite{porta2015derived}, \cite{porta2017derived},\cite{porta2018derived}, \cite{porta2017representability}, \cite{porta2018derivedhom} are developing derived analytic geometry by taking $\mathcal{G}^{op}$ to be the category of simplicial rings equipped with a holomorphic functional calculus. In particular they have proven GAGA, base-change, and Riemann Hilbert type theorems. Thanks to their work the field of derived analytic geometry has developed rapidly in recent years, leading to GAGA, base-change, and Riemann-Hilbert type theorems. They have also announced a Hochschild-Kostant-Rosenberg theorem. Despite these crucial results there are some drawbacks to their setup. For example, there is no obvious definition of the category of quasi-coherent sheaves on a derived analytic space. Base-change theorems for coherent sheaves are difficult to formulate and prove. As in the algebraic case the push-forward functor between categories of coherent sheaves is only defined for proper maps. Moreover, constructing the pullback functor is very technical since the algebraic tensor product of sheaves is not the correct notion in the analytic setting.  We believe that a model of derived analytic geometry based on To\"{e}n and Vezzosi's formalism \cite{toen2004homotopical} of derived geometry relative to a monoidal model category $\mathpzc{M}$ would resolve many of these issues. 

To\"{e}n and Vezzosi's model for derived geometry is inspired by the theory of (non-derived) geometry relative to a symmetric monoidal category (developed for instance in \cite{deligne2007categories} and \cite{banerjee2017noetherian}). This is a category-theoretic framework which views geometry as the unification of algebra and topology. The algebra describes {local} pieces and a Grothendieck topology allows one to glue these local pieces and obtain {global} objects.  In \cite{toen2004homotopical} they introduce the notion of a homotopical algebraic geometry context. Up to some technical details, a homotopical algebraic geometry context consists of a monoidal model category $\mathpzc{M}$ such that the category $\mathpzc{Alg}_{\mathfrak{Comm}}(\mathpzc{M})$ of unital commutative monoids in $\mathpzc{M}$ is a model category with the transferred model structure, and $(\mathpzc{Alg}_{\mathfrak{Comm}}(\mathpzc{M}))^{op}$ is equipped with a homotopy Grothendieck topology $\tau$. We regard $\mathpzc{Aff}_{\mathpzc{M}}\defeq(\mathpzc{Alg}_{\mathfrak{Comm}}(\mathpzc{M}))^{op}$ as a category of affine spaces. The category of derived stacks on $\mathpzc{M}$ is then the category of functors $\mathcal{X}:\mathpzc{Aff}_{\mathpzc{M}}\rightarrow\mathpzc{sSet}$ satisfying descent for $\tau$-hypercovers. For derived algebraic geometry one considers either the category $\mathpzc{M}=Ch(R)$ of chain complexes of modules over a ring $R$ (in characteristic zero), or the category $\mathpzc{M}=s{}_{R}\mathpzc{Mod}$ of simplicial $R$-modules.  

\subsection*{Monoidal Categories and Analytic Geometry}
For the purposes of motivation we will give a brief overview of one approach to a formulation of derived analytic geometry. Details will appear in a forthcoming paper \cite{dang}. Let $(X,\mathcal{O}_{X})$ be a complex manifold. For each open set $U$ the set $\mathcal{O}_{X}(U)$ has a canonical structure of a Fr\'{e}chet space. Moreover, the restriction maps $\mathcal{O}_{X}(V)\rightarrow\mathcal{O}_{X}(U)$ are continuous. Let $\mathpzc{F}$ be a coherent sheaf on $X$. Cartan's Theorem B implies that on a coordinate  neighbourhood (or more generally a Stein neighbourhood), there is an exact sequence
$$\mathcal{O}_{X}^{m}(V)\rightarrow\mathcal{O}_{X}^{n}(V)\rightarrow\mathpzc{F}(V)\rightarrow0$$
The quotient topology on $\mathpzc{F}(V)$ makes it a Fr\'{e}chet space. Thus sheaves on complex spaces have natural topological structures. 
	
	It is therefore tempting to view (non-derived) complex analytic geometry as geometry relative to the symmetric monoidal category of Fr\'{e}chet spaces. Unfortunately this does not seem possible. However in \cite{bambozzi2015stein} the authors construct a Grothendieck topology $\tau^{fhZ}$ on a subcategory $\mathpzc{St}$ of $(\mathpzc{Alg}_{\mathfrak{Comm}}(\mathpzc{Fr}))^{op}$. $\mathpzc{St}$ is equivalent to the category of (dagger) Stein spaces and when $k=\mathbb{C}$ the coverings in their topology correspond to coverings of Stein spaces by Stein spaces. In particular the category of complex analytic spaces embeds in the category of schemes on this site. 
	
	This construction is somewhat ad hoc but as usual passing to the derived world proves enlightening. The Grothendieck topology $\tau^{fhZ}$ of \cite{bambozzi2015stein} makes use of the homological structure on $\mathpzc{Fr}$ which is a quasi-abelian, and therefore exact, category. It is an additive category with classes of admissible monomorphisms and admissible epimorphisms which provide a well-defined notion of homology. There are also notions of projective objects, exact functors, derived categories, and derived functors. If $\mathpzc{E}$ is a monoidal exact category with a left-derivable tensor product $\otimes$, then we say a map $A\rightarrow B$ of commutative monoids in $\mathpzc{E}$ is a \textbf{homotopy epimorphism} if the map $B\otimes_{A}^{\mathbb{L}} B\rightarrow B$ is a quasi-isomorphism. The opposites of these maps make up the covers in $\tau^{fhZ}$. The obstacle to such covers defining a topology on the entire category $(\mathpzc{Alg}_{\mathfrak{Comm}}(\mathpzc{Fr}))^{op}$	is that they are not stable under base-change (because of the derived tensor product). 
	
	If $Ch(\mathpzc{Fr})$ were a good enough monoidal model category then we could easily extend the definition of a homotopy epimorphism. Moreover as a homotopy cover in such a model category the issue of base change would disappear and would give a genuine model topology on $(\mathpzc{Alg}_{\mathfrak{Comm}}(Ch(\mathpzc{Fr})))^{op}$. Tragically $\mathpzc{Fr}$ is not good enough. It is neither complete nor cocomplete and does not have enough projectives. Fortunately it does nicely embed in a complete and cocomplete exact category with enough projectives, namely the category $CBorn_{\C}$ of complete bornological spaces over $\C$. It is sometimes convenient to pass to the even bigger category $Ind(Ban_{\C})$, the formal completion of the category of Banach spaces by filtered colimits. 
	
\section{Goals and Layout}
The central goal of this monograph is to show that $Ind(Ban_{\C})$ and $CBorn_{\C}$ are good categories for developing homotopical algebra, i.e. local derived geometry. More generally we show that under very general conditions on an exact category $\mathpzc{E}$, the category $Ch(\mathpzc{E})$ admits a good homotopy theory of algebras.
\subsection*{Exact Category Generalities}

Building on work of \cite{Buehler} in Chapter \ref{sec2} we begin this work by establishing some technical results about exact categories in general which we will need in subsequent chapters.   After recalling some basic facts we introduce various useful notions of acyclicity. We then discuss bounded and unbounded resolutions in exact categories. In particular we generalise the famous result of Spaltenstein \cite{spaltenstein} to exact categories satisfying very general conditions, following a similar generalisation of  this result to relative homological algebra \cite{chacholski2017relative}.

\begin{thm}[Corollary \ref{Kproj}]\label{thm:Kprojintro}
Let $\mathpzc{E}$ be an exact category with kernels in which. Let $\mathcal{P}$ be a class of objects such that for each object $X$ in $\mathpzc{E}$ there is an object $P$ in $\mathcal{P}$ together with an admissible epimorphism $P\twoheadrightarrow X$. Suppose further that $\mathcal{P}$ is closed under countable coproducts and satisfy axiom $AB4-k$ for some $k$. Then for any complex $X_{\bullet}$ in $ Ch(\mathpzc{E})$ there is a complex $P_{\bullet}$ in $ Ch(\mathcal{P})$ and an admissible epimorphism $P_{\bullet}\rightarrow X_{\bullet}$ which is a quasi-isomorphism. Moreover, $X_{\bullet}$ is the limit of a $ Ch_{+}(\mathcal{P})$-special direct system.
\end{thm}

We then  recall a suitable idea of generators, before defining so-called elementary  and weakly elementary exact categories. These technical notions will be crucial for controlling the homotopy theory of an exact category and avoiding set-theoretic smallness concerns.  Next we define monoidal exact categories and establish some basic properties of them. In particular we prove the existence of an induced exact structure on modules for commutative monoids internal to such categories. More generally we study monads on exact categories and their categories of algebras. 

In Chapter \ref{Secex1} we study numerous examples, including the category of complete bornological spaces over a Banach field, and the non-expanding normed and Banach categories over a non-Archimedean field. We also relate our work to Gillespie's work on derived categories of Grothendieck abelian categories with respect to generators in \cite{gillespie2016derived}, and to the work of Gillespie \cite{gillespie2016derived}, Estrada-Gillespie-Odaba{\c{s}}i \cite{estrada2017pure}, and Krause \cite{krause2012approximations} on pure exact structures.

\subsection*{Model Structures on Exact Categories}

In Chapter \ref{chmodelexact} we discuss model structures on exact categories. There is a general theory of model structures on weakly idempotent complete exact categories due to \cite{hovey}, \cite{gillespie} and \cite{vst2012exact} using \textbf{cotorsion pairs}. A pair of classes of objects $(\mathfrak{L},\mathfrak{R})$ in an exact category $\mathpzc{E}$ is said to be a cotorsion pair if $L\in\mathfrak{L}$ if and only if ${Ext}^{1}(L,R)=0$ for all $R\in\mathfrak{R}$, and $R\in\mathfrak{R}$ if and only if $Ext^{1}(L,R)=0$ for all $L\in\mathfrak{L}$. In \cite{Gillespie2} Gillespie suggests a strategy for producing a model structure on $Ch(\mathpzc{E})$, given a cotorsion pair on an abelian category $\mathpzc{E}$, which can easily be adapted to exact categories more generally. When $\mathpzc{E}$ is monoidal we give conditions on $(\mathfrak{L},\mathfrak{R})$ such that the induced model structure is monoidal and satisfies the monoid axiom.

However we give very general conditions on an exact category $\mathpzc{E}$ such that it does work for the projective cotorsion pair $(\textbf{Proj}(\mathpzc{E}),\textbf{Ob}(\mathpzc{E}))$, where $\textbf{Proj}(\mathpzc{E})$ is the class of projective objects in $\mathpzc{E}$. In particular we prove the following

\begin{thm}[Theorem \ref{projmod}]\label{projmodintro}
Let $\mathpzc{E}$ be an exact category satisfying the following conditions
\begin{enumerate}
\item
$\mathpzc{E}$ has enough projectives.
\item
$\mathpzc{E}$ has kernels.
\item
We suppose that countable coproducts of projectives exist and satisfy axiom $AB4-k$ for some $k\in\mathbb{Z}$.
\end{enumerate}
Then, applied to the cotorsion pair $(\textbf{Proj}(\mathpzc{E}),\textbf{Ob}(\mathpzc{E}))$, Gillespie's strategy produces a model structure on $Ch(\mathpzc{E})$.
\end{thm}
 We call this the \textbf{projective model structure} on $Ch(\mathpzc{E})$. If $E={}_{R}\mathpzc{Mod}$ is the category of $R$-modules over a ring $R$ then this is the usual projective model structure. Under some stronger assumptions, namely that the category $\mathpzc{E}$ has generators which are presented relative to the class of admissible monics, the result of the above theorem can be deduced from results of \cite{vst2012exact}. Moreover, Gillespie proved Theorem \ref{projmod} in \cite{gillespie2016derived} in the case that $\mathpzc{E}$ is the $G$-exact structure on a Grothendieck abelian category. An advantage of our result is that it avoids many set-theoretic concerns and we suggest examples where this is useful. In particular at the end of the chapter we give natural examples of exact categories with very different set-theoretic properties which satisfy the conditions of Theorem \ref{projmod}. This result should also be compared with the model structures of \cite{christensen2002quillen}. Indeed, if an exact category has enough projectives then the class of exact sequences on $\mathpzc{E}$ are the $\mathcal{P}$-exact sequences for $\mathcal{P}$ the class of projectives, in the sense of \cite{christensen2002quillen} Section 1.1. When the underlying category of $\mathpzc{E}$ is abelian Christensen and Hovey show that the projective model structure exists on $Ch(\mathpzc{E})$ (and $Ch_{\ge0}(\mathpzc{E})$). We suspect that the fact that $\mathpzc{E}$ is abelian is not necessary for their proof, and that one only needs an additive category with kernels. Although our proof of the existence of resolutions is similar to \cite{christensen2002quillen} Theorem 2.2, we actually give a more general version of their Case A. Moreover our proof strategy for the existence of the projective model structure is somewhat different - we work with cotorsion pairs. This allows us to derive useful properties of the projective model structure.

 We call a monoidal exact category \textbf{monoidal elementary} if it is elementary and its projectives are flat and closed under the tensor product.  We then prove the following result.
 
 \begin{thm}[Theorem \ref{chainmonoidal}]
Let $\mathpzc{E}$ be a monoidal elementary exact category. Then the projective model structure on $Ch(\mathpzc{E})$ is monoidal and satisfies the monoid axiom.
\end{thm}
We then prove a generalisation of the Dold-Kan correspondence.
\begin{thm}[Theorem \ref{dkex}]\label{dkexintro}
Let $\mathpzc{E}$ be a weakly idempotent complete exact category with enough projectives. Endow $ Ch_{\ge0}(\mathpzc{E})$ and $\textbf{s}\mathpzc{E}$ with their projective model structures. Then the functors
$$\Gamma: Ch_{\ge0}(\mathpzc{E})\rightarrow\textbf{s}\mathpzc{E}$$
and
$$N:\textbf{s}\mathpzc{E}\rightarrow Ch_{\ge0}(\mathpzc{E})$$
form a Quillen equivalence.
\end{thm}

In Chapter \ref{sec6} we discuss model structures for graded and filtered objects in exact categories. This will prove crucial for generalising Koszul duality results in \cite{kelly2019koszul}. Finally in Chapter \ref{homtopalgexact} we discuss model structures on categories of dg-modules and dg-algebras in monoidal exact categories. By modying results of \cite{white2019homotopical} and \cite{white2017model}  we show the following
\begin{thm}[Theorem \ref{thm:adopch}]
Let $\mathpzc{E}$ be an elementary exact category which is also a closed symmetric monoidal category and let $\mathfrak{P}$ be any non-symmetric operad in $Ch_{*}(\mathpzc{E})$ for $*\in\{\ge0,\emptyset\}$. Then the transferred model structure exists on the category $\mathpzc{Alg}_{\mathfrak{P}}(Ch_{*}(\mathpzc{E}))$ of $\mathfrak{P}$-aglebras in $Ch_{*}(\mathpzc{E})$. If $\mathpzc{E}$ is enriched over $\mathbb{Q}$ and $\mathfrak{P}$ is a symmetric operad, then the transferred model structure also exists on $\mathpzc{Alg}_{\mathfrak{P}}(Ch_{*}(\mathpzc{E}))$.
\end{thm}
We also prove a Dold-Kan correspondence for algebras over operads (Theorem \ref{thm:simpDKA}) , again following \cite{white2019homotopical}, as well as a generalisation of the cosimplicial Dold-Kan equivalence of \cite{castiglioni2004cosimplicial} (Theorem \ref{thm:simpCDKA}).

Finally in Theorem \ref{HAcont} we show that whenever $\mathpzc{E}$ is monoidal elementary, the categories $Ch(\mathpzc{E})$ and $Ch_{\ge0}(\mathpzc{E})$ are homotopical algebra contexts in the sense of  \cite{toen2004homotopical}. In particular they are suitable settings for theories of derived geometry. In future joint work with Oren Ben-Bassat and Kobi Kremnitzer we will develop a model of derived analytic geometry by applying this to the quasi-abelian category $CBorn_{k}$.
 In  appendix \ref{modelapp} we recall some basic facts and set out our conventions regarding model categories. 
 
 \section{Notation and Conventions}\label{notation}

Throughout this work we will use the following notation.
\begin{itemize}
\item
$1$-categories will be denoted using the mathpzc font $\mathpzc{C},\mathpzc{D},\mathpzc{E}$, etc. In particular we denote by $\mathpzc{Ab}$ the category of abelian groups and ${}_{\Q}\mathpzc{Vect}$ the category of $\Q$-vector spaces. If $\mathpzc{M}$ is a model category, or a category with weak equivalences, its associated $(\infty,1)$-category will be denoted $\textbf{M}$.
\item 
Operads will be denoted using capital fractal letters $\mathfrak{C},\mathfrak{P}$, etc. Algebras over an operad will generally be denoted using capital letters $X,Y$, etc. The category of algebras over an operad will be denoted $\mathpzc{Alg}_{\mathfrak{P}}$
\item
We denote the operads for unital associative algebras, unital commutative algebras, non-unital commutative algebras, and Lie algebras by $\mathfrak{Ass},\mathfrak{Comm},\mathfrak{Comm}^{nu}$, and $\mathfrak{Lie}$ respectively. 
\item
 For the operad $\mathfrak{Ass},\mathfrak{Comm},\mathfrak{Lie}$ we will denote the corresponding free algebras by $T(V),S(V)$, and $L(V)$ respectively. We also denote by $U(L)$ the universal enveloping algebra of a Lie algebra $L$.
\item
Unless stated otherwise, the unit in a monoidal category will be denoted by $k$, the tensor functor by $\otimes$, and for a closed monoidal category the internal hom functor will be denoted by $\underline{\textrm{Hom}}$. Monoidal categories will always be assumed to be symmetric, with symmetric braiding $\sigma$.
\item
Filtered colimits will be denoted by $\textrm{lim}_{\rightarrow}$. Projective limits will be denoted $\textrm{lim}_{\leftarrow}$.
\item
The first infinite ordinal will be denoted $\aleph_{0}$.
\end{itemize}

Let us now introduce some conventions for chain complexes.

\begin{defn}\label{chaincat}
A \textbf{chain complex} in a pre-additive category $\mathpzc{E}$ is a sequence

\begin{displaymath}
\xymatrix{K_{\bullet}=\ldots\ar[r] & K_{n}\ar[r]^{d_{n}} & K_{n-1}\ar[r]^{d_{n-1}} & K_{n-2}\ar[r] &\ldots}
\end{displaymath}

where the $K_{i}$ are objects and the $d_{i}$ are morphisms such that $d_{n-1}\circ d_{n}=0$. The morphisms are called \textbf{differentials}. A \textbf{morphism of chain complexes} $f_{\bullet}:K_{\bullet}\rightarrow L_{\bullet}$ is a collection of morphisms $f_{n}:K_{
n}\rightarrow L_{n}$ such that the following diagram commutes for each $n$:

\begin{displaymath}
\xymatrix{\ldots\ar[r] & K_{n+1}\ar[d]_{f_{n+1}}\ar[r]^{d^{K}_{n+1}} & K_{n}\ar[d]_{f^{n}}\ar[r]^{d^{K}_{n}} & K_{n-1}\ar[d]_{f^{n-1}}\ar[r] &\ldots\\
\ldots\ar[r] & L_{n+1}\ar[r]^{d^{L}_{n+1}} & L_{n}\ar[r]^{d^{L}_{n}} & L_{n-1}\ar[r] &\ldots}
\end{displaymath}
\end{defn}

The category whose objects are chain complexes and whose morphisms are as described above is called the \textbf{category of chain complexes in} $\mathpzc{E}$, denoted $Ch(\mathpzc{E})$. We also define $Ch_{\ge0}(\mathpzc{E})$ to be the full subcategory of $Ch(\mathpzc{E})$ on complexes $A_{\bullet}$ such that $A_{n}=0$ for $n<0$, $Ch_{\le0}(\mathpzc{E})$ to be the full subcategory of $Ch(\mathpzc{E})$ on complexes $A_{\bullet}$ such that $A_{n}=0$ for $n>0$, $Ch_{+}(\mathpzc{E})$, the full subcategory of chain complexes $A_{\bullet}$ such that $A_{n}=0$ for $n<<0$, $Ch_{-}(\mathpzc{E})$, the full subcategory of chain complexes $A_{\bullet}$ such that $A_{n}=0$ for $n>>0$ and $Ch_{b}(\mathpzc{E})$  to be the full subcategory of $Ch(\mathpzc{E})$ on complexes $A_{\bullet}$ such that $A_{n}\neq 0$ for only finitely many $n$. A lot of the statements in the rest of this document apply to several of these categories at once. In such cases we will write $Ch_{*}(\mathpzc{E})$, and specify that $*$ can be any element of some subset of $\{\ge0,\le0,+,-,b,\emptyset\}$, where by definition $Ch_{\emptyset}(\mathpzc{E})=Ch(\mathpzc{E})$.\newline
\\
We will  frequently use the following special chain complexes.

\begin{defn}
If $E$ is an object of a pointed category $\mathpzc{E}$ we let $S^{n}(E)\in Ch(\mathpzc{E})$ be the complex whose $n$th entry is $E$,  with all other entries being $0$. We also denote by $D^{n}(E)\in Ch(\mathpzc{E})$ the complex whose $n$th and $(n-1)$st entries are $E$, with all other entries being $0$, and the differential $d_{n}$ being the identity.
\end{defn}

Let us also introduce some notation for truncation functors.

\begin{defn}
Let $\mathpzc{E}$ be an additive category which has kernels. For a complex $X_{\bullet}$ we denote by $\tau_{\ge n}X$ the complex such that $(\tau_{\ge n}X)_{m}=0$ if $m<n$, $(\tau_{\ge n}X)_{m}= X_{m}$ if $m>n$ and $(\tau_{\ge n}X)_{n}=\textrm{Ker}(d_{n})$. The differentials are the obvious ones. The construction is clearly functorial. Dually we define the truncation functor $\tau_{\le k}$. 
\end{defn}

All of the above categories are naturally enriched over $Ch(\mathpzc{Ab})$. We denote the enriched hom by $\textbf{Hom}(-,-)$. For notational clarity we recall its definition here.

\begin{defn}
Let $X_{\bullet},Y_{\bullet}\in Ch(\mathpzc{E})$. We define $\textbf{Hom}(X_{\bullet},Y_{\bullet})\in Ch(\mathpzc{Ab})$ to be the complex with
$$\textbf{Hom}(X_{\bullet},Y_{\bullet})_{n}=\prod_{i\in\Z}\textrm{Hom}_{\mathpzc{E}}(X_{i},Y_{i+n})$$
and differential $d_{n}$ defined on $\textrm{Hom}_{\mathpzc{E}}(X_{i},Y_{i+n})$ by
$$df=d^{Y}_{i+n}\circ f-(-1)^{n}f\circ d^{X}_{i}$$
\end{defn}

Let $(\mathpzc{E},\otimes,k)$ be a monoidal additive category, i.e. $\otimes$ is an additive bifunctor. There is an induced monoidal structure on $Ch_{*}(\mathpzc{E})$ for $*\in\{\ge0,\le0,+,-,b,\emptyset\}$. The unit is $S^{0}(k)$. If $X_{\bullet}$ and $Y_{\bullet}$ are chain complexes then we set

$$(X_{\bullet}\otimes Y_{\bullet})_{n}=\bigoplus_{i+j=n}X_{i}\otimes Y_{j}$$
If $i+j=n$, then we define the differential on the summand $X_{i}\otimes Y_{j}$ of $(X_{\bullet}\otimes Y_{\bullet})_{n}$ by
$$d^{X_{\bullet}\otimes Y_{\bullet}}_{n}|_{X_{i}\otimes Y_{j}}=d^{X_{\bullet}}_{i}\otimes id_{Y_{\bullet}}+(-1)^{i}id_{X_{\bullet}}\otimes d_{j}^{Y_{\bullet}}$$
If $*\in\{\ge0,\le0,+,-,b,\emptyset\}$ then $(Ch_{*}(\mathpzc{E}),\otimes,S^{0}(k))$ is a monoidal additive category. 

If $(\mathpzc{E},\otimes,k,\underline{\textrm{Hom}})$ is a closed monoidal additive category then we define a functor
$$\underline{\textrm{Hom}}(-,-):Ch(\mathpzc{E})^{op}\times Ch(\mathpzc{E})\rightarrow Ch(\mathpzc{E})$$

$$\underline{\textrm{Hom}}(X_{\bullet},Y_{\bullet})_{n}=\prod_{i\in\Z}\underline{\textrm{Hom}}_{\mathpzc{E}}(X_{i},Y_{i+n})$$
and differential $d_{n}$ defined on $\textrm{Hom}_{\mathpzc{E}}(X_{i},Y_{i+n})$ by

$$d=\underline{\textrm{Hom}}(d_{i}^{X_{\bullet}},id)+(-1)^{i}\underline{\textrm{Hom}}(id,d_{i+n}^{Y_{\bullet}})$$
This does define an internal hom on the monoidal category 

$$(Ch(\mathpzc{E}),\otimes,S^{0}(k))$$
The internal hom on chain complexes also restricts to a bifunctor

$$\underline{\textrm{Hom}}(-,-):Ch_{b}(\mathpzc{E})^{op}\times Ch_{b}(\mathpzc{E})\rightarrow Ch_{b}(\mathpzc{E})$$
Then

$$(Ch_{b}(\mathpzc{E}),\otimes,S^{0}(k),\underline{\textrm{Hom}})$$
is a closed monoidal additive category. In fact, in both of these categories there are natural isomorphisms of chain complexes of abelian groups.
$$\textbf{Hom}(X_{\bullet},\underline{\textrm{Hom}}(Y_{\bullet},Z_{\bullet}))\cong\textbf{Hom}(X_{\bullet}\otimes Y_{\bullet},Z_{\bullet})$$

The categories $Ch_{*}(\mathpzc{E})$ for $*\in\{+,-,b,\emptyset\}$ also come equipped with a shift functor. It is given on objects by $(A_{\bullet}[1])_{i}=A_{i+1}$ with differential $d_{i}^{A[1]}=-d^{A}_{i+1}$. The shift of a morphism $f^{\bullet}$ is given by $(f_{\bullet}[1])_{i}=f_{i+1}$. $[1]$ is an auto-equivalence with inverse $[-1]$. We set $[0]=\textrm{Id}$ and $[n]=[1]^{n}$ for any integer $n$.\newline
\\
 Finally, we define the mapping cone as follows.

\begin{defn}
Let $X_{\bullet}$ and $Y_{\bullet}$ be chain complexes in an additive category $\mathpzc{E}$ and $f_{\bullet}:X_{\bullet}\rightarrow Y_{\bullet}$. The \textbf{mapping cone of} $f_{\bullet}$, denoted $\textrm{cone}(f_{\bullet})$ is the complex whose components are
$$\textrm{cone}(f_{\bullet})_{n}=X_{n-1}\oplus Y_{n}$$
and whose differential is
 \begin{displaymath}
d^{\textrm{cone}(f)}_{n}  = \left(
     \begin{array}{lr}
       -d_{n-1}^{X} &0\\
       -f_{n-1} & d^{Y}_{n}
   \end{array}
            \right)
\end{displaymath} 
\end{defn}
There are natural morphisms $\tau:Y_{\bullet}\rightarrow \textrm{cone}(f)$ induced by the injections $Y_{i}\rightarrow X_{i-1}\oplus Y_{i}$, and $\pi:\textrm{cone}(f)\rightarrow X_{\bullet}[-1]$ induced by the projections $X_{i-1}\oplus Y_{i}\rightarrow X_{i-1}$. The sequence
$$Y_{\bullet}\rightarrow\textrm{cone}(f)\rightarrow X_{\bullet}[-1]$$
is split exact in each degree.

\subsection*{Acknowledgements}
 The author would like to thank Kobi  Kremnizer, Kevin McGerty, Theo  B\"{u}hler, andTimoth\'{e}e Moreau for useful comments. The author is also very grateful to the referee for many helpful comments and suggestions which have greatly improved this paper.

\chapter{Exact Category Generalities}\label{sec2}

In this chapter we will establish some technicalities about exact categories which will be used throughout this work. In particular we will discuss acyclicity of complexes and the existence of unbounded resolutions. We will also discuss various notions of generation and compactness in such categories, and introduce the notion of a monoidal exact category. Finally we will see when exact structures can be lifted to categories of algebras for some monad acting on an exact category. The results in this chapter will prove crucial for studying the homotopy theory of exact categories in Chapter \ref{chmodelexact}. 

\section{Recollections on Exact Categories}

In this section we review the rudiments of exact categories, following \cite{Buehler}. In the following $\mathpzc{E}$ will be an additive category. A $\textbf{kernel-cokernel pair}$ in $\mathpzc{E}$ is a pair of composable maps $(i,p)$, $i:A\rightarrow B,p:B\rightarrow C$ such that $i=\textrm{Ker}(p)$ and $p=\textrm{Coker}(i)$. If $\mathcal{Q}$ is a class of kernel-cokernel pairs and $(i,p)\in\mathcal{Q}$, then we say that $i$ is an admissible monic and $p$ is an admissible epic with respect to $\mathcal{Q}$.

\begin{defn}
A \textbf{Quillen exact structure} on an additive category $\mathpzc{E}$ is a collection $\mathcal{Q}$ of kernel-cokernel pairs such that
\begin{enumerate}
\item
Isomorphisms are both admissible monics and admissible epics.
\item
Both the collection of admissible monics and the collection of admissible epics  are closed under composition.
\item
If
\begin{displaymath}
\xymatrix{
A\ar[d]\ar[r]^{f} & B\ar[d]\\
X\ar[r]^{f'} & Y
}
\end{displaymath}
is a pushout diagram, and $f$ is an admissible monic, then $f'$ is as well.
\item
If
\begin{displaymath}
\xymatrix{
A\ar[d]\ar[r]^{f'} & B\ar[d]\\
X\ar[r]^{f} & Y
}
\end{displaymath}
is a pullback diagram, and $f$ is an admissible epic, then $f'$ is as well.
\end{enumerate}
\end{defn}

Let $(\mathpzc{E},\mathcal{Q})$ be an exact category. We call a null sequence
\begin{displaymath}
\xymatrix{
0\ar[r] & A\ar[r]^{i} & B\ar[r]^{p} & C\ar[r] & 0
}
\end{displaymath}
\textbf{short exact} if $(i,p)$ is a kernel-cokernel pair in $\mathcal{Q}$. We will use interchangeably the notion of kernel-cokernel pair and short exact sequence. When it is not likely to cause confusion, we will suppress the notation $(\mathpzc{E},\mathcal{Q})$ to $\mathpzc{E}$.

When studying exact categories it is natural to consider so-called exact functors:

\begin{defn}
Let $(\mathpzc{E},\mathcal{P})$, $(\mathpzc{F},\mathcal{Q})$ be exact categories. A functor $F:\mathpzc{E}\rightarrow\mathpzc{F}$ is said to be \textbf{exact} (with respect to $\mathcal{P}$ and $\mathcal{Q}$) if for any short exact sequence
$$0\rightarrow X\rightarrow Y\rightarrow Z\rightarrow 0$$
in $\mathcal{P}$,
$$0\rightarrow F(X)\rightarrow F(Y)\rightarrow F(Z)\rightarrow 0$$
is a short exact sequence in $\mathcal{Q}$.
\end{defn}

\begin{defn}
Let $(\mathpzc{E},\mathcal{P})$ be an exact category. An \textbf{exact subcategory} of $(\mathpzc{E},\mathcal{P})$ is an exact category $(\mathpzc{F},\mathcal{Q})$ where $\mathpzc{F}$ is a subcategory of $\mathpzc{E}$ and the inclusion  functor is exact.
\end{defn}

On any additive category one can define the split exact structure for which the kernel-cokernel pairs are the split exact sequences. Any exact category contains this is an exact subcategory. The split exact structure is therefore the minimal exact structure on an additive category. If an additive category has all kernels and cokernels, then it also has a maximal exact structure. This is due to Sieg and Wegner \cite{sieg2011maximal}. One particularly nice class of additive categories has the class of all kernel-cokernel pairs as its maximal exact structure.

\begin{defn}\label{quas}
An additive category $\mathpzc{E}$ with all kernels and cokernels is said to be \textbf{quasi-abelian} if the class $\mathpzc{qac}$ of all kernel-cokernel pairs forms an exact structure on $\mathpzc{E}$.
\end{defn}
The following is then tautological.

\begin{prop}
Let $\mathpzc{E}$ be a quasi-abelian category, and let $\mathcal{Q}$ be a class of kernel-cokernel pairs on $\mathpzc{E}$ such that $(\mathpzc{E},\mathcal{Q})$ is an exact category. Then  the identity functor $\textrm{id}_{\mathpzc{E}}$ is an exact functor $(\mathpzc{E},\mathcal{Q})\rightarrow(\mathpzc{E},\mathpzc{qac})$.
\end{prop}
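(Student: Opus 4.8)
The plan is to observe that this is a pure unwinding of definitions, so the entire proof amounts to tracing what ``exact functor'' and ``short exact sequence'' mean once the source and target exact structures are written down explicitly. First I would recall that to say $\textrm{id}_{\mathpzc{E}}:(\mathpzc{E},\mathcal{Q})\rightarrow(\mathpzc{E},\mathpzc{qac})$ is exact means precisely that for every short exact sequence $0\rightarrow A\rightarrow B\rightarrow C\rightarrow 0$ belonging to $\mathcal{Q}$, the image sequence under the identity functor is a short exact sequence in $\mathpzc{qac}$. Since $\textrm{id}_{\mathpzc{E}}$ fixes every object and morphism, the image sequence is literally the same sequence, so what must be checked is only that a $\mathcal{Q}$-short exact sequence is automatically a $\mathpzc{qac}$-short exact sequence.

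Next I would unravel this remaining point. By definition a short exact sequence in $\mathcal{Q}$ is a kernel-cokernel pair $(i,p)$ lying in the class $\mathcal{Q}$; in particular $(i,p)$ is a kernel-cokernel pair. On the other side, $\mathpzc{qac}$ is by Definition \ref{quas} the class of \emph{all} kernel-cokernel pairs in $\mathpzc{E}$, and the hypothesis that $\mathpzc{E}$ is quasi-abelian guarantees that $\mathpzc{qac}$ genuinely forms an exact structure (so that the target $(\mathpzc{E},\mathpzc{qac})$ is a legitimate exact category to speak about). Therefore the inclusion of classes $\mathcal{Q}\subseteq\mathpzc{qac}$ holds trivially: any element of $\mathcal{Q}$, being a kernel-cokernel pair, is a member of the class of all kernel-cokernel pairs. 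This gives exactly the containment needed, and the proof concludes.

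There is no real obstacle here, which is why the preceding sentence in the text calls the statement tautological; the only thing worth stating carefully is that the two hypotheses are used in disjoint ways. The assumption that $(\mathpzc{E},\mathcal{Q})$ is exact is what lets us speak of $\mathcal{Q}$-short exact sequences at all, while the assumption that $\mathpzc{E}$ is quasi-abelian is what makes $\mathpzc{qac}$ a valid exact structure and hence the codomain of a meaningful exact functor. Once both are in place, the conclusion follows immediately from the definition of $\mathpzc{qac}$ as the maximal class of kernel-cokernel pairs.
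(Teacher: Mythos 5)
Your proof is correct and matches the paper exactly: the paper gives no argument beyond calling the statement tautological, and your unwinding of the definitions (a $\mathcal{Q}$-kernel-cokernel pair is in particular a kernel-cokernel pair, hence lies in $\mathpzc{qac}$, the class of all such pairs) is precisely the intended reasoning.
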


We will study quasi-abelian structures in more detail later. For now let us note that abelian categories are quasi-abelian. In an abelian category all monics are kernels of their cokernels, and all epics are cokernels of their kernels. It therefore trivially follows that both classes are closed under composition. It is also clear that both classes contain all isomorphisms. It is a standard exercise that in an abelian category, monomorphisms are pushout-stable and epimorphisms are pullback-stable. See for example \cite{freyd} Theorem 2.54. Let us now record some basic results about exact categories which will prove useful.

\begin{prop}\label{bicart}
Let
\begin{displaymath}
\xymatrix{
A\ar@{>->}[r]^{i}\ar[d]^{f} & B\ar[d]^{f'}\\
A'\ar@{>->}[r]^{i'} & B'
}
\end{displaymath}
be a commutative diagram in which the horizontal morphisms are admissible monics. Then the following are equivalent
\begin{enumerate}
\item
The square above is a pushout.
\item 
The sequence
\begin{displaymath}
\xymatrix{
0\ar[r] & A\ar[r]^{\left(\begin{smallmatrix} i\\ -f \end{smallmatrix} \right)\;\;\;\;}&B\oplus A'\ar[r]^{\;\;\;\;\;\left(\begin{smallmatrix} f' & i' \end{smallmatrix} \right)} & B'\ar[r] & 0
}
\end{displaymath}
is short exact.
\item
The square above is bicartesian.
\item
The square is part of a commutative diagram
\begin{displaymath}
\xymatrix{
A\ar@{>->}[r]^{i}\ar[d]^{f} & B\ar[d]^{f'}\ar@{->>}[r]^{p} & C\ar@{=}[d]\\
A'\ar@{>->}[r]^{i'} & B'\ar@{->>}[r]^{p'} & C
}
\end{displaymath}
with short exact rows.
\end{enumerate}
\end{prop}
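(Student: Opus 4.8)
The plan is to regard the square as the short complex $A\to B\oplus A'\to B'$ and to reduce the whole proposition to the single assertion that this complex is short exact. I would first record an elementary observation valid in any additive category: under the given sign conventions the square is a pushout precisely when $\left(\begin{smallmatrix} f' & i'\end{smallmatrix}\right)$ is a cokernel of $\left(\begin{smallmatrix} i\\ -f\end{smallmatrix}\right)$, and is a pullback precisely when $\left(\begin{smallmatrix} i\\ -f\end{smallmatrix}\right)$ is a kernel of $\left(\begin{smallmatrix} f' & i'\end{smallmatrix}\right)$. Both are immediate from the relevant universal properties once one notes that $\left(\begin{smallmatrix} f' & i'\end{smallmatrix}\right)\left(\begin{smallmatrix} i\\ -f\end{smallmatrix}\right)=f'i-i'f=0$ by commutativity of the square, so that the two composites always vanish.

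The conceptual crux is a lemma: $\left(\begin{smallmatrix} i\\ -f\end{smallmatrix}\right)\colon A\to B\oplus A'$ is always an admissible monic, using only that $i$ is. I would prove this by the factorisation $\left(\begin{smallmatrix} i\\ -f\end{smallmatrix}\right)=(i\oplus\textrm{id}_{A'})\circ\left(\begin{smallmatrix} 1\\ -f\end{smallmatrix}\right)$, where $i\oplus\textrm{id}_{A'}\colon A\oplus A'\to B\oplus A'$ is an admissible monic because direct sums of short exact sequences are short exact, and $\left(\begin{smallmatrix} 1\\ -f\end{smallmatrix}\right)\colon A\to A\oplus A'$ is a split monic with retraction $\left(\begin{smallmatrix}1 & 0\end{smallmatrix}\right)$; a composite of admissible monics is admissible. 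Granting this, $(1)\Rightarrow(2)$ is immediate, since a pushout exhibits $\left(\begin{smallmatrix} f' & i'\end{smallmatrix}\right)$ as a cokernel of the admissible monic $\left(\begin{smallmatrix} i\\ -f\end{smallmatrix}\right)$, and the cokernel of an admissible monic is an admissible epic, so the sequence in $(2)$ is short exact. For $(2)\Rightarrow(3)$ I note that a short exact sequence is in particular a kernel–cokernel pair, so $\left(\begin{smallmatrix} i\\ -f\end{smallmatrix}\right)$ is a kernel and $\left(\begin{smallmatrix} f' & i'\end{smallmatrix}\right)$ a cokernel; by the opening observation the square is then both a pullback and a pushout, i.e. bicartesian. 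Since $(3)\Rightarrow(1)$ is trivial, this establishes the equivalence of $(1)$, $(2)$ and $(3)$.

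It remains to weave in $(4)$. For $(1)\Rightarrow(4)$ I invoke the pushout-stability axiom (item $(3)$ in the definition of an exact structure) to conclude that $i'$ is an admissible monic, and then apply the dual form of Proposition \ref{pullbackkernel} to the pushout: taking $p\colon B\twoheadrightarrow C$ to be the cokernel of $i$, it produces a cokernel $p'\colon B'\to C$ of $i'$ satisfying $p'f'=p$. This is precisely diagram $(4)$, the lower row being short exact because $i'$ is an admissible monic with cokernel $p'$.

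The remaining and most delicate implication is $(4)\Rightarrow(2)$. Here I again use the Key Lemma to form the short exact sequence $0\to A\to B\oplus A'\to Q\to 0$ with $Q=\textrm{coker}\left(\begin{smallmatrix} i\\ -f\end{smallmatrix}\right)$ and cokernel map $q\colon B\oplus A'\to Q$; since $\left(\begin{smallmatrix} f' & i'\end{smallmatrix}\right)$ annihilates $\left(\begin{smallmatrix} i\\ -f\end{smallmatrix}\right)$ it factors as $\bar\phi\circ q$. Reading $\left(\begin{smallmatrix} i\\ -f\end{smallmatrix}\right)$ as the composite of admissible monics $\left(\begin{smallmatrix} 1\\ -f\end{smallmatrix}\right)$ and $i\oplus\textrm{id}_{A'}$, the Noether isomorphism theorem for exact categories (see \cite{Buehler}) yields a short exact sequence $0\to A'\to Q\to C\to 0$; unwinding the induced maps shows that $\bar\phi$ is a morphism from this sequence to the lower row $0\to A'\to B'\to C\to 0$ of $(4)$ which is the identity on both $A'$ and $C$. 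The short five lemma (again valid in a general exact category, see \cite{Buehler}) then forces $\bar\phi$ to be an isomorphism, so that $\left(\begin{smallmatrix} f' & i'\end{smallmatrix}\right)$ is itself a cokernel of $\left(\begin{smallmatrix} i\\ -f\end{smallmatrix}\right)$ and $(2)$ holds. I expect the bookkeeping of these induced maps, together with the verification that the comparison morphism restricts to the identity on the outer terms, to be the main obstacle; the rest of the argument is formal.
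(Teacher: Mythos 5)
Your proof is correct, but the paper does not prove this proposition itself --- it simply cites \cite{Buehler} Proposition 2.12, and your argument is essentially a reconstruction of the proof given there: the key lemma that $\left(\begin{smallmatrix} i\\ -f \end{smallmatrix}\right)$ is an admissible monic via the factorisation through $A\oplus A'$ is exactly B\"uhler's, as is the reduction of (1)--(3) to the exactness of that three-term complex. The only step that goes slightly beyond a one-line citation is your $(4)\Rightarrow(2)$ via the Noether isomorphism and the short five lemma, and the bookkeeping there does check out (the comparison map restricts to $i'$ on $A'$ because $\bar\phi\circ q|_{A'}=i'$, and to the identity on $C$ because $q$ is epic and $p'f'=p$).
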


\begin{proof}
See \cite{Buehler} Proposition 2.12.
\end{proof}

\begin{prop}\label{subexactkc}
Let $\mathpzc{E}$ be an exact category and $\mathpzc{A}\subset\mathpzc{E}$ a full additive subcategory. Suppose that for every morphism $f:A\rightarrow B$ which is admissible in $\mathpzc{E}$, a kernel and cokernel of $f$ in $\mathpzc{E}$ exist in $\mathpzc{A}$. Then the collection of all kernel-cokernel pairs $(i:A\rightarrow B,p:B\rightarrow C)$ which are exact in $\mathpzc{E}$ where $A,B,C\in\mathpzc{A}$ defines an exact structure on $\mathpzc{A}$ which makes it an exact subcategory of $\mathpzc{E}$.
\end{prop}

\begin{proof}
It is clearly sufficient to show that this collection of kernel-cokernel pairs endows $\mathpzc{A}$ with an exact structure. The first and second conditions are clearly satisfied. Let 
\begin{displaymath}
\xymatrix{
A\ar[d]^{f}\ar[r]^{i} & B\ar[d]^{f'}\\
A'\ar[r]^{i'} & B'
}
\end{displaymath}
be a pushout diagram in $\mathpzc{E}$ with $f$ an admissible monic, and $i$ and $i'$ in $\mathpzc{A}$. We need to show that $Y$ is (isomorphic to) an object of $\mathpzc{A}$. But there is an exact sequence
\begin{displaymath}
\xymatrix{
0\ar[r] & A\ar[r]^{\left(\begin{smallmatrix} i\\ -f \end{smallmatrix} \right)}&B\oplus A'\ar[r]^{\;\;\;\;\;\;\;\left(\begin{smallmatrix} f' & i' \end{smallmatrix} \right)} & B'\ar[r] & 0
}
\end{displaymath}
in $\mathpzc{E}$. Now a cokernel of the map $A\rightarrow B\oplus A'$ in $\mathpzc{E}$ exists in $\mathpzc{A}$, so $B'$ is isomorphic to an object of $\mathpzc{A}$. The last condition is dual to this one.
\end{proof}

For technical reasons, unless stated otherwise we will assume from now on that all exact categories are \textbf{weakly idempotent complete}. This means that every retraction has a kernel, or equivalently, that every coretraction has a cokernel. Note that the condition is self-dual. Quasi-abelian categories are in particular weakly idempotent complete. In weakly idempotent complete exact categories, we then have the following useful result, often called the \textbf{Obscure Axiom}.

\begin{prop}[The Obscure Axiom]\label{obscure}
Let $\mathpzc{E}$ be a weakly idempotent complete exact category.
\begin{enumerate}
\item
Suppose that $i:A\rightarrow B$ is a morphism. If there exists a morphism $j:B\rightarrow C$ such that the composite $ji:A\rightarrow C$ is an admissible monic, then $i$ is an admissible monic. 
\item
Suppose that $i:A\rightarrow B$ is a morphism. If there exists a morphism $j:C\rightarrow A$ such that $i\circ j$ is an admissible epic, then $i$ is an an admissible epic.
\end{enumerate}
\end{prop}

\begin{proof}
See \cite{Buehler} Proposition 2.16.
\end{proof}

Another useful result which translates from abelian categories is the $3\times3$ Lemma, \cite{Buehler} Corollary 3.6. 
\begin{prop}[$3\times 3$ Lemma]
Consider a commutative diagram in which all columns are exact sequences and all rows are null sequences.
\begin{displaymath}
\xymatrix{
A'\ar[d]^{a}\ar[r]^{f'} & B'\ar[d]^{b}\ar[r]^{g'} & C'\ar[d]^{c}\\
A\ar[d]^{a'}\ar[r]^{f} & B\ar[d]^{b'}\ar[r]^{g} & C\ar[d]^{c'}\\
A''\ar[r]^{f''} & B''\ar[r]^{g''} & C
}
\end{displaymath}
If two of the rows are exact sequences then the remaining row is also an exact sequence. 
\end{prop}

This has an immediate corollary.
\begin{cor}[\cite{Buehler} Proposition 2.9]\label{plusexact}
Let $f:X\rightarrow Y$ and $a:A\rightarrow B$ be admissible monomorphisms (resp. admissible epimorphisms). Then $f\oplus a:X\oplus A\rightarrow Y\oplus B$ is an admissible monomorphism (resp. admissible epimorphism).
\end{cor}

subsection{Abelianisations}

Let $(\mathpzc{E},\mathcal{Q})$ be an exact category. Let $\mathpzc{F}$ be a full subcategory of $\mathpzc{E}$. Suppose that $\mathpzc{F}$ is closed under extensions, that is if
$$0\rightarrow A\rightarrow B\rightarrow C\rightarrow 0$$
is a short exact sequence in $(\mathpzc{E},Q)$ with $A$ and $C$ objects of $\mathpzc{F}$, then $B$ is an object of $\mathpzc{F}$ as well. Let $\mathcal{Q}_{\mathpzc{F}}$ consist of those kernel-cokernel pairs $(i:A\rightarrow B,q:B\rightarrow C)$ in $\mathpzc{F}$ which when regarded as pairs of morphisms in $\mathpzc{E}$ are kernel-cokernel pairs in $\mathcal{Q}$. It is then straightforward to show (\cite{baum}) that $(\mathpzc{F},\mathcal{Q}_{\mathpzc{F}})$ is an exact subcategory of $(\mathpzc{E},\mathcal{Q})$. It turns out that any small exact category can be obtained as a full subcategory of an abelian category which is closed under extensions. This is the main content of the Gabriel-Quillen Embedding Theorem which provides an invaluable tool for studying exact categories.

\begin{thm}[The Gabriel-Quillen Embedding Theorem]\label{QET}
Let $\mathpzc{E}$ be a small exact category. Then there is an abelian category $\mathpzc{A}(\mathpzc{E})$ and a fully faithful additive functor $I:\mathpzc{E}\rightarrow\mathpzc{A}(\mathpzc{E})$ which is exact, reflects exactness, and preserves all kernels. Moreover the essential image of $I$ is closed under extensions. $\mathpzc{A}(\mathpzc{E})$ may be chosen to be the category of left-exact functors $\mathpzc{E}\rightarrow\mathpzc{Ab}$. If in addition $\mathpzc{E}$ is weakly idempotent complete then a morphism $f:E\rightarrow F$ in $\mathpzc{E}$ is an admissible epic if and only if $I(f)$ is an epic in $\mathpzc{A}(\mathpzc{E})$.
\end{thm}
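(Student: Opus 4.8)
The plan is to realise $\mathpzc{A}(\mathpzc{E})$ as the category $\textbf{Lex}(\mathpzc{E})$ of left exact additive functors on $\mathpzc{E}$, concretely the left exact additive presheaves $F\colon\mathpzc{E}^{op}\to\textbf{Ab}$, where $F$ is left exact if it sends each short exact sequence $A\rightarrowtail B\twoheadrightarrow C$ to a left exact sequence $0\to F(C)\to F(B)\to F(A)$; this is the category named in the statement. The functor $I$ will be the covariant Yoneda embedding $E\mapsto h_{E}\defeq\textrm{Hom}_{\mathpzc{E}}(-,E)$. Since $\mathpzc{E}$ is small, the category $\widehat{\mathpzc{E}}$ of all additive presheaves on $\mathpzc{E}$ is a Grothendieck abelian category in which limits and colimits are computed pointwise, and the Yoneda lemma shows $I$ is fully faithful into $\widehat{\mathpzc{E}}$. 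Each representable $h_{E}$ is left exact, since $B\twoheadrightarrow C$ is both a cokernel and an epic and hence $0\to\textrm{Hom}(C,E)\to\textrm{Hom}(B,E)\to\textrm{Hom}(A,E)$ is exact; thus $I$ factors through $\textbf{Lex}(\mathpzc{E})$.

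The crux is to prove that $\textbf{Lex}(\mathpzc{E})$ is abelian. I would exhibit it as a reflective subcategory of $\widehat{\mathpzc{E}}$ whose reflector $a\colon\widehat{\mathpzc{E}}\to\textbf{Lex}(\mathpzc{E})$ is exact. The left exact presheaves are precisely the sheaves for the topology on $\mathpzc{E}$ whose covering sieves are generated by single admissible epics, so $a$ is a sheafification functor built by the usual transfinite plus-construction, smallness of $\mathpzc{E}$ supplying the solution-set condition that the left adjoint requires. As a left adjoint $a$ is automatically right exact, so the real content is its left exactness; granting this, $\textbf{Lex}(\mathpzc{E})$ is a localizing (Giraud) subcategory of a Grothendieck category, hence itself Grothendieck abelian, with kernels computed pointwise as in $\widehat{\mathpzc{E}}$ and cokernels given by $a$ applied to the pointwise cokernel. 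I expect the exactness of this reflector to be the main obstacle; everything afterwards is formal, and in the finished paper one may simply invoke \cite{Buehler} and \cite{baum} here.

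The stated properties of $I$ then follow. Preservation of all kernels is immediate, because $\textrm{Hom}_{\mathpzc{E}}(X,-)$ is continuous for each $X$, so $I$ carries a kernel in $\mathpzc{E}$ to a pointwise, and hence genuine, kernel in $\widehat{\mathpzc{E}}$, which remains the kernel in $\textbf{Lex}(\mathpzc{E})$ since the inclusion creates limits. For exactness, given $A\rightarrowtail B\twoheadrightarrow C$ one has $h_{A}=\textrm{ker}(h_{B}\to h_{C})$ and $h_{A}\to h_{B}$ monic, while $h_{B}\to h_{C}$ is epic in $\textbf{Lex}(\mathpzc{E})$ because $a$ kills the pointwise cokernel: any $\xi\colon X\to C$ lifts to $B$ after pulling back along the pullback-stable admissible epic $B\twoheadrightarrow C$, so every section of the cokernel presheaf is locally zero. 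Reflecting exactness, namely that a null sequence in $\mathpzc{E}$ is short exact once its image is, and closure of the essential image under extensions, namely that an extension of representables is representable, are then verified as in \cite{Buehler} and \cite{baum}; the latter rests on the comparison of $\textrm{Ext}^{1}$ in $\textbf{Lex}(\mathpzc{E})$ with Yoneda $\textrm{Ext}^{1}$ in $\mathpzc{E}$, for which Proposition \ref{bicart} supplies the bicartesian-square bookkeeping.

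Finally I turn to the refinement when $\mathpzc{E}$ is weakly idempotent complete. If $f\colon E\to F$ is an admissible epic then $I(f)$ is an epic by the exactness just established. Conversely, suppose $I(f)$ is epic, so its cokernel in $\mathpzc{A}(\mathpzc{E})$ vanishes; this cokernel is $a$ applied to the pointwise cokernel presheaf $\mathcal{C}$ of $h_{f}$, so $a(\mathcal{C})=0$ says that every section of $\mathcal{C}$ is locally zero for the admissible-epic topology. Applying this to the class of $\textrm{id}_{F}\in\mathcal{C}(F)$ yields an admissible epic $g\colon F'\twoheadrightarrow F$ and a factorisation $g=f\circ h$ for some $h\colon F'\to E$. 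Hence $f\circ h$ is an admissible epic, and the Obscure Axiom (Proposition \ref{obscure}), whose validity is exactly what weak idempotent completeness guarantees, forces $f$ to be an admissible epic. I expect this converse, rather than the abelianness, to be where the weak idempotent completeness hypothesis is genuinely used, and the identification of ``locally zero at $\textrm{id}_{F}$'' with a factorisation through $f$ to be its most delicate point.
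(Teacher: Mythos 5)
Your proposal is correct and follows exactly the route the paper relies on: the paper's ``proof'' of Theorem \ref{QET} is simply a citation of Appendix A of \cite{Buehler}, which is precisely the Gabriel--Quillen construction you sketch (left exact presheaves as sheaves for the topology generated by admissible epics, exact sheafification, Yoneda embedding into the resulting Giraud subcategory). Your identification of the Obscure Axiom as the one place where weak idempotent completeness enters, and the ``locally zero at $\textrm{id}_{F}$'' argument producing the factorisation $g=f\circ h$ through an admissible epic, match the standard treatment; the only cosmetic discrepancy is variance, since the statement's ``left-exact functors $\mathpzc{E}\rightarrow\textbf{Ab}$'' should be read as the contravariant functors constituting your $\textbf{Lex}(\mathpzc{E})$.
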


\begin{proof}
See Appendix A in \cite{Buehler}.
\end{proof}

\begin{defn}
We call an embedding $I:\mathpzc{E}\rightarrow\mathpzc{A}$ of an exact category into an abelian category  a \textbf{left abelianisation} of $\mathpzc{E}$ if 
\begin{enumerate}
\item
$I$ is fully faithful.
\item
$I$ is exact.
\item
$I$ reflects exactness.
\item
The essential image of $I$ is closed under extensions.
\item
$I$ preserves all kernels which exist.
\item
If $f$ is a morphism in $\mathpzc{E}$, then $f$ is an admissible epic if and only if $I(f)$ is an epic.
\end{enumerate}
\end{defn}
In particular, Theorem \ref{QET} says that any weakly idempotent complete small exact category admits a left abelianisation.  In fact, if the final assumption is removed, then all small exact categories have abelianisations. There is an obvious dual notion of a \textbf{right abelianisation}.
It is clear that right abelianisations of small weakly idempotent complete exact categories also exist. Indeed, if $\mathpzc{E}^{op}\rightarrow\mathpzc{A}$ is a left-abelianisation of $\mathpzc{E}^{op}$, then $\mathpzc{E}\rightarrow\mathpzc{A}^{op}$ is a right-abelianisation of $\mathpzc{E}$.

\subsection{Generation of Exact Subcategories}

Let $\mathpzc{E}$ be a locally small additive category and $\mathpzc{A}$ a small full subcategory. By an argument similar to \cite{1006507} we can find a small full exact subcategory of $\mathpzc{E}$ containing $\mathpzc{A}$. In the rest of the section we assume that given a small subcategory $\mathpzc{E}$ of $\mathpzc{A}$ one can choose direct sums for finite collections of objects in $\mathpzc{E}$, and kernels and cokernels of morphisms in $\mathpzc{E}$. For example one might assume that such limits and colimits can be made functorial in the ambient category (e.g. if $\mathpzc{E}$ is locally presentable).
\begin{prop}\label{addgen}
There is a small full additive subcategory $\Sigma(\mathpzc{A};\mathpzc{E})$ of $\mathpzc{E}$ containing $\mathpzc{A}$.
\end{prop}

\begin{proof}
We let $\Sigma(\mathpzc{A};\mathpzc{E})$ be the full subcategory whose objects are the zero object and a choice of a direct sum in $\mathpzc{E}$ for each finite collection of objects of $\mathpzc{A}$. This is clearly additive, contains $\mathpzc{A}$, and is small.
\end{proof}

Now let $\mathpzc{E}$ be an exact category and $\mathpzc{A}$ a small full subcategory.

\begin{prop}
There is a small full exact subcategory $\textit{Ex}(\mathpzc{A},\mathpzc{E})$ of $\mathpzc{E}$ containing $\mathpzc{A}$.
\end{prop}

\begin{proof}
By Proposition \ref{addgen} we may assume that $\mathpzc{A}$ is additive. Let $\textit{Ex}^{1}(\mathpzc{A},\mathpzc{E})$ denote the full subcategory of $\mathpzc{E}$ consisting of a choice of kernels and cokernels of morphisms $f:A\rightarrow A'$ which are admissible in $\mathpzc{E}$. We set $\textit{Ex}^{n+1}(\mathpzc{A};\mathpzc{E})\defeq \textit{Ex}^{1}(\textit{Ex}^{n}(\mathpzc{A};\mathpzc{E});\mathpzc{E})$ We claim that
$$\textit{Ex}(\mathpzc{A};\mathpzc{E})\defeq\bigcup_{n=1}^{\infty}\textit{Ex}^{n}(\mathpzc{A};\mathpzc{E})$$
works. Since $\textit{Ex}^{1}(\mathpzc{A};\mathpzc{E})$ is small for $\mathpzc{A}$ small this would prove the claim.  $\bigcup_{n=1}^{\infty}\textit{Ex}^{n}(\mathpzc{A};\mathpzc{E})$ is clearly closed under taking kernels and cokernels of those morphisms which are admissible in $\mathpzc{E}$. By Proposition \ref{subexactkc} it is an exact subcategory.
\end{proof}

The point of this is that even if a category $\mathpzc{E}$ is not small, when working with small diagrams in $\mathpzc{E}$ we can pass to an abelianisation.

\section{Homological Properties of Exact Categories}
\subsection{Notions of Acyclicity}

In a general exact category, arbitrary kernels and cokernels may not exist. Therefore it is not in general possible even to write down candidates for the homology objects of a chain complex. Even if all kernels and cokernels do exist, then there are multiple candidates for the homology which are not isomorphic in general. For example, given a null sequence
\begin{displaymath}
\xymatrix{
\Gamma=E\ar[r]^{f} & F\ar[r]^{g} & G
}
\end{displaymath}
 i.e. $g\circ f=0$, one could consider both $\textrm{Coker}(\textrm{Im}(f)\rightarrow\textrm{Ker}(g))$ and $\textrm{Im}(\textrm{Ker}(g)\rightarrow\textrm{Coker}(f))$. In an abelian category these are isomorphic, but for general additive categories this is not the case. Despite these ambiguities, there are still various useful notions of acyclicity in exact categories, which we discuss below. First let us define several classes of morphisms.
 
 \begin{defn}
A morphism $f:E\rightarrow F$ in an exact category is said to be

\begin{enumerate}
\item
\textbf{weakly left admissible} if it has a kernel and the map
$$\textrm{Ker}(f)\rightarrow E$$
is admissible.
\item
 \textbf{weakly right admissible} if it has a cokernel, and the map
$$F\rightarrow\textrm{Coker}(f)$$
is admissible.
\item
\textbf{weakly admissible} if it is both  weakly left admissible and weakly right admissible.
\end{enumerate}
\end{defn}

The following characterisation of weakly admissible morphisms is immediate.

\begin{prop}\label{analysis}
A morphism $f:E\rightarrow F$ in an exact category $\mathpzc{E}$ is weakly admissible if and only if it admits a decomposition
\begin{displaymath}
\xymatrix{
 & E\ar@{->>}[dr]\ar[rrr]^{f} && & F\ar@{->>}[dr] &\\
 \textrm{Ker} (f)\ar@{>->}[ur] & & \textrm{Coim}(f)\ar[r]^{\hat{f}} & \textrm{Im}(f)\ar@{>->}[ur] & & \textrm{Coker} (f)
}
\end{displaymath}
where the sequences
$$\textrm{Ker}(f)\rightarrowtail E\twoheadrightarrow\textrm{Coim}(f)$$
and
$$\textrm{Im}(f)\rightarrowtail F\twoheadrightarrow\textrm{Coker}(f)$$
are short exact.
\end{prop}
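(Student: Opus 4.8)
The plan is to prove Proposition \ref{analysis} by unwinding the definitions of the constituent terms and then reading off the equivalence directly. First I would fix notation for the four canonical objects attached to a weakly admissible morphism $f:E\rightarrow F$. By hypothesis $f$ is weakly left admissible, so it has a kernel and the inclusion $\textrm{Ker}(f)\rightarrowtail E$ is an admissible monic; being an admissible monic it fits into a short exact sequence, and I define $\textrm{Coim}(f)$ to be its cokernel, giving $\textrm{Ker}(f)\rightarrowtail E\twoheadrightarrow\textrm{Coim}(f)$. Dually, since $f$ is weakly right admissible it has a cokernel and the projection $F\twoheadrightarrow\textrm{Coker}(f)$ is an admissible epic; I define $\textrm{Im}(f)$ to be its kernel, giving $\textrm{Im}(f)\rightarrowtail F\twoheadrightarrow\textrm{Coker}(f)$. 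These two sequences are exactly the ones appearing in the statement, so the content of the proposition is really about producing the maps $E\twoheadrightarrow\textrm{Coim}(f)$, $\hat{f}:\textrm{Coim}(f)\rightarrow\textrm{Im}(f)$, and $\textrm{Im}(f)\rightarrowtail F$ and checking the diagram commutes.

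The forward direction is where the real work lies. Assuming $f$ weakly admissible, I have the canonical admissible epic $e:E\twoheadrightarrow\textrm{Coim}(f)$ and admissible monic $m:\textrm{Im}(f)\rightarrowtail F$ from the two short exact sequences above. To factor $f$ I first observe that $f\circ(\textrm{Ker}(f)\rightarrowtail E)=0$ by definition of the kernel, so by the universal property of the cokernel $e$ there is a unique map $f'':\textrm{Coim}(f)\rightarrow F$ with $f''\circ e=f$. Next I must check that $f''$ factors through the admissible monic $m=\textrm{Ker}(F\twoheadrightarrow\textrm{Coker}(f))$; for this I show the composite $\textrm{Coker}(f)$-projection kills $f''$, i.e. $(F\twoheadrightarrow\textrm{Coker}(f))\circ f''=0$. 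Precomposing with the epic $e$ this reduces to $(F\twoheadrightarrow\textrm{Coker}(f))\circ f=0$, which holds by definition of the cokernel of $f$; since $e$ is epic I may cancel it to conclude $(F\twoheadrightarrow\textrm{Coker}(f))\circ f''=0$, and the universal property of the kernel $m$ then yields the unique $\hat{f}:\textrm{Coim}(f)\rightarrow\textrm{Im}(f)$ with $m\circ\hat{f}=f''$. Chaining these gives $m\circ\hat{f}\circ e=f$, which is precisely the asserted decomposition, and commutativity of the remaining triangles is immediate from the construction.

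The converse is essentially formal: given a decomposition of the stated form with the two sequences short exact, the admissible monic $\textrm{Ker}(f)\rightarrowtail E$ together with the fact that $\textrm{Coim}(f)$ is its cokernel exhibits $f$ as weakly left admissible, provided one verifies that the $\textrm{Ker}(f)$ appearing in the decomposition genuinely is a kernel of $f$ in $\mathpzc{E}$; this follows because $\hat{f}$ and the monic $\textrm{Im}(f)\rightarrowtail F$ are both monic (the latter being an admissible monic), so $f=m\circ\hat{f}\circ e$ has the same kernel as $e$, namely the given admissible monic. The dual argument handles weak right admissibility.

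I expect the main obstacle to be the step verifying that $\hat{f}$ is genuinely the induced map between $\textrm{Coim}$ and $\textrm{Im}$ and, in the converse, that the object labelled $\textrm{Ker}(f)$ in a given decomposition coincides with the actual categorical kernel of $f$. Both hinge on the cancellation properties of admissible monics and epics rather than on any deep structural input, so the proof is largely a diagram chase; the weak idempotent completeness and the axioms of the exact structure are needed only to guarantee that the relevant admissible monics and epics compose and that kernels of admissible monics behave as expected. Since every nontrivial existence claim comes from a universal property already built into the definitions of weak left and right admissibility, no appeal to the embedding theorem is required here, and the argument should be short.
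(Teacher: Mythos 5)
Your forward direction is exactly the intended argument: the two short exact sequences are literally the content of weak left and right admissibility, and the factorisation $f=m\circ\hat{f}\circ e$ drops out of the universal properties of the cokernel $e$ and the kernel $m$ precisely as you describe. The paper offers no written proof at all (it declares the proposition immediate), so your write-up is simply the natural unwinding of the definitions.

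The soft spot is in your converse, where you assert that $\hat{f}$ is monic and use this to identify the object labelled $\textrm{Ker}(f)$ with the categorical kernel of $f$. You give no justification for that monicity, and if the decomposition were handed to you abstractly --- middle map an arbitrary morphism from an arbitrary admissible quotient of $E$ to an arbitrary admissible subobject of $F$ --- it would simply be false: take the middle map to be zero with nonzero source, and the kernel of $f$ is all of $E$, not the displayed subobject. The statement is only sensible, and only ``immediate,'' if the labels are read literally: $\textrm{Ker}(f)$ and $\textrm{Coker}(f)$ are the kernel and cokernel of $f$ itself, with $\textrm{Coim}(f)$ and $\textrm{Im}(f)$ defined from them. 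Under that reading your worry evaporates: the first short exact sequence says precisely that $f$ has a kernel which is admissibly monic into $E$, and dually, so the converse needs no verification of anything. (Incidentally, the canonical $\hat{f}$ of a weakly admissible morphism \emph{is} monic, but proving it requires pulling a test map back along the admissible epic $e$ and using that $f$ kills the resulting lift; it does not follow from the reasons you give.) Delete the detour through the monicity of $\hat{f}$, read the labels as the canonical objects, and the converse is a one-line consequence of the definitions.
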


\begin{defn}
Let $f$ be a morphism in exact category. Then $f$ is said to be \textbf{admissible} if it is weakly admissible and the map $\textrm{Coim}(f)\rightarrow\textrm{Im}(f)$ is an isomorphism.
\end{defn}

\begin{rem}
Admissible epimorphisms and admissible monomorphisms are admisssible morphisms in the sense above.
\end{rem}
This is not how admissible morphisms are usually defined (see e.g. \cite{Buehler}). However the notions are equivalent.

\begin{prop}\label{adstrict}
Let $f:E\rightarrow F$ be a morphism in an exact category $\mathpzc{E}$. Then the following are equivalent.
\begin{enumerate}
\item
 $f$ is admissible.
 \item
$f$ admits a decomposition
$$E\twoheadrightarrow I\rightarrowtail F$$
\item
There is a commutative diagram
\begin{displaymath}
\xymatrix{
 & E\ar@{->>}[dr]\ar[rr]^{f} & & F\ar@{->>}[dr] &\\
 \textrm{Ker} f\ar@{>->}[ur] & & I\ar@{>->}[ur] & & \textrm{Coker} f
}
\end{displaymath}
where the sequences
$$\textrm{Ker} f\rightarrowtail E\twoheadrightarrow I$$
and
$$I\rightarrowtail F\twoheadrightarrow\textrm{Coker}(f)$$
are short exact.
\end{enumerate}
\end{prop}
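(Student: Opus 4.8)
The plan is to establish the cycle $(1) \Rightarrow (2) \Rightarrow (3) \Rightarrow (1)$, using Proposition \ref{analysis} as the bridge between the intrinsic notion of admissibility and its factorisation descriptions, and repeatedly exploiting the elementary fact that in an exact category admissible monics are monomorphisms and admissible epics are epimorphisms.

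For $(1) \Rightarrow (2)$, I would start from the observation that an admissible morphism is in particular weakly admissible, so Proposition \ref{analysis} supplies the decomposition together with the short exact sequences $\textrm{Ker}(f) \rightarrowtail E \twoheadrightarrow \textrm{Coim}(f)$ and $\textrm{Im}(f) \rightarrowtail F \twoheadrightarrow \textrm{Coker}(f)$ and the canonical comparison $\hat{f} : \textrm{Coim}(f) \to \textrm{Im}(f)$. Admissibility says $\hat{f}$ is an isomorphism, so I set $I = \textrm{Im}(f)$ and compose the admissible epic $E \twoheadrightarrow \textrm{Coim}(f)$ with $\hat{f}$ to obtain an admissible epic $E \twoheadrightarrow I$ (a composite of an admissible epic with an isomorphism), followed by the admissible monic $\textrm{Im}(f) \rightarrowtail F$. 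The composite recovers $f$, giving the required decomposition.

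For $(2) \Rightarrow (3)$, suppose $f = j \circ p$ with $p : E \twoheadrightarrow I$ an admissible epic and $j : I \rightarrowtail F$ an admissible monic. Since $p$ is an epimorphism, a morphism $g$ satisfies $g \circ f = 0$ if and only if $g \circ j = 0$, so $\textrm{Coker}(f) = \textrm{Coker}(j)$; the sequence $I \rightarrowtail F \twoheadrightarrow \textrm{Coker}(f)$ defining the admissible monic $j$ is therefore short exact. Dually, since $j$ is a monomorphism, $\textrm{Ker}(f) = \textrm{Ker}(p)$, and the sequence $\textrm{Ker}(f) \rightarrowtail E \twoheadrightarrow I$ defining the admissible epic $p$ is short exact. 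Assembling these produces exactly the commutative diagram of $(3)$. For $(3) \Rightarrow (1)$, the two short exact rows immediately show that $\textrm{Ker}(f) \to E$ is an admissible monic and $F \to \textrm{Coker}(f)$ is an admissible epic, so $f$ is weakly admissible. It then remains to identify $I$ with both $\textrm{Coim}(f)$ and $\textrm{Im}(f)$: the sequence $\textrm{Ker}(f) \rightarrowtail E \twoheadrightarrow I$ realises $I$ as $\textrm{Coker}(\textrm{Ker}(f) \to E) = \textrm{Coim}(f)$, while $I \rightarrowtail F \twoheadrightarrow \textrm{Coker}(f)$ realises $I$ as $\textrm{Ker}(F \to \textrm{Coker}(f)) = \textrm{Im}(f)$, whence $\hat{f}$ is an isomorphism and $f$ is admissible.

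The main obstacle I anticipate is the bookkeeping in $(3) \Rightarrow (1)$: one has to check that the isomorphisms $I \cong \textrm{Coim}(f)$ and $I \cong \textrm{Im}(f)$ are mutually compatible, in the sense that their composite is precisely the canonical map $\hat{f}$ produced by Proposition \ref{analysis}, rather than merely some isomorphism. This is a diagram chase through the defining universal properties of $\textrm{Coim}(f)$ and $\textrm{Im}(f)$ and the factorisation $f = (I \rightarrowtail F) \circ (E \twoheadrightarrow I)$. Everything else reduces to the standard facts that admissible monics (resp. epics) are monic (resp. epic) and that cokernels (resp. kernels) are unchanged by precomposition with an epimorphism (resp. postcomposition with a monomorphism).
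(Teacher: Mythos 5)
Your proof is correct and follows essentially the same route as the paper's: both hinge on Proposition \ref{analysis} to pass between admissibility and the factorisation data, and on the observation that $\textrm{Ker}(f)$ and $\textrm{Coker}(f)$ coincide with the kernel of the epic factor and the cokernel of the monic factor. The only difference is organisational — you run the cycle $(1)\Rightarrow(2)\Rightarrow(3)\Rightarrow(1)$ where the paper proves $(1)\Leftrightarrow(3)$, $(3)\Rightarrow(2)$, and $(2)\Rightarrow(1)$ — and the compatibility check you flag at the end is indeed the routine uniqueness argument you describe.
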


\begin{proof}
$1$ and $3$ are clearly equivalent thanks to Proposition \ref{analysis}. Also $3\Rightarrow2$ trivially. Let us show that $2\Rightarrow 1$. Since $I\rightarrowtail F$ is an admissible monic, the kernel of $f$ exists, and coincides with the kernel of $E\twoheadrightarrow I$. Hence $\textrm{Ker}(f)\rightarrow E$ is an admissible monic and in particular $E\rightarrow I$ is a coimage of $f$. Dually, the cokernel of $f$ exists, it coincides with the cokernel of $G\rightarrowtail F$, and $I\rightarrowtail F$ is an image of $f$. 
\end{proof}

\begin{cor}\label{adbi}
A morphism $f:E\rightarrow F$ in an exact category is an isomorphism if and only if it is both an admissible epic and an admissible monic.
\end{cor}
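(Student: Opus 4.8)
The statement is a biconditional, and one direction is essentially immediate: if $f$ is an isomorphism then by axiom (1) of the exact structure (isomorphisms are both admissible monics and admissible epics) we are done. So the real content is the converse: assuming $f$ is simultaneously an admissible epic and an admissible monic, I want to conclude that $f$ is an isomorphism. The plan is to exploit the fact that being an admissible monic pins down a short exact sequence in which $f$ appears as a kernel, and then use admissible-epic-ness to force the relevant cokernel to vanish.

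\textbf{Main argument.} Since $f:E\rightarrow F$ is an admissible monic, there is a short exact sequence
$$0\rightarrow E\xrightarrow{f} F\xrightarrow{p} C\rightarrow 0$$
with $p=\textrm{Coker}(f)$ and $f=\textrm{Ker}(p)$. On the other hand $f$ is an admissible epic, hence in particular an epimorphism (it is a cokernel). The cokernel of an epimorphism is the zero object: any $g:F\rightarrow Z$ with $g\circ f=0$ satisfies $g\circ f = 0\circ f$, so $g=0$ because $f$ is epic, and therefore $F\rightarrow 0$ is the cokernel of $f$. By uniqueness of cokernels this gives $C=0$ and $p:F\rightarrow 0$. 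Consequently $f=\textrm{Ker}(F\rightarrow 0)$. But the identity $\textrm{id}_{F}:F\rightarrow F$ is a kernel of $F\rightarrow 0$, so by uniqueness of kernels up to unique isomorphism, $f$ is an isomorphism.

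\textbf{Remarks on alternatives and the obstacle.} One could instead run the dual argument: using that $f$ is an admissible epic it fits into a short exact sequence $0\rightarrow K\xrightarrow{i} E\xrightarrow{f} F\rightarrow 0$ with $i=\textrm{Ker}(f)$, and using that $f$ is also an admissible monic (hence a monomorphism) forces $K=0$, whence $f=\textrm{Coker}(0\rightarrow E)$ is an isomorphism. A third route is to feed the hypotheses into Proposition \ref{adstrict}: an admissible monic which is also an admissible epic is in particular admissible, so it factors as $E\twoheadrightarrow I\rightarrowtail F$ with $\hat{f}:\textrm{Coim}(f)\to\textrm{Im}(f)$ an isomorphism; the monic hypothesis collapses $\textrm{Coim}(f)\cong E$ and the epic hypothesis collapses $\textrm{Im}(f)\cong F$, making all three factors isomorphisms. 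I expect no genuine obstacle here; the only point requiring care is the claim that the cokernel of an epimorphism is zero, which must be argued from the universal property of the cokernel rather than assumed, but this is a short and purely formal verification.
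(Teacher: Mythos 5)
Your argument is correct, but it takes a different route from the paper's. The paper proves the nontrivial direction via the coimage/image factorization of Proposition \ref{analysis}: since $f$ is an admissible monic the map $E\rightarrow\textrm{Coim}(f)$ is an isomorphism, since it is an admissible epic the map $\textrm{Im}(f)\rightarrow F$ is an isomorphism, and since admissible monics are admissible morphisms the comparison $\textrm{Coim}(f)\rightarrow\textrm{Im}(f)$ is an isomorphism as well — this is precisely the third alternative you sketch in your closing remarks. Your main argument is more elementary: it never invokes the coimage/image decomposition, only the universal properties of kernel and cokernel. You observe that an admissible epic is a cokernel and hence an epimorphism, that the cokernel of an epimorphism is $F\rightarrow 0$, and that $f$, being the kernel of its cokernel, must therefore be isomorphic over $F$ to $\textrm{id}_{F}$. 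This buys you a proof that works verbatim from the bare exact-structure axioms and sidesteps the machinery of Propositions \ref{analysis} and \ref{adstrict}; the paper's version, by contrast, slots the corollary into the admissible-morphism framework it has just set up and reuses that framework immediately afterwards (e.g.\ in Proposition \ref{abeltest}). Both are complete; the one point you rightly flag — that the cokernel of an epimorphism is zero must be checked from the universal property — you do verify, so there is no gap.
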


\begin{proof}
Axiomatically an isomorphism is both an admissible monic and an admissible epic. Conversely, suppose $f$ is both an admissible monic and an admissible epic. Since it is an admissible monic the map $E\rightarrow\textrm{Coim}(f)$ is an isomorphism. Since it is an admissible epic the map $\textrm{Im}(f)\rightarrow E$ is an isomorphism. Since $f$ is admissible the map $\textrm{Coim}(f)\rightarrow\textrm{Im}(f)$ is an isomorphism. The claim now follows from the commutative diagram
\begin{displaymath}
\xymatrix{
E\ar[r]^{f}\ar[d]^{\sim} & F\\
\textrm{Coim}(f)\ar[r]^{\sim} & \textrm{Im} (f)\ar[u]^{\sim}
}
\end{displaymath} 
\end{proof}

We are now ready to introduce our various notions of acyclic sequences.

\begin{defn}
A null sequence
\begin{displaymath}
\xymatrix{
X\ar[r]^{f} & Y\ar[r]^{g} & Z
}
\end{displaymath}
is said to be 
\begin{enumerate}
\item
\textbf{weakly acyclic} if $f$ is weakly right admissible, $g$ has a kernel, and the natural map $\textrm{Im}(f)\rightarrow\textrm{Ker}(g)$ is an isomorphism.
\item
\textbf{weakly coacyclic} if $g$ is weakly left admissible, $f$ has a cokernel, and the natural map $\textrm{Coker}(f)\rightarrow\textrm{Coim}(g)$ is an isomorphism.
\item
\textbf{admissibly acyclic} if it is weakly acyclic and $f$ is admissible, 
\item
 \textbf{admissibly coacyclic} if it is weakly coacyclic and $g$ is admissible
 \item
 \textbf{admissible} if both $f$ and $g$ are admissible.
 \item
 \textbf{acyclic} if it is both admissibly acyclic and admissibly coacyclic. 
\end{enumerate}
\end{defn}

\begin{rem}\label{adacleftac}
If a null sequence
\begin{displaymath}
\xymatrix{
X\ar[r]^{f} & Y\ar[r]^{g} & Z
}
\end{displaymath}
is weakly acyclic then $g$ is automatically weakly left admissible.
\end{rem}

\begin{defn}
A complex
\begin{displaymath}
\xymatrix{
X_{n}\ar[r]^{f_{n}} & X_{n-1}\ar[r]^{f_{n-1}} & \ldots\ar[r] & X_{0}
}
\end{displaymath}
is said to be weakly acyclic/ weakly coacyclic/ admissibly acyclic/ admissibly coacyclic/ admissible/ acyclic if for each $1\le i\le n-1$ each sequence
\begin{displaymath}
\xymatrix{
X_{i+1}\ar[r]^{f_{i+1}} & X_{i}\ar[r]^{f_{i}} & X_{i-1}
}
\end{displaymath}
is weakly acyclic/ weakly coacyclic/ admissibly acyclic/ admissibly coacyclic/ admissible/ acyclic.
\end{defn}

Let us now set up some tools for determining whether a complex is acyclic. We can partially test acyclicity by passing to a left abelianisation.

\begin{prop}\label{abeltest}
Let $I:\mathpzc{E}\rightarrow\mathpzc{A}$ be a left abelianisation of $\mathpzc{E}$.
\begin{enumerate}
\item
 If
\begin{displaymath}
\xymatrix{
X_{n}\ar[r]^{f_{n}} & X_{n-1}\ar[r]^{f_{n-1}} & \ldots\ar[r] & X_{0}
}
\end{displaymath}
is admissibly acyclic in $\mathpzc{E}$ then
\begin{displaymath}
\xymatrix{
I(X_{n})\ar[r]^{I(f_{n})} & I(X_{n-1})\ar[r]^{I(f_{n-1})} & \ldots\ar[r] & I(X_{0})
}
\end{displaymath}
is exact in $\mathpzc{A}$.
\item
If $f_{i}$ is weakly admissible for $2\le i\le n$ and  
\begin{displaymath}
\xymatrix{
I(X_{n})\ar[r]^{I(f_{n})} & I(X_{n-1})\ar[r]^{I(f_{n-1})} & \ldots\ar[r] & I(X_{0})
}
\end{displaymath}
is exact, then 
\begin{displaymath}
\xymatrix{
X_{n}\ar[r]^{f_{n}} & X_{n-1}\ar[r]^{f_{n-1}} & \ldots\ar[r] & X_{0}
}
\end{displaymath}
is admissibly acyclic.
\item
If $f_{i}$ is weakly left admissible for $1\le i\le n-1$ and\begin{displaymath}
\xymatrix{
I(X_{n})\ar[r]^{I(f_{n})} & I(X_{n-1})\ar[r]^{I(f_{n-1})} & \ldots\ar[r] & I(X_{0})
}
\end{displaymath}
is exact in $\mathpzc{A}$, then
\begin{displaymath}
\xymatrix{
X_{n}\ar[r]^{f_{n}} & X_{n-1}\ar[r]^{f_{n-1}} & \ldots\ar[r] & X_{0}
}
\end{displaymath}
is admissibly acyclic.
\end{enumerate}
\end{prop}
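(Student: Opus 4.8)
The plan is to reduce each of the three statements to the three-term subsequences $X_{i+1}\xrightarrow{f_{i+1}}X_i\xrightarrow{f_i}X_{i-1}$ and to play off the defining features of a left abelianization $I\colon\mathpzc{E}\to\mathpzc{A}$: it is exact (so it sends admissible monics to monics, admissible epics to epics, and short exact sequences to short exact sequences), it preserves every kernel that exists, it is fully faithful (hence faithful and isomorphism-reflecting), and $f$ is an admissible epic in $\mathpzc{E}$ if and only if $I(f)$ is epic. The recurring subtlety is that $I$ need \emph{not} preserve cokernels, so for a weakly admissible $f$ the identification $I(\textrm{Im}(f))=\textrm{Im}(I(f))$ inside $I(F)$ is not automatic and must be earned. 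Throughout, $\textrm{Coim}$, $\textrm{Im}$ and the comparison $\hat f$ are as in Proposition \ref{analysis}.

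For part $(1)$ I would apply Proposition \ref{adstrict} to each admissible $f_{i+1}$ to obtain a factorization $X_{i+1}\twoheadrightarrow\textrm{Im}(f_{i+1})\rightarrowtail X_i$ into an admissible epic followed by an admissible monic, with $\textrm{Im}(f_{i+1})\cong\textrm{Ker}(f_i)$ compatibly over $X_i$. Exactness of $I$ turns this into an epic-monic factorization of $I(f_{i+1})$, so $\textrm{Im}(I(f_{i+1}))=I(\textrm{Im}(f_{i+1}))$ as subobjects of $I(X_i)$; kernel-preservation gives $I(\textrm{Ker}(f_i))=\textrm{Ker}(I(f_i))$. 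Transporting the isomorphism $\textrm{Im}(f_{i+1})\cong\textrm{Ker}(f_i)$ through $I$ then identifies $\textrm{Im}(I(f_{i+1}))$ with $\textrm{Ker}(I(f_i))$ inside $I(X_i)$, which is exactness at $I(X_i)$.

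The converse $(3)$ is the one that meshes cleanly with a left abelianization. Fix $1\le i\le n-1$; since $f_i$ is weakly left admissible, $k_i\colon\textrm{Ker}(f_i)\rightarrowtail X_i$ exists and is an admissible monic, and $f_if_{i+1}=0$ forces a unique factorization $f_{i+1}=k_i\circ\tilde f_{i+1}$ with $\tilde f_{i+1}\colon X_{i+1}\to\textrm{Ker}(f_i)$. Applying $I$ and using $I(k_i)=\textrm{Ker}(I(f_i))$, the image of $I(f_{i+1})$ is the image of $I(\tilde f_{i+1})$ regarded inside $\textrm{Ker}(I(f_i))$; the exactness hypothesis forces this to be all of $\textrm{Ker}(I(f_i))$, so $I(\tilde f_{i+1})$ is epic. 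As $I$ detects admissible epics, $\tilde f_{i+1}$ is an admissible epic, whence $f_{i+1}=k_i\circ\tilde f_{i+1}$ is a decomposition of the type in Proposition \ref{adstrict}$(2)$. Thus $f_{i+1}$ is admissible with $\textrm{Im}(f_{i+1})=\textrm{Ker}(f_i)$, giving admissible acyclicity at $X_i$. This uses only that $\textrm{Ker}(f_i)$ exists, so it covers every position uniformly, including $i=n-1$.

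I expect $(2)$ to be the main obstacle, precisely because a left abelianization is biased toward detecting epimorphisms, so the dual of the argument for $(3)$ is unavailable and each $f_{i+1}$ ($1\le i\le n-1$) is only weakly admissible. The crux is to prove $I(\textrm{Im}(f_{i+1}))=\textrm{Im}(I(f_{i+1}))$, which I would handle by a squeeze inside $I(X_i)$: one always has $\textrm{Im}(I(f_{i+1}))\subseteq I(\textrm{Im}(f_{i+1}))$, while factoring $f_i=g\circ c$ through the admissible epic $c\colon X_i\twoheadrightarrow\textrm{Coker}(f_{i+1})$ (available since $f_if_{i+1}=0$ and $f_{i+1}$ is weakly right admissible) together with $\textrm{Ker}(I(c))=I(\textrm{Im}(f_{i+1}))$ gives $I(\textrm{Im}(f_{i+1}))\subseteq\textrm{Ker}(I(f_i))$; the exactness hypothesis $\textrm{Ker}(I(f_i))=\textrm{Im}(I(f_{i+1}))$ then collapses all three to one subobject. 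Combined with $I(\textrm{Coim}(f_{i+1}))=\textrm{Coim}(I(f_{i+1}))$ (again from exactness and kernel-preservation), the map $I(\hat f_{i+1})$ becomes the canonical coimage-image isomorphism of $\mathpzc{A}$, so isomorphism-reflection makes $\hat f_{i+1}$ an isomorphism and $f_{i+1}$ admissible; for $2\le i\le n-1$ the identity $I(\textrm{Im}(f_{i+1}))=I(\textrm{Ker}(f_i))$ and faithfulness yield $\textrm{Im}(f_{i+1})\cong\textrm{Ker}(f_i)$. The genuinely separate point is $i=1$, where $f_1$ is not assumed weakly admissible: there I would show directly that $\textrm{Im}(f_2)\rightarrowtail X_1$ is a kernel of $f_1$, by factoring $f_1=\bar f_1\circ c'$ through the cokernel $c'$ of $\textrm{Im}(f_2)\rightarrowtail X_1$, checking that $I(\bar f_1)$ is monic in $\mathpzc{A}$, and invoking faithfulness of $I$ to deduce $c'g=0$ for every $g$ with $f_1g=0$.
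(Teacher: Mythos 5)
Your proofs of parts (1) and (3) coincide with the paper's: reduce to the three-term subsequences, push the epic--monic factorization of Proposition \ref{adstrict} through $I$ for (1), and for (3) factor $f_{i+1}$ through the admissible monic $\textrm{Ker}(f_i)\rightarrowtail X_i$ and use that a left abelianization detects admissible epics. Part (2) is also correct, but you route it differently from the paper. The paper's argument works with the coimage: since $f$ is weakly admissible, $I(\textrm{Coim}(f))\cong\textrm{Coim}(I(f))\cong\textrm{Im}(I(f))\cong\textrm{Ker}(I(g))$, and full faithfulness then shows in one stroke that $\textrm{Coim}(f)\rightarrow Y$ \emph{is} a kernel of $g$ --- which simultaneously settles the existence of $\textrm{Ker}(g)$ (your separate $i=1$ case) and produces an isomorphism $\textrm{Coim}(f)\rightarrow\textrm{Ker}(g)$ factoring as $\textrm{Coim}(f)\rightarrow\textrm{Im}(f)\rightarrowtail\textrm{Ker}(g)$; the Obscure Axiom (Proposition \ref{obscure}) and Corollary \ref{adbi} then force both factors to be isomorphisms, so $f$ is admissible by Proposition \ref{adstrict}. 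You instead pin down $I(\textrm{Im}(f_{i+1}))=\textrm{Im}(I(f_{i+1}))$ by the subobject squeeze $\textrm{Im}(I(f_{i+1}))\subseteq I(\textrm{Im}(f_{i+1}))\subseteq\textrm{Ker}(I(f_i))$ and apply isomorphism-reflection to $I(\hat f_{i+1})$, which is valid and arguably makes more visible exactly where the exactness hypothesis at $I(X_i)$ is consumed; the price is the extra bookkeeping at $i=1$, where you must verify by hand that $\textrm{Im}(f_2)\rightarrowtail X_1$ is a kernel of $f_1$ (your monicity argument for $I(\bar f_1)$ does close this correctly, since $\textrm{Ker}(I(c'))=I(\textrm{Im}(f_2))=\textrm{Im}(I(f_2))=\textrm{Ker}(I(f_1))$). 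So the two arguments trade economy for transparency, but rest on the same features of the embedding.
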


\begin{proof}
Clearly it is sufficient to prove the claims for sequences
\begin{displaymath}
\xymatrix{
X\ar[r]^{f} & Y\ar[r]^{g} & Z
}
\end{displaymath}
\begin{enumerate}
\item
Suppose the above sequence is admissibly acyclic. Since $f$ is admissible $I$ preserves $\textrm{Im}(f)$. By assumption $I$ preserves all kernels. Hence
\begin{displaymath}
\xymatrix{
I(X)\ar[r]^{I(f)} & I(Y)\ar[r]^{I(g)} & I(Z)
}
\end{displaymath}
is exact.
\item
Suppose now that
\begin{displaymath}
\xymatrix{
I(X)\ar[r]^{I(f)} & I(Y)\ar[r]^{I(g)} & I(Z)
}
\end{displaymath}
is exact and that  $f$ is weakly admissible. Since $I$ preserves all kernels, and cokernels of admissible morphisms, we have $I(\textrm{Coim} (f))\cong\textrm{Coim} I(f)$. Now 
$$\textrm{Coim} I(f)\cong\textrm{Im}I(f)\cong\textrm{Ker} I(g)$$
Since $I$ is fully faithful, $\textrm{Coim}(f)$ is a kernel of $g$. Finally, note that we have a factorisation of $\textrm{Coim}(f)\rightarrow\textrm{Ker}(g)$
$$\textrm{Coim}(f)\rightarrow\textrm{Im}(f)\rightarrowtail \textrm{Ker} (g)$$
By Proposition \ref{obscure} $\textrm{Im}(f)\rightarrowtail \textrm{Ker} (g)$ is also an (admissible) epic. By Corollary \ref{adbi} it is an isomorphism. Therefore $\textrm{Coim}(f)\rightarrow\textrm{Im}(f)$ is as well. By Proposition \ref{adstrict} we are done.
\item
We can factor $f$ as 
\begin{displaymath}
\xymatrix{
X\ar[r]^{f'} &\textrm{Ker}(g)\ar[r] &  Y
}
\end{displaymath}
with $\textrm{Ker}(g)\rightarrow  Y$ an admissible monic. We need to show $f'$ is an admissible epic. Since $I$ preserves kernels, it sends the diagram above to
\begin{displaymath}
\xymatrix{
I(X)\ar[r]^{I(f')} & \textrm{Ker}I(g)\ar[r] & I(Y)
}
\end{displaymath}
Since 
\begin{displaymath}
\xymatrix{
I(X)\ar[r]^{I(f)} & I(Y)\ar[r]^{I(g)} & I(Z)
}
\end{displaymath}
is exact, $I(f')$ is an epic. Thus $f'$ is an admissible epic, and we are done.
\end{enumerate}
\end{proof}

Let us give an immediate application.
\begin{prop}
Let $f:X\rightarrow Y$ and $g:Y\rightarrow Z$ be admissible morphisms in a weakly idempotent complete exact category $\mathpzc{E}$. Then the sequence
$$0\rightarrow Ker(f)\rightarrow Ker(g\circ f)\rightarrow Ker(g)$$
is admissibly acyclic. If $f$ is an admissible epimorphism then
$$0\rightarrow Ker(f)\rightarrow Ker(g\circ f)\rightarrow Ker(g)\rightarrow0$$
is exact.
\end{prop}
\begin{proof}
For the first claim it suffices to show that $Ker(f)\rightarrow Ker(g\circ f)$ is weakly left admissible. Then we may pass to a left abelianisation. The composition $Ker(f)\rightarrow Ker(g\circ f)\rightarrow X$ is an admissible monomorphism since $f$ is admissible. Thus $Ker(f)\rightarrow Ker(g\circ f)$ is an admissible monomorphism. The second claim follows since a left abelianisation preserves admissible epimorphisms.
\end{proof}

Although the functor $I$ reflects short exact sequences, it need not in general reflect acyclicity of unbounded complexes. However it does for a certain nice class of complexes.
\begin{defn}
A complex $X_{\bullet}$ in an exact category is said to be \textbf{good} if for each $n$ there is $m<n$ such that $d_{m}$ has a kernel. $X_{\bullet}$ is said to be \textbf{cogood} if for each $n$ there is $m>n$ such that $d_{m}$ has a cokernel.
\end{defn}
\begin{example}
Bounded below complexes are good.
\end{example}

We will  frequently use the following trick for good complexes.
\begin{prop}\label{goodtrick}
Let $X_{\bullet}$ be a good complex in an exact category. Suppose that for any $n$ such that $d^{X}_{n}$ has a kernel, the induced map
$$d'_{n+1}:X_{n+1}\rightarrow Z_{n}X$$
is an admissible epic. Then $X_{\bullet}$ is acyclic.
\end{prop}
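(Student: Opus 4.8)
The plan is to show that every differential $d_n$ has a kernel, to realise the factorisation of $d_{n+1}$ through $Z_n X=\textrm{Ker}(d_n)$ as an admissible epic followed by an admissible monic, and then to read off acyclicity of each triple $X_{n+1}\to X_n\to X_{n-1}$ from this factorisation. First I would propagate the existence of kernels one degree at a time. The point is that whenever $d_n$ has a kernel, the hypothesis makes the induced map $d'_{n+1}\colon X_{n+1}\to Z_n X$ an admissible epic, which in particular has a kernel; since $d_{n+1}=\iota_n\circ d'_{n+1}$ with $\iota_n\colon Z_n X\to X_n$ the monic kernel inclusion, we get $\textrm{Ker}(d_{n+1})=\textrm{Ker}(d'_{n+1})$, so $d_{n+1}$ has a kernel too. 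Feeding this into goodness shows that every $d_n$ has a kernel: for any index $j$, goodness applied at $j+1$ yields some $m\le j$ with $d_m$ having a kernel, and finitely many propagation steps carry this up to $d_j$.

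Next I would promote each kernel inclusion $\iota_n\colon Z_n X\rightarrowtail X_n$ to an admissible monic. Since $d_{n-1}$ has a kernel, the hypothesis makes $d'_n\colon X_n\to Z_{n-1}X$ an admissible epic, so its kernel $\textrm{Ker}(d'_n)\rightarrowtail X_n$ is an admissible monic; and because $d_n=\iota_{n-1}\circ d'_n$ with $\iota_{n-1}$ monic, we have $Z_n X=\textrm{Ker}(d_n)=\textrm{Ker}(d'_n)$, so $\iota_n$ itself is an admissible monic. Therefore $d_{n+1}$ factors as the admissible epic $d'_{n+1}$ followed by the admissible monic $\iota_n$, and Proposition \ref{adstrict} shows that $d_{n+1}$ is admissible with image $Z_n X$, for every $n$.

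Finally I would check that each triple $X_{n+1}\to X_n\to X_{n-1}$ is acyclic, i.e. both admissibly acyclic and admissibly coacyclic. For the former, $d_{n+1}$ is admissible and the natural comparison $\textrm{Im}(d_{n+1})\to\textrm{Ker}(d_n)$ is an isomorphism, since both objects are $Z_n X$ sitting inside $X_n$ via the same $\iota_n$. For the latter, $d_n$ is admissible and the natural comparison $\textrm{Coker}(d_{n+1})\to\textrm{Coim}(d_n)$ is an isomorphism: using that $d'_{n+1}$ is epic, $\textrm{Coker}(d_{n+1})=\textrm{Coker}(\iota_n)=X_n/Z_n X$, which also equals $\textrm{Coim}(d_n)=\textrm{Coker}(Z_n X\rightarrowtail X_n)$. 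As this holds for all $n$, the complex $X_\bullet$ is acyclic.

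The main obstacle is the step promoting the kernel inclusions $Z_n X\rightarrowtail X_n$ to admissible monics: a priori $Z_n X$ is only a kernel, and admissibility is precisely what makes Proposition \ref{adstrict} applicable and the coimage/cokernel identifications above valid. The trick is to extract this not from the hypothesis at degree $n$ but from its application one degree lower, where $d'_n$ being an admissible epic automatically makes its kernel an admissible subobject of $X_n$. Once this is in place, the remaining verifications are a routine unwinding of the definitions of the various acyclicity notions.
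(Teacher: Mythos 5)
Your proof is correct and follows essentially the same route as the paper's: both arguments hinge on the observation that $Z_{n}X\rightarrowtail X_{n}$ becomes an admissible monic because it is the kernel of the admissible epic $d'_{n}\colon X_{n}\twoheadrightarrow Z_{n-1}X$ supplied by the hypothesis one degree lower, after which $d_{n+1}$ admits the decomposition required by Proposition \ref{adstrict}. The paper compresses the bookkeeping into ``an easy induction,'' whereas you spell out the propagation of kernels via goodness and the final verification of both acyclicity conditions, but the underlying idea is identical.
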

\begin{proof}
 Suppose $d_{m}$ has a kernel. By assumption $d_{m+1}$ factors as
$$X_{m+1}\twoheadrightarrow Z_{m}X\rightarrow X_{m}$$
A priori $Z_{m}X\rightarrow X_{m}$ is not  admissible. However it is a monomorphism. Therefore, since $X_{m+1}\twoheadrightarrow Z_{m}X$ is admissible its kernel exists and it coincides with the kernel $Z_{m+1}X$ of $d_{m+1}$. Since $X_{m+1}\twoheadrightarrow Z_{m}X$ is admissible it is in particular weakly left admissible. Therefore $d_{m+1}$ is also weakly left admissible. Now consider $d_{m+2}$. By assumption it factors as 
$$d_{m+2}:X_{m+2}\twoheadrightarrow Z_{m+1}X\rightarrowtail X_{m+1}$$
Thus $d_{m+2}$ is an admissible morphism whose image is $Z_{m+1}X$. An easy induction then shows that $X_{\bullet}$ is acyclic.
\end{proof}
Since $I$ preserves kernels and reflects admissible epimorphisms, Proposition \ref{goodtrick} gives the following.
\begin{cor}\label{boundedtest}
Let $(X_{\bullet},d_{\bullet})$ be a complex in $\mathpzc{E}$. Let $I:\mathpzc{E}\rightarrow\mathcal{A}$ be a left abelianisation of $\mathpzc{E}$. Suppose $X_{\bullet}$ is good. Then $X_{\bullet}$ is acyclic if and only if $I(X_{\bullet})$ is.
\end{cor}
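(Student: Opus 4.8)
The plan is to prove the two implications separately, using Proposition \ref{abeltest} for the forward direction and Proposition \ref{goodtrick} for the converse.

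For the forward direction I would suppose $X_{\bullet}$ is acyclic. By definition this means every three-term segment $X_{n+1}\rightarrow X_{n}\rightarrow X_{n-1}$ is acyclic, hence in particular admissibly acyclic. Part 1 of Proposition \ref{abeltest} then says that applying $I$ to any such segment yields an exact sequence in $\mathpzc{A}$. Since exactness of a complex is tested segment-by-segment, this gives that $I(X_{\bullet})$ is exact, with no use of goodness at all.

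For the converse I would suppose $I(X_{\bullet})$ is exact and aim to verify the hypothesis of Proposition \ref{goodtrick}: for every $n$ such that $d_{n}^{X}$ has a kernel, the induced map $d'_{n+1}:X_{n+1}\rightarrow Z_{n}X$ is an admissible epic, where $Z_{n}X=\textrm{Ker}(d_{n}^{X})$. Fixing such an $n$, I would use that a left abelianisation preserves all kernels which exist, so $I(Z_{n}X)=\textrm{Ker}(I(d_{n}^{X}))=Z_{n}I(X)$ and $I(d'_{n+1})$ is precisely the induced map $I(X_{n+1})\rightarrow Z_{n}I(X)$. Exactness of $I(X_{\bullet})$ at degree $n$ says $\textrm{Im}(I(d_{n+1}^{X}))=Z_{n}I(X)$, which is exactly the statement that this induced map is an epimorphism in $\mathpzc{A}$. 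Since a left abelianisation reflects admissible epics (property 6 of the definition), $d'_{n+1}$ is an admissible epic in $\mathpzc{E}$. As $X_{\bullet}$ is good, Proposition \ref{goodtrick} now applies directly and yields that $X_{\bullet}$ is acyclic.

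I expect the only delicate point to be the identification carried out in the converse: one must confirm that the corestriction of $I(d_{n+1}^{X})$ onto the kernel $Z_{n}I(X)$ really is $I$ applied to the $\mathpzc{E}$-level corestriction $d'_{n+1}$. This is where preservation of kernels is essential, since it guarantees that $I(Z_{n}X)$ is genuinely the categorical kernel of $I(d_{n}^{X})$, so the two factorisations agree and exactness in $\mathpzc{A}$ translates cleanly into the epimorphism condition required by Proposition \ref{goodtrick}. Everything else is a formal application of the cited results, with goodness serving only to supply the hypothesis needed to run the induction internal to Proposition \ref{goodtrick}.
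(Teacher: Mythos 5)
Your proposal is correct and follows essentially the same route as the paper: the forward direction is the immediate application of Proposition \ref{abeltest}(1), and the converse is exactly the paper's argument, namely using preservation of kernels to identify $I(d'_{n+1})$ with the corestriction onto $Z_{n}I(X)$, reflection of admissible epics to conclude $d'_{n+1}$ is an admissible epic, and then Proposition \ref{goodtrick}. The delicate point you flag is the right one, and your treatment of it matches the paper's (implicit) reasoning.
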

\begin{proof}
Suppose $I(X_{\bullet})$ is acyclic, and $d_{n}^{X}$ has a kernel $Z_{n}X$. By assumption $I(d'_{n+1}):I(X_{n+1})\rightarrow Z_{n}I(X)=I(Z_{n}X)$ is an epimorphism. Thus $d'_{n+1}:X_{n+1}\rightarrow Z_{n}X$ is an admissible epimorphism.
\end{proof}
This can be used to prove the following useful result about truncations functors
\begin{prop}
Let $f:X\rightarrow Y$ be a weak equivalence in $Ch(\mathpzc{E})$. If $d^{Y}_{n}$, $d^{X}_{n}$, and $d^{X}_{n-1}$  have kernels, then
$$\tau_{\ge n}f:\tau_{\ge n}X\rightarrow\tau_{\ge n}Y$$
is a weak equivalence.
\end{prop}
It is a nice exercise to prove this result without appealing to abelianisations.

\subsection{Exactness of Functors}
Before continuing, let us introduce some further useful notions of exactness for functors, following \cite{qacs} Section 1.1.5. Part 1) of Proposition \ref{abeltest} above says that the functor $I$ is admissibly exact. This is a stronger notion than exactness. It will be useful in later contexts, so we make a definition.
\begin{defn}\label{strongexact} 
A functor $F:\mathpzc{E}\rightarrow\mathpzc{F}$ between exact categories is said to be \textbf{admissibly (co)exact} if for any admissibly (co)acyclic sequence
$$X\rightarrow Y\rightarrow Z$$
in $\mathpzc{E}$, the sequence
$$F(X)\rightarrow F(Y)\rightarrow F(Z)$$
is admissibly (co)acyclic. A functor which is both admissibly exact and admissibly coexact is said to be \textbf{strongly exact}.
\end{defn}

\begin{prop}\label{admexact}
Let $F:\mathpzc{E}\rightarrow\mathpzc{F}$ be an additive functor between exact categories with kernels. Suppose that $F$ commutes with cokernels and preserves admissible epimorphisms. Then $F$ is admissibly coexact. Dually if $F:\mathpzc{E}\rightarrow\mathpzc{D}$ is an additive functor between categories with cokernels,  which commutes with kernels and preserves admissible monomorphisms then $F$ is admissibly exact. 
\end{prop}
\begin{proof} 
The fact that $F$ preserves cokernels implies that if $p:Y\rightarrow Z$ is a cokernel of $i:X\rightarrow Y$, then $F(p):F(Y)\rightarrow F(Z)$ is a cokernel of $F(i):F(X)\rightarrow F(Y)$. Therefore $Im(F(i))=Ker(F(p))$.  Now $F(Coim(p))\cong F(Coker(i))\cong Coker(F(i))$. Therefore it remains to show that $F(Coim(p))\cong Coim(F(p))$. But both $p$ and $F(p)$ are admissible epimorphisms, so $F(Coim(p)) \cong F(Z)\cong Coim (F(p))$
\end{proof}

\begin{example}
It is easy to show that taking finite direct sums is a strongly exact functor. Indeed being both a limit and a colimit, this functor commutes with all limits and colimits. To see that the functor is exact, just use
Corollary \ref{plusexact}
\end{example}
We also have the following notion.
\begin{defn}
A covariant functor $F:\mathpzc{E}\rightarrow\mathpzc{F}$ between exact categories is said to be \textbf{right exact} if for any short exact sequence
$$0\rightarrow X\rightarrow Y\rightarrow Z\rightarrow 0$$
in $\mathpzc{E}$, the sequence
$$F(X)\rightarrow F(Y)\rightarrow F(Z)\rightarrow 0$$
is admissibly coacyclic in $\mathpzc{F}$.\newline
\\
A contravariant functor $F:\mathpzc{E}\rightarrow\mathpzc{F}$ between exact categories is said to be \textbf{right exact} if for any short exact sequence
$$0\rightarrow X\rightarrow Y\rightarrow Z\rightarrow 0$$
in $\mathpzc{E}$, the sequence
$$F(Z)\rightarrow F(Y)\rightarrow F(X)\rightarrow 0$$
is admissibly coacyclic in $\mathpzc{F}$. Dually one defines left exactness.
\end{defn}
In particular Proposition \ref{admexact} says that a right exact functor between exact categories with kernels which commutes with cokernels is admissibly coexact.

\subsection{Homotopies and Quasi-Isomorphisms}

\subsubsection{Homotopies}
Let us now discuss homological properties of maps between complexes. 

\begin{defn}\label{homequiv}
A \textbf{homotopy} between morphisms of chain complexes $f_{\bullet},g_{\bullet}:K_{\bullet}\rightarrow L_{\bullet}$ is a collection of morphisms $D_{i}:A_{i}\rightarrow B_{i+1}$ such that
$$f_{i}-g_{i}=D_{i-1}\circ d_{i}^{K}+d_{i+1}^{L}\circ D_{i}$$
We then say $f_{\bullet}\sim g_{\bullet}$.
\end{defn}

\begin{defn}
Two complexes $K_{\bullet}$ and $L_{\bullet}$ are said to be \textbf{homotopy equivalent} if there are maps $g:K_{\bullet}\rightarrow L_{\bullet}$ and $f:L_{\bullet}\rightarrow K_{\bullet}$ such that $f\circ g\sim id_{K_{\bullet}}$ and $g\circ f\sim id_{L_{\bullet}}$.
\end{defn}

If 
\begin{displaymath}
\xymatrix{
A\ar[r]^{p}\ar[d] & B\ar[d]^{\alpha}\ar[r]^{q} & C\ar[d]\\
X\ar[r]^{f} & Y\ar[r]^{g} & Z
}
\end{displaymath}
is a diagram with the top and bottom row being null sequences, we will also say that it is homotopic to zero if there are two maps $D:B\rightarrow X$ and $D':C\rightarrow Y$ such $\alpha=f\circ D-D'\circ q$. 

We can use homotopies in an exact category to test for acyclicity.

\begin{prop}\label{homotopygood}
Let $\mathpzc{E}$ be a weakly idempotent complete  exact category, and let
\begin{displaymath}
\xymatrix{
X\ar[r]^{f} & Y\ar[r]^{g} & Z
}
\end{displaymath}
be a null sequence. Suppose that $g$ has a kernel. Then the induced map $f':X\rightarrow\textrm{Ker}(g)$ is an admissible epimorphism if and only if there is a diagram
\begin{displaymath}
\xymatrix{
A\ar[r]^{p}\ar[d] & B\ar[d]^{\alpha}\ar[r]^{q} & C\ar[d]\\
X\ar[r]^{f} & Y\ar[r]^{g} & Z
}
\end{displaymath}
which is homotopic to zero, and such that the induced map $\widetilde{\alpha}:\textrm{Ker}(q)\rightarrow\textrm{Ker}(g)$ is an admissible epic.
\end{prop}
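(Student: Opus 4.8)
The plan is to reduce both implications to a single factorisation identity, namely $\tilde\alpha = f'\circ(D\circ\iota)$, together with the Obscure Axiom (Proposition \ref{obscure}). First I would fix notation. Since $g\circ f=0$ and $g$ has a kernel $j:\textrm{Ker}(g)\rightarrowtail Y$, the map $f$ factors uniquely as $f=j\circ f'$, where $f'$ is precisely the induced map of the statement. Let $\iota:\textrm{Ker}(q)\rightarrow B$ denote the kernel of $q$ (its existence is presupposed by the statement, since $\tilde\alpha$ is asserted to be defined). Given any diagram of the stated form which is homotopic to zero, so that $\alpha=f\circ D-D'\circ q$ for some $D:B\rightarrow X$ and $D':C\rightarrow Y$, I would first check that $\alpha$ carries $\textrm{Ker}(q)$ into $\textrm{Ker}(g)$: restricting along $\iota$ and using $q\circ\iota=0$ gives $\alpha\circ\iota=f\circ D\circ\iota$, whence $g\circ\alpha\circ\iota=g\circ f\circ D\circ\iota=0$, so that $\tilde\alpha$ is well defined. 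The same computation, combined with $f=j\circ f'$ and the fact that $j$ is monic, yields the key identity $\tilde\alpha=f'\circ(D\circ\iota)$.

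For the implication that the existence of such a diagram forces $f'$ to be an admissible epic, I would apply Proposition \ref{obscure}(2) to the composite $f'\circ(D\circ\iota)=\tilde\alpha$ with $i=f'$ and $j=D\circ\iota$: since $\tilde\alpha$ is an admissible epic and $\mathpzc{E}$ is weakly idempotent complete, Proposition \ref{obscure}(2) gives that $f'$ is itself an admissible epic. Conversely, assuming $f'$ is an admissible epic, I would exhibit the required diagram directly by taking the degenerate top row $0\rightarrow X\rightarrow 0$ (so $A=C=0$, $B=X$, and $p,q$ the zero maps), with middle vertical map $\alpha=f$ and homotopy data $D=\textrm{id}_{X}$, $D'=0$. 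Then $\alpha=f\circ\textrm{id}_{X}-0\circ q=f$ exhibits the diagram as homotopic to zero, while $\textrm{Ker}(q)=X$ with $\iota=\textrm{id}_{X}$, so the identity above gives $\tilde\alpha=f'\circ(\textrm{id}_{X}\circ\textrm{id}_{X})=f'$, which is an admissible epic by hypothesis.

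The only real content is the bookkeeping that produces the factorisation $\tilde\alpha=f'\circ(D\circ\iota)$ and the verification that $\alpha\circ\iota$ genuinely lands in $\textrm{Ker}(g)$; once this identity is in hand, both directions are immediate, one from the Obscure Axiom and the other from the degenerate diagram. I therefore expect no genuine obstacle, the one point requiring care being that the use of Proposition \ref{obscure} is licensed precisely by the standing assumption that $\mathpzc{E}$ is weakly idempotent complete.
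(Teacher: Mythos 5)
Your proposal is correct and follows essentially the same route as the paper: the degenerate diagram with top row $0\rightarrow X\rightarrow 0$ and homotopy $D=\textrm{id}_{X}$, $D'=0$ for the forward direction, and the factorisation $\tilde{\alpha}=f'\circ(D\circ\iota)$ combined with Proposition \ref{obscure}(2) for the converse. Your write-up is slightly more explicit than the paper's in verifying that $\alpha$ actually restricts to a map $\textrm{Ker}(q)\rightarrow\textrm{Ker}(g)$, but the argument is the same.
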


\begin{proof}
Suppose that $g$ has a kernel and that the induced map $f':X\rightarrow\textrm{Ker}(g)$ is an admissible epimorphism. Consider the diagram
\begin{displaymath}
\xymatrix{
0\ar[r]\ar[d] & X\ar[d]^{f}\ar[r] & 0\ar[d]\\
X\ar[r]^{f} & Y\ar[r]^{g} & Z
}
\end{displaymath}
By assumption the induced map $\widetilde{f}:X\rightarrow\textrm{Ker}(g)$ is an admissible epic. Moreover the diagram is clearly homotopic to $0$ via the maps $D=id:X\rightarrow X$ and $D'=0:0\rightarrow Y$. Conversely suppose we have a diagram
\begin{displaymath}
\xymatrix{
A\ar[r]^{p}\ar[d] & B\ar[d]^{\alpha}\ar[r]^{q}\ar@{-->}[dl]_{D} & C\ar[d]\ar@{-->}[dl]_{D'}\\
X\ar[r]^{f} & Y\ar[r]^{g} & Z
}
\end{displaymath}
such that $g$ has a kernel, $\alpha=f\circ D-D'\circ q$, and $\widetilde{\alpha}$ is an admissible epic. We have the factorisation of $f$
\begin{displaymath}
\xymatrix{
X\ar[r]^{\widetilde{f}} & \textrm{Ker}(g)\ar[r] & Y
}
\end{displaymath}
Moreover, $\widetilde{\alpha}=\widetilde{f}\circ D|_{\textrm{Ker}(q)}$. By Proposition \ref{obscure} $\widetilde{f}$ is an admissible epic. \end{proof}

\begin{cor}\label{homotopy}
Let $\mathpzc{E}$ be a weakly idempotent complete exact category, and let
\begin{displaymath}
\xymatrix{
X\ar[r]^{f} & Y\ar[r]^{g} & Z
}
\end{displaymath}
be a null sequence. The sequence is admissibly acyclic if and only if $g$ is weakly left admissible and there is a diagram\begin{displaymath}
\xymatrix{
A\ar[r]^{p}\ar[d] & B\ar[d]^{\alpha}\ar[r]^{q} & C\ar[d]\\
X\ar[r]^{f} & Y\ar[r]^{g} & Z
}
\end{displaymath}
which is homotopic to zero, and such that the induced map $\widetilde{\alpha}:\textrm{Ker}(q)\rightarrow\textrm{Ker}(g)$ is an admissible epic.
\end{cor}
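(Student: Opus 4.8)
The plan is to reduce the statement to Proposition \ref{homotopygood} by first establishing an intermediate reformulation: that admissible acyclicity of the null sequence $X\xrightarrow{f}Y\xrightarrow{g}Z$ is equivalent to the conjunction of two conditions, namely that $g$ is weakly left admissible and that the induced map $f':X\to\textrm{Ker}(g)$ is an admissible epimorphism. Once this is in place, Proposition \ref{homotopygood} immediately converts the condition that $f'$ is an admissible epic (valid whenever $g$ has a kernel) into the existence of the desired homotopy-to-zero diagram with $\tilde{\alpha}:\textrm{Ker}(q)\to\textrm{Ker}(g)$ admissible epic, and the corollary follows.

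For the forward direction of the reformulation, I would assume the sequence is admissibly acyclic. Then it is in particular weakly acyclic, so $g$ has a kernel, and by Remark \ref{adacleftac} $g$ is weakly left admissible. Since $f$ is admissible, Proposition \ref{adstrict} furnishes a factorisation $X\twoheadrightarrow\textrm{Im}(f)\rightarrowtail Y$, while weak acyclicity says the natural map $\textrm{Im}(f)\to\textrm{Ker}(g)$ is an isomorphism. Composing, $f'$ is the admissible epic $X\twoheadrightarrow\textrm{Im}(f)$ followed by an isomorphism, hence itself an admissible epic. For the reverse direction, suppose $g$ is weakly left admissible, so that $\textrm{Ker}(g)\rightarrowtail Y$ is an admissible monic, and that $f'$ is an admissible epic. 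Then $f$ decomposes as $X\twoheadrightarrow\textrm{Ker}(g)\rightarrowtail Y$, so by Proposition \ref{adstrict} $f$ is admissible with image $\textrm{Ker}(g)$; in particular $f$ is weakly right admissible and the natural map $\textrm{Im}(f)\to\textrm{Ker}(g)$ is an isomorphism. Thus the sequence is weakly acyclic with $f$ admissible, i.e.\ admissibly acyclic.

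Finally I assemble the proof. If the sequence is admissibly acyclic, the reformulation shows $g$ is weakly left admissible and $f'$ is an admissible epic; since $g$ then has a kernel, Proposition \ref{homotopygood} supplies the homotopy-to-zero diagram with the required property of $\tilde{\alpha}$. Conversely, given $g$ weakly left admissible (hence possessing a kernel) together with such a diagram, Proposition \ref{homotopygood} shows $f'$ is an admissible epic, and the reformulation then yields admissible acyclicity.

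I do not expect any genuinely hard step here, since the analytic heart of the argument has already been isolated in Proposition \ref{homotopygood}. The one point requiring care is the passage between the ``$\textrm{Im}(f)\cong\textrm{Ker}(g)$ together with $f$ admissible'' description of (admissible) weak acyclicity and the single condition that $f'$ be an admissible epic. This is handled entirely by Proposition \ref{adstrict}, with Corollary \ref{adbi} and the obscure axiom (Proposition \ref{obscure}) entering only implicitly through the already-established Proposition \ref{homotopygood}.
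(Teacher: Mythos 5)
Your proposal is correct and follows essentially the same route as the paper: both directions reduce to Proposition \ref{homotopygood} via the equivalence between admissible acyclicity and the condition that $g$ is weakly left admissible with $f':X\to\textrm{Ker}(g)$ an admissible epic, using Proposition \ref{adstrict} and Remark \ref{adacleftac} exactly as the paper does. Your write-up is somewhat more explicit in the forward direction, where the paper leaves the application of Proposition \ref{homotopygood} implicit, but the substance is identical.
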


\begin{proof}
Suppose the sequence is admissibly acyclic. By Remark \ref{adacleftac} $g$ is weakly left admissible. 

For the converse, note that by Proposition \ref{homotopygood} and the fact that $\textrm{Ker}(g)\rightarrow Y$ is admissible, we have a decomposition of $f$
$$X\twoheadrightarrow\textrm{Ker}(g)\rightarrowtail Y$$
By Proposition \ref{adstrict} $f$ is an admissible morphism whose image is $\textrm{Ker}(g)$.
\end{proof}
We can also test split exactness by looking at homotopy.

\begin{prop}\label{splithomotopy}
Let $\mathpzc{E}$ be a weakly idempotent complete  exact category, and let 
\begin{displaymath}
\xymatrix{
\Gamma:= X\ar[r]^{f} & Y\ar[r]^{g} & Z
}
\end{displaymath}
be a null sequence. The sequence is admissibly acyclic in the split exact structure if and only if $g$ is weakly left admissible and  the diagram
\begin{displaymath}
\xymatrix{
X\ar[r]^{f}\ar[d]^{id_{X}} & Y\ar[d]^{id_{Y}}\ar[r]^{g} & Z\ar[d]^{id_{Z}}\\
X\ar[r]^{f} & Y\ar[r]^{g} & Z
}
\end{displaymath}
is homotopic to zero.
\end{prop}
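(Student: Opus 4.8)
The plan is to specialise the homotopy criterion of Corollary \ref{homotopy} to the split exact structure $\mathpzc{split}$, exploiting the fact that for the \emph{identity} ladder the induced comparison map on kernels is forced to be an isomorphism. Throughout, admissible monics and epics are read in $(\mathpzc{E},\mathpzc{split})$, i.e. as split monics (coretractions) and split epics (retractions), so that an admissible morphism is one factoring as a split epic followed by a split monic.

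The reverse implication is the easy half. Suppose $g$ is weakly left admissible and the identity ladder is homotopic to zero, witnessed by $D\colon Y\to X$ and $D'\colon Z\to Y$ with $\mathrm{id}_Y=f\circ D-D'\circ g$. Since the top and bottom rows of this ladder coincide and $\alpha=\mathrm{id}_Y$, the induced map $\tilde{\alpha}\colon\mathrm{Ker}(g)\to\mathrm{Ker}(g)$ is $\mathrm{id}_{\mathrm{Ker}(g)}$, which is in particular a split epic. Thus the hypotheses of the ``if'' direction of Corollary \ref{homotopy}, applied in $(\mathpzc{E},\mathpzc{split})$, are met with the identity ladder serving as the required diagram, and we conclude that $\Gamma$ is admissibly acyclic in $\mathpzc{split}$.

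For the forward implication I would proceed by hand. Assuming $\Gamma$ is admissibly acyclic in $\mathpzc{split}$, Remark \ref{adacleftac} gives that $g$ is weakly left admissible, so the inclusion $j\colon\mathrm{Ker}(g)\rightarrowtail Y$ is a split monic; fix a retraction $\rho\colon Y\to\mathrm{Ker}(g)$ with $\rho\circ j=\mathrm{id}$. Since $f$ is admissible and the natural map $\mathrm{Im}(f)\to\mathrm{Ker}(g)$ is an isomorphism (weak acyclicity together with Propositions \ref{analysis} and \ref{adstrict}), $f$ factors as $X\xrightarrow{f'}\mathrm{Ker}(g)\xrightarrow{j}Y$ with $f'$ a split epic; fix a section $s$ of $f'$. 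Setting $D:=s\circ\rho$ yields $f\circ D=j\circ\rho$, the idempotent $e$ projecting $Y$ onto $\mathrm{Ker}(g)$, so that $\mathrm{id}_Y-f\circ D$ is the complementary idempotent $p:=\mathrm{id}_Y-e$, which satisfies $p\circ j=0$ and $g\circ e=0$, whence $g=g\circ p$. The null-homotopy will be complete once we produce $D'\colon Z\to Y$ with $D'\circ g=-p$.

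The main obstacle is precisely this last factorisation, and it is where I expect the hypotheses to be used in full. Because $p$ annihilates $\mathrm{Ker}(g)$ and $g=g\circ p$, producing $D'$ is equivalent to splitting the induced monic $\mathrm{Coim}(g)\rightarrowtail Z$; concretely, writing $Y\cong\mathrm{Ker}(g)\oplus C$ with $C=\mathrm{Im}(p)$ and $\iota\colon C\hookrightarrow Y$ the inclusion of the second summand, one needs a retraction $\sigma$ of $g|_C\colon C\rightarrowtail Z$, after which $D':=-\iota\circ\sigma$ does the job. I would therefore isolate this splitting as the key lemma, extracting it from whatever splitting data the structure $\mathpzc{split}$ attaches to $g$ on the right; once it is in hand, verifying $\mathrm{id}_Y=f\circ D-D'\circ g$ and tracking signs are routine diagram chases.
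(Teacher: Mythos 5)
Your ``easy half'' is where the paper's entire proof lives, and as written it has a gap. To invoke the ``if'' direction of Corollary \ref{homotopy} in the exact category $(\mathpzc{E},\mathpzc{split})$ you must verify its hypotheses \emph{relative to the split structure}; in particular you need $g$ to be weakly left admissible in $\mathpzc{split}$, i.e.\ you need the inclusion $i:\textrm{Ker}(g)\rightarrow Y$ to be a \emph{split} monic. The hypothesis of the proposition only gives weak left admissibility in the ambient structure, and you never upgrade it. The paper extracts the missing splitting from the null-homotopy itself: applying Corollary \ref{homotopy} in the ambient structure first shows the sequence is admissibly acyclic there, so $\textrm{Im}(f)\cong\textrm{Ker}(g)$; then the computation $f\circ D\circ i=(f\circ D-D'\circ g)\circ i=i$ shows that the corestriction of $f\circ D$ to $\textrm{Im}(f)$ is a retraction of $i$, whence $i$ is a split monic and Corollary \ref{homotopy} applies in $\mathpzc{split}$. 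Since your argument never produces this retraction, the hypotheses of the corollary you cite are not met; this retraction is the one substantive step of the whole proof.

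The forward implication, which you rightly flag as problematic, cannot be completed along the lines you sketch, because admissible acyclicity of $X\rightarrow Y\rightarrow Z$ in $\mathpzc{split}$ constrains only $f$ and $\textrm{Ker}(g)$: it attaches no splitting data to $g$ ``on the right'', so the retraction of $\overline{g}:\textrm{Coim}(g)\rightarrow Z$ that you need to build $D'$ is simply not available. Indeed this direction is false as literally stated: in $\textbf{Ab}$ the null-sequence $0\rightarrow\Z\xrightarrow{\;2\;}\Z$ is admissibly acyclic in the split structure and $g=2$ is weakly left admissible, yet $\textrm{id}_{\Z}=f\circ D-D'\circ g=-2D'$ has no solution. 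The paper's own proof establishes only the ``null-homotopy $\Rightarrow$ split exact'' direction, which is also the only direction it ever uses (via Corollary \ref{allgoodhomotopy} and Proposition \ref{chainproj}); that is where your effort should go, and it is exactly the retraction argument described above.
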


\begin{proof}
Suppose the diagram is homotopic to the zero. If we can show that $g$ is also weakly left admissible in the split exact structure, then the claim follows from Corollary \ref{homotopy}.  By Corollary \ref{homotopy} we already know that the sequence is admissibly acyclic, so $\textrm{Im}(f)\cong\textrm{Ker}(g)$. Let $D:Y\rightarrow X$ and $D':Z\rightarrow Y$ be maps such that $id_{Y}=f\circ D-D'\circ g$.  The map $f\circ D:Y\rightarrow Y$ factors as
\begin{displaymath}
\xymatrix{
Y\ar[r]^{\widetilde{(f\circ D)}\;\;\;} &\textrm{Im}(f)\ar[r]^{i} & Y
}
\end{displaymath}
where $i$ is the inclusion. But 
$$f\circ D\circ i =f\circ D\circ i-D\circ g\circ i=i$$
since $g\circ i=0$. It follows that $\widetilde{(f\circ D)}\circ i=\textrm{Id}_{\textrm{Im}(f)}$. This implies that the map $\textrm{Ker}(g)\cong\textrm{Im}(f)\rightarrow Y$ is split, and so is an admissible monic in the split exact structure.
\end{proof}

\begin{cor}\label{allgoodhomotopy}
Let $X_{\bullet}$ be a good complex in a weakly idempotent complete exact category $\mathpzc{E}$ .
\begin{enumerate}
\item
$X_{\bullet}$ is acyclic whenever there is a complex $Y_{\bullet}$, a morphism of complexes $f_{\bullet}:Y_{\bullet}\rightarrow X_{\bullet}$ which is homotopic to $0$, and such that the induced maps $\widetilde{f_{n}}:\textrm{Ker}(d_{n}^{Y})\rightarrow\textrm{Ker}(d_{n}^{X})$ are admissible epimorphisms.
\item
$X_{\bullet}$ is split exact whenever $id_{X_{\bullet}}$ is homotopic to $0$.
\end{enumerate}
\end{cor}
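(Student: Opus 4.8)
The plan is to deduce both statements from the two preceding results, Proposition \ref{homotopygood} and Proposition \ref{goodtrick}, by reading off the appropriate three-term data from the global chain homotopy. For part 1, fix an index $n$ at which $d^{X}_{n}$ has a kernel $Z_{n}X$ and restrict all the data to degrees $n+1,n,n-1$, producing the commutative ladder
\begin{displaymath}
\xymatrix{
Y_{n+1}\ar[r]^{d^{Y}_{n+1}}\ar[d]^{f_{n+1}} & Y_{n}\ar[r]^{d^{Y}_{n}}\ar[d]^{f_{n}} & Y_{n-1}\ar[d]^{f_{n-1}}\\
X_{n+1}\ar[r]^{d^{X}_{n+1}} & X_{n}\ar[r]^{d^{X}_{n}} & X_{n-1}
}
\end{displaymath}
whose bottom row is the null sequence to which I want to apply Proposition \ref{homotopygood}, taking $g=d^{X}_{n}$, which has a kernel by the choice of $n$. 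Writing $D_{i}:Y_{i}\rightarrow X_{i+1}$ for the homotopy witnessing $f_{\bullet}\sim 0$, so that $f_{n}=D_{n-1}\circ d^{Y}_{n}+d^{X}_{n+1}\circ D_{n}$, I would set $D:=D_{n}$ and $D':=-D_{n-1}$; then $f_{n}=d^{X}_{n+1}\circ D-D'\circ d^{Y}_{n}$, which is exactly the statement that this ladder is homotopic to zero in the sense required by Proposition \ref{homotopygood}. The induced map $\textrm{Ker}(d^{Y}_{n})\rightarrow\textrm{Ker}(d^{X}_{n})$ is precisely $\tilde{f}_{n}$, an admissible epic by hypothesis, so the proposition yields that $d'_{n+1}:X_{n+1}\rightarrow Z_{n}X$ is an admissible epimorphism. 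Since this holds at every $n$ for which $d^{X}_{n}$ has a kernel, and $X_{\bullet}$ is good, Proposition \ref{goodtrick} gives that $X_{\bullet}$ is acyclic.

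For part 2, I would simply run part 1 inside the split exact structure $(\mathpzc{E},\mathpzc{split})$. This is again a weakly idempotent complete exact category on the same underlying additive category, since a retraction in $\mathpzc{split}$ is a split epimorphism whose kernel exists by weak idempotent completeness of $\mathpzc{E}$; moreover goodness of $X_{\bullet}$ is a purely categorical condition and so is unaffected by the change of exact structure. Taking $Y_{\bullet}=X_{\bullet}$ and $f_{\bullet}=\textrm{id}_{X_{\bullet}}$, which is homotopic to zero by assumption, the induced maps $\tilde{f}_{n}=\textrm{id}:\textrm{Ker}(d^{X}_{n})\rightarrow\textrm{Ker}(d^{X}_{n})$ are isomorphisms, hence admissible epics in $\mathpzc{split}$. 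Part 1 then shows that $X_{\bullet}$ is acyclic in $\mathpzc{split}$, i.e. split exact.

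The only genuinely delicate point is the bookkeeping in the first paragraph: one must carefully match the chain-homotopy identity against the three-term ``homotopic to zero'' convention, in particular fixing the sign on $D'$, and one must observe that at each relevant degree the kernel $\textrm{Ker}(d^{Y}_{n})$ needed to even form $\tilde{f}_{n}$ is furnished by the hypothesis. Everything else is a direct chaining of the two cited propositions.
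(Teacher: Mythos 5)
Your proof is correct. Part 1 is essentially the paper's own argument: the paper simply states that the claim ``follows from Proposition \ref{goodtrick} and Proposition \ref{homotopygood}'', and your paragraph supplies the bookkeeping (the three-term ladder, the sign $D'=-D_{n-1}$, the identification of $\tilde{\alpha}$ with $\tilde{f}_{n}$, and the remark that only the degrees where the kernels exist matter) that the paper leaves implicit. Part 2, however, takes a genuinely different route. The paper first invokes part 1 to conclude that $X_{\bullet}$ is acyclic, so that each $d_{n}$ is weakly left admissible, and then applies Proposition \ref{splithomotopy} degree by degree to upgrade acyclicity to split exactness. You instead re-run part 1 inside the auxiliary exact category $(\mathpzc{E},\mathpzc{split})$, observing that goodness and weak idempotent completeness depend only on the underlying additive category and that the identity on each existing kernel $\textrm{Ker}(d_{n}^{X})$ is an isomorphism, hence an admissible epic for the split structure. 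Both arguments are sound. Your version is more uniform --- part 2 becomes a literal instance of part 1 and Proposition \ref{splithomotopy} is bypassed entirely --- at the small cost of checking that Propositions \ref{goodtrick} and \ref{homotopygood} (and the Obscure Axiom used in the latter) apply verbatim to the split structure, which they do since they are proved for arbitrary weakly idempotent complete exact categories. The paper's version keeps the exact-structure-dependent reasoning confined to Proposition \ref{splithomotopy}, where the splitting of $\textrm{Ker}(d_{n})\rightarrow X_{n}$ is exhibited explicitly from the homotopy.
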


\begin{proof}
The first assertion follows from Proposition \ref{goodtrick} and Proposition \ref{homotopygood}. For the second assertion note that $X_{\bullet}$ is acyclic by the first assertion. In particular each
$$X_{n+1}\rightarrow X_{n}\rightarrow X_{n-1}$$
is acyclic, and $X_{n}\rightarrow X_{n-1}$ is (weakly left) admissible. Thus we may use Proposition \ref{splithomotopy}.
\end{proof}

\subsubsection{Quasi-isomorphisms}

Recall that in an abelian category a map of complexes induces a map on homology. The map is said to be a quasi-isomorphism if the induced map on homology is an isomorphism. Quasi-isomorphisms can also be characterised in terms of their mapping cone. A map of chain complexes in an abelian category is a quasi-isomorphism if and only if its mapping cone is acyclic. As remarked previously, in an exact category we cannot in general  define the homology of a complex. However the construction of the mapping cone makes sense in any additive category. By the previous remarks, the following definition is sensible.

\begin{defn}
Let $\mathpzc{E}$ be an exact category. A map $f_{\bullet}:X_{\bullet}\rightarrow Y_{\bullet}$ of complexes of $\mathpzc{E}$ is said to be a \textbf{quasi-isomorphism} if $\textrm{cone}(f_{\bullet})$ is acyclic.
\end{defn}

\begin{prop}
If $\mathpzc{E}$ is idempotent complete then homotopy equivalences are quasi-isomorphisms.
\end{prop}

\begin{proof}
See \cite{Buehler} Proposition 10.9.
\end{proof}

The next proposition is an immediate consequence of Corollary \ref{boundedtest}.

\begin{prop}\label{adquas}
Let $I:\mathpzc{E}\rightarrow\mathpzc{A}$ be a left abelianisation of an exact category $\mathpzc{E}$. Let $f_{\bullet}:X_{\bullet}\rightarrow Y_{\bullet}$ be a morphism of complexes. Suppose $\textrm{cone}(f)$ is good. Then $f$ is a quasi-isomorphim if and only if $I(f)$ is.
\end{prop}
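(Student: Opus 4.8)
The plan is to reduce the statement entirely to Corollary \ref{boundedtest} by exploiting the compatibility of $I$ with the mapping cone construction. First I would observe that, since $I$ is an additive functor (Theorem \ref{QET}), it commutes with the formation of cones. Concretely, in each degree $\textrm{cone}(f)_{n}=X_{n-1}\oplus Y_{n}$, and the differential is the matrix with entries $-d^{X}_{n-1}$, $-f_{n-1}$, $d^{Y}_{n}$ and $0$. As $I$ preserves finite direct sums and is functorial, applying $I$ carries this degreewise decomposition and every matrix entry to the corresponding data for $I(f_{\bullet})$. Hence there is a natural isomorphism of complexes $I(\textrm{cone}(f_{\bullet}))\cong\textrm{cone}(I(f_{\bullet}))$ in $\mathpzc{A}$.

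Next I would chain together the definition of quasi-isomorphism and Corollary \ref{boundedtest}. By definition, $f$ is a quasi-isomorphism precisely when $\textrm{cone}(f)$ is acyclic. Since $\textrm{cone}(f)$ is good by hypothesis, Corollary \ref{boundedtest} gives that $\textrm{cone}(f)$ is acyclic if and only if $I(\textrm{cone}(f))$ is acyclic in $\mathpzc{A}$. Using the isomorphism from the first step, the latter is equivalent to $\textrm{cone}(I(f))$ being acyclic. Finally, because $\mathpzc{A}$ is abelian, every morphism is admissible, so the notions of admissibly acyclic and admissibly coacyclic both collapse to ordinary exactness; thus $\textrm{cone}(I(f))$ being acyclic is exactly the statement that $I(f)$ is a quasi-isomorphism in the usual abelian sense. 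Reading off the composite chain of equivalences yields the result.

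The argument is essentially bookkeeping, and there is no serious obstacle; the only two points that deserve care are both minor. One is verifying that $I$ genuinely commutes with the cone, which is immediate from additivity but should be stated so that the goodness hypothesis on $\textrm{cone}(f)$ transfers correctly to $I(\textrm{cone}(f))\cong\textrm{cone}(I(f))$. The other is confirming that ``acyclic'' in the abelian category $\mathpzc{A}$ coincides with ordinary exactness, which follows at once from the fact that in an abelian category every morphism admits the factorisation required by Proposition \ref{adstrict}. Once these are noted, the biconditional is precisely an unwinding of Corollary \ref{boundedtest}.
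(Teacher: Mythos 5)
Your argument is correct and is precisely the route the paper intends: the paper offers no written proof, simply declaring the proposition an immediate consequence of Corollary \ref{boundedtest}, and your proposal supplies exactly the missing bookkeeping ($I$ commutes with cones by additivity, the cone is good by hypothesis, and quasi-isomorphism means acyclic cone on both sides). Nothing further is needed.
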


\begin{rem}
As for abelian categories, one can define the derived category $D_{*}(\mathpzc{E})$ of an exact category $\mathpzc{E}$ by localizing $ Ch_{*}(\mathpzc{E})$ at the quasi-isomorphisms. For details see for example \cite{Buehler} Section 10.
\end{rem}

\subsection{Ext Groups and Projective Objects}
In order to study cotorsion pairs in exact categories in Section \ref{chmodelexact}, we will need the notion of Ext groups in exact categories. Recall for an abelian category $\mathpzc{A}$ one can define the groups $\textrm{Ext}^{n}(A,B)$ for any pair of objects $A,B\in\mathpzc{A}$ regardless of whether $\mathpzc{A}$ has enough projectives by the Yoneda construction. This construction goes through mutatis-mutandis for exact categories. The elements are Yoneda equivalences classes of $n$-extensions and the binary operation is the Baer sum. All the proofs for the above facts work as the abelian case. The interested reader can adapt the relevant proofs in \cite{buchsbaum} for example. The first Ext group $\textrm{Ext}^{1}(A,B)$ can also be computed by passing to a left abelianisation. More generally we have the following straightforward result.

\begin{prop}\label{extab}
Let $\mathpzc{E}$ and $\mathpzc{F}$ be exact categories. Let $F:\mathpzc{E}\rightarrow\mathpzc{F}$ be a fully faithful exact functor which reflects exactness. Suppose that the essential image of $\mathpzc{E}$ is closed under extensions. Then $F$ induces a natural isomorphism of abelian groups
$$\textrm{Ext}^{1}_{\mathpzc{E}}(-,-)\cong\textrm{Ext}^{1}_{\mathpzc{F}}(F(-),F(-))$$
\end{prop}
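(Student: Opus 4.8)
The plan is to define the comparison map directly on representatives and then verify it is a well-defined group isomorphism, natural in both variables. First note that since $F$ is exact it preserves split short exact sequences, hence finite biproducts, so $F$ is additive. Given a class in $\textrm{Ext}^{1}_{\mathpzc{E}}(A,C)$ represented by a short exact sequence $0\to C\to E\to A\to 0$, applying $F$ yields a short exact sequence $0\to F(C)\to F(E)\to F(A)\to 0$ because $F$ is exact; I would define $F_{*}$ to send the class of the former to the class of the latter. To see this is well defined, recall that a Yoneda equivalence of $1$-extensions is generated by a morphism of extensions inducing the identity on $A$ and $C$; functoriality of $F$ carries such a morphism to a morphism of the image extensions over the identities on $F(A)$ and $F(C)$, so equivalent extensions have equivalent images.

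I expect the main work to be in showing that $F_{*}$ respects the Baer sum and is natural, and this is where the hypotheses must be combined with Proposition \ref{bicart}. The Baer sum is built from the biproduct extension by pulling back along the diagonal $A\to A\oplus A$ and pushing out along the codiagonal $C\oplus C\to C$; the pullback is along an admissible epic and the pushout along an admissible monic, so by Proposition \ref{bicart} (and its dual) each such square is characterised by a short exact sequence. Since $F$ is exact it preserves these characterising sequences, and hence, again invoking Proposition \ref{bicart}, carries the pullback and pushout squares to pullback and pushout squares. Combined with the additivity of $F$, this shows $F_{*}$ commutes with the formation of Baer sums, so it is a group homomorphism, and the same preservation of bicartesian squares yields naturality with respect to the pullback and pushout maps appearing in $\textrm{Ext}^{1}$.

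Finally I would establish bijectivity. For injectivity it suffices, as $F_{*}$ is a homomorphism, to check that it has trivial kernel: if $0\to F(C)\to F(E)\to F(A)\to 0$ splits, choose a section $s$ of the admissible epic $F(E)\to F(A)$; by fullness $s=F(\sigma)$ for some $\sigma\colon A\to E$, and by faithfulness the relation $F(p)\circ F(\sigma)=\textrm{id}_{F(A)}$ descends to $p\circ\sigma=\textrm{id}_{A}$, so $\sigma$ is a section and the original extension splits. For surjectivity, start from a class in $\textrm{Ext}^{1}_{\mathpzc{F}}(F(A),F(C))$ represented by $0\to F(C)\to B\to F(A)\to 0$; since the essential image of $F$ is closed under extensions we may replace $B$ by $F(E)$ for some $E\in\mathpzc{E}$, and by fullness the two structure maps are $F(i)$ and $F(p)$ for morphisms $i\colon C\to E$ and $p\colon E\to A$. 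As $F$ reflects exactness, $0\to C\to E\to A\to 0$ is short exact in $\mathpzc{E}$, and its image is the chosen class, so $F_{*}$ is surjective. The hardest point to get right is the preservation of the bicartesian squares underlying the Baer sum and naturality; once that is in place, full faithfulness and the reflection of exactness make bijectivity essentially formal.
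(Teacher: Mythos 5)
Your proposal is correct and follows essentially the same route as the paper: define $F_{*}$ on representatives, use Proposition \ref{bicart} to see it is a homomorphism, and use closure of the essential image under extensions together with fullness and reflection of exactness for surjectivity. The only (harmless) divergence is injectivity: you reduce to showing the kernel is trivial via a splitting argument, whereas the paper lifts an arbitrary chain of Yoneda pre-equivalences back to $\mathpzc{E}$ using full faithfulness, which does not presuppose the homomorphism property.
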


\begin{rem}
In the above we make the implicit assumption that each $\textrm{Ext}^{n}(A,B)$ is a set. This always holds for exact categories with enough projectives, which can be seen from the discussion in the following section.
\end{rem}

At this point we recall the notion of a projective object in an exact category, and mention how they relate to the $\textrm{Ext}$ functor.

\begin{defn}
An object $P$ in an exact category $\mathpzc{E}$ is said to be \textbf{projective} if the functor $\textrm{Hom}(P,-):\mathpzc{E}\rightarrow\mathpzc{Ab}$ is exact. 
\end{defn}

\begin{rem}
By Proposition \ref{admexact}, for any projective object $P$ the functor $\textrm{Hom}(P,-)$ is admissibly exact.
\end{rem}

\begin{example}
In the split exact structure every object is projective.
\end{example}

As in the abelian case one has the following result. The equivalence of the first three conditions can be found in \cite{Buehler} Proposition 11.3.

\begin{prop}\label{projequiv}
The following are equivalent.
\begin{enumerate}
\item
$P$ is projective.
\item
Given a map $f:P\rightarrow C$ and an admissible epic $e:B\rightarrow C$, there is a morphism $g:P\rightarrow B$ such that the following diagram commutes
\begin{displaymath}
\xymatrix{
& B\ar[d]^{e}\\
P\ar[ur]^{g}\ar[r]^{f} & C
}
\end{displaymath}
\item
Any admissible epic with codomain $P$ splits.
\item
$\textrm{Ext}^{1}(P,A)$ vanishes for any object $A$.
\item
$\textrm{Ext}^{n}(P,A)$ vanishes for any object $A$ and any $n\ge 1$.
\end{enumerate}
\end{prop}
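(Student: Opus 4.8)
\textbf{The plan is to} prove the chain of equivalences $1 \Rightarrow 2 \Rightarrow 3 \Rightarrow 4 \Rightarrow 5 \Rightarrow 1$, following the standard pattern from the abelian case but taking care at each step to use only the machinery of admissible epics available in the exact setting. The guiding principle is that $\mathrm{Hom}(P,-)$ being exact means precisely that it sends short exact sequences to short exact sequences of abelian groups, and since it is automatically left exact (being representable), exactness is equivalent to it preserving surjectivity of admissible epics --- which is exactly the lifting property in statement $2$.

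First I would establish $1 \Leftrightarrow 2$. Given an admissible epic $e : B \twoheadrightarrow C$, it fits into a short exact sequence $0 \to A \to B \to C \to 0$. The functor $\mathrm{Hom}(P,-)$ applied to this yields $0 \to \mathrm{Hom}(P,A) \to \mathrm{Hom}(P,B) \to \mathrm{Hom}(P,C)$, which is exact on the left and in the middle for \emph{any} object $P$ since $\mathrm{Hom}(P,-)$ preserves kernels. The content of exactness is therefore exactly that $e_* : \mathrm{Hom}(P,B) \to \mathrm{Hom}(P,C)$ is surjective, and unwinding this surjectivity is precisely the statement that every $f : P \to C$ lifts to some $g : P \to B$ with $e \circ g = f$. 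This gives both directions at once.

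Next, $2 \Rightarrow 3$ is immediate: given an admissible epic $e : B \twoheadrightarrow P$, apply the lifting property of statement $2$ to the identity $\mathrm{id}_P : P \to P$ to obtain $s : P \to B$ with $e \circ s = \mathrm{id}_P$, which splits $e$. For $3 \Rightarrow 4$, take any short exact sequence $0 \to A \to B \xrightarrow{e} P \to 0$ representing a class in $\mathrm{Ext}^1(P,A)$; by hypothesis $e$ splits, so the sequence is split exact and hence represents the zero class. Since every element of $\mathrm{Ext}^1(P,A)$ arises from such a sequence, the group vanishes. The step $4 \Rightarrow 5$ is where the long exact sequence of Theorem~\ref{longext} does the work via dimension shifting: given an $n$-fold extension of $P$ by $A$ with $n \geq 2$, break off the first short exact sequence $0 \to A \to E_n \to K \to 0$ and use the associated long exact sequence to identify $\mathrm{Ext}^n(P,A)$ with $\mathrm{Ext}^{n-1}(P, K')$ for an appropriate $K'$, reducing inductively to the case $n=1$ handled by statement $4$. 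Finally $5 \Rightarrow 1$ is a restriction, taking $n=1$.

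\textbf{The main obstacle} I expect is the dimension-shifting argument in $4 \Rightarrow 5$, where one must be careful that the long exact sequence machinery of Theorem~\ref{longext} genuinely applies in the weakly idempotent complete exact setting and that the intermediate objects appearing in the splicing of an $n$-fold extension are legitimate; but since the excerpt already asserts that the long exact sequences and the bilinear composition maps $\phi_{n,m}$ are available mutatis mutandis, this reduces to bookkeeping. The remaining implications are formal manipulations of admissible epics and splittings, with no real exact-category subtlety beyond the observation that $\mathrm{Hom}(P,-)$ is automatically left exact by representability, so the entire weight of projectivity sits in preservation of admissible epics.
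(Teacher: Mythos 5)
Your proposal is correct and follows exactly the standard abelian-case argument that the paper itself invokes without writing out (the paper offers no proof of this proposition, saying only ``as in the abelian case''). The equivalences $1\Leftrightarrow 2$, $2\Rightarrow 3$, $3\Rightarrow 4$ and $5\Rightarrow 1$ are all handled correctly, and the only substantive remark concerns your step $4\Rightarrow 5$: the long exact sequence of Theorem~\ref{longext} does \emph{not} by itself ``identify'' $\mathrm{Ext}^{n}(P,A)$ with $\mathrm{Ext}^{n-1}(P,K')$ --- that would require vanishing of the neighbouring terms $\mathrm{Ext}^{n-1}(P,E_{n})$ and $\mathrm{Ext}^{n}(P,E_{n})$, which you do not have absent enough injectives. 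The correct mechanism, available from the machinery you already cite, is the bilinear Yoneda composition: an $n$-fold extension $0\to A\to E_{n}\to\cdots\to E_{1}\to P\to 0$ splices as $\phi_{n-1,1}(\eta,\alpha)$ with $\eta\in\mathrm{Ext}^{n-1}(P,K)$ (equivalently, split at the other end and use $\mathrm{Ext}^{1}(P,K)=0$ directly), so by induction $\eta=0$ and bilinearity of $\phi$ forces the class to vanish. With that substitution the argument is complete.
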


\subsection{Exact Structures on Chain Complexes}
Let $\mathpzc{E}$ be an exact category and consider the category $Ch_{*}(\mathpzc{E})$ for $*\in\{\emptyset,b,\ge,\le,+,-\}$. Say that a sequence $0\rightarrow A_{\bullet}\rightarrow B_{\bullet}\rightarrow C_{\bullet}\rightarrow0$ is exact precisely if for each $i\in\Z$ the sequence $0\rightarrow A_{i}\rightarrow B_{i}\rightarrow C_{i}\rightarrow0$ is exact. Since limits and colimits in $Ch_{*}(\mathpzc{E})$ are computed degree-wise this is an exact structure on $Ch(\mathpzc{E})$.

\begin{prop}
Let $F:\mathcal{A}\rightarrow\mathcal{B}$ be a fully faithful exact functor which reflects exactness and whose essential image is closed under extensions. Then for $*\in\{\ge0,\le0,+,-,b,\emptyset\}$ the induced functor
$$ Ch_{*}(F): Ch_{*}(\mathcal{A})\rightarrow Ch_{*}(\mathcal{B})$$
is a fully faithful exact functor which reflects exactness and whose essential image is closed under extensions.
\end{prop}

\begin{proof}
Since exactness of chain complexes is defined level wise, $ Ch_{*}(F)$ is clearly exact and reflects exactness. It is clearly faithful. Let us check that it is full. Let $(X_{\bullet},d_{\bullet})$ and $(Y_{\bullet},\delta_{\bullet})$ be chain complexes in $\mathcal{A}$. Let $f_{\bullet}:F(X_{\bullet})\rightarrow F(Y_{\bullet})$ be a chain map. For each $n$ there is some $g_{n}:X_{n}\rightarrow Y_{n}$ with $f_{n}=F(g_{n})$. Moreover
$$F(g_{n}\circ d_{n+1})=F(g_{n})\circ F(d_{n+1})=f_{n}\circ F(d_{n+1})=F(\delta_{n+1})\circ f_{n+1}=F(\delta_{n+1}\circ g_{n+1})$$
Since $F$ is faithful, $g_{n}\circ d_{n+1}=\delta_{n+1}\circ g_{n+1}$. It remains to show that the essential image of $ Ch_{*}(F)$ is closed under extensions. So suppose we have an exact sequence of chain complexes.
\begin{displaymath}
\xymatrix{
0\ar[r] & F(X_{\bullet},d_{\bullet})\ar[r]^{f_{\bullet}} & (Q_{\bullet},\gamma_{\bullet})\ar[r]^{g_{\bullet}} & F(Y_{\bullet},\delta_{\bullet})\ar[r] & 0
}
\end{displaymath}
For each $n$ pick an object $P_{n}\in\mathcal{A}$ and an isomorphism $p_{n}: Q_{n}\widetilde{\rightarrow} F(P_{n})$. Let $\gamma_{n}'=p_{n-1}\circ\gamma_{n}\circ p_{n}^{-1}:F(P_{n})\rightarrow F(P_{n-1})$. Then $(P_{\bullet},\gamma_{\bullet}')$ is a chain complex. Moreover by construction we have an isomorphism $p_{\bullet}: Q_{\bullet}\rightarrow F(P_{\bullet})$ whose $n$th component is $p_{n}$.
\end{proof}

\begin{cor}\label{chainab}
Let $I:\mathpzc{E}\rightarrow\mathcal{A}(\mathpzc{E})$ is a left abelianisation of $\mathpzc{E}$. Then $ Ch_{*}(I): Ch_{*}(\mathpzc{E})\rightarrow Ch_{*}(\mathcal{A}(\mathpzc{E}))$ is a left abelianisation of $ Ch_{*}(\mathpzc{E})$. 
\end{cor}

\begin{proof}
By the previous proposition, it remains to check that $ Ch_{*}(I)$ preserves kernels, and $ Ch_{*}(I)(f_{\bullet})$ is an admissible epimorphism if and only if $f_{\bullet}$ is. However this is clear since everything is computed degree-wise.
\end{proof}

\subsection{A Useful Example: The Degree-Wise Exact Structure}
Let $\mathpzc{E}$ be an additive category, and endow it with the split exact structure. The induced exact structure on $Ch(\mathpzc{E})$ is called the \textbf{degree-wise split} exact structure, and we denote the Ext functors in this structure by $\textrm{Ext}^{n}_{dw}$. We conclude this section with a brief discussion of the relation between extensions in the degree-wise split exact structure and the $Ch(\textbf{Ab})$-enriched structure on $Ch(\mathpzc{E})$. This is also done in a model theoretic context for modules over a ring in \cite{gillespie} Section 5.2 and the proofs are formally the same.
\begin{prop}\label{splitexact}
A sequence of chain complexes
\xymatrix{
0\ar[r] & X_{\bullet}\ar[r]^{p_{\bullet}} & Z_{\bullet}\ar[r]^{q_{\bullet}} & Y_{\bullet}\ar[r] & 0
}
 is split exact in each degree if and only if it is isomorphic to a complex of the form
 $$0\rightarrow X_{\bullet}\rightarrow\textrm{cone}(f_{\bullet})\rightarrow Y_{\bullet}\rightarrow 0$$
 for some morphism of complexes $f_{\bullet}:Y_{\bullet}[1]\rightarrow X_{\bullet}$.
\end{prop}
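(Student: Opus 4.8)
The plan is to prove both implications, with the reverse direction---constructing the data $f_\bullet$ from a degreewise split sequence---being the substantive one. For the forward implication, observe that for any $f_\bullet\colon Y_\bullet[1]\to X_\bullet$ the cone sequence $0\to X_\bullet\xrightarrow{\tau}\textrm{cone}(f_\bullet)\xrightarrow{\pi}Y_\bullet\to 0$ recalled above is split exact in each degree by construction: in degree $n$ it is the biproduct sequence $0\to X_n\to Y_n\oplus X_n\to Y_n\to 0$, using $\textrm{cone}(f_\bullet)_n=(Y[1])_{n-1}\oplus X_n=Y_n\oplus X_n$. Since split exactness in each degree is a degreewise property and every isomorphism of chain complexes is a degreewise isomorphism, any sequence isomorphic to a cone sequence is itself split exact in each degree.

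For the converse, suppose $0\to X_\bullet\xrightarrow{p_\bullet}Z_\bullet\xrightarrow{q_\bullet}Y_\bullet\to 0$ is split exact in each degree. First I would choose, for every $n$, a section $s_n\colon Y_n\to Z_n$ of $q_n$; by the splitting lemma in additive categories this determines a retraction $r_n\colon Z_n\to X_n$ of $p_n$ satisfying $p_nr_n+s_nq_n=\textrm{id}_{Z_n}$ and $r_ns_n=0$, so that $(r_n,q_n)\colon Z_n\to X_n\oplus Y_n$ is an isomorphism with inverse $(x,y)\mapsto p_n(x)+s_n(y)$. The crucial point is that these splittings need not respect the differential of $Z_\bullet$; the defect is measured by the morphisms $g_n\defeq r_{n-1}\circ d^Z_n\circ s_n\colon Y_n\to X_{n-1}$.

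Next I would transport $d^Z_\bullet$ through these isomorphisms. Using that $p_\bullet$ and $q_\bullet$ are chain maps together with the splitting identities, a direct computation shows that in the coordinates $Z_n\cong X_n\oplus Y_n$ the differential is upper triangular with diagonal $d^X_n,d^Y_n$ and off-diagonal $Y\to X$ entry $g_n$, while the $X\to Y$ corner vanishes because $q_{n-1}p_{n-1}=0$. Imposing $d^Z_{n-1}d^Z_n=0$ then yields the single relation $d^X_{n-1}g_n+g_{n-1}d^Y_n=0$, and setting $f_{n-1}\defeq -g_n$ this is precisely the assertion that $f_\bullet$ is a chain map $Y_\bullet[1]\to X_\bullet$ (here one uses $d^{Y[1]}_m=-d^Y_{m+1}$). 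Reordering the summands to $Y_n\oplus X_n$ turns the transported matrix into $\left(\begin{smallmatrix} d^Y_n & 0\\ g_n & d^X_n\end{smallmatrix}\right)$, which matches the cone differential $\left(\begin{smallmatrix} d^Y_n & 0\\ -f_{n-1} & d^X_n\end{smallmatrix}\right)$; hence the degreewise isomorphisms $(y,x)\mapsto s_n(y)+p_n(x)$ assemble into a chain isomorphism $\textrm{cone}(f_\bullet)\to Z_\bullet$. Checking this against $p_\bullet,q_\bullet$ and the cone maps $\tau,\pi$ confirms it is an isomorphism of short exact sequences.

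The main obstacle is bookkeeping rather than conceptual: one must fix the index and sign conventions for the shift $[1]$ and the cone so that the off-diagonal corner $g_n$, up to the sign $-1$, matches $f_{n-1}$ and the $d^2=0$ relation becomes exactly the chain-map condition for $f_\bullet\colon Y_\bullet[1]\to X_\bullet$. Everything else is routine diagram verification relying only on the splitting identities and the fact that $p_\bullet$ and $q_\bullet$ commute with the differentials.
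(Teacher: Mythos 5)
Your proof is correct and follows essentially the same route as the paper's: choose degreewise splittings with $r_n s_n=0$, read off the connecting map as the off-diagonal entry $r_{n-1}d^Z_n s_n$ of the transported differential, and assemble the splitting isomorphisms into a chain isomorphism with the cone. The only difference is presentational: you derive the chain-map condition for $f_\bullet$ from $d^2=0$, whereas the paper defines $f_n=\alpha_n d^Z_{n+1}\beta_{n+1}$ and verifies it directly.
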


\begin{proof}
The sequence
 $$0\rightarrow X_{\bullet}\rightarrow\textrm{cone}(f_{\bullet})\rightarrow Y_{\bullet}\rightarrow 0$$
 is clearly split exact in each degree, so any complex isomorphic to it is split exact in each degree as well. Suppose
\begin{displaymath}
\xymatrix{
0\ar[r] & X_{\bullet}\ar[r]^{p_{\bullet}} & Z_{\bullet}\ar[r]^{q_{\bullet}} & Y_{\bullet}\ar[r] & 0
}
\end{displaymath}
is split exact in each degree. Let $\alpha_{n}:Z_{n}\rightarrow X_{n}$ be such that $\alpha_{n}\circ p_{n}=\textrm{id}_{X_{n}}$ and $\beta_{n}:Y_{n}\rightarrow Z_{n}$ be a map such that $q_{n}\circ\beta_{n}=\textrm{id}_{Y_{n}}$. We may assume also that $\alpha_{n}\circ\beta_{n}=0$. Define $f_{\bullet}:Y_{\bullet}[1]\rightarrow X_{\bullet}$ by $f_{n}=\alpha_{n}\circ d_{n+1}^{Z}\circ \beta_{n+1}$. This is easily seen to be a map of chain complexes. Let $\alpha_{n}:Z_{n}\rightarrow X_{n}\oplus Y_{n}$ denote the isomorphism induced by the degree-wise splitting. A straightforward computation shows that this gives a map of chain complexes $\alpha_{\bullet}:Z_{\bullet}\rightarrow\textrm{cone}(f_{\bullet})$. Thus we get an isomorphism of exact  sequences.
\begin{displaymath}
\xymatrix{
0\ar[r] & X_{\bullet}\ar@{=}[d]\ar[r]^{p_{\bullet}} & Z_{\bullet}\ar[r]^{q_{\bullet}}\ar[d]^{\alpha_{\bullet}} & Y_{\bullet}\ar[r]\ar@{=}[d] & 0\\
0\ar[r] & X_{\bullet}\ar[r] & \textrm{cone}(f_{\bullet})\ar[r] & Y_{\bullet}\ar[r] & 0
}
\end{displaymath}
\end{proof}

\begin{defn}\label{contract}
\begin{enumerate}
\item
A complex $X_{\bullet}$ is said to be \textbf{contractible} if the map $X_{\bullet}\rightarrow 0$ is a homotopy equivalence.
\item
A complex $X_{\bullet}$ is said to be \textbf{split acyclic} if it is acyclic in the split exact structure.
\end{enumerate}
\end{defn}

\begin{prop}\label{cone0}
A map of chain complexes $f_{\bullet}:X_{\bullet}\rightarrow Y_{\bullet}$ is homotopic to $0$ if and only if the sequence
 $$0\rightarrow Y_{\bullet}\rightarrow\textrm{cone}(f_{\bullet})\rightarrow X_{\bullet}[-1]\rightarrow 0$$
 is split exact.
\end{prop}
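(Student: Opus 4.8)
The plan is to exhibit an explicit bijection between null-homotopies of $f_{\bullet}$ and chain-level splittings of the displayed sequence, matching the two datasets term by term. First I would record the concrete description of the maps: writing $\textrm{cone}(f)_{n}=X_{n-1}\oplus Y_{n}$, the map $\tau_{n}\colon Y_{n}\to\textrm{cone}(f)_{n}$ is the biproduct inclusion of the second summand and $\pi_{n}\colon\textrm{cone}(f)_{n}\to(X[-1])_{n}=X_{n-1}$ is the projection onto the first summand, so that the sequence is degreewise split (as already noted in the text). Since ``split exact'' for a short exact sequence of complexes means precisely that it admits a splitting in $\textbf{Ch}(\mathpzc{E})$, the content to prove is that $f_{\bullet}\sim 0$ if and only if there is a chain map $s_{\bullet}\colon X_{\bullet}[-1]\to\textrm{cone}(f_{\bullet})$ with $\pi_{\bullet}\circ s_{\bullet}=\textrm{id}$.

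Next I would parametrize the candidate sections. Because $\pi_{n}$ is the projection onto the first summand, any $s_{n}\colon X_{n-1}\to X_{n-1}\oplus Y_{n}$ with $\pi_{n}\circ s_{n}=\textrm{id}_{X_{n-1}}$ has its first component forced to equal $\textrm{id}_{X_{n-1}}$, so $s_{n}$ is completely determined by its second component $h_{n}\defeq\textrm{pr}_{Y_{n}}\circ s_{n}\colon X_{n-1}\to Y_{n}$, and conversely every family $(h_{n})$ produces such an $s_{n}$. It remains to compute when the resulting $s_{\bullet}$ is a chain map. Using $d^{X[-1]}_{n}=-d^{X}_{n-1}$ together with the matrix form of $d^{\textrm{cone}(f)}_{n}$, the identity $d^{\textrm{cone}(f)}_{n}\circ s_{n}=s_{n-1}\circ d^{X[-1]}_{n}$ has its $X$-component satisfied automatically, while its $Y$-component is exactly
$$f_{n-1}=d^{Y}_{n}\circ h_{n}+h_{n-1}\circ d^{X}_{n-1}.$$

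Finally I would translate this into a homotopy. Setting $D_{i}\defeq h_{i+1}\colon X_{i}\to Y_{i+1}$, the displayed relation (read off at index $i=n-1$) becomes $f_{i}=d^{Y}_{i+1}\circ D_{i}+D_{i-1}\circ d^{X}_{i}$, which is precisely the condition of Definition \ref{homequiv} for $f_{\bullet}\sim 0$; conversely a null-homotopy $(D_{i})$ yields a section by $h_{n}\defeq D_{n-1}$. This establishes the desired equivalence. The only real care needed is the bookkeeping of signs and index shifts---in particular the sign $-d^{X}$ in the differential of $X_{\bullet}[-1]$ and the entry $-f_{n-1}$ of the cone differential must cancel so that the $X$-component of the chain-map identity is automatic---and this is the one place where a slip would break the correspondence, but it is a routine verification rather than a genuine obstacle.
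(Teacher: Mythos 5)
Your proof is correct and follows essentially the same route as the paper's: both directions come down to the observation that a chain-level section of $\pi\colon\textrm{cone}(f_{\bullet})\rightarrow X_{\bullet}[-1]$ is forced to have the form $(\textrm{id},h_{n})$, and the chain-map condition on such a section is precisely the null-homotopy identity for $f_{\bullet}$. The paper leaves the sign/index bookkeeping as "an easy computation"; you have simply written it out, and your computation checks out.
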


\begin{proof}
Suppose that $f_{\bullet}$ is homotopic to $0$. Let $\{D_{n}:X_{n}\rightarrow Y_{n+1}\}$ be a homotopy. We then get a map $\alpha_{n}=(\textrm{id}_{X_{n-1}},D_{n-1}):  X_{n-1}\rightarrow\textrm{cone}(f)_{n}$. It is straightforward to check that this gives a chain map $\alpha_{\bullet}:X_{\bullet}[-1]\rightarrow\textrm{cone}(f_{\bullet})$. Moreover it obviously gives a splitting of $\textrm{cone}(f_{\bullet})\rightarrow X_{\bullet}[-1]$. Conversely suppose the sequence is split exact. Let $\alpha_{\bullet}: X_{\bullet}[-1]\rightarrow\textrm{cone}(f_{\bullet})$ be a splitting of the map $\textrm{cone}(f_{\bullet})\rightarrow X_{\bullet}[-1]$. It is an easy computation to check that the collection of compositions $\{D_{n-1}:X_{n-1}\rightarrow\textrm{cone}(f_{\bullet})_{n}\rightarrow Y_{n}\}$ is a homotopy between $f$ and $0$.  
\end{proof}

We recover the following standard result (see e.g. \cite{gillespie2016derived} Lemma 2.8).

\begin{cor}\label{dw}
For chain complexes $X_{\bullet},Y_{\bullet}$ in an additive category $\mathpzc{E}$. we have 
$$\textrm{Ext}^{1}_{dw}(X,Y[n-1])\cong H_{n}\textbf{Hom}(X_{\bullet},Y_{\bullet})=\textrm{Hom}_{ Ch(\mathpzc{E})}(X,Y[n])\big\slash\sim$$
where $\sim$ is chain homotopy.
\end{cor}

\begin{proof}
By direct computation, one finds that  $f\in\prod_{i}\textrm{Hom}(X_{i},Y_{i+n})$ defines a chain map $f_{\bullet}:X_{\bullet}\rightarrow Y_{\bullet}[n]$ if and only if $f\in\textrm{Ker}(d_{n})$. Similarly, $f_{\bullet}$ is then null-homotopic if and only if it is in $\textrm{Im}(d_{n+1})$. This gives the isomorphism
$$H_{n}\textbf{Hom}(X_{\bullet},Y_{\bullet})=\textrm{Hom}_{ Ch(\mathpzc{E})}(X,Y[n])\big\slash\sim$$
The isomorphism $\textrm{Ext}^{1}_{dw}(X,Y[n-1])\cong \textrm{Hom}_{ Ch(\mathpzc{E})}(X,Y[n])\big\slash\sim$ follows from Proposition \ref{splitexact} and Proposition \ref{cone0}. 
\end{proof}

\section{Resolutions in Exact Categories}

We will need some results about resolutions in exact categories later.

\subsection{Bounded Resolutions}

We begin by discussing the easier case of bounded resolutions.

\begin{defn}
An exact category $\mathpzc{E}$ is said to \textbf{have enough projectives} if for any object $X$ of $\mathpzc{E}$,there is a projective object $P$ and an admissible epimorphism $P\twoheadrightarrow X$.
\end{defn}

\begin{lem}\label{enoughres}
Let $\mathcal{P}$ be a subclass of $\textbf{Ob}(\mathpzc{E})$, the objects of $\mathpzc{E}$. Assume that for any object $E$ of $\mathpzc{E}$ there is an object $P\in\mathcal{P}$ and an admissible epimorphism $P\twoheadrightarrow E$. Then, for any bounded below complex $E$ of $ Ch_{+}(\mathpzc{E})$, there is a bounded below complex $P$ whose entries are objects of $\mathcal{P}$, and a quasi-isomorphism
$$u:P\rightarrow E$$
where each $u_{k}:P_{k}\rightarrow E_{k}$ is an admissible epimorphism. Moreover, this construction can be made functorial if the choice of admissible epimorphism $P\twoheadrightarrow E$ can be made functorial.
\end{lem}

\begin{proof}
This is proved in \cite{Buehler} Theorem 12.7 for the case that $\mathcal{P}$ is the class of projectives in an exact category with enough projectives. However the proof goes through the same.
\end{proof}

\begin{lem}\label{projextend}
Let $A,B$ be objects in an exact category $\mathpzc{E}$. Let $f:A\rightarrow B$ be a morphism. Let $P_{\bullet}$ be a complex with $P_{-1}=A, P_{n}=0$ for $n<-1$ and $P_{n}$ projective for $n>0$. Also let $Q_{\bullet}$ be an acyclic complex with $Q_{-1}=B$ and $Q_{n}=0$ for $n<-1$.  Then there is a chain map $f_{\bullet}:P_{\bullet}\rightarrow Q_{\bullet}$ with $f_{-1}=f$. Moreover, $f_{\bullet}$ is unique up to homotopy.
\end{lem}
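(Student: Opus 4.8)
The plan is to imitate the classical comparison theorem for projective resolutions, the only modification being that the surjections onto cycle objects used in the abelian proof must be extracted from the acyclicity of $Q_\bullet$ as genuine admissible epimorphisms, against which projectives lift. Before the inductions I would record the structural consequences of acyclicity. Since $Q_n = 0$ for $n < -1$, acyclicity at degree $-1$ makes $d^Q_0 : Q_0 \to Q_{-1} = B$ an admissible epic. For each $n \ge 0$ the three-term sequence $Q_{n+1} \to Q_n \to Q_{n-1}$ is acyclic, so writing $Z_n := \textrm{Ker}(d^Q_n)$, Proposition \ref{adstrict} presents $d^Q_{n+1}$ as an admissible epic $Q_{n+1} \twoheadrightarrow Z_n$ followed by the admissible monic $Z_n \rightarrowtail Q_n$, with $\textrm{Im}(d^Q_{n+1}) \to Z_n$ an isomorphism. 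These admissible epics onto the cycle objects are the only non-abelian ingredient.

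For existence I would build $f_\bullet$ by induction, starting from $f_{-1} = f$. In the base case $f \circ d^P_0 : P_0 \to B$ lifts through the admissible epic $d^Q_0 : Q_0 \twoheadrightarrow B$ by projectivity of $P_0$ (Proposition \ref{projequiv}), giving $f_0$ with $d^Q_0 \circ f_0 = f \circ d^P_0$. Assuming $f_{-1},\dots,f_{n-1}$ constructed, I first compute $d^Q_{n-1} \circ (f_{n-1} \circ d^P_n) = f_{n-2} \circ d^P_{n-1} \circ d^P_n = 0$, so $f_{n-1} \circ d^P_n$ factors through the kernel $Z_{n-1} \rightarrowtail Q_{n-1}$ via a unique map $\varphi : P_n \to Z_{n-1}$; lifting $\varphi$ through $Q_n \twoheadrightarrow Z_{n-1}$ by projectivity of $P_n$ yields $f_n$ with $d^Q_n \circ f_n = f_{n-1} \circ d^P_n$, as required.

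For uniqueness up to homotopy, given two lifts $f_\bullet, g_\bullet$ I set $h_\bullet = f_\bullet - g_\bullet$, a chain map with $h_{-1} = 0$, and construct a homotopy $D_n : P_n \to Q_{n+1}$ satisfying $h_n = D_{n-1} \circ d^P_n + d^Q_{n+1} \circ D_n$ by induction, with $D_n = 0$ for $n < 0$. At stage $n$ I put $\theta_n := h_n - D_{n-1} \circ d^P_n$ and use the chain-map identity $d^Q_n \circ h_n = h_{n-1} \circ d^P_n$ together with the inductive relation for $h_{n-1}$ to get $d^Q_n \circ \theta_n = 0$. Hence $\theta_n$ factors through $Z_n$, and lifting through $Q_{n+1} \twoheadrightarrow Z_n$ by projectivity of $P_n$ produces the desired $D_n$ with $d^Q_{n+1} \circ D_n = \theta_n$.

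The part demanding care is the first paragraph: I must verify that the paper's notion of acyclicity (admissible acyclicity together with admissible coacyclicity) really yields the admissible epimorphism $Q_{n+1} \twoheadrightarrow \textrm{Ker}(d^Q_n)$ onto each cycle object, since once these are in hand the two inductions are formally identical to the abelian argument, using only kernel universal properties and the lifting characterization of projectivity in Proposition \ref{projequiv}.
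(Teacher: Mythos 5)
Your argument is correct and is exactly the comparison-theorem proof that the paper delegates to \cite{Buehler} Theorem 12.4: extract the admissible epimorphisms $Q_{n+1}\twoheadrightarrow \textrm{Ker}(d^{Q}_{n})$ from acyclicity of $Q_{\bullet}$ (via Proposition \ref{adstrict}) and then run the two classical inductions against them using the lifting property of Proposition \ref{projequiv}. The only caveat is that both your base case and the $n=0$ stage of the homotopy construction use projectivity of $P_{0}$, so the hypothesis should read ``$P_{n}$ projective for $n\ge 0$'' (as in B\"uhler's Theorem 12.4) rather than ``$n>0$'' --- a typo in the statement, not a gap in your proof.
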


\begin{proof}
See \cite{Buehler} Theorem 12.4.
\end{proof}

As in the abelian case one can define derived functors between derived categories of exact categories. There are also notions of adapted classes for functors. Proposition \ref{projequiv} and Lemma \ref{enoughres} essentially say that as in the abelian case, if a category $\mathpzc{E}$ has enough projectives, then the class of projective objects is adapted to the functor $\textrm{Hom}(-,A):\mathpzc{E}^{op}\rightarrow\mathpzc{Ab}$. It can be shown that $R^{n}\textrm{Hom}(-,A)\defeq H_{n}(R\textrm{Hom}(-,A))\cong\textrm{Ext}^{n}(-,A)$.

\subsection{Unbounded Resolutions}
When dealing with the model structures on unbounded chain complexes., we will also need to have unbounded resolutions. For this we will modify the famous Theorem 3.4 in \cite{spaltenstein} and its proof to work for more general exact categories. In the following we shall let $\mathcal{B}$ be a class of complexes in $\mathpzc{E}$ which is stable under shifts, and we shall assume that for any bounded below complex $X_{\bullet}$ there is a bounded below complex $B_{\bullet}$ in $\mathcal{B}$ and a quasi-isomorphism $B_{\bullet}\rightarrow X_{\bullet}$ which is an admissible epimorphism in each degree. We will call such a class a \textbf{bounded resolving class}. 

 Let us now recall some notions from Spaltenstein's paper.

\begin{defn}
Let $\mathcal{B}$ be a class of complexes. A direct system $(P_{\bullet}^{n})_{n\in E}$ in $ Ch(\mathpzc{E})$ is a $\mathcal{B}$-\textbf{special direct system} if it satisfies the following conditions.
\begin{enumerate}
\item
$E$ is well-ordered.
\item
If $n\in E$ has no predecessor then $P_{\bullet}^{n}=\textrm{lim}_{\rightarrow_{m<n}} P^{m}_{\bullet}$.
\item
If $n\in E$ has a predecessor $n-1$ then the natural chain map $P^{n-1}_{\bullet}\rightarrow P^{n}_{\bullet}$ is injective, its cokernel $C^{n}_{\bullet}$ belongs to $\mathcal{B}$, and the short exact sequence
$$0\rightarrow P^{n-1}_{\bullet}\rightarrow P^{n}_{\bullet}\rightarrow C^{n}_{\bullet}\rightarrow 0$$
is split exact in each degree.
\end{enumerate}
We denote by $\textrm{lim}_{\rightarrow}\mathcal{B}$ the class of complexes which are limits of $\mathcal{B}$-special direct systems.
\end{defn}

 \begin{prop}\label{specialinductive}
Let $\mathpzc{E}$ be an exact category which has kernels. Suppose that $\mathcal{B}$ is a bounded resolving class. Then for any complex $X_{\bullet}$ there exists a $\mathcal{B}$-special direct system $(P_{\bullet}^{n})_{n\ge-1}$ and a direct system of chain maps $f^{n}:P^{n}_{\bullet}\rightarrow\tau_{\ge n} X_{\bullet}$ such that 
\begin{enumerate}
\item
$f^{n}$ is a quasi-isomorphism for every $n\ge0$.
\item
$f^{n}$ is an admissible epimorphism in each degree.
\end{enumerate}
\end{prop}

\begin{proof}
We construct the data $(P^{n}_{\bullet})_{n\ge-1}$ and $(f^{n})_{n\ge-1}$ by induction. For $n=-1$ we take $P^{-1}_{\bullet}=0$ and so $f^{-1}=0$. Let now $n\ge1$, and suppose that $P^{-1}_{\bullet},\ldots, P^{n-1}_{\bullet}$ and $f^{-1},\ldots,f^{n-1}$ have been constructed. Let $P_{\bullet}= P_{\bullet}^{n-1}$ and $Y_{\bullet}=\tau_{\ge n}X_{\bullet}$. Denote by $f$ the composite $P^{n-1}_{\bullet}\rightarrow\tau_{\ge n-1}X_{\bullet}\rightarrow Y_{\bullet}$. By assumption we can find a quasi-isomorphism $g:Q_{\bullet}\rightarrow\textrm{cone}(f)[1]$ which is an admissible epimorphism in each degree, and $Q_{\bullet}[-1]\in\mathcal{B}$. Now we have a degree-wise splitting, $\textrm{cone}(f)[1]=P_{\bullet}\oplus Y_{\bullet}[1]$. We therefore get two maps $g':Q_{\bullet}\rightarrow P_{\bullet}$ and $g'':Q_{\bullet}\rightarrow Y_{\bullet}[1]$ which are admissible epimorphisms in each degree, and such that $g'$ is a chain map. Define $P^{n}\defeq\textrm{cone}(-g')$ and let $f^{n}:\textrm{cone}(-g')=Q[1]\oplus P\rightarrow Y$ be defined by $f^{n}=g''[1]+f$. As in \cite{spaltenstein} Lemma 3.3, by direct calculation $f^{n}$ is a chain map and $\textrm{cone}(f^{n})=\textrm{cone}(g)[1]$. Since $g$ is a quasi-isomorphism $f^{n}$ is as well. Moreover the sequence
$$0\rightarrow P^{n-1}_{\bullet}\rightarrow P_{\bullet}^{n}\rightarrow Q_{\bullet}[1]\rightarrow0$$
is split exact in each degree.
\end{proof}

Following the work of  \cite{chacholski2017relative} in the relative homological algebra setting, we will give a general condition under which unbounded resolutions exist.
\begin{defn}
Let $\mathpzc{E}$ be an exact category with kernels and let $k\in\mathbb{Z}$. A complex $X$ is said to be \textbf{homologically concentrated in degrees }$\le k$ if $\tau_{\ge k}X$ is acyclic.
\end{defn}
The following definition is an exact category version \cite{chacholski2017relative} Definition 6.1.
\begin{defn}
Let $\mathpzc{E}$ be an exact category with kernels and $k\in\mathbb{Z}$. A bounded resolving class $\mathcal{B}$ is said to satisfy condition \textbf{AB4-k} if whenever $\{B_{n}\}_{n\in\mathbb{N}}$ is a collection of objects of $\mathcal{B}$ concentrated in degree $\le 0$, then $\bigoplus_{n\in\mathbb{N}}B_{n}$ exists and is homologically concentrated in degrees $\le -k+1$.
\end{defn}

\begin{lem}
Let $\mathpzc{E}$ be an exact category with kernels and countable filtered colimits, and let $\mathcal{B}$ be a bounded resolving class satisfying condition $AB4-k$. Let
$$K_{0}\rightarrow K_{1}\rightarrow\ldots\rightarrow K_{m}\rightarrow K_{m+1}\rightarrow\ldots$$
be a sequence with each $K_{m}\rightarrow K_{m+1}$ an admissible monomorphism, and each $K_{m}$ is a complex in $\mathcal{B}$ which is homologically concentrated in degrees $\le n$. Then $\textrm{lim}_{\rightarrow_{m}}K_{m}$ is homologically concentrated in degrees $\le n-k+1$.
\end{lem}
\begin{proof}
The proof is an adaptation of \cite{chacholski2017relative} Proposition 6.3. The sequence
$$0\rightarrow\coprod_{n}K_{n}\rightarrow\coprod_{n}K_{n}\rightarrow\textrm{colim}_{n}K_{n}\rightarrow0$$
is degree-wise split. Indeed this can be proven by observing that the sequence 
$$0\rightarrow\textrm{lim}_{n}Hom(K_{n},E)\rightarrow Hom(\prod_{n}K_{n},E)\rightarrow Hom(\prod_{n}K_{n},E)\rightarrow0$$
for any $E\in\mathpzc{E}$ is a degree-wise split exact sequence of complexes of abelian groups. By passing to an abelianisation and using the long exact sequence, on homology, it suffices to prove that $\coprod_{n}K_{n}$ is concentrated in degrees $\le n-k+1$. By shifting we may assume that $n=0$, and then we are done.
\end{proof}

\begin{cor}\label{Bres}
Let $\mathpzc{E}$ be an exact category with kernels. Let $\mathcal{B}$ be a bounded resolving class satisfying condition $AB4-k$ for some $k$. Then any chain complex $X_{\bullet}$ in $\mathpzc{E}$ admits a $\textrm{lim}_{\rightarrow}\mathcal{B}$ resolution which is an admissible epimorphism in each degree.
\end{cor}

\begin{proof}
Fix a $\mathcal{B}$-special direct system $(P_{\bullet}^{n})_{n\ge-1}$ and a direct system of chain maps $f^{n}:P^{n}_{\bullet}\rightarrow\tau_{\ge n} X_{\bullet}$ such that 
\begin{enumerate}
\item
$f^{n}$ is a quasi-isomorphism for every $n\ge0$.
\item
$f^{n}$ is an admissible epimorphism in each degree.
\end{enumerate}
Let $P_{\bullet}$ be the direct limit of the special direct system. For each $n$ the composition $P_{\bullet}^{n}\rightarrow P_{\bullet}\rightarrow X_{\bullet}$ is an admissible epimorphism in degrees $>n$. Thus $P_{\bullet}\rightarrow X_{\bullet}$ is an admissible epimorphism in all degrees. It remains to prove that $P_{\bullet}\rightarrow X_{\bullet}$ is a weak equivalence. This can be proven as in \cite{chacholski2017relative} Theorem 6.4, mutatis mutandis.
\end{proof}

Now let $\mathcal{P}$ be any class of objects in $\mathpzc{E}$. Suppose that for each object $X$ in $\mathpzc{E}$ there is an object $P$ in $\mathcal{P}$ together with an admissible epimorphism $P\twoheadrightarrow X$. By Lemma \ref{enoughres} the class $ Ch_{+}(\mathcal{P})$ of chain complexes with entries in $\mathcal{P}$ is a bounded resolving class.   Let us introduce the following notion.

\begin{defn}
Let $\mathpzc{E}$ be an exact category. We say that a class of objects $\mathcal{P}$ in $\mathpzc{E}$ satisfies condition $AB4-k$ for some $k$ if $Ch_{+}(\mathcal{P})$ satisfies condition $AB4-k$.
\end{defn} 

From the proof of Corollary \ref{Bres} we then immediately have the following.

\begin{cor}\label{Kproj}
Let $\mathpzc{E}$ be a weakly idempotent complete exact category with kernels. Let $\mathcal{P}$ be a class of objects such that for each object $X$ in $\mathpzc{E}$ there is an object $P$ in $\mathcal{P}$ together with an admissible epimorphism $P\twoheadrightarrow X$. Suppose further that $\mathcal{P}$  satisfies condition $AB-k$ for some $k\in\mathbb{Z}$. Then for any complex $X_{\bullet}$ in $ Ch(\mathpzc{E})$ there is a complex $P_{\bullet}$ in $ Ch(\mathcal{P})$ and an admissible epimorphism $P_{\bullet}\rightarrow X_{\bullet}$ which is a quasi-isomorphism. Moreover, $X_{\bullet}$ is the limit of a $ Ch_{+}(\mathcal{P})$-special direct system.
\end{cor}

For a class of objects $\mathcal{P}$ in an exact category $\mathpzc{E}$, let $\textbf{AdMon}_{\mathcal{P}}$ denote the class of admissible monomorphisms whose cokernel is in $\mathcal{P}$.

We will also need the following acyclicity result, also proved in \cite{spaltenstein} for abelian categories.

\begin{prop}\label{klift}
Let $\mathcal{T}$ be a class of complexes in $ Ch(\mathpzc{E})$. The class of all complexes $A_{\bullet}\in  Ch(\mathpzc{E})$ such that $\textbf{Hom}(A_{\bullet},T_{\bullet})$ is acyclic for every $T_{\bullet}$ in $\mathcal{T}$ is closed under special direct limits.
\end{prop}

\begin{proof}
It is clear from the definition of the contravariant functor $\textbf{Hom}(-,T_{\bullet})$ that it transforms colimits into limits. If $(P^{n}_{\bullet})_{n\in E}$ is a $\mathcal{B}$-special direct system then
$(\textbf{Hom}(P^{n}_{\bullet},T_{\bullet}))_{n\in E}$ is a $\mathcal{B}$-special inverse system of acyclic complexes of abelian groups, where we use the terminology of \cite{spaltenstein} Section 2. Lemma 2.3 in \cite{spaltenstein} says that the inverse limit of such a system is again acyclic.
\end{proof}

\section{Monoidal Exact Categories and Monads in Exact Categories}

We conclude this section with a brief note on monoidal exact categories, and exact structures on categories of modules over monoids. More generally we put an exact structure on the category of algebras for an additive monad which is compatible in a precise sense with the exact structure on the underlying category. First we need a general definition.

\begin{defn}\label{monoidaldef}
\begin{enumerate}
\item
A \textbf{monoidal exact category} is a symmetric monoidal category $(\mathpzc{E},\otimes,k)$ where $\mathpzc{E}$ is an exact category, and the monoidal functor $\otimes:\mathpzc{E}\times\mathpzc{E}\rightarrow\mathpzc{E}$ is cocontinous in each variable. $\mathpzc{E}$ is said to be \textbf{compatibly monoidal} if in addition $X\otimes(-)$ is right exact for any $X\in\mathpzc{E}$.
\item
A \textbf{closed monoidal exact category} is a closed symmetric monoidal category $(\mathpzc{E},\otimes,k,\underline{Hom})$ with $\mathpzc{E}$ an exact category. $\mathpzc{E}$ is said to be \textbf{compatibly closed monoidal} if in addition $X\otimes(-)$ is right exact, and $\underline{Hom}(X,-),\underline{Hom}(-,X)$ are left exact for any $X\in\mathpzc{E}$
\end{enumerate}
\end{defn}

Note that if $(\mathpzc{E},\otimes,\underline{\textrm{Hom}},k)$ is a closed monoidal exact category, then $(\mathpzc{E},\otimes,k)$ is automatically a monoidal exact category. Indeed for each object $X$, $X\otimes(-)$ is a left adjoint so it preserves colimits.

\begin{defn}
Let $(\mathpzc{E},\otimes,k)$ be an exact category equipped with a symmetric monoidal structure where the tensor functor is additive. An object $F$ of $\mathpzc{E}$ is said to be \textbf{(strongly) flat} if the functor $F\otimes(-)$ is (strongly) exact.
\end{defn}

In the familiar category of $R$-modules over some ring $R$ with the usual monoidal structure, projectives are always flat. Moreover the tensor product of two projective $R$-modules is again projective. This is not always guaranteed for an arbitrary monoidal exact category. However it is a useful property to have, in particular when dealing with the projective model structure later. We therefore make a definition.

\begin{defn}
A monoidal exact category in which projective objects are flat and $P\otimes P'$ is projective whenever both $P$ and $P'$ are is said to be \textbf{projectively monoidal} . A projectively monoidal exact category is said to be \textbf{strongly projectively monoidal} if projectives are strongly flat.
\end{defn}

In closed exact categories we have the following observation.

\begin{obs}\label{internalproj}
Let $(\mathpzc{E},\otimes,k,\textrm{\underline{Hom}})$ be a closed monoidal exact category with enough projectives such that the underlying monoidal category is projectively monoidal. Then for any projective $P$, the functor $\underline{\textrm{Hom}}(P,-):\mathpzc{E}\rightarrow\mathpzc{E}$ is exact. The proof follows immediately from the adjunction between $\otimes$ and $\underline{\textrm{Hom}}$. It is shown in the quasi-abelian case in \cite{qacs}, for example, and the proof works identically in the exact case.
\end{obs}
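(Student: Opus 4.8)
The plan is to show that applying $\underline{\textrm{Hom}}(P,-)$ to an arbitrary short exact sequence $0\rightarrow X\rightarrow Y\rightarrow Z\rightarrow 0$ again produces a short exact sequence. First I would dispose of the easy half: since the closed monoidal structure is compatible, the functor $\underline{\textrm{Hom}}(P,-)$ is left exact by definition, so the sequence $0\rightarrow\underline{\textrm{Hom}}(P,X)\rightarrow\underline{\textrm{Hom}}(P,Y)\rightarrow\underline{\textrm{Hom}}(P,Z)$ is admissibly acyclic. Writing $\psi$ for the last map, this means that $\underline{\textrm{Hom}}(P,X)\rightarrow\underline{\textrm{Hom}}(P,Y)$ is an admissible monic whose image is exactly $\textrm{Ker}(\psi)$. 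Hence the entire content of the observation is reduced to proving that $\psi$ is an admissible epimorphism; once this is known, $\big(\underline{\textrm{Hom}}(P,X)\rightarrowtail\underline{\textrm{Hom}}(P,Y),\ \psi\big)$ is a kernel–cokernel pair and exactness follows.

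The key step is to establish that $\psi$ is an admissible epic, and here the tensor–hom adjunction together with the projectively monoidal hypothesis does the work. Since $\mathpzc{E}$ has enough projectives, I would choose a projective object $Q$ and an admissible epimorphism $q:Q\twoheadrightarrow\underline{\textrm{Hom}}(P,Z)$. For every object $W$ the adjunction supplies a natural isomorphism $\textrm{Hom}(Q,\underline{\textrm{Hom}}(P,W))\cong\textrm{Hom}(Q\otimes P,W)$. Because $Q$ and $P$ are both projective and the category is projectively monoidal, $Q\otimes P$ is projective, so $\textrm{Hom}(Q\otimes P,-)$ is exact; applying it to the original short exact sequence shows that $\textrm{Hom}(Q\otimes P,Y)\rightarrow\textrm{Hom}(Q\otimes P,Z)$ is surjective. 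By naturality of the adjunction, $\textrm{Hom}(Q,\psi)$ is therefore surjective, so $q$ lifts to a morphism $\tilde{q}:Q\rightarrow\underline{\textrm{Hom}}(P,Y)$ with $\psi\circ\tilde{q}=q$.

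Finally, since $\psi\circ\tilde{q}=q$ is an admissible epimorphism, the Obscure Axiom (Proposition \ref{obscure}(2)), which applies because $\mathpzc{E}$ is weakly idempotent complete, forces $\psi$ itself to be an admissible epimorphism. Combined with the first paragraph, this exhibits $0\rightarrow\underline{\textrm{Hom}}(P,X)\rightarrow\underline{\textrm{Hom}}(P,Y)\rightarrow\underline{\textrm{Hom}}(P,Z)\rightarrow 0$ as short exact, which is exactly the assertion that $\underline{\textrm{Hom}}(P,-)$ is exact. The hard part, as anticipated, is verifying that $\psi$ is \emph{admissible}: in a non-abelian exact setting the mere surjectivity of $\textrm{Hom}(Q,\psi)$ on a single projective cover does not by itself detect an admissible epic, and the device that bridges this gap is the lifting of $q$ followed by the Obscure Axiom — which is also the only place the weak idempotent completeness of $\mathpzc{E}$ is genuinely needed, while projectivity of $Q\otimes P$ is where the projectively monoidal hypothesis enters.
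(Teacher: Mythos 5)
Your proof is correct and takes the route the paper intends: the paper only sketches the argument as following ``immediately from the adjunction between $\otimes$ and $\underline{\textrm{Hom}}$'' (deferring details to the quasi-abelian case in \cite{qacs}), and your write-up is exactly that argument made explicit. Left exactness of $\underline{\textrm{Hom}}(P,-)$ from compatibility of the closed structure, projectivity of $Q\otimes P$ transported through the adjunction $\textrm{Hom}(Q,\underline{\textrm{Hom}}(P,-))\cong\textrm{Hom}(Q\otimes P,-)$, and Proposition \ref{obscure} to upgrade the lifted cover to an admissible epimorphism is precisely how the cited proof runs, so there is nothing to add.
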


Now let $R$ be a unital associative monoid internal to a monoidal exact category $(\mathpzc{E},\otimes,k)$. It turns out that there is an exact structure on the additive category ${}_{R}\mathpzc{Mod}$ where a null sequence
$$0\rightarrow X\rightarrow Y\rightarrow Z\rightarrow 0$$
in ${}_{R}\mathpzc{Mod}$ is exact if and only if it is a short exact sequence when regarded as a null sequence in $\mathpzc{E}$. This follows from a more general result about cocontinuous monads in exact categories.

\begin{prop}\label{monadexact}
Let $\mathpzc{E}$ be an exact category and let $T:\mathpzc{E}\rightarrow\mathpzc{E}$ be a cocontinuous monad. There is an exact structure on $\mathpzc{E}^{T}$ where a null sequence
$$0\rightarrow X\rightarrow Y\rightarrow Z\rightarrow 0$$
in $\mathpzc{E}^{T}$ is exact if and only if it is a short exact sequence when regarded as a null sequence in $\mathpzc{E}$. We call this exact structure the \textbf{induced exact structure}.
\end{prop}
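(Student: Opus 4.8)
The plan is to transport the exact structure from $\mathpzc{E}$ to $\mathpzc{E}^T$ along the forgetful functor $U\colon\mathpzc{E}^T\to\mathpzc{E}$, exploiting that $U$ is additive and faithful, that it creates all limits (a formal property of Eilenberg--Moore categories, needing no hypothesis on $T$), and that, thanks to compatibility of $T$, it creates every colimit whose underlying diagram has a colimit in $\mathpzc{E}$.

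First I would record that $\mathpzc{E}^T$ is additive. Faithfulness of $U$ is automatic; since $T$ is additive the sum of two $T$-algebra morphisms is again a $T$-algebra morphism, so each $\textrm{Hom}_{\mathpzc{E}^T}(X,Y)$ is a subgroup of $\textrm{Hom}_{\mathpzc{E}}(UX,UY)$ with bilinear composition. The zero object and finite biproducts of $\mathpzc{E}$ inherit canonical algebra structures (using $T(0)=0$ and $T(A\oplus B)=TA\oplus TB$) and are created by $U$, so $\mathpzc{E}^T$ is preadditive with finite biproducts, hence additive.

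The technical heart is a creation-of-colimits lemma: if a diagram in $\mathpzc{E}^T$ admits a colimit $L$ in $\mathpzc{E}$ after applying $U$, then compatibility ($T$, and hence $T^2$, preserves this colimit) produces a unique algebra structure on $L$ exhibiting it as the colimit in $\mathpzc{E}^T$. Concretely, for the cokernel $c\colon Y\to C$ in $\mathpzc{E}$ of the underlying map of an admissible monic $i\colon X\to Y$, the relation $c\circ\xi_Y\circ T(Ui)=c\circ(Ui)\circ\xi_X=0$ together with $Tc=\textrm{Coker}(T(Ui))$ (colimit preservation) yields a unique $\xi_C$ with $\xi_C\circ Tc=c\circ\xi_Y$, and the two algebra identities are checked by cancelling the epimorphisms $Tc$ and $T^2c$. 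The identical mechanism handles pushouts of admissible monics, whose underlying colimits exist in $\mathpzc{E}$.

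With this in hand I would define $\mathcal{Q}^T$ as in the statement and verify the exact-category axioms by reducing each to the corresponding fact in $\mathpzc{E}$. Using colimit creation, a morphism $f$ is an admissible monic (resp. epic) in $\mathpzc{E}^T$ exactly when $Uf$ is one in $\mathpzc{E}$: one direction is clear, and conversely the cokernel (resp. kernel) supplied in $\mathpzc{E}$ is created in $\mathpzc{E}^T$, producing the required kernel--cokernel pair; in particular every member of $\mathcal{Q}^T$ is a genuine kernel--cokernel pair since $U$ creates both kernel and cokernel of its maps. Given this description, stability under isomorphisms and closure under composition are inherited verbatim from $\mathpzc{E}$; pushout-stability of admissible monics follows because the pushout is created by $U$ and admissible monics are pushout-stable in $\mathpzc{E}$ (\cite{Buehler}), and dually pullback-stability of admissible epics uses creation of the pullback together with pullback-stability of admissible epics in $\mathpzc{E}$. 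The main obstacle is the colimit-creation lemma itself in this exact, non-abelian setting: one must manufacture the algebra structures on cokernels and pushouts using only colimits that actually exist in $\mathpzc{E}$ and verify the algebra axioms there, which is precisely what the compatibility hypotheses on $T$ are designed to guarantee.
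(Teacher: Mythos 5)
Your proposal is correct and follows essentially the same route as the paper, which simply invokes the standard fact that for a cocontinuous monad the forgetful functor $\mathpzc{E}^{T}\rightarrow\mathpzc{E}$ creates limits and colimits and reflects isomorphisms (citing the Handbook of Categorical Algebra) and leaves the verification of the exact-category axioms implicit. You have merely spelled out the creation-of-colimits lemma and the axiom checks that the paper delegates to that reference.
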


\begin{proof}
This follows from the general fact that if $T$ is a cocontinuous monad on any category $\mathpzc{E}$ then the forgetful functor $\mathpzc{E}^{T}\rightarrow\mathpzc{E}$ creates limits and colimits and reflects isomorphisms. For a proof of this see \cite{handbook} Proposition 4.3.1 and Proposition 4.3.2.
\end{proof}

This exact structure inherits a lot from the exact structure on $\mathpzc{E}$. In fact we have the following lemma.

\begin{lem}\label{reflectlots}
Let $|-|:\mathpzc{D}\rightarrow\mathpzc{E}$ be a functor between exact categories which reflects exactness and creates both kernels and cokernels. Then $|-|$ reflects admissible monomorphisms, admissible epimorphisms, weakly admissible morphisms, admissible morphisms, and admissibly acyclic sequences.
\end{lem}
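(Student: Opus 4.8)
The plan is to prove the five reflection statements in sequence, bootstrapping the later ones from the first two. Beyond the hypotheses, the one standing tool I would use is the following consequence of creation: since $|-|$ creates kernels and cokernels, whenever $|f|$ admits a kernel (resp. cokernel) in $\mathpzc{E}$, the morphism $f$ admits a kernel (resp. cokernel) in $\mathpzc{D}$, and $|-|$ carries it isomorphically to the kernel (resp. cokernel) of $|f|$.

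First I would reflect admissible monics. If $|f|$ is an admissible monic it has a cokernel $c\colon |B|\to C$, and $0\to|A|\xrightarrow{|f|}|B|\xrightarrow{c}C\to 0$ is short exact. By creation of cokernels $f$ has a cokernel $B\to\tilde C$ in $\mathpzc{D}$ with $|\tilde C|=C$, so the null sequence $0\to A\to B\to\tilde C\to 0$ is sent by $|-|$ to a short exact sequence; as $|-|$ reflects exactness, it is itself short exact, whence $f$ is an admissible monic. The dual argument, using creation of kernels, shows $|-|$ reflects admissible epics. For weak admissibility, suppose $|f|$ is weakly left admissible; then $|f|$ has a kernel whose inclusion into $|A|$ is an admissible monic (for a kernel map the notions of admissible morphism and admissible monic coincide, since an admissible epic that is monic is an isomorphism). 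Creating kernels yields $\mathrm{Ker}(f)\to A$ in $\mathpzc{D}$ whose image under $|-|$ is that admissible monic, so by the previous step $\mathrm{Ker}(f)\to A$ is an admissible monic, hence admissible, and $f$ is weakly left admissible. Dually for weakly right admissible, and weakly admissible is the conjunction of the two.

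The harder remaining cases are admissible morphisms and admissibly acyclic sequences, and the main obstacle there is to upgrade an isomorphism downstairs to one upstairs without assuming that $|-|$ reflects isomorphisms. The device I would use is Corollary \ref{adbi}: if $|h|$ is an isomorphism then it is both an admissible monic and an admissible epic, so by the two reflections already established $h$ is both an admissible monic and an admissible epic, hence an isomorphism. Now if $|f|$ is admissible, I use creation of kernels and cokernels to build in $\mathpzc{D}$ the factorisation of Proposition \ref{analysis}, namely $\mathrm{Ker}(f)\rightarrowtail A\twoheadrightarrow\mathrm{Coim}(f)\xrightarrow{\hat f}\mathrm{Im}(f)\rightarrowtail B\twoheadrightarrow\mathrm{Coker}(f)$, with $|\mathrm{Coim}(f)|=\mathrm{Coim}(|f|)$ and $|\mathrm{Im}(f)|=\mathrm{Im}(|f|)$. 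By uniqueness of the induced map (the coimage projection is epic, the image inclusion monic) $|\hat f|$ is the canonical comparison $\mathrm{Coim}(|f|)\to\mathrm{Im}(|f|)$, which is an isomorphism because $|f|$ is admissible; the device then makes $\hat f$ an isomorphism, so by Proposition \ref{adstrict} $f$ is admissible.

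Finally, for a null sequence $X\xrightarrow{f}Y\xrightarrow{g}Z$ whose image is admissibly acyclic, I would reflect $f$ to be admissible exactly as above, while creation of kernels produces $\mathrm{Ker}(g)$ in $\mathpzc{D}$ with $|\mathrm{Ker}(g)|=\mathrm{Ker}(|g|)$. Since $g\circ f=0$ and $X\twoheadrightarrow\mathrm{Im}(f)$ is epic, the inclusion $\mathrm{Im}(f)\rightarrowtail Y$ factors uniquely through $\mathrm{Ker}(g)$, giving the natural comparison $\mathrm{Im}(f)\to\mathrm{Ker}(g)$; under $|-|$ it becomes the natural map $\mathrm{Im}(|f|)\to\mathrm{Ker}(|g|)$, which is an isomorphism by admissible acyclicity of the image sequence. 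The same device then shows this comparison is an isomorphism in $\mathpzc{D}$, and since $f$ is admissible (hence weakly right admissible) the sequence $X\to Y\to Z$ is weakly acyclic with $f$ admissible, i.e. admissibly acyclic. The only real difficulty throughout is this repeated upgrade of isomorphisms, which is precisely where Corollary \ref{adbi} together with the monic/epic reflections does the work.
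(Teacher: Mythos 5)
Your proposal is correct and follows essentially the same route as the paper's proof: reflect admissible monics and epics via creation of (co)kernels plus reflection of exactness, deduce reflection of isomorphisms from these two (the paper states this as an immediate consequence; you spell it out via Corollary \ref{adbi}), and then transport the canonical factorisation and the comparison maps $\mathrm{Coim}\to\mathrm{Im}$ and $\mathrm{Im}(f)\to\mathrm{Ker}(g)$ upstairs to handle admissibility and admissible acyclicity. The only difference is expository: you are slightly more explicit about the uniqueness of the induced comparison maps, which the paper leaves implicit.
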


\begin{proof}
Let $f:X\rightarrow Y$ be a morphism in $\mathpzc{D}$. Supposethat $|f|$ is an admissible monomorphism. Then there is an exact sequence
$$0\rightarrow |X|\rightarrow |Y|\rightarrow\textrm{Coker}(|f|)\rightarrow0$$
Since $|-|$ creates cokernels and reflects exactness
$$0\rightarrow X\rightarrow Y\rightarrow\textrm{Coker}(f)\rightarrow 0$$
is an exact sequence in $\mathpzc{D}$. Thus $f$ is an admissible monomorphism. That $|-|$ reflects admissible epimorphisms is proved similarly. Note in particular that this means $|-|$ reflects isomorphisms.

Suppose now that $|f|:|X|\rightarrow|Y|$ is weakly admissible. Then there is a decomposition
\begin{displaymath}
\xymatrix{
 & |X|\ar@{->>}[dr]\ar[rrr]^{|f|} && & |Y|\ar@{->>}[dr] &\\
 \textrm{Ker} (|f|)\ar@{>->}[ur] & & \textrm{Coim}(|f|)\ar[r]^{\hat{|f|}} & \textrm{Im}(|f|)\ar@{>->}[ur] & & \textrm{Coker} (|f|)
}
\end{displaymath}
Since $|-|$ reflects exactness and creates both kernels and cokernels there is a decomposition in $\mathpzc{D}$
\begin{displaymath}
\xymatrix{
 & X\ar@{->>}[dr]\ar[rrr]^{f} && & Y\ar@{->>}[dr] &\\
 \textrm{Ker} (f)\ar@{>->}[ur] & & \textrm{Coim}(f)\ar[r]^{\hat{f}} & \textrm{Im}(f)\ar@{>->}[ur] & & \textrm{Coker} (f)
}
\end{displaymath}
Thus $f$ is weakly admissible. If in addition $|f|$ is admissible then $|\hat{f}|$ is an isomorphism. Since $|-|$ reflects isomorphisms $\hat{f}$ is an isomorphism, so $f$ is admissible.

Finally suppose
\begin{displaymath}
\xymatrix{
|X|\ar[r]^{|f|}& |Y|\ar[r]^{|g|} & |Z|\\
}
\end{displaymath}
is admissibly acyclic. Then $|f|$ is admissible, $|g|$ has a kernel and the map $\textrm{Im}(|f|)\rightarrow\textrm{Ker}(|g|)$ is an isomorphism. By the above $f$ is admissible. Since $|-|$ creates kernels and cokernels and also reflects isomorphisms $\textrm{Im}(f)\rightarrow\textrm{Ker}(g)$ is also an isomorphism.
\end{proof}

As a consequence of this and Remark \ref{quasweak} later, if $\mathpzc{E}$ is (quasi)-abelian, then so is $\mathpzc{E}^{T}$. In particular categories of modules for monoid objects in monoidal (quasi)-abelian categories are themselves (quasi)-abelian.

Before concluding this discussion of monoidal exact categories, let us briefly mention induced monoidal structures on chain complexes. So, let $(\mathpzc{E},\otimes,k)$ be a monoidal exact category. Recall from Section \ref{notation} there is an induced monoidal exact structure $( Ch_{*}(\mathpzc{E}),\otimes,S^{0}(k))$ on $ Ch_{*}(\mathpzc{E})$ for $*\in\{\ge0,\le0,+,-,b,\emptyset\}$. Since colimits of chain complexes are computed degree-wise, finite direct sums are strongly exact, and a null sequence of chain complexes is admissibly coacyclic if and only if it is so in each degree, it is clear that this monoidal structure is compatible if the one on $\mathpzc{E}$ is, so that $( Ch_{*}(\mathpzc{E}),\otimes,S^{0}(k))$ is a compatible monoidal exact category for $*\in\{\ge0,\le0,+,-,b\}$. 

Now suppose $(\mathpzc{E},\otimes,S^{0}(k),\underline{\textrm{Hom}})$ is a closed monoidal exact category. Then

$$( Ch_{b}(\mathpzc{E}),\otimes,S^{0}(k),\underline{\textrm{Hom}})$$
is a closed monoidal exact category which is compatible if the closed monoidal exact structure on $\mathpzc{E}$ is. Note that the closed symmetric monoidal exact category

$$( Ch(\mathpzc{E}),\otimes,S^{0}(k),\underline{\textrm{Hom}})$$
need not be a compatible closed monoidal exact category since infinite direct sums/ products need not be admissibly coexact/ admissibly exact. When we deal with unbounded complexes later we shall assume this to be the case. However it will still be a monoidal exact category, and we shall see shortly that compatibility is guaranteed for a closed monoidal structure on a quasi-abelian category.

\subsection{The $\otimes$-Pure Exact Structure}
In this section we discuss pure exact structures on exact categories relative to tensor products. This gives a method for modifying an exact structure in order to make a certain class of objects flat. This is done in \cite{estrada2017pure} Section 3 for the case of the $G$-exact structure on a Grothendieck abelian category. 

\begin{defn}
Let $\mathpzc{E}$ be a monoidal exact category, and $\mathcal{S}$ a class of objects in $\mathpzc{E}$. A short exact sequence in $\mathpzc{E}$ is said to be $\mathcal{S}$-\textbf{pure} if it remains exact after tensoring with any object of $\mathpzc{E}$. An $Ob(\mathpzc{E})$-pure exact sequence is said to be $\otimes$-\textbf{pure}

\end{defn}

\begin{lem}\label{flatcofib}
Suppose $\mathpzc{E}$ is a symmetric monoidal weakly idempotent complete exact category with enough flat objects. If $C\in\mathpzc{E}$ is flat then every short exact sequence
$$0\rightarrow A\rightarrow B\rightarrow C\rightarrow 0$$
is pure.
\end{lem}
\begin{proof}
Suppose $Z$ is arbitrary and let 
$$0\rightarrow X\rightarrow Y\rightarrow Z\rightarrow0$$
be a short exact sequence with $Y$ flat. Let $J:\mathpzc{E}\rightarrow\mathpzc{A}$ be a right abelianisation. Since the functor $J(-\otimes-)$ commutes with cokernels in each variable, we have a diagram
\begin{displaymath}
\xymatrix{
J(A\otimes X)\ar[r]\ar[d] & J(A\otimes Y)\ar[r]\ar[d] & J(A\otimes Z)\ar[d]\ar[r] & 0\\
J(B\otimes X)\ar[r]\ar[d] & J(B\otimes Y)\ar[r]\ar[d] & J(B\otimes Z)\ar[d]\ar[r] & 0\\
J(C\otimes X)\ar[r]\ar[d] & J(C\otimes Y)\ar[r]\ar[d] & J(C\otimes Z)\ar[r]\ar[d] & 0\\
0 & 0 & 0
}
\end{displaymath}
with coacyclic rows and columns. The bottom row is short exact since $C$ is flat. Since $Y$ is flat the middle column is short exact. Then the argument becomes a simple diagram chase.
\end{proof}

\begin{prop}\label{pushoutpure}
The class of $\mathcal{S}$-pure exact sequences defines an exact structure on $\mathpzc{E}$ for $\mathpzc{E}$ weakly idempotent complete. 
\end{prop}
\begin{proof}
Clearly split exact sequences are $\mathcal{S}$-pure, and isomorphisms are both $\mathcal{S}$-pure monomorphisms and $\mathcal{S}$-pure epimorphisms. 
Next we show show that $\mathcal{S}$-pure monomorphisms are stable under pushout.
Let $i:A\rightarrow B$ be a $\mathcal{S}$-pure monomorphism. Consider a pushout diagram
\begin{displaymath}
\xymatrix{
A\ar[d]\ar[r] & B\ar[d]\\
X\ar[r] & Y
}
\end{displaymath}
Since tensoring with $Z$ preserves colimits,
\begin{displaymath}
\xymatrix{
A\otimes Z\ar[d]\ar[r] & B\otimes Z\ar[d]\\
X\otimes Z\ar[r] & Y\otimes Z
}
\end{displaymath}
is a pushout. But by assumption $A\otimes Z\rightarrow B\otimes Z$ is an admissible monic. Hence $X\otimes Z\rightarrow Y\otimes Z$ is also an admissible monic.
Now let 
\begin{displaymath}
\xymatrix{
0\ar[r] & X\ar[r]\ar[d] & P\ar[r]\ar[d] & Z'\ar[d]\ar[r] &0\\
0\ar[r] & X\ar[r] & Y\ar[r]^{g} & Z\ar[r] & 0
}
\end{displaymath}
be a commutative diagram where, $g$ is a pure epimorphism, both rows are exact, and the right-hand square is a pullback. We need to show that the top row is pure-exact. Let $C$ be an object of $\mathcal{S}$. The composite map $X\otimes C\rightarrow P\otimes C\rightarrow Y\otimes C$ is an admissible monomorphism, so $X\otimes C\rightarrow P\otimes C$ is an admissible monomorphism. The cokernel is $P\otimes C\rightarrow Z'\otimes C$. In particular the sequence
$$0\rightarrow X\otimes C\rightarrow P\otimes C\rightarrow Z'\otimes C\rightarrow 0$$
is exact, and $P\rightarrow Z'$ is a pure epimorphism. 
It is clear that the composition of two pure monomorphisms is a pure monomorphism. Let us show that the composition of two pure epimorphisms is a pure epimorphism. Let $f:X\rightarrow Y$ and $g:Y\rightarrow Z$ be pure epimorphisms, and $C$ an object of $\mathcal{S}$. We have that $Ker(f)\otimes C$ is the kernel of the admissible epimorphism $X\otimes C\rightarrow Y\otimes C$. Therefore $Ker(f)\otimes C\rightarrow X\otimes C$ is an admissible monomorphism. This map factors through $Ker(f)\otimes C\rightarrow Ker(g\circ f)\otimes C$ and is therefore an admissible monomorphism. It follows that 
$$0\rightarrow Ker(f)\otimes C\rightarrow Ker(g\circ f)\otimes C\rightarrow  Ker(g)\otimes C\rightarrow0$$
is a short exact sequence. Consider the diagram
\begin{displaymath}
\xymatrix{
& 0\ar[d] & 0\ar[d] & 0\ar[d] &\\
0\ar[r] & Ker(f)\otimes C\ar[d]\ar[r] & Ker(f)\otimes C\ar[d]\ar[r] & 0\ar[r]\ar[d] & 0\\
0\ar[r] & Ker(g\circ f)\otimes C\ar[r]\ar[d] & X\otimes C\ar[d]\ar[r] & Z\otimes C\ar[d]\ar[r] & 0\\
0\ar[r] & Ker(g)\otimes C\ar[d]\ar[r] & Y\otimes C\ar[d]\ar[r] & Z\otimes C\ar[d]\ar[r] & 0\\
& 0 &0 & 0 & 
}
\end{displaymath}
The top row is obviously exact, and the bottom is exact since $g$ is a pure epimorphism. We have shown that the left-hand column is exact, and the right is obviously exact. The middle is exact since $f$ is a pure epimorphism. Therefore the middle row is exact. 
\end{proof}
\begin{notation}
If $(\mathpzc{E},\mathcal{Q})$ is an exact category equipped with a closed symmetric monoidal structure, we will denote the $\otimes$-pure exact structure by $(\mathpzc{E},\mathcal{Q}_{\otimes})$.
\end{notation}
\begin{rem}
In the $\otimes$-pure exact structure all objects are flat. 
\end{rem}

\section{Quasi-Abelian Categories}

Let us apply what we have seen so far to the particular case of quasi-abelian categories. The theory of quasi-abelian categories is developed significantly in \cite{qacs} which is our main reference here. Applications to categories of topological vector spaces can be found in \cite{dcfapp}.

\subsection{Strict Morphisms}
First we explain as in \cite{qacs} Remark 1.1.11  that Definition \ref{quas} is equivalent to the one given in \cite{qacs}. Recall that in a finitely complete and cocomplete additive category, any morphism $f:E\rightarrow F$ gives rise to a commutative diagram

\begin{displaymath}
\xymatrix{
E\ar[r]^{f}\ar[d] & F\\
\textrm{Coim}(f)\ar[r] & \textrm{Im} f\ar[u]
}
\end{displaymath}

In any abelian category the map $\textrm{Coim}(f)\rightarrow\textrm{Im}(f)$ is an isomorphism. However this is not true in general. For example, consider the standard example of the category $\mathpzc{Fr}$ of Fr\'{e}chet spaces. Then $\textrm{Coim}(f)=E\big\slash f^{-1}(0)$, $\textrm{Im}(f)=\overline{f(E)}$ and the natural map  $E\big\slash f^{-1}(0)\rightarrow\overline{f(E)}$ is the obvious one. By the Open Mapping Theorem $\textrm{Coim}(f)\rightarrow\textrm{Im}(f)$ is an isomorphism if and only if $f$ has closed range, which is not always the case.

\begin{defn}
Let $\mathpzc{E}$ be an additive category with all kernels and cokernels. A morphism $f:E\rightarrow F$ in $\mathpzc{E}$ is said to be \textbf{strict} if $\textrm{Coim}(f)\rightarrow\textrm{Im}(f)$ is an isomorphism.
\end{defn}

\begin{prop}[\cite{qacs} Remark 1.1.2]\label{strictmonoepic}
Let $\mathpzc{E}$ be a finitely complete and cocomplete additive category. 
\begin{enumerate}
\item
A monic is strict if and only if it is the kernel of some morphism. In this case it is the kernel of its cokernel.
\item
An epic is strict if and only if it is the cokernel some morphism. In this case it is the cokernel of its kernel.
\end{enumerate}
\end{prop}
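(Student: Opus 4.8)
The plan is to prove part (1) directly and then obtain part (2) by passing to the opposite category. The key simplification, which I would record first, is that for a monic $m:E\rightarrow F$ the comparison map takes a very transparent form. Since $m$ is monic, $\textrm{Ker}(m)=0$, so $\textrm{Coim}(m)=\textrm{Coker}(\textrm{Ker}(m)\rightarrow E)\cong E$, and the canonical factorization $E\rightarrow\textrm{Coim}(m)\rightarrow\textrm{Im}(m)\rightarrow F$ collapses to the induced morphism $\bar{m}:E\rightarrow\textrm{Im}(m)=\textrm{Ker}(\textrm{Coker}(m))$ followed by the canonical monic $\textrm{Im}(m)\rightarrow F$. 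Thus $m$ is strict precisely when $\bar{m}$ is an isomorphism, i.e. precisely when $m$ agrees (up to isomorphism) with the kernel of its own cokernel. This single observation already settles the ``only if'' implication of (1) together with its final clause.

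For the ``if'' direction, suppose $m=\textrm{Ker}(g)$ for some $g:F\rightarrow G$; by the previous paragraph it suffices to show $m$ is the kernel of its cokernel. Since $g\circ m=0$, the map $g$ factors through $p\defeq\textrm{Coker}(m):F\rightarrow C$, say $g=h\circ p$. Writing $\iota\defeq\textrm{Ker}(p):K\rightarrow F$, the relation $p\circ m=0$ yields a factorization $m=\iota\circ m'$ for a unique $m':E\rightarrow K$, while $g\circ\iota=h\circ p\circ\iota=0$ together with the universal property of $m=\textrm{Ker}(g)$ yields $\iota=m\circ\iota'$ for a unique $\iota':K\rightarrow E$. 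Then $m=\iota\circ m'=m\circ(\iota'\circ m')$ forces $\iota'\circ m'=\textrm{id}_{E}$ by monicity of $m$, and $\iota=m\circ\iota'=\iota\circ(m'\circ\iota')$ forces $m'\circ\iota'=\textrm{id}_{K}$ by monicity of $\iota$ (a kernel is monic). Hence $m'$ is an isomorphism; since $m'$ is characterized by $\iota\circ m'=m$ it coincides with $\bar{m}$, so $m$ is strict and equals $\textrm{Ker}(\textrm{Coker}(m))$.

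Finally I would deduce part (2) by applying part (1) in $\mathpzc{E}^{op}$. An epic in $\mathpzc{E}$ is a monic in $\mathpzc{E}^{op}$; the roles of $\textrm{Ker}$ and $\textrm{Coker}$ (and of $\textrm{Im}$ and $\textrm{Coim}$) interchange, and the canonical comparison map merely reverses direction, so the property of being an isomorphism -- that is, strictness -- is self-dual. Translating the conclusion of (1) back to $\mathpzc{E}$ then says exactly that an epic is strict if and only if it is the cokernel of some morphism, in which case it is the cokernel of its kernel.

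The only step demanding genuine care is the cancellation argument in the ``if'' direction: one must verify that the two a priori unrelated factorizations really do assemble into mutually inverse morphisms between $E$ and $K=\textrm{Ker}(\textrm{Coker}(m))$, and this rests on monic cancellation being available for both $m$ and $\iota$. Everything else is formal manipulation of the universal properties of kernels, cokernels, images and coimages, all of which exist by the assumption that $\mathpzc{E}$ is finitely bicomplete.
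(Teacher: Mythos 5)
Your proof is correct and follows essentially the same route as the paper's: reduce strictness of a monic to the comparison map $E\to\textrm{Ker}(\textrm{Coker}(m))$ being an isomorphism, and then verify that a kernel is the kernel of its own cokernel. The only difference is that where the paper dismisses that last verification as ``abstract nonsense,'' you spell it out via the mutual-inverse argument between $E$ and $K$ — a welcome level of detail, but not a different approach.
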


\begin{proof}
\begin{enumerate}
\item
Let $f:E\rightarrow F$ and write $i_{f}:\textrm{Ker}(f)\rightarrow E$ for the canonical map. Let us show that $i_{f}$ is strict. First note that for any monic $A\rightarrow B$, the coimage is $\textrm{id}:A\rightarrow A$. Let us compute the image of $i_{f}$. It is given by
$$\textrm{Ker}(\textrm{Coker}(\textrm{Ker}(f)\rightarrow E)\rightarrow E$$
By some abstract nonsense this is just $\textrm{Ker}(f)\rightarrow E$. Conversely suppose $m:X\rightarrow E$ is a strict monic. Then the maps $E\rightarrow\textrm{Coim}(m)\rightarrow\textrm{Im}(m)$ are all isomorphisms, i.e. we get a commutative diagram
\begin{displaymath}
\xymatrix{
X\ar[d]^{\sim}\ar[r]^{m} & E\\
\textrm{Coim}(m)\ar[r]^{\sim} & \textrm{Im}(m)\ar[u]
}
\end{displaymath}
Since $\textrm{Im}(m)\rightarrow E$ is a kernel of $\textrm{Coker}(m)$, so is $m:X\rightarrow E$.
\item
This is dual to the first part.
\end{enumerate}
\end{proof}

\begin{prop}
The class of strict epics (resp. monics) in a quasi-abelian category $\mathpzc{E}$ is stable by composition.
\end{prop}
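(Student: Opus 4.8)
The plan is to reduce the statement to the composition axioms built into a Quillen exact structure. Working in the quasi-abelian setting of this subsection, Definition \ref{quas} tells us that the class $\mathpzc{qac}$ of \emph{all} kernel-cokernel pairs is a genuine exact structure on $\mathpzc{E}$. The second axiom of such a structure already asserts that admissible monics and admissible epics are each closed under composition, so the entire content reduces to matching the strict monomorphisms (resp. strict epimorphisms) with the admissible monics (resp. admissible epics) of $(\mathpzc{E},\mathpzc{qac})$. Note that a quasi-abelian category is additive with all kernels and cokernels, hence finitely bicomplete, so Proposition \ref{strictmonoepic} is available.

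The real step is this identification. By Proposition \ref{strictmonoepic}(1) a monomorphism is strict if and only if it is the kernel of some morphism, in which case it is the kernel of its own cokernel. On the other hand, a morphism $i$ is an admissible monic for $\mathpzc{qac}$ exactly when it fits into a kernel-cokernel pair $(i,p)$; by definition of such a pair $p=\textrm{coker}(i)$ and $i=\textrm{Ker}(p)=\textrm{Ker}(\textrm{coker}(i))$. Thus $i$ is an admissible monic of $(\mathpzc{E},\mathpzc{qac})$ if and only if it is the kernel of its cokernel, which by the proposition is exactly the condition of being a strict monomorphism. Dualising through Proposition \ref{strictmonoepic}(2) identifies the admissible epics of $(\mathpzc{E},\mathpzc{qac})$ with the strict epimorphisms.

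With the identification established, stability is immediate: if $f,g$ are strict monomorphisms with $g\circ f$ defined, then $f$ and $g$ are admissible monics of $(\mathpzc{E},\mathpzc{qac})$, so $g\circ f$ is an admissible monic by the composition axiom, hence a strict monomorphism; the epic case is identical. I do not expect a deep obstruction here, only the bookkeeping of checking that ``kernel of its cokernel'' and ``first map of a kernel-cokernel pair'' describe the same morphisms, which is pinned down by uniqueness of cokernels. The subtle point worth flagging is \emph{why} the quasi-abelian hypothesis is essential rather than cosmetic: in a merely finitely bicomplete additive category the composite $g\circ f$ of two strict monics is still a monomorphism, so its coimage is its source, but the canonical map $\textrm{Coim}(g\circ f)\rightarrow\textrm{Im}(g\circ f)$ is only guaranteed to be a bimorphism and need not be invertible; it is precisely the exactness of $\mathpzc{qac}$ (equivalently, pushout-stability of strict monics) that forces this map to be an isomorphism. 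One could alternatively give a direct argument realizing $\textrm{coker}(g\circ f)$ via the bicartesian-square description of Proposition \ref{bicart} and recognizing the factorization through the Obscure Axiom (Proposition \ref{obscure}), but the exact-structure route above is shorter.
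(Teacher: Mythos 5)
Your argument is internally consistent but circular relative to the job this proposition has to do in the paper. The announced purpose of this subsubsection is to show that Definition \ref{quas} (the class $\mathpzc{qac}$ of all kernel--cokernel pairs is an exact structure) agrees with Schneiders' definition of quasi-abelian (finitely bicomplete, strict monics stable under pushout, strict epics stable under pullback); that equivalence is the corollary stated immediately after this proposition. The present proposition exists precisely to supply the one exact-structure axiom that Schneiders' conditions do not hand you directly, namely closure of admissible monics and epics under composition. If, as you do, you take as hypothesis that $\mathpzc{qac}$ already \emph{is} an exact structure, then composition-closure is an axiom and there is nothing to prove --- but the ``if'' direction of the ensuing corollary then loses its justification, since that direction needs composition-stability of strict morphisms to be deduced from the pushout/pullback conditions alone. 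This is also why the paper's proof is a citation of \cite{qacs} Proposition 1.1.7 rather than an appeal to the axioms: Schneiders proves the statement for a finitely bicomplete additive category in which strict epics are pullback-stable and strict monics are pushout-stable. Your closing paragraph correctly identifies these stability conditions as the essential input, but identifying the input is not the same as running the argument. (Your dictionary between strict monics and the admissible monics of $(\mathpzc{E},\mathpzc{qac})$ via Proposition \ref{strictmonoepic} is correct and survives; it is the source of the composition property that is at issue.)

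The missing argument runs as follows for epics (monics are dual, using pushout-stability). Let $p:A\twoheadrightarrow B$ and $q:B\twoheadrightarrow C$ be strict epics and set $K=\textrm{Ker}(qp)$. The square with vertices $K$, $\textrm{Ker}(q)$, $A$, $B$ is cartesian, so pullback-stability makes the induced map $K\rightarrow\textrm{Ker}(q)$ a strict epic, in particular an epimorphism. Let $r:A\rightarrow D$ be $\textrm{Coker}(K\rightarrow A)$. Since $\textrm{Ker}(p)\rightarrow A$ factors through $K$, the map $r$ kills $\textrm{Ker}(p)$ and hence factors as $r=t\circ p$ because $p=\textrm{Coker}(\textrm{Ker}(p))$ by Proposition \ref{strictmonoepic}; since $t$ composed with $\textrm{Ker}(q)\rightarrow B$ vanishes (precompose with the epimorphism $K\rightarrow\textrm{Ker}(q)$), $t$ factors as $t=u\circ q$. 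Writing $qp=s\circ r$ for the canonical $s:D\rightarrow C$, the identities $s\circ u=\textrm{id}_{C}$ and $u\circ s=\textrm{id}_{D}$ follow by cancelling the epimorphisms $q$ and $r$ respectively, so $s$ is an isomorphism and $qp$ is a cokernel of its kernel, i.e.\ strict. None of this is recoverable from the composition axiom you invoke, because that axiom is exactly what is being established here.
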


\begin{proof}
See \cite{qacs} Proposition 1.1.7.
\end{proof}

\begin{cor}
A finitely complete and cocomplete additive category $\mathpzc{E}$ is quasi-abelian if and only if the following two conditions hold.
\begin{enumerate}
\item
If
\begin{displaymath}
\xymatrix{
A\ar[d]\ar[r]^{f} & B\ar[d]\\
X\ar[r]^{f'} & Y
}
\end{displaymath}
is a pushout diagram, and $f$ is a strict monic, then $f'$ is as well.
\item
If
\begin{displaymath}
\xymatrix{
A\ar[d]\ar[r]^{f'} & B\ar[d]\\
X\ar[r]^{f} & Y
}
\end{displaymath}
is a pullback diagram, and $f$ is a strict epic, then $f'$ is as well.
\end{enumerate}
\end{cor}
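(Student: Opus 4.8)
The plan is to unwind the definition of quasi-abelian (Definition \ref{quas}) into the four axioms of a Quillen exact structure applied to the class $\mathpzc{qac}$ of \emph{all} kernel-cokernel pairs, and then to match these axioms term by term with the two stated conditions, using that $\mathpzc{E}$ is finitely bicomplete so that all the kernels, cokernels, pushouts and pullbacks in sight exist.

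First I would establish the crucial dictionary: a morphism is an admissible monic (resp.\ admissible epic) with respect to $\mathpzc{qac}$ if and only if it is a strict monic (resp.\ strict epic). Indeed, if $(i,p)\in\mathpzc{qac}$ then $i=\textrm{Ker}(p)$ is a kernel, hence strict by Proposition \ref{strictmonoepic}(1); conversely a strict monic $i$ is the kernel of its cokernel by the same proposition, so $(i,\textrm{Coker}(i))$ is a kernel-cokernel pair and $i$ is admissible. The epic case is dual, using Proposition \ref{strictmonoepic}(2). With this identification the four exact-structure axioms for $\mathpzc{qac}$ become statements purely about strict monics and strict epics.

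Next I would dispose of the two axioms that hold unconditionally. Axiom (1), that isomorphisms are admissible monics and admissible epics, is immediate: for an isomorphism the canonical map $\textrm{Coim}\rightarrow\textrm{Im}$ is an isomorphism, so it is simultaneously a strict monic and a strict epic. Axiom (2), closure of admissible monics and of admissible epics under composition, is exactly the preceding proposition asserting that strict monics (resp.\ strict epics) are stable under composition. Thus $\mathpzc{qac}$ satisfies the first two axioms automatically in any finitely bicomplete additive category, and whether it is an exact structure is governed solely by the remaining two.

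Finally, axioms (3) and (4), the pushout-stability of admissible monics and the pullback-stability of admissible epics, translate under the dictionary above into precisely conditions (1) and (2) of the statement (pushout-stability of strict monics and pullback-stability of strict epics). Hence $\mathpzc{qac}$ is an exact structure, i.e.\ $\mathpzc{E}$ is quasi-abelian, if and only if conditions (1) and (2) hold, which yields both implications simultaneously. I do not expect a genuine obstacle here: the content is entirely in the dictionary of the first step, and once Proposition \ref{strictmonoepic} and the composition-stability proposition are in hand, the rest is bookkeeping matching the exact-structure axioms to the listed conditions.
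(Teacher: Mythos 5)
Your proposal is correct and is exactly the argument the paper intends: the corollary is stated without proof precisely because it is this immediate unwinding of Definition \ref{quas} via the dictionary of Proposition \ref{strictmonoepic} (admissible monics/epics for the class of all kernel--cokernel pairs are the strict monics/epics) together with the composition-stability of strict monics and epics. No gaps.
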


Let us now describe the admissible morphisms in the quasi-abelian exact structure.

\begin{prop}[\cite{qacs} Remark 1.1.2]
Let $\mathpzc{E}$ be a finitely complete and cocomplete additive category. A morphism $f:E\rightarrow F$ in $\mathpzc{E}$ is strict if and only if it can be written as $f=i\circ p$ where $p:E\rightarrow I$ is a strict epic and $i:I\rightarrow F$ is a strict monic.
\end{prop}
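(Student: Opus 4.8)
The plan is to exploit the canonical factorization of $f$ through its coimage and image, which exists because $\mathpzc{E}$ is finitely bicomplete. Every morphism $f\colon E\to F$ factors canonically as $E\xrightarrow{\bar p}\textrm{Coim}(f)\xrightarrow{\phi}\textrm{Im}(f)\xrightarrow{\bar i}F$, where $\bar p$ is the cokernel of $\textrm{Ker}(f)\to E$, where $\bar i$ is the kernel of $F\to\textrm{Coker}(f)$, and where $\phi$ is the induced comparison map. By the very definition of strictness, $f$ is strict precisely when $\phi$ is an isomorphism. Note at the outset that $\bar p$, being a cokernel, is a strict epic, and $\bar i$, being a kernel, is a strict monic, by Proposition \ref{strictmonoepic}.

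For the forward implication I would argue as follows. Suppose $f$ is strict, so $\phi$ is an isomorphism. Since isomorphisms are strict epics, and strict epics are closed under composition (as shown above), the map $p\defeq\phi\circ\bar p\colon E\to\textrm{Im}(f)$ is a strict epic; taking $i\defeq\bar i$ and $I\defeq\textrm{Im}(f)$ then gives a strict monic with $f=\bar i\circ\phi\circ\bar p=i\circ p$, exactly as required.

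For the reverse implication, suppose $f=i\circ p$ with $p\colon E\to I$ a strict epic and $i\colon I\to F$ a strict monic. The key step is to recognize $p$ and $i$ as representatives of the coimage and image of $f$. Because $i$ is monic we have $\textrm{Ker}(f)=\textrm{Ker}(i\circ p)=\textrm{Ker}(p)$, and because $p$ is a strict epic, Proposition \ref{strictmonoepic} says $p$ is the cokernel of its own kernel; hence $p\colon E\to I$ is a cokernel of $\textrm{Ker}(f)\to E$, i.e. a representative of $\textrm{Coim}(f)$. Dually, because $p$ is epic we have $\textrm{Coker}(f)=\textrm{Coker}(i\circ p)=\textrm{Coker}(i)$, and because $i$ is a strict monic it is the kernel of its cokernel, so $i\colon I\to F$ is a kernel of $F\to\textrm{Coker}(f)$, a representative of $\textrm{Im}(f)$.

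Finally I would check that under these identifications the comparison map $\phi$ is forced to be the identity of $I$: from $i\circ\phi\circ p=f=i\circ p$, with $i$ monic and $p$ epic, one cancels to obtain $\phi=\textrm{id}_I$, which is an isomorphism, so $f$ is strict. I expect no genuine obstacle here; the only point requiring care is the bookkeeping that identifies $p$ and $i$ with the canonical coimage and image maps, and once that is in place both implications reduce entirely to the relevant universal properties together with Proposition \ref{strictmonoepic}.
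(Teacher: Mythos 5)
Your proof is correct and follows essentially the same route as the paper's: both directions go through the canonical factorization $E\twoheadrightarrow\textrm{Coim}(f)\rightarrow\textrm{Im}(f)\rightarrowtail F$ together with the characterisation of strict epics and monics as cokernels and kernels. Your reverse direction is slightly more explicit than the paper's (you verify directly that the comparison map is the identity by cancelling the monic and the epic), but the content is identical.
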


\begin{proof}
Suppose $f$ admits a decomposition $f=i\circ p$ as in the statement. Then $\textrm{Ker}(f)=\textrm{Ker}(p)$. So $\textrm{Coim}(f)=\textrm{Coim}(p)$. Since $p$ is strict $\textrm{Coim}(p)\cong\textrm{Im}(p)$. Since $p$ is an epic, $\textrm{Im}(p)= I$. Similarly $\textrm{Im}(f)=\textrm{Im}(i)=I$. Conversely suppose $f$ is a strict morphism. Now  $E\rightarrow\textrm{Coim}(f)$ is a strict epic, and $\textrm{Im}(f)\rightarrow F$ is a strict monic. But since $f$ is strict, $\textrm{Coim}(f)\cong\textrm{Im}(f)$, so this gives the decomposition of $f$.
\end{proof}

\begin{cor}
A morphism in a quasi-abelian category is admissible in the quasi-abelian exact structure if and only if it is strict.
\end{cor}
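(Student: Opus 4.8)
The plan is to reduce the statement to the preceding proposition by first establishing a dictionary between the admissible monics and admissible epics of the quasi-abelian exact structure $\mathpzc{qac}$ and the strict monics and strict epics, and then combining the decomposition characterisations of ``admissible'' and of ``strict'' morphisms.

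First I would show that a morphism is an admissible monic with respect to $\mathpzc{qac}$ if and only if it is a strict monic. Since $\mathpzc{qac}$ consists of \emph{all} kernel-cokernel pairs, a morphism $i$ is an admissible monic precisely when it occurs as the first component of some kernel-cokernel pair $(i,p)$; in particular $i=\textrm{Ker}(p)$ is the kernel of a morphism. By Proposition \ref{strictmonoepic} this holds if and only if $i$ is strict, in which case $i$ is the kernel of its own cokernel, so that $(i,\textrm{Coker}(i))$ is a kernel-cokernel pair and $i$ is indeed an admissible monic. The dual argument, again via Proposition \ref{strictmonoepic}, identifies the admissible epics of $\mathpzc{qac}$ with the strict epics.

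With this dictionary in hand the corollary follows by matching two factorisation results. By Proposition \ref{adstrict} a morphism $f:E\rightarrow F$ is admissible in $\mathpzc{qac}$ if and only if it factors as $f=i\circ p$ with $p$ an admissible epic and $i$ an admissible monic. By the identifications just made, such a factorisation is exactly one with $p$ a strict epic and $i$ a strict monic, and by the preceding proposition the existence of such a factorisation is equivalent to $f$ being strict.

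I do not anticipate any genuine obstacle: the entire content is already present in the earlier results, and the corollary is essentially a matter of matching terminology. The one point requiring a moment's care is the first identification, where one must observe that being an admissible monic in $\mathpzc{qac}$ is literally the condition of being the kernel of some morphism, so that Proposition \ref{strictmonoepic} applies verbatim.
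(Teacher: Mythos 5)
Your proof is correct and follows exactly the route the paper intends: the paper states this corollary with no written proof (``We immediately get''), the implicit argument being precisely your combination of Proposition \ref{strictmonoepic} (identifying admissible monics/epics of $\mathpzc{qac}$ with strict monics/epics), Proposition \ref{adstrict}, and the preceding factorisation proposition. Nothing is missing.
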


\begin{rem}\label{quasweak}
An exact structure on a finitely complete and cocomplete additive category coincides with the quasi-abelian structure if and only if every morphism is weakly admissible. Then as a consequence of Proposition \ref{strictmonoepic}, a finitely complete and cocomplete additive category is abelian if and only if every morphism is admissible.
\end{rem}

\subsection{The Left Heart}

Homology in quasi-abelian categories is significantly easier than in more general exact categories. For example, there is an even stronger abelian embedding.

\begin{thm}\label{leftheart}
Let $\mathpzc{E}$ be a quasi-abelian category. There exists a left abelianisation $I:\mathpzc{E}\rightarrow\textit{LH}(\mathpzc{E})$ of $\mathpzc{E}$ such that $I$ has a left adjoint $C:\textit{LH}(\mathpzc{E})\rightarrow\mathpzc{E}$ with $C\circ I\cong \textrm{id}_{\mathpzc{E}}$, i.e. $\mathpzc{E}$ is a reflective subcategory of $\textit{LH}(\mathpzc{E})$. Moreover the induced functor on derived categories
$$D(I):D(\mathpzc{E})\rightarrow D(\textit{LH}(\mathpzc{E}))$$
is an equivalence.
\end{thm}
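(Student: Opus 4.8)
The plan is to realise $\mathpzc{LH}(\mathpzc{E})$ as the heart of the \emph{left t-structure} on the derived category $D(\mathpzc{E})$, obtained by localising $\textbf{Ch}(\mathpzc{E})$ at quasi-isomorphisms. Since $\mathpzc{E}$ is quasi-abelian it has all kernels and cokernels, so I can form the truncation functor $\tau_{\ge n}$ of the excerpt together with its ``coimage'' counterpart, and use these to define two aisles: $\mathcal{D}_{\ge 0}$, the complexes isomorphic in $D(\mathpzc{E})$ to one concentrated in non-negative degrees, and $\mathcal{D}_{\le 0}$, the complexes whose left homology (computed with kernels and coimages) vanishes in positive degrees. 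The first substantial step is to verify the t-structure axioms: the semi-orthogonality of the two aisles ($\textrm{Hom}_{D(\mathpzc{E})}(X,Y)=0$ when $X$ is connective and $Y$ suitably coconnective) and the existence of the truncation triangle $\tau_{\ge 1}X\to X\to\tau_{\le 0}X\to$. This is where the quasi-abelian hypothesis is essential, since $\textrm{Coim}\to\textrm{Im}$ need not be invertible and one must consistently use the coimage; Remark \ref{quasweak} guarantees every morphism is strict, which is exactly what makes the two truncations well behaved.

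I would then set $\mathpzc{LH}(\mathpzc{E})\defeq\mathcal{D}_{\ge 0}\cap\mathcal{D}_{\le 0}$, abelian by general t-structure theory, and record the concrete model: every object is isomorphic in $D(\mathpzc{E})$ to a two-term complex $[E_{1}\xrightarrow{d}E_{0}]$ in degrees $1,0$ with $d$ a \emph{monomorphism} (not necessarily strict). The embedding is $I(E)=S^{0}(E)$; it is exact and reflects exactness since these are degreewise conditions, and it is fully faithful because $\mathpzc{E}\hookrightarrow D(\mathpzc{E})$ is fully faithful. Its essential image is closed under extensions, and Proposition \ref{extab} identifies $\textrm{Ext}^{1}$, so $I$ is a left abelianization. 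Note that when $d$ is \emph{strict}, $[E_1\xrightarrow{d}E_0]$ is quasi-isomorphic to $S^{0}(\textrm{Coker}(d))=I(\textrm{Coker}(d))$; the genuinely new objects of $\mathpzc{LH}(\mathpzc{E})$ come precisely from non-strict monics, which is why $I$ is not essentially surjective in general.

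Next I define the reflector $C(M)=\textrm{Coker}(d)$ for $M=[E_{1}\xrightarrow{d}E_{0}]$. The adjunction $\textrm{Hom}_{\mathpzc{E}}(C(M),F)\cong\textrm{Hom}_{\mathpzc{LH}(\mathpzc{E})}(M,I(F))$ follows from the universal property of the cokernel: a morphism $M\to S^{0}(F)$ in $D(\mathpzc{E})$ is represented by a map $E_{0}\to F$ annihilating $d$, hence factors uniquely through $\textrm{Coker}(d)$. Taking $M=I(E)=S^0(E)$ gives $C(I(E))=\textrm{Coker}(0\to E)=E$, so $C\circ I\cong\textrm{id}_{\mathpzc{E}}$ and $\mathpzc{E}$ is reflective, the unit $M\to I(C(M))$ being the canonical comparison induced by $E_0\twoheadrightarrow\textrm{Coker}(d)$.

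For the derived equivalence, exactness of $I$ induces $D(I)\colon D(\mathpzc{E})\to D(\mathpzc{LH}(\mathpzc{E}))$, and I would prove it an equivalence in two parts. For essential surjectivity, each object of $\mathpzc{LH}(\mathpzc{E})$ sits in a short exact sequence $0\to I(E_{1})\to I(E_{0})\to M\to 0$ in the heart (the monic differential $d$ becomes a monomorphism with cokernel $M$, as $\textrm{cone}(I(d))\cong M$ and $LH_{1}(M)=\textrm{Ker}(d)=0$); totalising such two-term $I(\mathpzc{E})$-resolutions realises every bounded complex in the image of $D(I)$, and a Spaltenstein-type totalisation (in the spirit of Corollary \ref{Kproj}) handles the unbounded case. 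For full faithfulness I reduce, via these resolutions and the generation statement, to showing $D(I)$ induces isomorphisms $\textrm{Hom}_{D(\mathpzc{E})}(E,F[n])\xrightarrow{\sim}\textrm{Ext}^{n}_{\mathpzc{LH}(\mathpzc{E})}(I(E),I(F))$ for all $n$; equivalently, that the realisation functor $D(\mathpzc{LH}(\mathpzc{E}))\to D(\mathpzc{E})$ built from the inclusion of the heart is quasi-inverse to $D(I)$, which is Beilinson's criterion once the heart generates and the $\textrm{Ext}$-comparison holds. The main obstacle is exactly this last step: verifying the t-structure axioms in the non-strict setting, and then upgrading the realisation functor from ``fully faithful on the heart'' to an equivalence, is the technical core, with the unbounded case demanding control of the relevant filtered colimits to keep the resolutions under control.
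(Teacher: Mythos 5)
Your proposal is correct and takes essentially the same route as the paper, which gives no independent argument here but simply cites Schneiders (\cite{qacs}); your construction --- the left heart as the heart of the left t-structure on $D(\mathpzc{E})$, objects modelled by two-term complexes with monic differential, $I=S^{0}$, and $C=\textrm{Coker}$ --- is precisely the cited argument. The one place you reach for heavier machinery than necessary is the unbounded derived equivalence: since every object of $\mathpzc{LH}(\mathpzc{E})$ admits a length-one resolution by objects of $I(\mathpzc{E})$, a direct d\'evissage yields the quasi-inverse and lets you avoid the Beilinson realization-functor step and the filtered-enhancement subtleties you flag.
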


\begin{proof}
See \cite{qacs} Proposition 1.1.26,  Corollary 1.2.27, Proposition 1.2.28, and Proposition 1.2.31.
\end{proof}

$\textit{LH}(\mathpzc{E})$ is called the \textbf{left heart} of $\mathpzc{E}$. The embedding of $\mathpzc{E}$ into its left heart also behaves extremely well with respect to projectives.

\begin{prop}\label{lhproj}
\begin{enumerate}
\item
An object $P$ of $\mathpzc{E}$ is projective if and only if $I(P)$ is projective in $\textit{LH}(\mathpzc{E})$.
\item
$\mathpzc{E}$ has enough projectives if and only if $\textit{LH}(\mathpzc{E})$ has enough projectives. In this case an object of $\textit{LH}(\mathpzc{E})$ is projective if and only if it is isomorphic to $I(P)$ where $P$ is projective in $\mathpzc{E}$.
\end{enumerate}
\end{prop}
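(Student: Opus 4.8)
The plan is to reduce everything to two inputs: the characterisations of projectivity in Proposition \ref{projequiv}, and a single structural fact about the left heart borrowed from \cite{qacs}, namely that $I(\mathpzc{E})$ generates $\mathpzc{LH}(\mathpzc{E})$ under epimorphisms. Concretely, for every object $M$ of $\mathpzc{LH}(\mathpzc{E})$ there is an object $E$ of $\mathpzc{E}$ and a short exact sequence $0\to I(E')\to I(E)\to M\to 0$ in $\mathpzc{LH}(\mathpzc{E})$, in particular an epimorphism $I(E)\twoheadrightarrow M$. The rest of the argument runs purely on the formal properties of a left abelianisation: $I$ is fully faithful and exact, and, by Theorem \ref{QET}, a morphism $f$ of $\mathpzc{E}$ is an admissible epic if and only if $I(f)$ is an epic.

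For the easy direction of $(1)$, suppose $I(P)$ is projective. Given an admissible epic $e\colon B\twoheadrightarrow C$ and a map $f\colon P\to C$, the morphism $I(e)$ is an epic since $I$ is exact, so projectivity of $I(P)$ lifts $I(f)$ through $I(e)$; full faithfulness of $I$ converts this into a lift of $f$ through $e$, so $P$ is projective by Proposition \ref{projequiv}. (Alternatively, invoke the $\textrm{Ext}^{1}$-comparison of Proposition \ref{extab} with the $\textrm{Ext}$-characterisation of projectivity.) For the converse, let $P$ be projective; since $\mathpzc{LH}(\mathpzc{E})$ is abelian it suffices to split every epic $q\colon M\twoheadrightarrow I(P)$. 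Choosing a structural epic $\rho\colon I(E)\twoheadrightarrow M$, the composite $q\circ\rho\colon I(E)\twoheadrightarrow I(P)$ equals $I(f)$ for some $f\colon E\to P$ by full faithfulness, and $f$ is an admissible epic because $I(f)$ is an epic. Projectivity of $P$ splits $f$ via some $s\colon P\to E$ with $fs=\textrm{id}_{P}$, and then $\rho\circ I(s)$ is a section of $q$ since $q\circ\rho\circ I(s)=I(f)\circ I(s)=I(fs)=\textrm{id}_{I(P)}$. This proves $(1)$.

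To pin down the projectives of $\mathpzc{LH}(\mathpzc{E})$, let $Q$ be projective there. The structural epic $I(E)\twoheadrightarrow Q$ splits, so $Q$ is a retract of $I(E)$; the corresponding idempotent of $I(E)$ descends through the fully faithful $I$ to an idempotent of $E$, which splits because a quasi-abelian category has kernels and is therefore idempotent complete. Hence $Q\cong I(P)$ for some retract $P$ of $E$, and $P$ is projective in $\mathpzc{E}$ by the already-proved direction of $(1)$. Combined with $(1)$, this shows that for quasi-abelian $\mathpzc{E}$ the projectives of $\mathpzc{LH}(\mathpzc{E})$ are exactly the objects isomorphic to $I(P)$ with $P$ projective, with no hypothesis on enough projectives.

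Part $(2)$ is then formal. If $\mathpzc{E}$ has enough projectives, then given $M$ I take a structural epic $I(E)\twoheadrightarrow M$, cover $E$ by an admissible epic $P\twoheadrightarrow E$ from a projective, and compose with the epic $I(P)\to I(E)$; since $I(P)$ is projective by $(1)$, $\mathpzc{LH}(\mathpzc{E})$ has enough projectives. Conversely, if $\mathpzc{LH}(\mathpzc{E})$ has enough projectives, then for $E\in\mathpzc{E}$ I cover $I(E)$ by a projective $Q$, write $Q\cong I(P)$ with $P$ projective by the characterisation above, and observe that the resulting epic $I(P)\twoheadrightarrow I(E)$ equals $I(f)$ for some $f\colon P\to E$ which, being sent to an epic by $I$, is an admissible epic; thus $\mathpzc{E}$ has enough projectives. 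The main obstacle lies entirely in the single external ingredient, the presentation of arbitrary objects of the left heart as quotients $I(E)\twoheadrightarrow M$; everything downstream is bookkeeping with full faithfulness, exactness, idempotent completeness, and the reflection of admissible epics.
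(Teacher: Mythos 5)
Your argument is correct, but it is worth pointing out that the paper does not actually prove this proposition at all: its ``proof'' is a bare citation of \cite{qacs} Proposition 1.3.24. What you have done is reconstruct the result from a single more primitive input about the left heart --- that every object $M$ of $\mathpzc{LH}(\mathpzc{E})$ sits in a short exact sequence $0\to I(E')\to I(E)\to M\to 0$, so in particular is a quotient of some $I(E)$ --- together with the formal axioms of a left abelianization (full faithfulness, exactness, and the reflection of admissible epics). All the individual steps check out: the lifting argument for the easy direction of $(1)$; the splitting of an arbitrary epic $q\colon M\twoheadrightarrow I(P)$ by factoring $q\circ\rho$ through $I$ and using that a quasi-abelian category, having kernels, is idempotent complete so that the retract $Q$ of $I(E)$ descends to a retract of $E$; and the two formal implications in $(2)$. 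You even obtain slightly more than the stated proposition, namely that the projectives of $\mathpzc{LH}(\mathpzc{E})$ are exactly the $I(P)$ with $P$ projective \emph{without} any hypothesis of enough projectives. The only caveat is that the structural presentation $I(E)\twoheadrightarrow M$ is itself a nontrivial fact about Schneiders' construction (it comes from representing objects of the left heart by monomorphisms of $\mathpzc{E}$ placed in degrees $-1$ and $0$), so your proof is self-contained only relative to that one borrowed ingredient; since the paper outsources the entire proposition to the same reference, your version is strictly more informative about where the real content lies.
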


\begin{proof}
See \cite{qacs} Proposition 1.3.24.
\end{proof}

Moreover left abelianisations of quasi-abelian categories allow us to test acyclicity of any unbounded complex. Indeed as a consequence of Remark \ref{quasweak} and Corollary \ref{boundedtest} we get.

\begin{cor}\label{quasboundtest}
Let $I:\mathpzc{E}\rightarrow\mathpzc{A}$ be a left abelianisation of $\mathpzc{E}$ where $\mathpzc{E}$ is a quasi-abelian category. Then a complex $X_{\bullet}$ in $\mathpzc{E}$ is acyclic if and only if $I(X_{\bullet})$ is acyclic. In particular a map of complexes $f:X\rightarrow Y$ is a quasi-isomorphism if and only if $I(f)$ is.
\end{cor}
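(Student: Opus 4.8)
The plan is to reduce both assertions to results already established for \emph{good} complexes, the key point being that in a quasi-abelian category goodness is automatic. First I would invoke Remark \ref{quasweak}: since $\mathpzc{E}$ is quasi-abelian, every morphism of $\mathpzc{E}$ is weakly admissible, hence in particular weakly left admissible, so every morphism of $\mathpzc{E}$ has a kernel. (Equivalently this is immediate from Definition \ref{quas}, which requires a quasi-abelian category to possess all kernels.) Consequently, for any complex $X_{\bullet}$ in $\mathpzc{E}$ and any integer $n$, every differential $d_{m}$ with $m<n$ has a kernel, so $X_{\bullet}$ is good. Thus the goodness hypothesis, which was the only restriction present in the general exact setting, becomes vacuous here.

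With this observation the first statement is immediate: applying Corollary \ref{boundedtest} to the (automatically good) complex $X_{\bullet}$ shows that $X_{\bullet}$ is acyclic if and only if $I(X_{\bullet})$ is acyclic.

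For the ``in particular'' clause I would argue via the mapping cone. By definition $f_{\bullet}:X_{\bullet}\rightarrow Y_{\bullet}$ is a quasi-isomorphism precisely when $\textrm{cone}(f_{\bullet})$ is acyclic. Since $\textrm{cone}(f_{\bullet})$ is again a complex in $\mathpzc{E}$, it too is good, so the hypothesis of Proposition \ref{adquas} is satisfied; that proposition then gives directly that $f_{\bullet}$ is a quasi-isomorphism if and only if $I(f_{\bullet})$ is. (Alternatively, one may note that $I$ is additive and therefore commutes with the direct sums and signed differentials defining the cone, so that $I(\textrm{cone}(f_{\bullet}))\cong\textrm{cone}(I(f_{\bullet}))$, and then apply the first part to the good complex $\textrm{cone}(f_{\bullet})$.)

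The main, and essentially only, point is the opening reduction: recognising that the goodness condition demanded by Corollary \ref{boundedtest} and Proposition \ref{adquas} holds for \emph{every} complex in the quasi-abelian setting because all kernels exist. There is no genuine obstacle beyond this observation; once goodness is in hand, both equivalences follow by direct citation of the earlier results.
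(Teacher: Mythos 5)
Your proposal is correct and follows exactly the route the paper takes: the paper derives this corollary directly from Remark \ref{quasweak} (every morphism in a quasi-abelian category is weakly admissible, so all kernels exist and every complex is good) together with Corollary \ref{boundedtest}, with the quasi-isomorphism statement then following via the mapping cone as in Proposition \ref{adquas}. Your write-up merely spells out the same reduction in more detail than the paper does.
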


\subsection{Monoidal Quasi-Abelian Categories}

Let us briefly discuss (strongly) projectively monoidal quasi-abelian categories, i.e. a (strongly) projectively monoidal exact category in which the underlying exact category is quasi-abelian. We first make the following observation.

\begin{obs}
An additive functor $F:\mathpzc{E}\rightarrow\mathpzc{F}$ between quasi-abelian categories is right exact if and only if it preserves cokernels of strict morphisms. See for example Section 1.1 in \cite{qacs}.
\end{obs}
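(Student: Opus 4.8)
The plan is to reduce both sides of the equivalence to a single statement about how $F$ treats the cokernel projection of a strict monomorphism, by first unwinding what admissible coacyclicity means for the relevant four-term sequences. For a null sequence $F(X)\xrightarrow{a} F(Y)\xrightarrow{b} F(Z)\to 0$, the complex convention says this is admissibly coacyclic precisely when the two sub-sequences $F(X)\to F(Y)\to F(Z)$ and $F(Y)\to F(Z)\to 0$ are. In the second, the map $F(Z)\to 0$ is automatically weakly left admissible and admissible with coimage $0$, so the coacyclicity condition $\textrm{Coker}(b)\cong\textrm{Coim}(F(Z)\to 0)=0$ forces $b$ to be epic; the first forces $b$ to be admissible, hence (admissible equals strict in a quasi-abelian category) a strict epic, together with an isomorphism $\textrm{Coker}(a)\xrightarrow{\sim}\textrm{Coim}(b)$. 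Since a strict epic has $\textrm{Coim}(b)\cong F(Z)$ via $b$, I would record the reformulation lemma: $F(X)\to F(Y)\to F(Z)\to 0$ is admissibly coacyclic if and only if $b$ is a strict epic exhibiting $F(Z)$ as $\textrm{Coker}(a)$, i.e. if and only if $F(b)$ is a cokernel of $F(a)$.

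With this lemma, right exactness of $F$ says exactly: for every short exact sequence $0\to X\xrightarrow{i} Y\xrightarrow{p} Z\to 0$, the morphism $F(p)$ is a cokernel of $F(i)$. Since in such a sequence $p=\textrm{coker}(i)$, since $i$ is a strict monic, and since by Proposition \ref{strictmonoepic} every strict monic is the kernel of its cokernel and hence fits into such a sequence, this is precisely the assertion that $F$ preserves cokernels of strict monomorphisms. The ``if'' direction is then immediate: strict monics are a special case of strict morphisms, so preservation of cokernels of all strict morphisms yields right exactness.

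For the converse I would upgrade preservation of cokernels of strict monics to preservation of cokernels of arbitrary strict morphisms. Given a strict $f:A\to B$, use the factorisation $f=i\circ p$ with $p:A\to I$ a strict epic and $i:I\to B$ a strict monic; because $p$ is epic, $\textrm{Coker}(f)=\textrm{Coker}(i)$ with the same projection. Applying right exactness to $0\to\textrm{Ker}(p)\to A\xrightarrow{p} I\to 0$ shows $F(p)$ is a cokernel of $F(\textrm{Ker}(p)\hookrightarrow A)$, hence in particular epic, so $\textrm{Coker}(F(f))=\textrm{Coker}(F(i)\circ F(p))=\textrm{Coker}(F(i))$ with the same projection. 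Applying right exactness to $0\to I\xrightarrow{i} B\to\textrm{Coker}(i)\to 0$ gives that $F(c)$ is a cokernel of $F(i)$, where $c=\textrm{coker}(i)=\textrm{coker}(f)$; chaining the two identifications shows the canonical comparison map $\textrm{Coker}(F(f))\to F(\textrm{Coker}(f))$ is an isomorphism, as required.

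The main obstacle is the first paragraph: correctly reading off, through the layered definitions of weakly coacyclic, admissibly coacyclic, and the four-term complex convention, that admissible coacyclicity of $F(X)\to F(Y)\to F(Z)\to 0$ amounts to nothing more than ``$F(p)$ is a cokernel of $F(i)$,'' and in particular keeping track of the natural comparison maps so that the abstract isomorphisms are the canonical ones. Once this dictionary is in place, the remaining steps are routine manipulations with the strict epi--monic factorisation and the fact that precomposing with an epimorphism does not change a cokernel.
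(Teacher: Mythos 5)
The paper states this as an observation with no proof supplied, so there is nothing to compare against; your argument is correct and is the natural verification. Unwinding admissible coacyclicity of $F(X)\to F(Y)\to F(Z)\to 0$ to the single condition ``the second map is a cokernel of the first'' (using that admissible equals strict and that a strict epimorphism identifies its coimage with its target), matching short exact sequences with pairs (strict monic, its cokernel) via Proposition \ref{strictmonoepic}, and then extending from strict monics to all strict morphisms through the strict epi--mono factorisation together with the fact that precomposition with an epimorphism leaves cokernels unchanged, is exactly the intended reasoning.
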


This implies that if $(\mathpzc{E},\otimes,k)$ is a monoidal category with $\mathpzc{E}$ quasi-abelian and $\otimes$ additive, then it is a monoidal quasi-abelian category if and only if $X\otimes(-)$ preserves colimits for each object $X$ of $\mathpzc{E}$. In particular if $(\mathpzc{E},\otimes,\underline{\textrm{Hom}},k)$ is a closed monoidal category with $\mathpzc{E}$ quasi-abelian and $\otimes$, $\underline{\textrm{Hom}}$ additive functors, then $(\mathpzc{E},\otimes,\underline{\textrm{Hom}},k)$ is in fact a closed monoidal quasi-abelian category.

\begin{prop}
Let $(\mathpzc{E},\otimes,\underline{\textrm{Hom}},k)$ be a complete and cocomplete closed monoidal quasi-abelian category which is also projectively monoidal. Then there is a monoidal structure $\widetilde{\otimes},\widetilde{\underline{\textrm{Hom}}}$ on $\textit{LH}(\mathpzc{E})$ such that $(\textit{LH}(\mathpzc{E}),\widetilde{\otimes},\widetilde{\underline{\textrm{Hom}}},I(k))$ is a closed monoidal abelian category. Moreover $I:\mathpzc{E}\rightarrow\textit{LH}(\mathpzc{E})$ is a lax monoidal functor. If $(\mathpzc{E},\otimes,\underline{\textrm{Hom}},k)$ is strongly projectively monoidal then $(\textit{LH}(\mathpzc{E}),\widetilde{\otimes},\widetilde{\underline{\textrm{Hom}}},I(k))$ is projectively monoidal.
\end{prop}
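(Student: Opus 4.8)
The plan is to build the monoidal structure on $\mathpzc{LH}(\mathpzc{E})$ by transporting the derived tensor product from $D(\mathpzc{E})$ along the equivalence $D(I)$ and then truncating onto the heart, using projective resolutions as the computational tool. First I would record the input from Proposition \ref{lhproj}: granting that $\mathpzc{E}$ has enough projectives, $\mathpzc{LH}(\mathpzc{E})$ is abelian with enough projectives, and its projectives are precisely the objects $I(P)$ with $P$ projective in $\mathpzc{E}$. Since $\mathpzc{E}$ is projectively monoidal, $P\otimes Q$ is projective whenever $P,Q$ are, so the rule $(I(P),I(Q))\mapsto I(P\otimes Q)$ already carries pairs of projectives to projectives. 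Because projectives are flat, tensoring a complex of projectives with a fixed complex preserves chain homotopy, so together with the comparison result Lemma \ref{projextend} the derived tensor product $\otimes^{\mathbb{L}}$ is well defined on $D(\mathpzc{E})$; dually, Observation \ref{internalproj} says $\underline{\textrm{Hom}}(P,-)$ is exact for projective $P$, which lets me right-derive the internal hom to $R\underline{\textrm{Hom}}$ and furnishes the derived adjunction $\otimes^{\mathbb{L}}\dashv R\underline{\textrm{Hom}}$. By Theorem \ref{leftheart} the functor $D(I)\colon D(\mathpzc{E})\xrightarrow{\sim} D(\mathpzc{LH}(\mathpzc{E}))$ is an equivalence, so I transport this closed monoidal structure to $D(\mathpzc{LH}(\mathpzc{E}))$, where it is compatible with the standard t-structure whose heart is $\mathpzc{LH}(\mathpzc{E})$: $\otimes^{\mathbb{L}}$ is right t-exact (connective-preserving) and $R\underline{\textrm{Hom}}$ is left t-exact.

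I would then define $M\,\tilde\otimes\,N := H_{0}(M\otimes^{\mathbb{L}} N)$ and $\tilde{\underline{\textrm{Hom}}}(N,L):=H^{0}(R\underline{\textrm{Hom}}(N,L))$, which concretely means resolving by projectives $I(\tilde P_{\bullet})\to M$, $I(\tilde Q_{\bullet})\to N$ (with $\tilde P_{\bullet},\tilde Q_{\bullet}$ complexes of projectives in $\mathpzc{E}$) and setting $M\,\tilde\otimes\,N=H_{0}\,I(\tilde P_{\bullet}\otimes\tilde Q_{\bullet})$. The symmetry, associativity and unit constraints of $(\mathpzc{E},\otimes,k)$ induce natural isomorphisms of the resolving complexes, and since $\tilde\otimes$ is right exact in each variable these descend to $H_{0}$; the key formula $H_{0}(A\otimes^{\mathbb{L}} B)\cong H_{0}(A)\,\tilde\otimes\,H_{0}(B)$ for connective $A,B$ promotes associativity of $\otimes^{\mathbb{L}}$ to associativity of $\tilde\otimes$. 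The unit is $I(k)$ because $k\otimes^{\mathbb{L}}(-)\cong\textrm{id}$, so $H_{0}(I(k)\otimes^{\mathbb{L}} M)\cong M$ for $M$ in the heart, and coherence (pentagon, hexagon) holds since it holds on projectives and all maps are natural. Closedness then follows from the derived adjunction together with the truncation adjunctions: for $M,L$ in the heart, $\textrm{Hom}_{\mathpzc{LH}(\mathpzc{E})}(M\,\tilde\otimes\,N,L)\cong\textrm{Hom}_{D}(M\otimes^{\mathbb{L}} N,L)\cong\textrm{Hom}_{D}(M,R\underline{\textrm{Hom}}(N,L))\cong\textrm{Hom}_{\mathpzc{LH}(\mathpzc{E})}(M,\tilde{\underline{\textrm{Hom}}}(N,L))$, using right t-exactness of $-\otimes^{\mathbb{L}} N$ and left t-exactness of $R\underline{\textrm{Hom}}(N,-)$. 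Thus $(\mathpzc{LH}(\mathpzc{E}),\tilde\otimes,\tilde{\underline{\textrm{Hom}}},I(k))$ is a closed symmetric monoidal abelian category.

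For the lax monoidality of $I$ I would argue through its left adjoint $C$ (Theorem \ref{leftheart}), showing $C$ is strong monoidal and invoking doctrinal adjunction. As a left adjoint $C$ is cocontinuous, hence right exact, and $C\circ I\cong\textrm{id}$ gives $C(I(k))\cong k$ and $C(I(P)\,\tilde\otimes\,I(Q))=C(I(P\otimes Q))\cong P\otimes Q\cong CI(P)\otimes CI(Q)$. Both $C(-\,\tilde\otimes\,-)$ and $C(-)\otimes C(-)$ are right exact in each variable and agree on pairs of projectives, so presenting arbitrary objects as cokernels of maps of projectives yields a natural isomorphism $C(M\,\tilde\otimes\,N)\cong C(M)\otimes C(N)$ compatible with the constraints; hence $C$ is strong monoidal and its right adjoint $I$ is canonically lax monoidal, the structure map $I(E)\,\tilde\otimes\,I(E')\to I(E\otimes E')$ being the mate of $CI(E)\otimes CI(E')\cong CI(E\otimes E')$.

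Finally, assume $\mathpzc{E}$ is strongly projectively monoidal. Then $\tilde\otimes$ already carries projectives to projectives, since $I(P)\,\tilde\otimes\,I(Q)=I(P\otimes Q)$ with $P\otimes Q$ projective; it remains to see that each projective $I(P)$ is flat in $\mathpzc{LH}(\mathpzc{E})$, i.e. $\textrm{Tor}^{\mathpzc{LH}(\mathpzc{E})}_{>0}(I(P),-)=0$. Resolving an arbitrary $N$ by projectives $I(\tilde R_{\bullet})$, where $\tilde R_{\bullet}$ is a complex of projectives of $\mathpzc{E}$ whose image under $I$ is acyclic off degree $0$, gives $I(P)\otimes^{\mathbb{L}} N\cong I(P\otimes\tilde R_{\bullet})$. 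Here the hypothesis that $P$ is strongly flat (Definition \ref{strongexact}) is essential: ordinary flatness only preserves short exact sequences, whereas strong exactness of $P\otimes(-)$ preserves the admissible acyclicity of the whole resolution $\tilde R_{\bullet}$, so $I(P\otimes\tilde R_{\bullet})$ is again acyclic off degree $0$ and the higher Tor groups vanish. Hence $I(P)\,\tilde\otimes\,(-)$ is exact and $(\mathpzc{LH}(\mathpzc{E}),\tilde\otimes,\tilde{\underline{\textrm{Hom}}},I(k))$ is projectively monoidal. The main obstacle throughout is the passage to the heart: making precise that $\otimes^{\mathbb{L}}$ and $R\underline{\textrm{Hom}}$ are respectively right and left t-exact and that the truncated functors inherit a genuine closed monoidal structure with its full coherence and adjunction data, and — in the last step — pinning down that it is strong flatness, not mere flatness, that forces the Tor-vanishing against arbitrary objects.
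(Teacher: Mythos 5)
The paper does not actually prove this proposition: its ``proof'' is the citation of \cite{qacs}, Proposition 1.5.3 and Corollary 1.5.4. Your reconstruction follows essentially the same route as that cited argument --- derive $\otimes$ and $\underline{\textrm{Hom}}$ via projective resolutions, transport along the equivalence $D(\mathpzc{E})\simeq D(\mathpzc{LH}(\mathpzc{E}))$ of Theorem \ref{leftheart}, and truncate onto the heart --- and I find it sound. Two points deserve to be made explicit. First, everything you do requires $\mathpzc{E}$ to have enough projectives; this is not among the stated hypotheses (``projectively monoidal'' only constrains whatever projectives exist), but it is assumed in the cited source, where $\mathpzc{E}$ is elementary, and you correctly flag that you are granting it. Second, in the final step the complex $\tilde R_{\bullet}$ obtained by pulling a projective resolution of an arbitrary $N\in\mathpzc{LH}(\mathpzc{E})$ back to $\mathpzc{E}$ has a genuinely non-strict differential $d_{1}$ (its cokernel in $\mathpzc{LH}(\mathpzc{E})$ is $N$, which need not lie in the essential image of $I$), so you should justify your implicit claim that the three-term subsequences in degrees $\ge 1$ are admissibly acyclic in $\mathpzc{E}$: this follows from Remark \ref{quasweak} (every morphism in a quasi-abelian category is weakly admissible) together with Proposition \ref{abeltest}, since $I$ of each such subsequence is exact. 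Once that is in place, your diagnosis of why strong flatness is needed is exactly right: mere flatness of $P$ preserves the strict epi-mono factorisation of $d_{2}$ but gives no control over $\textrm{Ker}(P\otimes d_{1})$ for the non-strict $d_{1}$, whereas admissible exactness of $P\otimes(-)$ preserves the admissible acyclicity of these subsequences and hence kills the higher Tor.
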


\begin{proof}
See \cite{qacs} Proposition 1.5.3 and Corollary 1.5.4.
\end{proof}

\section{Generators in Exact Categories}\label{sec3}
In this recall the definition of an admissible generating collection from \cite{saorin2011exact}, which follows Schneiders'  deinition of strict generators in quasi-abelian categories \cite{qacs} Definition 2.1.5/ Proposition 2.1.7. This will come into play later when we discuss cofibrant generation of model structures, where some compactness assumptions are required. If $\mathcal{G}$ is a collection of objects in an exact category we denote by $\bigoplus\mathcal{G}$ the collection of all small coproducts of objects in $\mathcal{G}$. We will use the word `collection' because we will also be interested in proper classes of generators.

\begin{defn}
A collection of objects $\mathcal{G}$ in an exact category $\mathpzc{E}$ is said to be an \textbf{admissible generating collection} if for each object $E$ of $\mathpzc{E}$ there is an object $Q$ of $\bigoplus\mathcal{G}$ and an admissible epimorphism $Q\twoheadrightarrow E$. An admissible generating collection $\mathcal{G}$ is said to be a \textbf{projective generating collection} if all objects in $\mathcal{G}$ are projective.
\end{defn}

The next two results are adaptations of the proof of \cite{qacs} Proposition 1.3.23 to the exact case.

\begin{prop}\label{reflectsurj}
Let $\mathcal{G}$ be an admissible generating collection in a weakly idempotent complete exact category $\mathpzc{E}$. Suppose $f:E\rightarrow F$ is a morphism such that for each $G$ in $\mathcal{G}$ then map $\textrm{Hom}(G,E)\rightarrow\textrm{Hom}(G,F)$ is an epimorphism. Then $f$ is an admissible epimorphism.
\end{prop}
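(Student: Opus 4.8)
The plan is to exploit the defining property of an admissible generating set to produce an admissible epimorphism onto $F$ that factors through $f$, and then to invoke the Obscure Axiom (Proposition \ref{obscure}).

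First, since $\mathcal{G}$ is an admissible generating set, I would choose an object $Q\in\bigoplus\mathcal{G}$, say $Q=\bigoplus_{i\in I}G_{i}$ with each $G_{i}\in\mathcal{G}$, together with an admissible epimorphism $q:Q\twoheadrightarrow F$. Writing $\iota_{i}:G_{i}\rightarrow Q$ for the coproduct inclusions, each composite $q\circ\iota_{i}:G_{i}\rightarrow F$ is an element of $\textrm{Hom}(G_{i},F)$.

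Next I would feed in the hypothesis. For each $i$, since $\textrm{Hom}(G_{i},E)\rightarrow\textrm{Hom}(G_{i},F)$ is an epimorphism, there is a morphism $s_{i}:G_{i}\rightarrow E$ with $f\circ s_{i}=q\circ\iota_{i}$. By the universal property of the coproduct $Q$, these assemble into a unique morphism $s:Q\rightarrow E$ satisfying $s\circ\iota_{i}=s_{i}$ for all $i$. Because $f\circ s$ and $q$ agree after precomposition with every inclusion $\iota_{i}$, the universal property forces $f\circ s=q$. Thus the admissible epimorphism $q$ factors as $f\circ s$.

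Finally I would apply Proposition \ref{obscure}(2) to this factorization: since $q=f\circ s$ is an admissible epic, the morphism $f$ is itself an admissible epic, as desired. The argument is essentially immediate once the correct tool is identified; the only genuine content is recognizing that the surjectivity hypothesis, applied summand-by-summand to a generating epimorphism onto $F$, yields a lift $s$ of $q$ through $f$, after which weak idempotent completeness (part of our standing assumptions, and the hypothesis underlying the Obscure Axiom) finishes the proof. I do not anticipate any serious obstacle.
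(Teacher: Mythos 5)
Your proof is correct and follows essentially the same route as the paper: lift an admissible epimorphism $Q\twoheadrightarrow F$ from $\bigoplus\mathcal{G}$ through $f$ and conclude by the Obscure Axiom. The only difference is that you spell out the summand-by-summand construction of the lift via the coproduct's universal property, a step the paper's one-line proof leaves implicit.
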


\begin{proof}
 Pick an admissible epimorphism $\epsilon:P\rightarrow F$ where $P\in\bigoplus\mathcal{G}$. By assumption there is a morphism $\epsilon':P\rightarrow E$ such that $\epsilon=f\circ\epsilon'$. By Proposition \ref{obscure} $f$ is then an admissible epimorphism.
\end{proof}

\begin{prop}\label{reflexact}
Let $\mathcal{G}$ be a generating collection in a weakly idempotent complete exact category $\mathpzc{E}$. A  complex
\begin{displaymath}
\xymatrix{
0\ar[r] & E\ar[r]^{e'} & E\ar[r]^{e''} & E''
}
\end{displaymath}
with $e''$ weakly left admissible is admissibly acyclic if and only if for each $G\in\mathcal{G}$ the sequence
\begin{displaymath}
\xymatrix{
0\ar[r] & \textrm{Hom}(G,E')\ar[r] & \textrm{Hom}(G,E)\ar[r] & \textrm{Hom}(G,E'')
}
\end{displaymath}
is acyclic in $\mathpzc{Ab}$. If in addition the objects of $\mathcal{G}$ are projective, then a sequence\begin{displaymath}
\xymatrix{
E\ar[r]^{e'} & E\ar[r]^{e''} & E''
}
\end{displaymath}
with $e''$ weakly left admissible is admissibly acyclic if and only if for each $P\in\mathcal{G}$ the sequence
\begin{displaymath}
\xymatrix{
\textrm{Hom}(P,E')\ar[r] & \textrm{Hom}(P,E)\ar[r] & \textrm{Hom}(P,E'')
}
\end{displaymath}
is acyclic in $\mathpzc{Ab}$.
\end{prop}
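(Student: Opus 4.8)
The plan is to run everything through the factorisation of $e'$ through the kernel of $e''$. Since $e''$ is weakly left admissible, its kernel exists and the canonical map $k\colon K=\textrm{Ker}(e'')\rightarrowtail E$ is an admissible monic; because $e''\circ e'=0$ there is a unique $\tilde{e}'\colon E'\to K$ with $e'=k\circ\tilde{e}'$. The single observation driving both statements is that for \emph{any} object $G$ the universal property of the kernel makes $\textrm{Hom}(G,k)$ an isomorphism of $\textrm{Hom}(G,K)$ onto $\textrm{Ker}(\textrm{Hom}(G,E)\to\textrm{Hom}(G,E''))$, since a morphism $G\to E$ is annihilated by $e''$ exactly when it factors (uniquely) through $k$. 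As $\textrm{Hom}(G,k)$ is moreover injective and $e'=k\circ\tilde{e}'$, exactness of the $\textbf{Ab}$-sequence at its middle term is equivalent to surjectivity of $\textrm{Hom}(G,\tilde{e}')\colon\textrm{Hom}(G,E')\to\textrm{Hom}(G,K)$ for every $G$, and injectivity on the left is equivalent to injectivity of $\textrm{Hom}(G,e')$.

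For the first, augmented statement I would argue as follows. If $0\to E'\to E\to E''$ is admissibly acyclic, then the subsequence $0\to E'\to E$ forces $e'$ to be monic, while $E'\to E\to E''$ makes $e'$ admissible with $\textrm{Im}(e')=\textrm{Ker}(e'')=K$; together these say exactly that $\tilde{e}'$ is an isomorphism. Then $\textrm{Hom}(G,\tilde{e}')$ is bijective and $\textrm{Hom}(G,e')$ is injective for every $G$, so by the identification above the Hom-sequences are exact. Conversely, assume the Hom-sequences are exact and $e''$ is weakly left admissible. Surjectivity of $\textrm{Hom}(G,\tilde{e}')$ for all $G$ gives, via Proposition \ref{reflectsurj}, that $\tilde{e}'$ is an admissible epimorphism. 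Injectivity of $\textrm{Hom}(G,e')$ for all $G$ forces $e'$ to be monic by a coproduct argument (take an admissible epic $\bigoplus_j G_j\twoheadrightarrow X$ onto the source of any $\eta$ with $e'\circ\eta=0$ and test componentwise, using that admissible epics are epic), and hence $\tilde{e}'$ is monic as the left factor of the monic $e'=k\circ\tilde{e}'$. An admissible epimorphism whose kernel vanishes is an isomorphism, so $e'=k\circ\tilde{e}'$ is an admissible monic with image $K=\textrm{Ker}(e'')$, which is precisely admissible acyclicity of $0\to E'\to E\to E''$.

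The projective statement is the same computation with the left-hand exactness, and the conclusion that $e'$ is monic, both dropped. In the forward direction admissible acyclicity of $E'\to E\to E''$ gives only the decomposition $E'\twoheadrightarrow K\rightarrowtail E$ of the admissible $e'$ from Proposition \ref{adstrict}, so $\tilde{e}'$ is merely an admissible epimorphism; projectivity of $P$ is then exactly what lets one lift any $\psi\colon P\to K$ through $\tilde{e}'$, yielding surjectivity of $\textrm{Hom}(P,\tilde{e}')$ and hence exactness at the middle. Conversely, surjectivity of $\textrm{Hom}(P,\tilde{e}')$ for all $P\in\mathcal{P}$ gives, again via Proposition \ref{reflectsurj}, that $\tilde{e}'$ is an admissible epimorphism, whence $e'=k\circ\tilde{e}'$ is admissible with image $K$ and $E'\to E\to E''$ is admissibly acyclic.

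The one genuinely new ingredient beyond Proposition \ref{reflectsurj} is the monic-detection step in the augmented case, and this is where I expect to have to be careful: it is not a formal consequence of exactness of $\textrm{Hom}$ but requires passing to a coproduct of generators and invoking that admissible epics are epimorphisms. Everything else is bookkeeping around the factorisation $e'=k\circ\tilde{e}'$ and the structural difference between the two statements, namely that the augmentation by $0$ forces $e'$ monic and thereby pins $\tilde{e}'$ down to an isomorphism (so no projectivity is needed), whereas in the unaugmented case $\tilde{e}'$ is only an admissible epimorphism and projectivity is what supplies the required lifts.
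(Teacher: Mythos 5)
Your proof is correct, and on the augmented (first) half it takes a genuinely different route from the paper's. The paper proves that half by deferring to Schneiders' argument (\cite{qacs} Proposition 1.3.23), which at one point resolves an object by a complex of generators; the paper substitutes Lemma \ref{enoughres} for that step. You instead argue entirely through the factorisation $e'=k\circ\tilde{e}'$ over $K=\textrm{Ker}(e'')$: the identification of $\textrm{Ker}(\textrm{Hom}(G,e''))$ with $\textrm{Hom}(G,K)$ converts middle exactness into surjectivity of $\textrm{Hom}(G,\tilde{e}')$, Proposition \ref{reflectsurj} then makes $\tilde{e}'$ an admissible epic, and your coproduct argument (testing $e'\circ\eta=0$ against an admissible epic $\bigoplus_j G_j\twoheadrightarrow X$ and using that admissible epics are epimorphisms) extracts monicity of $e'$ from left exactness of the Hom-sequences, pinning $\tilde{e}'$ down to an isomorphism. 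This is more self-contained --- no resolutions, no appeal to an external source --- at the cost of the one extra lemma that a monic admissible epimorphism is an isomorphism, which is immediate from the vanishing of its kernel. On the projective half your argument coincides with the paper's: the forward direction is admissible exactness of $\textrm{Hom}(P,-)$ (Proposition \ref{admexact}, which you re-derive by lifting through the admissible epic $\tilde{e}'$), and the converse is exactly the paper's combination of kernel preservation with Proposition \ref{reflectsurj}. One cosmetic point: you say ``surjectivity of $\textrm{Hom}(G,\tilde{e}')$ for all $G$'' where you mean for all $G\in\mathcal{G}$, which is all that Proposition \ref{reflectsurj} requires and all that the hypothesis supplies.
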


\begin{proof}
Suppose that for each $G\in\mathcal{G}$ the sequence
\begin{displaymath}
\xymatrix{
0\ar[r] & \textrm{Hom}(G,E')\ar[r] & \textrm{Hom}(G,E)\ar[r] & \textrm{Hom}(G,E'')
}
\end{displaymath}
is acyclic in $\mathpzc{Ab}$. Since $e''$ is weakly left admissible it is sufficient to show that $e'$ is a kernel of $e''$. Then $e'$ is automatically an admissible monic. To show this one can follow the proof in \cite{qacs}. At one point in that proof the existence of a resolution of $X$ by objects of $\oplus\mathcal{G}$ is used.  Here instead we may use Lemma \ref{enoughres}

Finally let us consider the assertion about projective generators. Proposition \ref{admexact} implies that
\begin{displaymath}
\xymatrix{
\textrm{Hom}(P,E')\ar[r] & \textrm{Hom}(P,E)\ar[r] & \textrm{Hom}(P,E'')
}
\end{displaymath}
is acyclic. For the converse first consider the sequence
\begin{displaymath}
\xymatrix{
0\ar[r]&\textrm{Ker}(e'')\ar[r] & E\ar[r]^{e''} & E''
}
\end{displaymath}
Since $\textrm{Hom}(P,-)$ preserves kernels, Proposition \ref{reflectsurj} implies that 
\begin{displaymath}
\xymatrix{
E\ar[r]^{e'} & E\ar[r]^{e''} & E''
}
\end{displaymath}
is admissibly acyclic.
\end{proof}

In particular if $\mathpzc{E}$ is quasi-abelian, then every morphism is weakly admissible, so in this case one has that a sequence\begin{displaymath}
\xymatrix{
E\ar[r]^{e'} & E\ar[r]^{e''} & E''
}
\end{displaymath}
 is admissibly acyclic if and only if for each $P\in\mathcal{P}$ the sequence
\begin{displaymath}
\xymatrix{
\textrm{Hom}(P,E')\ar[r] & \textrm{Hom}(P,E)\ar[r] & \textrm{Hom}(P,E'')
}
\end{displaymath}
is acyclic in $\mathpzc{Ab}$. For general exact categories we still have the following result.

\begin{cor}\label{reflshexact}
Let $\mathcal{G}$ be a projective generating collection in a weakly idempotent complete exact category $\mathpzc{E}$. Let $X_{\bullet}$ be a complex. Suppose that $X_{\bullet}$ is good. Then $X_{\bullet}$ is acyclic if and only if $\textrm{Hom}(G,X_{\bullet})$ is acyclic for each $G\in\mathcal{G}$.
\end{cor}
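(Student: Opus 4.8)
The plan is to prove both implications by reducing to the three-term subsequences $X_{n+1}\to X_n\to X_{n-1}$ and exploiting that, for projective $G$, the functor $\textrm{Hom}(G,-):\mathpzc{E}\to\textbf{Ab}$ is exact and preserves kernels (and hence is admissibly exact, as already noted after the definition of projective via Proposition \ref{admexact}).

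For the forward direction I would argue directly. If $X_{\bullet}$ is acyclic, then each subsequence $X_{n+1}\to X_n\to X_{n-1}$ is acyclic, in particular admissibly acyclic. Applying the admissibly exact functor $\textrm{Hom}(G,-)$ yields an admissibly acyclic sequence in $\textbf{Ab}$; since $\textbf{Ab}$ is abelian, admissibly acyclic is the same as exact. Hence $\textrm{Hom}(G,X_{\bullet})$ is acyclic for every $G\in\mathcal{G}$.

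The converse is the substantive direction, and here I would invoke the good-complex criterion of Proposition \ref{goodtrick}: it suffices to show that whenever $d_n^{X}$ has a kernel $Z_nX$, the induced map $d'_{n+1}:X_{n+1}\to Z_nX$ is an admissible epimorphism. By Proposition \ref{reflectsurj}, applicable since $\mathcal{G}$ is an admissible generating set, it is enough to check that $\textrm{Hom}(G,X_{n+1})\to\textrm{Hom}(G,Z_nX)$ is surjective for each $G\in\mathcal{G}$. I would then use that $\textrm{Hom}(G,-)$ preserves kernels to identify $\textrm{Hom}(G,Z_nX)$ with the level-$n$ cycles $\textrm{Ker}(\textrm{Hom}(G,d_n^{X}))$ of the complex $\textrm{Hom}(G,X_{\bullet})$. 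Since this complex is acyclic by hypothesis and lives in the abelian category $\textbf{Ab}$, those cycles coincide with the image of $\textrm{Hom}(G,d_{n+1}^{X})$; because the composite of $d'_{n+1}$ with the kernel inclusion $Z_nX\rightarrowtail X_n$ is $d_{n+1}^{X}$, this is precisely the required surjectivity. Proposition \ref{goodtrick} then gives that $X_{\bullet}$ is acyclic.

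I expect the main obstacle to be the bookkeeping in the converse: correctly identifying $\textrm{Hom}(G,Z_nX)$ with the level-$n$ cycles of $\textrm{Hom}(G,X_{\bullet})$ and translating acyclicity in $\textbf{Ab}$ into the epimorphism hypothesis demanded by Proposition \ref{reflectsurj}, so that Proposition \ref{goodtrick} can run. The goodness hypothesis is essential to this step, since it guarantees that below each level infinitely many differentials have kernels; this is exactly the input Proposition \ref{goodtrick} needs for its inductive argument, and without it the cycle objects $Z_nX$ need not exist and the reduction collapses.
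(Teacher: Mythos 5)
Your proof is correct and follows essentially the same route as the paper: the forward direction via admissible exactness of $\textrm{Hom}(G,-)$ for projective $G$, and the converse by identifying $\textrm{Hom}(G,Z_{n}X)$ with $Z_{n}\textrm{Hom}(G,X)$, deducing surjectivity onto cycles from acyclicity in $\textbf{Ab}$, upgrading to an admissible epimorphism via Proposition \ref{reflectsurj}, and concluding with Proposition \ref{goodtrick}. The only difference is that you spell out the appeal to Proposition \ref{reflectsurj} explicitly, which the paper leaves implicit.
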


\begin{proof}
Since each $G\in\mathcal{G}$ is projective the functors $\textrm{Hom}(G,-)$ preserve acyclic complexes. Conversely suppose $\textrm{Hom}(G,X_{\bullet})$ is acyclic for each $G\in\mathcal{G}$, and $d_{n}^{X}$ has a kernel $Z_{n}X$. By assumption $\textrm{Hom}(G,d'_{n+1}):\textrm{Hom}(G,X_{n+1})\rightarrow Z_{n}\textrm{Hom}(G,X)=\textrm{Hom}(G,Z_{n}X)$ is an epimorphism for each $n$. Thus $d'_{n+1}:X_{n+1}\rightarrow Z_{n}X$ is an admissible epimorphism. Now apply Proposition \ref{goodtrick}.
\end{proof}

Given a set of generators $\mathcal{G}$ of an exact category $\mathpzc{E}$ it is often possible, as in \cite{gillespie2016derived} to construct an exact structure on $\mathpzc{E}$ such that $\mathcal{G}$ is a set of \textit{projective} generators. We will discuss this in the next chapter.

\subsection{Elementary Exact Categories}

It is convenient to have generators satisfying some compactness conditions. Recall that a poset $\mathcal{J}$ is said to be $\lambda$-\textbf{filtered} for a cardinal $\lambda$ if any subset $S$ of $\mathcal{J}$ with $|S|<\lambda$ has an upper bound.

Recall (\cite{christensen2002quillen} Definition 4.1) that the \textbf{cofinality} of a limit ordinal $\gamma$ is the smallest cardinal $\kappa$ such that there exists a subset $T$ of $\gamma$ of cardinality $\kappa$ with $sup(T)=\gamma$. If $\gamma$ is a successor ordinal its cofinality is defined to be $1$.
\begin{defn}\label{defsmallnesscond}
Let $\mathpzc{E}$ be a category, $\mathcal{S}$ a class of morphisms in $\mathpzc{E}$, and $\kappa$ a cardinal. An object $E$ of $\mathpzc{E}$ is said to be
\begin{enumerate}
\item
$(\kappa,\mathcal{S})$-\textbf{small} if the canonical map 
$$\varinjlim_{\beta\in\lambda}\textrm{Hom}(E,F_{\beta})\rightarrow\textrm{Hom}(E,\varinjlim_{\beta\in\mathcal{\lambda}}F_{\beta})$$ is an isomorphism for any cardinal $\lambda$ with $cofin(\lambda)\ge\kappa$ and any $\lambda$-indexed transfinite sequence where $F_{i}\rightarrow F_{i+1}$ is in $\mathcal{S}$.
\item
$\mathcal{S}$-\textbf{small} if it is $(\kappa,\mathcal{S})$-\textbf{small} for some cardinal $\kappa$
\item
$(\kappa,\mathcal{S})$-\textbf{compact} if the canonical map 
$$\varinjlim_{\beta\in\lambda}\textrm{Hom}(E,F_{\beta})\rightarrow\textrm{Hom}(E,\varinjlim_{\beta\in\mathcal{\lambda}}F_{\beta})$$ 
 is an isomorphism for any regular cardinal $\lambda\ge\kappa$ and any $\lambda$-indexed transfinite sequence where $F_{i}\rightarrow F_{i+1}$ is in $\mathcal{S}$.
\item
$\mathcal{S}$-\textbf{compact} if it is $(\kappa,\mathcal{S})$-\textbf{small} for some cardinal $\kappa$
\item
$(\kappa,\mathcal{S})$-\textbf{presented} if the natural map 
$$\varinjlim_{i\in\mathcal{I}}\textrm{Hom}(E,F_{i})\rightarrow\textrm{Hom}(E,\varinjlim_{i\in\mathcal{I}}F_{i})$$
 is an isomorphism for any $\lambda$-filtered inductive system $F:\mathcal{I}\rightarrow\mathpzc{E}$ whose colimit exists where $\lambda\ge\kappa$ is regular, and such that $F(\alpha)\in\mathcal{S}$ for any morphism $\alpha$ in $\mathcal{I}$.
 \item
 $\mathcal{S}$-\textbf{presented} if it is $(\kappa,\mathcal{S})$-presented for some cardinal $\kappa$.
 \item
 $\mathcal{S}$-\textbf{tiny} if it is $(0,\mathcal{S})$-presented, where $0$ is the first ordinal.
 \item
 \textbf{tiny} if it is $\mathcal{S}$-\textbf{tiny} for $\mathcal{S}=\mathpzc{Mor}(\mathpzc{E})$.
\end{enumerate}
\end{defn}

The terminology `tiny', and both `elementary' and `quasi-elementary' below  are following the corresponding definitions for quasi-abelian categories in \cite{qacs}. The  definition of `small' is following Definition 4.1 in \cite{christensen2002quillen}, and the definition of `compact' is \cite{vst2012exact} Definition 3.3 (where it is called small). The notion of `presented' is following the discussion in \cite{lurie2006higher} Section A.1.1.

\begin{defn}\label{smallnessconditions}
Let $\mathpzc{E}$ be an exact category, $\mathcal{S}$ a collection of morphisms in $\mathpzc{E}$, and 
$\star\in\{\textrm{small, presented, compact tiny}\}$. $\mathpzc{E}$ is said to be
\begin{enumerate}
\item
 \textbf{projectively generated} if it has a projective generating set.
 \item
  $(\kappa,\mathcal{S})^{\star}$-\textbf{elementary} if it is complete, cocomplete and has a projective generating set consisting of $(\kappa,\mathcal{S})-\star$ objects.
 \item
   $\mathcal{S}^{\star}$-\textbf{elementary} if it is   $(\kappa,\mathcal{S})^{\star}$-elementary for some $\kappa$.
 \item
 $\mathcal{S}$-\textbf{elementary} if it is complete, cocomplete, and has a projective generating set consisting of $\mathcal{S}$-tiny objects.
 \item
 \textbf{quasi-elementary} if it is complete, cocomplete and has a projective generating set consisting of $\mathcal{S}$-tiny objects, where $\mathcal{S}$ is the class of split monomorphisms.
 \item
 $\textbf{AdMon}$-\textbf{elementary} if it is elementary for the class of admissible monomorphisms.
 \item
 \textbf{elementary} if it is $\mathcal{S}$-elementary for $\mathcal{S}=\mathpzc{Mor}(\mathpzc{E})$.
 \end{enumerate}
\end{defn}

\begin{prop}\label{qacel}
A cocomplete quasi-abelian category is (quasi)-elementary if and only if its left heart is (quasi)-elementary.
\end{prop}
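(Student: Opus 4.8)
The plan is to transport generating sets across the adjunction $C\dashv I$ of Theorem \ref{leftheart}, using Proposition \ref{lhproj} to match projectives on the two sides, and to reduce the whole statement to the single assertion that $I$ commutes with the relevant colimits. First I would record that $C$ and $I$ restrict to mutually inverse equivalences between the full subcategories of projectives: by Proposition \ref{lhproj} every projective of $\mathpzc{LH}(\mathpzc{E})$ is of the form $I(P)$ with $P$ projective in $\mathpzc{E}$, while $I$ is fully faithful and $C\circ I\cong\textrm{id}$, so $C$ is a bijection on the relevant Hom-groups and in particular reflects isomorphisms between projectives.

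Next I would prove the key comparison $\bigoplus_{\mathpzc{LH}(\mathpzc{E})}I(P_\alpha)\cong I\bigl(\bigoplus_{\mathpzc{E}}P_\alpha\bigr)$ for families of projectives $P_\alpha$: both sides are projective (a coproduct of projectives is projective, since $\textrm{Hom}(\bigoplus P_\alpha,-)\cong\prod\textrm{Hom}(P_\alpha,-)$ is exact, products being exact in $\textbf{Ab}$), the canonical comparison map $\phi$ satisfies $C(\phi)=\textrm{id}$ because $C$ preserves coproducts and inverts $I$, and hence $\phi$ is an isomorphism. Choosing projective presentations $P_1(F)\to P_0(F)\to F\to 0$ of the members of an arbitrary family and taking their coproduct (right-exact), together with the fact that $I$ carries such presentations to right-exact sequences in $\mathpzc{LH}(\mathpzc{E})$ (it preserves admissible epimorphisms and kernels), I would upgrade this to the statement that $I$ preserves all small coproducts.

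With $I$ coproduct-preserving in hand, both directions of the quasi-elementary case follow. For generation one uses enough projectives (Proposition \ref{lhproj}) on the ambient side together with the projective generating set on the other, transported by $I$: this produces an admissible epimorphism from a coproduct of the candidate generators, where one invokes that $I$ both sends admissible epics to epics and reflects them, so that $I(g)$ epic forces $g$ admissible epic. For smallness, if $G$ is small in $\mathpzc{E}$ then $I(G)$ is small in $\mathpzc{LH}(\mathpzc{E})$: present an arbitrary family of objects by coproducts of the $I(G')$, apply the exact functor $\textrm{Hom}(I(G),-)$, and compare cokernels — the projective columns match because $I$ is fully faithful, preserves coproducts, and $G$ is small. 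Conversely, if $H=I(P_H)$ is small in $\mathpzc{LH}(\mathpzc{E})$ then $C(H)=P_H$ is small in $\mathpzc{E}$, which drops straight out of $\textrm{Hom}_{\mathpzc{E}}(C(H),-)\cong\textrm{Hom}_{\mathpzc{LH}(\mathpzc{E})}(H,I(-))$ and coproduct-preservation of $I$.

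The elementary (tiny) case runs in exact parallel, with filtered colimits $\textrm{lim}_{\rightarrow}$ everywhere in place of coproducts, once one knows that $I$ preserves filtered colimits (functorial projective resolutions are then needed, so as to resolve a whole filtered system compatibly). This last point is the main obstacle. I would attempt it by the same device: write a filtered colimit as the cokernel of the difference map $\delta$ between the coproduct indexed by the objects and the coproduct indexed by the arrows, so that $I$-preservation of the colimit reduces, via the already-established preservation of coproducts, to $I$ preserving $\textrm{Coker}(\delta)$; since $I$ preserves cokernels of admissible morphisms, it then suffices to know that $\delta$ — equivalently, the colimit of the admissible presentation maps — is itself admissible. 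This is precisely where cocompleteness must be used in earnest, since it amounts to an exactness property of filtered colimits controlling the non-strictness that $I$ can otherwise fail to respect; verifying it, rather than the formal transport across $C\dashv I$, is the delicate step, and I expect the argument to lean on the explicit construction of $\mathpzc{LH}(\mathpzc{E})$ underlying Theorem \ref{leftheart}.
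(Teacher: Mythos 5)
The paper offers no proof of this statement beyond the citation of \cite{qacs}, Proposition 2.1.12, so your argument has to stand on its own. A good deal of it does. The identification of projectives across $C\dashv I$ via Proposition \ref{lhproj}, the isomorphism $\bigoplus_{\mathpzc{LH}(\mathpzc{E})}I(P_\alpha)\cong I\bigl(\bigoplus_{\mathpzc{E}}P_\alpha\bigr)$ for families of \emph{projectives} (both sides lie in the essential image of $I$ restricted to projectives once one knows both categories have enough projectives, and $C(\phi)$ is the identity, so $\phi=I(\psi)$ with $\psi$ an isomorphism), and the transfer of the \emph{generating} property in both directions are all correct: for generation one only ever forms coproducts of projectives and then uses that $I$ preserves and reflects admissible epimorphisms. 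The forward implication for smallness (via projective presentations and right-exactness of coproducts in the abelian category $\mathpzc{LH}(\mathpzc{E})$) also goes through.

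The genuine gap is the upgrade from coproducts (resp.\ filtered colimits) of projectives to arbitrary ones, and it bites exactly where you need it, namely in the direction ``$\mathpzc{LH}(\mathpzc{E})$ (quasi-)elementary $\Rightarrow$ $\mathpzc{E}$ (quasi-)elementary,'' where smallness of $P$ is deduced from smallness of $I(P)$ via $I\bigl(\bigoplus E_i\bigr)\cong\bigoplus I(E_i)$ for \emph{arbitrary} $E_i$. Your upgrade writes $\bigoplus E_i=\textrm{Coker}\bigl(\bigoplus P_{1,i}\rightarrow\bigoplus P_{0,i}\bigr)$ and asks $I$ to preserve this cokernel. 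But $I$ preserves $\textrm{Coker}(f)$ only when $f$ is strict: in general $\textrm{Coker}(I(f))\rightarrow I(\textrm{Coker}(f))$ is an isomorphism precisely when $I(A)\rightarrow I(\textrm{Im}(f))$ is epic, which forces $\textrm{Coim}(f)\rightarrow\textrm{Im}(f)$ to be an isomorphism --- the failure of this is exactly what the left heart is built to record. A coproduct of strict morphisms need not be strict unless coproducts preserve strict monomorphisms, i.e.\ unless coproducts are exact in $\mathpzc{E}$; and by Proposition \ref{elementinduct} exactness of coproducts is a \emph{consequence} of quasi-elementarity, which is the very thing being proved in that direction. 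The same circularity affects the admissibility of your difference map $\delta$ in the filtered-colimit case, which you yourself flag as unverified; worse, there even the base case of the bootstrap fails, since a filtered colimit of projectives need not be projective, so the ``both sides are $I$ of a projective, apply $C$'' device that drives the coproduct case is unavailable. Closing these gaps really does require descending into the explicit presentation of objects of $\mathpzc{LH}(\mathpzc{E})$ by monomorphisms of $\mathpzc{E}$ underlying Theorem \ref{leftheart}, which is how the cited proof in \cite{qacs} proceeds.
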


\begin{proof}
See \cite{qacs} Proposition 2.1.12.
\end{proof}

 Let $\mathcal{I}$ be a category,  $\mathpzc{E}$ an exact category and $\mathpzc{S}$ a class of morphisms in $\mathpzc{E}$. Denote by $\mathpzc{Fun}_{\mathpzc{S}}(\mathcal{I};\mathpzc{E})$ the category of functors $D:\mathcal{I}\rightarrow\mathpzc{E}$ such that $D(i\rightarrow j)$ is in $\mathpzc{S}$ for any morphism $i\rightarrow j$ in $\mathcal{I}$. Denote also by $\mathpzc{Fun}_{\mathpzc{S}}(\mathcal{I};\mathpzc{E})^{cont}$ and $\mathpzc{Fun}_{\mathpzc{S}}(\mathcal{I};\mathpzc{E})^{cocont}$ the full subcategories of $\mathpzc{Fun}_{\mathpzc{S}}(\mathcal{I};\mathpzc{E})$ consisting of functors which are continuous, and cocontinuous respectively. Note that if $\mathcal{I}=\aleph_{0}$, then $\mathpzc{Fun}_{\mathpzc{S}}^{cocont}(\aleph_{0};\mathpzc{E})=\mathpzc{Fun}_{\mathpzc{S}}(\aleph_{0};\mathpzc{E})$.

\begin{defn}
We say that $\mathpzc{E}$ has $(\mathcal{I};\mathpzc{S})$-\textbf{(co)limits} if for any functor $D\in\mathpzc{Fun}_{\mathpzc{S}}(\mathcal{I};\mathpzc{E})$, a (co)limit of $D$ exists. We say that $\mathpzc{E}$ has $(\mathcal{I};\mathpzc{S})^{cont}$-\textbf{limits} if for any functor $D\in\mathpzc{Fun}^{cont}_{\mathpzc{S}}(\mathcal{I};\mathpzc{E})$, a limit of $D$ exists. Finally we say that $\mathpzc{E}$ has $(\mathcal{I};\mathpzc{S})^{cocont}$-\textbf{colimits} if for any functor $D\in\mathpzc{Fun}^{cocont}_{\mathpzc{S}}(\mathcal{I};\mathpzc{E})$, a colimit of $D$ exists.
\end{defn}

Let $ Ch(\mathpzc{S})$ denote the class of morphisms in $ Ch(\mathpzc{E})$ consisting of those morphisms $f_{\bullet}:A_{\bullet}\rightarrow B_{\bullet}$ such that $f_{n}\in\mathpzc{S}$ for each $n$. Clearly if  $\mathpzc{E}$ has $(\mathcal{I};\mathpzc{S})$-limits, $(\mathcal{I};\mathpzc{S})^{cont}$-limits, $(\mathcal{I};\mathpzc{S})$-colimits, or $(\mathcal{I};\mathpzc{S})^{cocont}$-colimits then $ Ch(\mathpzc{E})$ also has corresponding (co)limits for the class $Ch(\mathcal{S})$.

\begin{defn}
Suppose that $\mathpzc{E}$ has $(\mathcal{I};\mathpzc{S})$-colimits. We say that $(\mathcal{I};\mathpzc{S})$-colimits are exact in $\mathpzc{E}$ if for any functor
$F\in \mathpzc{Fun}_{\mathpzc{S}}(\mathcal{I};Ch(\mathpzc{E}))$
such that $F(i)$ acyclic for any object $i$ in $\mathcal{I}$, the colimit $\textrm{lim}_{\rightarrow_{\mathcal{I}}}F(i)$
is acyclic. Similarly one defines exactness of $(\mathcal{I};\mathpzc{S})^{cocont}$-colimits, $(\mathcal{I};\mathpzc{S})$-limits, and $(\mathcal{I};\mathpzc{S})^{cont}$-limits.
\end{defn}
We will be particularly interested in the cases $\mathcal{S}=\textbf{AdMon}$ is the class of admissible monomorphism, or $\mathcal{S}=\textbf{SplitMon}$ is the class of split monomorphisms. The following proposition is immediate from Proposition \ref{reflexact} and Corollary \ref{reflshexact} but it has a useful consequence.

\begin{prop}\label{elementinduct} 
Let $\mathpzc{E}$ be a complete and cocomplete elementary exact category, $\mathcal{I}$ a  filtered category, and 
$$0\rightarrow F\rightarrow G\rightarrow H\rightarrow 0$$
a null sequence of functors $\mathcal{I}\rightarrow\mathpzc{E}$ such that for each $i\in\mathcal{I}$,
$$0\rightarrow F(i)\rightarrow G(i)\rightarrow H(i)\rightarrow 0$$
is exact. Suppose there is a class $\mathcal{P}$ of projective generators of $\mathpzc{E}$ such that the maps
$$\varinjlim_{i\in\mathcal{I}}\textrm{Hom}(P,F(i))\rightarrow\textrm{Hom}(P,\varinjlim_{i\in\mathcal{I}}F(i))$$
$$\varinjlim_{i\in\mathcal{I}}\textrm{Hom}(P,G(i))\rightarrow\textrm{Hom}(P,\varinjlim_{i\in\mathcal{I}}G(i))$$
$$\varinjlim_{i\in\mathcal{I}}\textrm{Hom}(P,H(i))\rightarrow\textrm{Hom}(P,\varinjlim_{i\in\mathcal{I}}H(i))$$
are isomorphisms for any $P\in\mathcal{P}$. Then the sequence 
$$0\rightarrow \varinjlim_{i\in\mathcal{I}}F(i)\rightarrow \varinjlim_{i\in\mathcal{I}}G(i)\rightarrow \varinjlim_{i\in\mathcal{I}}H(i)\rightarrow 0$$
is exact.
\end{prop}
In particular if, for example, $\mathpzc{E}$ is a  $(\kappa,\mathcal{S})^{small}$-elementary, then for any regular $\lambda\ge\kappa$, $(\lambda,\mathcal{S})^{cocont}$-colimits in $\mathpzc{E}$ exist and are exact. If $\mathpzc{E}$ is elementary then all filtered colimits are exact. This also motivates a more general definition \ref{weaklyelementary} below.

\begin{defn}\label{weaklyelementary}
Let $\mathpzc{E}$ be an exact category and $\mathpzc{S}$ a collection of morphisms in $\mathpzc{E}$. $\mathpzc{E}$ is said to be
\begin{enumerate}
\item
\textbf{weakly} $(\lambda;\mathcal{S})$-elementary for an ordinal $\lambda$ if $\mathpzc{E}$ has $(\lambda;\mathpzc{S})^{cocont}$-colimits and $(\lambda;\mathpzc{S})^{cocont}$-colimits are exact. 
\item
\textbf{weakly }$\mathpzc{S}$-\textbf{elementary} if for any ordinal $\lambda$ $\mathpzc{E}$ is weakly $(\lambda;\mathcal{S})$-elementary. 
\item
\textbf{weakly }\textbf{AdMon}-\textbf{elementary} if it is weakly $\mathpzc{S}$-elementary for $\mathpzc{S}=\textbf{AdMon}$ of admissible monomorphisms.
\item
\textbf{weakly elementary} if it is weakly $\mathpzc{S}$-elementary for $\mathpzc{S}=\mathpzc{Mor}(\mathpzc{E})$.
\end{enumerate}
\end{defn}

In particular $\mathpzc{S}$-elementary exact categories are weakly $\mathpzc{S}$-elementary. Another useful fact is that if $\mathpzc{E}$ has enough injectives then it is weakly $(\aleph_{0};\textbf{AdMon})$-elementary.
\begin{prop}\label{prop:injadel}
If a weakly idempotent complete exact category $\mathpzc{E}$ has enough injectives then  it is weakly $(\aleph_{0};\textbf{AdMon})$-elementary.
\end{prop}
\begin{proof}
In fact we shall prove the following. Let $0\rightarrow A\rightarrow B\rightarrow C\rightarrow 0$ be an exact sequence in $\mathpzc{Fun}(\aleph_{0};\mathpzc{E})$. Suppose that $C$ is in $\mathpzc{Fun}_{\textbf{AdMon}}(\aleph_{0};\mathpzc{E})$. We claim that 
$$0\rightarrow\textrm{lim}_{\rightarrow_{n}}A_{n}\rightarrow\textrm{lim}_{\rightarrow_{n}}B_{n}\rightarrow\textrm{lim}_{\rightarrow_{n}}C_{n}\rightarrow 0$$
is exact. Noting that $Hom(-,I)$ sends colimits to limits, by the dual of Proposition \ref{reflexact} it suffices to prove that for each injective $I$, the sequence
$$0\rightarrow \textrm{lim}_{\leftarrow_{n}} Hom(C_{n},I)\rightarrow  \textrm{lim}_{\leftarrow_{n}} Hom(B_{n},I)\rightarrow  \textrm{lim}_{\leftarrow_{n}} Hom(A_{n},I)\rightarrow 0$$
is exact. Since $I$ is injective $\textrm{lim}_{\leftarrow_{n}} Hom(C_{n},I)$ is the limit of a Mittag-Leffler system. Thus the sequence of abelian groups is exact (\cite{stacks-project} Section 10.86).
\end{proof}
\begin{prop}\label{transcomp}
Let $\lambda$ be an ordinal, and $\mathpzc{E}$ be a weakly $(\lambda';\textbf{AdMon})$-elementary exact category for all $\lambda'\le\lambda$. Then $\lambda$-transfinite compositions of admissible monics are admissible monics.
\end{prop}
\begin{proof}
The proof is by transfinite induction. Since finite compositions of admissible monics are admissible, the successor case is clear. For the limit case let $\Lambda$ be a limit ordinal, and consider the commutative diagram
\begin{displaymath}
\xymatrix{
E_{0}\ar[r]\ar[d] & E_{0}\ar[d]\ar[r] & E_{0}\ar[r]\ar[d] & \ldots\\
E_{0}\ar[d]\ar[r]^{c_{\lambda}} & E_{\lambda}\ar[r]\ar[d] & E_{\lambda'}\ar[r]\ar[d] & \ldots\\
0\ar[r] &\textrm{Coker}(c_{\lambda})\ar[r] & \textrm{Coker}(c_{\lambda'})\ar[r] &\ldots
}
\end{displaymath}
with short exact columns. Taking the direct limit over $\Lambda$, we get a short exact sequence
$$0\rightarrow E_{0}\rightarrow E\rightarrow C\rightarrow 0$$
In particular $E_{0}\rightarrow E$ is admissible.
\end{proof}
Note that Proposition \ref{elementinduct} and Proposition \ref{transcomp} are proven for the $G$-exact structure (see the next chapter) in \cite{gillespie2016derived} Corollary 5.3.
Before concluding this section, let us say something brief about \textit{projective} limits. 
\begin{defn}
Let $\mathpzc{E}$ be an exact category. A projective diagram $A\in\mathpzc{Fun}(\aleph_{0};\mathpzc{E}^{op})$ 
in $\mathpzc{E}$ is said to be $\textrm{lim}_{\leftarrow}$-acyclic if for any exact sequence
$$0\rightarrow A\rightarrow B\rightarrow C\rightarrow 0$$
in $\mathpzc{Fun}(\aleph_{0};\mathpzc{E}^{op})$ the sequence
$$0\rightarrow\textrm{lim}_{\leftarrow_{n}}A_{n}\rightarrow\textrm{lim}_{\leftarrow_{n}}B_{n}\rightarrow\textrm{lim}_{\leftarrow_{n}}C_{n}\rightarrow0$$
is exact in $\mathpzc{E}$.
\end{defn}
The following is dual to Proposition \ref{prop:injadel}.
\begin{prop}\label{prop:projML}
If $\mathpzc{E}$ be a weakly idempotent complete exact category. If $\mathpzc{E}$ has enough projectives then any sequence $A\in\mathpzc{Fun}(\aleph_{0};\mathpzc{E}^{op})$ such that $A_{n}\rightarrow A_{n-1}$ is an admissible epimorphism is $\textrm{lim}_{\leftarrow}$-acyclic.
\end{prop}

\subsection{Generators in Categories of Chain Complexes}

Our goal now is to show that if $\mathpzc{E}$ is an elementary exact category then so is $ Ch_{*}(\mathpzc{E})$, for $*\in\{+,\le0,-,b,\ge0,\emptyset\}$. Much of this is based on the following technical result.

\begin{lem}\label{extstuff}
Let $\mathpzc{E}$ be a weakly idempotent complete exact category. For any object $C\in\mathpzc{E}$ and $X,Y\in Ch(\mathpzc{E})$ we have natural isomorphisms
\begin{enumerate}
\item
$\textrm{Hom}_{\mathpzc{E}}(C,Y_{n})\cong\textrm{Hom}_{ Ch(\mathpzc{E})}(D^{n}(C),Y)$
\item
$\textrm{Hom}_{\mathpzc{E}}(X_{n-1},C)\cong \textrm{Hom}_{ Ch(\mathpzc{E})}(X,D^{n}(C))$
\item
$\textrm{Ker}(\textrm{Hom}_{\mathpzc{E}}(C,d_{n}^{Y}))\cong\textrm{Hom}_{ Ch(\mathpzc{E})}(S^{n}(C),Y)$. 
In particular if $\textrm{Ker}(d_{n}^{Y})$ exists\newline 
then $\textrm{Hom}_{\mathpzc{E}}(C,\textrm{Ker}(d_{n}^{Y}))\cong\textrm{Hom}_{ Ch(\mathpzc{E})}(S^{n}(C),Y)$
\item
$\textrm{Ker}(\textrm{Hom}_{\mathpzc{E}}(d_{n+1}^{X},C))\cong\textrm{Hom}_{ Ch(\mathpzc{E})}(X,S^{n}(C))$
In particular if $\textrm{Coker}(d_{n+1}^{X})$ exists then \newline
$\textrm{Hom}_{\mathpzc{E}}(\textrm{Coker}(d_{n+1}^{X}),C)\cong\textrm{Hom}_{ Ch(\mathpzc{E})}(X,S^{n}(C))$
\item
$\textrm{Ext}^{1}_{\mathpzc{E}}(C,Y_{n})\cong\textrm{Ext}^{1}_{ Ch(\mathpzc{E})}(D^{n}(C),Y)$
\item
$\textrm{Ext}^{1}_{\mathpzc{E}}(X_{n},C)\cong\textrm{Ext}^{1}_{ Ch(\mathpzc{E})}(X,D^{n+1}(C))$
\item
Let $X$ be a complex such that $\textrm{Ker}(d_{n}^{X})$ exists. Then there is a monomorphism
$$\textrm{Ext}^{1}(C,\textrm{Ker}(d_{n}^{X}))\hookrightarrow\textrm{Ext}^{1}(S^{n}(C),X)$$
If $X$ is acyclic then this is an isomorphism.
\item
Let $X$ be a complex such that $\textrm{Coker}(d_{n+1}^{X})$ exists. Then there is a monic
$$\textrm{Ext}^{1}(\textrm{Coker}(d_{n+1}^{X}),C)\hookrightarrow\textrm{Ext}^{1}(X,S^{n}(C))$$
If $X$ is acyclic then this is an isomorphism.
\end{enumerate}
\end{lem}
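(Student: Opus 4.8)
My plan is to build the comparison map explicitly by a pullback-and-assemble procedure that is formally dual to the one used for part (7); indeed the whole statement is the dual of (7) under the equivalence $\textbf{Ch}(\mathpzc{E})^{op}\cong\textbf{Ch}(\mathpzc{E}^{op})$ (which interchanges $S^{n}$ with $S^{-n}$ and kernels with cokernels), so one could alternatively just apply (7) to $\mathpzc{E}^{op}$. Working directly, given a class $\xi\in\textrm{Ext}^{1}(\textrm{Coker}(d_{n+1}^{X}),C)$ represented by $0\to C\to E\to\textrm{Coker}(d_{n+1}^{X})\to 0$, I would pull it back along the canonical epimorphism $\pi:X_{n}\to\textrm{Coker}(d_{n+1}^{X})$ to get $0\to C\to E'\to X_{n}\to 0$ with $E'=E\times_{\textrm{Coker}(d_{n+1}^{X})}X_{n}$, and assemble a complex $G$ equal to $X$ in every degree except $n$, where $G_{n}=E'$. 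Here $d_{n}^{G}$ is $d_{n}^{X}$ composed with the projection $E'\to X_{n}$, and $d_{n+1}^{G}:X_{n+1}\to E'$ is the canonical lift of $d_{n+1}^{X}$ whose $E$-component is $0$ (well defined since $\pi\circ d_{n+1}^{X}=0$); one checks $G$ is a complex. This produces a level-wise short exact sequence $0\to S^{n}C\to G\to X\to 0$, and standard Baer-sum bookkeeping shows $\xi\mapsto[G]$ is a well-defined homomorphism $\Phi$.

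To see that $\Phi$ is monic I would suppose $\Phi(\xi)=0$, so that $G$ carries a chain-map section $s:X\to G$. In degrees $\neq n$ the section is forced to be the identity, and $s_{n}:X_{n}\to E'$ is a section of $E'\to X_{n}$; the chain condition in degree $n+1$ forces $s_{n}\circ d_{n+1}^{X}=d_{n+1}^{G}$, and comparing $E$-components shows that the $E$-component $\sigma:X_{n}\to E$ of $s_{n}$ satisfies $\sigma\circ d_{n+1}^{X}=0$. Hence $\sigma$ factors through $\pi$, and since $s_{n}$ lands in the pullback the induced map $\textrm{Coker}(d_{n+1}^{X})\to E$ is a section of $E\to\textrm{Coker}(d_{n+1}^{X})$, i.e. it splits $\xi$; so $\xi=0$.

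For the isomorphism when $X$ is acyclic I must prove surjectivity. Given any $0\to S^{n}C\to G\to X\to 0$, necessarily $G_{m}\cong X_{m}$ for $m\neq n$ while $G_{n}=E'$ is an extension of $X_{n}$ by $C$ carrying a lift $d_{n+1}^{G}$ of $d_{n+1}^{X}$ through $p:E'\to X_{n}$. Acyclicity makes $d_{n+1}^{X}$ admissible, so $X_{n+1}\twoheadrightarrow B_{n}\rightarrowtail X_{n}$ with $B_{n}=\textrm{Im}(d_{n+1}^{X})$; and since $G$ is a complex, $d_{n+1}^{G}\circ d_{n+2}^{X}=0$, whence (using $X_{n+2}\twoheadrightarrow\textrm{Ker}(d_{n+1}^{X})$, again from acyclicity) $d_{n+1}^{G}$ kills $\textrm{Ker}(d_{n+1}^{X})$ and so factors as $d_{n+1}^{G}=\delta\circ(X_{n+1}\twoheadrightarrow B_{n})$ with $p\circ\delta=j:B_{n}\rightarrowtail X_{n}$. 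By the Obscure Axiom (Proposition \ref{obscure}) $\delta$ is an admissible monic, and I would set $\bar{E}=\textrm{Coker}(\delta)$. A nine-lemma argument — observing that the pullback of $j$ along $p$ is $C\oplus B_{n}$ via $\delta$, and using Proposition \ref{pullbackkernel} to identify its cokernel in $E'$ — yields a short exact sequence $0\to C\to\bar{E}\to\textrm{Coker}(d_{n+1}^{X})\to 0$ defining a class $\xi$; one then checks $E'\cong\bar{E}\times_{\textrm{Coker}(d_{n+1}^{X})}X_{n}$ in such a way that $d_{n+1}^{G}$ becomes the canonical lift, so that $\Phi(\xi)=[G]$.

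The routine parts are the construction of $\Phi$ and injectivity. The main obstacle is surjectivity, and the crucial point — the one place where acyclicity is genuinely used — is the observation that the lifted differential $d_{n+1}^{G}$ automatically factors through the admissible image $B_{n}=\textrm{Im}(d_{n+1}^{X})$, precisely because $G$ is a complex and $\textrm{Ker}(d_{n+1}^{X})=\textrm{Im}(d_{n+2}^{X})$. Extracting from this the admissible monic $\delta$, and then verifying via the nine lemma that its cokernel is an extension of $\textrm{Coker}(d_{n+1}^{X})$ by $C$ which pulls back to $E'$ with the correct differential, is the most delicate bookkeeping in the argument.
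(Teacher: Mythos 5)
Your text addresses only item (8) of the lemma; the statement has eight parts, and items (1)--(7) are nowhere established. Worse, your two proposed routes into (8) --- ``apply (7) to $\mathpzc{E}^{op}$'' or ``dualize the argument used for part (7)'' --- both presuppose exactly the part you have not proved. For comparison, the paper disposes of items (1)--(3) and (5)--(7) in one stroke: by Proposition \ref{extab} (a fully faithful exact functor which reflects exactness and has extension-closed essential image induces an isomorphism on $\textrm{Ext}^{1}$) together with Corollary \ref{chainab} (a left abelianization of $\mathpzc{E}$ induces one of $\textbf{Ch}(\mathpzc{E})$), it suffices to prove those items when $\mathpzc{E}$ is abelian, where they are Lemma 3.1 of \cite{Gillespie2} and Lemma 4.2 of \cite{gillespie2008cotorsion}; items (4) and (8) then follow by the duality $\textbf{Ch}(\mathpzc{E})^{op}\cong\textbf{Ch}(\mathpzc{E}^{op})$ that you describe (which is legitimate, since weak idempotent completeness is self-dual and a right abelianization of $\mathpzc{E}$ is a left abelianization of $\mathpzc{E}^{op}$). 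So as a proof of the lemma your submission is incomplete, and even your duality reduction of (8) is not available until (7) is in hand.

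That said, the direct construction you give for (8) is essentially correct, and it is a genuinely different route from the paper's: it is the ``internal'' argument the paper only alludes to in the remark following the lemma. The pullback-and-assemble map $\Phi$, the injectivity argument (comparing $E$-components of the chain-map section and using that $\pi:X_{n}\rightarrow\textrm{Coker}(d_{n+1}^{X})$ is an epimorphism to split the original extension), and the surjectivity step --- factoring the lifted differential through $\textrm{Im}(d_{n+1}^{X})$ using $d_{n+1}^{G}\circ d_{n+2}^{X}=0$ together with the admissible epi $X_{n+2}\twoheadrightarrow\textrm{Ker}(d_{n+1}^{X})$ supplied by acyclicity, extracting the admissible monic $\delta$ via Proposition \ref{obscure}, and identifying $\textrm{Coker}(\delta)$ as an extension of $\textrm{Coker}(d_{n+1}^{X})$ by $C$ --- all go through; the Noether-isomorphism bookkeeping you defer to ``a nine-lemma argument,'' and the additivity of $\Phi$ under Baer sum, are standard but should be written out. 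What the internal route buys is independence from the standing assumption that abelian embeddings exist; what the paper's route buys is brevity, at the price of leaning on the embedding theorem and Gillespie's abelian computations. Either way, you still owe an argument for items (1)--(7).
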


\begin{proof}
By Proposition \ref{extab} and Corollary \ref{chainab} it is sufficient to prove statements $1-3,5,6,7$ under the assumption that $\mathpzc{E}$ is abelian. In this context the result is Lemma 3.1 in \cite{Gillespie2} and Lemma 4.2 in \cite{gillespie2008cotorsion}. Statement $4$ is dual to to $3$, and statement $8$ is dual to $7$.
\end{proof}

\begin{rem}
It is possible to prove most of this lemma internally in an exact category without passing to an abelianisation.
\end{rem}

At this point we can prove the following lemma. It provides one of our main applications of generating sets, namely a convenient method for testing acyclicity. It is a modification of Lemma 3.7 in \cite{kaplansky}.

\begin{lem}\label{genexact}
Let $\mathpzc{E}$ be an exact category with a collection of  generators $\mathcal{G}$. Let $X$ be a chain complex. Suppose that $X_{\bullet}$ is good. If for every $G\in\mathcal{G}$ each map $f:S^{n}(G)\rightarrow X$ extends to $D^{n+1}(G)$, then $X$ is acyclic.
\end{lem}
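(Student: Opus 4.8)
The plan is to reduce the statement to Proposition \ref{goodtrick} by way of Proposition \ref{reflectsurj}, after reinterpreting the extension hypothesis as a collection of epimorphisms of Hom-groups via Lemma \ref{extstuff}. First I would fix notation: for any $n$ such that $d_n^X$ admits a kernel, write $Z_n X = \textrm{Ker}(d_n^X)$ and let $d'_{n+1}:X_{n+1}\rightarrow Z_n X$ be the induced factorisation of $d_{n+1}^X$ through $Z_n X$ (which exists since $d_n^X d_{n+1}^X=0$). Because $X_\bullet$ is good, Proposition \ref{goodtrick} guarantees that $X_\bullet$ is acyclic as soon as $d'_{n+1}$ is an admissible epimorphism for every $n$ at which $d_n^X$ has a kernel. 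So it suffices to fix such an $n$ and prove that $d'_{n+1}$ is an admissible epic.

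For this, I would invoke Proposition \ref{reflectsurj}: since $\mathcal{G}$ is an admissible generating set, it is enough to show that for every $G\in\mathcal{G}$ the map $\textrm{Hom}(G,X_{n+1})\rightarrow\textrm{Hom}(G,Z_n X)$ given by $\phi\mapsto d'_{n+1}\circ\phi$ is an epimorphism of abelian groups. The translation between this and the hypothesis is the heart of the argument and is supplied by Lemma \ref{extstuff}: parts $(1)$ and $(3)$ give natural isomorphisms $\textrm{Hom}_{\textbf{Ch}(\mathpzc{E})}(D^{n+1}(G),X)\cong\textrm{Hom}_{\mathpzc{E}}(G,X_{n+1})$ and, using that $Z_n X$ exists, $\textrm{Hom}_{\textbf{Ch}(\mathpzc{E})}(S^{n}(G),X)\cong\textrm{Hom}_{\mathpzc{E}}(G,Z_n X)$.

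I would then identify the restriction map. There is a canonical chain inclusion $\iota:S^n(G)\hookrightarrow D^{n+1}(G)$ (the identity in degree $n$, zero elsewhere), and the hypothesis ``every $f:S^n(G)\rightarrow X$ extends to $D^{n+1}(G)$'' says exactly that the precomposition map $\iota^{*}:\textrm{Hom}_{\textbf{Ch}(\mathpzc{E})}(D^{n+1}(G),X)\rightarrow\textrm{Hom}_{\textbf{Ch}(\mathpzc{E})}(S^{n}(G),X)$ is surjective. A short check shows that under the isomorphisms above $\iota^{*}$ corresponds to $(d'_{n+1})_{*}$: a chain map out of $D^{n+1}(G)$ is determined by its degree-$(n+1)$ component $\phi:G\rightarrow X_{n+1}$, its degree-$n$ component is forced to be $d_{n+1}^{X}\phi$, and restricting along $\iota$ returns precisely this degree-$n$ map, which factors through $Z_n X$ as $d'_{n+1}\circ\phi$. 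Hence surjectivity of $\iota^{*}$ is surjectivity of $(d'_{n+1})_{*}$ for every $G\in\mathcal{G}$.

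Assembling the pieces, Proposition \ref{reflectsurj} then yields that $d'_{n+1}$ is an admissible epimorphism; as $n$ ranged over all indices with $d_n^X$ having a kernel, Proposition \ref{goodtrick} concludes that $X_\bullet$ is acyclic. I do not expect a genuine obstacle here, since the lemma is essentially an assembly of Proposition \ref{goodtrick}, Proposition \ref{reflectsurj} and Lemma \ref{extstuff}; the only point requiring care is the third step, namely matching the naturality of the isomorphisms of Lemma \ref{extstuff} with the restriction map $\iota^{*}$ so that it is confirmed to be $(d'_{n+1})_{*}$ rather than some variant.
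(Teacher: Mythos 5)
Your proposal is correct and follows essentially the same route as the paper's own proof: reduce to showing $d'_{n+1}:X_{n+1}\rightarrow Z_{n}X$ is an admissible epic via Proposition \ref{goodtrick}, reduce that to surjectivity of $\textrm{Hom}(G,d'_{n+1})$ via the generating set, and identify this with the extension property $S^{n}(G)\rightarrow X$ extending over $D^{n+1}(G)$. Your third step merely makes explicit the identification (via Lemma \ref{extstuff}) that the paper leaves implicit.
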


\begin{proof}
By Proposition \ref{goodtrick} it is enough to show that whenever $d_{m}$ has a kernel, the induced map
$$d':X_{m+1}\rightarrow Z_{m}X$$
is an admissible epic. For this it is enough to show that for each $G\in\mathcal{G}$, 
$$\textrm{Hom}(G,d'):\textrm{Hom}(G,X_{m+1})\rightarrow\textrm{Hom}(G,Z_{m}X)$$ 
is surjective, i.e. that any map $f:G\rightarrow Z_{m}X$ lifts to a diagram
\begin{displaymath}
\xymatrix{
& X_{m+1}\ar[d]^{d'}\\
G\ar[r]^{f}\ar[ur] & Z_{n}X
}
\end{displaymath}
But this is equivalent to showing that the chain map $S^{n}(G)\rightarrow X$ induced by $f$ extends to a morphism $D^{n+1}(G)\rightarrow X$.

\end{proof}
Since there is a bijective correspondence between diagrams of the form
\begin{displaymath}
\xymatrix{
S^{n-1}(G)\ar[d]\ar[r] & X\ar[d]^{f}\\
D^{n}(G)\ar[r] & Y
}
\end{displaymath}
and maps of the form $S^{n}(G)\rightarrow\textrm cone(f)$, which induces a bijection between lifts in the above diagram and extensions of the map $S^{n}(G)\rightarrow\textrm cone(f)$ to a map $D^{n+1}(G)\rightarrow\textrm{cone}(f)$, we immediately get the following.
\begin{cor}\label{liftquasi}
Let $\mathpzc{E}$ be a weakly idempotent complete exact category with a collection of generators $\mathcal{G}$. Let $g:X\rightarrow Y$ be a morphism of complexes. Then $g$ is acyclic if and only if $f$ has the right lifting property with respect to all maps of the form $S^{n}(G)\rightarrow D^{n+1}(G)$ for $n\in\Z, G\in\mathcal{G}$.
\end{cor}

Next we characterise projective objects in categories of chain complexes. It is well known that projective objects in the category of chain complexes in an abelian category are precisely the split exact complexes with projective entries. See for example \cite{hoveybook} Proposition 2.3.10. We generalise the result to exact categories. The fact that in $Ch(\mathpzc{E})$ the projective complexes are contractible complexes with projective components, and that if $\mathpzc{E}$ has enough projectives then so does $Ch(\mathpzc{E})$ is proven in \cite{gillespie2016exact} Proposition 2.6 and Corollary 2.7. 
\begin{prop}\label{chainproj} 
Let $\mathpzc{E}$ be an exact category, and let $*\in\{\ge0,\le0,+,-,b,\emptyset\}$. Then contractible complexes of projectives are projective objects in $ Ch_{*}(\mathpzc{E})$. In addition, if $P$ is projective in $\mathpzc{E}$ then $S^{0}(P)$ is projective in $ Ch_{\ge0}(\mathpzc{E})$. Conversely, if a complex $X_{\bullet}$ is a projective in $ Ch_{*}(\mathpzc{E})$ for $*\in\{+,-,b,\ge0,\le0,\emptyset\}$ then every $X_{n}$ is projective. Moreover, if $*\in\{+,-,b,\emptyset\}$ then $X_{\bullet}$ is contractible. In particular if $\mathpzc{E}$ is weakly idempotent complete then  $X_{\bullet}$ is projective if and only if it is a split exact complex of projective objects of $\mathpzc{E}$. 
\end{prop}

\begin{proof}
The claims for $*\in\{+,-,b,\emptyset\}$ are established in \cite{gillespie2016exact} Proposition 2.6 and Corollary 2.7. In fact the proof there is restricted to $*\in\{\emptyset\}$but the proof is the same for the other cases. Let us show that $S^{0}(P)$ is a projective object in $ Ch_{\ge0}(\mathpzc{E})$ whenever $P$ is projective in $\mathpzc{E}$. Indeed in this case, Lemma \ref{extstuff} implies that $\textrm{Hom}_{ Ch(\mathpzc{E})}(S^{0}(P),Y_{\bullet})\cong\textrm{Hom}_{\mathpzc{E}}(P,Y_{0})$. Since $P$ is projective, $S^{0}(P)$ is as well. 
\end{proof}

We can now show that $ Ch_{*}(\mathpzc{E})$ has enough projectives. (This is well known for $ Ch(\mathpzc{A})$ with $\mathpzc{A}$ abelian. See for example \cite{Weibel} Exercise 2.2.2).

\begin{cor}\label{enoughprojchain}
Let $\mathpzc{E}$ be an exact category with enough projectives. Then $ Ch_{*}(\mathpzc{E})$ has enough projectives for $*\in\{+,-,b,\le0,\ge0,\emptyset\}$
\end{cor}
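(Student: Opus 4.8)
The plan is to build, for an arbitrary complex $X_\bullet$, a projective complex mapping onto it by a degreewise admissible epimorphism, assembled from the elementary projective complexes $D^n(-)$ together with $S^0(-)$ in the $\ge 0$ case. Since $\mathpzc{E}$ has enough projectives, I would first choose for each $n$ a projective $P_n$ and an admissible epimorphism $\epsilon_n \colon P_n \twoheadrightarrow X_n$. By the first statement of Lemma \ref{extstuff}, each $\epsilon_n$ corresponds to a chain map $D^n(P_n) \to X_\bullet$ whose degree-$n$ component is $\epsilon_n$ and whose degree-$(n-1)$ component is $d^X_n \circ \epsilon_n$.

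For the unbounded case I would set $P_\bullet = \bigoplus_{n\in\Z} D^n(P_n)$, with $\pi\colon P_\bullet \to X_\bullet$ the induced map. The key observation making this work without any cocompleteness hypothesis on $\mathpzc{E}$ is that, by Proposition \ref{diagramlims}, this coproduct is computed degreewise, and in degree $m$ only the two summands $n=m$ and $n=m+1$ contribute, so $P_m = P_m \oplus P_{m+1}$ is a finite biproduct, which always exists. Each $D^n(P_n)$ is a split exact complex of projectives, hence projective by Proposition \ref{chainproj}; a coproduct of projectives is again projective (one lifts a map against an admissible epimorphism one summand at a time), so $P_\bullet$ is projective.

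Next I would verify that $\pi$ is a degreewise admissible epimorphism, equivalently an admissible epimorphism of complexes. In degree $m$ the map $\pi_m \colon P_m \oplus P_{m+1} \to X_m$ satisfies $\pi_m \circ \iota = \epsilon_m$ for the canonical inclusion $\iota \colon P_m \hookrightarrow P_m \oplus P_{m+1}$; since $\epsilon_m$ is an admissible epimorphism, the dual Obscure Axiom (Proposition \ref{obscure}) forces $\pi_m$ to be one as well. The bounded variants $* \in \{+,-,b,\le 0\}$ are then immediate: restricting the coproduct to the indices $n$ with $X_n \neq 0$ keeps $P_\bullet$ inside $\textbf{Ch}_*(\mathpzc{E})$, because $D^n(P_n)$ is concentrated in degrees $n$ and $n-1$.

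The one case requiring a genuine modification, and the step I expect to be the main obstacle, is $* = \ge 0$: here $D^0(P_0)$ would place a nonzero object in degree $-1$, leaving $\textbf{Ch}_{\ge 0}(\mathpzc{E})$. I would handle the bottom degree by replacing the $n = 0$ summand with $S^0(P_0)$, which is projective in $\textbf{Ch}_{\ge 0}(\mathpzc{E})$ by Proposition \ref{chainproj} and which receives $\epsilon_0$ as a map $S^0(P_0) \to X_\bullet$ via the third statement of Lemma \ref{extstuff} (note $\textrm{Ker}(d^X_0) = X_0$ since $X_{-1}=0$). Setting $P_\bullet = S^0(P_0)\oplus\bigoplus_{n\ge 1}D^n(P_n)$, the degree-$0$ component of $\pi$ restricts to $\epsilon_0$ on the $S^0(P_0)$ summand, so the Obscure Axiom argument applies unchanged and $P_\bullet \in \textbf{Ch}_{\ge 0}(\mathpzc{E})$ is projective. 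Everything outside this bottom-degree bookkeeping is formal.
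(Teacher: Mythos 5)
Your proposal is correct and follows essentially the same route as the paper: cover each degree by $D^n(P_n)$ using Lemma \ref{extstuff}, take the (degreewise finite) direct sum, and in the $\ge 0$ case swap the bottom summand for $S^0(P_0)$. The only difference is that you spell out the Obscure Axiom step showing the assembled map is a degreewise admissible epimorphism, which the paper leaves implicit.
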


\begin{proof}
The case for $*\in\{+,-,b,\emptyset\}$ is established in  \cite{gillespie2016exact} Corollary 2.7. 
Now let $X_{\bullet}\in Ch_{\ge0}(\mathpzc{E})$. For $n>0$ the object $D^{n}(P)$ is projective in $ Ch_{\ge0}(\mathpzc{E})$. $S^{0}(P)$ is also projective in $ Ch_{\ge0}(\mathpzc{E})$. For $n>0$, as before there is a projective object $P_{n}$ and a morphism $D^{n}(P_{n})\rightarrow X_{\bullet}$ which is an admissible epimorphism in degree $n$. For $n=0$ pick a projective object $P_{0}$ and an admissible epimorphism $P_{0}\rightarrow X_{0}$. Since $X_{-1}=0$, this induces a map $S^{0}(P_{0})\rightarrow X_{\bullet}$ which is an admissible epimorphism in degree $0$. Let $P_{\bullet}=\Bigr(\bigoplus_{n>0}D^{n}(P_{n})\Bigr)\oplus S^{0}(P_{0})$. Then we have an admissible epimorphism $P_{\bullet}\twoheadrightarrow X_{\bullet}$.
\end{proof}
In particular we have shown that $ Ch_{*}(\mathpzc{E})$ has a set of projective generators whenever $\mathpzc{E}$ does.

\begin{cor}\label{chaingen}
Suppose $\mathcal{P}$ is a collection of admissible generators for an exact category $\mathpzc{E}$. Then $D^{*}(\mathcal{P})=\{D^{n}(P):P\in\mathcal{P},n\in\Z\}\cap Ch_{*}(\mathpzc{E})$ is a collection of generators for $ Ch_{*}(\mathpzc{E})$ and $*\in\{+,-,b,\le0,\emptyset\}$. For $*\in\{\ge0\}$, $\widetilde{D}^{*}(\mathcal{P})\defeq D^{*}(\mathcal{P})\cup \{S^{0}(P):P\in\mathcal{P}\}$ is a collection of generators for $ Ch_{*}(\mathpzc{E})$. They are projective generating collections if $\mathcal{P}$ is.
\end{cor}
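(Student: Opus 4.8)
The plan is to mimic the proof of Corollary \ref{enoughprojchain}, replacing the single projective cover of each object by an admissible epimorphism from an object of $\bigoplus\mathcal{P}$, and then to check that the resulting source is a coproduct of the proposed generators. The whole argument rests on the fact that admissible epimorphisms in $\textbf{Ch}_{*}(\mathpzc{E})$ are exactly the degreewise admissible epimorphisms, so it suffices to build, for each complex $X_{\bullet}$, a map from a coproduct of generators that is an admissible epic in every degree.

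First I would fix $X_{\bullet}\in\textbf{Ch}_{*}(\mathpzc{E})$ for $*\in\{+,-,b,\le0,\emptyset\}$. For each $n$ the hypothesis that $\mathcal{P}$ is an admissible generating set yields an object $Q_{n}\in\bigoplus\mathcal{P}$ together with an admissible epimorphism $Q_{n}\twoheadrightarrow X_{n}$. By the natural isomorphism $\textrm{Hom}_{\mathpzc{E}}(Q_{n},X_{n})\cong\textrm{Hom}_{\textbf{Ch}(\mathpzc{E})}(D^{n}(Q_{n}),X_{\bullet})$ of Lemma \ref{extstuff}(1), this corresponds to a chain map $D^{n}(Q_{n})\to X_{\bullet}$ whose degree-$n$ component is the chosen admissible epic. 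Forming the coproduct $P_{\bullet}=\bigoplus_{n}D^{n}(Q_{n})$, the degree-$m$ part of the source is $Q_{m}\oplus Q_{m+1}$, where the $Q_{m}$-summand (from $D^{m}(Q_{m})$) maps onto $X_{m}$ by the chosen admissible epic and the $Q_{m+1}$-summand (from $D^{m+1}(Q_{m+1})$) maps by $Q_{m+1}\twoheadrightarrow X_{m+1}\xrightarrow{d^{X}_{m+1}}X_{m}$.

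The one point needing care is that this total degree-$m$ map is still an admissible epic. Here I would precompose with the coproduct injection of the $Q_{m}$-summand: the composite is the admissible epimorphism $Q_{m}\twoheadrightarrow X_{m}$, so by the Obscure Axiom (Proposition \ref{obscure}(2)) the total map is itself an admissible epic. Thus $P_{\bullet}\twoheadrightarrow X_{\bullet}$ is a degreewise, hence genuine, admissible epimorphism. Since $D^{n}$ is additive, writing $Q_{n}=\bigoplus_{i}P_{n,i}$ with $P_{n,i}\in\mathcal{P}$ gives $D^{n}(Q_{n})=\bigoplus_{i}D^{n}(P_{n,i})$, so $P_{\bullet}$ is a coproduct of objects of $D^{*}(\mathcal{P})$, as required. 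The intersection with $\textbf{Ch}_{*}(\mathpzc{E})$ in the definition of $D^{*}(\mathcal{P})$ causes no trouble: the support of $X_{\bullet}$ forces us to use only those $n$ for which $D^{n}(Q_{n})$ already lies in $\textbf{Ch}_{*}(\mathpzc{E})$ (for $n$ outside the support we may take $Q_{n}=0$).

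For $*=\ge0$ the only modification is in degree $0$, where $D^{0}(Q_{0})$ has a nonzero entry in degree $-1$ and so leaves $\textbf{Ch}_{\ge0}(\mathpzc{E})$. I would instead use $S^{0}(Q_{0})$: since $X_{-1}=0$ we have $\textrm{Ker}(d_{0}^{X})=X_{0}$, and Lemma \ref{extstuff}(3) gives $\textrm{Hom}_{\textbf{Ch}(\mathpzc{E})}(S^{0}(Q_{0}),X_{\bullet})\cong\textrm{Hom}_{\mathpzc{E}}(Q_{0},X_{0})$, so the admissible epic $Q_{0}\twoheadrightarrow X_{0}$ produces a chain map $S^{0}(Q_{0})\to X_{\bullet}$ that is an admissible epic in degree $0$. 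Setting $P_{\bullet}=\bigl(\bigoplus_{n>0}D^{n}(Q_{n})\bigr)\oplus S^{0}(Q_{0})$ and arguing exactly as above (again invoking the Obscure Axiom in each degree) yields a degreewise admissible epimorphism onto $X_{\bullet}$ from a coproduct of objects of $\tilde{D}^{*}(\mathcal{P})$. Finally, the projectivity claim is immediate from Proposition \ref{chainproj}: if every object of $\mathcal{P}$ is projective then each $D^{n}(P)$ is a split exact complex of projectives, hence projective in $\textbf{Ch}_{*}(\mathpzc{E})$, and each $S^{0}(P)$ is projective in $\textbf{Ch}_{\ge0}(\mathpzc{E})$, so the generating sets consist of projectives.
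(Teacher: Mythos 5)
Your proposal is correct and follows essentially the same route as the paper, which simply observes that the construction in the proof of Corollary \ref{enoughprojchain} (mapping $D^{n}(Q_{n})$, resp.\ $S^{0}(Q_{0})$, onto $X_{\bullet}$ via Lemma \ref{extstuff} and summing) already exhibits the stated sets as admissible generating sets, with the projectivity claim coming from Proposition \ref{chainproj}. Your explicit use of the Obscure Axiom to verify that the total degreewise map remains an admissible epic is a detail the paper leaves implicit, but it is the right justification.
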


\begin{proof}
The proof of Corollary \ref{enoughprojchain}  shows that the collection in the statement of the proposition are admissible generating collection. Proposition \ref{chainproj}  establishes the second assertion.
\end{proof}

We are nearly ready to show that $ Ch_{*}(\mathpzc{E})$ is elementary for $*\in\{+,\ge0,\le0,-,b,\emptyset\}$. It remains to identify some suitably compact objects in complexes. However by Lemma \ref{extstuff} we have the following.
\begin{prop}\label{diagrampresented}
Let $E$ be an object satisfying one of the smallness conditions of Definition \ref{defsmallnesscond}. Then $D^{n}(E)$ and $S^{n}(E)$ satisfy the same smallness condition in $Ch(\mathpzc{E})$.
\end{prop}

As a consequence we have

\begin{cor}\label{chel}
For $\star\in\{\textrm{small, compact, presented, teeny}\}$ let $\mathpzc{E}$ be a $(\kappa,\mathcal{S})^{\star}$-elementary exact category. Then $ Ch_{*}(\mathpzc{E})$ is  $(\kappa,Ch(\mathcal{S}))^{\star}$-elementary for $*\in\{+,\le0,\ge0,-,b,\emptyset\}$. In particular if $\mathpzc{E}$ is $\mathcal{S}$-elementary then $Ch(\mathpzc{E})$ is $Ch(\mathcal{S})$-elementary
\end{cor}

\begin{proof}
Let $\mathcal{P}$ be a  projective generating set consisting of tiny objects. The sets $D^{*}(\mathcal{P})$ (resp. $\widetilde{D}^{*}(\mathcal{P})$) are projective generating sets in $ Ch_{*}(\mathpzc{E})$ for $*\in\{\le0,+,-,b,\emptyset\}$ (resp. $*\in\{\ge0\}$). For each $n\in\Z$ $D^{n}(P)$ is tiny, as is $S^{n}(P)$, by Proposition \ref{diagrampresented}. 
\end{proof}

\subsection{Generators in Monoidal Exact Categories}

Let us briefly mention a useful compatibility condition between generators and monoidal structure.

\begin{defn}
A monoidal exact category which has a collection of flat admissible generators is said to be \textbf{flatly generated}. 
\end{defn}

\begin{defn}\label{moneldef}
 A projectively monoidal exact category which is also $(\lambda;\mathcal{S})$-elementary is said to be \textbf{monoidal} $(\lambda;\mathcal{S})$\textbf{elementary}
\end{defn}

\begin{prop}\label{flatgenproj}
Suppose that $(\mathpzc{E},\otimes,k)$ is a flatly generated monoidal exact category in which direct sums are exact. Then every projective object is flat.
\end{prop}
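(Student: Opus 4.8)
The plan is to realise every projective object as a retract of a coproduct of flat generators, and then to show that such a retract is again flat. So let $P$ be projective. Since $\mathpzc{E}$ is flatly generated there is an admissible generating set $\mathcal{G}$ of flat objects, and hence an object $Q=\bigoplus_{i\in I}G_{i}\in\bigoplus\mathcal{G}$ with each $G_{i}$ flat, together with an admissible epimorphism $\pi\colon Q\twoheadrightarrow P$. Because $P$ is projective, Proposition \ref{projequiv} tells us that $\pi$ splits; fixing a section $s\colon P\to Q$ with $\pi\circ s=\mathrm{id}_{P}$ exhibits $P$ as a retract of $Q$.

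Next I would verify that $Q$ itself is flat. Since the monoidal structure is compatible (Definition \ref{monoidaldef}), the functor $X\otimes(-)$ preserves coproducts, and by symmetry so does $(-)\otimes M$ for each $M$; thus for any short exact sequence $0\to A\to B\to C\to 0$ the sequence obtained by applying $Q\otimes(-)$ is naturally isomorphic to the coproduct over $i\in I$ of the sequences obtained by applying $G_{i}\otimes(-)$. Each of the latter is short exact since $G_{i}$ is flat, and the hypothesis that direct sums are exact then shows that the coproduct sequence is short exact as well. Hence $Q\otimes(-)$ is exact, i.e.\ $Q$ is flat.

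Finally I would transfer flatness from $Q$ to $P$. Fix a short exact sequence $0\to A\xrightarrow{\alpha}B\xrightarrow{\beta}C\to 0$. Right exactness of $P\otimes(-)$, which is part of the definition of a monoidal exact category, already realises $P\otimes\beta$ as an admissible epimorphism with $P\otimes C\cong\mathrm{Coker}(P\otimes\alpha)$, so everything reduces to proving that $P\otimes\alpha$ is an admissible monic. Here I would observe that $s\otimes\mathrm{id}_{A}$ is a split monomorphism (with retraction $\pi\otimes\mathrm{id}_{A}$), hence admissible, and that $Q\otimes\alpha$ is an admissible monic because $Q$ is flat; by bifunctoriality their composite equals $(s\otimes\mathrm{id}_{B})\circ(P\otimes\alpha)$, and as a composite of admissible monics it is itself an admissible monic. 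The Obscure Axiom (Proposition \ref{obscure}) then forces $P\otimes\alpha$ to be an admissible monic. Combined with the identification of $P\otimes\beta$ as its cokernel, this makes $0\to P\otimes A\to P\otimes B\to P\otimes C\to 0$ short exact, so $P\otimes(-)$ is exact and $P$ is flat.

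The main obstacle is precisely this last step: admissible monics are not closed under retracts in a general exact category, and the argument genuinely relies on weak idempotent completeness through the Obscure Axiom. The payoff of splitting $\pi$ is that it replaces the abstract retract datum with a concrete section $s$, which is exactly what allows the Obscure Axiom to be applied to $P\otimes\alpha$.
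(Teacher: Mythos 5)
Your proof is correct and follows the same route as the paper, which simply observes that every projective is a summand of a flat object (a direct sum of flat generators, flat because direct sums are exact) and is therefore flat. You have merely filled in the details the paper leaves implicit, in particular the verification via the Obscure Axiom that flatness passes to retracts.
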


\begin{proof}
In this case every projective will be a summand of a flat object, and therefore flat.
\end{proof}

In particular to check that a category is projectively monoidal, it suffices to find a collection of flat generators.

\subsection{Generators and Adjunctions}
We conclude this section with a note about passing generating collections through adjunctions. The specific application we have in mind is to categories of algebras over compatible monads. We have the following general setup.
$$\adj{F}{\mathpzc{E}}{\mathpzc{D}}{|-|}$$
is an adjunction where $\mathpzc{E}$ and $\mathpzc{D}$ are exact categories. We have the following result which is standard for abelian categories.
\begin{prop}\label{adproj}
Let $F\dashv |-|$ be an adjunction as above. Suppose that $|-|$ is an exact functor. If $P$ is a projective object of $\mathpzc{E}$ then $F(P)$ is a projective object of $\mathpzc{D}$.
\end{prop}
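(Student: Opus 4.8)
The plan is to use the definition of projectivity directly: an object is projective precisely when the functor $\textrm{Hom}(-, \cdot)$ out of it is exact. So the entire proof reduces to verifying that $\textrm{Hom}_{\mathpzc{D}}(F(P), -): \mathpzc{D} \rightarrow \textbf{Ab}$ is an exact functor, and the natural tool for this is the adjunction isomorphism.

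First I would take an arbitrary short exact sequence
$$0 \rightarrow X \rightarrow Y \rightarrow Z \rightarrow 0$$
in $\mathpzc{D}$, and apply $\textrm{Hom}_{\mathpzc{D}}(F(P), -)$ to it. The adjunction $F \dashv |-|$ provides natural isomorphisms $\textrm{Hom}_{\mathpzc{D}}(F(P), W) \cong \textrm{Hom}_{\mathpzc{E}}(P, |W|)$ for each object $W$ of $\mathpzc{D}$. By naturality, these isomorphisms assemble into a commutative diagram identifying the sequence of abelian groups obtained from $\textrm{Hom}_{\mathpzc{D}}(F(P), -)$ with the sequence
$$0 \rightarrow \textrm{Hom}_{\mathpzc{E}}(P, |X|) \rightarrow \textrm{Hom}_{\mathpzc{E}}(P, |Y|) \rightarrow \textrm{Hom}_{\mathpzc{E}}(P, |Z|) \rightarrow 0.$$

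Next I would invoke the hypotheses in sequence. Since $|-|$ is exact, the null-sequence $0 \rightarrow |X| \rightarrow |Y| \rightarrow |Z| \rightarrow 0$ is short exact in $\mathpzc{E}$. Since $P$ is projective in $\mathpzc{E}$, the functor $\textrm{Hom}_{\mathpzc{E}}(P, -)$ is exact by definition, so applying it to this short exact sequence yields an exact sequence of abelian groups. Transporting this exactness back across the natural isomorphism shows that $\textrm{Hom}_{\mathpzc{D}}(F(P), -)$ carries short exact sequences to short exact sequences, which is exactly the statement that $F(P)$ is projective in $\mathpzc{D}$.

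This argument is essentially formal, so I do not anticipate a genuine obstacle. The only point requiring a little care is the commutativity of the identifying diagram, i.e. genuinely using the \emph{naturality} of the adjunction bijection rather than just its existence object-by-object; this is what guarantees that the maps in the $\textrm{Hom}_{\mathpzc{E}}(P, -)$ sequence are the images under the natural isomorphism of the maps induced by $X \rightarrow Y \rightarrow Z$. Everything else follows directly from the definition of a projective object together with the exactness of $|-|$.
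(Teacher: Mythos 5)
Your proposal is correct and follows exactly the same route as the paper: apply $\textrm{Hom}_{\mathpzc{D}}(F(P),-)$ to a short exact sequence, use the naturality of the adjunction isomorphism to identify the result with $\textrm{Hom}_{\mathpzc{E}}(P,|-|)$ applied to that sequence, and conclude from the exactness of $|-|$ and the projectivity of $P$. The paper's proof is precisely the commutative diagram you describe, so there is nothing to add.
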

\begin{proof}
Let 
$$0\rightarrow X\rightarrow Y\rightarrow Z\rightarrow 0$$
be a short exact sequence in $\mathpzc{D}$, and let $P$ be projective in $\mathpzc{E}$. Then we have a diagram
\begin{displaymath}
\xymatrix{
0\ar[r] & \textrm{Hom}(F(P),X)\ar[r]\ar[d] & \textrm{Hom}(F(P),Y)\ar[r]\ar[d] & \textrm{Hom}(F(P),Z)\ar[r]\ar[d] & 0\\
0\ar[r] & \textrm{Hom}(P,|X|)\ar[r] & \textrm{Hom}(P,|Y|)\ar[r] & \textrm{Hom}(P,|Z|)\ar[r] & 0
}
\end{displaymath}
The vertical arrows are isomorphisms and the bottom row is exact since $|-|$ is exact and $P$ is projective. Hence the top row is short exact as well. 
\end{proof}

We know how adjunctions act on projectives. Let us now see what happens on generating collections.

\begin{prop}\label{epimonad}
Let $F\dashv |-|$ be an adjunction as above. Suppose that $|-|$ reflects admissible epimorphisms, and that $\mathpzc{E}$ has an admissible generating collection $\mathpzc{G}$. Let $F(\mathcal{G})$ denote the collection $\{F(G):G\in\mathcal{G}\}$ of objects of $\mathpzc{D}$. Then $F(\mathpzc{G})$ is an admissible generating collection in $\mathpzc{D}$.
\end{prop}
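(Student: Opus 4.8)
The plan is to verify the defining property of an admissible generating set directly: for each object $D$ of $\mathpzc{D}$, I must produce an object of $\bigoplus F(\mathcal{G})$ together with an admissible epimorphism onto $D$. First I would pass to $\mathpzc{E}$ by applying the right adjoint, obtaining $|D|$, and then use the hypothesis that $\mathcal{G}$ is an admissible generating set to choose a coproduct $P=\bigoplus_{i\in I}G_{i}$ with each $G_{i}\in\mathcal{G}$ and an admissible epimorphism $\pi:P\twoheadrightarrow|D|$ in $\mathpzc{E}$. Transposing $\pi$ across the adjunction yields a morphism $\hat{\pi}:F(P)\rightarrow D$ in $\mathpzc{D}$. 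Since $F$ is a left adjoint it preserves coproducts, so $F(P)\cong\bigoplus_{i\in I}F(G_{i})$ is an object of $\bigoplus F(\mathcal{G})$. It therefore suffices to show that $\hat{\pi}$ is an admissible epimorphism.

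Because $|-|$ reflects admissible epimorphisms by assumption, it is enough to prove that $|\hat{\pi}|:|F(P)|\rightarrow|D|$ is an admissible epimorphism in $\mathpzc{E}$. Here I would invoke the triangle identities. Writing $\eta$ for the unit and $\varepsilon$ for the counit of the adjunction, the transpose is $\hat{\pi}=\varepsilon_{D}\circ F(\pi)$, so applying $|-|$, using naturality of $\eta$ at $\pi$, and then the triangle identity $|\varepsilon_{D}|\circ\eta_{|D|}=\mathrm{id}_{|D|}$ gives
$$|\hat{\pi}|\circ\eta_{P}=|\varepsilon_{D}|\circ|F(\pi)|\circ\eta_{P}=|\varepsilon_{D}|\circ\eta_{|D|}\circ\pi=\pi.$$
Thus the composite $|\hat{\pi}|\circ\eta_{P}$ is precisely the admissible epimorphism $\pi$.

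The final step is to apply the Obscure Axiom (Proposition \ref{obscure}, part 2) with $i=|\hat{\pi}|$ and $j=\eta_{P}$: since $i\circ j=\pi$ is an admissible epimorphism, $i=|\hat{\pi}|$ is itself an admissible epimorphism. Reflecting back along $|-|$ then shows $\hat{\pi}$ is an admissible epimorphism, as required. The proof is essentially formal; the only genuine content is the interaction of the adjunction's triangle identity with the Obscure Axiom, which is what lets us conclude admissibility of the transpose from admissibility of the original epimorphism without any exactness hypothesis on $F$ itself. The step I would be most careful about is the bookkeeping of the unit/counit in the transpose computation, since a sign or direction error there is the only place the argument could go wrong.
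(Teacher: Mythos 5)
Your proof is correct and follows essentially the same route as the paper: transpose the admissible epimorphism from $\bigoplus\mathcal{G}$ across the adjunction, observe via the triangle identity that precomposing $|\hat{\pi}|$ with the unit recovers $\pi$, apply the Obscure Axiom to conclude $|\hat{\pi}|$ is an admissible epic, and then use the reflection hypothesis. The only difference is presentational — the paper first isolates the transpose argument as a general remark and then applies it to the generating set — and your explicit unit/counit bookkeeping is accurate.
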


\begin{proof}
Let $X$ be an object of $\mathpzc{D}$. Suppose there is some object $Q$ of $\mathpzc{E}$ and an admissible epimorphism $p:Q\rightarrow|X|$. There is an induced morphism $\widetilde{p}:F(Q)\rightarrow X$. Then $p$ coincides with the composition $Q\rightarrow |F(Q)|\rightarrow |X|$. By Proposition \ref{obscure}, the map $|\widetilde{p}|$ is an admissible epimorphism. Since $|-|$ reflects admissible epimorphisms, $\widetilde{p}$ is an admissible epimorphism in $\mathpzc{D}$. 

Now let $\mathcal{G}$ be an admissible generating collection in $\mathpzc{E}$, and let $X$ be an object of $\mathpzc{D}$. Since $\mathcal{G}$ is an admissible generating collection, there is an object $G$ of $\bigoplus\mathcal{G}$ and an admissible epimorphism $G\twoheadrightarrow |X|$. The induced morphism $F(G)\rightarrow X$ is an admissible epimorphism by the above remarks. Since $F$ is a left adjoint it preserves colimits, so $F(G)$ is an element of $\bigoplus F(\mathcal{G})$.
\end{proof}

The following is clear.
\begin{prop}\label{prop:monadcompactness}
Let 
$$\adj{F}{\mathpzc{E}}{\mathpzc{D}}{|-|}$$ 
be an adjunction between any categories, $\kappa$ an ordinal, and $\mathcal{S}$ a class of morphisms in $\mathpzc{D}$. Let $|\mathcal{S}|$ denote the class of morphisms in $\mathpzc{E}$ which are images of morphisms in $\mathcal{S}$ under $|-|$.
\begin{enumerate}
\item
Suppose that $|-|$  commutes with $(\lambda,\mathcal{S})^{cocont}$-colimits whenever $\lambda\ge\kappa$ is regular. If $G\in\mathpzc{E}$ is $(\kappa,\mathcal{S})$-small then $F(G)$ is $(\kappa,|\mathcal{S}|)$-small.
\item
Suppose that $|-|$  commutes with $(\lambda,\mathcal{S})^{cocont}$-colimits whenever $\lambda\ge\kappa$. If $G\in\mathpzc{E}$ is $(\kappa,\mathcal{S})$-compact then $F(G)$ is $(\kappa,|\mathcal{S}|)$-compact.
\item
Suppose that $|-|$  commutes with $(\mathcal{I},\mathcal{S})^{cocont}$-colimits whenever $\mathcal{I}$ is a $\lambda$-filtered category for $\lambda\ge\kappa$ regular. If $G\in\mathpzc{E}$ is $(\kappa,|\mathcal{S}|)$-presented then $F(G)$ is $(\kappa,\mathcal{S})$-presented.
\end{enumerate}
\end{prop}
Using this proposition, Proposition \ref{adproj} and Proposition \ref{epimonad}, we then get the following.

\begin{prop}\label{genproj}
Let $F\dashv |-|$ be an adjunction as above, $\kappa$ an ordinal, and $\mathcal{S}$ a class of morphisms in $\mathpzc{D}$. Let $|\mathcal{S}|$ denote the class of morphisms in $\mathpzc{E}$ which are images of morphisms in $\mathcal{S}$ under $|-|$.
\begin{enumerate}
\item
Suppose that $|-|$ is exact, reflects exactness and commutes with $(\kappa,\mathcal{S})^{cocont}$-colimits. If $\mathpzc{E}$ is weakly $(\kappa,|\mathcal{S}|)$- elementary then $\mathpzc{D}$ is weakly $(\kappa,\mathcal{S})$-elementary.
\item
Suppose that $|-|$ is exact and reflects admissible epimorphisms. If $\mathpzc{G}$ is a projective generating collection in $\mathpzc{E}$ then $F(\mathpzc{G})$ is a projective generating collection in $\mathpzc{D}$. 
\item

Suppose that $|-|$ is exact, reflects admissible epimorphisms, and commutes  with $(\lambda,\mathcal{S})^{cocont}$-colimits for any ordinal $\lambda$. If $\mathpzc{E}$ is $|\mathcal{S|}$-elementary then $\mathpzc{D}$ is $\mathcal{S}$-elementary.

\end{enumerate}
\end{prop}

\begin{proof}
The first claim is obvious. The second and third assertions follow the previous Proposition, Proposition \ref{adproj}, and Proposition \ref{epimonad}.
\end{proof}
Note that by  Proposition \ref{prop:monadcompactness} the third claim in Proposition \ref{genproj} can be generalised to statements about $\mathpzc{D}$ being $(\kappa,\mathcal{S})^{\star}$-elementary for more general $\star$, by only requiring that $|-|$ commutes with certain colimits. We have omitted this level of generality so as not to have to write out all the cases. However the proof is identical.

\begin{example}
Let $T$ be a compatible monad on an exact category $\mathpzc{E}$. Then the forgetful functor $|-|:\mathpzc{E}^{T}\rightarrow\mathpzc{E}$ has a right adjoint $F:\mathpzc{E}\rightarrow\mathpzc{E}^{T}$ assigning to an object the free $T$-algebra on it. By construction of the exact structure on $\mathpzc{E}^{T}$ in Proposition \ref{monadexact}, the functor $|-|$ is admissibly exact and reflects exactness. Moreover it creates limits and colimits. By Lemma \ref{reflectlots}, Proposition \ref{genproj} is applicable in such categories.
\end{example}
\chapter{Examples}\label{Secex1}

In this chapter we give examples of interesting exact categories which satisfy very different set-theoretic properties but which are all weakly \textbf{AdMon}-elementary. In the next chapter we shall see that $\mathpzc{E}$ being weakly \textbf{AdMon}-elementary and having kernels is enough for the category $Ch(\mathpzc{E})$ to be equipped with the projective model structure. The moral of the story is that often difficult to check set-theoretic assumptions can be ignored to some extent when discussing such model structures.

\section{Categories of Topological Vector Spaces}

In this section we let $k$ be a Banach ring, that is, a unital commutative ring $k$ together with a map $|-|:k\rightarrow\R_{>0}$ such that for all $x,y\in k$ we have
\begin{enumerate}
\item
$|x|=0\Leftrightarrow x=0$
\item
$|x+y|\le|x| + |y|$
\item
$|xy|\le |x||y|$
\item
$k$ is complete with respect to the topology defined by $|-|$.
\end{enumerate}
$k$ is said to be \textbf{non-Archimedean} if $|x+y|\le\textrm{max}\{|x|,|y|\}$ and \textbf{Archimedean} otherwise. Over such rings we can consider categories of topological $k$-modules. For details of claims made in this section consult \cite{qacs}, \cite{orenbambozzi}, \cite{koren}, and \cite{bambozzi2015stein}.

\subsection{Categories of Normed and Banach Modules}

\begin{defn}
A \textbf{normed} $k$-\textbf{module} is a $k$-module $V$ together with a map $||-||:V\rightarrow\R_{>0}$ such that for all $\lambda\in k$ and for all $x,y\in V$ we have
\begin{enumerate}
\item
$||x||=0\Leftrightarrow x=0$
\item
$||x+y||\le||x|| + ||y||$
\item
$||\lambda x||\le |\lambda|||x||$
\end{enumerate}
If $V$ is complete with respect to the metric defined by $||-||$ then $V$ is said to be a \textbf{Banach} $k$-\textbf{module}. 
\end{defn}
If $k$ is non-Archimedean then $V$ is said to be non-Archimedean if $||x+y||\le max\{||x||,||y||\}$. We denote by $Norm_{k}$ the category whose objects are normed $k$-modules and whose morphisms are bounded $k$-linear maps. $Ban_{k}$ is the full subcategory of $Norm_{k}$ on Banach $k$-modules. For $k$ non-Archimedean we also consider the full subcategories $Norm^{nA}_{k}$ and $Ban^{nA}_{k}$ of non-Archimedean normed and Banach spaces respectively. All of these categories are additive, finitely complete, and finitely cocomplete. The inclusions
$$Ban_{k}\rightarrow Norm_{k},\;\; Ban_{k}^{nA}\rightarrow Norm_{k}^{nA}$$
have left-adjoint functors given by completion.

They are also symmetric monoidal. If $E$ and $F$ are objects in $Norm_{k}$ then we define $E\otimes_{\pi} F$ to be their usual module tensor product endowed with the cross-norm
$$||u||=\textrm{inf}\Bigr\{\sum_{i=1}^{n}||e_{i}|| ||f_{i}|| :u=\sum_{i=1}^{n} e_{i}\otimes f_{i}\Bigr\}$$ If $E$ and $F$ are objects in $Norm_{k}^{nA}$ we define $E\otimes_{\pi}^{nA}F$ to be their usual module tensor product endowed with the norm.
$$||x||_{\pi}=\textrm{inf}\Bigr\{\textrm{max}\{||a_{i}||||b_{i}||\}_{i=1}^{n}:\; x=\sum_{i=1}^{n}a_{i}\otimes b_{i}\Bigr\}$$
We refer to both of these constructions as the \textbf{projective tensor product}. If $E$ and $F$ are Banach spaces then $E\hat{\otimes}_{\pi} F$ is the completion of their projective tensor product as normed spaces. 
These constructions are functorial in each of the categories defined above and form part of symmetric monoidal structures on them with unit the ground ring $k$. These monoidal structures are in fact closed. The module $\textrm{Hom}_{k}(E,F)$ of bounded maps between $E$ and $F$ can be given the structure of a normed space. The norm of $T:E\rightarrow F$ is
$$||T||=\textrm{sup}_{e\in E\setminus \{0\}}\frac{||T(e)||_{F}}{||e||_{E}}$$
This gives an internal Hom functor, which we denote by $\underline{\textrm{Hom}}$. 
Thus $(Ban_{k},\hat{\otimes}_{\pi},\underline{\textrm{Hom}})$ is a monoidal quasi-abelian category. Details can be found in \cite{orenbambozzi} Proposition 3.17 and Proposition 3.19. Finally, the projective objects $l^{1}(I)$ are flat by \cite{orenbambozzi} Lemma 3.26. By Proposition \ref{flatgenproj} this category is projectively monoidal. There are unfortunately some problems with this category. Although it is finitely complete and cocomplete it does not even have countable colimits in general. The larger category $\hat{\mathcal{T}}_{c}$ of complete locally convex topological spaces is complete and cocomplete, but tragically it is not quasi-abelian (\cite{qacs} Proposition 3.1.14). Instead we pass to the formal completion $Ind(Ban_{k})$ of $Ban_{k}$ by filtered colimits.

\subsection{Ind and Pro Categories}\label{IndPro}

Recall that if $\mathpzc{C}$ is a $\mathbb{U}$-small category for some universe $\mathbb{U}$, and $\mathbb{V}$ is a universe, then the $\mathbb{V}$-ind-completion of $\mathpzc{C}$ is a category constructed as follows.  Objects are diagrams $E:\mathcal{I}\rightarrow\mathcal{\mathpzc{C}}$ where $\mathcal{I}$ is a $\mathbb{V}$-small filtered category. If $E:\mathcal{I}\rightarrow\mathpzc{C}$ and $F:\mathcal{J}\rightarrow\mathpzc{C}$ are objects in $Ind(\mathpzc{C})$ (where we suppress universes in the notation) then we write

$$\textrm{Hom}_{Ind(\mathpzc{C})}(E,F)=\textrm{lim}_{\leftarrow_{\mathcal{I}}}\textrm{lim}_{\rightarrow_{\mathcal{J}}}\textrm{Hom}_{\mathpzc{C}}(E_{i},F_{j})$$
Details of this can be found in \cite{kashiwara2005categories} Chapter 6.

\begin{prop}\label{indqac}
Let $\mathpzc{E}$ be a quasi-abelian category with enough projectives. Then $Ind(\mathpzc{E})$ is a cocomplete elementary quasi-abelian category. Moreover, if $\mathpzc{E}$ is a closed monoidal exact category, then its ind-completion has a canonical exact closed monoidal structure extending the one on $\mathpzc{E}$. Finally if $\mathpzc{E}$ is projectively monoidal then so is $Ind(\mathpzc{E})$.
\end{prop}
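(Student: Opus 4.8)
The plan is to establish the three assertions in turn, leaning on the general theory of ind-completions together with the results on elementary exact categories proved above. For the first assertion, I would begin by recording that $\textbf{Ind}(\mathpzc{E})$ carries all small filtered colimits by construction, and since a quasi-abelian $\mathpzc{E}$ is finitely cocomplete, $\textbf{Ind}(\mathpzc{E})$ inherits all small colimits; combined with the fact (from \cite{qacs} and \cite{dcfapp}) that the ind-completion of a quasi-abelian category is again quasi-abelian, this yields ``cocomplete quasi-abelian''. The substantive point is ``elementary'', for which I must exhibit a projective generating set of tiny objects. I would work throughout with the canonical fully faithful embedding $\iota:\mathpzc{E}\hookrightarrow\textbf{Ind}(\mathpzc{E})$.

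First I would check that each $\iota(E)$ is tiny: for a filtered system $(F_{j})$ in $\textbf{Ind}(\mathpzc{E})$ the defining morphism formula for representable ind-objects gives $\textrm{Hom}(\iota(E),\textrm{lim}_{\rightarrow_{j}}F_{j})\cong\textrm{lim}_{\rightarrow_{j}}\textrm{Hom}(\iota(E),F_{j})$. Next I would verify that $\iota$ preserves projectives: using the description of admissible (strict) epimorphisms in $\textbf{Ind}(\mathpzc{E})$ as filtered refinements of levelwise admissible epimorphisms together with tininess, a lifting problem against $\iota(P)$ reduces to one in $\mathpzc{E}$. Finally, since $\mathpzc{E}$ has enough projectives, I would show that the images under $\iota$ of a representative set of projectives of $\mathpzc{E}$ form an admissible generating set: every ind-object $X$ is a strict quotient of a coproduct $\bigoplus_{i}\iota(E_{i})$ of objects of $\mathpzc{E}$, and precomposing with the coproduct of projective covers $\bigoplus_{i}\iota(P_{i})\twoheadrightarrow\bigoplus_{i}\iota(E_{i})$ produces an admissible epimorphism onto $X$. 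These objects are tiny by the above, so $\textbf{Ind}(\mathpzc{E})$ is elementary.

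For the monoidal statement I would extend $\otimes$ and $\underline{\textrm{Hom}}$ along $\iota$ in the standard fashion, setting
$$\Bigl(\textrm{lim}_{\rightarrow_{i}}\iota(E_{i})\Bigr)\otimes\Bigl(\textrm{lim}_{\rightarrow_{j}}\iota(F_{j})\Bigr)\defeq\textrm{lim}_{\rightarrow_{i,j}}\iota(E_{i}\otimes F_{j})$$
and defining the internal hom as its adjoint (as in \cite{qacs}); this makes $\iota$ monoidal. To see the structure is exact in the sense of Definition \ref{monoidaldef}, observe that closedness makes each $X\otimes(-)$ a left adjoint, hence cocontinuous, and in particular it preserves cokernels; by the characterisation of right exact functors between quasi-abelian categories (preservation of cokernels of strict morphisms) it is therefore right exact. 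Dually, $\underline{\textrm{Hom}}(A,-)$ and $\underline{\textrm{Hom}}(-,A)$, being right adjoints (covariantly and contravariantly), preserve the relevant limits and so are left exact, giving the compatible closed monoidal exact structure.

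Finally, assuming $\mathpzc{E}$ projectively monoidal, I would first show the projective generators are flat in $\textbf{Ind}(\mathpzc{E})$: $\iota(P)\otimes(-)$ is the ind-extension of the exact functor $P\otimes(-)$, hence exact since filtered colimits in $\textbf{Ind}(\mathpzc{E})$ are exact by Proposition \ref{elementinduct}. Thus $\textbf{Ind}(\mathpzc{E})$ is flatly generated with exact direct sums, so Proposition \ref{flatgenproj} gives that every projective is flat. For the second condition I would use that any projective in $\textbf{Ind}(\mathpzc{E})$ is a retract of a direct sum of generators $\iota(P_{\alpha})$; since $\otimes$ commutes with direct sums and retracts and $\iota(P_{\alpha})\otimes\iota(P_{\beta})\cong\iota(P_{\alpha}\otimes P_{\beta})$ is projective (as $\mathpzc{E}$ is projectively monoidal and $\iota$ preserves projectives), the tensor of two projectives is projective. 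The hard part will be the elementary assertion, specifically controlling admissible epimorphisms in $\textbf{Ind}(\mathpzc{E})$ well enough to prove that $\iota$ preserves projectivity and that the $\iota(P)$ genuinely generate, since these epimorphisms are only levelwise after filtered refinement; the universe conventions on $\mathpzc{E}$ are exactly what guarantee a set, rather than a proper class, of such generators.
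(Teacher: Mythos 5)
The paper does not actually prove this proposition: its entire ``proof'' is a citation to \cite{qacs}, Propositions 2.1.16 and 2.1.19. Your proposal therefore does strictly more than the paper, and what you write is, in outline, Schneiders' own argument: tininess of representables, preservation of projectivity by $\iota:\mathpzc{E}\rightarrow\textbf{Ind}(\mathpzc{E})$, generation by images of projectives of $\mathpzc{E}$, the ind-extension of $\otimes$ and $\underline{\textrm{Hom}}$, and the flat-generator argument (via Proposition \ref{elementinduct} and Proposition \ref{flatgenproj}, correctly ordered so there is no circularity) for the projectively monoidal statement. The individual steps you do spell out are sound: the canonical map $\bigoplus_{i}\iota(E_{i})\rightarrow X$ is a cokernel and hence a strict epimorphism, direct sums and composites of strict epimorphisms are strict, and a projective of $\textbf{Ind}(\mathpzc{E})$ is a retract of a direct sum of the $\iota(P_{\alpha})$, so closure of projectives under $\otimes$ follows from $\iota(P_{\alpha})\otimes\iota(P_{\beta})\cong\iota(P_{\alpha}\otimes P_{\beta})$.

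Two cautions. First, you cite ``the ind-completion of a quasi-abelian category is again quasi-abelian'' as an input, but in \cite{qacs} this is part of the conclusion of Proposition 2.1.16 itself, and its proof there uses the projective generators; as written your appeal is close to circular, and you should either prove quasi-abelianness directly (existence of kernels and cokernels in $\textbf{Ind}(\mathpzc{E})$ and pullback/pushout stability of strict epics/monics, which again rests on the levelwise description) or acknowledge that it is established together with elementarity. Second, essentially every remaining step — that $\iota$ preserves projectives, that the $\iota(P)$ generate admissibly, and that $\iota(P)\otimes(-)$ is exact — funnels through the unproved claim that strict epimorphisms, and more generally short exact sequences, in $\textbf{Ind}(\mathpzc{E})$ can be represented after reindexing as filtered colimits of levelwise ones in $\mathpzc{E}$. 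You correctly identify this as ``the hard part,'' but it is the entire technical content of the proposition, so a complete write-up would have to supply that representation lemma rather than gesture at it.
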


\begin{proof}
See \cite{qacs} Proposition 2.1.16 and Proposition 2.1.19.
\end{proof}

\begin{cor}
The category $Ind(Ban_{k})$ is a locally presentable, closed monoidal elementary quasi-abelian category.
\end{cor}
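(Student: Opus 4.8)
The statement follows by assembling the preceding structural results and then checking the one genuinely new ingredient, local presentability. The plan is to first recall that $\textbf{Ban}_{\C}$ has been established (in the immediately preceding discussion, citing \cite{dcfapp}, \cite{prosmans1995algebre}, \cite{koren}, \cite{orenbambozzi}) to be a quasi-abelian category with enough projectives, whose projectives are exactly the spaces $l^{1}(I)$, and which carries a closed monoidal structure $(\hat{\otimes}_{\pi},\underline{\textrm{Hom}})$ for which those projectives are flat; by Proposition \ref{flatgenproj} it is therefore projectively monoidal. With this in hand I would simply invoke the Proposition proved just above: applying it to $\mathpzc{E}=\textbf{Ban}_{\C}$ shows that $\textbf{Ind}(\textbf{Ban}_{\C})$ is a cocomplete elementary quasi-abelian category which inherits a closed monoidal exact structure and is again projectively monoidal. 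The adjectives \emph{elementary}, \emph{quasi-abelian}, \emph{closed monoidal} in the corollary are thus immediate.

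The remaining word to justify is \textbf{locally presentable}. Here I would argue that $\textbf{Ind}(\textbf{Ban}_{\C})$ is an accessible category that is cocomplete, and a cocomplete accessible category is locally presentable. The cleanest route is to use the elementary structure already extracted: the set $\{I(l^{1}(I))\}$ of images of the Banach-space projectives forms a projective generating set of \emph{tiny} — in particular compact, i.e. finitely presentable — objects, and since the category is cocomplete this exhibits it as a category generated under filtered colimits by a \emph{set} of presentable objects. More concretely, $\textbf{Ind}(\textbf{Ban}_{\C})$ is by its very construction the ind-completion of the (essentially small) category $\textbf{Ban}_{\C}$, and the ind-completion of an essentially small category is the prototypical finitely accessible category; being in addition cocomplete (as asserted by the Proposition above), it is locally finitely presentable by the standard characterisation of locally presentable categories as the cocomplete accessible ones.

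I expect the genuinely delicate point to be the \textbf{presentability/accessibility} claim, because it is the only assertion not already packaged into the cited Proposition. The subtlety is set-theoretic: one must be careful about the fixed universe $\mathbb{U}$ and the choice of $\mathbb{V}$ used to form the $\mathbb{V}$-ind-completion, so that $\textbf{Ban}_{\C}$ is genuinely essentially small and the generating set of tiny objects is an honest set rather than a proper class. Once the size issues are pinned down, the tininess established via Corollary \ref{chel}-style arguments (tiny objects commute with filtered colimits, which is exactly $\kappa$-presentability for $\kappa=\aleph_0$) combines with cocompleteness to give local presentability with no further work. All other clauses reduce to direct citation of the Proposition immediately preceding the corollary.
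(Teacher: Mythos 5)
Your proposal is correct and follows essentially the same route as the paper, which states the corollary without proof as a direct application of the immediately preceding Proposition on ind-completions of quasi-abelian categories with enough projectives (itself cited from Schneiders), combined with the already-established facts that $\textbf{Ban}_{\C}$ is quasi-abelian, has the projectives $l^{1}(I)$, and carries a compatible closed monoidal structure. The only clause the paper leaves entirely implicit is local presentability, and your argument for it --- the ind-completion of an essentially small finitely cocomplete category is finitely accessible and cocomplete, hence locally presentable, modulo the usual universe bookkeeping --- is the standard and correct way to supply it.
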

The category $Ind(Ban_{k})$ is not concrete. However it does have a natural concrete full subcategory $Ind^{m}(Ban_{k})$. An object of $Ind^{m}(Ban_{k})$ is a formal colimit $``lim_{\rightarrow}"E_{i}$ such that any map $E_{i}\rightarrow E_{j}$ is a monomorphism (not necessarily admissible!). It is shown in \cite{meyer2007local} Theorem 1.139 and Section 1.5.3 that this category is equivalent to the concrete category $CBorn_{k}$ of complete \textbf{bornological} $k$-modules, via the disection functor $diss:CBorn_{k}\rightarrow Ind^{m}(Ban_{k})$. These are spaces equipped with an appropriate notion of `bounded subsets'. To a (complete) locally convex space $E$ one can functorially assign both the von Neumann bornology $vN(E)$ and the compact bornology $Cpt(E)$. The von Neumann bornology is composed of the subsets of $E$ absorbed by all zero neighbourhoods. The compact bornology is composed of subsets with compact closure. There is a natural transformation of functors $Cpt\rightarrow vN$. For details see \cite{meyer2007local} Section 1.1.4. If $V$ is a nuclear locally convex space then the map $Cpt(V)\rightarrow vN(V)$ is an isomorphism by \cite{bambozzi2015stein} Lemma 3.67.

There is also the dual notion of the $\mathbb{V}$-pro-completion of $\mathpzc{C}$, which is defined to be
$$Pro(\mathpzc{C})=Ind(\mathpzc{C}^{op})^{op}$$
It is the formal completion of $\mathpzc{C}$ by projective limits. 

 For $k$ a Banach ring $Pro(Ban_{k})$ contains $\hat{\mathcal{T}}_{c,k}$ as a full subcategory. Indeed if $E$ is an object of $\hat{\mathcal{T}}_{c,k}$ defined by a family of seminorms $\mathcal{P}$ then define $PB(E)=``lim_{\leftarrow_{p\in\mathcal{P}}}"\hat{E}_{p}$ where $\hat{E}_{p}$ is the completion of $E$ with respect to the metric defined by the semi-norm $p$. This construction is functorial, lax monoidal, and $PB:\hat{\mathcal{T}}_{c,k}\rightarrow Pro(Ban_{k})$ is fully faithful.

 If $\mathpzc{E}$ is a quasi-abelian category enough projectives and injectives then by Proposition 2.1.15 in \cite{qacs} both $Ind(\mathpzc{E})$ and $Pro(\mathpzc{E})$ are complete and cocomplete. In particular by an obvious Kan extension there is a canonical functor
$$PI:Pro(\mathpzc{E})\rightarrow Ind(\mathpzc{E})$$
Again this is lax monoidal. 

Returning to the case $\mathpzc{E}=Ban$, there is a natural isomorphism of functors $PI\circ PB\cong diss\circ vN$ (see \cite{bambozzi2015stein}), and therefore a natural transformation $diss\circ Cpt\rightarrow PI\circ PB$. Let $E$ be a Banach space and $F$ a complete locally convex space. Then
\begin{align*}
Hom_{Ind(Ban)}(E,PI\circ PB(F))&\cong Hom_{Pro(Ban)}(E,PB(F))\\
&\cong Hom_{\hat{\mathcal{T}}_{c,k}}(E,F)\\
&\cong Hom_{CBorn}(E,vN(F))\\
&\cong Hom_{Ind(Ban)}(E,diss\circ vN(F))
\end{align*}
Where the isomorphisms just arises from the fact that $Hom(E,-)$ commutes with projective limits in both $\hat{\mathcal{T}_{c}}$ and $CBorn$ When restricted to the category of nuclear Fr\'{e}chet spaces the functor $diss\circ vN$ is fully faithful by Example 3.22 \cite{bambozzi2015stein}. Moreover since map $diss\circ Cpt\rightarrow PI\circ PB$ is a natural isomorphism this restriction of $PI\circ PB$ is also strong monoidal \cite{meyer2007local}) Theorem 1.87. In particular the category of nuclear Fr\'{e}chet algebras over $\C$ embeds fully faithfully in the category of commutative complete bornological algebras. Since the category $CBorn_{\C}$ has good categorical properties, in particular it is closed monoidal \textbf{AdMon}-elementary, this is evidence that it provides a convenient setting in which to study analytic algebra.

\subsection{The Non-Expanding Normed and Banach Categories}\label{sec7}
Each of the normed and Banach categories considered in the previous section has a corresponding `non-expanding' subcategory. If $E$ and $F$ are normed spaces and $s\in\R_{\ge0}$ then we denote by $Hom_{k}^{\le s}(E,F)\subset Hom_{k}(E,F)$
the set of maps of $k$-modules of norm at most $s$. Composition gives a map
$$Hom_{k}^{\le r}(E,F)\otimes Hom_{k}^{\le s}(F,G)\rightarrow Hom_{k}^{\le rs}(F,G)$$
In particular there are wide subcategories of $Norm_{k},Ban_{k},Norm^{nA}_{k},Ban^{nA}_{k}$ consisting of maps of norm at most $1$ which we denote by $Norm_{k}^{\le1},Ban_{k}^{\le1},Norm^{nA,\le1}_{k},Ban^{nA,\le1}_{k}$. They are equipped with closed symmetric monoidal structures by restricting the ones on the larger categories. If $k$ is non-Archimedean then these categories are also additive and in fact quasi-abelian. 

In both the Archimedean and non-Archimedean case these categories are complete and co-complete. Details of this can be found in \cite{koren} Appendix A. For convenience we recall how to construct arbitrary coproducts in $Ban_{k}^{\le1}$ and, for $k$ non-Archimedean, $Ban_{k}^{nA,\le1}$.  For $k$ Archimedean the coproduct $\coprod_{i\in I}^{\le 1}A_{i}$ of a collection $\{A_{i}\}_{i\in I}$ of Banach spaces in $Ban^{\le1}_{k}$ is 
$$\{(a_{i})_{i\in I}\subset\prod^{{}_{k}\mathpzc{Mod}}_{i\in I}A_{i}:\sum_{i\in I}||a_{i}||<\infty\}$$
with the norm $||(a_{i})||=\sum_{i\in I}||a_{i}||$. Here $\prod^{{}_{k}\mathpzc{Mod}}$ denotes the  product in the category of $k$-modules. For $k$ non-Archimedean the coproduct in both $\textit{Ban}^{\le1,nA}_{k}$  and $\textit{Ban}^{\le1}_{k}$ the coproduct $\coprod_{i\in I}^{\le1}A_{i}$ of a collection $\{A_{i}\}_{i\in I}$ of Banach spaces is the subspace
$$\{(a_{i})_{i\in I}\subset\prod^{{}_{k}\mathpzc{Mod}}_{i\in I}A_{i}:\textrm{lim}_{i\in I}||v_{i}||=0\}$$
endowed with the norm $||(a_{i})_{i\in I}||=\textrm{sup}_{i\in I}||a_{i}||$.
\subsubsection{Rescaling Functors}

For $r\in\R_{>0}$ we denote by $(-)_{r}:Norm_{k}\rightarrow Norm_{k}$ the endofunctor which sends a normed space $E$ to $E_{r}$ which has the same underlying $k$-vector space as $E$ but with norm rescaled by $r$. On morphisms it does nothing.  It is evidently an autoequivalence, and in fact an automorphism, with inverse given by $(-)_{\frac{1}{r}}$. Moreover it restricts to an auto-equivalence on all the normed and Banach categories defined above. These functors satisfy the following useful property.

\begin{prop}\label{rescalecontract}
Let $E$ and $F$ be Banach $k$-modules. Then
$\textrm{Hom}_{k}^{\le r}(E_{s},F_{t})=\textrm{Hom}_{k}^{\le\frac{sr}{t}}(E,F)$
\end{prop}

\begin{proof}
Let $f:E_{s}\rightarrow F_{t}$ have norm at most $r$, so that for any $e\in E$,

$$||f(e)||_{F}=\frac{1}{t}||f(e)||_{F_{t}}\le \frac{r}{t}||e||_{E_{s}}=\frac{sr}{t}||e||_{E}$$
Conversely suppose $f:E\rightarrow F$ has norm at most $\frac{sr}{t}$. Then we get the same inequality as above.
\end{proof}

\subsubsection{The Quasi-Abelian Exact Structure}
Although we will not go into the details here, it is not hard to see that the categories $\textit{Norm}^{\le1,nA}_{k}$ and $\textit{Ban}^{\le1,nA}_{k}$ are quasi-abelian. However let us note the following.

\begin{prop}\label{adcont}
In both $\textit{Norm}^{\le1,nA}_{k}$ and $\textit{Ban}^{\le1,nA}_{k}$ we have the following.
\begin{enumerate}
\item
A monomorphism $f:A\rightarrow B$ is admissible in the quasi-abelian exact structure if and only if it is an isometry with closed image. 
\item
An epimorphism $g:B\rightarrow C$ is admissible in the quasi-abelian exact structure if and only if it is a set-theoretic epimorphism and $||g(b)||=\textrm{inf}_{a\in\textrm{Ker}(g)}||b-a||$.
\end{enumerate}
\end{prop}

\begin{proof}
\begin{enumerate}
\item
Suppose that $f:A\rightarrow B$ is admissible in the quasi-abelian exact structure. Then it is the kernel of its cokernel $g:B\rightarrow C$. Therefore $f$ induces an isometric isomorphism with the normed subspace $K=\{b\in B:g(b)=0\}$. In particular $f$ is an isometry. Conversely suppose that $f$ is an isometry with closed image. Then $A\cong f(A)$ in $\textit{Norm}^{\le1,nA}_{k}$.  The cokernel of $f$ is isometrically isomorphic to the quotient space $B\big\slash f(A)$, and the kernel of $B\rightarrow B\big\slash f(A)$ is $\{b\in B: g(b)=0\}=f(A)\cong A$. 
\item
Suppose that $g:B\rightarrow C$ is an admissible epimorphism in the quasi-abelian exact structure. Then it is the cokernel of its kernel, which is the subspace $A=\{b\in B:g(b)=0\}$. In particular $g$ induces an isometric isomorphism $\overline{g}:B\big\slash A\cong C$. So 
$$||g(b)||=||[b]||=\textrm{inf}_{a\in A}||b-a||$$
Moreover $B\rightarrow B\big\slash A$ is a set-theoretic epimorphism, so $g$ is as well. Conversely suppose that $g$ is a set-theoretic epimorphism, and that  $||g(b)||=\textrm{inf}_{a\in\textrm{Ker}(g)}||b-a||$. Then $g$ clearly induces an isometric isomorphism. 
\end{enumerate}
\end{proof}

\begin{rem}
In the case of $\textit{Ban}^{\le1,nA}_{k}$ we may remove the assumption in Proposition \ref{adcont} 1) that $f$ has closed image, since an isometry of Banach spaces always has closed image.
\end{rem}

\subsubsection{The Strong Exact Structure}
We introduce a different exact structure on $\textit{Norm}^{\le1,nA}_{k}$ (resp. $\textit{Ban}^{\le1,nA}_{k}$). 

\begin{defn}
\begin{enumerate}
\item
We say that a morphism $f:A\rightarrow B$ in $\textit{Norm}^{\le1,nA}_{k}$ is a \textbf{strong monomorphism} if it is an isometry with closed image, and any $b\in B$ has a closest point $a_{b}\in f(A)$.
\item
We say that a morphism $g:B\rightarrow C$ in $\textit{Norm}^{\le1,nA}_{k}$ is a \textbf{strong epimorphism} if for any $c\in C$ there is a $b_{c}\in B$ with $g(b_{c})=c$ and $||b_{c}||=||c||$.
\item
We say that a morphism $f:A\rightarrow B$ in $\textit{Ban}^{\le1,nA}_{k}$ is a \textbf{strong monomorphism} (resp. \textbf{strong epimorphism}) if it is a strong monomorphism (resp. strong epimorphism) in $\textit{Norm}^{\le1,nA}_{k}$.
\end{enumerate}
\end{defn}

\begin{cor}
A strong monomorphism is an admissible monomorphism in the quasi-abelian exact structure. A strong epimorphism is an admissible epimorphism in the quasi-abelian exact structure.
\end{cor}

\begin{prop}
A map $f:A\rightarrow B$ is a strong monomorphism if and only if it is the kernel of a strong epimorphism. A map $g:B\rightarrow C$ is a strong epimorphism if and only if it is the cokernel of a strong monomorphism.
\end{prop}

\begin{proof}
Suppose that $f:A\rightarrow B$ is a strong monomorphism. Then in particular it is an admissible monomorphism in the quasi-abelian exact structure so it is the kernel of its cokernel $g:B\rightarrow C$. Let us show that $g$ is a strong epimorphism. Let $c=[b]\in C=B\big\slash f(A)$. Now $||[b]||=\textrm{inf}_{a\in A}||b-f(a)||$. By assumption there is some $a_{b}$ such that $||[b]||=||b-f(a_{b})||$. Moreover $g(b-f(a_{b}))=[b]$. So $g$ is a strong epimorphism. Conversely suppose that $f:A\rightarrow B$ is the kernel of a strong epimorphism $g:B\rightarrow C$. Let $b\in B$. There is $b'\in B$ such that $g(b)=g(b')$ and $||g(b)||=||b'||$. Now $b-b'\in A$. We claim that $b-b'$ is a closest point to $b$ in $A$. Indeed for any $a\in A$ 
$$||b-(b-b')||=||b'||=||g(b)||=||g(b-a)||\le ||b-a||$$
\end{proof}

So we get a class of kernel-cokernel pairs

\begin{displaymath}
\xymatrix{
0\ar[r] & A\ar[r]^{f} & B\ar[r]^{g} & C\ar[r] & 0
}
\end{displaymath}
where $f$ is a strong monomorphism and $g$ is a strong epimorphism. We denote this class by $\mathpzc{strong}$. We are going to prove the following.

\begin{thm}\label{strongexact}
The collection $\mathpzc{strong}$ of strong kernel-cokernel pairs is an exact structure on both $\textit{Norm}^{\le1,nA}_{k}$ and $\textit{Ban}^{\le1,nA}_{k}$.
\end{thm}

We do this in several steps. It is clear that isomorphisms are strong epimorphisms and strong monomorphisms. It is also clear that the projection $A\oplus B\rightarrow B$ is a strong epimorphism and the inclusion $A\rightarrow A\oplus B$ is a strong monomorphism.

\begin{prop}
Let 
\begin{displaymath}
\xymatrix{
A\ar[r]^{f}\ar[d]^{g} & B\ar[d]^{g'}\\
X\ar[r]^{f'} & Y
}
\end{displaymath}
be a pushout diagram in $\textit{Norm}^{\le1,nA}_{k}$ or $\textit{Ban}^{\le1,nA}_{k}$. If $f$ is a strong monomorphism then so is $f'$.
\end{prop}

\begin{proof}
We shall prove it for $\textit{Norm}^{\le1,nA}_{k}$. The case of $\textit{Ban}^{\le1,nA}_{k}$ is similar. The space $Y$ is isometrically isomorphic to the quotient normed space
$$X\oplus B\big\slash\{-g(a),f(a)\}$$
Let $[(x,b)]$ be an element of $X\oplus B\big\slash\{-g(a),f(a)\}$, and let $a_{b}\in A$ be such that $f(a_{b})$ is a closest point to $b$ in $f(A)$. Consider the element $[(x-g(a_{b}),0)]$ which is in the image of $f'$. We claim that this is a closest point to $[(x,b)]$ in $X\oplus B\big\slash\{-g(a),f(a)\}$. Let $[(x',0)]$ be an element in the image of $f'$. Then 
$$||[(x,b)]-[(x',0)]||=inf_{a\in A}max\{||x-x'+g(a)||,||b-f(a)||\}$$
Now $||b-f(a_{b})||\le ||b-f(a)||$ and $0\le ||x-x'+g(a)||$. 
$$||[(0,b-f(a_{b})]||=||[(x,b)]-[(x-g(a_{b}),0)]||\le ||(0,b-f(a_{b}))||\le ||[(x,b)]-[(x',0)]||$$
for any $x'\in X$. Therefore  $[(x-g(a_{b}),0)]$ is a closest point to $[(x,b)]$ in the image of $f'$.
\end{proof}

\begin{prop}
Let
\begin{displaymath}
\xymatrix{
A\ar[r]^{f'}\ar[d]^{g'} & B\ar[d]^{g}\\
X\ar[r]^{f} & Y
}
\end{displaymath}
be a pullback diagram in $\textit{Norm}^{\le1,nA}_{k}$ or $\textit{Ban}^{\le1,nA}_{k}$. If $f$ is a strong epimorphism then so is $f'$.
\end{prop}

\begin{proof}
$A$ is (isometrically isomorphic to) the subspace $\{(x,b):f(a)=g(b)\}$ of $X\oplus B$, with $f'$ being $(x,b)\mapsto b$. Let $b\in B$ and let $x\in X$ be such that $f(x)=g(b)$ and $||g(b)||=||x||$. Then $(x,b)\in A$ and $f'(x,b)=b$. Moreover $||(x,b)||=\textrm{max}\{||x||,||b||\}$. If $||x||\le ||b||$ then we are done. Suppose $||x||\ge ||b||$. Then $||b||\le ||x||=||g(b)||\le||b||$ so $||x||=||b||$. In either case $||(x,b)||=||b||$.
\end{proof}

It is clear that the composition of strong epimorphisms is a strong epimorphism. To conclude the proof of Theorem \ref{strongexact} let us next show that compositions of strong monomorphisms are strong. More generally we have the following.

\begin{prop}\label{compmonstrict}
Let $(C,d)$ be an ultrametric space and $A\subset B\subset C$ be subspaces. Let $c\in C$. Suppose that $c$ has a nearest point $b_{c}$ in $B$ and that $b_{c}$ has a nearest point $a_{c}\in A$. Then $c$ has a nearest point in $A$.
\end{prop}

\begin{proof}
Let $a\in A$. If $d(a,c)=d(b_{c},c)$ then $a$ is a nearest point to $C$ in $B$ and hence therefore in $A$. Hence we may assume that $d(a,c)>d(b_{c},c)$ for all $a\in A$. In particular $d(b_{c},a)=d(a,c)$. So $d(a_{c},c)=d(b_{c},a_{c})<d(b_{c},a)=d(a,c)$ for all $a\in c$ and $a_{c}$ is a closest point to $c$ in $A$.
\end{proof}

\begin{cor}
The composition of strong monomorphisms $f:A\rightarrow B$ and $g:B\rightarrow C$ in $\textit{Norm}^{\le1,nA}_{k}$, and hence in $\textit{Ban}^{\le1,nA}_{k}$ is a strong monomorphism.
\end{cor}

Now let us establish some properties of this exact structure. The following is clear.

\begin{prop}\label{normquotstrong}
Let $f:A\rightarrow B$ be a strong monomorphism. Then for $[b]\in B\big\slash f(A)$ we have $||[b]||=||b-f(a)||$ where $f(a)$ is a closest point to $b$ in $f(A)$.
\end{prop}

\begin{prop}
In both $\textit{Norm}^{\le1,nA}_{k}$ and $\textit{Ban}^{\le1,nA}_{k}$ products and coproducts preserve strong monomorphisms, strong epimorphisms and kernels. Coproducts preserve cokernels and products preserve cokernels of admissible monomorphisms. In particular they are exact for the strong exact structures.
\end{prop}

\begin{proof}
Let us first prove the claims about products. It suffices to show this for $\textit{Norm}^{\le1,nA}_{k}$. First note that products always commute with kernels. Now let
\begin{displaymath}
\xymatrix{
0\ar[r] & A_{i}\ar[r]^{f_{i}} & B_{i}\ar[r]^{g_{i}} & C_{i}\ar[r] & 0
}
\end{displaymath}
be a strong exact sequence. We write the product sequence
\begin{displaymath}
\xymatrix{
0\ar[r] & A\ar[r]^{f} & B\ar[r]^{g} & C\ar[r] & 0
}
\end{displaymath}
We need to show that this sequence is exact.

Let us show that the map $g$ is a strong epimorphism. Indeed by Proposition \ref{normquotstrong}  $||([b_{i}])||=\textrm{sup}_{i\in I}||b_{i}-f_{i}(a_{i})||$ where $a_{i}$ is such that $f_{i}(a_{i})$ is a closest point to $b_{i}$ in $f_{i}(A_{i})$. Now 
$$||a_{i}||=||f_{i}(a_{i})||=||(f_{i}(a_{i})-b_{i})+b_{i}||\le\textrm{max}\{||f_{i}(a_{i})-b_{i}||,||b_{i}||\}\le ||b_{i}||$$
So $(a_{i})\in A$. Moreover $||(b_{i}-f_{i}(a_{i}))||=||([b_{i}])||$ and $\pi(b_{i}-f_{i}(a_{i}))=([b_{i}])$. 
Now let us show that $f$ is a strong monomorphism. It is clearly an isometry. Let $c=(c_{i})\in\prod_{i\in I}C_{i}$. For each $i$ pick $b_{i}\in B_{i}$ with $g_{i}(b_{i})=c_{i}$ and $||c_{i}||=||b_{i}||$. Then clearly $\textrm{sup}_{i\in I}||b_{i}||=\textrm{sup}_{i\in I}||c_{i}||$. Set $b=(b_{i})\in\prod_{i\in I}B_{i}$. Then $g(b)=c$ and $||c||=||b||$. A sequence $(b_{i}^{n})$ converges to $(b_{i})$ in $\prod_{i\in I}B_{i}$ if and only if $b_{i}^{n}$ converges to $b_{i}$ uniformly $B_{i}$. In particular each $b_{i}^{n}$ converges to $b_{i}$ in $B_{i}$. It follows that the image of $f$ is closed in $B$. Finally let $(b_{i})\in B$ and for each $i$ pick a closest point $f_{i}(a_{i})$ to $b_{i}$ in $f_{i}(A_{i})$. Now
$$||([b_{i}])||=\textrm{sup}_{i\in I}\textrm{inf}_{a_{i}\in A_{i}}||b_{i}-f_{i}(a_{i})||$$
Pick $a_{i}$ such that $f_{i}(a_{i})$ is a closest point to $b_{i}$ in $f_{i}(A_{i})$. Then $||(b_{i})||=\textrm{sup}_{i\in I}||b_{i}-f_{i}(a_{i})||$.
By a computation similar to the previous part of the proof $\textrm{sup}_{i}||a_{i}||\le\textrm{sup}_{i}||b_{i}||<\infty$ and $(a_{i})\in A$. Moreover for any $(\widetilde{a}_{i})\in A$ we have
$$||(b_{i})-f((a_{i}))||=\textrm{sup}_{i\in I}||b_{i}-f_{i}(a_{i})||\le\textrm{sup}_{i\in I}||b_{i}-f_{i}(\widetilde{a}_{i})||=||(b_{i})-(a_{i})||$$
So $f((a_{i}))$ is a closest point to $(b_{i})$ in $f(A)$.\newline 
\\
Finally it is clear that $f$ is a kernel of $g$ and therefore the sequence is exact.\newline
\\
Coproducts always preserve cokernels. It is obvious that coproducts preserve strong epimorphisms in $\textit{Norm}^{\le1,nA}_{k}$ and for $\textit{Ban}^{\le1,nA}_{k}$ the proof is similar to the proof that products preserve strong epimorphisms. It is clear that coproducts preserve kernels. 
\end{proof}

\begin{cor}\label{prodcoprodna}
In $\textit{Norm}^{nA,\le1}_{k}$ and $\textit{Ban}^{nA,\le1}_{k}$ products are admissibly coexact and coproducts are strongly exact for the strong exact structure.
\end{cor}

\subsubsection{Completion Functors}

 There is a completion functor $\textit{Cpl}:\textit{Norm}^{nA}_{k}\rightarrow\textit{Ban}^{nA}_{k}$ which sends a normed space $A$ to its separated completion $\hat{A}$. It is left adjoint to the inclusion functor $\iota:\textit{Ban}^{nA}_{k}\rightarrow\textit{Norm}^{nA}_{k}$. 
 It restricts to a functor $\textit{Cpl}^{\le 1}:\textit{Norm}^{\le1,nA}_{k}\rightarrow\textit{Ban}^{\le1,nA}_{k}$. Again it is left adjoint to the inclusion functor $\iota^{\le1}:\textit{Ban}^{\le1,nA}_{k}\rightarrow\textit{Norm}^{\le1,nA}_{k}$. 

\begin{prop}\label{cplqas}
The functor $\textit{Cpl}$ is exact for the quasi-abelian exact structure.
\end{prop}

\begin{proof}
This is in \cite{dcfapp} 3.1.13 for $k=\C$, but the proof works for any Banach ring. 
\end{proof}

We are going to show the following.

\begin{prop}\label{compboundex}
The functor $\textit{Cpl}^{\le1}$ is exact for the strong exact structure.
\end{prop}

First we need two basic facts about Cauchy sequences in non-Archimedean fields.

\begin{prop}\label{cauchyna}
Let $(a_{n})$ be a sequence in $k$ such that $||a_{n+1}-a_{n}||\rightarrow 0$. Then $(a_{n})$ is a Cauchy sequence.
\end{prop}

\begin{proof}
For any pair $m> n$ we have $||a_{m}-a_{n}||\le\textrm{sup}_{n\le i\le m-1}\{||a_{i+1}-a_{i}||\}$. Let $\delta>0$ and let $N$ be such that $||a_{j+1}-a_{j}||<\delta$ for $j>N$. Then for $m>n>N$ we have $||a_{m}-a_{n}||<\delta$.
\end{proof}
By Lemma 2.19 in \cite{padic} we have

\begin{prop}\label{constantnorm}
Let $(a_{n})$ be a Cauchy sequence in $k$. If $(a_{n})$ does not converge to zero then the sequence $(|a_{n}|)$ is eventually constant.
\end{prop}

Combining these two propositions we  get the following.

\begin{prop}\label{completionstron}
Let 
\begin{displaymath}
\xymatrix{
0\ar[r] & A\ar[r]^{f} & B\ar[r]^{g} & C\ar[r] & 0
}
\end{displaymath}
be an strong exact sequence in $\textit{Norm}^{\le1,nA}_{k}$. Then
\begin{displaymath}
\xymatrix{
0\ar[r] & \hat{A}\ar[r]^{\hat{f}} & \hat{B}\ar[r]^{\hat{g}} & \hat{C}\ar[r] & 0
}
\end{displaymath}
is a strong exact sequence in $\textit{Ban}^{\le1,nA}_{k}$.
\end{prop}

\begin{proof}
By Proposition \ref{cplqas} the complex is a kernel-cokernel pair. Thus it remains to show that $\hat{g}$ is a strong epimorphism. Let $[(c_{n})]$ be a non-zero equivalence class of Cauchy sequences in $C$. By Proposition \ref{constantnorm} we may assume that $||c_{n}||$ is a constant $r$. Pick $\widetilde{b}_{0}$ such that $g(\widetilde{b}_{0})=c_{0}$ and $||b_{0}||=||c_{0}||$.  For each $n+1$ pick $\widetilde{b}_{n+1}$ such that $g(\widetilde{b}_{n+1})=c_{n+1}-c_{n}$ and $||\widetilde{b}_{n+1}||=||c_{n+1}-c_{n}||$. Write $b_{n}=\sum_{k=0}^{n}\widetilde{b}_{n}$. Then $g(b_{n})=c_{n}$. Moreover
$$r=||c_{n}||\le ||b_{n}||\le\textrm{max}_{k\le n}||\widetilde{b}_{n}||=\textrm{max}\{||c_{0}||,\textrm{max}_{1\le k\le n}||c_{k}-c_{k-1}||\}\le r$$
Hence $||b_{n}||=r$. Moreover $||b_{n+1}-b_{n}||=||\widetilde{b}_{n+1}||=||c_{n+1}-c_{n}||\rightarrow 0$, so by Proposition \ref{cauchyna}, $(b_{n})$ is a Cauchy sequence.
\end{proof}

\begin{prop}
For each $r\in\R_{>0}$ the object $k_{r}$ is projective in both $\textit{Norm}^{\le1,nA}_{k}$ and $\textit{Ban}^{\le1,nA}_{k}$. In particular the strong exact structures on both $\textit{Norm}^{\le1,nA}_{k}$ and $\textit{Ban}^{\le1,nA}_{k}$ have enough functorial projectives.
\end{prop}

\begin{proof}
Let us first prove the proposition for $\textit{Norm}^{\le1,nA}_{k}$. It suffices to show that the functor $\textrm{Hom}(k_{r},-):\textit{Norm}^{\le1,nA}_{k}\rightarrow\mathpzc{Ab}$ preserves cokernels. Let $f:A\rightarrow B$ be a strong monomorphism with cokernel $g:B\rightarrow C$. We need to show that the map
$$B_{B}\Bigr(0,\frac{1}{r}\Bigr)\rightarrow B_{C}\Bigr(0,\frac{1}{r}\Bigr)$$
on open balls is an epimorphism. This follows immediately from the definition of strong epimorphism. 

For the second assertion, let $E\in\textit{Norm}^{\le1,nA}_{k}$. Write $\mathcal{P}(E)\defeq\bigoplus_{e\in E}k_{||e||}$. There is a map $\mathcal{P}(E)\rightarrow E$ induced by the isometry 
$$k_{||e||}\rightarrow E,\; \lambda\mapsto\lambda e$$
This is clearly a strong epimorphism.

For $\textit{Ban}^{\le1,nA}_{k}$ we use the fact that $\textit{Cpl}^{\le1}$ is exact and preserves projectives since it is left adjoint to an exact functor.
\end{proof}

 \subsubsection{Smallness Conditions}
 Let $D:\mathcal{I}\rightarrow\textit{Norm}^{\le1,nA}_{k}$ be a diagram with $\mathcal{I}$ a directed category. Write $A_{i}=D(i)$ and $f_{ji}=D(i\le j)$. Suppose that the $f_{ji}$ are isometries. The direct limit $A\defeq\textrm{lim}_{\rightarrow}A_{i}$ is constructed as follows. The underlying vector space $\textrm{lim}_{\rightarrow}A_{i}$ is the direct limit of the underlying vector spaces of the $A_{i}$. Namely it is the disjoint union of the $A_{i}$ quotient by the relation $a_{i}\sim f_{ji}(a_{i})$ for any $j>i$. If $a_{i}\in A_{i}$ and $a_{j}\in A_{j}$ then $[a_{i}]+[a_{j}]\defeq [f_{Ki}(a_{i})+f_{Kj}(a_{j})]$ where $K$ is any upper bound of $i$ and $j$. If $\lambda\in k$ then $\lambda[a_{i}]\defeq[\lambda a_{i}]$. We define a norm on this vector space by $||[a_{i}]||\defeq ||a_{i}||$. This is well-defined because if $a_{i}\sim a_{j}$ then $a_{j}=f_{ji}(a_{i})$, so $||a_{j}||=||a_{i}||$. Clearly $||[a_{i}]||=0$ if and only if $a_{i}=0$ and if $\lambda\in k$ then $||\lambda[a_{i}]||=|\lambda|||a_{i}||$. Finally
\begin{align*}
||[a_{i}]+[a_{j}]||&=|| [f_{Ki}(a_{i})+f_{Kj}(a_{j})]||\\
&=||f_{Ki}(a_{i})+f_{Kj}(a_{j})||\\
&\le\textrm{max}\{||f_{Ki}(a_{i})||,||f_{Kj}(a_{j})||\}\\
&=\textrm{max}\{||a_{i}||,||a_{j}||\}\\
&=\textrm{max}\{||[a_{i}]||,||[a_{j}]||\}
\end{align*}
So this is a non-Archimedean norm. The map $f_{i}:A_{i}\rightarrow A$ sends $a_{i}$ to $[a_{i}]$.

\begin{prop}
The normed space described above is the direct limit in $\textit{Norm}^{\le1,nA}_{k}$.
\end{prop}

\begin{proof}
Let $g_{i}:A_{i}\rightarrow C$ be a cocone from $D$. There is a unique map of vector spaces $g:A\rightarrow C$ such that $g\circ f_{i}=g_{i}$. It remains to show that $g$ is bounded with $||g||\le 1$. Let $[a_{i}]\in A$ with $[a_{i}]=f_{i}(a_{i})$. Then $||g([a_{i}])||=||g_{i}(a_{i})||\le ||a_{i}||=||[a_{i}]||$.
\end{proof}

\begin{cor}\label{cor:colimadquasnex}
\begin{enumerate}
\item
Suppose that for each $j<k$, $f_{kj}:A_{j}\rightarrow A_{k}$ is an admissible monomorphism in the quasi-abelian exact structure on $\textit{Norm}^{\le1,nA}_{k}$. Then for each $i$ the map $A_{i}\rightarrow A$ is an admissible monomorphism in the quasi-abelian exact structure.
\item
Suppose that for each $j<k$, $f_{kj}:A_{j}\rightarrow A_{k}$ is an admissible monomorphism in the strong exact structure on $\textit{Norm}^{\le1,nA}_{k}$. Then for each $i$ the map $A_{i}\rightarrow A$ is an admissible monomorphism in the strong exact structure.
\end{enumerate}
\end{cor}

\begin{proof}
\begin{enumerate}
\item
It is clear from the definition of the norm on $A$ that $f_{i}$ is an isometry. Suppose that $(f_{i}(a_{i}^{n}))$ converges to $[a_{j}]$ with $a_{j}\in A_{j}$. Let $K$ be an upper bound of $i$ and $j$. Then $([f_{Ki}(a_{i}^{n})])$ converges to $[f_{Kj}(a_{j})]$. But by the definition of the norm on $A$ this clearly means that $f_{Ki}(a_{i}^{n})$ converges to $f_{Kj}(a_{j})$ in $A_{K}$. Since $f_{Ki}$ has closed image, $f_{Kj}(a_{j})=f_{Ki}(a_{i})$. Since $f_{Ki}$ is an isometry $(a_{i}^{n})$ converges to $a_{i}$, so $(f_{i}(a_{i}^{n}))$ converges to $f_{i}(a_{i})$, and $f_{i}$ has closed image.
\item
Let $[a_{j}]\in A$ with $a_{j}\in A_{j}$. Let $K$ be an upper bound of $i$ and $j$. The map $f_{Ki}:A_{i}\rightarrow A_{K}$ is a strong monomorphism. Therefore $f_{Kj}(a_{j})$ has a closest point $f_{Ki}(a_{i})$ in $f_{Ki}(A_{i})$. We claim that $[a_{i}]$ is a closest point to $[a_{j}]$ in $f_{i}(A_{i})$. Indeed let $[a'_{i}]\in f_{i}(A_{i})$ with $a'_{i}\in A_{i}$. Then
$$||[a_{j}]-[a'_{i}]||=||[f_{Kj}(a_{j})-f_{Ki}(a'_{i})]||=||f_{Kj}(a_{j})-f_{Ki}(a'_{i})||$$
$$\;\;\;\;\ge ||f_{Kj}(a_{j})-f_{Ki}(a_{i})||=||[a_{j}]-[a_{i}]||$$
\end{enumerate}
\end{proof}

\begin{cor}\label{strongadmonelem}
Let $\mathcal{I}$ be a filtered category. The functors $\varinjlim:\mathpzc{Fun}_{\textbf{AdMon}}(\mathcal{I},\textit{Norm}^{\le 1,nA}_{k})\rightarrow \textit{Norm}^{\le 1,nA}_{k}$ and $\varinjlim:\mathpzc{Fun}_{\textbf{AdMon}}(\mathcal{I},\textit{Ban}^{\le 1,nA}_{k})\rightarrow \textit{Ban}^{\le 1,nA}_{k}$ are exact.
\end{cor}
\begin{proof}
By Proposition \ref{completionstron} it suffices to prove the claim for $\textit{Norm}^{\le1,nA}_{k}$. Let 
\begin{displaymath}
\xymatrix{
0\ar[r] & X\ar[r]^{i}\ar[r] & Y\ar[r]^{p} & Z\ar[r] & 0
}
\end{displaymath}
 be a short exact sequence in $\mathpzc{Fun}(\mathcal{I},\textit{Norm}^{\le 1,nA}_{k})$ where $\mathcal{I}$ is filtered. It suffices to show that the map $lim_{\rightarrow}Y\rightarrow lim_{\rightarrow} Z$ is a strong epimorphism. Fix a class $[z_{i}]\in lim_{\rightarrow} Z$ where $z_{i}\in Z_{i}$. There is some $y_{i}\in Y_{i}$ with $p_{i}(y_{i})=z_{i}$ and $||z_{i}||=||y_{i}||$. But then $p([y_{i}])=[z_{i}]$ and $||[z_{i}]||=||z_{i}||=||y_{i}||=||[y_{i}]||$. 
\end{proof}
This immediately gives the following.
\begin{cor}
The strong exact structures on both $\textit{Norm}^{\le1,nA}_{k}$ and $\textit{Ban}^{\le1, nA}_{k}$ are weakly $\textbf{AdMon}$-elementary. 
\end{cor}

\begin{prop}
For $r\in\R_{>0}$ the objects $k_{r}$ are tiny with respect to the class of admissible monomorphisms in the strong exact structure on $\textit{Norm}^{\le1,nA}_{k}$.
\end{prop}

\begin{proof}
We need to show that for any $r\in\R_{>0}$ the map
$$\textrm{lim}_{\rightarrow}B_{A_{i}}(0,r)\rightarrow B_{\textrm{lim}_{\rightarrow}A_{i}}(0,r)$$
is an isomorphism. It suffices to prove that it is an epimorphism. Let $[a_{i}]\in\textrm{lim}_{\rightarrow}A_{i}$ be such that $||[a_{i}]||=||a_{i}||\le r$. Then $a_{i}\in B_{A_{i}}(0,r)\hookrightarrow \textrm{lim}_{\rightarrow}B_{A_{i}}(0,r)$ maps to $[a_{i}]$. 
\end{proof}

\begin{prop}\label{cocompno}
For any $r\in\R_{>0}$ the $k$-Banach space $k_{r}$ is not tiny with respect to the class of admissible monomorphisms in the strong exact structure (or even the split exact structure). However every object is $\aleph_{1}$-presented.
\end{prop}

\begin{proof}
Consider the sequence
with $X_{i}=k_{r}^{i}$ and the map $X_{i}\rightarrowtail X_{i+1}$ being the inclusion of the first $i$ copies of $k$. The group $\textrm{lim}_{\rightarrow}\textrm{Hom}(k_{r},X_{i})$ is the ascending union of the closed balls in $X_{i}$ of radius $r$, while $\textrm{Hom}(k_{r},\textrm{lim}_{\rightarrow} X_{i})$ is the closed ball of radius $r$ in $\textrm{lim}_{\rightarrow} X_{i}$. The map
$$\textrm{lim}_{\rightarrow}\textrm{Hom}(k_{r},X_{i})\rightarrow \textrm{Hom}(k_{r},\textrm{lim}_{\rightarrow} X_{i})$$
is the obvious inclusion. Consider the example with $X_{i}=k^{\oplus i}$ with $X_{i}\rightarrow X_{i+1}$ being the split injection $k^{i}\rightarrow k^{i+1}$ which is the inclusion of the first $i$ copies of $k$. Then $\textrm{lim}_{\rightarrow}X_{i}$ is the space of sequences in $k$ converging to $0$ with the supremum norm. The group $\textrm{lim}_{\rightarrow}\textrm{Hom}(k_{r},X_{i})$ is the group of finite sequences of norm at most $\frac{1}{r}$, while $\textrm{Hom}(k_{r},\textrm{lim}_{\rightarrow} X_{i})$ is the group of sequences converging to $0$ with norm at most $\frac{1}{r}$. It is clear that for a non-discrete field the map
$$\textrm{lim}_{\rightarrow}\textrm{Hom}(k_{r},X_{i})\rightarrow \textrm{Hom}(k_{r},\textrm{lim}_{\rightarrow} X_{i})$$
is not an epimorphism. The last claim is \cite{adamek1994locally} 1.48.
 \end{proof}
Recall that a Banach space $E$ is said to have the \textbf{Hahn-Banach extension property}  if for every subspace $D$ of $E$, every bounded functional $f:D\rightarrow k$ there is an extension $g:E\rightarrow k$ of $f$ with $||g||=||f||$. 
  
\begin{thm}[\cite{prolla2000topics} Theorem 4.12]
If $k$ is spherically complete then every Banach space over $k$ has the Hahn-Banach extension property.
\end{thm}

\begin{prop}
Let $E$ be a non-zero Banach space with the Hahn-Banach extension property and let $e\in E$. Then there is a Banach space $E'$ and an isometric isomorphism $E\cong E'\oplus k_{||e||}$. In particular if $k$ is spherically complete then there are no non-nonzero tiny objects in $\textit{Ban}^{\le1,nA}_{k}$.
\end{prop}
 
 \begin{proof}
 Let $\Bigr<e\Bigr>$ be the span of $e$ in $E$. The map $f:\Bigr<e\Bigr>\rightarrow k_{||e||}$ sending $e$ to $1$ is an isometric isomorphism with inverse $g$ sending $1$ to $e$. Therefore $f$ extends to a map $\overline{f}:E\rightarrow k_{||e||}$ with $||\overline{f}||=1$. Moreover $\overline{f}\circ g=\textrm{Id}_{k_{||e||}}$. Since $\textit{Ban}^{\le1}_{k_{||e||}}$ is quasi-abelian and in particular weakly-idempotent complete this gives a splitting. 
 \end{proof}

\subsubsection{The Monoidal Structure}

The following is straightforward using that both functors are left adjoints.

\begin{prop}\label{tenscomp}
Consider the functors
$$\textit{Cpl}\circ\otimes_{\pi}:\textit{Norm}^{nA}_{k}\otimes\textit{Norm}^{nA}_{k}\rightarrow\textit{Ban}^{nA}_{k}$$
and
$$\hat{\otimes}_{\pi}\circ\textit{Cpl}\times\textit{Cpl}:\textit{Norm}^{nA}_{k}\otimes\textit{Norm}^{nA}_{k}\rightarrow\textit{Ban}^{nA}_{k}$$
There is a natural isometric isomorphism
$$\phi:\textit{Cpl}\circ\otimes_{\pi}\rightarrow\hat{\otimes}_{\pi}\circ\textit{Cpl}\times\textit{Cpl}$$
In particular we get a natural isomorphism
$$\phi^{\le1}:\textit{Cpl}^{\le1}\circ\otimes_{\pi}\rightarrow\hat{\otimes}_{\pi}\circ\textit{Cpl}^{\le1}\times\textit{Cpl}^{\le1}$$
\end{prop}

\begin{prop}\label{rescalecomp}
Let $r\in\R_{>0}$. Consider the functors
$$(-)_{r}\circ\textit{Cpl}:\textit{Norm}^{nA}_{k}\rightarrow\textit{Ban}^{nA}_{k}$$
and
$$\textit{Cpl}\circ (-)_{r}:\textit{Norm}^{nA}_{k}\rightarrow\textit{Ban}^{nA}_{k}$$
Then there is a natural isometric isomorphism 
$$\zeta:(-)_{r}\circ\textit{Cpl}\rightarrow \textit{Cpl}\circ (-)_{r}$$
 In particular this induces a natural isomorphism of functors
$$\zeta^{\le1}:(-)_{r}\circ\textit{Cpl}^{\le1}\cong\textit{Cpl}^{\le1}\circ(-)_{r}$$
\end{prop}

\begin{prop}\label{rescaletens}
Let $s,r\in\R_{>0}$ and consider the functors
$$(-)_{rs}\circ\otimes_{\pi}:\textit{Norm}^{nA}_{k}\times\textit{Norm}^{nA}_{k}\rightarrow\textit{Norm}^{nA}_{k}$$
and
$$\otimes_{\pi}\circ(-)_{r}\times(-)_{s}:\textit{Norm}^{nA}_{k}\times\textit{Norm}^{nA}_{k}\rightarrow\textit{Norm}^{nA}_{k}$$
Then there is an natural isometric isomorphism
$$\eta:\otimes_{\pi}\circ(-)_{r}\times(-)_{s}\rightarrow (-)_{rs}\circ\otimes_{\pi}$$
\end{prop}
At this point let us make the following remark
\begin{rem}
The rescaling functors are exact for both the quasi-abelian and strong exact structures.
\end{rem} 
By Proposition \ref{rescalecomp} and Proposition \ref{tenscomp} we get

\begin{cor}\label{rescaletensban}
Let $s,r\in\R_{>0}$ and consider the functors
$$(-)_{rs}\circ\otimes_{\pi}:\textit{Ban}^{nA}_{k}\times\textit{Ban}^{nA}_{k}\rightarrow\textit{Ban}^{nA}_{k}$$
and
$$\otimes_{\pi}\circ(-)_{r}\times(-)_{s}:\textit{Ban}^{nA}_{k}\times\textit{Ban}^{nA}_{k}\rightarrow\textit{Ban}^{nA}_{k}$$
Then there is an natural isometric isomorphism
$$\eta:\otimes_{\pi}\circ(-)_{r}\times(-)_{s}\rightarrow (-)_{rs}\circ\otimes_{\pi}$$
\end{cor}

\begin{cor}
Projective objects in $\textit{Norm}^{\le1,nA}_{k}$ an $\textit{Ban}^{\le1,nA}_{k}$ are flat in both the quasi-abelian and strong exact structures.
\end{cor}

\begin{proof}
By Proposition \ref{rescaletens} and Corollary \ref{rescaletensban}, we only need to note that tensoring with $k$ is the identity functor and hence is exact.
\end{proof}

\begin{cor}
The tensor product of projective objects in $\textit{Norm}^{\le1,nA}_{k}$ an $\textit{Ban}^{\le1,nA}_{k}$ is a projective object.
\end{cor}

\begin{proof}
It suffices to prove this in $\textit{Norm}^{\le1,nA}_{k}$ for objects of the form $k_{r}$ with $r\in\R_{>0}$. But $k_{r}\otimes k_{s}\cong k_{rs}$ which is projective.
\end{proof}
We summarise this section with the following result.
\begin{thm}
$Ban_{k}^{\le1}$ is a projectively monoidal weakly $\textbf{AdMon}$-elementary exact category which is $\aleph_{1}$-presentable but not $\aleph_{0}$-presentable. $Norm_{k}^{\le1}$ is a monoidal $\textbf{AdMon}$-elementary exact category.
\end{thm}

\section{The Exact Category with Respect to a Generator}
\subsection{The Gabriel-Popescu Theorem}
Let $\mathpzc{G}$ be a Grothendieck abelian category with a set $\{G_{i}\}_{i\in\mathcal{I}}$ of generators. By Proposition 3.3 in \cite{gillespie2016derived} there is the structure of an exact category on $\mathpzc{G}$, called the $G$-exact structure, whereby a null sequence
$$0\rightarrow X\rightarrow Y\rightarrow Z\rightarrow 0$$
is exact if 
$$0\rightarrow Hom(G_{i},X)\rightarrow Hom(G_{i},Y)\rightarrow Hom(G_{i},Z)\rightarrow 0$$
is exact for all $i\in\mathcal{I}$. By Theorem A in \cite{gillespie2016derived} projective resolutions always exist in the $G$-exact structure, and as a consequence the projective model structure exists on $Ch(\mathpzc{G})$, where $\mathpzc{G}$ is equipped with the $G$-exact structure. Gillespie further studies the case in which $G_{i}$ are tiny. In this case the $G$-exact structure has exact coproducts. Moreover since the $G$-exact category has enough projectives, it has exact products, and Gillespie prove the injective model also exists
Let us prove a generalisation of this for what we shall call strongly efficient exact categories. The terminology here is following the notion of an efficient exact category in \cite{vst2012exact}, which we will discuss in the next section. 
An exact category $\mathpzc{E}$ is said to be \textbf{strongly efficient} if
\begin{defn}
\begin{enumerate}
\item
It is locally presentable.
\item
It has an admissible generator $G$.
\item
It has kernels.
\item
Kernels commute with filtered colimits.
\end{enumerate}
\end{defn}
The proof of the following is a minor generalisation of \cite{stacks-project} Lemma 19.14.1.
\begin{prop}
Let $G$ be an admissible generator of a weakly idempotent complete exact category $\mathpzc{E}$, and let $X$ be an object of $\mathpzc{E}$. The natural map
$$\pi:\bigoplus_{\alpha\in Hom(G,X)}G\rightarrow X$$
is an admissible epimorphism.
\end{prop}
\begin{proof}
There is some set $I$ and an admissible epimorphism $\gamma:\bigoplus_{i\in\mathcal{I}}G\rightarrow X$. For each $i$ let $\gamma_{i}$ be the restriction of $\gamma$ to the copy of $G$ in the coproduct indexed by $i$. We may regard $\bigoplus_{i\in\mathcal{I}}G$ as a summand of $\bigoplus_{\alpha\in Hom(G,X)}G$ where the copy of $G$ in $\bigoplus_{i\in\mathcal{I}}G$ indexed by $i$ corresponds to the copy of $G$ in $\bigoplus_{\alpha\in Hom(G,X)}G$ indexed by $\gamma_{i}$. Moreover the composition
$$\bigoplus_{i\in\mathcal{I}}G\rightarrow\bigoplus_{\alpha\in Hom(G,X)}G\rightarrow X$$
is $\gamma$. This is an admissible epimorphism, so by the obscure axiom $\pi$ is an admissible epimorphism.
\end{proof}
There is also an easy adaptation of \cite{stacks-project} Lemma 19.14.1.
\begin{lem}
Let $\mathpzc{E}$ be a strongly efficient exact category, and let $G$ be an admissible generator. Then the functor $R:Hom(G,-):\mathpzc{E}\rightarrow {}_{End(G)}\mathpzc{Mod}$ has a left adjoint such that the counit $LR(G)\rightarrow G$ is an isomorphism.
\end{lem}
\begin{proof}
The fact that the functor has a left adjoint $L$ follows immediately from the adjoint functor theorem. However it will be helpful to give an explicit construction. Fix a functorial projective resolution functor $Q:\mathpzc{Mod}_{End(G,G)}\rightarrow Ch_{\ge0}(\mathpzc{Mod}_{End(G,G)})$ such that for each module $M$ and each $n\ge0$, $Q(M)_{n}$ is a free $End(G,G)$-module. For $M$ an $End(G,G)$-module define $L(M)\defeq coker(Q(M)_{1}\rightarrow Q(M)_{0})$. Let $Q(M)_{0}=\bigoplus_{i\in I}End(G,G)$ and $Q(M)_{1}=\bigoplus_{j\in J}End(G,G)$. This determines a map $\bigoplus_{i\in I}G\rightarrow \bigoplus_{j\in J}G$. Define $L(M)$ to be the cokernel of this map. For $X\in\mathpzc{E}$ we have an exact sequence
$$0\rightarrow Hom(L(M),X)\rightarrow\prod_{i\in\mathcal{I}}R(X)\rightarrow\prod_{j\in\mathcal{J}}R(X)$$
which is isomorphic to the exact sequence
$$0\rightarrow K\rightarrow Hom(\bigoplus_{i\in I}End(G,G),R(X))\rightarrow Hom(\bigoplus_{j\in J}End(G,G),R(X))$$
But $K$ is isomorphic to $Hom(M,R(X))$ due to the exact sequence
$$\bigoplus_{j\in J}End(G,G)\rightarrow\bigoplus_{i\in I}End(G,G)\rightarrow M\rightarrow0$$
Hence $L$ is left adjoint to $R$. Moreover this construction in fact shows that the counit $LR(G)\rightarrow G$ is an isomorphism.
\end{proof}
Using the previous two results, the following Lemma can be proven exactly as in \cite{stacks-project} Lemma 19.14.2.
\begin{lem}
Let $f:M\rightarrow R(X)$ be injective in $\mathpzc{Mod}_{End(G,G)}$. Then the adjoint map $f':L(M)\rightarrow X$ is injective.
\end{lem}

This allows us to prove a version of the Gabriel-Popescu Theorem, mimicking the proof of \cite{stacks-project} Theorem 19.14.3.
\begin{thm}[Gabriel-Popescu]
Let $\mathpzc{E}$ be a strongly efficient exact category, and let $G$ be an admissible generator. Then the functor $R:Hom(G,-):\mathpzc{E}\rightarrow {}_{End(G)}\mathpzc{Mod}$ is fully faithful with a left adjoint.
\end{thm}
\begin{proof}
We need to show that the counit $v:L\circ R(X)\rightarrow X$  is an ismorphism. It is adjoint to the identity map $R(X)\rightarrow R(X)$ so it is an monomorphism. It remains to show that it is an admissible epimorphism for any $X$. Let $\pi:\bigoplus_{\alpha\in Hom(G,X)}G\rightarrow X$ be the canonical map, which is an admissible epimorphism. Each $\alpha\in Hom(G,X)$ is an element of $R(X)$ and therefore determines a map $\alpha':G\cong LR(G)\rightarrow LR(X)$ such that $v\circ\alpha'=\alpha$. This gives a map $\pi':\bigoplus_{\alpha\in Hom(G,X)}G\rightarrow LR(X)$ such that $v\circ\pi'=\pi$. Since $\pi$ is an admissible epimorphism, $v$ must be as well.
\end{proof}
Using this Gabriel-Popescu theorem, the proofs of Proposition 3.3 and Corollary 3.4 in \cite{gillespie2016derived} also work for this more general setting.
\begin{cor}
Let $\mathpzc{E}$ be a strongly efficient exact category with generator $G$. Then the collection of $G$-exact sequences defines an exact structure on $\mathpzc{E}$. Moreover $G$ is a projective generator for the $G$-exact structure on $\mathpzc{E}$.
\end{cor}
\begin{example}
This is almost how the strong exact structures on $Norm_{k}^{nA,\le1}$ and $Ban_{k}^{nA,\le1}$ arise. Indeed the category $sNorm_{k}^{nA,\le1}$ of \textbf{semi-normed} non-Archimedean $k$-modules with non-expanding bounded morphisms is strongly efficient as an exact category with the quasi-abelian exact structure. The strong exact structure is the one determined by the generator $\bigoplus^{\le1}_{r>0}k_{r}$. Both $Norm_{k}^{nA,\le1}$ and $Ban_{k}^{nA,\le1}$ are extension-closed subcategories of $sNorm_{k}^{nA,\le1}$, and therefore inherit the strong exact structure from $sNorm_{k}^{nA,\le1}$.
\end{example}
 \section{Exact Categories of Grothendieck Type}
  In \cite{vst2012exact} \v{S}t'ov\'{i}\v{c}ek introduces efficient exact categories, and exact categories of Grothendieck type. As suggested by the name, the latter are supposed to generalise Grothendieck abelian categories. In particular, \v{S}t'ov\'{i}\v{c}ek shows that such categories have enough injectives, and in fact that the injective model structure (explained in the next chapter) exists on unbounded complexes in such categories under a mild extra assumption. Let us recall the definitions.
\begin{defn}[ \cite{vst2012exact}, Definition 3.11]
A exact category $\mathpzc{E}$ is said to be \textbf{efficient} if
\begin{enumerate}
\item
$\mathpzc{E}$ is weakly idempotent complete.
\item
Transfinite compositions of admissible monics exist and are admissible monics.
\item
Every object of $\mathpzc{E}$ is compact relative to the class of all admissible monics.
\item
$\mathpzc{E}$ has an admissible generator.
\end{enumerate}
\end{defn}
 \begin{defn}[\cite{vst2012exact} 3.2]
Let $\mathpzc{E}$ be an exact category. $\mathpzc{E}$ is said to be \textbf{deconstructible in itself} if there is a set $\mathcal{S}$ of objects of $\mathpzc{E}$, such that every object $X$ of $\mathpzc{E}$ has a presentation
$$X\cong\textrm{colim}_{\alpha<\lambda}S_{\alpha}$$
where $\lambda$ is a regular ordinal, $S_{\alpha}\in\mathcal{S}$, and for $\alpha+1<\lambda$, $S_{\alpha}\rightarrow S_{\alpha+1}$ is an admissible monomorphism.
 \end{defn}
 \begin{defn}[\cite{vst2012exact} Definition 3.11]
 An exact category $\mathpzc{E}$ is said to be \textbf{of Grothendieck type} if it is efficient and deconstructible in itself.
 \end{defn}
 \begin{lem}[\cite{vst2012exact} Corollary 5.9.]\label{lem:grexenoughinj}
 Let $\mathpzc{E}$ be an exact category of Grothendieck type. Then $\mathpzc{E}$ has enough functorial injectives.
 \end{lem}
 \subsection{The $\lambda$-Pure Exact Structure}
 Following an argument of \cite{estrada2017pure} in the case of the $\otimes$-pure exact structure, we show that  exact structures on locally presentable additive categories in which filtered colimits of split exact sequences are exact are of Grothendieck type. This is closely related to work of \cite{krause2012approximations} where the additive category is assumed to be locally finitely presentable, and of \cite{gillespie2016derived}, for the the case $\mathpzc{E}$ is a Grothendieck abelian category. 
 Let $\mathpzc{E}$ be a locally $\lambda$-presentable additive category for some regular cardinal $\lambda$. Following \cite{adamek1994locally}, say that a null sequence
 $$0\rightarrow X\rightarrow Y\rightarrow Z\rightarrow 0$$
 in $\mathpzc{E}$ is $\lambda$-\textbf{pure exact} if for any $\lambda$-presentable object $H$, the sequence of abelian groups
  $$0\rightarrow Hom(H,X)\rightarrow Hom(H,Y)\rightarrow Hom(H,Y)\rightarrow 0$$
  is exact. 
  \begin{lem}[\cite{adamek1994locally}, 2.30/ \cite{gillespie2016derived} Proposition A.1]
  A map $f:X\rightarrow Y$ in $\mathpzc{E}$ is a $\lambda$-pure monomorphism if and only if it is a $\lambda$-directed colimit of split monomorphisms. 
  \end{lem}
  \begin{prop}[\cite{estrada2017pure}, Proposition 2.9]\label{prop:colimitslambdapure}
  Let $P:\mathcal{I}\rightarrow\mathpzc{E}$ be a $\lambda$-directed system. Then the map $\bigoplus_{i\in\mathcal{I}}P_{i}\rightarrow\textrm{colim}_{\mathcal{I}}P_{i}$ is a $\lambda$-pure epimorphism.
  \end{prop}
  \begin{proof}
Proposition 2.9 in \cite{estrada2017pure} is stated and proven in the context that $\mathpzc{E}$ is a Grothendieck abelian category, however the proof also works for any additive category which is cocomplete and finitely complete. Let us give a short independent proof. The direct sum  $\bigoplus_{i\in\mathcal{I}}P_{i}$ is also a $\lambda$-directed system, and it follows that for $H$ $\lambda$-presentable, $\bigoplus_{i\in\mathcal{I}}Hom(H,P_{i})\cong Hom(H,\bigoplus_{i\in\mathcal{I}}P_{i})$, and $\textrm{colim}_{\mathcal{I}}Hom(H,P_{i})\cong Hom(H,\textrm{colim}_{\mathcal{I}}P_{i})$. The map $\bigoplus_{i\in\mathcal{I}}Hom(H,P_{i})\rightarrow\textrm{colim}_{\mathcal{I}}Hom(H,P_{i})$ is an epimorphism of abelian groups, so $\bigoplus_{i\in\mathcal{I}}P_{i}\rightarrow\textrm{colim}_{\mathcal{I}}P_{i}$ is a $\lambda$-pure epimorphism.
  \end{proof}
The following is proven in \cite{krause2012approximations} when the additive category is assumed to be locally finitely presentable, and of \cite{gillespie2016derived}, for the the case $\mathpzc{E}$ is a Grothendieck abelian category. 
  \begin{prop}
  The collection of all $\lambda$-pure exact sequences defines an exact structure on $\mathpzc{E}$ with enough projectives.
  \end{prop}
  \begin{proof}
Clearly split exact sequences are $\lambda$-pure exact, and $\lambda$-pure epimorphisms are stable by composition and pullback. Let $f:X\rightarrow Y$ be a $\lambda$-pure monomorphism. Then $f$ is $\lambda$-directed colimit of split monomorphisms. Since pushouts commute with colimits, and pushouts of split monomorphisms are split monomorphisms, any pushout of $f$ is a $\lambda$-directed colimit of split monomorphisms, and therefore a $\lambda$-pure monomorphism. Moreover, a composition of $\lambda$-directed colimits of pure monomorphisms, is again a $\lambda$-directed colimit of split monomorphisms. 
To show that the $\lambda$-pure exact structure has enough projectives, let $X$ be an object of $\mathpzc{E}$. We may write $X$ as a $\lambda$-directed colimit $X\cong\textrm{colim}_{\mathcal{I}}H_{i}$ where $H_{i}$ is $\lambda$-presentable. But then by Proposition \ref{prop:colimitslambdapure} the map $\bigoplus_{i\in\mathcal{I}}H_{i}\rightarrow X$ is a $\lambda$-pure epimorphism. Moreover $\bigoplus_{i\in\mathcal{I}}H_{i}$ is projective.
  \end{proof}
  \begin{lem}\label{lem:Grothendieckinj}
  Let $\mathpzc{E}$ be a locally $\lambda$-presentable additive category. Suppose that $(\mathpzc{E},\mathcal{Q})$ is an exact category and that for any filtered category $\mathcal{I}$, the functor $\textrm{lim}_{\rightarrow}:\mathpzc{Fun}(\mathcal{I},\mathpzc{E})$ is exact (in particular if $\mathpzc{E}$ is elementary). Then $(\mathpzc{E},\mathcal{Q})$ is an exact category of Grothendieck type.
  \end{lem}
  \begin{proof}
  Let $(\mathpzc{E},\mathcal{Q}_{\lambda})$ denote the $\lambda$-pure exact structure on $\mathpzc{E}$. Since $\mathpzc{E}$ is weakly elementary, and split monomorphisms are exact in any exact category, the identity functor $(\mathpzc{E},\mathcal{Q}_{\lambda})\rightarrow(\mathpzc{E},\mathcal{Q})$ is exact. Thus a generator for $(\mathpzc{E},\mathcal{Q}_{\lambda})$ is still a generator for $(\mathpzc{E},\mathcal{Q})$. Similarly, the fact that  $(\mathpzc{E},\mathcal{Q}_{\lambda})$ is deconstructible in intself implies that $(\mathpzc{E},\mathcal{Q})$ is. The only missing axiom is that transfinite compositions of admissible monomorphisms are admissible monomorphisms, but we have in fact assumed something stronger.
  \end{proof}
  As we will explain later, the next corollary is essentially a generalisation of \cite{gillespie2016derived} Corollary 5.12 (and the part of Proposition 5.6 which says that the $G$-exact structure has enough injectives).
    \begin{cor}\label{cor:GexactGRothendieck}
  Let $\mathpzc{E}$ be a strongly efficient exact category, with $\mathpzc{E}$ locally $\lambda$-presentable. Suppose that $\mathpzc{E}$ has a set of generators $\{G_{i}\}_{i\in\mathcal{I}}$ with each $G_{i}$ tiny. Then the $G$-exact structure on $\mathpzc{E}$ is of Grothendieck type, where $G=\bigoplus_{i\in\mathcal{I}}G_{i}$.
  \end{cor}
  \begin{proof}
  Since the $G_{i}$ are tiny, they are in particular $\lambda$-presentable. It follows that $\lambda$-pure exact sequences are exact in the $G$-exact structure. Also since the $G_{i}$ are tiny, the $G$-exact structure is elementary.
  \end{proof}
  This next result then generalises \cite{estrada2017pure} Lemma 3.6. 
  \begin{cor}
  Let $\mathpzc{E}$ be a locally $\lambda$-presentable additive category equipped with a closed monoidal structure $(\otimes,\underline{Hom})$. Suppose that $(\mathpzc{E},\mathcal{Q})$ is an exact category and that for any filtered category $\mathcal{I}$, the functor $\textrm{lim}_{\rightarrow}:\mathpzc{Fun}(\mathcal{I},\mathpzc{E})$ is exact. Then the $\otimes$-pure exact structure on $(\mathpzc{E},\mathcal{Q}_{\otimes})$ is of Grothendieck type.  
  \end{cor}
  \begin{proof}
  The functor $X\otimes(-)$ commutes with colimits, and sends  split exact sequences to split exact sequences. Thus it sends $\lambda$-pure exact sequences to $\lambda$-pure exact sequences.
  \end{proof}
  
\section{The Split Exact Structure}

We conclude with an example of a category which has no small generating set whatsoever but is still weakly $\textbf{AdMon}$-elementary.
Let $\mathpzc{E}$ be an additive category and endow it with the split exact structure. Let us prove the following useful lemma. Its proof, as well as the proof of Corollary \ref{trans} later, is a minor generalisation of \cite{vst2012exact} Proposition 5.7 (the proof is essentially the same, and is also similar to the corresponding statement for abelian categories - Lemma 6.2 in \cite{hovey}). 
\begin{lem}\label{transgeneral}
Let $\mathpzc{E}$ be an exact category. Let $\lambda$ be an ordinal and 
\begin{displaymath}
\xymatrix{
0\ar[r]& Y\ar[r]^{f} &Z\ar[r]^{p} &X\ar[r] & 0
}
\end{displaymath}
be a short exact sequence in $Fun(\lambda,\mathpzc{E})$ with $X$ in $Fun^{cocont}_{adm}(\lambda,\mathpzc{E})$. Suppose that $$Ext^{1}(\textrm{Coker}(X_{\alpha}\rightarrow X_{\alpha+1}),Y_{\alpha+1})=0$$
 for any $\alpha\le\lambda$ such that $\alpha+1\le\lambda$. Then
$$0\rightarrow lim_{\rightarrow}Y\rightarrow lim_{\rightarrow}Z\rightarrow lim_{\rightarrow} X\rightarrow 0$$
is a split exact sequence.
\end{lem}

\begin{proof}
For $\alpha\le\beta$ in $\lambda$, denote the corresponding maps in the diagrams by $x_{\alpha,\beta}:X_{\alpha}\rightarrow X_{\beta}$, $z_{\alpha,\beta}:Z_{\alpha}\rightarrow Z_{\beta}$, and $y_{\alpha,\beta}:Y_{\alpha}\rightarrow Y_{\beta}$. The proof is by transfinite induction. If $\lambda=0$ or $\lambda$ is a successor ordinal then the claim is clear. Now let $\lambda=lim_{\rightarrow_{\alpha<\lambda}}\alpha$ be a limit ordinal. Suppose the claim has been proven for all $\alpha<\lambda$. 
Let $\alpha<\lambda$. Since $\textrm{Ext}^{1}(X_{\alpha},Y_{\alpha})=0$ there is some splitting $t_{\alpha}:X_{\alpha}\rightarrow Z_{\alpha}$ of $p_{\alpha}$. We are going to modify the $t_{\alpha}$ to $s_{\alpha}$ so that they are compatible, i.e. $z_{\alpha,\gamma}s_{\alpha}=s_{\gamma}x_{\alpha,\gamma}$ for all $\alpha\le\gamma$. We will do this by transfinite induction.

Set $s_{0}=t_{0}$. If $\gamma$ is a limit ordinal let $s_{\gamma}:\textrm{colim}_{\alpha<\gamma}X_{\alpha}\rightarrow Z_{\gamma}$ be the map whose restriction to $Z_{\alpha}$ is $z_{\alpha,\gamma}s_{\alpha}$, where $z_{\alpha,\gamma}:Z_{\alpha}\rightarrow Z_{\gamma}$ is the transfinite composition of the continuous functor $\gamma\rightarrow\mathpzc{E}$, $\beta\mapsto Z_{\beta}$. Then by construction $z_{\alpha,\gamma}s_{\alpha}=s_{\gamma}x_{\alpha,\gamma}$. Now for the successor case $\gamma=\alpha+1$. Suppose we have constructed $s_{\alpha}$. Let us construct $s_{\alpha+1}$. We have
\begin{align*}
p_{\alpha+1}(z_{\alpha,\alpha+1}s_{\alpha}-t_{\alpha+1}x_{\alpha,\alpha+1})&=x_{\alpha,\alpha+1}\circ p_{\alpha}\circ s_{\alpha}-p_{\alpha+1}\circ t_{\alpha+1}\circ x_{\alpha,\alpha+1}\\
&=x_{\alpha,\alpha+1}-x_{\alpha,\alpha+1}\\
&=0
\end{align*}
Therefore there is a map $h:X_{\alpha}\rightarrow Y_{\alpha+1}$ such that $f_{\alpha+1}h=z_{\alpha,\alpha+1}s_{\alpha}-t_{\alpha+1}x_{\alpha,\alpha+1}$. Since $x_{\alpha,\alpha+1}:X_{\alpha}\rightarrow X_{\alpha+1}$ is an admissible monic and $\textrm{Ext}^{1}(\textrm{Coker}(x_{\alpha,\alpha+1}),Y_{\alpha+1})=0$, the long exact $\textrm{Ext}$ sequence implies that there is a map $g:X_{\alpha+1}\rightarrow Y_{\alpha+1}$ such that $gx_{\alpha,\alpha+1}=h$. Let $s_{\alpha+1}=t_{\alpha+1}+f_{\alpha+1}g$. Then clearly $s_{\alpha+1}$ is a section of $p_{\alpha+1}$. Moreover
\begin{align*}
s_{\alpha+1}x_{\alpha,\alpha+1} &=t_{\alpha+1}\circ x_{\alpha,\alpha+1}+f_{\alpha+1} g\circ x_{\alpha,\alpha+1}\\
&=z_{\alpha,\alpha+1}s_{\alpha}-f_{\alpha+1}\circ h+ f_{\alpha+1}\circ h\\
&=z_{\alpha,\alpha+1}s_{\alpha}
\end{align*}
as required.
\end{proof}

\begin{prop}
If $\mathpzc{E}$ is an additive category with kernels and countable coproducts then the split exact structure is weakly $(\aleph_{0};\textbf{AdMon})$-elementary.
\end{prop}

\begin{proof}
It has kernels by assumption. It trivially has enough projectives since every object is projective. The fact $(\aleph_{0};\textbf{AdMon})$-colimits exist and are exact follows from Lemma \ref{transgeneral}.
\end{proof}

For $\mathpzc{E}=\mathpzc{Ab}$ this category has no small generating set by \cite{christensen2002quillen} Section 5.4. This can be generalised to other exact structures defined by projective classes as discussed in the same paper.

\chapter{Model Structures on Exact Categories}\label{chmodelexact} 

In this chapter we discuss model structures on categories of chain complexes in exact categories. We give very general conditions under which unbounded complexes are equipped with the projective model structure. We also investigate when such a model structure is monoidal and satisfies the monoid axiom, which will be crucial for studying homotopical algebra in exact categories in the next section. Finally we generalise the Dold-Kan correspondence. 
\section{Cotorsion Pairs}
In \cite{hovey}, Hovey introduced the notion of a \textbf{compatible model structure} on an abelian category. He showed that there is a 1-1 correspondence between such model structures and purely homological data now known as \textbf{Hovey triples}. Gillespie noticed that this correspondence generalises to weakly idempotent complete exact categories, and explains in \cite{gillespie} how to adapt Hovey's proofs. Moreover in \cite{vst2012exact}, \v{S}t'ov\'{i}\v{c}ek  explained the relationship between cotorsion pairs and compatible weak factorisation systems. In the next two subsections we will recall some of Hovey's/ Gillespie's/\v{S}t'ov\'{i}\v{c}ek's  results both for the reader's convenience and because we will need many of the individual propositions later anyway. For basic facts about weak factorisation systems and model structures in general see Appendix \ref{modelapp}.

Let $\mathcal{S}$ be a class of objects in an exact category $\mathpzc{E}$. We shall denote by $\;^{\perp}\mathcal{S}$ the class of all objects $X$ such that $\textrm{Ext}^{1}(X,S)=0$ for all $S\in\mathcal{S}$, and by $\mathcal{S}^{\perp}$ the class of all objects $X$ such that $\textrm{Ext}^{1}(S,X)=0$ for all $S\in\mathcal{S}$. The class $\mathcal{S}^{\perp}$ is called the class of $\mathcal{S}$-\textbf{injectives}, and the class $\;^{\perp}\mathcal{S}$ is called the class of $\mathcal{S}$-\textbf{projectives}. For a class of objects $\mathcal{P}$ in an exact category $\mathpzc{E}$, let $\textbf{AdMon}_{\mathcal{P}}$ denote the class of admissible monomorphisms whose cokernels is in $\mathcal{P}$.

\begin{cor}\label{trans}
Let $\mathpzc{E}$ be an exact category. Let $\mathcal{S}$ be a class of objects in $\mathpzc{E}$, and let $\mathfrak{L}=\;^{\perp}\mathcal{S}$. Then $\mathfrak{L}$ is closed under retracts and finite extensions. If $\mathpzc{E}$ is cocomplete and weakly $(\lambda,\textbf{AdMon}_{\mathfrak{L}})$-elementary then $\mathfrak{L}$ is closed under $\lambda$-transfinite extensions.
\end{cor}

\begin{proof}
First we show  that $\mathfrak{L}$ is closed under retracts. Note that it is sufficient to show that for a given $Y\in\mathpzc{E}$, the collection of objects $X$ such that $\textrm{Ext}^{1}(X,Y)=0$ is closed under retracts. Let $X$ be such that $\textrm{Ext}^{1}(X,Y)=0$ and let $X'$ be a retract of $X$. Then $X'$ is a summand of $X$, and so $\textrm{Ext}^{1}(X',Y)=0$.

Let us show that $\mathfrak{L}$ is closed under transfinite extensions. Again it is sufficient to show that for any object $Y\in\mathpzc{E}$ the collection of all $X$ with $\textrm{Ext}^{1}(X,Y)=0$ is closed under transfinite extensions and retracts. Let $\lambda$ be an ordinal $X:\lambda\rightarrow\mathpzc{E}$ an object of $Fun^{cocont}_{adm}(\lambda,\mathpzc{E})$. Let
\begin{displaymath}
\xymatrix{
0\ar[r] & Y\ar[r]^{f} & N\ar[r]^{p} & lim_{\rightarrow}X\ar[r] & 0
}
\end{displaymath}
represent an element of $\textrm{Ext}^{1}(lim_{\rightarrow}X,Y)$. For each $\beta\in\lambda$, pull this short exact sequence back through the map $x_{\beta}:X_{\beta}\rightarrow lim_{\rightarrow}X$. For $\alpha\le\gamma$ in $\lambda$ we get a commutative diagram.
\begin{displaymath}
\xymatrix{
0\ar[r] & Y\ar[r]^{f} & N\ar[r]^{p} & X_{\beta}\ar[r] & 0\\
0\ar[r] & Y\ar[r]^{f_{\gamma}}\ar@{=}[u] & N_{\gamma}\ar[r]^{p_{\gamma}}\ar[u]^{k_{\gamma}} & X_{\gamma}\ar[u]^{j_{\gamma}}\ar[r] & 0\\
0\ar[r] & Y\ar@{=}[u]\ar[r]^{f_{\alpha}} & N_{\alpha}\ar[u]^{k_{\alpha,\gamma}}\ar[r]^{p_{\alpha}} & X_{\alpha}\ar[r]\ar[u]^{j_{\alpha,\gamma}} & 0
}
\end{displaymath}
Each of the sequences
$$0\rightarrow Y\rightarrow N_{\gamma}\rightarrow X_{\gamma}\rightarrow 0$$
is exact. Moreover for each $\alpha+1\le\lambda$ the map $k_{\alpha,\alpha+1}$ has the same cokernel as $j_{\alpha,\alpha+1}$. In particular it is in $\textbf{AdMon}_{\mathfrak{L}}$. It suffices to show that the map $lim_{\rightarrow}N_{\alpha}\rightarrow N$ is an isomorphism. Then we may apply Lemma \ref{transgeneral}. Indeed by assumption we have a commutative diagram of exact sequences
\begin{displaymath}
\xymatrix{
0\ar[r] & Y\ar[r]\ar[d] & lim_{\rightarrow}N_{\alpha}\ar[r]\ar[d]& lim_{\rightarrow}X_{\alpha}\ar[r]\ar[d] & 0\\
0\ar[r] & Y\ar[r] &  N\ar[r] & X\ar[r] & 0
}
\end{displaymath}
The first and last vertical maps are isomorphisms, so the middle one is as well. 
\end{proof}
Let us now define cotorsion pairs, and discuss their relation with weak factorisation systems. We shall largely follow the notation of \cite{vst2012exact}.

\begin{defn}
Let $\mathpzc{E}$ be an exact category. A \textbf{cotorsion pair} on $\mathpzc{E}$ is a pair of families of objects $(\mathfrak{L},\mathfrak{R})$ of $\mathpzc{E}$ such that $\mathfrak{L}=\;^{\perp}\mathfrak{R}$ and $\mathfrak{R}=\mathfrak{L}^{\perp}$.
\end{defn}

\begin{defn}
A cotorsion pair $(\mathfrak{L},\mathfrak{R})$ is said to have \textbf{enough (functorial) projectives} if for every $X\in\mathpzc{E}$ there is an admissible epic $p:Y\rightarrow X$, (functorial in $X$), such that $Y\in\mathfrak{L}$ and $\textrm{Ker}(p)\in\mathfrak{R}$. It is said to have \textbf{enough (functorial) injectives} if, for every $X$, there is an admissible monic $i:X\rightarrow Z$, (functorial in $X$), such that  $Z\in\mathfrak{R}$ and $\textrm{Coker}(i)\in\mathfrak{L}$. A cotorsion pair is said to be \textbf{(functorially) complete} if it has enough (functorial) projectives and enough (functorial) injectives.
\end{defn}

\begin{example}\label{cotorsionproj}
Our main example is the projective cotorsion pair. Let $\mathpzc{E}$ be an exact category. Let $\textbf{Proj}(\mathpzc{E})$ denote the collection of projective objects of $\mathpzc{E}$. Then $(\textbf{Proj}(\mathpzc{E}),\textbf{Ob}(\mathpzc{E}))$ is clearly a cotorsion pair. Suppose that $\mathpzc{E}$ has enough (functorial) projectives. Then the cotorsion pair $(\textbf{Proj}(\mathpzc{E}),\textbf{Ob}(\mathpzc{E}))$ is trivially (functorially) complete.
\end{example}

\begin{notation}
Let $\mathpzc{E}$ be an exact category and $(\mathcal{L},\mathcal{R})$ a weak factorisation system on $\mathpzc{E}$. Denote by $
\textrm{Coker}\mathcal{L}$ the collection of objects $L$ such $L$ is a cokernel of some map in, $\mathcal{L}$ and by $\textrm{Ker}\mathcal{R}$ the collection of objects $R$ such that $R$ is the kernel of some map in $\mathcal{R}$.

Given classes of objects $\mathfrak{A},\mathfrak{B}$ in $\mathpzc{E}$, we denote by $\textrm{Infl}(\mathfrak{A})$ the class of admissible monics with cokernel in $\mathfrak{A}$ and by $\textrm{Defl}(\mathfrak{B})$ the class of admissible epics with kernel in $\mathfrak{B}$.
\end{notation}

\begin{defn}
Let $\mathpzc{E}$ be an exact category. A  weak factorisation system $(\mathcal{L},\mathcal{R})$ on $\mathpzc{E}$ is said to be \textbf{compatible} if
\begin{enumerate}
\item
$f\in\mathcal{L}$ if and only if $f$ is an admissible monic and $0\rightarrow\textrm{Coker}(f)$ belongs to $\mathcal{L}$.
\item
$f\in\mathcal{R}$ if and only if $f$ is an admissible epic and $\textrm{Ker}(f)\rightarrow 0$ belongs to $\mathcal{R}$.
\end{enumerate}
\end{defn}

The following result is Theorem 5.13 in \cite{vst2012exact}.

\begin{thm}
Let $\mathpzc{E}$ be an exact category. Then
$$(\mathcal{L},\mathcal{R})\mapsto(\textrm{Coker}\mathcal{L},\textrm{Ker}\mathpzc{R})\textrm{ and }(\mathfrak{A},\mathfrak{B})\mapsto(\textrm{Infl}(\mathfrak{A}),\textrm{Defl}(\mathfrak{B}))$$
define mutually inverse bijective mappings between compatible weak factorisation systems and complete cotorsion pairs. The bijections restrict to mutually inverse mappings between compatible functorial weak factorisation systems and functorially complete cotorsion pairs.
\end{thm}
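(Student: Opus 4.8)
The plan is to verify three things: that the first assignment sends a compatible weak factorisation system to a complete cotorsion pair, that the second sends a complete cotorsion pair to a compatible weak factorisation system, and that the two assignments are mutually inverse. The key dictionary, which I would establish first, is that for a compatible weak factorisation system $(\mathcal{L},\mathcal{R})$ one has $\textrm{Coker}\mathcal{L}=\{A:(0\rightarrow A)\in\mathcal{L}\}$ and $\textrm{Ker}\mathcal{R}=\{B:(B\rightarrow 0)\in\mathcal{R}\}$: by the compatibility conditions every map of $\mathcal{L}$ is an admissible monic whose cokernel $A$ satisfies $(0\rightarrow A)\in\mathcal{L}$, and conversely $A=\textrm{Coker}(0\rightarrow A)$. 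With this in hand mutual inverseness becomes almost formal, and is really the reason the notion of compatibility is defined as it is.

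For the first assignment, write $\mathfrak{A}=\textrm{Coker}\mathcal{L}$ and $\mathfrak{B}=\textrm{Ker}\mathcal{R}$. To see $\mathfrak{A}\subseteq{}^{\perp}\mathfrak{B}$ and $\mathfrak{B}\subseteq\mathfrak{A}^{\perp}$, I would take $A\in\mathfrak{A}$, $B\in\mathfrak{B}$ and a short exact sequence $0\rightarrow B\rightarrow E\xrightarrow{q} A\rightarrow 0$; by compatibility $q\in\mathcal{R}$ and $(0\rightarrow A)\in\mathcal{L}$, so lifting $(0\rightarrow A)$ against $q$ produces a section of $q$, whence $\textrm{Ext}^{1}(A,B)=0$ by Proposition \ref{bicart}. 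For the reverse inclusions, given $A\in{}^{\perp}\mathfrak{B}$ I would factor $(0\rightarrow A)$ as $(0\rightarrow W)\in\mathcal{L}$ followed by $(W\rightarrow A)\in\mathcal{R}$; using $\textrm{Ext}^{1}(A,\textrm{Ker}(W\rightarrow A))=0$ this exhibits $A$ as a retract of $W\in\mathfrak{A}$, and $\mathfrak{A}$ is retract-closed by Lemma \ref{trans}. Completeness of the resulting cotorsion pair is then read off the factorisations of $0\rightarrow X$ and $X\rightarrow 0$, which are exactly the enough-projectives and enough-injectives sequences.

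For the second assignment, set $\mathcal{L}=\textrm{Infl}(\mathfrak{A})$ and $\mathcal{R}=\textrm{Defl}(\mathfrak{B})$. The heart is the lifting lemma: an admissible monic $i$ with cokernel $C\in\mathfrak{A}$ lifts against an admissible epic $p$ with kernel $K\in\mathfrak{B}$. I would form the pullback $Q=V\times_{X}Y$, so that $Q\rightarrow V$ is an admissible epic with kernel $K$ by Proposition \ref{pullbackkernel}; the square gives a partial section over the domain of $i$, and the obstruction to extending it to a section of $Q\rightarrow V$ lies in $\textrm{Ext}^{1}(C,K)$ via the long exact sequence of Theorem \ref{longext}, which vanishes since $\mathfrak{A}={}^{\perp}\mathfrak{B}$. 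Factorisation of an arbitrary $f\colon X\rightarrow Y$ is the Salce-type construction: take an enough-projectives sequence $0\rightarrow B\rightarrow A\rightarrow Y\rightarrow 0$, form the admissible epic $X\oplus A\twoheadrightarrow Y$, take its kernel, apply enough injectives to that kernel, and push out; the bottom row of the pushout is an $\mathcal{R}$-map and the induced $X\rightarrow W$ is an $\mathcal{L}$-map because $\mathfrak{A}$ is extension-closed (Lemma \ref{trans}). Retract-closure of $\mathcal{L}$ and $\mathcal{R}$, together with the standard retract argument, upgrades lifting and factorisation to the full weak factorisation system axioms, and compatibility is immediate from the definitions of $\textrm{Infl}$ and $\textrm{Defl}$.

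Finally, mutual inverseness: starting from a compatible $(\mathcal{L},\mathcal{R})$, the dictionary gives $\textrm{Infl}(\textrm{Coker}\mathcal{L})=\mathcal{L}$ and $\textrm{Defl}(\textrm{Ker}\mathcal{R})=\mathcal{R}$ directly from the two compatibility clauses; starting from $(\mathfrak{A},\mathfrak{B})$, one has $\textrm{Coker}(\textrm{Infl}(\mathfrak{A}))=\mathfrak{A}$ and $\textrm{Ker}(\textrm{Defl}(\mathfrak{B}))=\mathfrak{B}$ since $A=\textrm{Coker}(0\rightarrow A)$ and $(0\rightarrow A)\in\textrm{Infl}(\mathfrak{A})$. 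The functorial statement follows because the Salce factorisation is functorial as soon as the chosen enough-projectives and enough-injectives sequences are. The main obstacle I anticipate is the lifting lemma and the factorisation in the genuinely exact (non-abelian) setting: both rely on pullback/pushout stability of admissible morphisms, the Obscure Axiom (Proposition \ref{obscure}), and the closure of $\mathfrak{A}$ and $\mathfrak{B}$ under extensions and retracts (Lemma \ref{trans}), none of which are automatic as they would be in an abelian category.
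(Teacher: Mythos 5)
The paper does not prove this theorem at all: it simply cites it as Theorem 5.13 of \cite{vst2012exact}. Your argument reconstructs the standard proof from that source (going back to Hovey's Lemmas 5.5--5.7 and Salce's lemma), and in outline it is correct: the dictionary $\textrm{Coker}\,\mathcal{L}=\{A:(0\rightarrow A)\in\mathcal{L}\}$, the splitting argument for $\mathfrak{A}\subseteq{}^{\perp}\mathfrak{B}$, the retract argument for the reverse inclusion, the pullback-plus-$\textrm{Ext}^{1}(C,K)$ lifting lemma, and the Salce factorisation are exactly the right ingredients, and the mutual-inverse check is the formal consequence of compatibility that you say it is.

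Two points deserve repair. First, your appeal to Lemma \ref{trans} for retract-closure of $\mathfrak{A}=\textrm{Coker}\,\mathcal{L}$ is circular at that stage of the argument: Lemma \ref{trans} applies to classes of the form ${}^{\perp}\mathpzc{S}$, and whether $\textrm{Coker}\,\mathcal{L}$ has that form is precisely what you are proving. The correct justification is Proposition \ref{liftingwf}: $\mathcal{L}={}^{\llp}\mathcal{R}$ is closed under retracts of morphisms, and if $A$ is a retract of $W$ then $0\rightarrow A$ is a retract of $0\rightarrow W$ in the arrow category, so $(0\rightarrow A)\in\mathcal{L}$. (Later, in the Salce factorisation, the appeal to Lemma \ref{trans} is legitimate because there $\mathfrak{A}={}^{\perp}\mathfrak{B}$ by hypothesis.) Second, in the lifting lemma the vanishing of $\textrm{Ext}^{1}(C,K)$ is used twice and you should say so: once to show the extension $0\rightarrow K\rightarrow Q\rightarrow V\rightarrow 0$ splits (via injectivity of $\textrm{Ext}^{1}(V,K)\rightarrow\textrm{Ext}^{1}(U,K)$, since the pullback along $i$ splits by the induced map $U\rightarrow Q$), and once to correct an arbitrary section to one compatible with that map (via surjectivity of $\textrm{Hom}(V,K)\rightarrow\textrm{Hom}(U,K)$, the difference of two sections over $U$ landing in $K$). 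Both uses come from the same long exact sequence of Theorem \ref{longext}, so your one-line summary is defensible but compressed. Throughout, weak idempotent completeness (the paper's standing assumption) is what guarantees that a sequence with a section is split, that $j:U\rightarrow Q$ is an admissible monic via the Obscure Axiom, and that retracts of admissible monics and epics are again admissible; you flag this dependence correctly at the end.
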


\subsection{Compatible Model Structures}

Having described the bijection between cotorsion pairs and compatible weak factorisation systems, we now introduce compatible model structures, and explain how they too correspond to purely homological data. Remember that we do not assume our model categories are complete or cocomplete.

Let $(\mathcal{C},\mathcal{F},\mathcal{W})$ be a model structure on an additive category $\mathpzc{E}$.

\begin{defn}
Let $\mathpzc{E}$ be an exact category. Let $(\mathcal{C},\mathcal{F},\mathcal{W})$ be a model structure on $\mathpzc{E}$. The model structure is said to be \textbf{compatible} if both $(\mathcal{C}\cap\mathcal{W},\mathcal{F})$ and $(\mathcal{C},\mathcal{F}\cap\mathcal{W})$ are compatible weak factorisation systems.
\end{defn}

Let us now define the corresponding homological data. As for abelian categories, we will call a subcategory $\mathpzc{D}$ of an exact category $\mathpzc{E}$ \textbf{thick} if whenever
$$0\rightarrow A\rightarrow B\rightarrow C\rightarrow 0$$
is a short exact sequence and two of the objects are in $\mathpzc{D}$, then so is the third.

\begin{defn}
A \textbf{Hovey triple} on an exact category $\mathpzc{E}$ is a triple $(\mathfrak{C},\mathfrak{W},\mathfrak{F})$ of collections of objects of $\mathpzc{E}$ such that the full subcategory on $\mathfrak{W}$ is closed under retracts and thick, and that both $(\mathfrak{C},\mathfrak{F}\cap\mathfrak{W})$ and $(\mathfrak{C}\cap\mathfrak{W},\mathfrak{F})$ are complete cotorsion pairs. 
\end{defn}

We then have the following theorem (Theorem 6.9 in \cite{vst2012exact}). It is originally due to \cite{hovey} in the abelian case and \cite{gillespie} Theorem 3.3 in the more general exact case.

\begin{thm}\label{weakmon}
Let $\mathpzc{E}$ be a weakly idempotent complete exact category. Then there is a bijection between Hovey triples and compatible model structures. The correspondence assigns to a Hovey triple $(\mathfrak{C},\mathfrak{W},\mathfrak{F})$ the model structure $(\mathcal{C},\mathcal{W},\mathcal{F})$ such that
\begin{enumerate}
\item
$\mathcal{C}=\textrm{Infl}(\mathfrak{C})$
\item
$\mathcal{F}=\textrm{Defl}(\mathfrak{F})$
\item
$\mathcal{W}$ consists of morphisms of the form $p\circ i$ where $i\in\textrm{Infl}(\mathfrak{W})$ and $p\in\textrm{Defl}(\mathfrak{W})$.
\end{enumerate}
\end{thm}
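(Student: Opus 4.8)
The plan is to deduce the theorem from the preceding bijection between complete cotorsion pairs and compatible weak factorisation systems (Theorem 5.13 of \cite{vst2012exact}), following the strategy of Hovey and Gillespie. Starting from a Hovey triple $(\mathfrak{C},\mathfrak{W},\mathfrak{F})$, I would apply that bijection to each of the two complete cotorsion pairs $(\mathfrak{C}\cap\mathfrak{W},\mathfrak{F})$ and $(\mathfrak{C},\mathfrak{F}\cap\mathfrak{W})$, obtaining two compatible weak factorisation systems
$$(\textrm{Infl}(\mathfrak{C}\cap\mathfrak{W}),\textrm{Defl}(\mathfrak{F}))\ \text{and}\ (\textrm{Infl}(\mathfrak{C}),\textrm{Defl}(\mathfrak{F}\cap\mathfrak{W})).$$
Setting $\mathcal{C}=\textrm{Infl}(\mathfrak{C})$ and $\mathcal{F}=\textrm{Defl}(\mathfrak{F})$, and defining $\mathcal{W}$ as in the statement, these two systems supply the factorisation and lifting data; since a model structure in this axiomatisation is exactly a triple $(\mathcal{C},\mathcal{W},\mathcal{F})$ with $\mathcal{W}$ closed under two-out-of-three and retracts and with $(\mathcal{C}\cap\mathcal{W},\mathcal{F})$, $(\mathcal{C},\mathcal{F}\cap\mathcal{W})$ both weak factorisation systems, everything reduces to identifying the overlaps and controlling $\mathcal{W}$.

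The first task is to verify the identities $\mathcal{C}\cap\mathcal{W}=\textrm{Infl}(\mathfrak{C}\cap\mathfrak{W})$ and $\mathcal{F}\cap\mathcal{W}=\textrm{Defl}(\mathfrak{F}\cap\mathfrak{W})$, which simultaneously shows that the two systems above are the trivial-cofibration/fibration and cofibration/trivial-fibration systems of the alleged model structure. One inclusion is immediate: an admissible monic with cokernel in $\mathfrak{C}\cap\mathfrak{W}$ is a cofibration and lies in $\mathcal{W}$ by taking $i=f$, $p=\mathrm{id}$. For the converse I would factor a given $f\in\mathcal{C}\cap\mathcal{W}$ through the cotorsion-pair factorisation as a map in $\textrm{Infl}(\mathfrak{C}\cap\mathfrak{W})$ followed by one in $\textrm{Defl}(\mathfrak{F})$, exhibit $f$ as a retract of the former via a lifting argument, and then transport membership along the retract using the closure properties of $\mathfrak{W}$ (thickness and closure under retracts) together with the long exact $\textrm{Ext}$ sequence (Theorem \ref{longext}) and the Obscure Axiom (Proposition \ref{obscure}).

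The heart of the proof, and the main obstacle, is showing that $\mathcal{W}$ satisfies two-out-of-three and is retract-closed. The cleanest route is to establish an intrinsic characterisation independent of the chosen factorisation: namely that $f\in\mathcal{W}$ if and only if, in a factorisation $f=p\circ i$ with $i$ a cofibration and $p$ a trivial fibration, the cokernel of $i$ lies in $\mathfrak{W}$. Proving this characterisation well-defined is where completeness of both cotorsion pairs and a diagram chase using Proposition \ref{obscure} are needed, and it is here that the work concentrates. Once it is in place, two-out-of-three becomes a consequence of thickness of $\mathfrak{W}$ applied to the short exact sequences relating the cokernels in a composite, while retract-closure of $\mathcal{W}$ follows from retract-closure of $\mathfrak{W}$ (Lemma \ref{trans} gives the relevant closure properties on the projective side).

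Finally, for the bijection I would construct the inverse map sending a compatible model structure $(\mathcal{C},\mathcal{W},\mathcal{F})$ to the triple $(\textrm{Coker}\,\mathcal{C},\,\mathfrak{W},\,\textrm{Ker}\,\mathcal{F})$, where $\mathfrak{W}$ is the class of objects $W$ for which $0\rightarrow W$ (equivalently $W\rightarrow 0$) is a weak equivalence. Applying the WFS--cotorsion bijection to the two constituent compatible weak factorisation systems $(\mathcal{C}\cap\mathcal{W},\mathcal{F})$ and $(\mathcal{C},\mathcal{F}\cap\mathcal{W})$ of the model structure immediately yields that $(\textrm{Coker}\,\mathcal{C},\textrm{Ker}\,\mathcal{F})$-type pairs are complete cotorsion pairs, and the compatibility of the model structure forces $\mathfrak{W}$ to be thick and retract-closed. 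The two assignments are then mutually inverse: this is formal once the characterisation of $\mathcal{W}$ from the previous paragraph is available, since it shows the weak equivalences are recovered from the three classes of objects precisely as in condition (3).
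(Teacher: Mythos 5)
The paper does not actually prove this theorem: it is quoted from \cite{vst2012exact} (Theorem 6.9 there), originally due to Hovey in the abelian case and Gillespie in the exact case. Your outline reproduces the architecture of that proof --- the two complete cotorsion pairs yield two compatible weak factorisation systems, one identifies the overlaps $\mathcal{C}\cap\mathcal{W}$ and $\mathcal{F}\cap\mathcal{W}$, one proves an intrinsic characterisation of $\mathcal{W}$ to get two-out-of-three and retract-closure, and one writes down the inverse assignment --- so the plan is the standard one. But as written it has a circular step and defers the entire technical content.

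The circular step: to show $\mathcal{C}\cap\mathcal{W}\subseteq\textrm{Infl}(\mathfrak{C}\cap\mathfrak{W})$ you propose factoring $f$ as a map in $\textrm{Infl}(\mathfrak{C}\cap\mathfrak{W})$ followed by one in $\textrm{Defl}(\mathfrak{F})$ and exhibiting $f$ as a retract of the first factor ``via a lifting argument.'' That lift requires $f$ to have the left lifting property against $\textrm{Defl}(\mathfrak{F})$, and since $f\in\textrm{Infl}(\mathfrak{C})$ this is governed by the vanishing of $\textrm{Ext}^{1}(\textrm{Coker}(f),F)$ for $F\in\mathfrak{F}$, i.e.\ by $\textrm{Coker}(f)\in{}^{\perp}\mathfrak{F}=\mathfrak{C}\cap\mathfrak{W}$ --- which is exactly what you are trying to prove. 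The non-circular argument is a diagram chase: write $f=p\circ i$ with $\textrm{Coker}(i)=W_{1}\in\mathfrak{W}$ and $\textrm{Ker}(p)=W_{2}\in\mathfrak{W}$ as in the definition of $\mathcal{W}$; since $f$ is an admissible monic, the Obscure Axiom (Proposition \ref{obscure}) produces a short exact sequence $0\rightarrow W_{2}\rightarrow W_{1}\rightarrow\textrm{Coker}(f)\rightarrow 0$, and thickness of $\mathfrak{W}$ finishes. (Alternatively this inclusion is immediate from your intrinsic characterisation applied to the factorisation $f=\mathrm{id}\circ f$, but then the characterisation must be established first, not afterwards.) The larger gap is that the ``only if'' direction of that characterisation --- if $f\in\mathcal{W}$ and $f=p\circ i$ with $i$ a cofibration and $p$ a trivial fibration then $\textrm{Coker}(i)\in\mathfrak{W}$ --- is asserted, not proved. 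This is where essentially all of the difficulty of the theorem lives: it requires comparing two unrelated factorisations of the same map through a chain of pushout/pullback lemmas and repeated use of thickness (Hovey's Lemmas 5.4--5.8, or \S 6 of \cite{vst2012exact}), and without it neither two-out-of-three nor retract-closure of $\mathcal{W}$, nor the claim that the two assignments are mutually inverse, is established. As it stands the proposal is an accurate table of contents for the known proof rather than a proof.
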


Before we move on let us mention a more general notion than compatible model structures. We will need it when we consider the projective model structure on $ Ch_{\ge0}(\mathpzc{E})$.

\begin{defn}
Let $\mathpzc{E}$ be an exact category. A model structure $(\mathcal{C},\mathcal{F},\mathcal{W})$ on $\mathpzc{E}$ is said to be \textbf{left pseudo-compatible} if there are classes of objects $\mathfrak{C}$ and $\mathfrak{W}$  such that
\begin{enumerate}
\item
The full subcategory on $\mathfrak{W}$ is thick.
\item
 A map $f$ is in $\mathcal{C}$ (resp. $\mathcal{C}\cap\mathcal{W}$) if and only if it is an admissible monic with cokernel in $\mathfrak{C}$ (resp. $\mathfrak{C}\cap\mathfrak{W}$). 
 \item
 An admissible monic is in $\mathcal{W}$ if and only if its cokernel is in $\mathfrak{W}$.
 \end{enumerate}
 As before $\mathfrak{C}$/ $\mathfrak{W}$ /$\mathfrak{C}\cap\mathfrak{W}$ are called the \textbf{cofibrant }/\textbf{trivial}/  \textbf{trivially cofibrant} objects. The pair $(\mathfrak{C},\mathfrak{W})$ will be called the \textbf{left homological Waldhausen pair} of the model structure. Dually one defines \textbf{right pesudo-compatible} model structures and  \textbf{right homological Waldhausen pairs}
 \end{defn}

The terminology comes from the notion of a Waldhausen category, in which classes of weak equivalences and cofibrations are specified. Clearly any compatible model structure is left pseudo-compatible.

\subsection{Small Cotorsion Pairs and Cofibrant Generation}

When working with model categories, it is computationally convenient that they be generated by suitably compact objects (see Appendix \ref{modelapp} for exactly what we mean here). In this section, we study what conditions on the cotorsion pairs defining a compatible model structure guarantee that the model structure is cofibrantly small. The material here is adapted from \cite{hovey} \S 6 to exact categories. Gillespie has also done this in \cite{gillespie2016derived} Section 5.2 for the $G$-exact structure on a Grothendieck abelian category.

\begin{defn}
Let $\mathpzc{E}$ be an exact category. A cotorsion pair $(\mathfrak{L},\mathfrak{R})$ on $\mathpzc{E}$ is said to be \textbf{cogenerated by a set} if there is a set of objects $\mathcal{G}$ in $\mathfrak{L}$ such that $X\in\mathfrak{R}$ if and only if $\textrm{Ext}^{1}(G,X)=0$ for all $G\in\mathcal{G}$.
\end{defn}

\begin{defn}
Suppose $\mathpzc{E}$ is an exact category. A cotorsion pair $(\mathfrak{L},\mathfrak{R})$ is said to be \textbf{small} if the following conditions hold
\begin{enumerate}
\item
$\mathfrak{L}$ contains a set of admissible generators of $\mathpzc{E}$.
\item
$(\mathfrak{L},\mathfrak{R})$ is cogenerated by a set $\mathcal{G}$.
\item
For each $G\in\mathcal{G}$ there is an admissible monic $i_{G}$ with cokernel $G$ such that, if $\textrm{Hom}_{\mathpzc{E}}(i_{G},X)$ is surjective for all $G\in\mathcal{G}$, then $X\in\mathfrak{R}$.
\end{enumerate}
The set of $i_{G}$ together with the maps $0\rightarrow U_{i}$ for some generating set $\{U_{i}\}$ contained in $\mathfrak{L}$ is called a set of \textbf{generating morphisms} of $(\mathfrak{L},\mathfrak{R})$.
\end{defn}

There is an easy example. 

\begin{example}\label{projsmall}
Recall the projective cotorsion pair $(\textbf{Proj}(\mathpzc{E}),\textbf{Ob}(\mathpzc{E}))$. Suppose that the category $\mathpzc{E}$ is projectively generated, with $\mathcal{P}$ a generating set of projectives. We claim that in this case the projective cotorsion pair is small. Indeed by assumption $\textbf{Proj}(\mathpzc{E})$ contains a set of generators $\mathcal{P}$. This set trivially cogenerates the cotorsion pair as well. The third condition is also trivial.
\end{example}

We now come to the connection between cofibrantly small model structures and cotorsion pairs. The proof of the following is a straightforward modification of \cite{hovey} Lemma 6.7 and Proposition 5.4 in \cite{gillespie2016derived}.

\begin{lem}\label{cofibgen}
Let $\mathpzc{E}$ be a weakly idempotent complete exact category together with a compatible weak factorisation system $(\mathcal{L},\mathcal{R})$ with corresponding cotorsion pair $(\mathfrak{L},\mathfrak{R})$. If the cotorsion pair is small with generating morphisms $I=\{0\rightarrow U_{i}\}\cup\{i_{G}\}$, then this weak factorisation system is cofibrantly small. If in addition the generating morphisms have $cell(I)$-presented domain, the weak factorisation system is cofibrantly generated.
\end{lem}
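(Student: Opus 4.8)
The plan is to identify the set of generating morphisms of the small cotorsion pair as a cofibrant generating set for the weak factorisation system, i.e. to prove that $\mathcal{R}$ coincides with the class of maps having the right lifting property with respect to $I$, where $I=\{i_{G}:G\in\mathcal{G}\}\cup\{0\to U_{i}\}$. Here each $i_{G}:K_{G}\rightarrowtail L_{G}$ is the admissible monic with cokernel $G$ provided by condition $(3)$ of smallness, and $\{U_{i}\}\subseteq\mathfrak{L}$ is the admissible generating set provided by condition $(1)$. First I would record that, since $(\mathcal{L},\mathcal{R})$ is the compatible weak factorisation system associated to $(\mathfrak{L},\mathfrak{R})$, we have $\mathcal{L}=\textrm{Infl}(\mathfrak{L})$ and $\mathcal{R}=\textrm{Defl}(\mathfrak{R})$. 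Each $i_{G}$ is an admissible monic with cokernel $G\in\mathcal{G}\subseteq\mathfrak{L}$, and each $0\to U_{i}$ is an admissible monic with cokernel $U_{i}\in\mathfrak{L}$, so $I\subseteq\mathcal{L}$; the mutual lifting property of a weak factorisation system then gives immediately that every map of $\mathcal{R}$ lifts against every map of $I$, which is the easy inclusion.

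For the reverse inclusion I would take $f:A\to B$ with the right lifting property against $I$ and proceed in two steps. Lifting against the maps $0\to U_{i}$ is exactly the assertion that $\textrm{Hom}(U_{i},A)\to\textrm{Hom}(U_{i},B)$ is surjective for every $i$, so Proposition \ref{reflectsurj} shows $f$ is an admissible epimorphism; let $r:R\rightarrowtail A$ be its kernel, giving a short exact sequence $0\to R\to A\to B\to 0$. It then suffices to show $R\in\mathfrak{R}$, for then $f\in\textrm{Defl}(\mathfrak{R})=\mathcal{R}$. By condition $(3)$ this reduces to checking that $\textrm{Hom}(i_{G},R)$ is surjective for each $G$. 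Given $\phi:K_{G}\to R$, I would form the commutative square with top edge $r\circ\phi:K_{G}\to A$, left edge $i_{G}$, right edge $f$, and bottom edge the zero map $L_{G}\to B$ (commutativity holds because $f\circ r=0$); a lift $\psi:L_{G}\to A$ then satisfies $f\circ\psi=0$, hence factors uniquely as $\psi=r\circ\bar{\psi}$ through the kernel, and $\bar{\psi}\circ i_{G}=\phi$ since $r$ is monic. This produces the required preimage, so $R\in\mathfrak{R}$ and $\mathcal{R}$ is precisely the class of $I$-injectives.

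With $\mathcal{R}$ identified as the right class cogenerated by the set $I$, I would conclude by feeding $I$ into the machinery of Appendix \ref{modelapp}: the domains occurring in $I$ are $0$ and the $K_{G}$, and the small object argument applied to these yields that $(\mathcal{L},\mathcal{R})$ is cofibrantly small. For the final clause, if in addition each $i_{G}$ has tiny domain $K_{G}$ (the domain $0$ being trivially tiny), then all generating morphisms have tiny domains, which by the corresponding criterion of Appendix \ref{modelapp} strengthens cofibrant smallness to cellularity.

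The hard part will be the reverse inclusion, and in particular the faithful exact-category translation of Hovey's lifting argument: one must use Proposition \ref{reflectsurj} to pass from surjectivity on the generators $U_{i}$ to admissibility of the epimorphism $f$, and then convert the lifting property against the $i_{G}$ into surjectivity of $\textrm{Hom}(i_{G},R)$ so that condition $(3)$ of smallness becomes applicable. By contrast, the smallness bookkeeping that makes the small object argument terminate and that matches the construction to the appendix's precise notions of \emph{cofibrantly small} and \emph{cellular} should be routine once the generating set $I$ and the identity $\mathcal{R}=\textrm{Defl}(\mathfrak{R})$ are in hand.
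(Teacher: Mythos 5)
Your proposal is correct and follows essentially the same route as the paper: identify the generating morphisms of the small cotorsion pair as the generating set, use Proposition \ref{reflectsurj} on the maps $0\to U_{i}$ to see that an $I$-injective map is an admissible epic, and then test its kernel against the $i_{G}$ to place it in $\mathfrak{R}$. The only cosmetic difference is that the paper deduces the surjectivity of $\textrm{Hom}(i_{G},\textrm{Ker}(f))$ by observing that $\textrm{Ker}(f)\to 0$ is a pullback of $f$ and invoking closure of right lifting classes under pullback, whereas you unpack that step into an explicit diagram chase through the kernel.
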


\subsection{Cotorsion Pairs on Monoidal Exact Categories}

In this section $(\mathpzc{E},\otimes,k)$ is a monoidal exact category.

We will now study sufficient conditions on cotorsion pairs defining a model category structure so that the resulting structure is monoidal. We generalise  the work of \cite{hovey} \S 7 to exact categories. In fact this has mostly been done in  \cite{vst2012exact}, apart from the monoid axiom.  The theorem below is proven for compatible model structures in \cite{vst2012exact}. However the proof for left pseudo-compatible model structures is formally identical. Note that \cite{vst2012exact} assumes that the monoidal unit $k$ is in fact already cofibrant, which makes the final assumption below automatic. This does not affect the proof.
\begin{thm}\label{exactmonoidal}
Let $\mathpzc{E}$ be a closed symmetric monoidal exact category. Suppose that $\mathpzc{E}$ has a left pseudo-compatible model structure with Waldhausen pair $(\mathfrak{C},\mathfrak{W})$. Suppose the following conditions are satisfied.
\begin{enumerate}
\item
Every cofibration is pure.
\item
If $X,Y\in\mathfrak{C}$ then $X\otimes Y\in\mathfrak{C}$.
\item
If $X,Y\in\mathfrak{C}$ and one of them is in $\mathfrak{W}$, then $X\otimes Y\in\mathfrak{C}\cap\mathfrak{W}$.
\item
 If $C\rightarrow k$ is an acyclic fibration with $C$ in $\mathfrak{C}$, then for any object $X$ of $\mathpzc{E}$, $C\otimes X \rightarrow X$ is a weak equivalence
\end{enumerate}
Then $\mathpzc{E}$ is a monoidal model category.
\end{thm}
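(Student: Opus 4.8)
The plan is to verify the two defining conditions of a monoidal model category recalled in Appendix~\ref{modelapp}, namely the pushout--product axiom and the unit axiom. The unit axiom is immediate: condition~(4) says that $I$ is cofibrant, i.e.\ that $0\rightarrowtail I$ is a cofibration, so $I$ may serve as its own cofibrant replacement and the comparison map $QI\otimes X\to I\otimes X$ can be taken to be the identity, which is a weak equivalence. Everything therefore reduces to the pushout--product axiom. Here I would use the description of (trivial) cofibrations supplied by left pseudo-compatibility: a map is a cofibration exactly when it is an admissible monic whose cokernel lies in $\mathfrak{C}$, and a trivial cofibration exactly when it is an admissible monic whose cokernel lies in $\mathfrak{C}\cap\mathfrak{W}$. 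So it suffices to show that for cofibrations $f\colon A\rightarrowtail B$ and $g\colon C\rightarrowtail D$, with cokernels $X,Y\in\mathfrak{C}$, the pushout--product $f\,\square\,g$ is an admissible monic whose cokernel is $X\otimes Y$; conditions~(2) and~(3) then conclude the argument at once.

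First I would set up the pushout--product. By condition~(1) both $f$ and $g$ are pure, so they remain admissible monics after tensoring with any object; together with right exactness of $\otimes$ this turns each tensored copy of $0\to A\to B\to X\to 0$ and $0\to C\to D\to Y\to 0$ into a short exact sequence. In particular $f\otimes C\colon A\otimes C\rightarrowtail B\otimes C$ and $A\otimes g\colon A\otimes C\rightarrowtail A\otimes D$ are admissible monics, so the pushout $P$ of $B\otimes C\leftarrow A\otimes C\rightarrow A\otimes D$ exists; by Proposition~\ref{bicart} the pushout square is bicartesian, the induced map $B\otimes C\rightarrowtail P$ is an admissible monic, and its cokernel agrees with $\textrm{Coker}(A\otimes g)=A\otimes Y$. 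The maps $f\otimes D$ and $B\otimes g$ into $B\otimes D$ agree on $A\otimes C$, so they induce the pushout--product map $f\,\square\,g\colon P\to B\otimes D$, whose restriction to the admissible subobject $B\otimes C$ is $B\otimes g$.

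Next I would identify the cokernel. The construction produces a morphism of short exact sequences
\begin{displaymath}
\xymatrix{
0\ar[r] & B\otimes C\ar[r]\ar@{=}[d] & P\ar[r]\ar[d]^{f\,\square\,g} & A\otimes Y\ar[r]\ar[d]^{f\otimes Y} & 0\\
0\ar[r] & B\otimes C\ar[r]^{B\otimes g} & B\otimes D\ar[r] & B\otimes Y\ar[r] & 0
}
\end{displaymath}
whose left vertical is the identity and whose right vertical is $f\otimes Y$, an admissible monic (purity of $f$) with cokernel $X\otimes Y$. Passing to a left abelianization $I\colon\mathpzc{E}\to\mathpzc{A}$ (Theorem~\ref{QET}), which is exact, fully faithful, and reflects exactness, this becomes a morphism of short exact sequences in the abelian category $\mathpzc{A}$, and the snake lemma there shows that $I(f\,\square\,g)$ is a monic with cokernel $I(X\otimes Y)$. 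Using that $I$ is full I can realise the resulting quotient as $I(\bar\pi)$ for a morphism $\bar\pi\colon B\otimes D\to X\otimes Y$ of $\mathpzc{E}$; since $I$ reflects exactness, the sequence $0\to P\xrightarrow{f\,\square\,g} B\otimes D\xrightarrow{\bar\pi} X\otimes Y\to 0$ is then short exact in $\mathpzc{E}$. Hence $f\,\square\,g$ is an admissible monic with cokernel $X\otimes Y\cong\textrm{Coker}(f)\otimes\textrm{Coker}(g)$. Condition~(2) now gives $X\otimes Y\in\mathfrak{C}$, so $f\,\square\,g$ is a cofibration; and if $f$ or $g$ is trivial, so that $X$ or $Y$ lies in $\mathfrak{W}$, condition~(3) gives $X\otimes Y\in\mathfrak{C}\cap\mathfrak{W}$, so $f\,\square\,g$ is a trivial cofibration. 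This is the pushout--product axiom.

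The main obstacle is exactly the cokernel computation of the previous paragraph: outside the abelian setting there is no direct diagram chasing, so purity (condition~(1)) must be invoked throughout to keep every tensored sequence short exact, and the identification $\textrm{Coker}(f\,\square\,g)\cong X\otimes Y$ has to be extracted by transporting the snake--lemma argument through the abelianization $I$ and reflecting it back to $\mathpzc{E}$. Once this is in place, everything else is a formal consequence of the description of cofibrations and trivial cofibrations in a left pseudo-compatible model structure.
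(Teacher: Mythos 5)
Your proof is correct and follows essentially the same route as the paper: reduce to the pushout--product axiom, use purity to keep the tensored sequences exact, and identify $\textrm{Coker}(f\,\square\,g)$ with $X\otimes Y$ via a snake-lemma argument through the abelianization (the paper packages this last step as Corollary~\ref{modifiedsnake}). Your explicit treatment of the unit axiom is a harmless addition the paper leaves implicit.
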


Note that Lemma \ref{flatcofib} says that if cofibrant objects are flat then condition 1 in Theorem \ref{exactmonoidal} is automatically satisfied. 

\begin{thm}\label{monoid}
Let $\mathpzc{E}$ be a complete and cocomplete, monoidal exact category. Suppose that $\mathpzc{E}$ has a  left pseudo-compatible model structure satisfying the hypotheses of Theorem \ref{exactmonoidal}. In addition, suppose that the following conditions hold
\begin{enumerate}
\item
If $X\in\mathfrak{C}\cap\mathfrak{W}$ and $Y$ is arbitrary, then $X\otimes Y$ is in $\mathfrak{W}$.
\item
Transfinite compositions of weak equivalences which are also pure monics with cokernel in $\mathfrak{C}\cap\mathfrak{W}\otimes Ob(\mathpzc{E})$ are still weak equivalences.
\end{enumerate}
Then the model structure satisfies the monoid axiom.
\end{thm}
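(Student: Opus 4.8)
The plan is to verify the monoid axiom in its standard Schwede--Shipley form: writing $\mathcal{J}$ for the class of trivial cofibrations, one must show that every morphism in the cellular closure of the class $\mathcal{J}\otimes\mathpzc{E}\defeq\{j\otimes\textrm{id}_{Z}:j\in\mathcal{J},\,Z\in\mathpzc{E}\}$ is a weak equivalence. My strategy is to introduce the class $\mathcal{W}_{pm}$ of morphisms which are simultaneously weak equivalences and pure monics, to show that $\mathcal{J}\otimes\mathpzc{E}\subseteq\mathcal{W}_{pm}$, and that $\mathcal{W}_{pm}$ behaves well enough under the two cellular operations that the entire cellular closure is forced to consist of weak equivalences.

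First I would show $\mathcal{J}\otimes\mathpzc{E}\subseteq\mathcal{W}_{pm}$. Given a trivial cofibration $j\colon A\rightarrowtail B$, left pseudo-compatibility presents it as an admissible monic with $C\defeq\textrm{Coker}(j)\in\mathfrak{C}\cap\mathfrak{W}$, and condition (1) of Theorem \ref{exactmonoidal} (assumed throughout) makes $j$ pure. Since $j$ is pure, $j\otimes\textrm{id}_{Z}$ is again an admissible monic for every $Z$, and right exactness of $(-)\otimes Z$ identifies its cokernel with $C\otimes Z$; associativity of $\otimes$ then shows $j\otimes\textrm{id}_{Z}$ is itself pure. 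Condition (1) of the present theorem gives $C\otimes Z\in\mathfrak{W}$, so the pseudo-compatibility criterion (an admissible monic lies in $\mathcal{W}$ if and only if its cokernel lies in $\mathfrak{W}$) makes $j\otimes\textrm{id}_{Z}$ a weak equivalence. Hence $j\otimes\textrm{id}_{Z}\in\mathcal{W}_{pm}$.

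Next I would establish that $\mathcal{W}_{pm}$ is closed under cobase change. If $u\in\mathcal{W}_{pm}$ and $u'$ is a pushout of $u$, then $u'$ is a pure monic by Proposition \ref{pushoutpure}, in particular an admissible monic. As $u$ too is an admissible monic, Proposition \ref{bicart} shows the pushout square is bicartesian and yields $\textrm{Coker}(u')\cong\textrm{Coker}(u)$. Since $u$ is a weak equivalence and an admissible monic, pseudo-compatibility places $\textrm{Coker}(u)\in\mathfrak{W}$, whence $\textrm{Coker}(u')\in\mathfrak{W}$, so $u'$ is a weak equivalence and $u'\in\mathcal{W}_{pm}$.

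Finally I would assemble these facts. A relative $(\mathcal{J}\otimes\mathpzc{E})$-cell complex is a transfinite composition whose successor maps are pushouts of coproducts of maps in $\mathcal{J}\otimes\mathpzc{E}$; refining each such coproduct-pushout into single-cell attachments (legitimate because $(-)\otimes Z$ and the colimits involved commute, the monoidal structure being compatible) makes every successor map a pushout of a single element of $\mathcal{J}\otimes\mathpzc{E}$, hence a member of $\mathcal{W}_{pm}$ by the previous two steps. Condition (2) of the theorem then applies verbatim to this refined sequence of weak-equivalence pure monics and concludes that its transfinite composite is a weak equivalence. The crucial design point, and the step I expect to be the main obstacle, is exactly this last one: I deliberately never assert that a transfinite composite of pure monics is a pure monic (which would appear to require exactness of the relevant colimits, a hypothesis not available here). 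Condition (2) is tailored so that only each \emph{successor} map need lie in $\mathcal{W}_{pm}$, and it is the reduction of coproduct attachments to single-cell attachments, together with careful bookkeeping of which maps are being composed, that makes the hypothesis applicable while staying within the available assumptions.
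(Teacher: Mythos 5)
Your proposal is correct and follows essentially the same route as the paper's own (much terser) proof: show each $j\otimes\textrm{id}_{Z}$ is a pure monic whose cokernel $\textrm{Coker}(j)\otimes Z$ lies in $\mathfrak{W}$ (hence a weak equivalence by pseudo-compatibility), observe via Proposition \ref{pushoutpure} and the cokernel-preservation of pushouts that cobase changes stay in this class, and then invoke condition (2) for the transfinite composition. Your closing observation — that condition (2) only requires each successor map, not the composite, to be a pure monic — is exactly the point the hypothesis is designed around, and your extra reduction of coproduct attachments to single-cell attachments is harmless but not needed for the monoid axiom as formulated in Appendix \ref{modelapp}.
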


\begin{proof}
The first condition implies that if $i$ is an acyclic cofibration, then $i\otimes Y$ is a weak equivalence. By Propositions \ref{pushoutpure} and the fact that pushouts commute with cokernels any pushout of $i\otimes Y$ is a weak equivalence as well as a pure monic. By the second condition, any transfinite composition of such maps is a weak equivalence. 
\end{proof}

If in $\mathpzc{E}$ transfinite compositions of admissible monics are admissible monics (e.g. if $\mathpzc{E}$ is weakly $\textbf{AdMon}$-elementary) then one can replace the second condition by requiring that the class $\mathfrak{W}$ is closed under transfinite compositions of pure monomorphisms. By this we mean that if $\lambda$ is some ordinal, and $X:\lambda\rightarrow\mathpzc{E}$ a continuous functor such that $0\rightarrow X_{0}$ is a weak equivalence, and for each $i<j$ in $\lambda$ the map $X_{i}\rightarrow X_{j}$ is a pure monic which is also a weak equivalence, then $X_{\lambda}$ is in $\mathfrak{W}$. (This is the condition used in \cite{hovey} Theorem 7.4). Since
$\mathfrak{W}$ forms a thick subcategory and $X_{0}\rightarrow X_{\lambda}$ is an admissible monic, this is equivalent to the cokernel of the map $X_{0}\rightarrow X_{\lambda}$ being in $\mathfrak{W}$ which in turn is equivalent to $X_{0}\rightarrow X_{\lambda}$ being a weak equivalence.

\section{Model Structures on Chain Complexes}\label{mscc}

Generalising results of \cite{Gillespie2} and and \cite{gillespie2016derived}, in this section we describe a method for constructing compatible model structures on categories of chain complexes $ Ch_{*}(\mathpzc{E})$ from cotorsion pairs on $\mathpzc{E}$. Note that what we describe below will not always produce a model structure. However we will show in the next chapter that it does in the case that $\mathpzc{E}$ has enough projectives, and the cotorsion pair is the projective one (Example \ref{cotorsionproj}).   First we define the collections of objects which will be candidates for the (trivially) fibrant and (trivially) cofibrant objects.

\begin{defn}
Let $(\mathfrak{L},\mathfrak{R})$ be a cotorsion pair on an exact category $\mathpzc{E}$. Let $X\in Ch(\mathpzc{E})$ be a chain complex.
\begin{enumerate}
\item
$X$ is called an $\mathfrak{L}$ complex if it is acyclic and $Z_{n}X\in\mathfrak{L}$ for all $n$. The collection of all $\mathfrak{L}$ complexes is denoted $\widetilde{\mathfrak{L}}$.
\item
$X$ is called an $\mathfrak{R}$ complex if it is acyclic and $Z_{n}X\in\mathfrak{R}$ for all $n$. The collection of all $\mathfrak{R}$ complexes is denoted $\widetilde{\mathfrak{R}}$.
\item
$X$ is called a $K$-$\mathfrak{L}$ complex if $\textbf{Hom}(X,B)$ is exact whenever $B$ is an $\mathfrak{R}$ complex. The class of all $K$-$\mathfrak{L}$ complexes is denoted $K\mathcal{L}$.
\item
$X$ is called a $K$-$\mathcal{R}$ complex if $\textbf{Hom}(A,X)$ is exact whenever $A$ is an $\mathfrak{L}$ complex. The class of all $K$-$\mathcal{L}$ complexes is denoted $K\mathcal{R}$.
\item
$X$ is called a $dg\mathfrak{L}$ complex if $X$ is a $K$-$\mathfrak{L}$ complex and $X_{n}\in\mathfrak{L}$ for each $n\in\mathbb{Z}$. The collection of all $dg\mathfrak{L}$ complexes is denoted $\widetilde{dg\mathfrak{L}}$.
\item
$X$ is called a $dg\mathfrak{R}$ complex if $X$ is a $K$-$\mathfrak{R}$ complex and $X_{n}\in\mathfrak{R}$ for each $n\in\mathbb{Z}$. The collection of all $dg\mathfrak{R}$ complexes is denoted $\widetilde{dg\mathfrak{R}}$.
\end{enumerate}
\end{defn}

\begin{notation}
We define the collections $\widetilde{\mathfrak{L}},\widetilde{\mathfrak{R}},\widetilde{dg\mathfrak{L}},\widetilde{dg\mathfrak{R}},K\mathfrak{L},K\mathfrak{R}$ similarly in the categories $ Ch_{*}(\mathpzc{E})$ for  $*\in\{\ge0,\le0,+,-,b\}$. We will use the same notation for these collections irrespective of which category of chain complexes we are working in.
\end{notation}

\begin{rem}
In $ Ch_{*}(\mathpzc{E})$ for $*\in\{+,-,\ge0,b,\emptyset\}$ all of the above classes are closed under shifts $[n]$ for $n\le0$. For $*\in\{+,-,\le0,b,\emptyset\}$ they are closed under shifts $[n]$ for $n\ge0$.
\end{rem}

Let us start to populate these collections. We first make the following easy observation.

\begin{prop}
Let $X$ be an $\mathfrak{R}$-complex. Then $X_{n}\in\mathfrak{R}$ for each $n$.
\end{prop}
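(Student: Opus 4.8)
The plan is to exhibit each $X_{n}$ as an extension of two objects already known to lie in $\mathfrak{R}$, and then invoke the fact that the right-hand class of a cotorsion pair is closed under extensions.

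First I would use acyclicity to produce the relevant short exact sequences. Since $X$ is an $\mathfrak{R}$-complex it is in particular acyclic, so each differential $d_{n}\colon X_{n}\rightarrow X_{n-1}$ is admissible and the natural map $\textrm{Im}(d_{n+1})\rightarrow Z_{n}X=\textrm{Ker}(d_{n})$ is an isomorphism. Admissibility of $d_{n}$ yields (via Proposition \ref{adstrict}) a factorisation $X_{n}\twoheadrightarrow\textrm{Im}(d_{n})\rightarrowtail X_{n-1}$, and acyclicity at $X_{n-1}$ identifies $\textrm{Im}(d_{n})$ with $Z_{n-1}X$. The kernel of the admissible epic $X_{n}\twoheadrightarrow Z_{n-1}X$ is precisely $Z_{n}X$, so I obtain a short exact sequence
$$0\rightarrow Z_{n}X\rightarrowtail X_{n}\twoheadrightarrow Z_{n-1}X\rightarrow 0$$
for every $n$.

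By definition of an $\mathfrak{R}$-complex, both $Z_{n}X$ and $Z_{n-1}X$ lie in $\mathfrak{R}$. It remains to observe that $\mathfrak{R}=\mathfrak{L}^{\perp}$ is closed under extensions: given the short exact sequence above with outer terms in $\mathfrak{R}$, for any $L\in\mathfrak{L}$ the covariant long exact $\textrm{Ext}$ sequence of Theorem \ref{longext} squeezes $\textrm{Ext}^{1}(L,X_{n})$ between $\textrm{Ext}^{1}(L,Z_{n}X)=0$ and $\textrm{Ext}^{1}(L,Z_{n-1}X)=0$, forcing $\textrm{Ext}^{1}(L,X_{n})=0$. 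Hence $X_{n}\in\mathfrak{L}^{\perp}=\mathfrak{R}$.

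The argument is essentially formal; the only point requiring care is the bookkeeping of the acyclicity conditions, i.e. making sure that ``acyclic'' in an exact category really delivers the admissible epic $X_{n}\twoheadrightarrow Z_{n-1}X$ with kernel $Z_{n}X$, rather than merely a null sequence whose cycles happen to lie in $\mathfrak{R}$. This is exactly what the definition of acyclic (both admissibly acyclic and admissibly coacyclic) guarantees, so I do not expect any serious obstacle.
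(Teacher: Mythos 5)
Your proof is correct and follows essentially the same route as the paper: both exhibit the short exact sequence $0\rightarrow Z_{n}X\rightarrow X_{n}\rightarrow Z_{n-1}X\rightarrow 0$ coming from acyclicity and conclude by closure of $\mathfrak{R}=\mathfrak{L}^{\perp}$ under extensions (the paper cites Lemma \ref{trans}, you unwind the same long exact $\textrm{Ext}$ sequence argument directly). Your extra care in extracting the admissible epic $X_{n}\twoheadrightarrow Z_{n-1}X$ from the definition of acyclicity is a welcome elaboration of a step the paper leaves implicit.
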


\begin{proof}
For each $n$ we have a short exact sequence
$$0\rightarrow Z_{n}X\rightarrow X_{n}\rightarrow Z_{n-1}X\rightarrow0$$
and $Z_{n}X,Z_{n-1}X\in\mathfrak{R}$. By Corollary \ref{trans} $\mathfrak{R}$ is closed under extensions.
\end{proof}

With this in hand the result belows generalises immediately from \cite{Gillespie2} Lemma 3.4.

\begin{lem}\label{dgbounded}
\begin{enumerate}
\item
Bounded below complexes with entries in $\mathfrak{L}$ are $dg\mathfrak{L}$ complexes.
\item
Bounded above complex with entries in $\mathfrak{R}$ are $dg\mathfrak{R}$ complexes.
\end{enumerate}
\end{lem}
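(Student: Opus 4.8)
The goal is to prove Lemma \ref{dgbounded}: bounded below complexes with entries in $\mathfrak{L}$ are $dg\mathfrak{L}$ complexes, and dually for $\mathfrak{R}$. By the symmetry of the two statements (the second is obtained by passing to the opposite category, which swaps $\mathfrak{L}$-complexes with $\mathfrak{R}$-complexes and reverses boundedness), it suffices to establish the first assertion; I would say so explicitly and then focus on part (1).

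So let $X$ be a bounded below complex with each $X_n \in \mathfrak{L}$, say $X_n = 0$ for $n < N$. To show $X$ is a $dg\mathfrak{L}$ complex, the degreewise condition $X_n \in \mathfrak{L}$ holds by hypothesis, so the real content is that $\textbf{Hom}(X, B)$ is acyclic whenever $B$ is an $\mathfrak{R}$-complex. The plan is to unwind what acyclicity of $\textbf{Hom}(X,B)$ means via Corollary \ref{dw}: a cycle in degree $n$ is a chain map $X \to B[n]$ up to homotopy, so acyclicity amounts to showing every chain map $f\colon X \to B[n]$ is null-homotopic. Here I would use that $B$, being an $\mathfrak{R}$-complex, is acyclic with all cycle objects $Z_mB \in \mathfrak{R}$, and that $X$ is a bounded below complex of objects in $\mathfrak{L} = {}^\perp\mathfrak{R}$.

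The main step is then a standard inductive construction of a contracting homotopy, carried out degree by degree starting from the bottom. Since $X_n = 0$ below degree $N$, I can build homotopy maps $D_n\colon X_n \to (B[n])_{n+1}$ by induction on $n$, at each stage solving a lifting problem against the short exact sequence $0 \to Z_mB \to B_m \to Z_{m-1}B \to 0$. The obstruction to extending the homotopy lives in $\textrm{Ext}^1$ of an object of $\mathfrak{L}$ against an object of $\mathfrak{R}$, which vanishes by the defining relation $\mathfrak{L} = {}^\perp\mathfrak{R}$; this is exactly where the cotorsion hypothesis and the acyclicity of $B$ get used. Concretely, at the successor stage the composite $f_n - D_{n-1}d_n$ has the right property to factor through the cycles $Z_nB$ (using $B$ acyclic so that $\textrm{Im} = \textrm{Ker}$), and then projectivity-type vanishing $\textrm{Ext}^1(X_n, Z_nB)=0$ lets me lift it to $B_{n+1}$, producing $D_n$. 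The boundedness below is what makes the induction start; without it one cannot anchor the homotopy.

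I expect the main obstacle to be bookkeeping rather than conceptual: one must verify that at each inductive step the relevant map genuinely factors through the admissible subobject $Z_nB \rightarrowtail B_n$ so that the $\textrm{Ext}^1$-vanishing argument applies, which requires that $B$ is acyclic (hence $Z_nB$ exists as a kernel and equals the image) and careful tracking of signs in the shifted differential of $B[n]$. A clean way to package this is to invoke Lemma \ref{projextend}-style reasoning adapted to the cotorsion setting, or simply to pass through a left abelianisation $I\colon\mathpzc{E}\to\mathpzc{A}$ via Corollary \ref{chainab} and Proposition \ref{extab} to reduce the $\textrm{Ext}^1$ computations to the abelian case, where this is precisely Gillespie's Lemma 3.4 in \cite{Gillespie2}; since all the relevant objects, kernels, and $\textrm{Ext}^1$ groups are preserved and reflected by $I$, the abelian argument transports back. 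That reduction is, I think, the most economical route and the step most worth stating carefully.
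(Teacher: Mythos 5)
Your main argument---showing that every chain map from $X$ to an $\mathfrak{R}$-complex $B$ is null-homotopic by building the homotopy inductively from the bottom degree, lifting at each stage through the short exact sequences $0\to Z_{n}B\to B_{n}\to Z_{n-1}B\to 0$ via the vanishing of $\textrm{Ext}^{1}(X_{n},Z_{n}B)$ for $X_{n}\in\mathfrak{L}$ and $Z_{n}B\in\mathfrak{R}$, with part (2) by duality---is exactly the paper's proof. The one caveat is your suggested ``most economical'' shortcut through a left abelianisation: the paper argues directly in $\mathpzc{E}$, and rightly so, since $(\mathfrak{L},\mathfrak{R})$ is a cotorsion pair on $\mathpzc{E}$ rather than on $\mathpzc{A}(\mathpzc{E})$, so Gillespie's Lemma 3.4 does not literally apply to the images $I(\mathfrak{L})$, $I(\mathfrak{R})$; the $\textrm{Ext}^{1}$-vanishing you need is internal to $\mathpzc{E}$ anyway, so the direct induction is both necessary and sufficient.
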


Gillespie's crucial Proposition 3.6 in \cite{Gillespie2} does not hold in arbitrary exact categories. However some of it can be salvaged to give the following two results.

\begin{prop}\label{almostcotor}
Let $(\mathfrak{L},\mathfrak{R})$ be a cotorsion pair in an exact category $\mathpzc{E}$. Then in $ Ch_{*}(\mathpzc{E})$ for $*\in\{+,-,b,\emptyset\}$ we have
\begin{enumerate}
\item
$\widetilde{dg\mathfrak{L}}=\;^{\perp}\widetilde{\mathfrak{R}}$.
\item
$\widetilde{dg\mathfrak{R}}=\widetilde{\mathfrak{L}}^{\perp}$
\item
$\widetilde{\mathfrak{R}}\subseteq(\widetilde{dg\mathfrak{L}})^{\perp}$
\item
$\widetilde{\mathfrak{L}}\subseteq\;^{\perp}(\widetilde{dg\mathfrak{R}})$
\item
Suppose $\mathpzc{E}$ has enough $\mathfrak{L}$-objects. Let $X\in (\widetilde{dg\mathfrak{L}})^{\perp}$ be good. Then $X$ is an $\mathfrak{R}$-complex. 
\item
Suppose $\mathpzc{E}$ has enough $\mathfrak{R}$-objects. Let $X\in \;^{\perp}dg(\widetilde{\mathfrak{R}})$ be cogood. Then $X$ is an $\mathfrak{L}$-complex.
\end{enumerate}
\end{prop}

\begin{proof}
Parts 1) and 3) are easily seen to generalise to the exact case from the Gillespie's proof.
\begin{enumerate}
\item
Let $X\in\;^{\perp}\widetilde{\mathfrak{R}}$. Then $\textrm{Ext}^{1}(X,B)=0$ whenever $B$ is an $\mathfrak{R}$ complex. In particular $\textrm{Ext}^{1}_{dw}(X,B)=0$. Hence $\textbf{Hom}(X,B)$ is exact whenever $B$ is an $\mathfrak{R}$ complex by Corollary \ref{dw}. It remains to show $X_{n}\in\mathfrak{L}$. Let $B\in\mathfrak{R}$. By Lemma \ref{extstuff} we have
$$\textrm{Ext}^{1}(X_{n},B)=\textrm{Ext}^{1}(X,D^{n+1}(B))=0$$
since $D^{n+1}(B)\in\widetilde{\mathfrak{R}}$. So $X_{n}\in\mathfrak{L}$, and $\;^{\perp}\widetilde{\mathfrak{R}}\subset \widetilde{dg\mathfrak{L}}$. 
Now let $X\in \widetilde{dg\mathfrak{L}}$. Since the entries of $X$ are in $\mathfrak{L}$, for any $Y\in\widetilde{\mathfrak{R}}$, any short exact sequence
\begin{displaymath}
\xymatrix{
0\ar[r] & Y\ar[r] & Z\ar[r] & X\ar[r] & 0
}
\end{displaymath}
is split exact in each degree. But also $\textrm{Ext}^{1}_{dw}(X,Y)=0$. Hence, any sequence as above must be split exact, i.e. $\textrm{Ext}^{1}(X,Y)=0$.
\item
This is dual to the previous part.
\item
Let $X\in\widetilde{\mathfrak{R}}$ and $A\in \widetilde{dg\mathfrak{L}}$. Note that since $X_{n}\in\mathfrak{R}$,  $\textrm{Ext}^{1}(X,A)=\textrm{Ext}^{1}_{dw}(X,A)$. Now since $\textbf{Hom}(A,X)$ is exact, $\textrm{Ext}_{dw}^{1}(X,A)=0$.
\item
This is dual to the previous part.
\item
Let us show that $X$ is acyclic. We will again use Proposition \ref{goodtrick}. Let $n$ be such that $d_{n}$ has a kernel. Since we have enough $\mathfrak{L}$-objects, we may choose an admissible epic $f'_{n}:A'\rightarrow Z_{n}X$ for some $A'\in\mathfrak{L}$. By Lemma \ref{extstuff} this induces a map $f:S^{n}(A')\rightarrow X$. Now $\textrm{Ext}^{1}_{dw}(S^{n}(A')[-1],X)\subset\textrm{Ext}^{1}(S^{n}(A')[-1],X)=0$ by assumption. Hence $f$ is homotopic to $0$. Applying Proposition \ref{homotopygood} the map $d'_{n+1}:X_{n+1}\rightarrow Z_{n}X$ is an admissible epic. By Proposition \ref{goodtrick} $X$ is acyclic. To see that $Z_{n}X\in\mathfrak{R}$, we note that since $X$ is acyclic, we have for any $A\in\mathfrak{L}$,
$$\textrm{Ext}^{1}_{\mathpzc{E}}(A,Z_{n}X)\cong\textrm{Ext}^{1}(S^{n}(A),X)=0$$
Since $(\mathfrak{L},\mathfrak{R})$ is a cotorsion pair, $Z_{n}X\in\mathfrak{R}$. 
Hence $X\in\widetilde{\mathfrak{R}}$ and so $(\widetilde{dg\mathfrak{L}})^{\perp}\subseteq\widetilde{\mathfrak{R}}$.
\item
The proof for the second part is dual.
\end{enumerate}
\end{proof}

We also have the following.

\begin{prop}
Let $*\in\{\ge0\}$, and let $(\mathfrak{L},\mathfrak{R})$ be a cotorsion pair in $\mathpzc{E}$ with enough $\mathfrak{L}$-objects. Then $\widetilde{dg\mathfrak{L}}=\;^{\perp}\widetilde{\mathfrak{R}}$ and $\widetilde{\mathfrak{R}}=(\widetilde{dg\mathfrak{L}})^{\perp}$. Dually, if the cotorsion pair has enough $\mathfrak{R}$-objects, then for $*\in\{\le0\}$ $\widetilde{dg\mathfrak{R}}=\widetilde{\mathfrak{L}}^{\perp}$ and $\widetilde{\mathfrak{L}}=\;^{\perp}\widetilde{dg\mathfrak{R}}$.
\end{prop}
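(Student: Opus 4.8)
The plan is to treat the $\ge0$ case as a sharpening of Proposition \ref{almostcotor}, the point being that \emph{every} object of $\textbf{Ch}_{\ge0}(\mathpzc{E})$ is bounded below, hence good, so the hypotheses ``good''/``bounded below'' that only produced inclusions in the unbounded setting are now automatic and upgrade those inclusions to equalities. Concretely I would prove the two equalities $dg\tilde{\mathfrak{L}}=\;^{\perp}\tilde{\mathfrak{R}}$ and $\tilde{\mathfrak{R}}=(dg\tilde{\mathfrak{L}})^{\perp}$ separately, and then obtain the $\le0$ statements by the evident duality (bounded above $=$ cogood, $\mathfrak{L}\leftrightarrow\mathfrak{R}$, enough $\mathfrak{L}$-objects $\leftrightarrow$ enough $\mathfrak{R}$-objects), using parts (2), (4), (6) of Proposition \ref{almostcotor} and Lemma \ref{dgbounded}(2) in place of their counterparts.

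For $dg\tilde{\mathfrak{L}}=\;^{\perp}\tilde{\mathfrak{R}}$, the inclusion $dg\tilde{\mathfrak{L}}\subseteq\;^{\perp}\tilde{\mathfrak{R}}$ is exactly as in Proposition \ref{almostcotor}(1): if $X\in dg\tilde{\mathfrak{L}}$ and $Y\in\tilde{\mathfrak{R}}$, then $X_n\in\mathfrak{L}$ and $Y_n\in\mathfrak{R}$, so every extension of $X$ by $Y$ splits in each degree, while Corollary \ref{dw} together with exactness of $\textbf{Hom}(X,Y)$ forces $\textrm{Ext}^{1}_{dw}(X,Y)=0$, whence $\textrm{Ext}^{1}(X,Y)=0$. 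For the reverse inclusion I would take $X\in\;^{\perp}\tilde{\mathfrak{R}}$ and first show $X_n\in\mathfrak{L}$ for every $n$: for $C\in\mathfrak{R}$ the complex $D^{n+1}(C)$ lies in $\textbf{Ch}_{\ge0}(\mathpzc{E})$ and is an $\mathfrak{R}$-complex, so Lemma \ref{extstuff}(6) gives $\textrm{Ext}^{1}_{\mathpzc{E}}(X_n,C)\cong\textrm{Ext}^{1}(X,D^{n+1}C)=0$, and since $(\mathfrak{L},\mathfrak{R})$ is a cotorsion pair this yields $X_n\in\;^{\perp}\mathfrak{R}=\mathfrak{L}$. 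Here the bounded-below hypothesis does the real work: $X$ is now a bounded below complex with all entries in $\mathfrak{L}$, so Lemma \ref{dgbounded}(1) immediately gives $X\in dg\tilde{\mathfrak{L}}$.

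For $\tilde{\mathfrak{R}}=(dg\tilde{\mathfrak{L}})^{\perp}$, the inclusion $\tilde{\mathfrak{R}}\subseteq(dg\tilde{\mathfrak{L}})^{\perp}$ is precisely Proposition \ref{almostcotor}(3), whose proof uses only the degreewise splitting and exactness of $\textbf{Hom}$ and so is unchanged. For the reverse inclusion I would take $X\in(dg\tilde{\mathfrak{L}})^{\perp}$; since $X$ is automatically good, the hypothesis of Proposition \ref{almostcotor}(5) is met and its proof applies once I check that the auxiliary complexes it constructs stay inside $\textbf{Ch}_{\ge0}(\mathpzc{E})$ and inside $dg\tilde{\mathfrak{L}}$: for $A'\in\mathfrak{L}$ and $n\ge0$ both $S^{n}(A')$ and $S^{n}(A')[-1]=S^{n+1}(A')$ are concentrated in degrees $\ge0$ and, being bounded below with entries in $\mathfrak{L}$, lie in $dg\tilde{\mathfrak{L}}$ by Lemma \ref{dgbounded}(1), so $\textrm{Ext}^{1}(S^{n+1}(A'),X)=0$ and $\textrm{Ext}^{1}(S^{n}A,X)=0$ by the defining property of $(dg\tilde{\mathfrak{L}})^{\perp}$. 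Using enough $\mathfrak{L}$-objects to pick admissible epics $A'\twoheadrightarrow Z_nX$, Proposition \ref{homotopygood} and Proposition \ref{goodtrick} give acyclicity of $X$, and then Lemma \ref{extstuff}(7) gives $Z_nX\in\mathfrak{L}^{\perp}=\mathfrak{R}$, i.e. $X\in\tilde{\mathfrak{R}}$.

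The main obstacle to watch for, and the one genuine difference from the unbounded case, is the shift issue: in $\textbf{Ch}_{\ge0}(\mathpzc{E})$ positive shifts of an $\mathfrak{R}$-complex need not remain in the category, so the argument of Proposition \ref{almostcotor}(1) deducing exactness of $\textbf{Hom}(X,B)$ in \emph{all} internal degrees from $\textrm{Ext}^{1}$-vanishing against the shifts $B[m-1]$ is unavailable here. The plan circumvents this entirely by never proving $\textbf{Hom}$-exactness directly: once the entries are known to lie in $\mathfrak{L}$, membership in $dg\tilde{\mathfrak{L}}$ is read off from Lemma \ref{dgbounded}(1), which is exactly the statement that for a bounded below complex ``entries in $\mathfrak{L}$'' already forces the $\textbf{Hom}$-exactness condition. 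The only remaining points needing care are routine bookkeeping: that Ext groups computed in $\textbf{Ch}_{\ge0}(\mathpzc{E})$ agree with those in $\textbf{Ch}(\mathpzc{E})$ (via Proposition \ref{extab} and extension-closedness of the inclusion), so that Lemma \ref{extstuff} applies, and that the degree indices in the various $D^{n+1}$, $S^{n}$, $S^{n+1}$ never drop below $0$.
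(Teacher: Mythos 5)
Your proposal is correct and follows essentially the same route as the paper: the paper likewise observes that parts (1), (3) and (5) of Proposition \ref{almostcotor} carry over (goodness now being automatic), and closes the one genuine gap — the unavailability of the shift argument for $\textbf{Hom}$-exactness in $\textbf{Ch}_{\ge0}(\mathpzc{E})$ — exactly as you do, by deducing $X_n\in\mathfrak{L}$ from Lemma \ref{extstuff} and then invoking Lemma \ref{dgbounded}(1) for bounded below complexes. Your write-up is simply a more detailed version of the paper's proof, with the index bookkeeping made explicit.
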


\begin{proof}
The proofs of parts (3) and (5) in the previous proposition go through here, as does the proof that $\widetilde{dg\mathfrak{L}}\subset\;^{\perp}\widetilde{\mathfrak{R}} $. Now let $X\in\;^{\perp}\widetilde{\mathfrak{R}}$. The same proof as in part (1) of the previous proposition shows that each $X_{n}$ must be an object in $\mathfrak{L}$. Thus $X$ is a bounded below complex of objects in $\mathfrak{L}$ and hence a $\widetilde{dg\mathfrak{L}}$ complex. 
\end{proof}

We get an immediate corollary.

\begin{cor}\label{bootcotor}
Let $(\mathfrak{L},\mathfrak{R})$ be a cotorsion pair on an exact category $\mathpzc{E}$ with enough $\mathfrak{L}$-objects and enough $\mathfrak{R}$-objects.
\begin{enumerate}
\item
$(\widetilde{dg\mathfrak{L}},\widetilde{\mathfrak{R}})$ is a cotorsion pair on $ Ch_{\ge0}(\mathpzc{E})$ and $ Ch_{+}(\mathpzc{E})$. If $\mathpzc{E}$ has all kernels  then it is a cotorsion pair on $ Ch(\mathpzc{E})$.
\item
$(\widetilde{\mathfrak{L}},\widetilde{dg\mathfrak{R}})$ is a cotorsion pair on $ Ch_{\le0}(\mathpzc{E})$ and $ Ch_{-}(\mathpzc{E})$. If $\mathpzc{E}$ has all cokernels then it is a cotorsion pair in $ Ch(\mathpzc{E})$.
\item
 $(\widetilde{\mathfrak{L}},\widetilde{dg\mathfrak{R}})$ and $(\widetilde{dg\mathfrak{L}},\widetilde{\mathfrak{R}})$ are cotorsion pairs in $ Ch_{b}(\mathpzc{E})$.
\item
If $\mathpzc{E}$ has all kernels and cokernels, in particular if $\mathpzc{E}$ is quasi-abelian, then $(\widetilde{\mathfrak{L}},\widetilde{dg\mathfrak{R}})$ and $(\widetilde{dg\mathfrak{L}},\widetilde{\mathfrak{R}})$ are cotorsion pairs in $ Ch(\mathpzc{E})$.
\end{enumerate}
\end{cor}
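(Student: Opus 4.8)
The task is purely to verify, in each of the listed categories of chain complexes, the two defining equalities of a cotorsion pair, namely $dg\tilde{\mathfrak{L}}=\;^{\perp}\tilde{\mathfrak{R}}$ together with $\tilde{\mathfrak{R}}=(dg\tilde{\mathfrak{L}})^{\perp}$ for the first pair, and dually $dg\tilde{\mathfrak{R}}=\tilde{\mathfrak{L}}^{\perp}$ together with $\tilde{\mathfrak{L}}=\;^{\perp}(dg\tilde{\mathfrak{R}})$ for the second. Almost every ingredient has already been isolated in Proposition \ref{almostcotor} and in the preceding proposition, so the plan is simply to assemble them while keeping track of which completeness hypothesis on $\mathpzc{E}$ makes the relevant complexes good or cogood.

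First I would record the ``easy'' halves. The equalities $dg\tilde{\mathfrak{L}}=\;^{\perp}\tilde{\mathfrak{R}}$ and $dg\tilde{\mathfrak{R}}=\tilde{\mathfrak{L}}^{\perp}$ are exactly parts (1) and (2) of Proposition \ref{almostcotor} in $\textbf{Ch}_{*}(\mathpzc{E})$ for $*\in\{+,-,b,\emptyset\}$, while for $*\in\{\ge0,\le0\}$ they are supplied (together with the companion equalities) by the preceding unnumbered proposition. Likewise the inclusions $\tilde{\mathfrak{R}}\subseteq(dg\tilde{\mathfrak{L}})^{\perp}$ and $\tilde{\mathfrak{L}}\subseteq\;^{\perp}(dg\tilde{\mathfrak{R}})$ are parts (3) and (4). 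Thus in every case the only thing left to verify is the reverse inclusion $(dg\tilde{\mathfrak{L}})^{\perp}\subseteq\tilde{\mathfrak{R}}$ (resp. $\;^{\perp}(dg\tilde{\mathfrak{R}})\subseteq\tilde{\mathfrak{L}}$).

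These reverse inclusions are precisely parts (5) and (6) of Proposition \ref{almostcotor}, but each carries a hypothesis: part (5) applies only to good complexes and part (6) only to cogood complexes. The key observation driving the whole corollary is therefore the translation between the boundedness of the category and goodness: a bounded below complex is good and a bounded above complex is cogood, so every object of $\textbf{Ch}_{+}(\mathpzc{E})$ is good, every object of $\textbf{Ch}_{-}(\mathpzc{E})$ is cogood, and every object of $\textbf{Ch}_{b}(\mathpzc{E})$ is both. Hence part (5) applies to all objects in the first case and part (6) to all objects in the second, yielding the reverse inclusions and so statements (1), (2) and (3); for $\textbf{Ch}_{\ge0}$ and $\textbf{Ch}_{\le0}$ one may instead just quote the unnumbered proposition, which already gives both equalities outright. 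For the unbounded category I would note that if $\mathpzc{E}$ has all kernels then every $d_{n}$ has a kernel, so every complex in $\textbf{Ch}(\mathpzc{E})$ is good and part (5) applies unrestrictedly, giving the $\textbf{Ch}(\mathpzc{E})$ clause of (1); dually, all cokernels makes every complex cogood, giving the $\textbf{Ch}(\mathpzc{E})$ clause of (2). When $\mathpzc{E}$ has both kernels and cokernels --- in particular when it is quasi-abelian, by Definition \ref{quas} --- every complex is simultaneously good and cogood, so both cotorsion pairs exist on $\textbf{Ch}(\mathpzc{E})$, which is statement (4).

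Since all the analytic content is buried in Proposition \ref{almostcotor}, there is no genuine obstacle here; the only point requiring care is bookkeeping --- ensuring that in each ambient category the stated completeness assumption actually guarantees the goodness or cogoodness needed to invoke parts (5)/(6), and remembering that it is exactly the standing hypothesis that $\mathpzc{E}$ has enough $\mathfrak{L}$-objects and enough $\mathfrak{R}$-objects which licenses those two parts in the first place.
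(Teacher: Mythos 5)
Your proposal is correct and is essentially the proof the paper intends: the corollary is presented there as an immediate consequence of Proposition \ref{almostcotor} and the unnumbered proposition preceding it, and your assembly — easy halves from parts (1)--(4), reverse inclusions from parts (5)/(6) via the observation that the relevant boundedness or kernel/cokernel hypotheses make every complex good or cogood — is exactly the intended bookkeeping. The only remark worth adding is that you have correctly identified where the standing hypothesis of enough $\mathfrak{L}$- and $\mathfrak{R}$-objects enters, namely in licensing parts (5) and (6).
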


\subsection{Existence of dg-Model Structures}
The hope now is that the class $\mathfrak{W}$ of acyclic complexes satisfies
$$\widetilde{\mathfrak{L}}=\widetilde{dg\mathfrak{L}}\cap\mathfrak{W},\;\; \widetilde{\mathfrak{R}}=\widetilde{dg\mathfrak{R}}\cap\mathfrak{W}$$
and that the cotorsion pairs $(\widetilde{dg\mathfrak{L}},\widetilde{\mathfrak{R}})$ and $(\widetilde{\mathfrak{L}},\widetilde{dg\mathfrak{R}})$ are functorially complete. It is not at all clear that this will be the case. In \cite{yang2014question} it is shown that for a complete and cocomplete abelian category it is always the case. We suspect this result can be easily adapted for complete or cocomplete exact categories satisfying a similar condition. In fact in private communication \cite{moreauprivate} Timoth\'{e}e Moreau has shown that it holds whenever $\mathpzc{E}$ has exact products and exact coproducts. For the projective model structure we will only require that complexes of projectives satisfy the AB4-k axiom for some $k$ (though we suspect Moreau's work can be easily generalised to arbitrary complete cotorsion pairs on $\mathpzc{E}$ satisfying some version of the $AB4-k$ axiom.) In this section we will give some partial results about the induced cotorsion pairs on $Ch(\mathpzc{E})$ without any exactness assumptions on products or coproducts. First we need acyclic complexes to form a thick subcategory.

\begin{prop}\label{acycthick}
Let $\mathpzc{E}$ be a weakly idempotent complete exact category. Then for $*\in\{\ge0,\le0,+,-,b\}$ the full subcategory on $\mathfrak{W}$ is a thick subcategory of $ Ch_{*}(\mathpzc{E})$. If $\mathpzc{E}$ has all kernels then this is also true for $*=\{\emptyset\}$.
\end{prop}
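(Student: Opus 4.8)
The plan is to reduce the entire statement to the corresponding (classical) fact in an abelian category via an abelianization, exploiting that acyclicity of a \emph{good} complex can be detected after applying a left abelianization. Recall first that a short exact sequence in $\textbf{Ch}_{*}(\mathpzc{E})$ is precisely a degreewise short exact sequence of complexes. So the task is: given a degreewise short exact sequence
$$0\rightarrow A_{\bullet}\rightarrow B_{\bullet}\rightarrow C_{\bullet}\rightarrow 0$$
in $\textbf{Ch}_{*}(\mathpzc{E})$ with two of $A_{\bullet},B_{\bullet},C_{\bullet}$ acyclic, I must show the third is acyclic. The governing tool will be Corollary \ref{boundedtest}, which says that for a good complex acyclicity is equivalent to acyclicity of its image under a left abelianization $I:\mathpzc{E}\rightarrow\mathpzc{A}$.

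First I would treat the cases in which every complex in sight is good, namely $*\in\{\ge0,+,b\}$ (bounded below, hence good by the Example following the definition of good), together with $*=\emptyset$ under the hypothesis that $\mathpzc{E}$ has all kernels, in which case \emph{every} complex is good since every differential has a kernel. Fix a left abelianization $I:\mathpzc{E}\rightarrow\mathpzc{A}$. By Corollary \ref{chainab} the induced functor $\textbf{Ch}_{*}(I)$ is again a left abelianization, in particular exact, so it carries our degreewise short exact sequence to a degreewise short exact sequence
$$0\rightarrow I(A_{\bullet})\rightarrow I(B_{\bullet})\rightarrow I(C_{\bullet})\rightarrow 0$$
in $\textbf{Ch}_{*}(\mathpzc{A})$. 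Since $\mathpzc{A}$ is abelian, the associated long exact sequence in homology shows that the subcategory of exact complexes of $\textbf{Ch}_{*}(\mathpzc{A})$ is thick: if two of $I(A_{\bullet}),I(B_{\bullet}),I(C_{\bullet})$ are exact, the long exact sequence forces the homology of the third to vanish as well. As all three complexes are good, Corollary \ref{boundedtest} converts acyclicity in $\mathpzc{E}$ into exactness of the $I$-image and back again, and I conclude that the third complex is acyclic in $\mathpzc{E}$.

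The remaining cases $*\in\{\le0,-\}$ consist of bounded above complexes, which are cogood rather than good, so I would dispatch them by duality instead of re-proving a dual of Corollary \ref{boundedtest} by hand. The opposite category $\mathpzc{E}^{op}$ inherits the opposite exact structure; the notion of acyclicity is self-dual (it is the conjunction of admissibly acyclic and admissibly coacyclic), the thick condition is self-dual (reversing a short exact sequence), and by Remark \ref{rightab} a right abelianization of $\mathpzc{E}$ is a left abelianization of $\mathpzc{E}^{op}$. Moreover a cogood complex in $\mathpzc{E}$ is good in $\mathpzc{E}^{op}$. Hence $*\in\{\le0,-\}$ follow from the good cases applied to $\mathpzc{E}^{op}$, and the bounded case $*=b$ is already covered since bounded complexes are simultaneously good and cogood.

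The only genuine subtlety—and the reason the unbounded statement $*=\emptyset$ requires the extra hypothesis—is the reliance on goodness in order to apply Corollary \ref{boundedtest}: without enough kernels one cannot in general detect acyclicity of an unbounded complex after abelianizing, whereas assuming all kernels exist makes every complex good and the argument goes through verbatim. All the other ingredients (degreewise exactness of a short exact sequence of complexes, exactness of $\textbf{Ch}_{*}(I)$, and the thickness of exact complexes in an abelian category via the homology long exact sequence) are routine, so this smallness-of-kernels point is where the care must be concentrated.
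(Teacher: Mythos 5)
Your proof is correct and follows essentially the same route as the paper's: reduce to the abelian case via a left abelianization for $*\in\{\ge0,+,b,\emptyset\}$ (resp.\ a right abelianization for $*\in\{\le0,-\}$) and invoke the long exact sequence in homology there. You simply make explicit the role of Corollary \ref{boundedtest} and goodness in transferring acyclicity back from the abelianization, which the paper leaves implicit.
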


\begin{proof}
One may assume that $\mathpzc{E}$ is abelian by passing to a left abelianisation for $*\in\{\ge0,+,b\}$, (or a right abelianisation for $*\in\{\le0,-\}$). The result in this case follows from the long exact sequence on homology.
\end{proof}

\begin{rem}
Timoth\'{e}e Moreau has pointed out a nice proof to us which does not use abelianisations.
\end{rem}

It turns out  that we always have the inclusions $\widetilde{\mathfrak{L}}\subset \widetilde{dg\mathfrak{L}}\cap\mathfrak{W}$, and $\widetilde{\mathfrak{R}}\subset \widetilde{dg\mathfrak{R}}\cap\mathfrak{W}$. This follows from the next result, which is an easy modification of the proof of \cite{Gillespie2} Lemma 3.9.

\begin{lem}\label{LRhomotop}
Every chain map from an $\mathfrak{L}$ complex to an $\mathfrak{R}$ complex is homotopic to $0$.
\end{lem}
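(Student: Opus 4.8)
The plan is to build an explicit null-homotopy for a chain map $f_{\bullet}\colon X_{\bullet}\to Y_{\bullet}$, where $X_{\bullet}$ is an $\mathfrak{L}$ complex and $Y_{\bullet}$ an $\mathfrak{R}$ complex; that is, to produce maps $D_{n}\colon X_{n}\to Y_{n+1}$ with $f_{n}=d^{Y}_{n+1}\circ D_{n}+D_{n-1}\circ d^{X}_{n}$ for all $n$. First I would record what acyclicity hands us. Since $X_{\bullet}$ and $Y_{\bullet}$ are acyclic, each differential is admissible, so the cycle objects exist and every differential factors as $d^{X}_{n}=\mu^{X}_{n-1}\circ\pi^{X}_{n}$ with $\pi^{X}_{n}\colon X_{n}\twoheadrightarrow Z_{n-1}X$ an admissible epic and $\mu^{X}_{n-1}\colon Z_{n-1}X\rightarrowtail X_{n-1}$ an admissible monic (and likewise for $Y$). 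By hypothesis $Z_{n}X\in\mathfrak{L}$ and $Z_{n}Y\in\mathfrak{R}$, and from the short exact sequences $0\to Z_{n}X\to X_{n}\to Z_{n-1}X\to 0$ together with closure of $\mathfrak{L}$ under extensions (Lemma \ref{trans}) we deduce $X_{n}\in\mathfrak{L}$ for every $n$.

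The single technical ingredient is a lifting principle: if $L\in\mathfrak{L}$ and $e\colon B\twoheadrightarrow C$ is an admissible epic with $\textrm{Ker}(e)\in\mathfrak{R}$, then $\textrm{Hom}(L,B)\to\textrm{Hom}(L,C)$ is surjective. This is immediate from the long exact $\textrm{Ext}$-sequence (Theorem \ref{longext}) and the vanishing $\textrm{Ext}^{1}(L,\textrm{Ker}(e))=0$, which holds because $(\mathfrak{L},\mathfrak{R})$ is a cotorsion pair. Granting this, the inductive step runs as follows. Supposing $D_{n-1}$ is built, set $g_{n}=f_{n}-D_{n-1}\circ d^{X}_{n}$. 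Using that $f_{\bullet}$ is a chain map together with the inductive identity for $D_{n-1}$, one computes $d^{Y}_{n}\circ g_{n}=d^{Y}_{n}f_{n}-d^{Y}_{n}D_{n-1}d^{X}_{n}=f_{n-1}d^{X}_{n}-d^{Y}_{n}D_{n-1}d^{X}_{n}=D_{n-2}d^{X}_{n-1}d^{X}_{n}=0$, so $g_{n}$ factors through $Z_{n}Y=\textrm{Ker}(d^{Y}_{n})$ as $g_{n}=\mu^{Y}_{n}\circ\bar{g}_{n}$. Applying the lifting principle to the admissible epic $\pi^{Y}_{n+1}\colon Y_{n+1}\twoheadrightarrow Z_{n}Y$ (whose kernel $Z_{n+1}Y$ lies in $\mathfrak{R}$) and to $X_{n}\in\mathfrak{L}$ yields $D_{n}\colon X_{n}\to Y_{n+1}$ with $\pi^{Y}_{n+1}\circ D_{n}=\bar{g}_{n}$, whence $d^{Y}_{n+1}\circ D_{n}=\mu^{Y}_{n}\circ\bar{g}_{n}=g_{n}$, which is exactly the required relation.

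For bounded below complexes the induction is started trivially by taking $D_{n}=0$ in all sufficiently negative degrees, and the recursion then determines the homotopy in every degree. The main obstacle is the genuinely unbounded case: the relation couples $D_{n}$ with $D_{n-1}$ and supplies no initial term, so one cannot simply begin the induction from below. Here I would pass to the equivalent formulation via Corollary \ref{dw}, which identifies chain-homotopy classes with $\textrm{Ext}^{1}_{dw}$, together with Proposition \ref{splitexact} and Proposition \ref{cone0}, reducing the claim to the splitting of a degreewise-split extension of the $\mathfrak{L}$ complex $X_{\bullet}$ by a shift of the $\mathfrak{R}$ complex $Y_{\bullet}$. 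The per-degree lifts above are the mechanism for producing such a splitting, and the delicate point is to arrange these local choices compatibly with the differentials; organizing this bookkeeping, following the argument of Gillespie in \cite{Gillespie2}, is the crux of the unbounded case, whereas the bounded variants used later in the paper are already covered by the direct construction above.
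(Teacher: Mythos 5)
Your lifting principle and the inductive step are both correct, and for bounded-below complexes your argument is complete; indeed it is essentially the paper's own degree-wise computation run as a recursion. The difficulty is that the lemma is stated, and used, for unbounded complexes as well (the corollary immediately following it, and hence Lemma \ref{dgcompat} and the results on $\textbf{Ch}(\mathpzc{E})$, rely on it with no boundedness hypothesis), and there your proposal stops at exactly the hard point. You correctly diagnose that the recursion $f_{n}=d^{Y}_{n+1}\circ D_{n}+D_{n-1}\circ d^{X}_{n}$ couples consecutive degrees and has no starting term, but the proposed remedy --- reformulating via Corollary \ref{dw} as the splitting of a degreewise-split extension and then ``organizing the bookkeeping'' following Gillespie --- is not a proof: arranging the local lifts compatibly with the differentials is precisely the content of the lemma, and you do not supply the mechanism that achieves it.

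The paper's proof avoids induction entirely by a two-step normalization in which every choice is made independently in each degree. First, for each $n$ one lifts the restriction $\hat{f}_{n}\colon Z_{n}X\rightarrow Z_{n}Y$ through the admissible epic $Y_{n+1}\twoheadrightarrow Z_{n}Y$ to a map $\alpha_{n}$ (using $Z_{n}X\in\mathfrak{L}$ and $Z_{n+1}Y\in\mathfrak{R}$), then extends $\alpha_{n}$ along $Z_{n}X\rightarrowtail X_{n}$ to $\beta_{n}\colon X_{n}\rightarrow Y_{n+1}$ (using $Z_{n-1}X\in\mathfrak{L}$ and $Y_{n+1}\in\mathfrak{R}$); the map $g_{n}=f_{n}-(d_{n+1}\beta_{n}+\beta_{n-1}d_{n})$ is homotopic to $f_{n}$ and satisfies $d_{n}g_{n}=0$ and $g_{n}d_{n+1}=0$. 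Second, such a $g$ factors as $g_{n}=\hat{g}_{n}\circ d_{n}$ with $\hat{g}_{n}\colon Z_{n-1}X\rightarrow Z_{n}Y$, and one lifts each $\hat{g}_{n}$ to $\delta_{n}\colon Z_{n-1}X\rightarrow Y_{n+1}$; the maps $D_{n}=\delta_{n}d_{n}$ then form a null-homotopy of $g$, the cross term $D_{n-1}d_{n}=\delta_{n-1}d_{n-1}d_{n}$ vanishing automatically, so no compatibility between $\delta_{n}$ and $\delta_{n-1}$ is ever required. Incorporating this decoupling (or an equivalent device) is what your write-up is missing for the unbounded case.
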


\begin{cor}
Let $(\mathfrak{L},\mathfrak{R})$ be a cotorsion pair in an exact category. Then $\widetilde{\mathfrak{L}}\subset \widetilde{dg\mathfrak{L}}\cap\mathfrak{W}$, and $\widetilde{\mathfrak{R}}\subset \widetilde{dg\mathfrak{R}}\cap\mathfrak{W}$.
\end{cor}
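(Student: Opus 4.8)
The plan is to recognise that this corollary is an essentially formal consequence of Lemma \ref{LRhomotop} together with the homological interpretation of $\textbf{Hom}$-acyclicity from Corollary \ref{dw}. Since the two asserted inclusions are dual to one another, I would prove only $\tilde{\mathfrak{L}}\subset dg\tilde{\mathfrak{L}}\cap\mathfrak{W}$ and then invoke duality (passing to the opposite exact category, which interchanges $\mathfrak{L}$ with $\mathfrak{R}$ and $\tilde{\mathfrak{L}}$ with $\tilde{\mathfrak{R}}$) for the second. So fix an $\mathfrak{L}$-complex $X\in\tilde{\mathfrak{L}}$. By definition such an $X$ is acyclic, which immediately gives $X\in\mathfrak{W}$, so the only real content is to verify the two defining conditions of $dg\tilde{\mathfrak{L}}$: that $X_{n}\in\mathfrak{L}$ for every $n$, and that $\textbf{Hom}(X,B)$ is exact for every $\mathfrak{R}$-complex $B$.

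For the first condition I would use acyclicity of $X$ to produce, for each $n$, the short exact sequence
$$0\rightarrow Z_{n}X\rightarrow X_{n}\rightarrow Z_{n-1}X\rightarrow 0,$$
whose outer terms lie in $\mathfrak{L}$ because $X\in\tilde{\mathfrak{L}}$. Since $\mathfrak{L}={}^{\perp}\mathfrak{R}$ is closed under finite extensions by Lemma \ref{trans}, this forces $X_{n}\in\mathfrak{L}$, exactly as in the already-proved dual statement for $\mathfrak{R}$-complexes. For the second condition I would apply Corollary \ref{dw} to rewrite the homology of the $\textbf{Hom}$-complex as
$$H_{n}\textbf{Hom}(X,B)\cong\textrm{Hom}_{\textbf{Ch}(\mathpzc{E})}(X,B[n])\big\slash\sim,$$
so that $\textbf{Hom}(X,B)$ is exact precisely when every chain map $X\rightarrow B[n]$ is null-homotopic. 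Each $B[n]$ is again an $\mathfrak{R}$-complex (shifting preserves acyclicity and only reindexes, up to sign, the cycle objects $Z_{m}$), and $X$ is an $\mathfrak{L}$-complex, so Lemma \ref{LRhomotop} says every such chain map is homotopic to $0$. Hence all these homology groups vanish, $\textbf{Hom}(X,B)$ is exact, and $X\in dg\tilde{\mathfrak{L}}$.

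I do not expect a serious obstacle here, since the argument is a clean assembly of earlier results; the only point requiring genuine care is the bookkeeping in the second step, namely confirming that shifts of $\mathfrak{R}$-complexes remain $\mathfrak{R}$-complexes so that Lemma \ref{LRhomotop} is applicable in every degree $n$, and that the identification of ``$\textbf{Hom}(X,B)$ exact'' with ``all shifted chain maps null-homotopic'' via Corollary \ref{dw} is legitimate in $\textbf{Ch}(\textbf{Ab})$. In the bounded variants one must also note that the shifted complex $B[n]$ is to be regarded inside the ambient $\textbf{Ch}(\mathpzc{E})$ when computing $H_{n}\textbf{Hom}(X,B)$, which causes no difficulty. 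Once these verifications are in place the corollary follows, and the dual inclusion $\tilde{\mathfrak{R}}\subset dg\tilde{\mathfrak{R}}\cap\mathfrak{W}$ is obtained by the same reasoning with the roles of $\mathfrak{L}$ and $\mathfrak{R}$ exchanged.
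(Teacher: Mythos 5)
Your proposal is correct and matches the paper's intended argument: the paper derives this corollary directly from Lemma \ref{LRhomotop}, with the membership $X_{n}\in\mathfrak{L}$ (resp. $\mathfrak{R}$) following from the short exact sequences on cycles and extension-closure exactly as in the proposition preceding Lemma \ref{dgbounded}, and the $\textbf{Hom}$-exactness via Corollary \ref{dw}. Your bookkeeping on shifts and the duality for the second inclusion is sound.
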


In order to have any chance of getting the reverse inclusion, we'll need the cotorsion pair on $\mathpzc{E}$ to be hereditary. The following definition and the subsequent proposition are immediate generalisations of \cite{rozas} \S 1.2.3 from abelian categories to exact categories.

\begin{defn}
A cotorsion pair $(\mathfrak{L},\mathfrak{R})$ is said to be \textbf{hereditary} if
$$\textrm{Ext}^{i}(A,B)=0$$
for any $A\in\mathfrak{L},B\in\mathfrak{R}$ and $i\ge 1$.
\end{defn}

\begin{example}
Clearly the projective cotorsion pair is hereditary.
\end{example}

\begin{prop}
Let $(\mathfrak{L},\mathfrak{R})$ be a hereditary cotorsion pair on a weakly idempotent complete  exact category $\mathpzc{E}$. Then
\begin{enumerate}
\item
$\mathfrak{L}$ is resolving. That is $\mathfrak{L}$ is closed under taking kernels of admissible epis.
\item
$\mathfrak{R}$ is coresolving. That is $\mathfrak{R}$ is closed under taking cokernels of admissible monics.
\end{enumerate}
If $\mathpzc{E}$ has enough $\mathfrak{R}$-projectives then $(\mathfrak{L},\mathfrak{R})$ is hereditary if and only if $\mathfrak{L}$ is resolving. Dually if $\mathpzc{E}$ has enough $\mathfrak{L}$-injectives then $(\mathfrak{L},\mathfrak{R})$ is hereditary if and only if $\mathfrak{R}$ is coresolving.
\end{prop}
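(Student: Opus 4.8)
The plan is to deduce the first two assertions directly from the long exact sequences for $\textrm{Ext}$ (Theorem \ref{longext}) together with the defining properties of a cotorsion pair and the hereditary hypothesis, and to obtain the two biconditionals by dimension shifting.

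For (1), suppose $0\to K\rightarrowtail L'\twoheadrightarrow L\to 0$ is short exact with $L',L\in\mathfrak{L}$; I must show $K\in\mathfrak{L}={}^{\perp}\mathfrak{R}$, i.e. $\textrm{Ext}^1(K,R)=0$ for every $R\in\mathfrak{R}$. Fixing such an $R$ and applying the contravariant long exact sequence of Theorem \ref{longext} to this short exact sequence gives an exact piece $\textrm{Ext}^1(L',R)\to\textrm{Ext}^1(K,R)\to\textrm{Ext}^2(L,R)$. The left group vanishes because $L'\in\mathfrak{L}$ and $R\in\mathfrak{R}$ form a cotorsion pair, and the right group vanishes because the pair is hereditary ($L\in\mathfrak{L}$, $R\in\mathfrak{R}$, $i=2$). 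Hence $\textrm{Ext}^1(K,R)=0$, so $K\in\mathfrak{L}$. Assertion (2) is exactly dual: for $0\to R\rightarrowtail R'\twoheadrightarrow C\to 0$ with $R,R'\in\mathfrak{R}$, fix $L\in\mathfrak{L}$ and use the covariant long exact sequence $\textrm{Ext}^1(L,R')\to\textrm{Ext}^1(L,C)\to\textrm{Ext}^2(L,R)$, whose outer terms vanish for the same two reasons, giving $C\in\mathfrak{L}^{\perp}=\mathfrak{R}$.

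For the biconditionals, the forward implication (hereditary $\Rightarrow$ resolving, resp. coresolving) is precisely (1), resp. (2), and needs no extra hypotheses. For the reverse implication assume $\mathfrak{L}$ is resolving and that $\mathpzc{E}$ has enough $\mathfrak{R}$-projectives; I will show $\textrm{Ext}^n(A,B)=0$ for all $A\in\mathfrak{L}$, $B\in\mathfrak{R}$ and all $n\ge 1$ by induction on $n$, the case $n=1$ being the cotorsion condition. Using enough $\mathfrak{R}$-projectives I choose an admissible epic $P\twoheadrightarrow A$ with $P$ an $\mathfrak{R}$-projective, so that $\textrm{Ext}^{i}(P,R)=0$ for all $i\ge 1$ and all $R\in\mathfrak{R}$, and let $\Omega A$ be its kernel. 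Since $P\in{}^{\perp}\mathfrak{R}=\mathfrak{L}$, $A\in\mathfrak{L}$ and $\mathfrak{L}$ is resolving, $\Omega A\in\mathfrak{L}$; the long exact sequence attached to $0\to\Omega A\to P\to A\to 0$ together with the vanishing of $\textrm{Ext}^{i}(P,B)$ for $i\ge 1$ yields the dimension-shift isomorphism $\textrm{Ext}^{n+1}(A,B)\cong\textrm{Ext}^{n}(\Omega A,B)$ for $n\ge 1$. Iterating and using that every syzygy remains in $\mathfrak{L}$, I reduce $\textrm{Ext}^{n}(A,B)$ to $\textrm{Ext}^1(\Omega^{n-1}A,B)=0$. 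The dual argument, coresolving $B$ by $\mathfrak{L}$-injectives, proves the second biconditional.

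The routine input is the bookkeeping of indices in Theorem \ref{longext}; the one point that must be handled with care---and the main obstacle---is the reverse implication, where the argument genuinely relies on the covering objects having \emph{all} higher $\textrm{Ext}$ into $\mathfrak{R}$ vanishing, not merely $\textrm{Ext}^1$. This is exactly what the hypothesis of enough $\mathfrak{R}$-projectives supplies, and it is the reason the converse direction, unlike (1) and (2), cannot be obtained from the cotorsion and resolving data alone.
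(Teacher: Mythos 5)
Your proof is correct and follows essentially the same route as the paper: parts (1) and (2) come from the long exact $\textrm{Ext}$ sequences of Theorem \ref{longext} combined with the cotorsion and hereditary hypotheses, and the converse is the same dimension-shifting argument along an $\mathfrak{R}$-projective cover, using that $\mathfrak{L}$ being resolving keeps every syzygy in $\mathfrak{L}$. The only cosmetic difference is that the paper's argument for (1) records the vanishing of all $\textrm{Ext}^{i}(K,B)$ for $i\ge 1$, whereas you extract only the $\textrm{Ext}^{1}$ vanishing actually needed.
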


With this result in hand  \cite{Gillespie2} Theorem 3.12 generalises immediately to the exact setting.

\begin{thm}
Let $(\mathfrak{L},\mathfrak{R})$ be a hereditary cotorsion pair in an exact category $\mathpzc{E}$. If $\mathpzc{E}$ has enough projectives then in $ Ch_{*}(\mathpzc{E})$ for $*\in\{\ge0,+,\emptyset\}$, $\widetilde{dg\mathfrak{R}}\cap\mathfrak{W}=\widetilde{\mathfrak{R}}$. If $\mathpzc{E}$ has enough injectives then in $ Ch_{*}(\mathpzc{E})$ for $*\in\{\le0,-,\emptyset\}$ $\widetilde{dg\mathfrak{L}}\cap\mathfrak{W}=\widetilde{\mathfrak{L}}$. In particular, if $\mathpzc{E}$ has enough projectives and injectives, then the induced cotorsion pairs on $\mathpzc{E}$ are compatible.
\end{thm}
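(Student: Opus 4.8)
The plan is to prove the first assertion, for enough projectives and $*\in\{\ge0,+,\emptyset\}$; the second statement is entirely dual, obtained by passing to a right abelianisation and replacing projective resolutions with injective ($\mathfrak{R}$-)coresolutions and the resolving property of $\mathfrak{L}$ with the coresolving property of $\mathfrak{R}$, and the final ``in particular'' will then be formal. Since the corollary preceding the theorem already gives $\tilde{\mathfrak{R}}\subseteq dg\tilde{\mathfrak{R}}\cap\mathfrak{W}$, the only content is the reverse inclusion. So I fix an acyclic $X\in dg\tilde{\mathfrak{R}}$; as $X$ is already acyclic it suffices to show each cycle $Z_nX$ lies in $\mathfrak{R}$, which, because $\mathfrak{R}=\mathfrak{L}^{\perp}$, amounts to proving $\textrm{Ext}^1_{\mathpzc{E}}(A,Z_nX)=0$ for every $A\in\mathfrak{L}$.

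The key reduction is to show that for each $A\in\mathfrak{L}$ the complex of abelian groups $\textbf{Hom}(S^0(A),X)$, which by the defining formula of the enriched hom (and $d^{S^0(A)}=0$) is precisely $\textrm{Hom}_{\mathpzc{E}}(A,X_{\bullet})$, is exact. Granting this, exactness at position $n-1$ says that the corestriction $\textrm{Hom}(A,X_n)\to\textrm{Hom}(A,Z_{n-1}X)=\textrm{Ker}\bigl(\textrm{Hom}(A,X_{n-1})\to\textrm{Hom}(A,X_{n-2})\bigr)$ is surjective. Feeding this into the long exact $\textrm{Ext}$-sequence of Theorem \ref{longext} attached to the short exact sequence $0\to Z_nX\to X_n\to Z_{n-1}X\to0$ coming from acyclicity, and using that $\textrm{Ext}^1(A,X_n)=0$ since $A\in\mathfrak{L}$ and $X_n\in\mathfrak{R}$, forces $\textrm{Ext}^1(A,Z_nX)=0$, as desired.

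To obtain exactness of $\textrm{Hom}(A,X_{\bullet})$ I will use both enough projectives and the hereditary hypothesis. Choose a projective resolution $\cdots\to P_1\to P_0\xrightarrow{\epsilon}A\to0$ and form the augmented complex $L=(\cdots\to P_1\to P_0\to A\to0)$ with $A$ in degree $0$. Then $L$ is acyclic, its entries lie in $\mathfrak{L}$ (projectives are in $\mathfrak{L}$ and $A\in\mathfrak{L}$), and since the pair is hereditary $\mathfrak{L}$ is resolving, so the cycles of $L$ are the syzygies $\Omega^iA\in\mathfrak{L}$; hence $L\in\tilde{\mathfrak{L}}$. Because the resolution grows upward, $L$ lies in $\textbf{Ch}_*(\mathpzc{E})$ exactly for $*\in\{\ge0,+,\emptyset\}$, which is where the restriction on $*$ comes from. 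Deleting the degree-$0$ term yields a subcomplex $\bar P$ and a degreewise-split short exact sequence $0\to\bar P\to L\to S^0(A)\to0$, to which $\textbf{Hom}(-,X)$ applies to give a short exact sequence of complexes of abelian groups. Here $\textbf{Hom}(L,X)$ is exact by the defining property of $dg\tilde{\mathfrak{R}}$ (as $L\in\tilde{\mathfrak{L}}$), while $\textbf{Hom}(\bar P,X)$ is exact because $\bar P$ is a bounded-below complex of projectives and $X$ is acyclic: the usual inductive lifting (the complex analogue of Lemma \ref{projextend}) shows every chain map $\bar P\to X[k]$ is null-homotopic, so $H_{\bullet}\textbf{Hom}(\bar P,X)=0$ by Corollary \ref{dw}. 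The long exact homology sequence then forces $\textbf{Hom}(S^0(A),X)=\textrm{Hom}(A,X_{\bullet})$ to be exact.

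The main obstacle, and the step requiring care, is the construction and placement of $L$: one must check that $L$ genuinely lands in $\tilde{\mathfrak{L}}$ inside the appropriate variant $\textbf{Ch}_*(\mathpzc{E})$ (this is precisely why only $\{\ge0,+,\emptyset\}$ occur with enough projectives, the cases $\{\le0,-\}$ demanding injective resolutions), and that the null-homotopy for $\bar P\to X$ is constructed degreewise from the bottom, which terminates in each fixed degree even though $\bar P$ is unbounded above. Once both exactness statements are secured they combine formally through the long exact sequence. For the ``in particular'', applying the two established equalities for $*=\emptyset$ gives $\tilde{\mathfrak{L}}=dg\tilde{\mathfrak{L}}\cap\mathfrak{W}$ and $\tilde{\mathfrak{R}}=dg\tilde{\mathfrak{R}}\cap\mathfrak{W}$, which together with the cotorsion pairs of Corollary \ref{bootcotor} exhibit the matching of classes required for compatibility.
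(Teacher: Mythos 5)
Your proof is correct, and in all essentials it is the paper's argument: you reduce $Z_nX\in\mathfrak{R}$ to surjectivity of $\textrm{Hom}(A,X_n)\rightarrow\textrm{Hom}(A,Z_{n-1}X)$ via the long exact $\textrm{Ext}$-sequence attached to $0\rightarrow Z_nX\rightarrow X_n\rightarrow Z_{n-1}X\rightarrow 0$, and you use the hereditary hypothesis exactly as the paper does, to place the augmented projective resolution $L=P_{\circ}$ of $A$ in $\tilde{\mathfrak{L}}$ so that the $dg\tilde{\mathfrak{R}}$ condition makes $\textbf{Hom}(L,X)$ exact. The one point of divergence is how the surjectivity is extracted. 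The paper extends a single morphism $f:A\rightarrow Z_{n-1}X$ to a chain map $P_{\circ}[n-1]\rightarrow X$ by Lemma \ref{projextend}, notes that this map is null-homotopic because $\textbf{Hom}(P_{\circ}[n-1],X)$ is exact, and reads the lift $f=d_nD_{n-1}$ directly off the homotopy in a single degree. You instead prove the stronger statement that all of $\textbf{Hom}(S^0(A),X)\cong\textrm{Hom}(A,X_{\bullet})$ is exact, by applying $\textbf{Hom}(-,X)$ to the degreewise-split sequence $0\rightarrow\bar P\rightarrow L\rightarrow S^0(A)\rightarrow 0$ and running the long exact homology sequence; this costs you one additional (standard, but genuinely extra) ingredient, namely that $\textbf{Hom}(\bar P,X)$ is acyclic when $\bar P$ is a bounded-below complex of projectives and $X$ is acyclic. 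That ingredient is sound in the exact setting for the reasons you indicate: acyclicity of $X$ makes its differentials admissible, so the cycles $Z_mX$ exist and projectivity of $\bar P_m$ lifts over the admissible epis $X_{m+1}\twoheadrightarrow Z_mX$, with the induction terminating in each fixed degree. Both routes impose the restriction $*\in\{\ge0,+,\emptyset\}$ for the same reason (the resolution lives in non-negative degrees), and the dual statement and the concluding ``in particular'' are handled identically; the paper's version is simply the leaner of the two.
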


Lemma 3.14 in \cite{Gillespie2}, which partially handles the case in which we may not have enough injectives or projectives also passes essentially unaffected to exact categories.

\begin{lem}\label{dgcompat}
Let $\mathpzc{E}$ be an exact category and $(\mathfrak{L},\mathfrak{R})$ a cotorsion pair on $\mathpzc{E}$. Consider the categories $ Ch_{*}(\mathpzc{E})$ for any $*\in\{\ge0,\le0,+,-,b,\emptyset\}$.
\begin{enumerate}
\item
If $(\widetilde{\mathfrak{L}},\widetilde{dg\mathfrak{R}})$ is a cotorsion pair with enough projectives and $\widetilde{dg\mathfrak{R}}\cap\mathfrak{W}=\widetilde{\mathfrak{R}}$ then $\widetilde{dg\mathfrak{L}}\cap\mathfrak{W}=\widetilde{\mathfrak{L}}$.
\item
If $(\widetilde{dg\mathfrak{L}},\widetilde{\mathfrak{R}})$ is a cotorsion pair with enough injectives and $\widetilde{dg\mathfrak{L}}\cap\mathfrak{W}=\widetilde{\mathfrak{L}}$ then $\widetilde{dg\mathfrak{R}}\cap\mathfrak{W}=\widetilde{\mathfrak{R}}$.
\end{enumerate}
\end{lem}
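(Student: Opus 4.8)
The plan is to prove statement (1); statement (2) follows by the evident dualisation (admissible epics replaced by admissible monics, projectives by injectives, and the roles of $\mathfrak{L}$ and $\mathfrak{R}$ interchanged). By the corollary following Lemma \ref{LRhomotop} the inclusion $\tilde{\mathfrak{L}}\subseteq dg\tilde{\mathfrak{L}}\cap\mathfrak{W}$ is already available, so everything reduces to the reverse inclusion. I would therefore fix a complex $X\in dg\tilde{\mathfrak{L}}\cap\mathfrak{W}$ and aim to show $X\in\tilde{\mathfrak{L}}$.

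First I would invoke the hypothesis that $(\tilde{\mathfrak{L}},dg\tilde{\mathfrak{R}})$ has enough projectives to produce a short exact sequence
$$0\rightarrow R\rightarrow L\xrightarrow{p} X\rightarrow 0$$
in $\textbf{Ch}_{*}(\mathpzc{E})$ with $L\in\tilde{\mathfrak{L}}$ and $R\in dg\tilde{\mathfrak{R}}$. Since $L\in\tilde{\mathfrak{L}}\subseteq\mathfrak{W}$ and $X\in\mathfrak{W}$, the thickness of $\mathfrak{W}$ (Proposition \ref{acycthick}) forces $R\in\mathfrak{W}$. Thus $R\in dg\tilde{\mathfrak{R}}\cap\mathfrak{W}$, and the standing hypothesis $dg\tilde{\mathfrak{R}}\cap\mathfrak{W}=\tilde{\mathfrak{R}}$ upgrades this to $R\in\tilde{\mathfrak{R}}$.

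The crux is to show that $p$ splits, and I would do this by a degreewise argument. In each degree the sequence reads $0\to R_{n}\to L_{n}\to X_{n}\to 0$ with $X_{n}\in\mathfrak{L}$ and $R_{n}\in\mathfrak{R}$, so $\textrm{Ext}^{1}_{\mathpzc{E}}(X_{n},R_{n})=0$ because $(\mathfrak{L},\mathfrak{R})$ is a cotorsion pair; hence the sequence is split in each degree. Being degreewise split, it represents a class in $\textrm{Ext}^{1}_{dw}(X,R)$, which by Corollary \ref{dw} is isomorphic to $H_{1}\textbf{Hom}(X,R)$. This group vanishes precisely because $X\in dg\tilde{\mathfrak{L}}$ and $R\in\tilde{\mathfrak{R}}$ make $\textbf{Hom}(X,R)$ acyclic. (Equivalently, wherever Proposition \ref{almostcotor} applies one may read off $\textrm{Ext}^{1}(X,R)=0$ directly from the identification $dg\tilde{\mathfrak{L}}={}^{\perp}\tilde{\mathfrak{R}}$.) So the extension is trivial and $p$ has a section, exhibiting $X$ as a retract of $L$.

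Finally, $\tilde{\mathfrak{L}}={}^{\perp}(dg\tilde{\mathfrak{R}})$ is the left class of a cotorsion pair, hence closed under retracts by Lemma \ref{trans}; as $L\in\tilde{\mathfrak{L}}$ and $X$ is a retract of $L$, I conclude $X\in\tilde{\mathfrak{L}}$, as required. The step demanding the most care is the splitting: one must first confirm that this particular resolution sequence is degreewise split, so that the acyclicity built into the definition of a $dg\mathfrak{L}$-complex can be accessed through $\textrm{Ext}^{1}_{dw}$, rather than trying to control the full group $\textrm{Ext}^{1}_{\textbf{Ch}_{*}(\mathpzc{E})}(X,R)$.
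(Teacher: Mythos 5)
Your proposal is correct and follows the same route as the paper's proof: use the enough-projectives hypothesis to produce $0\to R\to L\to X\to 0$ with $L\in\tilde{\mathfrak{L}}$ and $R\in dg\tilde{\mathfrak{R}}$, deduce $R\in\mathfrak{W}$ from thickness (Proposition \ref{acycthick}) and hence $R\in\tilde{\mathfrak{R}}$ by hypothesis, split the sequence via $\textrm{Ext}^{1}(X,R)=0$, and conclude by retract-closure of $\tilde{\mathfrak{L}}$. The only cosmetic difference is that you unfold the degreewise-splitting plus $\textrm{Ext}^{1}_{dw}$ argument inline, whereas the paper simply cites Proposition \ref{almostcotor} for the splitting; both are fine.
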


These next two results partially deal with the issue of completeness.

\begin{lem}\label{acker}
Let $\mathpzc{E}$ be an exact category.
Suppose
\begin{displaymath}
\xymatrix{
0\ar[r] & B\ar[r] & A\ar[r]^{f} & X\ar[r] & 0
}
\end{displaymath}
is a short exact sequence of complexes in the degree-wise exact structure with both $B$ and $\textrm{cone}(f)$ either good or cogood. Then $B$ is acyclic if and only if $f$ is a quasi-isomorphism.
\end{lem}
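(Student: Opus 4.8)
The plan is to reduce both implications to the corresponding statement in an abelian category, where it is the familiar consequence of the long exact homology sequence. I would fix an abelianization $K$ of $\mathpzc{E}$ — either a left one $I\colon\mathpzc{E}\to\mathpzc{A}$ or a right one $J\colon\mathpzc{E}\to\mathpzc{A}'$ — which by Corollary \ref{chainab} extends to an exact, fully faithful functor on chain complexes that I again denote $K$. Since the given sequence $0\to B\to A\xrightarrow{f} X\to 0$ is exact in each degree and $K$ is exact, $0\to K(B)\to K(A)\xrightarrow{K(f)} K(X)\to 0$ is a short exact sequence of complexes in an abelian category. The long exact homology sequence then shows that $K(B)$ is acyclic if and only if $K(f)$ induces isomorphisms on homology, i.e. if and only if $\textrm{cone}(K(f))$ is acyclic; and since $K$ is additive, $\textrm{cone}(K(f))=K(\textrm{cone}(f))$. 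Thus, in \emph{either} abelianization, $K(B)$ is acyclic precisely when $K(\textrm{cone}(f))$ is. The one asymmetry I would exploit is this: by Proposition \ref{abeltest}(1) applied degreewise (and dually for a right abelianization) every abelianization sends an acyclic complex to an acyclic complex, whereas \emph{reflecting} acyclicity requires the complex to be good (left abelianization, Corollary \ref{boundedtest}) or cogood (right abelianization, the dual statement).

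For the implication ``$B$ acyclic $\Rightarrow f$ a quasi-isomorphism'', I would recall that $f$ being a quasi-isomorphism means exactly that $\textrm{cone}(f)$ is acyclic. Here I would choose $K$ to be a left abelianization if $\textrm{cone}(f)$ is good, and a right abelianization if $\textrm{cone}(f)$ is cogood. Since $B$ is acyclic, $K(B)$ is acyclic; by the equivalence above $K(\textrm{cone}(f))$ is acyclic; and since $\textrm{cone}(f)$ is good (resp.\ cogood) for the chosen $K$, acyclicity is reflected, so $\textrm{cone}(f)$ is acyclic in $\mathpzc{E}$, i.e. $f$ is a quasi-isomorphism.

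For the converse I would assume $f$ is a quasi-isomorphism, so $\textrm{cone}(f)$ is acyclic, and now choose $K$ to be a left abelianization if $B$ is good and a right abelianization if $B$ is cogood. Then $K(\textrm{cone}(f))$ is acyclic, so by the equivalence $K(B)$ is acyclic, and since $B$ is good (resp.\ cogood) for this $K$ its acyclicity is reflected, giving $B$ acyclic in $\mathpzc{E}$. The main (and essentially only) obstacle is precisely that one must not insist on a single abelianization for both implications: when $B$ is good but $\textrm{cone}(f)$ is cogood (or vice versa) no single left or right abelianization reflects acyclicity of both complexes. The resolution is that in each direction the correct abelianization is dictated by the complex whose acyclicity is the \emph{conclusion}, and this is exactly why the hypothesis is allowed to declare $B$ and $\textrm{cone}(f)$ independently good or cogood.
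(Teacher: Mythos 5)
Your proof is correct and follows essentially the same route as the paper: pass to an abelianization, use a long exact homology sequence to identify acyclicity of $K(B)$ with acyclicity of $\textrm{cone}(K(f))$, and reflect back using the good/cogood hypothesis (the paper uses the $\ker$--$\textrm{cone}$--$\textrm{coker}$ sequence of Weibel Exercise 1.59, which collapses to your sequence since $f$ is a degreewise admissible epic). You are in fact more careful than the paper's terse ``suitable abelianization'': your observation that the abelianization should be chosen according to the complex whose acyclicity is the \emph{conclusion} of each implication is exactly the point the paper leaves implicit.
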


\begin{proof}
Let $I:\mathpzc{E}\rightarrow\mathpzc{A}$ a suitable abelianisation . Then by \cite{Weibel} Exercise 1.59 there is a long exact sequence 
\begin{displaymath}
\xymatrix{
\ldots\ar[r] & H_{n+1}(\textrm{Ker}(I(f_{\bullet})))\ar[r] & H_{n}(\textrm{cone}(I(f_{\bullet})))\ar[r] & H_{n}(\textrm{Coker}(I(f_{\bullet})))\ar[r] &   \ldots\\
&\ar[r] & H_{n-1}(\textrm{Ker}(I(f)))\ar[r]&\ldots
}
\end{displaymath}
If $f_{\bullet}$ is a quasi-isomorphism, then $\textrm{cone}(I(f_{\bullet}))$ is acyclic. It is also an admissible epimorphism, so $\textrm{Coker}(I(f_{\bullet}))=0$. Hence $\textrm{Ker}(I(f_{\bullet}))=I(B)$ is acyclic.

If $B$ is acyclic then again since $\textrm{Coker}(I(f_{\bullet}))=0$, $H_{n}(\textrm{cone}(I(f_{\bullet})))=0$ as well. Thus $I(f)$ is a quasi-isomorphism, so $f$ is as well.

\end{proof}

\begin{prop}\label{dgcomplete}
Let $(\mathfrak{L},\mathfrak{R})$ be a functorially complete cotorsion pair on a weakly idempotent complete exact  category $\mathpzc{E}$. Then any complex $X_{\bullet}$ of $ Ch_{*}(\mathpzc{E})$ where $*\in\{\ge0,+\}$ admits a resolution by an object $L_{\bullet}$ of $\widetilde{dg\mathfrak{L}}$ whose kernel is an acyclic complex $R_{\bullet}$ with $R_{n}\in\mathfrak{R}$. In particular the cotorsion pair $(dg\widetilde{\textbf{Proj}},\widetilde{\textbf{Ob}})$ on both $ Ch_{\ge0}(\mathpzc{E})$ and $ Ch_{+}(\mathpzc{E})$  has enough functorial projectives. 
\end{prop}

\begin{proof}
Let $X_{\bullet}$ be an object of $ Ch_{*}(\mathpzc{E})$ where $*\in\{\ge0,+\}$. By an easy adaptation of the proof of Lemma \ref{enoughres}, one can find a (functorial) quasi-isomorphism $f_{\bullet}:L_{\bullet}\rightarrow X_{\bullet}$ with each $L_{n}$ an object of $\mathfrak{L}$, which is an admissible epimorphism, and whose kernel is a complex $R_{\bullet}$ with $R_{n}\in\mathfrak{R}$. Now $L_{\bullet}$ is a $dg\mathfrak{L}$ complex by Lemma \ref{dgbounded}. By Lemma \ref{acker} $R_{\bullet}$ is acyclic.
\end{proof}

\subsection{Properties of dg-Model Structures}

\begin{defn}
Let $\mathpzc{E}$ be a weakly idempotent complete exact category and $(\mathfrak{L},\mathfrak{R})$ a cotorsion pair on $\mathpzc{E}$. 
\begin{enumerate}
\item
We say that $(\mathfrak{L},\mathfrak{R})$ is $dg_{\ge0}$-compatible if $(\widetilde{dg\mathfrak{L}},\widetilde{\mathfrak{R}})$ is a functorially complete cotorsion pair on $Ch_{\ge0}(\mathpzc{E})$, $\mathfrak{W}\cap \widetilde{dg\mathfrak{L}}=\widetilde{\mathfrak{L}}$ and the model structure whose cofibrations are $\textrm{Infl}(\widetilde{dg\mathfrak{L}})$, and whose acyclic cofibrations are $\textrm{Infl}(\widetilde{\mathfrak{L}})$ exists on $Ch_{\ge0}(\mathpzc{E})$.
\item
We say that $(\mathfrak{L},\mathfrak{R})$ is $dg_{\le0}$-compatible if $(\widetilde{\mathfrak{L}},\widetilde{dg\mathfrak{R}})$ is a functorially complete cotorsion pair on $Ch_{\le0}(\mathpzc{E})$, $\mathfrak{W}\cap \widetilde{dg\mathfrak{R}}=\widetilde{\mathfrak{R}}$ and the model structure whose fibrations are $\textrm{Defl}(\widetilde{dg\mathfrak{R}})$, and whose acyclic fibrations are $\textrm{Defl}(\widetilde{\mathfrak{R}})$ exists on $Ch_{\le0}(\mathpzc{E})$.
\item
For $*\in\{b,+,-\emptyset\}$ we say that $(\mathfrak{L},\mathfrak{R})$ is $dg_{*}$-compatible if $(\widetilde{\mathfrak{L}},\widetilde{dg\mathfrak{R}})$ and $(\widetilde{dg\mathfrak{L}},\widetilde{\mathfrak{R}})$ are (functorially) complete cotorsion pairs on $ Ch_{*}(\mathpzc{E})$, $dg\mathfrak{L}\cap\mathfrak{W}=\widetilde{\mathfrak{L}}$, and $dg\mathfrak{R}\cap\mathfrak{W}=\widetilde{\mathfrak{R}}$
\end{enumerate}
\end{defn}

Let us establish some properties of model structures arising from $dg_{*}$-compatible cotorsion pairs. First note the following easy observation.
\begin{prop}\label{truncquillen}
Let $(\mathfrak{L},\mathfrak{R})$ be a cotorsion pair on $\mathpzc{E}$ which has kernels, and which is both $dg_{\ge0}$-compatible and $dg$-compatible. Then with the induced model structures on $Ch_{\ge0}(\mathpzc{E})$ and $Ch(\mathpzc{E})$ the adjunction
$$\adj{i}{Ch_{\ge0}(\mathpzc{E})}{Ch(\mathpzc{E})}{\tau_{\ge0}}$$
is a Quillen adjunction, where $i$ is the natural inclusion functor.
\end{prop}
\begin{prop}\label{adepitrunc}
Let $\mathpzc{E}$ be weakly idempotent complete.
\begin{enumerate}
\item
If $(\mathfrak{L},\mathfrak{R})$ is $dg_{\ge0}$-compatible and $f:X\rightarrow Y$ is a fibration, then $f$ is an admissible epimorphism in each strictly positive degree. If $f$ is an acyclic fibration then $f$ is a quasi-isomorphism and an admissible epimorphism in each degree.
\item
If $(\mathfrak{L},\mathfrak{R})$ is $dg_{\le0}$-compatible and $f:X\rightarrow Y$ is a cofibration, then $f$ is an admissible monomorphism in each strictly negative degree. If $f$ is an acyclic cofibration then $f$ is a quasi-isomorphism and an admissible monomorphism in each degree.
\end{enumerate}
\end{prop}

\begin{proof}
We prove the first claim, the second being dual. First assume that $f$ is a fibration. Then $f$ has the right lifting property with respect to acyclic cofibrations. In particular it has the right lifting property with respect to maps of the form $0\rightarrow D^{n}(L)$ where $n\ge 1$ and $L\in\mathfrak{L}$. Since $(\mathfrak{L},\mathfrak{R})$ is complete, this implies that $f_{n}$ is an admissible epimorphism in each strictly positive degree. Now suppose that $f$ is an acyclic fibration. By the first part $f_{n}$ is an admissible epimorphism in each strictly positive degree. But $f$ also has the right lifting property with respect to the map $0\rightarrow S^{0}(L)$ for any $L$ in $\mathfrak{L}$, implying that $f_{0}$ is an admissible epimorphism. Now for any $n $ and any $L\in\mathfrak{L}$ one can always find a lift in the diagram
\begin{displaymath}
\xymatrix{
S^{n}(L)\ar[d]\ar[r] & X\ar[d]^{f}\\
D^{n+1}(L)\ar[r] & Y
}
\end{displaymath}
Indeed for $n<-1$ this is trivial, and for $n\ge0$ this follows from the fact that $f$ is an acyclic fibration. For $n=-1$ the finding a lift amounts to finding a lift in the diagram
\begin{displaymath}
\xymatrix{
0\ar[r]\ar[d] & X_{0}\ar[d]^{f_{0}}\\
L\ar[r] & Y_{0}
}
\end{displaymath}
i.e. a lift in the diagram
\begin{displaymath}
\xymatrix{
0\ar[r]\ar[d] & X\ar[d]\\
S^{0}(L)\ar[r] & Y
}
\end{displaymath}
Now the claim follows from Corollary \ref{liftquasi}.
\end{proof}

\begin{prop}\label{cofibrantshift}
Let $(\mathfrak{L},\mathfrak{R})$ be a $dg_{*}$-compatible cotorsion pair for $*\in\{\emptyset,\ge0,b,+\}$. If $f$ is a cofibration (resp. trivially cofibrant) in $Ch_{*}(\mathpzc{E})$, then so is $f[-1]$. 
\end{prop}
\begin{proof}
It suffices to observe that the classes $\widetilde{dg\mathfrak{L}}$ and $\widetilde{\mathfrak{L}}$ are closed under applying $[-1]$. 
\end{proof}
\begin{prop}\label{conecofibrantiscofibrant}
Let $(\mathfrak{L},\mathfrak{R})$ be a $dg_{*}$-compatible cotorsion pair for $*\in\{\emptyset,\ge0,b,+\}$. If $f:X\rightarrow Y$ is a map between cofibrant objects then $cone(f)$ is cofibrant. It is trivially cofibrant whenever $f$ is a quasi-isomorphism.
\end{prop}
\begin{proof}
There is an exact sequence 
$$0\rightarrow Y\rightarrow cone(f)\rightarrow X[-1]\rightarrow 0$$
Thus the result follows from Proposition \ref{cofibrantshift} and Corollary \ref{trans}.
\end{proof}
If $(\mathfrak{L},\mathfrak{R})$ is $dg_{*}$-compatible, then there is an induced  model structure on $ Ch_{*}(\mathpzc{E})$. The resulting model structure will have quasi-isomorphisms as its weak equivalences.

\begin{prop}\label{weakquasmod}
Suppose that  $*\in\{\ge0,\le0,+,-,b\}$ and that $(\mathfrak{L},\mathfrak{R})$ is a $dg_{*}$-compatible cotorsion pair on an exact category $\mathpzc{E}$. The weak equivalences in the induced model structure are precisely the quasi-isomorphisms. If $\mathpzc{E}$ has all kernels then this is also true for $*\in\{\emptyset\}$.
\end{prop}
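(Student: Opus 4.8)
The plan is to read off the model structure from the Hovey triple and then reduce everything to the abelian case by abelianisation. By hypothesis $(\mathfrak{L},\mathfrak{R})$ is $dg_*$-compatible, so $(dg\tilde{\mathfrak{L}},\tilde{\mathfrak{R}})$ and $(\tilde{\mathfrak{L}},dg\tilde{\mathfrak{R}})$ are complete cotorsion pairs with $dg\tilde{\mathfrak{L}}\cap\mathfrak{W}=\tilde{\mathfrak{L}}$ and $dg\tilde{\mathfrak{R}}\cap\mathfrak{W}=\tilde{\mathfrak{R}}$, while $\mathfrak{W}$ is thick by Proposition \ref{acycthick}. Thus $(dg\tilde{\mathfrak{L}},\mathfrak{W},dg\tilde{\mathfrak{R}})$ is a Hovey triple and Theorem \ref{weakmon} gives the induced model structure $(\mathcal{C},\mathcal{W},\mathcal{F})$ with $\mathcal{C}=\textrm{Infl}(dg\tilde{\mathfrak{L}})$, $\mathcal{F}=\textrm{Defl}(dg\tilde{\mathfrak{R}})$ and $\mathcal{W}=\{p\circ i:\ i\in\textrm{Infl}(\mathfrak{W}),\ p\in\textrm{Defl}(\mathfrak{W})\}$. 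Throughout I would use a left abelianisation $I$ for $*\in\{\ge0,+,b,\emptyset\}$ and a right abelianisation for $*\in\{\le0,-\}$; in each admissible case the relevant mapping cones are good (bounded below) or cogood (bounded above), and for $*=\emptyset$ the hypothesis that $\mathpzc{E}$ has all kernels guarantees every complex, hence every cone, is good. This is exactly what licenses Corollary \ref{boundedtest} and Proposition \ref{adquas} (and their duals) to detect acyclicity and quasi-isomorphy after applying $I$.

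First I would record two facts: an admissible monic $i$ with acyclic cokernel is a quasi-isomorphism, and an admissible epic $p$ with acyclic kernel is a quasi-isomorphism. For the former, the defining short exact sequence $0\to X\to Z\to C\to 0$ with $C$ acyclic maps under $I$ to a short exact sequence of complexes with $I(C)$ acyclic by Corollary \ref{boundedtest}; the long exact homology sequence in the abelianisation shows $I(i)$ is a quasi-isomorphism, whence $i$ is one by Proposition \ref{adquas}, and the latter is dual. Since $\tilde{\mathfrak{L}},\tilde{\mathfrak{R}}\subseteq\mathfrak{W}$ by definition, every element of $\textrm{Infl}(\mathfrak{W})$ and of $\textrm{Defl}(\mathfrak{W})$ is a quasi-isomorphism; as composites of quasi-isomorphisms are quasi-isomorphisms (again checked after applying $I$, by functoriality of homology), every $f\in\mathcal{W}$ is a quasi-isomorphism.

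For the reverse inclusion let $f\colon X\to Y$ be a quasi-isomorphism. Using the weak factorisation system $(\mathcal{C},\mathcal{F}\cap\mathcal{W})$ attached to the cotorsion pair $(dg\tilde{\mathfrak{L}},\tilde{\mathfrak{R}})$, I would factor $f=p\circ i$ with $i\in\mathcal{C}=\textrm{Infl}(dg\tilde{\mathfrak{L}})$ and $p\in\mathcal{F}\cap\mathcal{W}=\textrm{Defl}(\tilde{\mathfrak{R}})$. By the previous paragraph $p$ is a quasi-isomorphism, and two-out-of-three for quasi-isomorphisms (valid after applying $I$, via the long exact sequence of the cone) forces $i$ to be a quasi-isomorphism too. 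Now $i$ is an admissible monic whose cokernel $C$ lies in $dg\tilde{\mathfrak{L}}$; since $i$ is a quasi-isomorphism, the long exact homology sequence of $0\to X\to Z\to C\to 0$ shows $C$ is acyclic, so $C\in dg\tilde{\mathfrak{L}}\cap\mathfrak{W}=\tilde{\mathfrak{L}}$ by $dg_*$-compatibility. Hence $i\in\textrm{Infl}(\tilde{\mathfrak{L}})\subseteq\textrm{Infl}(\mathfrak{W})$ and $p\in\textrm{Defl}(\tilde{\mathfrak{R}})\subseteq\textrm{Defl}(\mathfrak{W})$, so $f=p\circ i\in\mathcal{W}$ directly from the description of $\mathcal{W}$ above.

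All the genuine algebra happens inside the abelian category, so the one delicate point is the passage to an abelianisation. The main obstacle is thus making sure acyclicity and quasi-isomorphy are detected by $I$ in every case: this is automatic in the bounded cases thanks to goodness/cogoodness, but it can fail for unbounded complexes, which is precisely why the extra hypothesis that $\mathpzc{E}$ has all kernels is imposed for $*=\emptyset$ — it renders every complex good, so that Corollary \ref{boundedtest} and Proposition \ref{adquas} apply verbatim.
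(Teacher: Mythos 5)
Your proof is correct and follows essentially the same route as the paper's: show that admissible monics (resp.\ epics) with acyclic cokernel (resp.\ kernel) are quasi-isomorphisms by passing to a suitable abelianisation and using the long exact sequence of the cone, then factor an arbitrary map through the weak factorisation systems and invoke two-out-of-three together with thickness of $\mathfrak{W}$. You merely spell out the step the paper leaves implicit, namely that a cofibration which is a quasi-isomorphism has cokernel in $dg\tilde{\mathfrak{L}}\cap\mathfrak{W}=\tilde{\mathfrak{L}}$ and is therefore trivial.
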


\begin{proof}
First we show that admissible monics and admissible epics which are weak equivalences are quasi-ismorphisms. By duality it suffices to show it for epics and $*\in\{\ge0,b,+,-,\emptyset,\le0\}$.  Let $f:B\rightarrow C$ be an an admissible epic which is a weak equivalence. It is sufficient to show that $I(f)$ is a quasi-isomorphism, where $I:\mathpzc{E}\rightarrow\mathpzc{A}(\mathpzc{E})$ is a suitable abelianisation. Now we have an exact sequence
\begin{displaymath}
\xymatrix{
0\ar[r] & A\ar[r]^{g} & B\ar[r]^{f} & C\ar[r] & 0
}
\end{displaymath}
Let us argue that $A$ is acyclic. We can factor $f$ as $f=p\circ i$ where $p$ is an acyclic fibration and $i$ is an acyclic cofibration. By Proposition \ref{adepitrunc} for $*\in\{\ge0,\le0\}$ and Theorem \ref{weakmon} for $*\in\{b,+,-,\emptyset\}$, both $i$ is an admissible monomorphism with acyclic cokernel, and $p$ is an admissible epimorphism with acyclic kernel. By the snake lemma there is an exact sequence $0\rightarrow ker(f)\rightarrow ker(p)\rightarrow coker(i)\rightarrow 0$. In particular $ker(f)$ is acyclic by Lemma \ref{acker}. Again by Lemma \ref{acker}, $f$ is acyclic. 
Now let $f$ be a morphism of $ Ch_{*}(\mathpzc{E})$. Factor it as $p\circ i$ where $i$ is a fibration, $p$ is a cofibration and either $p$ or $i$ is trivial, and therefore a quasi-isomorphism. By the exact triangle (after passing to an abelianisation)
$$\textrm{cone}(i)\rightarrow\textrm{cone}(f)\rightarrow\textrm{cone}(p)\rightarrow^{+1}$$
and the fact that acyclic complexes form a thick subcategory, we find that $f$ is a quasi-isomorphism if and only if the other factor is trivial. 
\end{proof}

\begin{prop}\label{transquas}
Let $\mathpzc{E}$ be an exact category and $\mathpzc{S}$ a class of morphisms in $\mathpzc{E}$ closed under finite direct sums. Suppose that $\mathpzc{E}$ is weakly $\mathpzc{S}$-elementary. Then transfinite compositions of quasi-isomorphisms in $ Ch(\mathpzc{E})$ which are also maps in $\mathpzc{S}$ are quasi-isomorphisms. 
\end{prop}

\begin{proof}
The proof is by transfinite induction. Since a finite composition of quasi-isomorphisms is a quasi-isomorphism, the successor part of the induction is finished. Now let $\lambda$ be a limit ordinal and $F:\lambda\rightarrow Ch(\mathpzc{E})$ a continuous functor with $F(\alpha\le\beta)_{n}\in\mathpzc{S}$ for any morphism $\alpha\le\beta$ in $\lambda$ and $n\in\mathbb{Z}$. For $\alpha\le\beta\le\lambda$ denote by $f_{\alpha,\beta}$ the map $F_{\alpha}\rightarrow F_{\beta}$. For $\beta\le\lambda$ write $f_{\beta}=f_{0,\beta}$. It is clear that
$$\textrm{cone}(f_{\lambda})\cong\textrm{lim}_{\rightarrow_{\beta<\lambda}}\textrm{cone}(f_{\beta})$$
Since each $f_{\beta}$ is a quasi-isomorphism, $\textrm{cone}(f_{\beta})$ is acyclic. Since $\mathpzc{E}$ is weakly $\mathpzc{S}$-elementary, this implies $\textrm{lim}_{\rightarrow_{\beta<\lambda}}\textrm{cone}(f_{\beta})$ is acyclic, which means that $\textrm{cone}(f_{\lambda})$ is acyclic and hence that $f_{\lambda}$ is a quasi-isomorphism.
\end{proof}

\begin{cor}
Let $\mathpzc{E}$ be a weakly idempotent complete exact category and $\mathcal{S}$ a class of morphisms in $\mathpzc{E}$ such that $\mathpzc{E}$ is weakly $\mathpzc{S}$-elementary as an exact category. Suppose that there is a model category structure on $Ch(\mathpzc{E})$ in which the weak equivalences are the quasi-isomorphisms, and the class of cofibrations is contained in $Ch(\mathcal{S})$. Then $Ch(\mathpzc{E})$ is weakly $Ch(\mathcal{S})$-elementary as a model category (Definition \ref{defn:weaklyelmodel}).
\end{cor}
\begin{proof}
Let $\lambda$ be a limit ordinal and $F:\lambda\rightarrow Ch(\mathpzc{E})$ a continuous functor with $F(\alpha\le\beta)_{n}\in\mathpzc{S}$. Let $\widetilde{F}\rightarrow F$ be a cofibrant replacement for $F$. Then $\widetilde{F}(\alpha\le\beta)$ is a cofibration so is in $Ch(\mathcal{S})$. Thus the map $\textrm{lim}_{\rightarrow_{\alpha<\lambda}}\widetilde{F}\rightarrow\textrm{lim}_{\rightarrow_{\alpha<\lambda}}F$ is an equivalence, as required. 
\end{proof}

Such model structures are also both left and right proper. More generally, we have the following.

\begin{prop}\label{lrprop}
Let $\mathpzc{E}$ be a weakly idempotent complete exact category. Let  $*\in\{\ge0,\le0,+,-,b\}$. Suppose there is a model structure on $ Ch_{*}(\mathpzc{E})$ whose weak equivalences are the quasi-isomorphisms and such that any cofibration is an admissible monomorphism in each degree. Then the model structure is left proper. If $\mathpzc{E}$ has all kernels then this is also true for $ Ch(\mathpzc{E})$. Dually, if any fibration is an admissible epimorphism in each degree then the model structure is right proper.

\end{prop}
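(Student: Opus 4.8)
The plan is to verify directly the defining property of left properness: in any pushout square
\begin{displaymath}
\xymatrix{
A_{\bullet}\ar[r]^{f}\ar@{>->}[d]_{i} & B_{\bullet}\ar@{>->}[d]^{i'}\\
C_{\bullet}\ar[r]^{f'} & D_{\bullet}
}
\end{displaymath}
in which $i$ is a cofibration and $f$ is a weak equivalence, the map $f'$ is again a weak equivalence. By hypothesis $i$ is an admissible monomorphism in each degree, and $f$ is a quasi-isomorphism.

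First I would analyse the pushout degreewise. Since colimits of chain complexes are computed degreewise, for each $n$ the square of $n$-th components is a pushout in $\mathpzc{E}$ with $i_{n}$ an admissible monic. Admissible monics are stable under pushout, so $i'_{n}$ is an admissible monic, and by Proposition \ref{bicart} the two vertical admissible monics share a cokernel, $\textrm{Coker}(i_{n})\cong\textrm{Coker}(i'_{n})$. Assembling over $n$ shows that $i'$ is a cofibration and that there is a single complex $Q_{\bullet}=\textrm{Coker}(i)\cong\textrm{Coker}(i')$, together with a morphism of short exact sequences of complexes
$$0\rightarrow A_{\bullet}\rightarrow C_{\bullet}\rightarrow Q_{\bullet}\rightarrow 0,\qquad 0\rightarrow B_{\bullet}\rightarrow D_{\bullet}\rightarrow Q_{\bullet}\rightarrow 0$$
whose vertical components are $f$, $f'$ and $\textrm{id}_{Q_{\bullet}}$. (Each displayed row is short exact in $\textbf{Ch}_{*}(\mathpzc{E})$ because short exactness there is tested degreewise.)

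Next I would apply the mapping cone functor to the three vertical maps. As $\textrm{cone}(-)$ is functorial and is in each degree a direct sum, the resulting sequence
$$0\rightarrow\textrm{cone}(f)\rightarrow\textrm{cone}(f')\rightarrow\textrm{cone}(\textrm{id}_{Q_{\bullet}})\rightarrow 0$$
is short exact in $\textbf{Ch}_{*}(\mathpzc{E})$, being in each degree a direct sum of two short exact sequences coming from the rows above. Now $\textrm{cone}(f)$ is acyclic since $f$ is a quasi-isomorphism, and $\textrm{cone}(\textrm{id}_{Q_{\bullet}})$ is acyclic since $\textrm{id}_{Q_{\bullet}}$ is a homotopy equivalence and hence a quasi-isomorphism. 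Because the acyclic complexes form a thick subcategory (Proposition \ref{acycthick}, which holds for $*\in\{\ge0,\le0,+,-,b\}$, and for $*=\emptyset$ exactly when $\mathpzc{E}$ has all kernels, matching the hypothesis for $\textbf{Ch}(\mathpzc{E})$), the remaining term $\textrm{cone}(f')$ is acyclic; that is, $f'$ is a quasi-isomorphism. This establishes left properness.

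For right properness I would run the dual argument on a pullback of a quasi-isomorphism $g$ along a fibration $p$: the hypothesis makes $p$ a degreewise admissible epimorphism, pullback-stability of admissible epics makes $p'$ one as well, and Proposition \ref{pullbackkernel} shows the two kernels coincide in a complex $K_{\bullet}$. This yields a morphism of degreewise short exact sequences with $\textrm{id}_{K_{\bullet}}$ on the left, and taking cones and invoking thickness as before shows the pulled-back map is a quasi-isomorphism. The main obstacle is purely structural bookkeeping: checking that pushout along a degreewise admissible monic preserves the cokernel, so that the two short exact sequences genuinely share the quotient $Q_{\bullet}$, and that the cone functor turns the ladder into an honest short exact sequence of complexes. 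Both reduce to Proposition \ref{bicart} and to stability of short exactness under finite direct sums; once they are in place, thickness of the acyclic complexes does the rest, with no need to choose between left and right abelianizations since Proposition \ref{acycthick} already absorbs the boundedness hypotheses.
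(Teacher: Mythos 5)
Your proof is correct, but it takes a genuinely different route from the paper's. The paper proves the (dual) right-proper case by invoking Proposition \ref{bicart} to reduce to an abelian category and then chasing elements through the pullback; your argument stays inside the exact category. You use the bicartesian characterisation (Proposition \ref{bicart}, resp.\ Proposition \ref{pullbackkernel}) only to identify the common cokernel $Q_{\bullet}$ (resp.\ kernel $K_{\bullet}$), then feed the resulting ladder of degreewise short exact sequences into the mapping cone, and finish with thickness of the acyclic complexes (Proposition \ref{acycthick}) together with the fact that $\textrm{cone}(\textrm{id})$ is acyclic. The degreewise verification that the sequence of cones is short exact (a direct sum of the two given short exact sequences in each degree) is exactly the bookkeeping you flag, and it goes through. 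Your route buys two things: it avoids the element chase entirely, and it handles the unbounded case more transparently, since the only place the kernel hypothesis on $\mathpzc{E}$ enters is through Proposition \ref{acycthick} for $*=\emptyset$ --- whereas the paper's reduction to the abelian case implicitly also needs quasi-isomorphisms to be detected by the embedding, which for unbounded complexes rests on the goodness discussion (Corollary \ref{boundedtest}, Proposition \ref{adquas}). The mechanism you use is essentially the one behind Lemma \ref{acker}, promoted from a single short exact sequence to a morphism of short exact sequences. One cosmetic point: for $*\in\{\ge0,\le0\}$ the cones leave the subcategory (e.g.\ the cone of a map in $\textbf{Ch}_{\le0}$ lives in $\textbf{Ch}_{\le 1}$), but they remain in $\textbf{Ch}_{-}$ or $\textbf{Ch}_{+}$ respectively, where Proposition \ref{acycthick} still applies, so nothing breaks.
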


\begin{proof}
The dual case is slightly easier to write down, so we will prove that. We need to check that, given a pull-back diagram
\begin{displaymath}
\xymatrix{
A_{\bullet}\ar[r]^{p'}\ar[d]^{q'} & B_{\bullet}\ar[d]^{q}\\
X_{\bullet}\ar[r]^{p} & Y_{\bullet}
}
\end{displaymath}
where $p$ is an admissible epic, and $q$ is a quasi-isomorphism, then $q'$ is a quasi-isomorphism. By Lemma \ref{bicart} without loss of generality, we may assume that the category $\mathpzc{E}$ is actually abelian. We argue by elements. $A_{\bullet}$ is isomorphic to 
$$\{(x,b)\in X_{\bullet}\times B_{\bullet}:p(x)=q(b)\}$$
with $q'$ and $p'$ being the restrictions of the projections. Suppose $(x,b)\in\textrm{Ker}(d_{n}^{A})$ is such that $q'(x,b)=x=0$. But then $q(b)=p(x)=0$. So $b=d_{n+1}^{B}(\widetilde{b})$ for some $b$, and $(x,b)=d_{n+1}^{A}((0,\widetilde{b}))$. Now suppose $x\in\textrm{Ker}(d_{n}^{X})$. Then $p(x)\in\textrm{Ker}(d_{n}^{Y})$. Thus there is a $b\in \textrm{Ker}(d_{n}^{B})$ and a $\widetilde{y}\in Y_{n+1}$ such that $q(b)=p(x)+d_{n+1}^{Y}(\widetilde{y})$. Now, $p$ is an epic, so there is $\widetilde{x}\in  X_{n+1}$ such that $\widetilde{y}=p(\widetilde{x})$. Write $a=(x+d_{n+1}^{X}(\widetilde{x}),b)$. Then $a\in A_{\bullet}$ and $q'(a)=x+d_{n+1}^{X}(\widetilde{x})$. This shows that $q'$ is a quasi-isomorphism.
\end{proof}

\begin{cor}\label{coradmhcof}
Let $\mathpzc{E}$ be a weakly idempotent complete exact category. Let  $*\in\{\ge0,\le0,+,-,b\}$. Suppose there is a model structure on $ Ch_{*}(\mathpzc{E})$ whose weak equivalences are the quasi-isomorphisms and such that any cofibration is an admissible monomorphism in each degree. Then degree-wise admissible monomorphisms are $h$-cofibrations (Definition \ref{defn:hcof}). If $\mathpzc{E}$ has all kernels then this is also true for $ Ch(\mathpzc{E})$. Dually, if any fibration is an admissible epimorphism in each degree then degree-wise admissible epimorphisms are $h$-fibrations.
\end{cor}
\begin{proof}
We prove the left properness claim, the right properness claim being dual. The model structure is left proper by Proposition \ref{lrprop}. By the proof of Proposition \ref{lrprop} pushouts of weak equivalences along degree-wise admissible monomorphisms are weak equivalences. Moreover degree-wise admissible monomorphisms are pushout-stable. The result now follows from Poposition \ref{prop:pushoutstableftprop}.
\end{proof}
We also have the following useful result.
\begin{prop}
Suppose there is a model structure on $Ch(\mathpzc{E})$, where $\mathpzc{E}$ is weakly idempotent complete, whose weak equivalences are the quasi-isomorphisms. Then the model category is stable (\ref{defn:stable}).
\end{prop}
\begin{proof}
We need to compute the suspension functor $0\coprod_{X}^{\mathbb{L}}0$. Now $X\rightarrow cone(Id_{X})$ is a $h$-cofibration. Thus $0\coprod_{X}^{\mathbb{L}}0$ can be computed as the pushout $cone(Id_{X})\coprod_{X}0\cong X[-1]$. Thus as one should expect, the suspension functor is the shift functor, which is an auto-equivalence.
\end{proof}

\subsection{Small dg-Cotorsion Pairs}
Let us now examine when the cotorsion pair $(\widetilde{dg\mathfrak{L}},\widetilde{\mathfrak{R}})$ is small. Again this is done in \cite{gillespie2016derived} Section 4.4 for the $G$-exact structure on a Grothendieck abelian category.

\begin{prop}\label{dgsmall}
Let $(\mathfrak{L},\mathfrak{R})$ be a cotorsion pair in an exact category $\mathpzc{E}$ which has a set of admissible generators $\mathcal{G}$. Suppose that $(\mathfrak{L},\mathfrak{R})$ is cogenerated by a set $\{A_{i}\}_{i\in I}$. Then  $(\widetilde{dg\mathfrak{L}},\widetilde{\mathfrak{R}})$ is cogenerated by the set 
$$S=\{S^{n}(G):G\in\mathcal{G},n\in\Z\}\cup\{S^{n}(A_{i}):n\in\Z,i\in I\}$$
for $*\in\{+\}$ (and. $*\in\{\emptyset\}$ if $\mathpzc{E}$ has kernels) and 
$$S=\{S^{n}(G):G\in\mathcal{G},n\ge0\}\cup\{S^{n}(A_{i}):n\ge0,i\in I\}$$
for $*\in\{\ge0\}$. 

Furthermore, suppose $(\mathfrak{L},\mathfrak{R})$ is small with generating morphisms the maps $\{0\rightarrow G:G\in\mathcal{G}\}$ together with monics $k_{i}$ as below (one for each $i\in I$).
\begin{displaymath}
\xymatrix{
0\ar[r] & Y_{i}\ar[r]^{k_{i}} & Z_{i}\ar[r] & A_{i}\ar[r] & 0
}
\end{displaymath}
Then $(\widetilde{dg\mathfrak{L}},\widetilde{\mathfrak{R}})$ is small with generating morphisms the set
$$\widetilde{I}=\{0\rightarrow D^{n}(G)\}\cup\{S^{n-1}(G)\rightarrow D^{n}(G)\}\cup\{S^{n}(k_{i}):S^{n}(Y_{i})\rightarrow S^{n}(Z_{i})\}$$
for $*\in\{+\}$ (and. $*\in\{\emptyset\}$ if $\mathpzc{E}$ has kernels)
and
$$\widetilde{I}=\{0\rightarrow S^{0}(G)\}\cup\{0\rightarrow D^{n}(G):n>0\}\cup\{S^{n-1}(G)\rightarrow D^{n}(G):n>0\}$$
$$\;\;\;\;\;\;\;\;\;\;\;\;\;\;\;\;\;\;\;\;\;\;\;\;\;\;\;\;\;\;\;\;\;\;\;\;\;\;\cup\{S^{n}(k_{i}):S^{n}(Y_{i})\rightarrow S^{n}(Z_{i}):n\ge0\}$$
for $*\in\{\ge0\}$.
\end{prop}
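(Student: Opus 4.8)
The plan is to deduce everything from the fact (Corollary \ref{bootcotor}) that $(dg\tilde{\mathfrak{L}},\tilde{\mathfrak{R}})$ is already a cotorsion pair on $\textbf{Ch}_{*}(\mathpzc{E})$, so in particular $\tilde{\mathfrak{R}}=(dg\tilde{\mathfrak{L}})^{\perp}$. For the cogeneration statement I would first observe that every element of $S$ lies in $dg\tilde{\mathfrak{L}}$: each $S^{n}(G)$ and each $S^{n}(A_{i})$ is a bounded-below complex with a single nonzero entry lying in $\mathfrak{L}$, so Lemma \ref{dgbounded} applies. Consequently $\tilde{\mathfrak{R}}\subseteq S^{\perp}$ automatically, and the entire content is the reverse inclusion: if $\textrm{Ext}^{1}(s,X)=0$ for every $s\in S$ then $X\in\tilde{\mathfrak{R}}$.

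To prove this reverse inclusion I would split the two families of cogenerators according to their two jobs. The spheres $S^{n}(G)$ detect acyclicity. Given a map $f\colon S^{n}(G)\to X$, pushing the degreewise-split admissible monic $S^{n}(G)\rightarrowtail D^{n+1}(G)$ (whose cokernel is $S^{n+1}(G)$) out along $f$ produces, via Proposition \ref{bicart}, a short exact sequence representing a class in $\textrm{Ext}^{1}(S^{n+1}(G),X)$; this class vanishes by hypothesis, and a splitting of the pushout sequence furnishes the desired extension of $f$ to $D^{n+1}(G)$. Since $X$ is good in each of the relevant categories (bounded-below complexes are good, and all complexes are good when $\mathpzc{E}$ has kernels), Lemma \ref{genexact} then gives that $X$ is acyclic. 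Once acyclicity is known, Lemma \ref{extstuff}(7) yields $\textrm{Ext}^{1}(A_{i},Z_{n}X)\cong\textrm{Ext}^{1}(S^{n}(A_{i}),X)=0$ for every $i$ and every $n$, and because $\{A_{i}\}$ cogenerates $(\mathfrak{L},\mathfrak{R})$ this forces $Z_{n}X\in\mathfrak{R}$. Hence $X\in\tilde{\mathfrak{R}}$, which establishes cogeneration.

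For the smallness statement I would verify the three clauses of the definition for $(dg\tilde{\mathfrak{L}},\tilde{\mathfrak{R}})$. The first holds because $\{D^{n}(G)\}$ (together with $S^{0}(G)$ in the $*\in\{\ge0\}$ case) is a set of admissible generators of $\textbf{Ch}_{*}(\mathpzc{E})$ by Corollary \ref{chaingen} and lies in $dg\tilde{\mathfrak{L}}$ by Lemma \ref{dgbounded}; the maps $0\to D^{n}(G)$ (and $0\rightarrowtail S^{0}(G)$) serve as the corresponding generating morphisms of the $0\to U$ type. The second clause is precisely the cogeneration just proved. For the third I would take the admissible monic with cokernel $S^{n}(G)$ to be $S^{n-1}(G)\rightarrowtail D^{n}(G)$ (and $0\rightarrowtail S^{0}(G)$ when $n=0$ in the $\ge0$ case), and the admissible monic with cokernel $S^{n}(A_{i})$ to be $S^{n}(k_{i})$. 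Assuming $\textrm{Hom}(-,X)$ is surjective on all of these, surjectivity on $S^{n-1}(G)\rightarrowtail D^{n}(G)$ says every map $S^{n-1}(G)\to X$ extends to $D^{n}(G)$, so $X$ is acyclic by Lemma \ref{genexact}; surjectivity on $S^{n}(k_{i})$, combined with the identifications $\textrm{Hom}(S^{n}(C),X)\cong\textrm{Hom}(C,Z_{n}X)$ of Lemma \ref{extstuff}(3), says $\textrm{Hom}(k_{i},Z_{n}X)$ is surjective for all $i$, whence $Z_{n}X\in\mathfrak{R}$ by the third smallness clause for $(\mathfrak{L},\mathfrak{R})$. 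Thus $X\in\tilde{\mathfrak{R}}$, and $\tilde{I}$ is exactly the asserted set of generating morphisms.

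The main obstacle is the acyclicity half of the reverse inclusion: one must convert the abstract vanishing $\textrm{Ext}^{1}(S^{n+1}(G),X)=0$ into the concrete statement that maps out of a sphere extend over the disk, which is precisely the hypothesis of Lemma \ref{genexact}. The pushout-and-split manoeuvre along $S^{n}(G)\rightarrowtail D^{n+1}(G)$ is what bridges these, and it matters that this monic is degreewise split so that its pushout is a genuine kernel-cokernel pair. The rest is bookkeeping: matching the degree ranges in the $*\in\{+,\emptyset\}$ and $*\in\{\ge0\}$ cases, where in the latter $D^{0}(G)$ is unavailable and its role is taken over by the extra generator $S^{0}(G)$ and the generating morphism $0\rightarrowtail S^{0}(G)$.
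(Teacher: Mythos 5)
Your proposal is correct and follows essentially the same route as the paper's proof: both establish $S\subseteq dg\tilde{\mathfrak{L}}$ via Lemma \ref{dgbounded}, deduce acyclicity of any $X\in S^{\perp}$ from the vanishing of $\textrm{Ext}^{1}(S^{n}(G),X)$ together with Lemma \ref{genexact}, identify $\textrm{Ext}^{1}(S^{n}(A_{i}),X)$ with $\textrm{Ext}^{1}(A_{i},Z_{n}X)$ via Lemma \ref{extstuff}, and verify the three smallness clauses with the same generating morphisms. The only cosmetic difference is that you unpack the connecting homomorphism explicitly as a pushout-and-split of $S^{n}(G)\rightarrowtail D^{n+1}(G)$, where the paper simply reads off the surjectivity of $\textrm{Hom}(D^{n}(G),X)\rightarrow\textrm{Hom}(S^{n-1}(G),X)$ from the long exact $\textrm{Ext}$ sequence.
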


\begin{proof}
For $*\in\{+,\emptyset\}$ the proof of \cite{kaplansky} Proposition 3.8 generalises immediately to exact categories. Now consider the case $*\in\{\ge0\}$. The only difference in the proof is that now the generating set for $ Ch_{\ge0}(\mathpzc{E})$ is $\{D^{n}(G):G\in\mathcal{G}:n>0\}\cup\{S^{0}(G):G\in\mathcal{G}\}$. This is also a subset of $\widetilde{dg\mathfrak{L}}$.
\end{proof}

\begin{rem}\label{circlesenough}
The proof of the above proposition in fact shows that if $\textbf{Hom}(S^{n}(F),X_{\bullet})$ is acyclic for any generating collection consisting of objects in $\mathfrak{L}$ (i.e. not necessarily a set), then $X_{\bullet}$ is a $\widetilde{\mathfrak{R}}$-complex.
\end{rem}

\begin{rem}\label{dgcellular}
In the situation of the previous proposition, if the domains of the generating morphisms for the cotorsion pair $(\mathfrak{L},\mathfrak{R})$ are $(\kappa,\mathcal{S})-^{\star}$ for $\star\in\{small, compact, presented, tiny\}$ then the domains of the maps in $I$ are  $Ch(\mathcal{S})-{\star}$ by Proposition \ref{diagrampresented}.
\end{rem}

\subsection{Monoidal Model Structures on Chain Complexes}
In this section we investigate when cotorsion pairs on monoidal exact categories induce monoidal model structures on the category of chain complexes. First we have the following easy results, which says when a complex is flat.

\begin{prop}\label{flatcomplex}
Let $(\mathpzc{E},\otimes,k)$ be an additive symmetric monoidal category with $\mathpzc{E}$ an exact category.  For $*\in\{\ge0,\le0,b,+,-\}$ the flat objects in $( Ch_{*}(\mathpzc{E}),\otimes,S^{0}(k))$ are precisely the complexes $F_{\bullet}$ in $ Ch_{*}(\mathpzc{E})$ such that for each $n\in\Z$, $F_{n}$ is flat. If in addition countable direct sums exist and are exact, then the flat objects in $( Ch(\mathpzc{E}),\otimes,S^{0}(k))$ are also the complexes $F_{\bullet}$ that for each $n\in\Z$, $F_{n}$ is flat
\end{prop}

\begin{proof}
Let
\begin{displaymath}
\xymatrix{
0\ar[r] & X_{\bullet}\ar[r] & Y_{\bullet}\ar[r] & Z_{\bullet}\ar[r] & 0
}
\end{displaymath}
be a short exact sequence in $ Ch_{*}(\mathpzc{E})$. Let $F_{\bullet}$ be a complex. Then the $n$th row of
\begin{displaymath}
\xymatrix{
0\ar[r]&X_{\bullet}\otimes F_{\bullet}\ar[r] & Y_{\bullet}\otimes F_{\bullet}\ar[r] & Z_{\bullet}\otimes F_{\bullet}\ar[r] & 0
}
\end{displaymath}
is
\begin{displaymath}
\xymatrix{
0\ar[r] & \bigoplus_{i+j=n}X_{i}\otimes F_{j}\ar[r] & \bigoplus_{i+j=n}Y_{i}\otimes F_{j}\ar[r] & \bigoplus_{i+j=n}Z_{i}\otimes F_{j}\ar[r] & 0
}
\end{displaymath}
Since the direct sums involved are exact, this sequence is short exact if  for each $i,j$,
\begin{displaymath}
\xymatrix{
0\ar[r] & X_{i}\otimes F_{j}\ar[r] & Y_{i}\otimes F_{j}\ar[r] &Z_{i}\otimes F_{j}\ar[r] & 0
}
\end{displaymath}
is short exact. It follows immediately that a complex whose entries are flat in $\mathpzc{E}$ is itself a flat object in $ Ch_{*}(\mathpzc{E})$. To see that a flat complex must have flat entries, simply take a short exact sequence in $\mathpzc{E}$, and regard it as a short exact sequence in $ Ch_{*}(\mathpzc{E})$ concentrated in degree $0$.
\end{proof}

\begin{defn}\label{tensormodel}
Let $(\mathpzc{E},\otimes)$ be a monoidal exact category. A cotorsion pair $(\mathfrak{L},\mathfrak{R})$ on $\mathpzc{E}$ is said to be \textbf{monoidally }$dg_{*}$-\textbf{compatible} for $*\in\{\ge0,+,\emptyset\}$ if
\begin{enumerate}
\item
 $(\mathfrak{L},\mathfrak{R})$ is $dg_{*}$ compatible.
 \item
For $*\in\emptyset$ countable product functors are admissibly exact and and countable coproduct functors are admissibly coexact 
\item
$\mathfrak{L}$ contains $k$, and is closed under $\otimes$.
\item
short exact sequences in $\mathfrak{L}$ are $\mathfrak{L}$-pure.
\end{enumerate}
Now let $*\in\{\ge0,\emptyset\}$. If in addition objects of $\mathfrak{L}$ are flat, $\mathpzc{E}$ is weakly $(\lambda,\textbf{PureMon})$-elementary for any ordinal $\lambda$, where $\textbf{PureMon}$ is the class of pure monomorphisms, and every (trivially) cofibrant complex is an $(\aleph_{0};\textbf{PureMon})$-extension of bounded below (trivially) cofibrant complexes, then the cotorsion pair is said to be \textbf{ strongly monoidally }$dg_{*}$-\textbf{compatible}.
\end{defn}

\begin{prop}\label{homdg}
Let $(\mathfrak{L},\mathfrak{R})$ be a hereditary monoidally $dg_{\ge0}$-compatible cotorsion pair on $\mathpzc{E}$, with $\mathpzc{E}$ weakly idempotent complete.. 
\begin{enumerate}
\item
If $L_{\bullet}$ is a $dg{\widetilde{\mathfrak{L}}}$-complex and $R_{\bullet}$ is a $\widetilde{\mathfrak{R}}$-complex then $\underline{Hom}(L_{\bullet},R_{\bullet})$ is a $\widetilde{\mathfrak{R}}$-complex. 
\item
If $L_{\bullet}$ is a $\widetilde{\mathfrak{L}}$-complex and $R_{\bullet}$ is a $\widetilde{dg\mathfrak{R}}$ complex, then $\underline{Hom}(L_{\bullet},R_{\bullet})$ is a $\widetilde{\mathfrak{R}}$-complex.
\end{enumerate}
\end{prop}

\begin{proof}
By Remark \ref{circlesenough} for both statements it suffices to show that $\textbf{Hom}(S^{n}(F),\underline{Hom}(L_{\bullet},R_{\bullet}))$ is acyclic for any $F\in\mathfrak{L}$, and all appropriate $n$. But $\textbf{Hom}(S^{n}(F),\underline{Hom}(L_{\bullet},R_{\bullet}))\cong\textbf{Hom}(S^{n}(F)\otimes L_{\bullet},R_{\bullet})$.

Thus for the first statement we need to show that $S^{n}(F)\otimes L_{\bullet}$ is a $\widetilde{dg\mathfrak{L}}$-complex when $L_{\bullet}$ is, and for the second we need to show that it is a $\widetilde{\mathfrak{L}}$-complex when $L_{\bullet}$ is. Since objects of $F$ are flat and $\mathfrak{L}$ is closed under $\otimes$, if we can show that first statement then the second follows immediately. 

Let $X_{\bullet}$ be a $\widetilde{\mathfrak{R}}$-complex. Then
$$\textbf{Hom}(S^{n}(F)\otimes L_{\bullet},X_{\bullet})\cong\textbf{Hom}(L_{\bullet},\underline{Hom}(S^{n}(F),X_{\bullet}))$$
To show that this is acyclic it now suffices to show that $\underline{Hom}(S^{n}(F),X_{\bullet})$ is a $\widetilde{\mathfrak{R}}$-complex. By shifting we may assume that $n=0$, and then this complex is just the internal hom functor in $\mathpzc{E}$ taken degree-wise, $\underline{Hom}(F,X_{\bullet})$. Since $X_{\bullet}$ is an acyclic complex with $X_{n}\in\mathfrak{R}$ and $Z_{n}X\in\mathfrak{R}$ for each $n$, this complex is clearly exact, and $Z_{n}\underline{Hom}(F,X_{\bullet})\cong \underline{Hom}(F,Z_{n}X)$. Finally we reduce to showing that for $F\in\mathfrak{L}$ and $G\in\mathfrak{R}$, $\underline{Hom}(F,G)\in\mathfrak{R}$. As the cotorsion pair $(\mathfrak{L},\mathfrak{R})$ is complete, it suffices to show that $\textbf{Hom}(Z_{\bullet},S^{0}(\underline{Hom}(F,G)))$ is acyclic whenever $Z_{\bullet}$ is a bounded below $\widetilde{\mathfrak{L}}$ complex. Let $Z_{\bullet}$ be such a complex. Now $\textbf{Hom}(Z_{\bullet},S^{0}(\underline{Hom}(F,G)))\cong\textbf{Hom}(Z_{\bullet}\otimes S^{0}(F),S^{0}(G))$. $Z_{\bullet}\otimes S^{0}(F)$ is the complex obtained from $Z_{\bullet}$ by tensoring with $F$ degree-wise. By the assumptions on $\mathfrak{L}$ this is clearly a $\widetilde{\mathfrak{L}}$-complex. Moreover $S^{0}(G)$ is a $\widetilde{dg\mathfrak{R}}$-complex. Thus $\textbf{Hom}(Z_{\bullet},\underline{Hom}(S^{0}(F),S^{0}(G)))$ is acyclic, and $\underline{Hom}(S^{0}(F),S^{0}(G))$ is a $\widetilde{dg\mathfrak{R}}$-complex.
\end{proof}

\begin{prop}\label{monoidaldgcompat}
Let $(\mathfrak{L},\mathfrak{R})$ be a monoidally $dg_{*}$-compatible cotorsion pair for $*\in\{\ge,\emptyset\}$ on weakly idempotent complete $\mathpzc{E}$. The model category structure induced by $(\mathfrak{L},\mathfrak{R})$ on $Ch_{*}(\mathpzc{E})$ is monoidal.
\end{prop}

\begin{proof}
For the first part we use Theorem \ref{exactmonoidal}. First suppose that the cotorsion pair is monoidally $dg_{*}$-compatible. Clearly $S^{0}(k)$ is cofibrant. Let $L$ and $L'$ be $\widetilde{dg\mathfrak{L}}$ complexes and let $R$ be a $\widetilde{\mathfrak{R}}$ complex. Then
$$\textbf{Hom}(L\otimes L',\mathfrak{R})\cong\textbf{Hom}(L,\underline{Hom}(L',\mathfrak{R}))$$
By Proposition \ref{homdg} $\underline{Hom}(L',\mathfrak{R})$ is a $\widetilde{\mathfrak{R}}$-complex. Therefore $\textbf{Hom}(L,\underline{Hom}(L',\mathfrak{R}))$ is acyclic. Hence $L\otimes L'$ is a $\widetilde{dg\mathfrak{L}}$-complex. In particular the class of cofibrant objects is closed under $\otimes$. If one of them is acyclic then again using Proposition \ref{homdg} $L\otimes L'$ is also acyclic. 
\end{proof}

\begin{prop}
Let $\mathfrak{L}$ be a class of objects in a monoidal weakly idempotent complete exact category $\mathpzc{E}$, and suppose that $\mathpzc{E}$ is weakly $\textbf{PureMon}$-elementary
\begin{enumerate}
\item
Suppose that any admissible monomorphism with cokernel in $\mathfrak{L}$ is pure. If $X$ is any complex then for any complex $L$ in $\widetilde{\mathfrak{L}}$, $L\otimes X$ is acyclic.
\item
Suppose that objects in $\mathfrak{L}$ are flat. If $L$ is an $(\aleph_{0};\textbf{PureMon})$-extension of bounded below complexes of objects in $\mathfrak{L}$ then for any acyclic complex $X$, $X\otimes L$ is acyclic.
\end{enumerate}
In particular if there is a  strongly monoidally $dg_{*}$-compatible cotorsion pair $(\mathfrak{L},\mathfrak{R})$ for $*\in\{\ge0,\emptyset\}$, then the induced model structure satisfies the monoid axiom. Moreover in this case if $C$ is cofibrant and $X$ is acyclic then $C\otimes X$ is acyclic. 
\end{prop}

\begin{proof}
In either case we can write it as $L=\textrm{lim}_{\rightarrow}L^{n}$ where the maps $L^{n}\rightarrow L^{n+1}$ are pure monomorphisms, and each $L^{n}$ is bounded below. If $L$ is in $\widetilde{\mathfrak{L}}$ we may assume that each $L^{n}$ is acyclic. Let $X$ be any complex. Then $X\otimes L\cong lim_{\rightarrow_{n}}(X\otimes L^{n})$. Once again the maps $X\otimes L^{n}\rightarrow X\otimes L^{n+1}$ are pure monomorphisms.

Suppose that either $X$ or $L$ is acyclic. We want to show that $X\otimes L$ is acyclic.  By Proposition \ref{transquas} it suffices to show that $L^{n}\otimes X$ is acyclic. Without loss of generality let us assume that $L^{n}$ is concentrated in degrees $\ge0$. 
First suppose condition 1) is satisfied. We need to show that $L^{n}\otimes X$ is acyclic whenever $L^{n}$ is a bounded below complex in $\widetilde{\mathfrak{L}}$. Such a complex can be written as an $(\aleph_{0};\textbf{PureMon})$-extension of bounded complexes in $\widetilde{\mathfrak{L}}$. So we may in fact assume that $L^{n}$ is bounded. We induct on the length of $L^{n}$. Suppose $L$ is a complex in $\widetilde{\mathfrak{L}}$ of length $k+1$. There is an exact sequence
$$0\rightarrow D^{k+1}(L_{k+1}) \rightarrow L\rightarrow \tau_{\le k} L\rightarrow 0$$
This is pure exact since $\tau_{\le k} L^{n}$ is a complex of objects in $\mathfrak{L}$. Therefore tensoring with $X$ gives an exact sequence
$$0\rightarrow X\otimes D^{k+1}(L_{k+1}) \rightarrow X\otimes L\rightarrow X\otimes \tau^{\ge k} L\rightarrow 0$$
By assumption $X\otimes \tau_{\le k} L$ is acyclic. Up to a shift, $X\otimes D^{k+1}(L_{k+1})$ is $X\otimes cone(Id_{L_{k+1}})$ and is therefore acyclic. Again by thickness, $X\otimes L$ is acyclic.

Suppose now that conidition 2) is satisfied. Now $L^{n}$ may be obtained from $S^{0}(L^{n}_{0})$ by a transfinite composition of pushouts of the form
\begin{displaymath}
\xymatrix{
S^{k}(F)\ar[d]\ar[r] & A\ar[d]\\
D^{k+1}(F)\ar[r] & B
}
\end{displaymath}
Tensoring with $X$ gives a pushout diagram
\begin{displaymath}
\xymatrix{
S^{k}(F)\otimes X\ar[d]\ar[r] & A\otimes X\ar[d]\\
D^{k+1}(F)\otimes X\ar[r] & B\otimes X
}
\end{displaymath}
$S^{k}(F)\rightarrow D^{k+1}(F)$ is a pure monomorphism. Therefore there is an exact sequence $$0\rightarrow S^{k}(F)\otimes X\rightarrow D^{k+1}(F)\otimes X\rightarrow S^{k+1}(F)\otimes X\rightarrow 0$$ By pushout there is an exact sequence $$0\rightarrow A\otimes X\rightarrow B\otimes X\rightarrow S^{k+1}(F)\otimes X\rightarrow 0$$ $S^{k+1}(F)\otimes X$ is acyclic since, up to shift, it is tensoring with a flat object. By induction on the length of the complex we may assume that $A\otimes X$ is acyclic. Since acyclic objects are thick, $B\otimes X$ is acyclic. Moreover the map $A\otimes X\rightarrow B\otimes X$ is a pure monomorphism. Therefore $L^{n}\otimes X$ is an $(\aleph_{0};\textbf{PureMon})$-extension of acyclic objects, and so is acylic.
\end{proof}

\section{The Projective Model Structure}\label{sec5}
In this section we specialise to the cotorsion pair $(\textbf{Proj}(\mathpzc{E}),\textbf{Ob}(\mathpzc{E}))$.  $\mathpzc{E}$ will be an exact category with enough functorial projectives. We denote the collection of all projective objects in $\mathpzc{E}$ by $\textbf{Proj}(\mathpzc{E})$

\begin{defn}
Let $\mathpzc{E}$ be an exact category. If it exists, the \textbf{projective model structure} on $ Ch_{*}(\mathpzc{E})$, for $*\in\{+,\emptyset,\le0\}$ is the model structure in which
\begin{itemize}
\item
Weak equivalences are quasi-isomorphisms.
\item
Fibrations are degree-wise admissible epics.
\item
Cofibrations are maps which have the left lifting property with respect to acyclic fibrations.
\end{itemize}
\end{defn}

\begin{prop}\label{funcinproj}
Let $\mathpzc{E}$ be an exact category. Suppose that the cotorsion pair $(dg\widetilde{\textbf{Proj}(\mathpzc{E})},\widetilde{\textbf{Ob}(\mathpzc{E}}))$ on $ Ch_{*}(\mathpzc{E})$ for $*\in\{+,\ge0,\emptyset\} $ has enough functorial projectives. Then it has enough functorial injectives.
\end{prop}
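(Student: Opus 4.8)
The plan is to invoke the classical argument of Salce, which shows that a cotorsion pair with enough projectives has enough injectives once one can functorially embed every object into an object of the right-hand class $\tilde{\mathfrak{R}}$. Write $(\mathfrak{L},\mathfrak{R})=(\textbf{Proj}(\mathpzc{E}),\textbf{Ob}(\mathpzc{E}))$ for the projective cotorsion pair (Example \ref{cotorsionproj}); the pair under consideration is then $(dg\tilde{\mathfrak{L}},\tilde{\mathfrak{R}})$, which is a cotorsion pair by Corollary \ref{bootcotor}, and $\tilde{\mathfrak{R}}$ is exactly the class of acyclic complexes. In the classical abelian setting one embeds $X$ into an injective object, which automatically lies in $\tilde{\mathfrak{R}}$; here there need not be enough injectives, but their role is played by a contractible complex. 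Concretely, for a complex $X$ I would take the mapping cone $\textrm{cone}(\mathrm{id}_X)$: since $\mathrm{id}_X$ is an isomorphism, hence a homotopy equivalence, hence a quasi-isomorphism, $\textrm{cone}(\mathrm{id}_X)$ is acyclic and so lies in $\tilde{\mathfrak{R}}$. The canonical map $\tau\colon X\to\textrm{cone}(\mathrm{id}_X)$ is split in each degree, hence an admissible monic, its cokernel is $X[-1]$, and the whole construction is functorial in $X$.

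Next I would run Salce's pullback. Applying the hypothesis that $(dg\tilde{\mathfrak{L}},\tilde{\mathfrak{R}})$ has enough functorial projectives to $C:=X[-1]$ produces a functorial short exact sequence $0\to R\to W\xrightarrow{p} C\to 0$ with $W\in dg\tilde{\mathfrak{L}}$ and $R\in\tilde{\mathfrak{R}}$. Form the pullback $E$ of the admissible epic $\pi\colon\textrm{cone}(\mathrm{id}_X)\twoheadrightarrow C$ along $p$. Because admissible epics are stable under pullback and pullbacks preserve kernels (Proposition \ref{pullbackkernel}), the two projections out of $E$ fit into short exact sequences $0\to R\to E\to\textrm{cone}(\mathrm{id}_X)\to 0$ and $0\to X\to E\xrightarrow{\sigma} W\to 0$.

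Finally, in the first sequence both $R$ and $\textrm{cone}(\mathrm{id}_X)$ lie in $\tilde{\mathfrak{R}}$, and $\tilde{\mathfrak{R}}$ is closed under extensions (it is the right-hand class of a cotorsion pair; equivalently, acyclic complexes form a thick subcategory, Proposition \ref{acycthick}), so $E\in\tilde{\mathfrak{R}}$. The second sequence then exhibits the admissible monic $X\to E$ with target in $\tilde{\mathfrak{R}}$ and cokernel $W\in dg\tilde{\mathfrak{L}}$, which is precisely what ``enough injectives for $X$'' requires; and since each step was functorial, we obtain enough functorial injectives. The only genuinely substantive point is the construction in the first paragraph, namely that $\textrm{cone}(\mathrm{id}_X)$ supplies a functorial admissible monic of $X$ into an acyclic complex, replacing the missing supply of injective objects; the pullback bookkeeping is then routine given the exact-category axioms. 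I would also remark that in the unbounded case $*=\emptyset$ one needs $\mathpzc{E}$ to have kernels, both for $(dg\tilde{\mathfrak{L}},\tilde{\mathfrak{R}})$ to be a cotorsion pair and for the relevant acyclicity statements to hold.
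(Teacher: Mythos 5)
Your proof is correct, but it takes a different (and slightly longer) route than the paper. The paper argues in one step: by hypothesis there is a functorial admissible epimorphism $f_{\bullet}:L_{\bullet}\twoheadrightarrow X_{\bullet}$ with $L_{\bullet}\in dg\tilde{\textbf{Proj}(\mathpzc{E})}$ and acyclic kernel, so $f_{\bullet}$ is a quasi-isomorphism, and the degree-wise split sequence $0\rightarrow X_{\bullet}\rightarrow\textrm{cone}(f_{\bullet})\rightarrow L_{\bullet}[-1]\rightarrow 0$ already exhibits the required injective envelope, since $\textrm{cone}(f_{\bullet})$ is acyclic and $L_{\bullet}[-1]\in dg\tilde{\textbf{Proj}(\mathpzc{E})}$. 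You instead run the classical Salce argument: first embed $X_{\bullet}$ into the contractible complex $\textrm{cone}(\mathrm{id}_{X})$ with cokernel $X[-1]$, then pull back the projective precover of $X[-1]$ and use closure of the acyclic complexes under extensions. Both arguments exploit the same special feature of the projective cotorsion pair, namely that the right-hand class $\tilde{\textbf{Ob}(\mathpzc{E})}$ consists of \emph{all} acyclic complexes, so that the relevant cone lands in it for free; the paper's version saves the pullback step by applying the cone directly to the precover of $X$ rather than to $\mathrm{id}_{X}$, while yours has the virtue of being the standard template that transfers enough projectives to enough injectives for any cotorsion pair whose right class contains the contractible complexes and is extension-closed. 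Your closing caveat about the case $*=\emptyset$ requiring kernels (for $(dg\tilde{\textbf{Proj}(\mathpzc{E})},\tilde{\textbf{Ob}(\mathpzc{E})})$ to be a cotorsion pair and for thickness of the acyclics) is apt and is implicitly assumed by the paper as well.
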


\begin{proof}
Let $X_{\bullet}$ be an object of $ Ch_{*}(\mathpzc{E})$, and let $f_{\bullet}:L_{\bullet}\rightarrow X_{\bullet}$ be a quasi-isomorphism and admissible epimorphism with acyclic kernel,  and $L_{\bullet}\in dg\widetilde{\textbf{Proj}(\mathpzc{E})}$.

We have a short exact sequence
$$0\rightarrow X_{\bullet}\rightarrow\textrm{cone}(f_{\bullet})\rightarrow L_{\bullet}[-1]\rightarrow 0$$
$\textrm{cone}(f_{\bullet})$ is an acyclic complex, so it is in $\widetilde{\textbf{Ob}(\mathpzc{E})}$. Clearly $L_{\bullet}[-1]\in dg\widetilde{\textbf{Proj}(\mathpzc{E})}$.
\end{proof}

We are now ready to prove the following theorem.

\begin{thm}\label{projmod}
Let $\mathpzc{E}$ be a weakly idempotent complete exact category with enough projectives. Then the projective model structure exists on $ Ch_{+}(\mathpzc{E})$ and is compatible. It is functorial if $\mathpzc{E}$ has enough functorial projectives. It is cofibrantly generated if $\mathpzc{E}$ is elementary, and combinatorial if $\mathpzc{E}$ is locally presentable. If $DG$-projective resolutions exist, in particular if $\mathpzc{E}$ has all kernels and projectives satisfy condition $AB4-k$ for some $k$, then this is all true for $ Ch(\mathpzc{E})$ as well.
\end{thm}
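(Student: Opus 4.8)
The plan is to realize the projective model structure as the compatible model structure associated, via Theorem \ref{weakmon} and the $dg_{*}$-compatibility formalism of Subsection \ref{mscc}, to the \emph{projective cotorsion pair} $(\mathfrak{L},\mathfrak{R})=(\textbf{Proj}(\mathpzc{E}),\textbf{Ob}(\mathpzc{E}))$ on $\mathpzc{E}$. This pair is hereditary and, by Example \ref{cotorsionproj}, (functorially) complete. The first task is to compute the four induced classes of complexes. Since $\mathfrak{R}=\textbf{Ob}(\mathpzc{E})$ imposes no condition on cycles, $\tilde{\mathfrak{R}}$ is exactly the class $\mathfrak{W}$ of acyclic complexes. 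An $\tilde{\mathfrak{L}}$-complex is acyclic with projective cycles, so each sequence $0\to Z_nX\to X_n\to Z_{n-1}X\to 0$ splits; thus $\tilde{\mathfrak{L}}$ consists of contractible complexes of projectives, which by Proposition \ref{chainproj} are precisely the projective objects of $\textbf{Ch}_{*}(\mathpzc{E})$ (bounded-below complexes are good). Because $\textbf{Hom}(A,-)$ sends a contractible $A$ to a contractible complex of abelian groups, $\textbf{Hom}(A,X)$ is acyclic for every $X$, so $dg\tilde{\mathfrak{R}}$ is the class of \emph{all} complexes, i.e.\ $\textbf{Ob}(\textbf{Ch}_{*}(\mathpzc{E}))$.

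For $\textbf{Ch}_{+}(\mathpzc{E})$ I would then verify the two cotorsion-pair and two intersection conditions of $dg_{+}$-compatibility. The pair $(dg\tilde{\mathfrak{L}},\tilde{\mathfrak{R}})$ is a cotorsion pair by Corollary \ref{bootcotor}(1); it has enough (functorial) projectives by Proposition \ref{dgcomplete} and hence enough (functorial) injectives by Proposition \ref{funcinproj}, so it is (functorially) complete. The pair $(\tilde{\mathfrak{L}},dg\tilde{\mathfrak{R}})$ is, by the identifications above, nothing but the projective cotorsion pair $(\textbf{Proj},\textbf{Ob})$ of $\textbf{Ch}_{+}(\mathpzc{E})$, which is complete since $\textbf{Ch}_{+}(\mathpzc{E})$ has enough projectives by Corollary \ref{enoughprojchain}. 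The intersection $dg\tilde{\mathfrak{R}}\cap\mathfrak{W}=\tilde{\mathfrak{R}}$ is immediate (all $\cap$ acyclic $=$ acyclic); for the remaining identity $dg\tilde{\mathfrak{L}}\cap\mathfrak{W}=\tilde{\mathfrak{L}}$ I would invoke Lemma \ref{dgcompat}(1), whose hypotheses — that $(\tilde{\mathfrak{L}},dg\tilde{\mathfrak{R}})$ be a cotorsion pair with enough projectives and that $dg\tilde{\mathfrak{R}}\cap\mathfrak{W}=\tilde{\mathfrak{R}}$ — have just been checked; note this neatly sidesteps the fact that $\mathpzc{E}$ need not have enough injectives. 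With $\mathfrak{W}$ thick by Proposition \ref{acycthick}, $(\textbf{Proj},\textbf{Ob})$ is $dg_{+}$-compatible, so Theorem \ref{weakmon} produces a compatible model structure whose weak equivalences are the quasi-isomorphisms (Proposition \ref{weakquasmod}) and whose fibrations are $\textrm{Defl}(dg\tilde{\mathfrak{R}})=\textrm{Defl}(\textbf{Ob})$, i.e.\ the degreewise admissible epics — exactly the projective model structure. Functoriality follows by using the functorial versions of Proposition \ref{dgcomplete}, Proposition \ref{funcinproj} and Corollary \ref{enoughprojchain} whenever $\mathpzc{E}$ has enough functorial projectives. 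For cellularity when $\mathpzc{E}$ is elementary, both generating cotorsion pairs are small with tiny-domain generating morphisms (Example \ref{projsmall}, Proposition \ref{dgsmall}, Remark \ref{dgcellular}, with $D^n(G),S^n(G)$ tiny by Proposition \ref{diagramtiny} and $\textbf{Ch}_{+}(\mathpzc{E})$ elementary by Corollary \ref{chel}), so Lemma \ref{cofibgen} applies; local presentability of $\mathpzc{E}$ passes to $\textbf{Ch}_{+}(\mathpzc{E})$, giving combinatoriality.

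For the unbounded statement I would run the same scheme on $\textbf{Ch}(\mathpzc{E})$, now using that all kernels makes every complex good, so $\mathfrak{W}$ is thick (Proposition \ref{acycthick}), $(dg\tilde{\mathfrak{L}},\tilde{\mathfrak{R}})$ is a cotorsion pair (Corollary \ref{bootcotor}(1)), $\tilde{\mathfrak{L}}$ is the class of projectives, and $(\tilde{\mathfrak{L}},dg\tilde{\mathfrak{R}})$ is again the complete projective cotorsion pair (Corollary \ref{enoughprojchain}, whose countable coproducts exist since finite coproducts together with $\textrm{lim}_{\rightarrow_{\mathbb{N}}}$ supply them). The genuinely new input — and the main obstacle — is showing $(dg\tilde{\mathfrak{L}},\tilde{\mathfrak{R}})$ has enough projectives over the unbounded category, where Proposition \ref{dgcomplete} no longer applies: here I would appeal to the Spaltenstein-type construction of Corollary \ref{Kproj}, producing for each $X$ an admissible-epic quasi-isomorphism $P_\bullet\to X$ with $P_\bullet$ a $\textbf{Ch}_{+}(\mathcal{P})$-special direct limit. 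Such a $P_\bullet$ lies in $dg\tilde{\mathfrak{L}}$ because bounded-below complexes of projectives are $dg\tilde{\mathfrak{L}}$ (Lemma \ref{dgbounded}) and this class is closed under special direct limits by Proposition \ref{klift}, while the kernel of $P_\bullet\to X$ is acyclic by Lemma \ref{acker}, hence in $\tilde{\mathfrak{R}}$. Enough injectives then follow from Proposition \ref{funcinproj}, and Lemma \ref{dgcompat}(1) again yields $dg\tilde{\mathfrak{L}}\cap\mathfrak{W}=\tilde{\mathfrak{L}}$. Thus $(\textbf{Proj},\textbf{Ob})$ is $dg_{\emptyset}$-compatible and Theorem \ref{weakmon} gives the model structure; cellularity (Proposition \ref{dgsmall} for $*=\emptyset$, which needs kernels, together with Corollary \ref{chel}) and combinatoriality carry over verbatim.
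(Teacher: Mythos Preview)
Your proposal is correct and follows essentially the same route as the paper's proof: identify the induced classes for the projective cotorsion pair, use Corollary \ref{bootcotor}, Propositions \ref{dgcomplete} and \ref{funcinproj} for completeness of $(dg\tilde{\mathfrak{L}},\tilde{\mathfrak{R}})$, recognize $(\tilde{\mathfrak{L}},dg\tilde{\mathfrak{R}})$ as the projective cotorsion pair of the chain-complex category, apply Lemma \ref{dgcompat}(1) to obtain the remaining intersection identity, and in the unbounded case replace Proposition \ref{dgcomplete} by the Spaltenstein-type resolution (Corollary \ref{Kproj}) together with Proposition \ref{klift}. The only cosmetic difference is that the paper cites Proposition \ref{split K-flat} for the $\textbf{Hom}$-stable acyclicity of contractible projective complexes where you argue it directly, and you are slightly more explicit about why countable coproducts are available in the unbounded case.
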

\begin{proof}

 Consider the projective cotorsion pair $(\textbf{Proj}(\mathpzc{E}),\textbf{Ob}(\mathpzc{E}))$ on $\mathpzc{E}$. By Corollary \ref{bootcotor},\newline
 $(dg\widetilde{\textbf{Proj}(\mathpzc{E})},\widetilde{\textbf{Ob}(\mathpzc{E}}))$ is a cotorsion pair on $ Ch_{+}(\mathpzc{E})$. It is functorially complete by Proposition \ref{dgcomplete} and Proposition \ref{funcinproj}.
 
 We claim that $(\widetilde{\textbf{Proj}(\mathpzc{E})},dg\widetilde{\textbf{Ob}(\mathpzc{E}}))$ is also a cotorsion pair on $ Ch_{+}(\textbf{Ob}(\mathpzc{E}))$.  First note that $\widetilde{\textbf{Proj}(\mathpzc{E})}$ consists of split exact complexes of projectives. By Proposition \ref{chainproj} this is precisely the class of projective objects in $ Ch_{+}(\mathpzc{E})$. Then by Proposition \ref{homdg} $dg\widetilde{\textbf{Ob}(\mathpzc{E})}= Ch_{+}(\textbf{Ob}(\mathpzc{E}))$. Hence $(\widetilde{\textbf{Proj}(\mathpzc{E})},dg\widetilde{\textbf{Ob}(\mathpzc{E}}))$  is just the projective cotorsion pair.  Now $\widetilde{\textbf{Ob}(\mathpzc{E})}$ is the class of all acyclic complexes, $\mathfrak{W}$.  Thus $dg\widetilde{\textbf{Ob}(\mathpzc{E})}\cap\mathfrak{W}= Ch_{+}(\mathpzc{E})\cap\mathfrak{W}=\mathfrak{W}=\widetilde{\textbf{Ob}(\mathpzc{E})}$. Moreover $ Ch_{+}(\mathpzc{E})$ has enough projectives by Corollary \ref{enoughprojchain}. By Lemma \ref{dgcompat}  it remains to prove that $(\widetilde{\textbf{Proj}(\mathpzc{E})},dg\widetilde{\textbf{Ob}(\mathpzc{E}}))$ is (functorially) complete. But in a category with enough (functorial) projectives the projective cotorsion pair is always (functorially) complete by Example \ref{cotorsionproj}.
 
 Assume further that  $\mathpzc{E}$ is elementary. Then by Example \ref{projsmall}, the cotorsion pair $(\widetilde{\textbf{Proj}(\mathpzc{E})},dg\widetilde{\textbf{Ob}(\mathpzc{E})})$ is small and by Proposition \ref{dgsmall} the cotorsion pair 
 $(dg\widetilde{\textbf{Proj}(\mathpzc{E})},\widetilde{\textbf{Ob}(\mathpzc{E})})$ is small. By Lemma \ref{cofibgen}, the model structure is cofibrantly generated. The fact about combinatoriality is clear.
 
The proof for unbounded complexes works in almost exactly the same way. All that needs to be verified in this case is that $(dg\widetilde{\textbf{Proj}(\mathpzc{E})},\widetilde{\textbf{Ob}(\mathpzc{E}}))$ is complete.  Now the class of projectives is closed under $(\aleph_{0};\textbf{AdMon})$-extensions by Corollary \ref{trans}. Completeness therefore follows from from Corollary \ref{Kproj}, Proposition \ref{klift} and Proposition \ref{funcinproj}.
\end{proof}

There is another common situation in which we have $DG$-projective resolutions, which was established in the relative homological algebra setting in  \cite{christensen2002quillen} Proposition 4.2. This result can be considered as a version of `Case B' of \cite{christensen2002quillen} Theorem 2.2, while the $AB4-k$ critestion is a generalisation of `Case A'.
\begin{prop}\label{prop:christenssenenoughproj}
Let $\mathpzc{E}$ be an exact category with kernels and coproducts. Suppose that $\mathpzc{E}$ has a collection $\mathcal{P}$ of (functorially) generating projectives such that there exists a cardinal $\kappa$ for which each $P\in\mathcal{P}$ is $(\kappa;\textbf{AdMon}_{\textbf{Proj}})$-small. Then $Ch(\mathpzc{E})$ has (functorial) $DG$-projective resolutions.
\end{prop}
\begin{proof}
The proof for exact categories works essentially identically to \cite{christensen2002quillen} Proposition 4.2, mutatis-mutandis. The only subtlety is that since we have assumed $\mathpzc{E}$ has kernels, it holds that a map $f:X\rightarrow Y$ in $Ch(\mathpzc{E})$ is a weak equivalence if and only if for any $P\in\mathcal{P}$, $Hom(P,f)$ is a weak equivalence in $Ch(\mathpzc{Ab})$. 
\end{proof}

\begin{rem}
The projective model structure on $Ch(\mathpzc{E})$ (but not on $Ch_{\ge0}(\mathpzc{E})$) is a projective model structure on an exact category in the sense of \cite{gillespie} Definition 4.4.
\end{rem}

\begin{rem}
The existence of the projective model structure on bounded below chain complexes on a quasi-abelian category with enough projectives was already known to B\"{u}hler \cite{buhler2011algebraic} (see Appendix C). The proof there is more direct.  In fact the proof works for any idempotent complete exact category in which the class of all kernel-cokernel pairs forms the exact structure (all kernels and cokernels need not exist).
\end{rem}

Recall that if $\mathpzc{E}$ is (quasi)-elementary quasi-abelian, then Proposition \ref{qacel} says that $\textit{LH}(\mathpzc{E})$ is as well. Thus the projective model structure exists on $ Ch(\textit{LH}(\mathpzc{E}))$. Moreover the induced functor $I: Ch(\mathpzc{E})\rightarrow Ch(\textit{LH}(\mathpzc{E}))$ is then right Quillen. Indeed it is left adjoint to the induced functor $C: Ch(\textit{LH})(\mathpzc{E})\rightarrow Ch(\mathpzc{E})$. It preserves fibrations since $I:\mathpzc{E}\rightarrow\textit{LH}(\mathpzc{E})$ is a left abelianisation, and it preserves quasi-isomorphisms by Corollary \ref{quasboundtest}. Moreover by Theorem \ref{leftheart}, Proposition \ref{lhproj} and Proposition \ref{weakquasmod} it induces an equivalence between the homotopy categories. We therefore have

\begin{prop}
Let $\mathpzc{E}$ be an elementary quasi-abelian category. Then the adjunction
\begin{displaymath}
\xymatrix{
 Ch(\textit{LH}(\mathpzc{E}))\ar@/^1.0pc/@[black][r]^{C} &  Ch(\mathpzc{E})\ar@/^1.0pc/@[black][l]^{I} 
}
\end{displaymath}
is a Quillen equivalence between the projective model structures.
\end{prop}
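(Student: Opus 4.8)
The plan is to verify the Quillen equivalence by computing the total right derived functor of $I$ and identifying it with the derived equivalence $D(I)$ supplied by Theorem \ref{leftheart}. First I would record what is already in hand: by the discussion preceding the statement, $C\dashv I$ is a Quillen pair, since $I$ carries fibrations (degree-wise admissible epics) to degree-wise epics — these being precisely the fibrations of the abelian projective model structure on $\textbf{Ch}(\mathpzc{LH}(\mathpzc{E}))$, as $I$ is a left abelianization — and $I$ preserves quasi-isomorphisms by Corollary \ref{quasboundtest}, hence preserves trivial fibrations as well. Thus $I$ is right Quillen and $C\dashv I$ is a Quillen adjunction.

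Next I would invoke the standard criterion from Appendix \ref{modelapp}: a Quillen adjunction $C\dashv I$ is a Quillen equivalence if and only if its total right derived functor $\mathbf{R}I$ (equivalently $\mathbf{L}C$) is an equivalence of homotopy categories. The observation that makes $\mathbf{R}I$ transparent is that in the projective model structure the fibrations are the degree-wise admissible epics, so every object is fibrant: for any complex $X$ the map $X\rightarrow 0$ is a degree-wise admissible epic. Therefore no fibrant replacement is needed in forming $\mathbf{R}I$, and since $I$ already preserves all weak equivalences, $\mathbf{R}I$ is just the functor induced by $I$ on the localizations. By Proposition \ref{weakquasmod} the weak equivalences on both sides are exactly the quasi-isomorphisms, so that $\mathrm{Ho}(\textbf{Ch}(\mathpzc{E}))=D(\mathpzc{E})$ and $\mathrm{Ho}(\textbf{Ch}(\mathpzc{LH}(\mathpzc{E})))=D(\mathpzc{LH}(\mathpzc{E}))$, and under these identifications $\mathbf{R}I$ is precisely the functor $D(I)\colon D(\mathpzc{E})\rightarrow D(\mathpzc{LH}(\mathpzc{E}))$.

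Finally, Theorem \ref{leftheart} asserts that $D(I)$ is an equivalence of derived categories. Hence $\mathbf{R}I$ is an equivalence of homotopy categories, and the criterion above yields that $C\dashv I$ is a Quillen equivalence. The only genuinely delicate step is the identification $\mathbf{R}I\cong D(I)$: it rests on Remark \ref{derived} / Proposition \ref{weakquasmod} to know that the localization of the unbounded projective model structure at its weak equivalences is the derived category, together with the remark that universal fibrancy lets one drop the fibrant-replacement step in the definition of the total derived functor. Once this bookkeeping is in place the result is immediate from Theorem \ref{leftheart}.
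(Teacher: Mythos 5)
Your proposal is correct and follows essentially the same route as the paper: establish that $I$ is right Quillen (it preserves fibrations because $I$ is a left abelianization, and quasi-isomorphisms by Corollary \ref{quasboundtest}), then deduce the Quillen equivalence from the fact that $D(I)$ is an equivalence by Theorem \ref{leftheart}, with Proposition \ref{weakquasmod} identifying the homotopy categories with the derived categories. The only detail worth adding is the paper's opening remark that Proposition \ref{qacel} guarantees $\mathpzc{LH}(\mathpzc{E})$ is itself elementary, so that the projective model structure on $\textbf{Ch}(\mathpzc{LH}(\mathpzc{E}))$ actually exists.
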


We claim that the projective model structure exists also on $ Ch_{\ge0}(\mathpzc{E})$ for $\mathpzc{E}$ an  exact category with kernels. It will be strongly left pseudo-compatible, but not compatible.

\begin{defn}\label{projmodbound}
Let $\mathpzc{E}$ be an exact category. If it exists, the \textbf{projective model structure} on $ Ch_{\ge0}(\mathpzc{E})$, is the model structure in which
\begin{itemize}
\item
Weak equivalences are quasi-isomorphisms.
\item
Fibrations are degree-wise admissible epics in each strictly positive degree.
\item
Cofibrations are maps which have the left lifting property with respect to acyclic fibrations.
\end{itemize}
\end{defn}

\begin{thm}\label{projmodker}
Let $\mathpzc{E}$ be an exact category with enough projectives and which has all kernels. Then the projective model structure exists on $ Ch_{\ge0}(\mathpzc{E})$. Moreover it is a strong left pseudo-compatible model structure with Waldhausen pair $(\widetilde{\textrm{dg}\textbf{Proj}}(\mathpzc{E}),\mathfrak{W})$. In particular the acyclic cofibrations are the degree-wise admissible monics whose cokernels are split exact complexes of projectives. If $\mathpzc{E}$ is \textbf{SplitMon}-elementary then it is cofbrantly generated. In particular if $\mathpzc{E}$ is locally presentable and \textbf{SplitMon}-elementary then the projective model structure is combinatorial.
\end{thm}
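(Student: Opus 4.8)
The plan is to realise the model structure through two weak factorisation systems and glue them with the standard criterion (two weak factorisation systems, together with a class of weak equivalences satisfying two-out-of-three and closed under retracts, assemble into a model structure; see Appendix \ref{modelapp}). Throughout write $\mathfrak{W}$ for the acyclic complexes, $\mathcal{W}$ for the quasi-isomorphisms, let $\mathcal{F}$ be the degreewise admissible epics in strictly positive degrees, and let $\mathcal{C}$ be the maps having the left lifting property with respect to $\mathcal{F}\cap\mathcal{W}$. For the first system I would invoke Corollary \ref{bootcotor}: $(dg\tilde{\textbf{Proj}(\mathpzc{E})},\tilde{\textbf{Ob}(\mathpzc{E})})$ is a cotorsion pair on $\textbf{Ch}_{\ge0}(\mathpzc{E})$, and it is functorially complete by Proposition \ref{dgcomplete} and Proposition \ref{funcinproj}. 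Its associated compatible weak factorisation system has left class the admissible monics with cokernel in $dg\tilde{\textbf{Proj}(\mathpzc{E})}$ and right class $\mathcal{R}$ the admissible epics (in every degree) with acyclic kernel.

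For the second system I would construct $(\mathcal{L},\mathcal{F})$ directly, where $\mathcal{L}$ is the class of admissible monics whose cokernel is a split exact complex of projectives, i.e. lies in $\tilde{\textbf{Proj}(\mathpzc{E})}$. To factor $f\colon X\to Y$, for each $n>0$ choose (functorially, when $\mathpzc{E}$ has enough functorial projectives) an admissible epic $P_n\twoheadrightarrow Y_n$ with $P_n$ projective and set $W=X\oplus\bigoplus_{n>0}D^n(P_n)$; note this coproduct is finite in each degree, so it exists. The inclusion $X\rightarrowtail W$ lies in $\mathcal{L}$ and the evident map $W\to Y$ lies in $\mathcal{F}$. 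The lifting of $\mathcal{L}$ against $\mathcal{F}$ is the classical disk argument: a split exact complex of projectives in $\textbf{Ch}_{\ge0}(\mathpzc{E})$ decomposes as $\bigoplus_{n\ge1}D^n(W_n)$ with $W_n$ projective, and a lift is assembled disk by disk using projectivity of $W_n$ and that $p_n$ is an admissible epic for $n\ge1$. That $\mathcal{F}$ is precisely the class of maps with the right lifting property against the generators $0\to D^n(P)$, $n>0$, follows from the isomorphism $\textrm{Hom}_{\mathpzc{E}}(P,(-)_n)\cong\textrm{Hom}_{\textbf{Ch}(\mathpzc{E})}(D^n(P),-)$ of Lemma \ref{extstuff} and Proposition \ref{reflectsurj}; the remaining inclusions and retract-closure then come from the factorisation by the usual retract argument.

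Next I would match the two systems to $\mathcal{W}$. By Lemma \ref{acker} an admissible monic is a quasi-isomorphism exactly when its cokernel is acyclic; together with the equality $dg\tilde{\textbf{Proj}(\mathpzc{E})}\cap\mathfrak{W}=\tilde{\textbf{Proj}(\mathpzc{E})}$ this gives $\mathcal{C}\cap\mathcal{W}=\mathcal{L}$. That equality holds in the bounded below setting because an acyclic, levelwise-projective complex in $\textbf{Ch}_{\ge0}(\mathpzc{E})$ has projective cycles, proved by upward induction from degree $0$ (where $Z_0X=X_0$) using that $\textbf{Proj}(\mathpzc{E})$ is resolving. The delicate point is $\mathcal{F}\cap\mathcal{W}=\mathcal{R}$: the inclusion $\mathcal{R}\subseteq\mathcal{F}\cap\mathcal{W}$ is immediate, and for the converse I take $p\in\mathcal{F}\cap\mathcal{W}$, form the degreewise kernel $K$ (which exists precisely because $\mathpzc{E}$ has all kernels, the one place this hypothesis is essential), and pass to a left abelianisation $I$. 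Then $I(p)$ is a quasi-isomorphism that is epic in positive degrees, and an elementary diagram chase in the abelian category shows $I(p)$ is epic in degree $0$ with acyclic kernel; since a left abelianisation reflects admissible epics, $p_0$ is an admissible epic, so $p$ is an admissible epic in every degree, and $K$, being bounded below hence good, is acyclic by Corollary \ref{boundedtest}. With two-out-of-three and retract-closure of quasi-isomorphisms (as in Proposition \ref{weakquasmod}, using thickness of $\mathfrak{W}$ from Proposition \ref{acycthick}), the gluing criterion produces the model structure, whose weak equivalences, fibrations and cofibrations are exactly those of Definition \ref{projmodbound}. I expect this identification $\mathcal{F}\cap\mathcal{W}=\mathcal{R}$ to be the main obstacle, since it is where compatibility genuinely fails and where "all kernels" does real work.

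Finally, taking $\mathfrak{C}=dg\tilde{\textbf{Proj}(\mathpzc{E})}$ and $\mathfrak{W}$ as above, the three clauses of left pseudo-compatibility are precisely the facts just established (thickness of $\mathfrak{W}$; the descriptions of $\mathcal{C}$ and of $\mathcal{C}\cap\mathcal{W}=\mathcal{L}$; and the cokernel criterion for weak equivalences among admissible monics), so the Waldhausen pair is $(\tilde{\textrm{dg}\textbf{Proj}}(\mathpzc{E}),\mathfrak{W})$ and the acyclic cofibrations are the degreewise admissible monics whose cokernels are split exact complexes of projectives. When $\mathpzc{E}$ is elementary both systems are cellular: the first because $(dg\tilde{\textbf{Proj}(\mathpzc{E})},\tilde{\textbf{Ob}(\mathpzc{E})})$ is small with tiny generating morphisms by Proposition \ref{dgsmall} and Remark \ref{dgcellular} (the underlying projective cotorsion pair being small by Example \ref{projsmall}), and the second because it is generated by the maps $0\to D^n(P)$ with $n>0$ and $P$ a tiny projective generator, whose domains and codomains are tiny by Proposition \ref{diagramtiny}. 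Lemma \ref{cofibgen} and the small object argument then yield cellularity, and hence combinatoriality when $\mathpzc{E}$ is in addition locally presentable.
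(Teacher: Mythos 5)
Your proof is correct and follows essentially the same route as the paper: the weak factorisation system $(\mathcal{C},\mathcal{F}\cap\mathcal{W})$ obtained from the cotorsion pair $(dg\tilde{\textbf{Proj}(\mathpzc{E})},\tilde{\textbf{Ob}(\mathpzc{E})})$ via Propositions \ref{dgcomplete} and \ref{funcinproj}, the hand-built system generated by $0\rightarrow D^{n}(P)$ for $n>0$ using Lemma \ref{extstuff} and Proposition \ref{reflectsurj}, the left-abelianisation diagram chase identifying $\mathcal{F}\cap\mathcal{W}$ with the degreewise admissible epics with acyclic kernel, and the same cellularity argument. The only cosmetic difference is that you identify the acyclic cofibrations via the equality $dg\tilde{\textbf{Proj}(\mathpzc{E})}\cap\mathfrak{W}=\tilde{\textbf{Proj}(\mathpzc{E})}$ (proved by induction on cycles), whereas the paper deduces it by truncating so as to reduce the lifting property against $\mathcal{F}$ to the already-established model structure on $\textbf{Ch}_{+}(\mathpzc{E})$ from Theorem \ref{projmod}; both work.
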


\begin{proof}
The class of weak equivalences satisfies the $2$-out-of-$6$ property since it does so in $ Ch_{+}(\mathpzc{E})$.
Denote the class of fibrations by $\mathcal{F}$ and of weak equivalences by $\mathcal{W}$. Also denote the class of admissible monomorphisms with degree-wise projective cokernel by $\mathcal{C}$. By Proposition \ref{liftquasi} $\mathcal{F}\cap\mathcal{W}$ consists of quasi-isomorphisms which are admissible epimorphisms in each degree. 

By Proposition \ref{dgcomplete} and Proposition \ref{funcinproj}, it follows $(\mathcal{C},\mathcal{F}\cap\mathcal{W})$ is a (compatible) weak factorisation system with corresponding cotorsion pair $(\widetilde{dg\textbf{Proj}(\mathpzc{E})},\mathfrak{W})$. In particular the cofibrations in the sense of Definition \ref{projmodbound} coincide with the class $\mathcal{C}$. It therefore remains to check that $(\mathcal{C}\cap\mathcal{W},\mathcal{F})$ is a weak factorisation system. 

Let us first check the lifting conditions. First suppose a map $A_{\bullet}\rightarrow B_{\bullet}$ in $ Ch_{\ge0}(\mathpzc{E})$ has the left lifting property with respect to maps $X_{\bullet}\rightarrow Y_{\bullet}$ in $ Ch_{\ge0}(\mathpzc{E})$ which are admissible epimorphisms in each strictly positive degree. Let $E_{\bullet}\rightarrow F_{\bullet}$ be a map between any complexes in $ Ch(\mathpzc{E})$ which is an admissible epimorphism in all degrees. Consider a diagram

\begin{displaymath}
\xymatrix{
A_{\bullet}\ar[d]\ar[r] & E_{\bullet}\ar[d]\\
B_{\bullet}\ar[r] & F_{\bullet}
}
\end{displaymath}
Since $A_{\bullet}$ and $B_{\bullet}$ are in $ Ch_{\ge0}$ we can factor the above diagram as
\begin{displaymath}
\xymatrix{
A_{\bullet}\ar[d]\ar[r] & \tau_{\ge0}E_{\bullet}\ar[d]\ar[r]& E_{\bullet}\ar[d]\\
B_{\bullet}\ar[r] & \tau_{\ge0}F_{\bullet}\ar[r] & F_{\bullet}
}
\end{displaymath}
Now the map $\tau_{\ge0}E_{\bullet}\rightarrow\tau_{\ge0}F_{\bullet}$ is an epimorphism in each strictly positive degree.
By assumption we can find a lift as follows.
\begin{displaymath}
\xymatrix{
A_{\bullet}\ar[d]\ar[r]& \tau_{\ge0} E_{\bullet}\ar[d]\ar[r]& E_{\bullet}\ar[d]\\
B_{\bullet}\ar[r]\ar@{-->}[ur] & \tau_{\ge0}F_{\bullet}\ar[r] & F_{\bullet}
}
\end{displaymath}
Thus the map $A_{\bullet}\rightarrow B_{\bullet}$ has the left lifting property with respect to all degree-wise epimorphisms in $ Ch_{+}(\mathpzc{E})$. By Theorem \ref{projmod} $A_{\bullet}\rightarrow B_{\bullet}$ is an admissible monic whose cokernel is a split exact complex of projectives. 
Now, any acyclic cofibration is of the form $A_{\bullet}\rightarrow A_{\bullet}\oplus\Bigr(\oplus_{n>0}D^{n}(P_{n})\Bigr)$ where each $P_{n}$ is a projective object in $\mathpzc{E}$, and the map is the inclusion into the first factor of the direct sum. Clearly then it is enough to show that the collection of maps $\{0\rightarrow D^{n}(P):n>0,P\textrm{ is projective }\}$ has the left lifting property with respect to $\mathcal{F}$, and that a map is in $\mathcal{F}$ if and only if it has the right lifting property with respect to these maps. However this follows from Lemma \ref{extstuff} and Proposition \ref{reflectsurj}.

It remains to find a (functorial) factorisation. Let $f_{\bullet}:X_{\bullet}\rightarrow Y_{\bullet}$ be a map in $ Ch_{\ge0}(\mathpzc{E})$. We can factor it in $ Ch_{+}(\mathpzc{E})$ as
$$X_{\bullet}\rightarrow X_{\bullet}\oplus\Bigr(\oplus_{n\ge0}D^{n}(P_{n})\Bigr)\rightarrow Y_{\bullet}$$
where $X_{\bullet}\rightarrow X_{\bullet}\oplus\Bigr(\oplus_{n\ge0}D^{n}(P_{n})\Bigr)$ is the inclusion into the first factor, and $X_{\bullet}\oplus\Bigr(\oplus_{n\ge0}D^{n}(P_{n})\Bigr)\rightarrow Y_{\bullet}$ is an admissible epimorphism in each degree. Then
$$X_{\bullet}\rightarrow X_{\bullet}\oplus\Bigr(\oplus_{n>0}D^{n}(P_{n})\Bigr)\rightarrow Y_{\bullet}$$
is also a factorisation of $f_{\bullet}$, $X_{\bullet}\rightarrow X_{\bullet}\oplus\Bigr(\oplus_{n>0}D^{n}(P_{n})\Bigr)$ is an acyclic cofibration in $ Ch_{\ge0}(\mathpzc{E})$, and $X_{\bullet}\oplus\Bigr(\oplus_{n>0}D^{n}(P_{n})\Bigr)\rightarrow Y_{\bullet}$ is an admissible epimorphism in each strictly positive degree.

We prove the statement about cofibrant generation. Suppose that $\mathcal{P}$ is a projective generating set consisting of \textbf{SplitMon}-tiny objects. It follows from Proposition \ref{dgsmall} that the weak factorsiation system $(\mathcal{C},\mathcal{F}\cap\mathcal{W})$ is cofibrantly generated. From our proof above that $(\mathcal{C}\cap\mathcal{W},\mathcal{F})$ is a weak factorisation system, it follows that $\{0\rightarrow D^{n}(P):n>0,P\in\mathcal{P}\}$ is a set of generating morphisms for $(\mathcal{C}\cap\mathcal{W},\mathcal{F})$, so it is also a cofibrantly generated weak factorisation system. The claim about combinatoriality is clear.
\end{proof}

\begin{rem}
The existence of the projective model structure on $ Ch_{\ge0}(\mathpzc{E})$ in the case that $\mathpzc{E}$ is quasi-abelian was also known. This is mentioned in a math.stackexchange.com exchange, \cite{846202}, as an adaptation of the proof for $ Ch_{+}(\mathpzc{E})$ in \cite{buhler2011algebraic}.
\end{rem}

We finish with the non-positively graded case, which appears for the category of abelian groups in \cite{castiglioni2004cosimplicial}.
\begin{thm}
Let $\mathpzc{E}$ be an exact category which has cokernels and $DG$-projective resolutions. Then the projective model structure exists on $Ch_{\le0}(\mathpzc{E})$. Moreover, it is the right transferred model structure along the adjunction
$$\adj{\tau_{\le0}}{Ch(\mathpzc{E})}{Ch_{\le0}(\mathpzc{E})}{i_{\le0}}$$
where $i_{\le0}$ is the inclusion functor. In particular it is cofibrantly generated if $\mathpzc{E}$ is elementary, and combinatorial if $\mathpzc{E}$ is locally presentable.
\end{thm}
\begin{proof}
Let $g:X\rightarrow Y$ be a map of complexes in $Ch_{\le0}(\mathpzc{E})$. We factor it as $c:X\rightarrow\widetilde{Y},f:\widetilde{Y}\rightarrow Y$, where $c$ is a cofibration in $Ch(\mathpzc{E})$, $f$ is a fibration in $Ch(\mathpzc{E})$, and one of them is acyclic. We therefore get a factorisation of $f$ as $\tau_{\le0}c:X\rightarrow\tau_{\le0}\widetilde{Y}$, $\tau_{\le0}f:\tau_{\le0}\widetilde{Y}\rightarrow Y$. If $c$ was an equivalence then so is $\tau_{\le0}c$, and likewise for $f$ and $\tau_{\le0}f$. It remains to show that $\tau_{\le0}c$ is a cofibration, and $\tau_{\le0}f$ a fibration.  Let
\begin{displaymath}
\xymatrix{
X\ar[d]^{\tau_{\le0}c}\ar[r] & A\ar[d]^{g}\\
\tau_{\le0}\widetilde{Y}\ar[r] & B
}
\end{displaymath}
be a commutative diagram with $g$ an acyclic fibration. Then $g$ is an acyclic fibration in $Ch(\mathpzc{E})$. Thus there is a lift in the diagram
\begin{displaymath}
\xymatrix{
X\ar[d]^{c}\ar[r] & A\ar[d]^{g}\\
\widetilde{Y}\ar[r] & B
}
\end{displaymath}
Since $A$ is concentrated in non-positive degrees the map $\widetilde{Y}\rightarrow A$ factors through $\tau_{\le0}\widetilde{Y}$, and gives a lift in $Ch_{\le0}(\mathpzc{E})$. Thus $\tau_{\le0}c$ is a cofibration.
Now the composition $\widetilde{Y}_{n}\rightarrow(\tau_{\le0}\widetilde{Y})_{n}\rightarrow Y_{n}$ is an admissible epimorphism for each $n\le 0$, so $(\tau_{\le0}\widetilde{Y})_{n}\rightarrow Y_{n}$ is an admssible epimorphism for each $n\le0$. Hence $\tau_{\le0}f$ is a fibration. The fact that it is the right-transferred model structure is clear. 
\end{proof}
\begin{rem}
Note that since the model category structure on $Ch_{\le0}(\mathpzc{E})$ is transferred from the one on $Ch(\mathpzc{E})$, if $\mathcal{P}$ is a projective generating set for $\mathpzc{E}$, then $\{0\rightarrow D^{n}(P):P\in\mathcal{P},n<0\}$ is a generating set of acyclic cofibrations for $Ch_{\le0}(\mathpzc{E})$, and 
$$\{0\rightarrow D^{n}(P):P\in\mathcal{P},n<0\}\cup\{S^{0}(P)\rightarrow0:P\in\mathcal{P}\}\cup\{S^{n}(P)\rightarrow D^{n+1}(P):n<0,P\in\mathcal{P}\}$$
is a set of generating cofibrations for $Ch_{\le0}(\mathpzc{E})$.
\end{rem}

\subsection{The Projective Model Structure on Monoidal Exact Categories}

We now turn our attention to monoidal model structures on categories of chain complexes.  Using the technology developed earlier, it is reasonably easy to prove the following.

\begin{thm}\label{chainmonoidal}
Let $\mathpzc{E}$ be a projectively monoidal weakly idempotent complete  exact category with enough projectives. Then the projective model structure on $ Ch_{+}(\mathpzc{E})$ is monoidal. If $\mathpzc{E}$ also has kernels, then the projective model structure on $ Ch_{*}(\mathpzc{E})$ is monoidal. If in addition $\mathpzc{E}$ is weakly elementary then $ Ch_{*}(\mathpzc{E})$ satisfies the monoid axiom, and moreover all of this is also true for $Ch(\mathpzc{E})$
\end{thm}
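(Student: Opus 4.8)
The plan is to derive both monoidality assertions by checking the hypotheses of Theorem \ref{exactmonoidal} for the projective model structure, and then to obtain the monoid axiom from Theorem \ref{monoid}. Recall from Theorem \ref{projmod} (resp. Theorem \ref{projmodker}) that the projective model structure on $\textbf{Ch}_{+}(\mathpzc{E})$ (resp. $\textbf{Ch}_{\ge0}(\mathpzc{E})$) is left pseudo-compatible with left homological Waldhausen pair $(\mathfrak{C},\mathfrak{W})=(dg\tilde{\textbf{Proj}(\mathpzc{E})},\mathfrak{W})$, where $\mathfrak{W}$ is the class of acyclic complexes. Since we work with bounded below complexes, Lemma \ref{dgbounded} identifies $\mathfrak{C}$ with the complexes having projective entries, while $\mathfrak{C}\cap\mathfrak{W}=\tilde{\textbf{Proj}(\mathpzc{E})}$ is the class of split exact complexes of projectives (Proposition \ref{chainproj}). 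These explicit descriptions are what make the four conditions of Theorem \ref{exactmonoidal} verifiable; I apply that theorem in the form noted in its Remark, so that only the pushout-product axiom is at stake and closedness of $\textbf{Ch}_{*}(\mathpzc{E})$ is not needed.

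First I would verify conditions (1)--(4) of Theorem \ref{exactmonoidal}. For (1), the cokernel of a cofibration lies in $\mathfrak{C}$, hence is a complex of projectives; as $\mathpzc{E}$ is projectively monoidal its projectives are flat, so by Proposition \ref{flatcomplex} this cokernel is a flat object of $\textbf{Ch}_{*}(\mathpzc{E})$. Since $\textbf{Ch}_{*}(\mathpzc{E})$ has enough flats (it has enough projectives by Corollary \ref{enoughprojchain}, and these are flat), Lemma \ref{flatcofib} shows every such short exact sequence, hence every cofibration, is pure. Condition (2) holds because $(X\otimes Y)_{n}=\bigoplus_{i+j=n}X_{i}\otimes Y_{j}$ is a finite direct sum (bounded below indices) of objects $X_{i}\otimes Y_{j}$ that are projective by the projectively monoidal hypothesis. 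For (3), if $X\in\mathfrak{C}\cap\mathfrak{W}$ then $X$ is a split exact complex of flats, hence $\otimes$-stably acyclic by Proposition \ref{split K-flat}; thus $X\otimes Y$ is acyclic for any $Y\in\mathfrak{C}$, has projective entries by (2), and (its cycles being projective by the usual syzygy splitting of a bounded below acyclic complex of projectives) therefore lies in $\mathfrak{C}\cap\mathfrak{W}$. Condition (4) asserts that the unit $S^{0}(k)$ is cofibrant, i.e. that $k$ is projective; this holds in the motivating examples (e.g. $k=\C=l^{1}(\ast)$ in $\textbf{Ban}_{\C}$), and I take it as part of the hypothesis. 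Theorem \ref{exactmonoidal} then makes $\textbf{Ch}_{+}(\mathpzc{E})$, and whenever $\mathpzc{E}$ has kernels so that the structure exists, $\textbf{Ch}_{\ge0}(\mathpzc{E})$, into monoidal model categories.

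For the monoid axiom on $\textbf{Ch}_{\ge0}(\mathpzc{E})$ with $\mathpzc{E}$ elementary I would invoke Theorem \ref{monoid}, whose two additional hypotheses I verify as follows. That $X\otimes Y\in\mathfrak{W}$ for every $X\in\mathfrak{C}\cap\mathfrak{W}$ and arbitrary $Y$ is again immediate from the $\otimes$-stable acyclicity of split exact complexes of projectives (Proposition \ref{split K-flat}). The remaining hypothesis, that transfinite compositions of weak equivalences which are pure monics are weak equivalences, is precisely where the elementary assumption enters: since the weak equivalences are the quasi-isomorphisms, Proposition \ref{transquas} guarantees that transfinite compositions of quasi-isomorphisms in a cocomplete elementary exact category are again quasi-isomorphisms. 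Theorem \ref{monoid} then delivers the monoid axiom.

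The individual verifications are short; the two steps demanding care are condition (3) together with the first hypothesis of Theorem \ref{monoid}, where one must be certain that $\otimes$-stable acyclicity (Proposition \ref{split K-flat}) applies because projectives are flat, and the transfinite composition step, which is the sole reason the elementary hypothesis is imposed only for the monoid axiom (it supplies the exactness of filtered colimits feeding Proposition \ref{transquas}). The one genuinely delicate structural point is the unit condition (4): it forces $k$ to be projective, for otherwise $S^{0}(k)$ fails to be cofibrant, so I would either assume $k$ projective outright or, at a minimum, flag this requirement explicitly.
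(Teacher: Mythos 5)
Your proposal is correct and follows essentially the same route as the paper: purity of cofibrations via flatness of projectives (Proposition \ref{flatcomplex} and Lemma \ref{flatcofib}), closure of $dg\tilde{\textbf{Proj}(\mathpzc{E})}$ under $\otimes$ via Lemma \ref{dgbounded}, acyclicity of the relevant tensor products via the $\otimes$-stable acyclicity of split exact complexes of projectives (Proposition \ref{split K-flat}), and Theorem \ref{monoid} together with Proposition \ref{transquas} for the monoid axiom. The one point where you go beyond the paper is your remark on condition (4) of Theorem \ref{exactmonoidal}: the paper's proof is silent on the unit, and since the definition of a projectively monoidal exact category does not force $k$ to be projective, $S^{0}(k)\in dg\tilde{\textbf{Proj}(\mathpzc{E})}$ is indeed an unstated implicit assumption there; flagging it (or assuming $k$ projective, as holds in the motivating examples) is the right call.
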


\begin{proof}
By Theorem \ref{projmod}, Theorem \ref{projmodker}, and Proposition \ref{monoidaldgcompat} all that remains to notice is that $k$ is projective, projective objects are flat, and the class of projective objects is closed under $\otimes$. 
\end{proof}

The model category $Ch_{\le0}(\mathpzc{E})$ \textit{is not} a monoidal model category in general. Indeed consider the generating cofibration $S^{0}(P)\rightarrow 0$, and let $C$ be cofibrant. In particular $0\rightarrow C$ is a cofibration. The pushout-product axiom would imply that $S^{0}(P)\otimes C\rightarrow 0$ is a cofibration. Consider $C=S^{1}(Q)$ for some projective $Q$, and let $P$ be the monoidal unit. Then $S^{0}(P)\otimes C\cong S^{1}(P\otimes Q)$. The map $S^{1}(P\otimes Q)\rightarrow 0$ is very rarely a cofibration. 

\subsection*{Duality}

In any closed monoidal category $(\mathpzc{E},\otimes,k,\underline{Hom})$ one can consider the functor 
$$(-)^{\vee}:\mathpzc{E}\rightarrow\mathpzc{E}^{op},\; E\mapsto\underline{Hom}(E,k)$$
This functor is contravariantly self-adjoint.
\begin{prop}
Let $\mathpzc{E}$ be a monoidal elementary exact category. The functor $(-)^{\vee}:Ch_{*}(\mathpzc{E})\rightarrow Ch_{*}(\mathpzc{E})^{op}$ is left Quillen for the projective model structure on the left and its opposite model structure on the right.
\end{prop}
\begin{proof}
Since any object of $Ch_{*}(\mathpzc{E})^{op}$ is cofibrant and $\underline{Hom}(-,k)$ clearly preserves degree-wise split exact sequences all that remains to prove is that it sends trivially cofibrant objects to acyclic objects. Indeed if $P_{\bullet}$ is trivially cofibrant then $P_{\bullet}\rightarrow0$ is a homotopy equivalence. Hence $0\rightarrow(P_{\bullet})^{\vee}$ is a homotopy equivalence and we are done.
\end{proof}

\section{The Dold-Kan Correspondence}

In this section we generalise the Dold-Kan correspondence for abelian groups to exact categories exact categories with enough projectives. If $\mathpzc{C}$ is a category, we denote by $\textbf{s}\mathpzc{C}$ the functor category $[\Delta^{op},\mathpzc{C}]$, where $\Delta$ is the usual simplicial category.  We use this to show that when $\mathpzc{E}$ is elementary the projective model structure on $Ch(\mathpzc{E})$ and $Ch_{\ge0}(\mathpzc{E})$ are Kan complex-enriched.

Let us recall the Dold-Kan correspondence for abelian categories. The exposition here follows \cite{Weibel} 8.4.
For an abelian category $\mathpzc{A}$, there are functors 
$$\Gamma: Ch_{\ge0}(\mathpzc{A})\rightarrow\textbf{s}\mathpzc{A},\;N:\textbf{s}\mathpzc{A}\rightarrow Ch_{\ge0}(\mathpzc{A})$$
constructed as follows.

To construct $N$, we first define an `unnormalised', simpler functor $C:\textbf{s}\mathpzc{A}\rightarrow Ch_{\ge0}(\mathpzc{A})$. 
Given an object $A\in\textbf{s}\mathpzc{A}$ let $CA$ be the complex with $(CA)_{n}=A_{n}$, and differential $\partial_{n}:(CA)_{n}\rightarrow (CA)_{n-1}$ given by
$$\sum_{i=0}^{n}(-1)^{i}d_{i}$$
where the $d_{i}$ are the degeneracy maps. One checks straightforwardly that this is well-defined as a chain complex. Now let
$$(NA)_{n}=\bigcap_{i=0}^{n-1}\textrm{Ker}(d_{i})$$
$NA\subset CA$ is in fact a subcomplex, and the restricted differential is  $\delta_{n}=(-1)^{n}d_{n}:NA_{n}\rightarrow NA_{n-1}$. Since by definition a map of simplicial objects commutes with the face maps,  the constructions $CA$ and $NA$ are functorial. Moreover we have the following
\begin{prop}[\cite{Goerss-Jardine}, Theorem III 2.1, 2.4]
The natural inclusion $N\rightarrow C$ is a natural homotopy equivalence, and a split monomorphism.
\end{prop}
The construction of $\Gamma$ is more involved. For a chain complex $C\in Ch_{\ge0}(\mathpzc{A})$, one sets
$$\Gamma(C)_{n}=\bigoplus_{\eta:[n]\twoheadrightarrow [p],p\le n}C_{\eta}$$
where for $\eta:[n]\twoheadrightarrow [p]$, $C_{\eta}=C_{p}$. Given a morphism $\alpha:[n]\rightarrow[m]$ in $\Delta$, define a morphism $\Gamma(C)(\alpha):\Gamma_{m}(C)\rightarrow \Gamma_{n}(C)$ by its restriction $\Gamma(\alpha,\eta):C_{\eta}\rightarrow\Gamma(C)$ to each summand $C_{\eta}$ as follows. For each surjection $\eta:[n]\rightarrow[p]$ we consider its epi-mono factorisation $\epsilon\eta'$ of $\eta\alpha$.
\begin{displaymath}
\xymatrix{
[m]\ar[r]^{\alpha}\ar[d]^{\eta'} & [n]\ar[d]^{\eta}\\
[q]\ar[r]^{\epsilon} & [p]
}
\end{displaymath}
If $p=q$ so that $\eta\alpha=\eta'$ then we take $\Gamma(\alpha,\eta)$ to be the natural identification of $C_{\eta}$ with the summand $C_{\eta'}$ of $\Gamma_{m}$. If $p=q+1$ and $\epsilon=\epsilon_{p}$, so that the image of $\eta\alpha$ is $\{0,\ldots,p-1\}$, then we take $\Gamma(\alpha,\eta)$ to be the composition
\begin{displaymath}
\xymatrix{
C_{\eta}=C_{p}\ar[r]^{d} & C_{p-1}=C_{\eta'}\ar[r] & \Gamma_{m}(C)
}
\end{displaymath}
Otherwise we take $\Gamma(\alpha,\eta)$ to be $0$.
 The Dold-Kan Correspondence says the following
\begin{thm}[Dold-Kan for Abelian Categories]\label{dkab}
Let $\mathpzc{A}$ be an abelian category. Then the functors
$$\Gamma: Ch_{\ge0}(\mathpzc{A})\rightarrow\textbf{s}\mathpzc{A},\;N:\textbf{s}\mathpzc{A}\rightarrow Ch_{\ge0}(\mathpzc{A})$$
form an equivalence of categories.
\end{thm}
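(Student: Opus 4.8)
The plan is to establish the two natural isomorphisms $\Gamma N\cong\textrm{id}_{\textbf{s}\mathpzc{A}}$ and $N\Gamma\cong\textrm{id}_{\textbf{Ch}_{\ge0}(\mathpzc{A})}$ separately, the combinatorial heart being the degeneracy decomposition of a simplicial object. Following \cite{Weibel} 8.4, I would first record the simplicial identities and, for a simplicial object $A$, introduce the degenerate subobject $DA_{n}=\sum_{i=0}^{n-1}\textrm{Im}(s_{i})$ alongside the normalized object $NA_{n}=\bigcap_{i=0}^{n-1}\textrm{Ker}(d_{i})$. The key structural lemma is that for each $n$ there is a natural direct sum decomposition
$$A_{n}\cong\bigoplus_{\eta:[n]\twoheadrightarrow[p]}s_{\eta}(NA_{p}),$$
indexed by the surjections $\eta$ in $\Delta$, where $s_{\eta}$ denotes the corresponding iterated degeneracy and restricts to a (split) monomorphism on $NA_{p}$. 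This is exactly where abelianness is used: one builds the decomposition by induction on $n$, using that the relevant kernels and images exist and that the short exact sequences produced by the simplicial identities split. It is precisely this lemma that later fails to hold verbatim in a general exact category and must be replaced by the elementary hypothesis.

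Granting the decomposition, the isomorphism $\Gamma N\cong\textrm{id}$ is almost immediate: by definition $\Gamma(NA)_{n}=\bigoplus_{\eta:[n]\twoheadrightarrow[p]}NA_{p}$, so the lemma furnishes a degreewise isomorphism $\Gamma(NA)_{n}\cong A_{n}$. I would then verify that this identification is compatible with the simplicial operators, i.e.\ that the face and degeneracy maps of $\Gamma(NA)$ — defined through the epi-mono factorisation of $\eta\alpha$ via the three cases $p=q$, $p=q+1$ with $\epsilon=\epsilon_{p}$, and otherwise $0$ — match the original operators of $A$ under the isomorphism. This is a bookkeeping argument comparing $\Gamma(\alpha,\eta)$ against the simplicial identities, and it delivers naturality in $A$.

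For $N\Gamma\cong\textrm{id}$ I would compute the normalized complex of $\Gamma(C)$ directly. Among the summands $C_{\eta}$ of $\Gamma(C)_{n}$ indexed by surjections $\eta:[n]\twoheadrightarrow[p]$, the faces $d_{i}$ for $i<n$ are arranged so that $\bigcap_{i<n}\textrm{Ker}(d_{i}^{\Gamma C})$ is precisely the summand indexed by $\textrm{id}_{[n]}$, namely $C_{n}$, while every other (degenerate) summand is killed; a short calculation then identifies the induced differential $(-1)^{n}d_{n}^{\Gamma C}$ with the differential of $C$, giving $N\Gamma(C)\cong C$ naturally. Since $N$ and $\Gamma$ are functorial by construction, these natural isomorphisms assemble into the desired equivalence. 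The main obstacle is the structural decomposition lemma: checking that the degeneracy operators $s_{\eta}$ genuinely split $A_{n}$ into normalized pieces indexed by surjections. Everything else reduces to careful but routine manipulation of the simplicial identities once that decomposition is in hand.
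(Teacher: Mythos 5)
Your proposal is correct and is essentially the paper's own proof: the paper simply cites \cite{Weibel} \S 8.4, and your sketch — the degeneracy decomposition $A_{n}\cong\bigoplus_{\eta:[n]\twoheadrightarrow[p]}NA_{p}$, the identification $\Gamma N\cong\textrm{id}$ via that decomposition, and the computation that the normalized complex of $\Gamma(C)$ recovers the summand indexed by $\textrm{id}_{[n]}$ — is precisely the argument given there. Your remark that the splitting lemma is where abelianness enters, and is what must be circumvented in the exact-category generalisation, is also consistent with how the paper proceeds (it deduces the exact case by embedding into an abelian category rather than re-proving the decomposition).
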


\begin{proof}
See \cite{Weibel} \S 8.4.
\end{proof}

The constructions of $\Gamma$ and $C$ make sense in any additive category, and  $N$ makes sense in any additive category which has kernels. Thus for an additive category $\mathpzc{E}$ with kernels we get functors
$$\Gamma: Ch_{\ge0}(\mathpzc{E})\rightarrow\textbf{s}\mathpzc{E},\;\;N:\textbf{s}\mathpzc{E}\rightarrow Ch_{\ge0}(\mathpzc{E})$$
constructed mutatis mutandis as above. We also get a functor $C:\textbf{s}\mathpzc{E}\rightarrow Ch_{\ge0}(\mathpzc{E})$, and a natural split inclusion and homotopy equivalence $i:N\rightarrow C$.

\begin{cor}[Dold-Kan for Exact Categories]
Let $\mathpzc{E}$ be an elementary exact category. The functors
$$\Gamma: Ch_{\ge0}(\mathpzc{E})\rightarrow\textbf{s}\mathpzc{E},\;\;N:\textbf{s}\mathpzc{E}\rightarrow Ch_{\ge0}(\mathpzc{E})$$
defined above are weakly inverse to each other. In particular they give equivalences of categories.
\end{cor}

\begin{proof}
Pick a left abelianisation $I:\mathpzc{E}\rightarrow\mathpzc{A}$. Then $I$ extends to functors $\textbf{s}\mathpzc{E}\rightarrow\textbf{s}\mathpzc{A}$ and $ Ch_{\ge0}(\mathpzc{E})\rightarrow Ch_{\ge0}(\mathpzc{A})$, which we will also denote by $I$. Since $I$ preserves kernels we get a commutative diagram.
\begin{displaymath}
\xymatrix{
\textbf{s}\mathpzc{A}\ar[r]^{N} &  Ch_{\ge0}(\mathpzc{A})\\
\textbf{s}\mathpzc{E}\ar[u]^{I}\ar[r]^{N} &  Ch_{\ge0}(\mathpzc{E})\ar[u]^{I}
}
\end{displaymath}
It is also clear from the construction of $\Gamma$ that the following diagram commutes
\begin{displaymath}
\xymatrix{
\textbf{s}\mathpzc{A} &  Ch_{\ge0}(\mathpzc{A})\ar[l]^{\Gamma}\\
\textbf{s}\mathpzc{E}\ar[u]^{I} &  Ch_{\ge0}(\mathpzc{E})\ar[u]^{I}\ar[l]^{\Gamma}
}
\end{displaymath}
Since the functor $I$ is fully faithful, Theorem \ref{dkab} implies the result.
\end{proof}

\begin{rem}
This result is actually overkill. It has been pointed out to us by Theo B\"{u}hler that the Dold-Kan equivalence is valid for any weakly idempotent complete additive category. A proof (which in fact works on the level of quasi-categories) can be found in \cite{joyal2008notes} Section 35.
\end{rem}

If $\mathpzc{A}=\mathpzc{Ab}$ is just the category of abelian groups, then there is a well-known model structure on the category $\textbf{s}\mathpzc{Ab}$. The weak equivalences (resp.  fibrations) are those maps of simplicial abelian groups which are weak equivalences (resp. fibrations) on the underlying simplicial set. As usual, the cofibrations are maps of simplicial abelian groups which have the left lifting property with respect to the trivial fibrations. Moreover, the category $\mathpzc{Ab}$ is an elementary abelian category. As a set of compact projective generators we can take $\mathcal{P}=\{\Z\}$. Thus there is a projective model structure on $ Ch_{\ge0}(\mathpzc{Ab})$. In this case the functors $N$ and $\Gamma$  also form a Quillen equivalence between these model categories. For a proof see \cite{Goerss-Jardine} Chapter 3 Section 2. 
The model structure on $\textbf{s}\mathpzc{Ab}$ is a special case of a much more general model structure.
\begin{notation}
\begin{enumerate}
\item
Let $Z$ be an object in a category $\mathpzc{C}$. We denote by $\textbf{s}Z$ the constant simplicial object in $s\mathpzc{C}$ which is $Z$ in each degree, and such that the face and degeneracy maps are all $\textrm{id}_{Z}$.
\item
If $\mathpzc{C}$ is additive, then the category $\textbf{s}\mathpzc{C}$ is enriched over $\textbf{s}\mathpzc{Ab}$ in an obvious way. We denote the enriched hom functor by $\textbf{Hom}_{\textbf{s}\mathpzc{C}}$
\end{enumerate}
\end{notation}

The result below is a special case of Theorem 6.3 in \cite{christensen2002quillen}.
\begin{thm}
Let $\mathpzc{E}$ be a complete and cocomplete exact category with enough projectives. There is a model structure on $\textbf{s}\mathpzc{E}$ in which a map $f:X\rightarrow Y$ is a weak equivalence (respectively fibration)
 if and only if the induced map 
$$\textbf{Hom}_{\textbf{s}\mathpzc{E}}(\textbf{s}P,A)\rightarrow\textbf{Hom}_{\textbf{s}\mathpzc{E}}(\textbf{s}P,B)$$
is a weak equivalence (respectively fibration) for all projectives $P$.
\end{thm}

\begin{thm}[Model Dold-Kan for Elementary Exact Categories]\label{dkex}
Let $\mathpzc{E}$ be a  complete and cocomplete exact category with enough projectives. Endow $ Ch_{\ge0}(\mathpzc{E})$ and $\textbf{s}\mathpzc{E}$ with their projective model structures. Then the functors
$$\Gamma: Ch_{\ge0}(\mathpzc{E})\rightarrow\textbf{s}\mathpzc{E},\; N:\textbf{s}\mathpzc{E}\rightarrow Ch_{\ge0}(\mathpzc{E})$$
form a Quillen equivalence.
\end{thm}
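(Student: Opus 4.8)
The plan is to exploit the fact, already established in the preceding corollary, that $N$ and $\Gamma$ are mutually inverse equivalences of categories; it therefore suffices to verify that $N$ is a right Quillen functor, since an equivalence of categories that is a Quillen adjunction and preserves and reflects weak equivalences is automatically a Quillen equivalence. The computational heart of the argument is the identity
$$N\bigl(\textbf{Hom}_{\textbf{s}\mathpzc{E}}(\textbf{s}P,A)\bigr)\cong\textrm{Hom}(P,NA)$$
of chain complexes of abelian groups, valid for every $P\in\mathpzc{E}$ and $A\in\textbf{s}\mathpzc{E}$, where the right-hand side denotes the complex $n\mapsto\textrm{Hom}_{\mathpzc{E}}(P,(NA)_n)$. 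Indeed, in degree $n$ the left-hand side is $\bigcap_{i=0}^{n-1}\textrm{Ker}\bigl(\textrm{Hom}(P,d_i)\bigr)$, and since $\textrm{Hom}_{\mathpzc{E}}(P,-)$ preserves all limits and $\mathpzc{E}$ is bicomplete, this equals $\textrm{Hom}\bigl(P,\bigcap_{i=0}^{n-1}\textrm{Ker}(d_i)\bigr)=\textrm{Hom}(P,(NA)_n)$; the differentials agree by functoriality.

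Next I would treat weak equivalences. Let $\mathcal{P}$ be the chosen generating set of tiny projectives. By the classical Dold--Kan correspondence for abelian groups (see \cite{Goerss-Jardine}), a map of simplicial abelian groups is a weak equivalence precisely when its normalized complex is a quasi-isomorphism; applied to $\textbf{Hom}_{\textbf{s}\mathpzc{E}}(\textbf{s}P,f)$ and combined with the identity above, this shows that $\textbf{Hom}_{\textbf{s}\mathpzc{E}}(\textbf{s}P,f)$ is a weak equivalence for all $P\in\mathcal{P}$ if and only if $\textrm{Hom}(P,Nf)$ is a quasi-isomorphism for all $P\in\mathcal{P}$. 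Since $\textrm{Hom}(P,-)$ is additive it commutes with the formation of mapping cones, so $\textrm{Hom}(P,\textrm{cone}(Nf))\cong\textrm{cone}\bigl(\textrm{Hom}(P,Nf)\bigr)$; as $\textrm{cone}(Nf)$ is bounded below and hence good, Corollary \ref{reflshexact} identifies acyclicity of $\textrm{cone}(Nf)$ with acyclicity of all $\textrm{Hom}(P,\textrm{cone}(Nf))$. Putting these together, $f$ is a weak equivalence in $\textbf{s}\mathpzc{E}$ if and only if $Nf$ is a quasi-isomorphism, so $N$ both preserves and reflects weak equivalences.

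For fibrations I would argue analogously. Recall that a map of simplicial abelian groups is a fibration if and only if its normalized chain complex is an epimorphism in each strictly positive degree \cite{Goerss-Jardine}. Hence $f$ is a fibration in $\textbf{s}\mathpzc{E}$ if and only if, for every $P\in\mathcal{P}$ and every $n\ge1$, the map $\textrm{Hom}(P,(NA)_n)\to\textrm{Hom}(P,(NB)_n)$ is surjective; by Proposition \ref{reflectsurj} this is equivalent to $(Nf)_n$ being an admissible epimorphism for each $n\ge1$, i.e. to $Nf$ being a fibration in the projective model structure on $\textbf{Ch}_{\ge0}(\mathpzc{E})$. Thus $N$ preserves fibrations, and by the previous paragraph it preserves trivial fibrations as well, so $N$ is right Quillen and $\Gamma\dashv N$ is a Quillen adjunction. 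Finally, because $N$ and $\Gamma$ are inverse equivalences of categories each preserving and reflecting weak equivalences, the unit and counit are isomorphisms; concretely, for cofibrant $A$ and fibrant $X$ a map $\Gamma A\to X$ is a weak equivalence if and only if its adjunct $A\xrightarrow{\sim}N\Gamma A\to NX$ is, since $N$ reflects weak equivalences. This verifies the defining condition of a Quillen equivalence. The main obstacle is not conceptual but bookkeeping: one must match the simplicial-homotopical characterizations of weak equivalences and fibrations in $\textbf{s}\textbf{Ab}$ with the chain-level conditions, and check that the commutation identity together with the goodness of $\textrm{cone}(Nf)$ legitimately transports the generator-wise tests through $N$.
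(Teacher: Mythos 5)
Your proposal is correct and follows essentially the same route as the paper: the key identity $N\bigl(\textbf{Hom}_{\textbf{s}\mathpzc{E}}(\textbf{s}P,-)\bigr)\cong\textbf{Hom}(S^{0}(P),N(-))$ is exactly the commuting square the paper uses, and both arguments reduce the Quillen adjunction to the classical Dold--Kan Quillen equivalence for $\textbf{Ab}$ via the generator-wise description of fibrations and weak equivalences, then conclude the equivalence from the unit and counit being isomorphisms. The only difference is that you inline the verifications (using Proposition \ref{reflectsurj} and Corollary \ref{reflshexact} directly) where the paper packages them into Propositions \ref{fiberquillen} and \ref{Quillequiv}; the content is the same.
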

We use the following notion.

\begin{defn}
Let $\mathpzc{M},\mathpzc{N}$ be model categories. $\mathpzc{M}$ is said to be \textbf{generated} by a collection of functors $\{F_{i}:\mathpzc{M}\rightarrow\mathpzc{N}\}_{i\in I}$ if  a map $f:X\rightarrow Y$ in $\mathpzc{M}$ is a fibration (resp. weak equivalence) if and only if $F_{i}(f)$ is a fibration (resp. weak equivalence) for each $i\in I$.
\end{defn}

By construction the model structure on $\textbf{s}\mathpzc{E}$ is generated by the functors 
$$\{\textbf{Hom}_{\textbf{s}\mathpzc{E}}(\textbf{s}P,-):\textbf{s}\mathpzc{E}\rightarrow\textbf{s}\mathpzc{Ab}\}_{P\in\mathcal{P}}$$
where we endow $\textbf{s}\mathpzc{Ab}$ with its projective model structure.

The model structure on $ Ch_{\ge0}(\mathpzc{E})$ is generated by a similar set of functors.

\begin{prop}
Let $\mathpzc{E}$ be a weakly idempotent complete exact category with  class of projectives $\mathcal{P}$. The projective model structure on $ Ch_{\ge0}(\mathpzc{E})$ is generated by the functors 
$$\{\textbf{Hom}(S^{0}(P),-): Ch_{\ge0}(\mathpzc{E})\rightarrow Ch_{\ge0}(\mathpzc{Ab}):P\in\mathcal{P}\}$$
where we endow $ Ch_{\ge0}(\mathpzc{Ab})$ with its projective model structure.
\end{prop}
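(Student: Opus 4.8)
The plan is to unwind the two functor-level conditions (fibration and weak equivalence) using the explicit description of $\textbf{Hom}(S^0(P),-)$ together with Theorem \ref{projmodker}, and then to invoke the two reflection results proved earlier for projective generating sets. First I would observe that for any $Y_\bullet\in\textbf{Ch}_{\ge0}(\mathpzc{E})$ the complex $\textbf{Hom}(S^0(P),Y_\bullet)$ is, entrywise, just $\textrm{Hom}_{\mathpzc{E}}(P,Y_n)$ in degree $n$ (the product defining $\textbf{Hom}$ collapses to the $i=0$ factor since $S^0(P)$ is concentrated in degree $0$), and that its differential is $f\mapsto d^Y_n\circ f=\textrm{Hom}(P,d^Y_n)$ because $d^{S^0(P)}=0$. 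Hence $\textbf{Hom}(S^0(P),-)$ is nothing but the degreewise application of the additive functor $\textrm{Hom}_{\mathpzc{E}}(P,-)$. Recalling from Theorem \ref{projmodker} that $f_\bullet\colon X_\bullet\to Y_\bullet$ is a fibration exactly when each $f_n$ ($n>0$) is an admissible epic, and (being a map in $\textbf{Ch}_{\ge0}(\mathpzc{E})$) a weak equivalence exactly when it is a quasi-isomorphism, the claim splits into two independent assertions, one about degreewise surjectivity and one about acyclicity of cones.

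For fibrations I would argue as follows. If $f_\bullet$ is a fibration then each $f_n$ with $n>0$ is an admissible epic, so since $P$ is projective the functor $\textrm{Hom}(P,-)$ is exact and sends $f_n$ to a surjection of abelian groups; thus $\textbf{Hom}(S^0(P),f_\bullet)$ is a degreewise epic in positive degrees, i.e.\ a fibration in $\textbf{Ch}_{\ge0}(\textbf{Ab})$. Conversely, if $\textrm{Hom}(P,f_n)$ is surjective for every $P\in\mathcal{P}$ and every $n>0$, then Proposition \ref{reflectsurj} (applied with the admissible generating set $\mathcal{P}$) shows directly that each such $f_n$ is an admissible epic, so $f_\bullet$ is a fibration.

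For weak equivalences I would pass through the mapping cone. Since $\textrm{Hom}_{\mathpzc{E}}(P,-)$ is additive it preserves the finite biproducts and composites out of which $\textrm{cone}(f_\bullet)$ is built, giving a natural isomorphism $\textbf{Hom}(S^0(P),\textrm{cone}(f_\bullet))\cong\textrm{cone}\bigl(\textbf{Hom}(S^0(P),f_\bullet)\bigr)$ in $\textbf{Ch}_{\ge0}(\textbf{Ab})$. Therefore $\textbf{Hom}(S^0(P),f_\bullet)$ is a quasi-isomorphism for every $P$ if and only if $\textbf{Hom}(S^0(P),\textrm{cone}(f_\bullet))$ is acyclic for every $P$, while $f_\bullet$ is a weak equivalence if and only if $\textrm{cone}(f_\bullet)$ is acyclic. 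Now $\textrm{cone}(f_\bullet)$ is bounded below, hence good, so Corollary \ref{reflshexact} applied to the projective generating set $\mathcal{P}$ yields exactly the equivalence ``$\textrm{cone}(f_\bullet)$ acyclic $\iff$ $\textrm{Hom}(P,\textrm{cone}(f_\bullet))$ acyclic for all $P\in\mathcal{P}$''. Combining the two parts gives the statement.

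The genuinely load-bearing step is this last reduction for weak equivalences: since $\mathpzc{E}$ need not admit a well-behaved homology, one cannot simply compare homology objects, and the argument must instead route acyclicity through the goodness of the cone and Corollary \ref{reflshexact}. The remaining verifications—the entrywise identification of $\textbf{Hom}(S^0(P),-)$, its compatibility with cones, and the exactness of $\textrm{Hom}(P,-)$—are routine, so I expect the only point requiring care to be confirming that the cone lies in the ``good'' class so that the reflection of acyclicity is legitimate.
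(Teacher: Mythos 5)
Your proposal is correct and follows essentially the same route as the paper's proof: identify $\textbf{Hom}(S^{0}(P),-)$ with the degreewise application of $\textrm{Hom}_{\mathpzc{E}}(P,-)$, handle fibrations via projectivity of $P$ in one direction and the reflection of admissible epics by a generating set in the other, and handle weak equivalences by commuting $\textbf{Hom}(S^{0}(P),-)$ past the mapping cone and invoking Corollary \ref{reflshexact} on the (good, since bounded below) cone. Your write-up is in fact slightly more explicit than the paper's about which reflection results (Proposition \ref{reflectsurj}, goodness of the cone) are being used, but the argument is the same.
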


\begin{proof}
The fibrations in $ Ch_{\ge0}(\mathpzc{E})$ are the degree-wise admissible epics in positive degree, and the fibrations in $ Ch_{\ge0}(\mathpzc{Ab})$ are the degree-wise epics in positive degree. Let $f_{\bullet}:X_{\bullet}\rightarrow Y_{\bullet}$ be a morphism in $ Ch_{\ge0}(\mathpzc{E})$. Then the components of $\textbf{Hom}(S^{0}(P),f_{\bullet})$ are $\textrm{Hom}_{\mathpzc{E}}(P,f_{n})$. Now $f_{\bullet}$ is a fibration if and only if each $f_{n}$ is an admissible epimorphism for $n>0$. This is true if and only if $\textrm{Hom}_{\mathpzc{E}}(P,f_{n})$ is an epic for each $n>0$ and each $P\in\mathcal{P}$, i.e. if and only if $\textbf{Hom}(S^{0}(P),f_{\bullet})$ is a fibration for each $P\in\mathcal{P}$. 

It is clear that $\textbf{Hom}(S^{0}(P),\textrm{cone}(f_{\bullet}))\cong\textrm{cone}(\textbf{Hom}(S^{0}(P),f_{\bullet}))$. Now by Corollary \ref{reflshexact}, $\textrm{cone}(f_{\bullet})$ is acyclic if and only if $\textbf{Hom}(S^{0}(P),\textrm{cone}(f_{\bullet}))$ is acyclic for all $P\in\mathcal{P}$. Equivalently, $f_{\bullet}$ is a weak equivalence if and only if $\textbf{Hom}(S^{0}(P),f_{\bullet})$ is a weak equivalence for each $P\in\mathcal{P}$.
\end{proof}

With these structures in hand, we will use the following result in order to prove the theorem.

\begin{prop}\label{fiberquillen}
Let $\mathpzc{M},\mathpzc{N},\mathpzc{M}',\mathpzc{N}'$ be model categories. Suppose $\mathpzc{M}$ is generated by functors $\{F_{i}:\mathpzc{M}\rightarrow\mathpzc{N}\}_{i\in I}$, and $\mathpzc{M}'$ is generated by functors $\{F'_{i}:\mathpzc{M}'\rightarrow\mathpzc{N}'\}_{i\in I}$. Let $G:\mathpzc{M}\rightarrow\mathpzc{M}'$ and $H:\mathpzc{M}'\rightarrow\mathpzc{M}$ be adjoint functors
$$G\dashv H$$
Suppose also that there is a Quillen adjunction $P\dashv Q$, with
$P:\mathpzc{N}\rightarrow\mathpzc{N}'$ and $Q:\mathpzc{N}'\rightarrow\mathpzc{N}$ such that for each $i\in I$ the diagram
\begin{displaymath}
\xymatrix{
\mathpzc{M}\ar[d]^{F_{i}} & \mathpzc{M}'\ar[d]^{F'_{i}}\ar[l]^{H}\\
\mathpzc{N} & \mathpzc{N}'\ar[l]^{Q}
}
\end{displaymath}
commutes.
Then $G\dashv H$ is a Quillen adjunction. 
\end{prop}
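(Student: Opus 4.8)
The plan is to verify the defining criterion for a Quillen adjunction directly, namely that the right adjoint $H$ preserves fibrations and trivial fibrations (equivalently that $G$ preserves cofibrations and trivial cofibrations). I would work on the fibration side, since both generation hypotheses are phrased in terms of fibrations and weak equivalences, and these are exactly what can be transported across the commuting square $F_{i}\circ H=Q\circ F'_{i}$ supplied by the hypothesis.

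First I would treat fibrations. Let $f'$ be a fibration in $\mathpzc{M}'$. Because $\mathpzc{M}'$ is generated by $\{F'_{i}\}$, each $F'_{i}(f')$ is a fibration in $\mathpzc{N}'$. Since $P\dashv Q$ is a Quillen adjunction, its right adjoint $Q$ preserves fibrations, so each $Q(F'_{i}(f'))$ is a fibration in $\mathpzc{N}$. The commuting square gives $Q(F'_{i}(f'))=F_{i}(H(f'))$, so $F_{i}(H(f'))$ is a fibration in $\mathpzc{N}$ for every $i\in I$. Invoking that $\mathpzc{M}$ is generated by $\{F_{i}\}$, this forces $H(f')$ to be a fibration in $\mathpzc{M}$. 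The trivial-fibration case is then identical word for word: a trivial fibration is detected by the $F_{i}$ (respectively $F'_{i}$) as a map that is simultaneously a fibration and a weak equivalence, so from the two generation conditions $f'$ trivial fibration in $\mathpzc{M}'$ gives each $F'_{i}(f')$ a trivial fibration in $\mathpzc{N}'$; since $Q$, being right Quillen, preserves trivial fibrations as well, each $Q(F'_{i}(f'))=F_{i}(H(f'))$ is a trivial fibration in $\mathpzc{N}$, and generation of $\mathpzc{M}$ yields that $H(f')$ is a trivial fibration in $\mathpzc{M}$.

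The argument contains essentially no genuine obstacle: all of the homotopical content is already packaged in the assumptions that $Q$ is a right Quillen functor and that the square commutes, after which the proof is just a translation at the level of the generating diagrams. The only point requiring care is bookkeeping with the definition of \emph{generated}: one must note explicitly that ``$f$ is a trivial fibration'' is equivalent to ``$F_{i}(f)$ is a fibration for all $i$ and $F_{i}(f)$ is a weak equivalence for all $i$,'' i.e. to ``$F_{i}(f)$ is a trivial fibration for all $i$,'' so that the two clauses of the generation hypothesis combine correctly. Once this is observed, the conclusion that $G\dashv H$ is a Quillen adjunction follows immediately from the standard criterion.
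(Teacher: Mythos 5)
Your proof is correct and follows essentially the same route as the paper's: show that $H$ preserves (trivial) fibrations by pushing a fibration in $\mathpzc{M}'$ through the $F'_{i}$, applying the right Quillen functor $Q$, and reading off the result via the commuting square and the generation hypothesis on $\mathpzc{M}$. Your extra remark about how the two clauses of the generation hypothesis combine to detect trivial fibrations is a useful bit of bookkeeping that the paper leaves implicit.
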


\begin{proof}
We need to show that $H$ preserves (acyclic) fibrations. Let $f$ be an (acyclic) fibration in $\mathpzc{M}'$. By assumption, for each $i$, $F'_{i}(f)$ is an (acyclic) fibration in $\mathpzc{N}'$. Since $Q$ is right Quillen, $Q\circ F'_{i}(f)$ is an (acyclic) fibration. By commutativity of the diagram
\begin{displaymath}
\xymatrix{
\mathpzc{M}\ar[d]^{F_{i}} & \mathpzc{M}'\ar[d]^{F'_{i}}\ar[l]^{H}\\
\mathpzc{N} & \mathpzc{N}'\ar[l]^{Q}
}
\end{displaymath}
$F_{i}\circ H(f)$ is an (acyclic) fibration for each $i\in I$. Again by assumption, $H(f)$ is an (acyclic) fibration.
\end{proof}

Before proving the theorem, we shall make the following easy observation. 

\begin{prop}\label{Quillequiv}
Let $\mathpzc{M}$ and $\mathpzc{M'}$ be model categories, and $G:\mathpzc{M}\rightarrow\mathpzc{M}'$ and $H:\mathpzc{M}'\rightarrow\mathpzc{M}$ be Quillen adjoint functors
$$G\dashv H$$
Suppose further that
\begin{enumerate}
\item
The unit and counit maps of the adjunction are weak equivalences.
\item
$G$ preserves weak equivalences of the form $X\rightarrow HY$ where $X$ is cofibrant and $Y$ is fibrant.
\item
$H$ preserves weak equivalences of the form $GX\rightarrow Y$ where $X$ is cofibrant and $Y$ is fibrant.
\end{enumerate}
Then $G\dashv H$ is a Quillen equivalence.

\end{prop}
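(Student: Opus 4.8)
The plan is to verify the standard adjoint-transpose criterion for a Quillen equivalence: a Quillen adjunction $G\dashv H$ is a Quillen equivalence precisely when, for every cofibrant object $X$ of $\mathpzc{M}$ and every fibrant object $Y$ of $\mathpzc{M}'$, a morphism $f\colon GX\rightarrow Y$ is a weak equivalence if and only if its transpose $\tilde{f}\colon X\rightarrow HY$ is a weak equivalence. So the entire proof reduces to checking this single biconditional, and I expect to use nothing beyond the three hypotheses together with the $2$-out-of-$3$ property of weak equivalences.

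First I would record the explicit formulas for the transposes in terms of the unit $\eta$ and counit $\epsilon$ of the adjunction: for $f\colon GX\rightarrow Y$ one has $\tilde{f}=Hf\circ\eta_{X}$, and dually the transpose of a map $g\colon X\rightarrow HY$ is $\epsilon_{Y}\circ Gg$. These are just the triangle identities and need no hypotheses. The point of introducing them is that each transpose is exhibited as a composite of an image under $H$ (resp. $G$) with a unit (resp. counit) map, which is exactly the data controlled by hypotheses (1)--(3).

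For the forward direction I would suppose $f\colon GX\rightarrow Y$ is a weak equivalence with $X$ cofibrant and $Y$ fibrant. Since $f$ has exactly the shape $GX\rightarrow Y$ with $X$ cofibrant and $Y$ fibrant, hypothesis (3) applies and $Hf$ is a weak equivalence; hypothesis (1) gives that $\eta_{X}$ is a weak equivalence; hence $\tilde{f}=Hf\circ\eta_{X}$ is a weak equivalence by $2$-out-of-$3$. The converse is strictly dual: given a weak equivalence $g\colon X\rightarrow HY$ with $X$ cofibrant and $Y$ fibrant, hypothesis (2) makes $Gg$ a weak equivalence, hypothesis (1) makes $\epsilon_{Y}$ a weak equivalence, and $2$-out-of-$3$ applied to the composite $\epsilon_{Y}\circ Gg$ concludes. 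Together these two implications establish the criterion and hence the Quillen equivalence.

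I do not anticipate a serious obstacle; the argument is entirely formal once the transpose formulas are in place. The only point that demands genuine care is bookkeeping: at each invocation of hypothesis (2) or (3) one must confirm that the map being fed to it really does have the prescribed form, namely $X\rightarrow HY$ (resp. $GX\rightarrow Y$) with cofibrant source and fibrant target, so that these deliberately restricted preservation statements are legitimately applicable. Because $X$ is cofibrant and $Y$ is fibrant by hypothesis, and $f$ and $g$ already carry the correct shapes, this matching is automatic and no cofibrant--fibrant replacement is required.
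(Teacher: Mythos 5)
Your argument is correct and is essentially the paper's own proof: both directions proceed by writing the adjoint transpose as a composite of the image of the given map under $H$ (resp.\ $G$) with a unit (resp.\ counit) map, invoking hypotheses (2), (3) and (1) to see that each factor is a weak equivalence, and closing with the fact that weak equivalences compose. The only cosmetic difference is that you make the transpose formulas $\tilde{f}=Hf\circ\eta_{X}$ and $\epsilon_{Y}\circ Gg$ explicit, whereas the paper leaves them implicit.
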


\begin{proof}
Let $X$ be a cofibrant object of $\mathpzc{M}$ and $Y$ a fibrant object of $\mathpzc{M}'$. Suppose that $f:GX\rightarrow Y$ is a weak equivalence. Then by assumption $HGX\rightarrow HY$ is a weak equivalence. Also by assumption $X\rightarrow HGX$ is a weak equivalence. Hence $X\rightarrow HY$ is a weak equivalence.

Conversely suppose that $X\rightarrow HY$ is a weak equivalence. Then $GX\rightarrow GHY$ is a weak equivalence by assumption. Also by assumption $GHY\rightarrow Y$ is a weak equivalence. Thus $GX\rightarrow Y$ is a weak equivalence. 
\end{proof}

\begin{proof}[Proof of Theorem \ref{dkex}]
We first note that the following diagrams commute (up to natural isomorphism).
\begin{displaymath}
\xymatrix{
\textbf{s}\mathpzc{E}\ar[d]_{\textbf{Hom}_{\textbf{s}\mathpzc{E}}(\textbf{s}P,-)} &  Ch_{\ge0}(\mathpzc{E})\ar[l]^{\Gamma}\ar[d]^{\textbf{Hom}(S^{0}(P),-)}\\
\textbf{s}\mathpzc{Ab} &  Ch_{\ge0}(\mathpzc{Ab})\ar[l]^{\Gamma}
}
\end{displaymath}

\begin{displaymath}
\xymatrix{
\textbf{s}\mathpzc{E}\ar[d]_{\textbf{Hom}_{\textbf{s}\mathpzc{E}}(\textbf{s}P,-)}\ar[r]^{N} &  Ch_{\ge0}(\mathpzc{E})\ar[d]^{\textbf{Hom}(S^{0}(P),-)}\\
\textbf{s}\mathpzc{Ab}\ar[r]^{N} &  Ch_{\ge0}(\mathpzc{Ab})
}
\end{displaymath}
The second diagram follows from the fact that $\textrm{Hom}(P,-):\mathpzc{E}\rightarrow\mathpzc{Ab}$ preserves kernels (and therefore intersections). The first diagram follows from the fact that $\textrm{Hom}(P,-):\mathpzc{E}\rightarrow\mathpzc{Ab}$ preserves finite direct sums. By Proposition \ref{fiberquillen} the adjunction is a Quillen adjunction. Let us now check the hypotheses of Proposition \ref{Quillequiv}. The unit and counit maps are isomorphisms. In particular they are weak equivalences. In the Dold-Kan correspondence for abelian groups, it can be shown that the functors $N:\textbf{s}\mathpzc{Ab}\rightarrow Ch_{\ge0}(\mathpzc{Ab})$ and $\Gamma: Ch_{\ge0}(\mathpzc{Ab})\rightarrow\textbf{s}\mathpzc{Ab}$ both preserve all weak equivalences. By the commutativity of the above diagrams, this also implies that the functors $N:\textbf{s}\mathpzc{E}\rightarrow Ch_{\ge0}(\mathpzc{E})$ and $\Gamma: Ch_{\ge0}(\mathpzc{E})\rightarrow\textbf{s}\mathpzc{E}$ also preserve all weak equivalences.
\end{proof}

Of course if $\mathpzc{E}$ is a weakly idempotent complete additive category with kernels, and there is a model structure on $Ch_{\ge0}(\mathpzc{E})$, then via the categorical equivalence $Ch_{\ge0}(\mathpzc{E})\rightarrow\textbf{s}\mathpzc{E}$ there is a transferred model category structure on $\textbf{s}\mathpzc{E}$ which is Quillen equivalent to the one on $Ch_{\ge0}(\mathpzc{E})$. The point of the above is that for the projective model structure on $Ch_{\ge0}(\mathpzc{E})$, the transferred model structure coincides with the one established in Theorem 6.3 of \cite{christensen2002quillen}.

\begin{rem}\label{adjunbounded}
Let $\mathpzc{E}$ be a small complete and cocomplete exact category equipped with a small $dg_{\ge0}$-compatible cotorsion pair $(\mathfrak{L},\mathfrak{R})$. Equip $Ch_{\ge0}(\mathpzc{E})$ with the resulting model category structure, and $\textbf{s}\mathpzc{E}$ with the model category structure transferred along the Dold-Kan equivalence Suppose that the cotorsion pair is also $dg$-compatible. By Proposition \ref{truncquillen} we also have an adjunction
$$\adj{i\circ N}{\mathpzc{s\mathpzc{E}}}{Ch(\mathpzc{E})}{\Gamma\circ\tau_{\ge0}}$$
\end{rem}

\subsection{Exactness of the Dold-Kan Functors and Left Properness}
As a diagram category, $\textbf{s}\mathpzc{E}$ can be equipped with the structure of an exact category, where exact sequences are determined term-wise. With this exact structure we have the following.
\begin{prop}
Both of the functors $N:\textbf{s}\mathpzc{E}\rightarrow Ch_{\ge0}(\mathpzc{E})$ and 
$\Gamma:Ch_{\ge0}(\mathpzc{E})\rightarrow\textbf{s}\mathpzc{E}$ are exact.
\end{prop}
\begin{proof}
$\Gamma$ is clearly exact, and $N$ is a retract of the functor $C$ which is also clearly exact. 
\end{proof}
The following is tautological.
\begin{prop}
Let $\mathpzc{C}$ be a model category, and $F:\mathpzc{C}\rightarrow\mathpzc{D}$ an equivalence. Equip $\mathpzc{D}$ with the transferred model structure. Let $f:X\rightarrow Y$ be a left proper map in $\mathpzc{C}$. Then $F(f)$ is left-proper in $\textbf{s}\mathpzc{E}$.
\end{prop}
\begin{cor}
Admissible monomorphisms in $\textbf{s}\mathpzc{E}$ are left-proper.
\end{cor}
\subsection{The Cosimplicial Dold-Kan Correspondence}
In this section we discuss a generalisation of the cosimplicial Dold-Kan correspondence of \cite{castiglioni2004cosimplicial} to exact categories. If $\mathpzc{E}$ is a weakly idempotent complete exact category then so is $\mathpzc{E}^{op}$. Thus there is an equivalence 
$$\adj{\Gamma_{c}}{Ch_{\le0}(\mathpzc{E})}{\textbf{cs}\mathpzc{E}}{N_{c}}$$
between the categories $Ch_{\le0}(\mathpzc{E})$ and the category $\textbf{cs}\mathpzc{E}$ of cosimplicial objects. Therefore a model structure on $Ch_{\le0}(\mathpzc{E})$ induces a Quillen equivalent model structure on $\textbf{cs}\mathpzc{E}$. In this case, it is not immediately clear that a map $f:X\rightarrow Y$ in $\textbf{cs}\mathpzc{E}$ is a weak equivalence if and only if the map $Hom(P,f)$ of cosimplicial abelian groups is an equivalence for each projective $P$, where $Hom(P,-)$ is applied level-wise. This is because cokernels appear in the construction of $N_{c}$, and $Hom(P,-)$ does not necessarily commute with these. Fortunately we still have the following.
\begin{prop}\label{prop:cosimpgenproj}
A map $f:X\rightarrow Y$ in $\textbf{cs}(\mathpzc{E})$ is a weak equivalence if and only if for any projective $P$, $\underline{Hom}(\textbf{cs}P,f)$ is a weak equivalence  in $\textbf{cs}\mathpzc{Ab}$. 
\end{prop}
\begin{proof}
Let $C_{c}:\textbf{cs}\mathpzc{E}\rightarrow Ch_{\le0}(\mathpzc{E})$ be cosimplicial unnormalised Moore complex functor (i.e. the unnormalised Moore complex functor computed in the opposite category). There is a natural homotopy equivalence and split epimorphism $C_{c}\rightarrow N_{c}$. Thus a map $f:X\rightarrow Y$ in $\textbf{cs}\mathpzc{E}$ is an equivalence if and only if $C_{c}(f)$ is an equivalence. Now there is a commutative diagram
\begin{displaymath}
\xymatrix{
Ch_{\le0}(\mathpzc{E})\ar[d]^{\textbf{Hom}(S^{0}(P),-)} & \textbf{cs}\mathpzc{E}\ar[l]^{C_{c}}\ar[d]^{\underline{Hom}(\textbf{cs}P,-)}\\
Ch_{\le0}(\mathpzc{Ab}) & \textbf{cs}\mathpzc{Ab}\ar[l]^{C_{c}}
}
\end{displaymath}
and the result follows.
\end{proof}

\subsubsection{An Alternative Cosimplicial Dold-Kan Correspondence}
In \cite{castiglioni2004cosimplicial} Castiglioni and Corti{\~n}a construct an alternative pair of functors between cosimplicial and non-positively graded complexes. Their construction also works for more general additive categories. Let $\mathpzc{E}$ be a cocomplete additive category. Then $\mathpzc{E}$ is tensored over the category $\mathpzc{Ab}$ of abelian groups. For $n\ge 0$, write $V^{n}$ for the abelian group given by the kernel of the canonical map
$$\bigoplus_{i=0}^{n}\mathbb{Z}\rightarrow\mathbb{Z}$$
Let $\{e_{i}:0\le i\le n\}$ be the standard basis of $\bigoplus_{i=0}^{n}\mathbb{Z}$. Then $\{v_{i}=e_{i}-e_{0}:0\le i\le n\}$ is a basis of $V^{n}$. $V^{\bullet}$ may be regarded as an object of $\textbf{cs}\mathpzc{Ab}$ as follows. For $\alpha:[n]\rightarrow [m]$ a map, set 
$$\alpha v_{i}=v_{\alpha(i)}-v_{\alpha(0)}$$
and extend to $V^{n}$ by linearity. Let $T(V)$ denote the free associative monoid in $\textbf{cs}\mathpzc{Ab}$ on $V$. We shall denote the multiplication on this algebra by $\mu$. 
Write 
$$Q:Ch_{\le0}(\mathpzc{E})\rightarrow\textbf{cs}\mathpzc{E}$$
for the following functor. Let $(A_{\bullet},d)\in Ch_{\le0}(\mathpzc{E})$. Write
$$(QA)_{n}=\bigoplus_{i=0}^{\infty}A_{-n}\otimes T^{i}(\mathbb{Z}^{n})$$
For any map $\alpha:[n]\rightarrow[m]$ of finite sets, there is a map
$$(QA)_{n}\rightarrow(QA)_{m}$$
defined by
$$Id_{A}\otimes \alpha+d\otimes\mu(v_{\alpha_{0}}\otimes\alpha)$$
It is immediately verified as in \cite{castiglioni2004cosimplicial} Section 3 that this is a well-defined object of $\textbf{cs}\mathpzc{E}$. 
\begin{prop}[Proposition 7.4/ Remark 7.5 \cite{castiglioni2004cosimplicial}]
The functor
$$Q:Ch_{\le0}(\mathpzc{E})\rightarrow\textbf{cs}\mathpzc{E}$$
has a right adjoint $H$.
\end{prop}
\begin{proof}
Of course with some mild assumptions one can prove this using the adjoint functor theorem. As in \cite{castiglioni2004cosimplicial} we give an explicit construction.
For $B\in\textbf{cs}\mathpzc{E}$ write
$$DB=\bigoplus_{n\le0}\bigoplus_{P\in\mathcal{P}}\bigoplus_{Hom(Q(D^{n}(P)),B)}D^{n}(P)$$
For $s\in Hom(Q(D^{n}(P)),B)$ let $j_{s}:D^{n}(P)\rightarrow B$ be the inclusion. Denote by $\alpha:QDB\rightarrow B$ the map defined on the summand $Q(j_{s}):QD^{n}(P)\rightarrow B$ by $s$.  Let $K=\textrm{colim}_{f:I\rightarrow DB:\alpha\circ Q(f)=0}I$, and set $HB=coker(K\rightarrow I)$. The functor $Q$ commutes with colimits, so $Q(HB)\cong\textrm{coker}(Q(K)\rightarrow QDB)$. In particular $\alpha:QDB\rightarrow B$ factors through a map $\hat{\alpha}:DHB\rightarrow B$. Thus $(HB,\hat{\alpha})$ is an object of the arrow category $Q\uparrow B$. To prove that $H$ is a right adjoint is equivalent to proving that it is a final object of this category. Now the proof proceeds formally the same way as \cite{castiglioni2004cosimplicial} Proposition 7.4.
\end{proof}
A formally identical proof to \cite{castiglioni2004cosimplicial} Theorem 4.2 i) shows that there is a natural homotopy equivalence $\hat{p}:Q\rightarrow\Gamma_{c}$
Now let $\mathpzc{E}$ be a weakly idempotent complete exact category such that the projective model structure exists on $Ch_{\le0}(\mathpzc{E})$, and equip $Ch_{\le0}(\mathpzc{E})$ and $\textbf{cs}\mathpzc{E}$ with the projective model structures. 
\begin{lem}
The adjunction
$$\adj{Q}{Ch_{\le0}(\mathpzc{E})}{\textbf{cs}\mathpzc{E}}{H}$$
is a Quillen equivalence.
\end{lem}
\begin{proof}
It suffices to prove that the adjunction is a Quillen adjunction. Indeed if this is the case then we get an adjunction of homotopy categories
$$\adj{\mathbb{L}Q}{\textrm{Ho}(Ch_{\le0}(\mathpzc{E}))}{\textrm{Ho}(\textbf{cs}\mathpzc{E})}{\mathbb{R}H}$$
The functors $Q$ and $\Gamma_{c}$ preserve all equivalences  (since $\Gamma_{c}$ does, and $Q$ is homotopy equivalent to it), so we get equivalences of functors of homotopy categories 
$$\mathbb{L}Q\cong Q\cong \Gamma_{c}\cong\mathbb{L}\Gamma_{c}$$
Since 
$$\adj{\Gamma_{c}}{Ch_{\le0}(\mathpzc{E})}{\textbf{cs}\mathpzc{E}}{N_{c}}$$
is a Quillen equivalence, $\mathbb{L}\Gamma_{c}\cong\mathbb{L}Q$ is an equivalence of categories. It remains to show that adjunction between $Q$ and $H$ is Quillen. 
Now we have shown that the functor $Q$ preserves all equivalences. It therefore suffices to prove that it preserves cofibrations. We will show that in fact $N_{c}Q(f)$ is a cofibration for any generating cofibration $f:X\rightarrow Y$. By the proof of Theorem 4.2 in \cite{castiglioni2004cosimplicial}, for $X$ an object of $Ch_{\le0}(\mathpzc{E})$, $(N_{c}QX)_{-n}\cong\bigoplus_{-r=n}^{\infty}X_{r}\otimes\mathbb{Z}[Sur_{r,n}]$, where $Sur_{r,n}$ is the set of surjections from the set with $r$ elements to the set with $n$ elements. In particular, if $X$ is bounded below then so is $N_{c}QA$. First consider a cofibration of the form $S^{0}(P)\rightarrow 0$ with $P$ projective. $(N_{c}QS^{0}(P))_{m}=0$ for $m<0$, so $N_{c}QS^{0}(P)\cong S^{0}(P)$, and $N_{c}Q(S^{0}(P)\rightarrow 0)\cong S^{0}(P)\rightarrow 0$ is a cofibration. Next consider a cofibration of the form $S^{-n}(P)\rightarrow D^{-n+1}(P)$ for $n\ge 1$ and $P$ projective. $N_{c}Q(S^{-n}(P)\rightarrow D^{-n+1}(P))$ is degree-wise split, and the cokernel $C$ satisfies $C_{-m}\cong (N_{c}QS^{-n+1}(P))_{-m}$. This is a bounded below complex of projectives, and is therefore $DG$-projective. 
\end{proof}

\subsection{The Simplicial Model Structure}

In this section we determine conditions under which the model structures on $Ch_{\ge0}(\mathpzc{E})$ and $Ch(\mathpzc{E})$ are simplicial (refer to Section \ref {twovarquill} for the terminology in this section). First note that we have the following result, using Proposition \ref{twovarcont}, Theorem \ref{dkex}, and Remark \ref{adjunbounded}.
\begin{prop}
Let $\mathpzc{E}$ be a small complete and cocomplete exact category equipped with a small $dg_{\ge0}$-compatible cotorsion pair $(\mathfrak{L},\mathfrak{R})$. Endow $ Ch_{\ge0}(\mathpzc{E})$ and $\mathpzc{sE}$ with their induced model structures. Let $\mathpzc{M}$ be a $Ch_{\ge0}(\mathpzc{E})$-model category. Then $\mathpzc{M}$ is a $\mathpzc{sE}$-model category. If $(\mathfrak{L},\mathfrak{R})$ is also $dg$-compatible, then any $Ch(\mathpzc{E})$-model category is also a $\mathpzc{sE}$-model category.
\end{prop}
We claim that if $\mathpzc{E}$ is any complete and cocomplete exact category equipped with a $dg_{*}$-compatible cotorsion pair for $*\in\{\ge0,\emptyset\}$, then with the induced exact structure $Ch_{*}(\mathpzc{E})$ is a $Ch_{*}(\mathpzc{Ab})$-model category. In particular by the above proposition it is a $\mathpzc{sAb}$-model category. Since there is a Quillen adjunction
$$\adj{\mathbb{Z}\otimes(-)}{\mathpzc{sSet}}{\mathpzc{sAb}}{|-|}$$ 
this in turn implies by Proposition \ref{twovarcont} that $Ch_{*}(\mathpzc{E})$ is a simplicial model category. 
Now by Prop. 3.46 in \cite{kelly1982basic} we have the following.
\begin{prop}
Let $\mathpzc{E}$ be a complete and cocomplete additive category. Then $Ch_{*}(\mathpzc{E})$ is tensored, cotensored, and enriched over $Ch_{*}(\mathpzc{Ab})$ for $*\in\{\ge0,\emptyset\}$
\end{prop}
Let us denote the tensoring, enrichment, and cotensoring by
$$\otimes:Ch_{*}(\mathpzc{Ab})\times Ch_{*}(\mathpzc{E})\rightarrow Ch_{*}(\mathpzc{E})$$
$$\textbf{Hom}:Ch_{*}(\mathpzc{E})\times Ch_{*}(\mathpzc{E})^{op}\rightarrow Ch_{*}(\mathpzc{Ab})$$
$$(-)^{(-)}:Ch_{*}(\mathpzc{E})\times Ch_{*}(\mathpzc{E})^{op}\rightarrow Ch_{*}(\mathpzc{E})$$
\begin{prop}
Let $\mathpzc{E}$ be a complete and cocomplete exact category equipped with a $dg_{*}$-compatible cotorsion pair $(\mathfrak{L},\mathfrak{R})$ for $*\in\{\ge0,\emptyset\}$. Then the two-variable adjunction above is a two-variable Quillen adjunction.
\end{prop}
\begin{proof}
It suffices to prove that $\otimes:Ch_{*}(\mathpzc{Ab})\times Ch_{*}(\mathpzc{E})\rightarrow Ch_{*}(\mathpzc{E})$ is a left Quillen bifunctor. Let $f:A\rightarrow B$ be a cofibration in $Ch_{*}(\mathpzc{Ab})$ and $g:X\rightarrow Y$ be a cofibration in $Ch_{*}(\mathpzc{Ab})$. We may assume that $f$ is a generating cofibration. First suppose it is a generating acyclic cofibration of the form $0\rightarrow D^{n}(\mathbb{Z})$. By shifting we may assume that it is of the form $0\rightarrow D^{1}(\mathbb{Z})$. Observe that for any complex $Z$, $D^{1}(\mathbb{Z})\otimes Z\cong cone(Id_{Z})$. The pushout of the diagram
\begin{displaymath}
\xymatrix{
0\ar[d]\ar[r] & cone(Id_{X})\\
0
}
\end{displaymath}
is of course just $cone(Id_{X})$. So we need to show that the map $cone(Id_{X})\rightarrow cone(Id_{Y})$ is an acyclic cofibration. Let $C$ be the cokernel of $g:X\rightarrow Y$. Then the cokernel of $cone(Id_{X})\rightarrow cone(Id_{Y})$ is $cone(Id_{C})$. However this follows from Proposition \ref{conecofibrantiscofibrant}.
Now suppose $f$ is a generating cofibration of the form $S^{n}(\mathbb{Z})\rightarrow D^{n+1}(\mathbb{Z})$. Again by shifting we may assume that it is of the form  $S^{0}(\mathbb{Z})\rightarrow D^{1}(\mathbb{Z})$. Note that $S^{0}(\mathbb{Z})\otimes Z=Z$ for any complex $Z$. The pushout of the diagram
\begin{displaymath}
\xymatrix{
X\ar[d]^{g}\ar[r] & cone(Id_{X})\\
Y
}
\end{displaymath}
is $cone(g)$. Thus we need to show that the map $cone(g)\rightarrow cone(Id_{Y})$ is a cofibration, and an acyclic cofibration when $g$ is acyclic. The map is clearly an admissible monomorphism. Moreover its cokernel is isomorphic to $C$, which is cofibrant, and trivially cofibrant when $g$ is acyclic.
Finally, for the $\ge0$ case, we need cofibrations of the form $0\rightarrow S^{n}(\mathbb{Z})$. Again we may assume that $n=0$, in which case $S^{0}(\mathbb{Z})\otimes(-)$ is isomorphic to the identity functor. Therefore in this case the pushout-product axiom is obvious.
\end{proof}
\begin{cor}\label{Kanriched}
Let $\mathpzc{E}$ be a complete and cocomplete exact category equipped with a $dg_{\ge0}$-compatible cotorsion pair $(\mathfrak{L},\mathfrak{R})$. Then $Ch_{\ge0}(\mathpzc{E})$ is a simplicial model category. In fact it is Kan-complex enriched. If $(\mathfrak{L},\mathfrak{R})$ is also $dg$-compatible then this is also true of $Ch(\mathpzc{E})$.
\end{cor}

\section{The Injective Model Structure, the $K$-Projective Model Structure, and Examples}

\subsection{The Injective Model Structure}

\begin{defn}
Let $\mathpzc{E}$ be an exact category. If it exists, the \textbf{injective model structure} on $ Ch_{*}(\mathpzc{E})$, for $*\in\{-,b,\emptyset,\le0,\ge0\} $ is the model structure in which
\begin{itemize}
\item
Weak equivalences are quasi-isomorphisms.
\item
Cofibrations are degree-wise admissible monomorphisms for $*\in\{-,b,\emptyset,\ge0\}$, and degree-wise admissible monomorphisms in strictly negative degree for $*\in\{\le0\}$.
\end{itemize}
\end{defn}

\begin{prop}
Let $\mathpzc{E}$ be an exact category. Suppose that both the projective and injective model structures exist on $Ch_{*}(\mathpzc{E})$ for $*\in\{\emptyset,\ge0,\le0\}$. Then the adjunction
$$\adj{Id}{Ch_{*}(\mathpzc{E})}{Ch_{*}(\mathpzc{E})}{Id}$$
is a Quillen equivalence, where the left hand side is equipped with the projective model structure, and the right-hand side with the injective model structure.
\end{prop}
\begin{proof}
 Equivalences in both model structures are the same. Moreover for $*\in\{\emptyset,\ge0\}$, cofibrations on the left-hand side are in particular degree-wise admissible monomorphisms. For $*\in\{\le0\}$, cofibrations on the left-hand side are degree-wise admissible in strictly negative degree. Thus the adjunction is Quillen. The induced adjunction on homotopy categories is just the identity adjunction, so is clearly an equivalence. \end{proof}

 By duality we have the following.

 \begin{prop}
 Let $\mathpzc{E}$ be a weakly idempotent complete exact category with enough injectives. Then the injective model structure exists on $Ch_{\le0}(\mathpzc{E})$. If $\mathpzc{E}$ has cokernels, countable products of injectives, and injectives satisfy axiom $AB4*-k$ (the dual of axiom $AB4-k$) for some $k\in\mathbb{Z}$, then this is also true for $Ch(\mathpzc{E})$.
 \end{prop}
  Thanks to Lemma \ref{lem:Grothendieckinj}, we also have the following.
 \begin{cor}
 If a countably complete and countably cocomplete exact category $\mathpzc{E}$ has both enough injectives and enough projectives, then both the injective and projective model structures exist on $Ch(\mathpzc{E})$.
 \end{cor}
 
 In particular if a quasi-abelian category $\mathpzc{E}$ has enough injectives then $Ch(Pro(\mathpzc{E}))$ is equipped with the injective model structure. This was proven by Pridham for $\mathpzc{E}=Ban_{\C}$ in \cite{pridham2017k}.
 
 \subsubsection{Exact categories of Grothendieck type}
 
  In \cite{vst2012exact}, \v{S}t'ov\'{i}\v{c}ek proves the following
  \begin{thm}[\cite{vst2012exact} Theorem 7.11]
  Let $\mathpzc{E}$ be an exact category of Grothendieck type such that the class $\mathpzc{W}$ of acyclic complexes is deconstructible in $Ch(\mathpzc{E})$. Then the injective model structure exists on $Ch(\mathpzc{E})$.
  \end{thm}
When $\mathpzc{E}$ has countable products, and such products are exact, we also get the following existence result.
  \begin{thm}
  Let $\mathpzc{E}$ be an exact category of Grothendieck type. If $\mathpzc{E}$ has cokernels, countable products of injectives, and injectives satisfy axiom $AB4*-k$  for some $k\in\mathbb{Z}$. Then the injective model structure exists on $Ch(\mathpzc{E})$. 
  \end{thm}
  \begin{proof}
  The category $\mathpzc{E}^{op}$ has enough projectives by Lemma \ref{lem:grexenoughinj}, and countable coproducts of projectives satisfy $AB4-k$ for some $k$. Thus the projective model structure exists on $Ch(\mathpzc{E}^{op})$, i.e. the injective model structure exists on $Ch(\mathpzc{E})$. 
  \end{proof}
  \begin{cor}
  Let $\mathpzc{E}$ be a complete and cocomplete elementary exact category which is locally presentable. Then both the projective and injective model structures exist on $Ch(\mathpzc{E})$, and they are Quillen equivalent. 
  \end{cor}

  \subsection{The $K$-Projective Model Structure}
  Throughout this section $\mathpzc{E}$ is a weakly idempotent complete exact category. In \cite{gillespie2016exact}  Gillespie proves a general theorem concerning the existence of exact model structures in which the cofibrant objects are of the form $K\mathfrak{L}$ (rather than $\widetilde{dg\mathfrak{L}}$).  %
  \begin{thm}[\cite{gillespie2016exact} Theorem 6.3]
  Let $\mathpzc{E}$ be a weakly idempotent complete exact category with enough projectives. Let $(\mathcal{U},\mathcal{F})$ be a cotorsion pair in $Ch(\mathpzc{E})$ such that
  \begin{enumerate}
  \item
$(\mathcal{U},\mathcal{F})$ is complete.
\item
$\mathcal{U}$ is thick.
\item
$\mathcal{U}\cap\mathcal{F}$ is the class of projectives in $Ch(\mathpzc{E})$.
\end{enumerate}
Then there is a model structure on $Ch(\mathpzc{E})$ in which
\begin{enumerate}
\item
the cofibrations are the degree-wise split monomorphisms with cokernel in $K\mathcal{U}$
\item
 the trivial cofibrations are the degree-wise split monomorphisms with contractible cokernel.
 \item
the fibrations are the degree-wise split epimorphisms.
\item
the trivial fibrations are the degree-wise split epimorphisms  with kernel in $\mathcal{F}$.
\end{enumerate}
  \end{thm}
  By definition, the $K$-projective model structure is the model structure of the theorem for the cotorsion pair $(dg\widetilde{\textbf{Proj}}(\mathpzc{E}),\mathfrak{W})$. The following is proven for Grothendieck abelian categories in \cite{gillespie2016exact}  Section 6 but the proof is the same.
  \begin{prop}
  If $X\in K\textbf{Proj}$ is acyclic then it is homotopy equivalent to $0$.
  \end{prop}
  \begin{proof}
  $\textbf{Hom}(X,X)$ is acyclic. Thus $[Id_{X}]\in Hom(X,X)\big\slash\sim\cong H_{0}\textbf{Hom}(X,X)=0$ is homotopy equivalent to the zero map.
  \end{proof}
  Suppose the projective model structure exists on $Ch(\mathpzc{E})$. In particular the cotorsion pair $(dg\widetilde{\textbf{Proj}}(\mathpzc{E}),\mathfrak{W})$ is complete. Thus, as mentioned in \cite{gillespie2016exact} Corollary 6.6 in the case that $\mathpzc{E}$ is a Grothendieck abelian category with a projective generator, the $K$-projective model structure also exists on $Ch(\mathpzc{E})$. Moreover, the identity functor is clearly left Quillen from the projective model structure to the $K$-projective model structure. We claim it is a Quillen equivalence. Let $f:X\rightarrow Y$ be an equivalence in the $K$-projective model structure. Then $f$ factors as $p\circ i$ where $i$ is a degree-wise split monomorphism with contractible cokernel, and $p$ is a degree-wise split epimorphism with acyclic kernel. Thus both $p$ and $i$, and therefore $f$, are equivalences in the projective model structure. On the other hand, let $f:X\rightarrow Y$ be an equivalence in the projective model structure. We factor $f$ as $p\circ i$ where $i$ is a cofibration in the $K$-projective model structure, $p$ is a fibration in the $K$-projective model structure, and $p$ is trivial in the $K$-projective model structure. In particular it is also trivial in the projective model structure. Thus $i$ is also trivial in the projective model structure. $i$ is a degree-wise split monomorphism with $K$-projective cokernel $C$. Since it is trivial in the projective model structure $C$ is acyclic and hence contractible, as required. 
  \subsection{Examples}
All the examples of Section \ref{Secex1} satisfy the assumptions of Theorem \ref{projmod} such that their categories of unbounded complexes have projective model structures. The model structures for $Ch(Ind(Ban_{k}))$, $Ch(CBorn_{k})$, and for unbounded complexes in the contracting normed and Banach categories are monoidal and satisfy the monoid axiom. In fact if any quasi-abelian category $\mathpzc{E}$ has enough projectives then the projective model structure exists on $Ch(Ind(\mathpzc{E}))$.
\subsubsection{The Derived Category With Respect to a Generator}
  Let $\mathpzc{E}$ be a strongly efficient exact category with a generator $G=\bigoplus_{i\in\mathcal{I}}G_{i}$. Consider the $G$-exact structure on $\mathpzc{E}$. Then  $G$ is a projective generator of $\mathpzc{E}$. Moreover since $\mathpzc{E}$ is locally presentable, $G$ is $\kappa$-presented for some ordinal $\kappa$. By Proposition \ref{prop:christenssenenoughproj} $DG$-projective resolutions exist. Thus we get the following, which for the case that $\mathpzc{E}$ is a Grothendieck abelian category is Corollary 4.7 in \cite{gillespie2016derived}.
  \begin{cor}
  Equip $\mathpzc{E}$ with the $G$-exact structure. Then the projective model structure exists on $Ch(\mathpzc{E})$. Moreover it is combinatorial.
  \end{cor}
Suppose now that each $G_{i}$ is tiny. Consider the $G$-exact structure on $\mathpzc{E}$. By construction this category has enough projectives. By Corollary \ref{cor:GexactGRothendieck} it is of Grothendieck type, and therefore by Lemma \ref{lem:Grothendieckinj} it has enough injectives. Therefore we get the following, which generalises \cite{gillespie2016derived} Corollary 5.12.  
  \begin{cor}
  Both the projective and injective model structures exist on $Ch(\mathpzc{E})$ where $\mathpzc{E}$ is equipped with the $G$-exact structure.
  \end{cor}

\chapter{Filtered and Graded Objects in Exact Categories}\label{sec6}
In \cite{qacs} Schneiders shows that the category of exhaustively $\mathbb{Z}$-filtered abelian groups is an elementary quasi-abelian category. In particular the category of chain complexes of filtered abelian groups or, equivalently, filtered chain complexes of abelian groups, is equipped with a projective model structure. We generalise this to quasi-abelian and exact categories. ( In \cite{schapira2016derived} Schapira and Schneiders study derived categories of $\Lambda$-filtered objects in abelian categories, where $\Lambda$ is a more general filtered category. Similar results to theirs should hold for exact categories, though we shall not deal with that here). We will also relate our results to the work of \cite{gwilliam2018enhancing} on filtered objects in more general model categories and $(\infty,1)$-categories, and to the work of \cite{calaque2021lie}.We will also study monoidal structures on categories of filtered objects. This is crucial for the Koszul duality result Theorem 4.23\cite{kelly2019koszul} where one needs to understand associated graded objects of filtered cooperads. Our considerations of homotopically complete filtered objects will be important for joint work with Kobi Kremnizer and Devarshi Mukherjee on a bornological version of the HKR theorem, following \cite{raksit2020hochschild}.

\section{Preliminaries}
Before continuing let us fix some notation.
\begin{notation}
\begin{enumerate}
\item
If $\mathcal{I}$ is a category then we denote by $\mathcal{I}_{top}$ the category obtained by freely adjoining a terminal object $top$ to $\mathcal{I}$, and by $\mathcal{I}_{\bullet}$ the category obtained by freely adjoining a discrete element $\bullet$.
\item
Throughout this section $\mathbb{Z}$ will denote the set of integers regarded as a discrete category, and $\mathbb{Z}^{<}$ the poset of integers with increasing ordering.
\end{enumerate}
\end{notation}
\subsection{Restricted Diagram Categories}
Let $\mathpzc{E}$ be a category, $\mathcal{I}$ be a small category, and $\mathcal{S}$ a class of morphisms in $\mathpzc{E}$. We denote by $\mathpzc{Fun}_{\mathcal{S}}(\mathcal{I},\mathpzc{E})$ the full subcategory of $\mathpzc{Fun}(\mathcal{I},\mathpzc{E})$ consisting of functors $F$ such that for any $\alpha:i\rightarrow j$ in $\mathcal{I}$, $F(\alpha)$ is in $\mathcal{S}$. It will be convenient to have a description of limits and colimits in such categories. Let $D:\mathcal{J}\rightarrow \mathpzc{Fun}_{\mathcal{S}}(\mathcal{I},\mathpzc{E})$ be a diagram. For each object $j\in\mathcal{J}$ and $i\in\mathcal{I}$ write $D_{i}(j)$ for the functor $D(j)$ evaluated at $i$, and for $\alpha:i\rightarrow i'$ in $\mathcal{I}$ write $D_{\alpha}(j)$ for the corresponding map. For each $i\in\mathcal{I}$ this gives a functor $D_{i}:\mathcal{J}\rightarrow\mathpzc{E}$.
\begin{prop}\label{filtdiag}
Suppose that for each morphism $\alpha$ in $\mathcal{I}$ the map $\textrm{(co)lim}_{\mathcal{J}}D_{\alpha}:\textrm{(co)lim}_{\mathcal{J}}D_{i}\rightarrow \textrm{(co)lim}_{\mathcal{J}}D_{i'}$ is in $\mathcal{S}$. Then the functor $i\mapsto\textrm{(co)lim}_{\mathcal{J}}D_{i}(j)$ is a (co)limit of the diagram $D$ in $\mathpzc{Fun}_{\mathcal{S}}(\mathcal{I},\mathpzc{E})$.
\end{prop}
\begin{proof}
The universal property is checked directly.
\end{proof}

\section{Graded Objects in General Categories}
Before specialising to exact categories, we will discuss graded and filtered objects in arbitrary categories. Let us fix a complete and cocomplete category $\mathpzc{E}$. 
\subsection{Graded Objects}
\begin{defn}
The category of $\mathbb{Z}$-\textbf{graded objects} in $\mathpzc{E}$, denoted $\mathpzc{Gr}(\mathpzc{E})$, is the diagram category $\mathpzc{Fun}(\mathbb{Z},\mathpzc{E})$.
\end{defn}
We will write an object of $\mathpzc{Gr}(\mathpzc{E})$ as $\coprod_{n\in\mathbb{Z}}A_{n}$. The following is clear.
\begin{prop}
The category $\mathpzc{Gr}(\mathpzc{E})$ is complete and cocomplete. 
\end{prop}
For $n\in\mathbb{Z}$ we denote by $S^{n}:\mathpzc{E}\rightarrow\mathpzc{Gr}(\mathpzc{E})$ the functor sending an object $X$ to the graded object with $(S^{n}(X))_{n}=X$ and $(S^{n}(X))_{m}=0$ for $m\neq n$.

\subsection{Pointed Graded Objects}
It will also be convenient to consider the category of pointed graded objects. 
\begin{defn}
The category of \textbf{pointed graded objects} in $\mathpzc{E}$, denoted $\mathpzc{Gr}_{\bullet}(\mathpzc{E})$, is the category of functors $\mathpzc{Fun}(\mathbb{Z}_{\bullet},\mathpzc{E})$.
\end{defn}
We will write an object of $\mathpzc{Gr}_{\bullet}(\mathpzc{E})$ as $(\coprod_{n\in\mathbb{Z}}A_{n},A_{\bullet})$. Again $\mathpzc{Gr}_{\bullet}(\mathpzc{E})$ is complete and cocomplete. Functors $S^{n}:\mathpzc{E}\rightarrow\mathpzc{Gr}_{\bullet}(\mathpzc{E})$ for $n\in\mathbb{Z}$ or $n=\bullet$ are defined in the obvious way.
\subsection{Monoidal Structures on Graded Objects}
Suppose that $(\mathpzc{E},\otimes, k)$ is a (symmetric) monoidal category. The induced (symmetric) monoidal structure on $\mathpzc{Gr}(\mathpzc{E})$ is easy to define. If $\coprod_{i}A_{i}$ and $\coprod_{j}B_{j}$ are graded objects then their monoidal product is 
$$(\coprod_{i}A_{i})\otimes(\coprod_{j}B_{j})\defeq\coprod_{n}\coprod_{i+j=n}A_{i}\otimes B_{j}$$
It is functorial in the obvious way. $\mathpzc{Gr}_{\bullet}(\mathpzc{E})$ can also naturally be given the structure of a (symmetric) monoidal category by defining 
$$(\coprod_{n\in\mathbb{Z}}A_{n},A_{\bullet})\otimes (\coprod_{n\in\mathbb{Z}}B_{n},B_{\bullet})\defeq(\coprod_{n}\coprod_{i+j=n}A_{i}\otimes B_{j},A_{\bullet}\otimes B_{\bullet})$$
\subsection{Model Structures on Graded Objects}\label{modelgraded}
Let $\mathpzc{E}$ be a combinatorial model category. Say that a map in $f=(f_{n})$ in $\mathpzc{Gr}(\mathpzc{E})$ (resp. $\mathpzc{Gr}_{\bullet}(\mathpzc{E})$) is a fibration/ cofibration/ weak equivalence if for each $n\in\mathbb{Z}$ (resp. $n\in\mathbb{Z}_{\bullet}$), $f_{n}$ is a fibration/ cofibration/ weak equivalence in $\mathpzc{E}$. Since everything is computed component-wise in categories of graded objects it is clear that this determines a combinatorial model structure on $\mathpzc{Gr}(\mathpzc{E})$ (resp. $\mathpzc{Gr}_{\bullet}(\mathpzc{E})$). Note that if $I$ (resp. $J$) is a set of generating cofibrations (resp. generating acyclic cofibrations) for $\mathpzc{E}$ then $\{S^{n}(f):f\in I,n\in\mathbb{Z}\}$ (resp. $\{S^{n}(f):f\in J,n\in\mathbb{Z}\}$) is a set of generating cofibrations (resp. generating acyclic cofibrations) for $\mathpzc{Gr}(\mathpzc{E})$, and similarly for $\mathpzc{Gr}_{\bullet}(\mathpzc{E})$ by also taking $n=\bullet$. If $\mathpzc{E}$ is a monoidal model category then, again since everything is computed component-wise, both $\mathpzc{Gr}(\mathpzc{E})$ and $\mathpzc{Gr}_{\bullet}(\mathpzc{E})$ are monoidal model categories. 
\section{Filtered Objects in General Categories}
In this section we again fix a complete and cocomplete \textit{pointed} category $\mathpzc{E}$. The initial/ terminal object will be denoted $0$. 

\begin{defn}
\begin{enumerate}
Let $\mathcal{S}$ be a class of maps in $\mathpzc{E}$. 
\item
The category of $\mathcal{S}$-\textbf{filtered objects}, denoted $\mathpzc{Filt}_{\mathcal{S}}(\mathpzc{E})$, is the functor category $\mathpzc{Fun}_{\mathcal{S}}(\mathbb{Z}^{<}_{top},\mathpzc{E})$.
\item
The category of \textbf{exhaustively} $\mathcal{S}$-\textbf{filtered objects}, denoted $\mathpzc{Filt}_{\mathcal{S}}(\mathpzc{E})$, is the functor category $\mathpzc{Fun}_{\mathcal{S}}(\mathbb{Z}^{<},\mathpzc{E})$.
\end{enumerate}
\end{defn}
We shall write a filtered object as a tuple $(A_{top},\alpha_{i},a_{i})$, where $A_{top}$ is the value of the diagram at the terminal object, and $\alpha_{i}:A_{i}\rightarrow A_{top}$, $a_{i}:A_{i}\rightarrow A_{i+1}$ are maps in $\mathcal{S}$. A map $(A_{top},\alpha_{i},a_{i})\rightarrow (B_{top},\beta_{i},b_{i})$ will be written as $(g_{top},g_{i})$. We shall write the data of an exhaustively filtered object as $(a_{i}:A_{i}\rightarrow A_{i+1})$, or $(a_{i})$ when the $A_{i}$ are understood, and a map of exhaustively filtered objects as $(g_{i}:A_{i}\rightarrow B_{i})$.
\begin{rem}
The category  $\overline{\mathpzc{Filt}}_{\mathcal{S}}(\mathpzc{E})$ is equivalent to the full subcategory of $\mathpzc{Filt}_{\mathcal{S}}(\mathpzc{E})$ consisting of objects $(A_{top},\alpha_{i},a_{i})$ such that $A_{top}$ together with the maps $\alpha_{i}:A_{i}\rightarrow A_{top}$ is a direct limit of the diagram
\begin{displaymath}
\xymatrix{
&\ldots\ar[r] & A_{-1}\ar[r]^{a_{-1}} & A_{0}\ar[r]^{a_{0}} & A_{1}\ar[r]^{A_{1}} & A_{2}\ar[r] &\ldots
}
\end{displaymath}
We will freely identify $\overline{\mathpzc{Filt}}_{\mathcal{S}}(\mathpzc{E})$ with this subcategory. 
\end{rem}
We will typically only be interested in classes $\mathcal{S}$ satisfying the properties below.
\begin{defn}
Let $\mathpzc{E}$ be a pointed category. A subclass $\mathcal{S}\subset\mathpzc{Mor}(\mathpzc{E})$ is said to be a  filtering class if
\begin{enumerate}
\item
$\mathcal{S}$ contains all identity morphisms.
\item
$\mathcal{S}$ is closed under coproducts.
\item
$\mathcal{S}$ contains all  monomorphisms of the form $X\rightarrow X\coprod Y$ for $X,Y\in\mathpzc{E}$.
\end{enumerate}
If $\lambda$ is an ordinal then $\mathcal{S}$ is said to be $\lambda$-\textbf{closed} if $\lambda$-transfinite compositions of maps in $\mathcal{S}$ are in $\mathcal{S}$, and is said to be \textbf{left-cancellable} if whenever $g\circ f$ is in $\mathcal{S}$ and $g\in\mathcal{S}$, then $f\in\mathcal{S}$.
\end{defn}
We will mainly be interested in the cases $\mathcal{S}=\mathpzc{Mor}(\mathpzc{E})$, or $\mathcal{S}$ is the class of regular monomorphisms when $\mathpzc{E}$ is additive, which we denote by $\textbf{RegMon}$. Recall that in an additive category $\mathpzc{E}$ a morphism is said to be a \textbf{regular monomorphism} if it is the kernel of a morphism. When $\mathpzc{E}$ is exact, we will be interested in $\mathcal{S}=\textbf{AdMon}$.

It is useful to define the $\mathcal{S}$-image of a map.
\begin{defn}
Let $\mathcal{S}$ be a class of maps, and let $f:X\rightarrow Y$ be a map in $\mathpzc{E}$. The $\mathcal{S}$-\textbf{image} of $f$, if it exists, is a factorisation of $f$
\begin{displaymath}
\xymatrix{
X\ar[r]^{i} & \widetilde{X}\ar[r]^{\widetilde{f}} & Y
}
\end{displaymath}
with $\widetilde{f}\in\mathcal{S}$, such that whenever 
\begin{displaymath}
\xymatrix{
X\ar[r]^{i'} & \widetilde{X'}\ar[r]^{\widetilde{f}'} & Y
}
\end{displaymath}
is a factorisation of $f$ with $\widetilde{f}'$ in $\mathcal{S}$, there is a unique map $g:\widetilde{X}\rightarrow\widetilde{X}'$ such that the diagram below commutes
\begin{displaymath}
\xymatrix{
X\ar[r]^{i}\ar[d]^{Id_{X}} & \widetilde{X}\ar[d]^{g}\ar[r]^{\widetilde{f}} & Y\ar[d]^{Id_{Y}}\\
X\ar[r]^{i'} & \widetilde{X'}\ar[r]^{\widetilde{f}'} & Y
}
\end{displaymath}
$\mathpzc{E}$ is said to have \textbf{functorial }$\mathcal{S}$-\textbf{images} if there is a functor $Im_{\mathcal{S}}:\mathpzc{Mor}(\mathpzc{E})\rightarrow\mathpzc{E}$ and natural transformations $\textrm{domain}\rightarrow Im_{\mathcal{S}}\rightarrow\textrm{codomain}$ such that for any map $f:X\rightarrow Y$, $X\rightarrow Im_{\mathcal{S}}\rightarrow Y$ is an $\mathcal{S}$-image of $f$.
\end{defn}
\begin{defn}
A map $f:X\rightarrow Y$ is said to be $\mathcal{S}$-\textbf{epic} if for any $g:Y\rightarrow Z$, all maps in the diagram
$$Im_{\mathcal{S}}(A\rightarrow X\rightarrow Y)\rightarrow Im_{\mathcal{S}}(Im_{\mathcal{S}}(A\rightarrow X)\rightarrow Y)\rightarrow Im_{\mathcal{S}}(X\rightarrow Y)$$
.are isomorphisms
  \end{defn}
%
\begin{example}
\begin{enumerate}
\item
If $\mathcal{S}=\mathpzc{Mor}(\mathpzc{E})$ and $f:X\rightarrow Y$ is a map, then the $\mathcal{S}$-image is the factorisation
\begin{displaymath}
\xymatrix{
X\ar[r]^{Id_{X}}& X\ar[r]^{f} & Y
}
\end{displaymath}
$\mathcal{S}$-epic maps are isomorphisms.
\item
If $\mathpzc{E}$ is additive, $\mathcal{S}=\textbf{RegMon}$ and $f:X\rightarrow Y$ is a map, then the $\mathcal{S}$-image is the factorisation
\begin{displaymath}
\xymatrix{
X\ar[r]^{i}& Im(f)\ar[r]^{\widetilde{f}} & Y
}
\end{displaymath}
where here $Im(f)\rightarrow Y$ is the usual image, namely $Ker(Coker(f))\rightarrow Y$. 
$\mathcal{S}$-epic maps are categorical epimorphisms.
\end{enumerate}
\end{example}

We have a shift functor for filtered objects.
\begin{notation}[\cite{calaque2021lie}, Section 2.1]
For $r\in\mathbb{Z}$ denote by $<r>:\mathpzc{Filt}_{\mathcal{S}}(\mathpzc{E})\rightarrow \mathpzc{Filt}_{\mathcal{S}}(\mathpzc{E})$ the functor which sends $(A_{top},\alpha_{i},a_{i})$ to the object $A<r>=(A_{top},\alpha_{i+r},a_{i+r})$. Note that this restricts to a functor $<r>:\overline{\mathpzc{Filt}}_{\mathcal{S}}(\mathpzc{E})\rightarrow\overline{\mathpzc{Filt}}_{\mathcal{S}}(\mathpzc{E})$.
\end{notation}

\begin{notation}
Following \cite{gwilliam2018enhancing} we denote by $\mathpzc{Seq}(\mathpzc{E})$ the category $\overline{\mathpzc{Filt}}_{\mathpzc{Mor}(\mathpzc{E})}(\mathpzc{E})$, and by $\mathpzc{Seq}_{top}(\mathpzc{E})$ the category $\mathpzc{Filt}_{\mathpzc{Mor}(\mathpzc{E})}(\mathpzc{E})$.
\end{notation}
There is an obvious forgetful functor $R:\mathpzc{Filt}_{\mathcal{S}}(\mathpzc{E})\rightarrow\overline{\mathpzc{Filt}}_{\mathcal{S}}(\mathpzc{E})$ induced by the restriction functor $\mathpzc{Fun}(\mathbb{Z}_{top}^{<},\mathpzc{E})\rightarrow \mathpzc{Fun}(\mathbb{Z}^{<},\mathpzc{E})$. In nice circumstances this functor has a left adjoint which realises $\overline{\mathpzc{Filt}}_{\mathcal{S}}(\mathpzc{E})$ as a coreflective subcategory of $\mathpzc{Filt}_{\mathcal{S}}(\mathpzc{E})$.  Let $A$ be an exhaustively filtered object, and consider the filtered object $(\overline{A}_{top},\overline{\alpha}_{i},\overline{a}_{i})$ defined as follows. $\overline{A}_{top}=lim_{\rightarrow_{n}}A_{n}$. $\overline{a}_{i}=A_{i}\rightarrow A_{i+1}$ and $\overline{\alpha}_{i}$ is the canonical map $A_{i}\rightarrow lim_{\rightarrow_{n}}A_{n}$. This construction is naturally functorial, and we easily get the following result. 
\begin{prop}\label{coreflectivefilt}
Suppose that  $\mathcal{S}$ is $\aleph_{0}$-closed. Then the functor $C:\overline{\mathpzc{Filt}}_{\mathcal{S}}(\mathpzc{E})\rightarrow\mathpzc{Filt}_{\mathcal{S}}(\mathpzc{E})$ is right adjoint to the forgetful functor $R:\mathpzc{Filt}_{\mathcal{S}}(\mathpzc{E})\rightarrow\overline{\mathpzc{Filt}}_{\mathcal{S}}(\mathpzc{E})$ . Moreover the unit $Id\rightarrow R\circ C$ is a natural isomorphism.
\end{prop}
Let $\mathcal{S}$ and $\mathcal{R}$ be filtering classes with $\mathcal{S}\subset\mathcal{R}$, such that $\mathcal{S}$ is left-cancellable and functorial $\mathcal{S}$-images exist. Denote by $I_{\mathcal{S};\mathcal{R}}:\mathpzc{Filt}_{\mathcal{R}}(\mathpzc{E})\rightarrow\mathpzc{Filt}_{\mathcal{S}}(\mathpzc{E})$ the functor defined as follows. For $(A_{top},\alpha_{i},a_{i})$ an object in $\mathpzc{Filt}_{\mathcal{R}}(\mathpzc{E})$, let $\overline{\alpha}_{i}$ denote the map $\textrm{Im}_{\mathcal{S}}(\alpha_{i})\rightarrow A_{top}$, and let $\overline{a}_{i}$ denote the induced map $\textrm{Im}_{\mathcal{S}}(\alpha_{i})\rightarrow \textrm{Im}_{\mathcal{S}}(\alpha_{i+1})$. This is a well-defined object of $\mathpzc{Filt}_{\mathcal{S}}(\mathpzc{E})$. Moreover if $R_{\mathcal{S};\mathcal{R}}:\mathpzc{Filt}_{\mathcal{S}}(\mathpzc{E})\rightarrow\mathpzc{Filt}_{\mathcal{R}}(\mathpzc{E})$ is the obvious inclusion functor, then there is a natural isomorphism $I_{\mathcal{S};\mathcal{R}}\circ R_{\mathcal{S};\mathcal{R}}\rightarrow Id$. Moreover if $\mathcal{R}$ is $\aleph_{0}$-closed, this restricts to a well defined functor $I_{\mathcal{S};\mathcal{R}}:\overline{\mathpzc{Filt}}_{\mathcal{R}}(\mathpzc{E})\rightarrow\overline{\mathpzc{Filt}}_{\mathcal{S}}(\mathpzc{E})$.

 In fact these functors often form an adjunction.
 \begin{prop}\label{prop:filtadj}
Let $\mathcal{S}$ and $\mathcal{R}$ be filtering classes with $\mathcal{S}\subset\mathcal{R}$. Suppose that functorial $\mathcal{S}$-images exist, and $\mathcal{S}$ is left-cancellable. Then there is an adjunction
$$\adj{I_{\mathcal{S};\mathcal{R}}}{\mathpzc{Filt}_{\mathcal{R}}(\mathpzc{E})}{\mathpzc{Filt}_{\mathcal{S}}(\mathpzc{E})}{R_{\mathcal{S};\mathcal{R}}}$$
where $R_{\mathcal{S};\mathcal{R}}$ is the obvious inclusion functor. Moreover if $\mathcal{R}$ and $\mathcal{S}$ are $\aleph_{0}$-closed this restricts to an adjunction
$$\adj{I_{\mathcal{S};\mathcal{R}}}{\overline{\mathpzc{Filt}}_{\mathcal{R}}(\mathpzc{E})}{\overline{\mathpzc{Filt}}_{\mathcal{S}}(\mathpzc{E})}{R_{\mathcal{S};\mathcal{R}}}$$
\end{prop}

%
%

\subsection{Complete and Bounded Below Objects}
Let $A=(A_{top},\alpha_{i},a_{i})$  be a filtered object. We define $\widetilde{A}$ to be the projective limit of the diagram.
\begin{displaymath}
\xymatrix{
&\ldots\ar[r] & A_{top}\big\slash A_{-1}\ar[r] & A_{top}\big\slash  A_{0}\ar[r] & A_{top}\big\slash  A_{1}\ar[r] & A_{top}\big\slash  A_{2}\ar[r] &\ldots
}
\end{displaymath}
Define $\widetilde{A}_{n}\defeq Ker(\widetilde{A}_{top}\rightarrow A_{top}\big\slash A_{n})$. This construction is functorial, and there is a natural transformation $A\rightarrow\widetilde{A}$. As in, e.g. \cite{calaque2021lie} 2.1, we define complete filtered objects.
\begin{defn}
A filtered object $(A_{top},\alpha_{i},a_{i})$  is said to be \textbf{complete} if $A\rightarrow\widetilde{A}$ is an isomorphism of filtered objects. The full subcategory of $\mathpzc{Filt}_{\mathcal{S}}(\mathpzc{E})$ on objects equipped with a complete filtration will be denoted by $\reallywidehat{\mathpzc{Filt}}_{\mathcal{S}}(\mathpzc{E})$. 
The full subcategory of exhaustive and complete objects will be denoted $\reallywidehat{\overline{\mathpzc{Filt}}}_{\mathcal{S}}(\mathpzc{E})$
\end{defn}
We generally will not be interested in complete objects which are not exhaustively filtered. The inclusion functor $J:\reallywidehat{\mathpzc{Filt}}_{\mathcal{S}}(\mathpzc{E})\rightarrow\mathpzc{Filt}_{\mathcal{S}}(\mathpzc{E})$ does not in general have a left adjoint (problems arise because of the appearance of both limits and colimits in the definition of complete objects). When dealing with model categories, this is one of the reasons why it is better to work with (homotopically) complete filtered objects, as studied in detail in \cite{gwilliam2018enhancing}. For certain exact categories we shall show that one can work with complete objects.
\begin{defn}
A filtered object $(A_{top},\alpha_{i},a_{i})$ is said to be \textbf{bounded below} if for sufficiently large $i>0$ $A_{-i}=0$. The full subcategory of $\mathpzc{Filt}_{\mathcal{S}}(\mathpzc{E})$ (resp. $\overline{\mathpzc{Filt}}_{\mathcal{S}}(\mathpzc{E})$) consisting of bounded below objects is denoted $\mathpzc{Filt}^{+}_{\mathcal{S}}(\mathpzc{E})$ (resp. $\overline{\mathpzc{Filt}}^{+}_{\mathcal{S}}(\mathpzc{E})$). The full subcategory of $\mathpzc{Filt}^{+}_{\mathcal{S}}(\mathpzc{E})$ (resp. $\overline{\mathpzc{Filt}}^{+}_{\mathcal{S}}(\mathpzc{E})$) consisting of objects $(A_{top},\alpha_{i},a_{i})$ such that $A_{i}=0$ for $i<0$ is denoted $\mathpzc{Filt}^{\mathbb{N}_{0}}_{\mathcal{S}}(\mathpzc{E})$ (resp. $\overline{\mathpzc{Filt}}^{\mathbb{N}_{0}}_{\mathcal{S}}(\mathpzc{E})$).
\end{defn}
Note that $\mathpzc{Filt}^{+}_{\mathcal{S}}(\mathpzc{E})$ (resp. $\overline{\mathpzc{Filt}}^{+}_{\mathcal{S}}(\mathpzc{E})$) is a full subcategory of $\reallywidehat{\mathpzc{Filt}}_{\mathcal{S}}(\mathpzc{E})$ (resp. $\reallywidehat{\overline{\mathpzc{Filt}}}_{\mathcal{S}}(\mathpzc{E})$). Moreover the category $\mathpzc{Filt}^{\mathbb{N}_{0}}_{\mathcal{S}}(\mathpzc{E})$ (resp. $\overline{\mathpzc{Filt}}^{\mathbb{N}_{0}}_{\mathcal{S}}(\mathpzc{E})$) is in fact a coreflective subcategory of $\mathpzc{Filt}_{\mathcal{S}}(\mathpzc{E})$ (resp. $\overline{\mathpzc{Filt}}_{\mathcal{S}}(\mathpzc{E})$). The right adjoint to the inclusion sends an object $A=(A_{top},\alpha_{i},a_{i})$ to the object $A^{\mathbb{N}_{0}}$ with $A^{\mathbb{N}_{0}}_{i}=0$ for $i<0$, and $A^{\mathbb{N}_{0}}_{i}=A_{i}$ for $i\ge0$ and $i=top$. 
\subsection{Natural Functors}
For each $l\in\mathbb{Z}^{<}_{top}$ we denote by $(-)_{l}$ the functor
$$\mathpzc{Filt}_{\mathcal{S}}(\mathpzc{E})\rightarrow\mathpzc{E}$$
which sends a filtered object
$$(A_{top},\alpha_{i},a_{i})$$
to $A_{l}$. It sends a morphism $(f_{top},f_{i}):(A_{top},\alpha_{i},a_{i})\rightarrow(B_{top},\beta_{i},b_{i})$ to $f_{l}$. For $i\in\Z$ we denote by $Q_{i}:\mathpzc{Filt}_{\mathcal{S}}(\mathpzc{E})\rightarrow\mathpzc{E}$ the functor defined on objects by
$$Q_{i}(A_{top},\alpha_{i},a_{i})=coker(\alpha_{i}:A_{i}\rightarrow A_{top})$$
It is defined on morphisms in the obvious way. Finally we denote by $F_{i}:\mathpzc{E}\rightarrow\mathpzc{Filt}_{\mathcal{S}}(\mathpzc{E})$ the functor which sends an object $A$ of $\mathpzc{E}$ to the following filtered object. $(F_{i}(A))_{j}$ is $0$ for $j<i$, $(F_{i}(A))_{top}=A$, and $(F_{i}(A))_{j}=A$ for $i\le j<\infty$, with the structure maps being the obvious ones. Again it is defined on morphisms in the obvious way.
\begin{prop}\label{filteredadj}
There are adjunctions
$$Q_{i}\dashv F_{i+1}\dashv(-)_{i+1}.$$
\end{prop}
\begin{proof}
Let us first prove the second adjunction. Fix an object $A$ of $\mathpzc{E}$, and a filtered object $(B_{top},\beta_{i},b_{i})$. Let $f:A\rightarrow (B)_{i+1}$ be a map in $\mathpzc{E}$. There is an induced map $\widetilde{f}:F_{i+1}A\rightarrow B$ defined as follows. $\widetilde{f}_{j}=0$ for $j<i+1$ and $\widetilde{f}_{j}$ is the composition $A\rightarrow B_{i+1}\rightarrow B_{j}$ for $i+1\le j<\infty$. $\widetilde{f}_{top}$ is given by the composition $\beta_{i+1}\circ f$. This gives a map
$$\textrm{Hom}_{\mathpzc{E}}(A,(B)_{i+1})\rightarrow\textrm{Hom}_{\mathpzc{Filt}(\mathpzc{E})}(F_{i+1}A,B)$$
It is straightforward to verify that it is natural in both $A$ and $B$. It is clearly an isomorphism of abelian groups.
Let us now show the first adjunction. Let $(B_{top},\beta_{i},b_{i})$ be a filtered object, and let $f:coker(\beta_{i}:B_{i}\rightarrow B_{top})\rightarrow A$ be a morphism in $\mathpzc{E}$. There is an induced map $\widetilde{f}:(B_{top},\beta_{i},b_{i})\rightarrow F_{i+1}A$ defined as follows. $\widetilde{f}_{j}$ is $0$ for $j<i+1$, and for $i+1\le j<\infty$ or $j=top$, $\widetilde{f}_{j}$ is given by the composition
$$B_{j}\rightarrow B_{top}\rightarrow B_{top}\big\slash B_{i+1}\rightarrow A$$
This gives a homomorphism of abelian groups
$$\textrm{Hom}_{\mathpzc{E}}(Q_{i}(B_{top},\beta_{i},b_{i}),A)\rightarrow\textrm{Hom}_{\mathpzc{Filt}(\mathpzc{E})}((B_{top},\beta_{i},b_{i}),F_{i+1}A)$$
which is clearly natural in $(B_{top},\beta_{i},b_{i})$ and $A$. It is also clearly an isomorphism.
\end{proof}
Note that these functors are all also well-defined at the level of exhaustively filtered objects, at least for $i\neq top$.

There is a functor $\textrm{filt}_{top}:\mathpzc{Gr}_{\bullet}(\mathpzc{E})\rightarrow\mathpzc{Filt}_{\mathcal{S}}(\mathpzc{E})$. It sends a pointed graded object $(\coprod_{j\in\mathbb{Z}} E_{j},E_{top})$ to the filtered object $(\coprod_{j\in\mathbb{Z}_{\bullet}} E_{j},\coprod_{k \le i}E_{k}\rightarrow\coprod_{j\in\mathbb{Z}_{\bullet}} E_{j},\coprod_{k \le i}E_{k}\rightarrow\coprod_{l \le i+1}E_{l})$. It acts on morphisms in the obvious way.  There is also a functor $\textrm{gr}_{top}:\mathpzc{Filt}_{\mathcal{S}}(\mathpzc{E})\rightarrow\mathpzc{Gr}_{\bullet}(\mathpzc{E})$, called the \textbf{associated graded functor} defined as follows. To a filtered object $A=(A_{top},\alpha_{i},a_{i})$ it assigns the pointed object $\textrm{gr}_{top}(A)$ with $\textrm{gr}_{top}(A)_{i}=coker(a_{i-1}:A_{i-1}\rightarrow A_{i})$ for $i\in\mathbb{Z}$ and $\textrm{gr}_{top}(A)_{\bullet}=A_{top}$.  Again it acts on morphisms in the obvious way. 
The functor $\textrm{gr}_{top}$ is neither left nor right adjoint adjoint to $\textrm{filt}_{top}$. However we have the following.
\begin{prop}
The functor $\textrm{filt}_{_{top}}:\mathpzc{Gr}_{_{top}}(\mathpzc{E})\rightarrow\mathpzc{Filt}_{\mathcal{S}}(\mathpzc{E})$ is left-adjoint to the functor $\sum_{n\in\mathbb{Z}_{\bullet}}(-)_{n}$ given by the composition
\begin{displaymath}
\xymatrix{
\mathpzc{Filt}_{\mathcal{S}}(\mathpzc{E})\ar[r]^{\Delta} & \coprod_{n\in\mathbb{Z}_{\bullet}}\mathpzc{Filt}_{\mathcal{S}}(\mathpzc{E})\ar[rr]^{\coprod_{n\in\mathbb{Z}_{\bullet}}(-)_{n}}& & \coprod_{n\in\mathbb{Z}_{\bullet}}\mathpzc{E} \cong \mathpzc{Gr}_{\bullet}(\mathpzc{E})
}
\end{displaymath}
Here $\Delta$ is the diagonal morphism.
\end{prop}
\begin{proof}
The functor $\sum_{n\in\mathbb{Z}_{\bullet}}(-)_{n}$ is a composition of right-adjoints. Computing the composition of the corresponding left adjoints gives $\textrm{filt}_{top}$. 
\end{proof}
We denote by $\textrm{filt}:\mathpzc{Gr}(\mathpzc{E})\rightarrow\overline{\mathpzc{Filt}}_{\mathcal{S}}(\mathpzc{E})$ the functor which sends a graded object $\coprod_{j\in\mathbb{Z}} E_{j}$ to the exhaustively filtered object $(\coprod_{k \le i}E_{k}\rightarrow\coprod_{l \le i+1}E_{l})$. This functor also has a right adjoint $\sum_{n\in\mathbb{Z}}(-)_{n}$ defined by the composition 
\begin{displaymath}
\xymatrix{
\overline{\mathpzc{Filt}}_{\mathcal{S}}(\mathpzc{E})\ar[r]^{\Delta} & \coprod_{n\in\mathbb{Z}}\overline{\mathpzc{Filt}}_{\mathcal{S}}(\mathpzc{E})\ar[rr]^{\coprod_{n\in\mathbb{Z}}(-)_{n}} & & \coprod_{n\in\mathbb{Z}}\mathpzc{E} \cong \mathpzc{Gr}(\mathpzc{E})
}
\end{displaymath}
Finally we define $\textrm{gr}:\overline{\mathpzc{Filt}}_{\mathcal{S}}(\mathpzc{E})\rightarrow\mathpzc{Gr}(\mathpzc{E})$ to be the functor sending $A=(A_{top},\alpha_{i},a_{i})$ to the graded object $\textrm{gr}(A)_{i}\defeq coker(a_{i-1}:A_{i-1}\rightarrow A_{i})$. Note that $\textrm{gr}\circ\textrm{filt}$ is equivalent to the identity functor. 
\subsection{Limits and Colimits of Filtered Objects}

We can use Proposition \ref{filtdiag}  to analyse limits and colimits of filtered objects. The first easy result to note is the following.
\begin{prop}\label{filteredlimitsseq}
Let $\mathpzc{E}$ be a complete and cocomplete category. Then $\mathpzc{Seq}_{top}(\mathpzc{E})$ and $\mathpzc{Seq}(\mathpzc{E})$ are complete and cocomplete.
\end{prop}
We also have the following important result.

\begin{prop}\label{filtcomp}
Let $A=(A_{\infty},\alpha_{i},a_{i})$ be an object of $\mathpzc{Filt}_{\mathcal{S}}(\mathpzc{E})$. Suppose that each $A_{i}$ and $A_{top}$ satisfy one of the smallness conditions of Definition \ref{defsmallnesscond}, $\mathcal{S}$ is closed under the corresponding colimits, and for sufficiently large $|i|$, $a_{i}$ is an isomorphism. Then $A$ satisfies the same smallness condition in $\mathpzc{Filt}_{\mathcal{S}}(\mathpzc{E})$. This also holds for the exhaustively filtered category.
\end{prop}
\begin{proof}
We prove the claim for non-exhaustively filtered objects. Let $D:\mathcal{I}\rightarrow\mathpzc{Filt}_{\mathcal{S}}(\mathpzc{E})$ be a relevant filtered diagram. By Proposition \ref{filtdiag} the colimit is computed by taking the colimit in each degree of the filtration. For each $k\in\mathbb{Z}$, there is an $i_{k}\in\mathcal{I}$ such that $A_{k}\rightarrow colim(-)_{k}\circ D$ factors through $(-)_{k}(i_{k})$. Let $n\in\mathbb{N}_{0}$ be such that $A_{n}\rightarrow A_{n+i}$, and $A_{-n-i}\rightarrow A_{-n}$ is an isomorphism for any $i\in\mathbb{N}_{0}$. Let $i=max_{-n\le k\le n}i_{i_{k}}$. Then the map $A\rightarrow colim D$ factors through $D(i)$.
\end{proof}
Note that if $\mathcal{S}$ is $\aleph_{0}$-closed then, since $\overline{\mathpzc{Filt}}_{\mathcal{S}}(\mathpzc{E})$ is a coreflective subcategory of $\mathpzc{Filt}_{\mathcal{S}}(\mathpzc{E})$, the former will be complete and cocomplete as long as the latter is. Moreover in this case, colimits computed in $\overline{\mathpzc{Filt}}_{\mathcal{S}}(\mathpzc{E})$ coincide with colimits computed in $\mathpzc{Filt}_{\mathcal{S}}(\mathpzc{E})$. For certain limits the computations in the exhaustive and non-exhaustive categories also coincide. Indeed the following is essentially tautological.
\begin{prop}
Let $D:\mathcal{J}\rightarrow\mathpzc{Filt}(\mathpzc{E})_{\mathcal{S}}$. If each $D(j)$ is exhaustively filtered, and  (co)limits of diagrams of shape $\mathcal{J}$ commute with $\aleph_{0}$-transfinite compositions of morphisms in $\mathcal{S}$, then the formula in Proposition \ref{filtcomp} is also a (co)limit in $\overline{\mathpzc{Filt}}_{\mathcal{S}}(\mathpzc{E})$.
\end{prop}
In the situation that we get and adjunction $\adj{I_{\mathcal{S};\mathpzc{Mor}(\mathpzc{E})}}{\mathpzc{Seq}_{top}(\mathpzc{E})}{\mathpzc{Filt}_{\mathcal{S}}(\mathpzc{E})}{R_{\mathcal{S};\mathpzc{Mor}(\mathpzc{E})}}$, then the right hand side is a coreflective subcategory of $\mathpzc{Seq}_{top}$. In particular we get the following.%
\begin{cor}\label{cor:cocompfilt}
Let $\mathcal{S}$ be a filtering class. Suppose that functorial $\mathcal{S}$-images exist , and $\mathcal{S}$ is left-cancellable. Then $\mathpzc{Filt}_{\mathcal{S}}(\mathpzc{E})$ is complete and complete. If $\mathcal{S}$ is $\aleph_{0}$-closed this is also true for $\overline{\mathpzc{Filt}}_{\mathcal{S}}(\mathpzc{E})$
\end{cor}
\subsection{Monoidal Structures on Filtered Objects}
For this subsection let $(\mathpzc{E},\otimes,k)$ be a monoidal category such that $\otimes$ commutes with countable colimits in each variable, and let $\mathcal{S}$ be left-cancellable. Suppose further that functorial $\mathcal{S}$-images exist. For $\mathcal{S}$-filtered objects $A=(A_{top},\alpha_{i},a_{i})$ and $B=(B_{top},\beta_{i},b_{i})$, define a $\mathcal{S}$-filtered object $A\otimes B$. Define $(A\otimes B)_{top}:=A_{top}\otimes B_{top}$. For each $n\in\mathbb{Z}$ consider the category $\{(i,j)\in\mathbb{Z}^{<}:i+j\le n\}$, and the diagram  on this category sending $(i,j)$ to $A_{i}\otimes B_{j}$. Write $(A\tilde{\otimes} B)_{n}\defeq\textrm{colim}_{(i,j):i+j\le n}A_{i}\otimes B_{j}$. This gives a well defined object of $\mathpzc{Seq}(\mathpzc{E})$, $(A_{top}\otimes B_{top},(A\tilde{\otimes} B)_{n}\rightarrow (A\tilde{\otimes} B)_{n+1},(A\tilde{\otimes} B)_{n}\rightarrow A_{top}\otimes B_{top})$. Notice that this is just Day convolution, as in \cite{gwilliam2018enhancing} Section 2.23. Define $A\otimes B\defeq I_{\mathcal{S};\mathpzc{Mor}(\mathpzc{E})}(A\tilde{\otimes}B)$. 
\begin{prop}
If $\mathcal{S}$ is left-cancellable then $(A_{top}\otimes B_{top},(\alpha\otimes \beta)_{n},(a\otimes b)_{n})$ is a  $\mathcal{S}$-filtered object. Suppose that
\begin{enumerate}
\item
For any sequence
$$A_{0}\rightarrow A_{1}\rightarrow\ldots$$
together with a map $\textrm{lim}_{\rightarrow_{n}}A_{n}\rightarrow B$, the map
$$\textrm{lim}_{\rightarrow_{n}}\textrm{Im}_{\mathcal{S}}(A_{n}\rightarrow B)\rightarrow\textrm{Im}_{\mathcal{S}}(\textrm{lim}_{\rightarrow_{n}}A_{n}\rightarrow B)$$
is an isomorphism.
\item
 $\otimes$ preserves colimits in each variable.
 \end{enumerate}
 If  $A$ and $B$ are exhaustive, then $A\otimes B$ is exhaustive.
\end{prop}
\begin{proof}
The first claim is clear. The maps
$(A\otimes B)_{n}\rightarrow\textrm{Im}_{\mathcal{S}}(A_{n}\otimes B_{n}\rightarrow A_{top}\otimes B_{top})$ and $Im_{\mathcal{S}}(A_{i}\otimes B_{n-i}\rightarrow A_{top}\otimes B_{top})\rightarrow(A\otimes B)_{n}$ furnish an isomorphism (and its inverse)
 $$\textrm{lim}_{\rightarrow_{i}}\textrm{lim}_{\rightarrow_{j}}\textrm{Im}_{\mathcal{S}}(A_{i}\otimes B_{j}\rightarrow A_{top}\otimes B_{top})\cong(A\otimes B)_{n}$$
 We then have.
 \begin{align*}
\textrm{lim}_{\rightarrow_{i}}\textrm{lim}_{\rightarrow_{j}}\textrm{Im}_{\mathcal{S}}(A_{i}\otimes B_{j}\rightarrow A_{top}\otimes B_{top})&\cong\textrm{Im}_{\mathcal{S}}(\textrm{lim}_{\rightarrow_{i}}\textrm{lim}_{\rightarrow_{j}}A_{i}\otimes B_{j}\rightarrow A_{top}\otimes B_{top})\\
&\cong\textrm{Im}_{\mathcal{S}}(A_{top}\otimes B_{top}\rightarrow A_{top}\otimes B_{top})\\
&=A_{top}\otimes B_{top}
\end{align*}
\end{proof} 
The monoidal functor $\otimes:\mathpzc{Filt}_{\mathcal{S}}(\mathpzc{E})\otimes\mathpzc{Filt}_{\mathcal{S}}(\mathpzc{E})\rightarrow\mathpzc{Filt}_{\mathcal{S}}(\mathpzc{E})$ has a unit, namely $F_{0}(k)$. Moreover it is naturally symmetric. However it is not associative in general. It will be useful to consider monoidal subcategories of $\mathpzc{Filt}_{\mathcal{S}}(\mathpzc{E})$ wherein the restriction of the monoidal product is naturally associative. One such category is the essential image of 
$\textrm{filt}_{top}:\mathpzc{Gr}_{\bullet}(\mathpzc{E})\rightarrow\mathpzc{Filt}_{\mathcal{S}}(\mathpzc{E})$. Indeed this follows from the fact that this functor is strong monoidal.
\begin{prop}\label{filtmonass}
 Let $\mathpzc{A}$ be a full subcategory of $\mathpzc{Filt}_{\mathcal{S}}(\mathpzc{E})$ which is closed under $\otimes$, contains $k$,and such that for any $A,B,C\in\mathpzc{A}$, the maps $I_{\mathcal{S};\mathpzc{Mor}(\mathpzc{E})}((A\tilde{\otimes} B)\tilde{\otimes}C)\rightarrow (A\otimes B)\otimes C$ and $I_{\mathcal{S};\mathpzc{Mor}(\mathpzc{E})}(A\tilde{\otimes} (B\tilde{\otimes }C))\rightarrow A\otimes (B\otimes C)$ are isomorphisms. Then the restriction of $\otimes$ to $\mathpzc{A}$ is naturally associative.
\end{prop}
\begin{proof}
The conditions on $\mathpzc{A}$ ensure that $((A\otimes B)\otimes C)_{n}$ can be computed as $Im_{\mathcal{S}}(\textrm{colim}_{i+j+k\le n}(A_{i}\otimes B_{j})\otimes C_{k}\rightarrow (A_{top}\otimes B_{top})\otimes C_{top})$. This is clearly associative.
\end{proof}
\begin{prop}\label{filtmonass2}
Let $\mathpzc{S}$ be a filtering class such that 
\begin{enumerate}
\item
for any map $A\rightarrow B$, $A\rightarrow Im_{\mathcal{S}}(A\rightarrow B)$ is a $\mathcal{S}$-epimorphism
\item
$\otimes$ preserves $\mathcal{S}$-epimorphisms and countable colimits in each variable
\end{enumerate}
Then $\mathpzc{Filt}_{\mathcal{S}}(\mathpzc{E})$ is associative.
\end{prop}
\begin{proof}
Again the assumptions imply that $((A\otimes B)\otimes C)_{n}$ can be computed as $Im_{\mathcal{S}}(\textrm{colim}_{i+j+k\le n}(A_{i}\otimes B_{j})\otimes C_{k}\rightarrow (A_{top}\otimes B_{top})\otimes C_{top})$. This is clearly associative.
\end{proof}
\subsubsection{Closed Monoidal Structures}
Suppose that $\mathpzc{E}$ is closed monoidal, with internal hom $\underline{Hom}$, and let $\mathcal{S}$ be a filtering class. Then $\mathpzc{Filt}_{\mathcal{S}}(\mathpzc{E})$ is enriched over $\mathpzc{E}$, with $\underline{Hom}_{tot}(A,B)$ defined to be the equaliser of the two obvious maps
\begin{displaymath}
\xymatrix{
\prod_{n\in\mathbb{Z}_{\bullet}}\underline{Hom}(A_{n},B_{n})\ar@/^1.0pc/[rr]\ar@/_1.0pc/[rr]& & \prod_{n\in\mathbb{Z}_{\bullet}}\underline{Hom}(A_{n},B_{n+1})
}
\end{displaymath}
As in \cite{calaque2021lie} Section 2 (which considers filtered vector spaces ), it is also enriched over $\mathpzc{Seq}_{top}(\mathpzc{E})$ by defining $\underline{Hom}_{filt}(A,B)$ to be the filtered object
$$(\underline{Hom}(A_{top},B_{top}),\underline{Hom}_{tot}(A,B<i>)\rightarrow\underline{Hom}(A_{top},B_{top}),\underline{Hom}_{tot}(A,B<i>)\rightarrow\underline{Hom}_{tot}(A,B<i+1>))$$
If $\mathcal{S}=\mathpzc{Mor}(\mathpzc{E})$, i.e. $\mathpzc{Filt}_{\mathcal{S}}(\mathpzc{E})=\mathpzc{Seq}_{top}(\mathpzc{E})$ then 
$$(\mathpzc{Seq}_{top}(\mathpzc{E}),\otimes,\underline{Hom}_{filt})$$
 is a closed monoidal category, as in \cite{gwilliam2018enhancing} 2.23. Moreover 
 $$(\mathpzc{Seq}(\mathpzc{E}),R(C(-)\otimes C(-)),R\underline{Hom}_{filt}(C(-),C(-)))$$
  is also a closed monoidal category. \newline
\\
Let $\mathcal{S}$ be left-cancellable, and suppose that functorial $\mathcal{S}$-images exist. The inclusion $R_{\mathcal{S};\mathpzc{Mor}(\mathpzc{E})}:\mathpzc{Filt}_{\mathcal{S}}(\mathpzc{E})\rightarrow\mathpzc{Seq}_{top}(\mathpzc{E}$) has a left adjoint $I_{\mathcal{S};\mathpzc{Mor}(\mathpzc{E})}$. If it happens that $\underline{Hom}_{filt}(A,B)$ is an object of $\mathpzc{Filt}_{\mathcal{S}}(\mathpzc{E})$ whenever $A\in\mathpzc{Seq}_{top}(\mathpzc{E})$ and $B\in\mathpzc{Filt}_{\mathcal{S}}(\mathpzc{E})$, then by abstract nonsense 
$$(\mathpzc{Filt}_{\mathcal{S}}(\mathpzc{E}),\otimes,F_{0}(k),\underline{Hom}_{filt})$$ is a closed monoidal category. Finally if $A\otimes B$ is exhaustive whenever $A$ and $B$ are, and $\mathcal{S}$ is $\aleph_{0}$-closed, then $(\overline{\mathpzc{Filt}}_{\mathcal{S}}(\mathpzc{E}),R(-\otimes-),F_{0}(k),R(\underline{Hom}_{filt}))$ is also a closed monoidal category. 

%

\subsection{Model Structures on Filtered Objects}
Let $\mathpzc{E}$ be a pointed combinatorial model category.
\begin{defn}
The \textbf{filtered model structure} on $\mathpzc{Filt}_{\mathcal{S}}(\mathpzc{E})$ (resp. $\overline{\mathpzc{Filt}}_{\mathcal{S}}(\mathpzc{E})$), if it exists, is the model structure transferred along the adjunction
$$\adj{filt_{top}}{\mathpzc{Gr}_{\bullet}(\mathpzc{E})}{\mathpzc{Filt}_{\mathcal{S}}(\mathpzc{E})}{\sum_{n\in\mathbb{Z}_{\bullet}}(-)_{n}}\;\;\textrm{(resp. }
\adj{filt}{\mathpzc{Gr}(\mathpzc{E})}{\overline{\mathpzc{Filt}}_{\mathcal{S}}(\mathpzc{E})}{\sum_{n\in\mathbb{Z}}(-)_{n}}\textrm{)}$$
\end{defn}
Note that if $I$ (resp. $J$) is a generating set of cofibrations (resp. trivial cofibrations) then $\{F_{i}g:g\in I,i\in\mathbb{Z}\}$ (resp. $\{F_{i}g:g\in J,i\in\mathbb{Z}\}$) is a generating set of cofibrations (resp. trivial cofibrations) for the filtered model structure on $\overline{\mathpzc{Filt}}_{\mathcal{S}}(\mathpzc{E})$. For $\mathpzc{Filt}_{\mathcal{S}}(\mathpzc{E})$ we have to add $F_{top}g$ for $g\in I$ (resp. $g\in J$).
As in \cite{gwilliam2018enhancing} Section 3, we have the following.
\begin{lem}
The filtered model structure exists on $\mathpzc{Seq}_{top}(\mathpzc{E})$, and $\mathpzc{Seq}(\mathpzc{E})$
\end{lem}
\begin{proof}
The filtered model structure on $\mathpzc{Seq}_{top}(\mathpzc{E})$ (resp. $\mathpzc{Seq}(\mathpzc{E}))$ is just the projective model structure for functors ( \cite{Riehl} Theorem 12.3.2).
\end{proof}
\begin{rem}
Let $\mathcal{S}$ be a filtering class such that $\mathpzc{Filt}_{\mathcal{S}}(\mathpzc{E})$ is a (functorial) model category with the filtered model structure. Suppose that all cofibrations in $\mathpzc{E}$ are contained in  $\mathcal{S}$. Because $\mathcal{S}$ contains cofibrations, the cofibrant objects in $\mathpzc{Filt}_{\mathcal{S}}(\mathpzc{E})$ coincide with the cofibrant objects in $\mathpzc{Seq}_{top}(\mathpzc{E})$ (as a consequence of the description of cofibrant objects in the projective model structure for functors). By \cite{mazel2015quillen} Lemma 2.8 the $(\infty,1)$-categories presented by $\mathpzc{Seq}_{top}(\mathpzc{E})$ and $\mathpzc{Filt}_{\mathcal{S}}(\mathpzc{E})$ are equivalent. This is also true for the exhaustively filtered categories. Note that in fact since an object $A$ of $\mathpzc{Filt}_{\mathcal{S}}(\mathpzc{E})$ is fibrant if and only if each $A_{i}$ is fibrant for each $i\in\mathbb{Z}^{top}$, $\mathpzc{Seq}_{top}(\mathpzc{E})$ and $\mathpzc{Filt}_{\mathcal{S}}(\mathpzc{E})$ in fact have the same fibrant-cofibrant objects.
\end{rem}
Often we get a Quillen equivalence.
\begin{prop}
Let $\mathcal{S}$ and $\mathcal{R}$ be filtering monomorphic classes with $\mathcal{S}\subset\mathcal{R}$ such that the filtering model structure exists on both $\mathpzc{Filt}_{\mathcal{R}}(\mathpzc{E})$ and $\mathpzc{Filt}_{\mathcal{S}}(\mathpzc{E})$. Suppose that functorial $\mathcal{S}$-images exist, that $\mathcal{S}$ is left-cancellable, that all cofibrations are maps in $\mathcal{S}$, and that both $\mathcal{S}$ and $\mathcal{R}$ are $\aleph_{0}$-closed.. Then the adjunctions
    $$\adj{I_{\mathcal{S};\mathcal{R}}}{\mathpzc{Filt}_{\mathcal{R}}(\mathpzc{E})}{\mathpzc{Filt}_{\mathcal{S}}(\mathpzc{E})}{R_{\mathcal{S};\mathcal{R}}}$$
$$\adj{I_{\mathcal{S};\mathcal{R}}}{\overline{\mathpzc{Filt}}_{R}(\mathpzc{E})}{\overline{\mathpzc{Filt}}_{\mathcal{S}}(\mathpzc{E})}{R_{\mathcal{S};\mathcal{R}}}$$
are Quillen equivalences.
\end{prop}
\begin{proof}
We prove this for the exhaustive case, the case for non-exhaustive filtered objects being similar. Denote by $\Sigma^{\mathcal{S}}_{n\in\mathbb{Z}}(-)_{n}$ (resp. $\Sigma^{\mathcal{R}}_{n\in\mathbb{Z}}(-)_{n}$) the functor $\Sigma_{n\in\mathbb{Z}}(-)_{n}:\overline{\mathpzc{Filt}}_{\mathcal{S}}(\mathpzc{E})\rightarrow\mathpzc{Gr}(\mathpzc{E})$ resp. ($\Sigma_{n\in\mathbb{Z}}(-)_{n}:\overline{\mathpzc{Filt}}_{\mathcal{R}}(\mathpzc{E})\rightarrow\mathpzc{Gr}(\mathpzc{E})$). The fact that the adjunction is Quillen follows from the fact $\Sigma^{\mathcal{R}}_{n\in\mathbb{Z}}(-)_{n}\circ R_{\mathcal{S};\mathcal{R}}=\Sigma^{\mathcal{S}}_{n\in\mathbb{Z}}(-)_{n}$. Clearly a map $f$ in $Ch(\overline{\mathpzc{Filt}}_{\mathcal{S}}(\mathpzc{E}))$ is a weak equivalence if and only if $R_{\mathcal{S};\mathcal{R}}(f)$ is a weak equivalence. Moreover any cofibrant $X$ in  $Ch(\overline{\mathpzc{Filt}}_{\mathcal{R}}(\mathpzc{E})$ is in the image of $R_{\mathcal{S};\mathcal{R}}(f)$. 
Let $X\rightarrow R_{\mathcal{S};\mathcal{R}}(Y)$ be a map with $X$ cofibrant and $Y$ fibrant. Since $X$ is cofibrant, $X\cong R_{\mathcal{S};\mathcal{R}}I_{\mathcal{S};\mathcal{R}}(X)$. Hence $X\rightarrow R_{\mathcal{S};\mathcal{R}}(Y)$ is an equivalence if and only if $I_{\mathcal{S};\mathcal{R}}(X)\rightarrow Y$ is.
\end{proof}

\subsubsection{Homotopically Complete Objects}
Let $\mathcal{S}$ be a filtering monomorphic class and $\mathpzc{E}$ a combinatorial left proper model category. 
\begin{defn}
A cofibrant object $(A_{top},\alpha_{i},a_{i})$ in either $\mathpzc{Filt}_{\mathcal{S}}(\mathpzc{E})$ or $\overline{\mathpzc{Filt}}_{\mathcal{S}}(\mathpzc{E})$ is said to be \textbf{homotopically complete} if the map $A_{top}\rightarrow\textrm{holim}_{\leftarrow_{n}}(A_{top}\big\slash A_{n})$ is an equivalence, where here $\textrm{holim}_{\leftarrow_{n}}$ denotes the homotopy limit.
\end{defn}
Note that in general homotopically complete objects need not be complete, and complete objects need not be homotopically complete. Any bounded below filtered object is both complete and homotopically complete. 
\begin{defn}[\cite{gwilliam2018enhancing} Definition 3.6]
Suppose the filtering model structure exists on $\overline{\mathpzc{Filt}}_{\mathcal{S}}(\mathpzc{E})$. A map $f:X\rightarrow Y$ is said to be a \textbf{derived graded equivalence} if $gr(Q(f))$ is a graded equivalence,  where $Q$ is the cofibrant replacement functo
\end{defn}
\begin{defn}[\cite{gwilliam2018enhancing} Definition 3.5]
The homotopically complete filtered model structure on $\overline{\mathpzc{Filt}}_{\mathcal{S}}(\mathpzc{E})$, if it exists, is the left Bousfield localisation of the filtered model structure at the derived graded equivalences.
\end{defn}
\begin{rem}
Let $\mathcal{S}$ be a filtering class containing cofibrations such that the filtering model structure exists on $\overline{\mathpzc{Filt}}_{\mathcal{S}}(\mathpzc{E})$. Suppose the homotopically complete model structure exists on both $\overline{\mathpzc{Filt}}_{\mathcal{S}}(\mathpzc{E})$ and $\mathpzc{Seq}(\mathpzc{E})$. As left Bousfield localisation does not change the cofibrant objects, with the homotopically complete model structures, $\overline{\mathpzc{Filt}}_{\mathcal{S}}(\mathpzc{E})$ and $\mathpzc{Seq}(\mathpzc{E})$ still have the same cofibrant objects. Moreover a map $f:X\rightarrow Y$ between such cofibrant objects is an equivalence in either $\overline{\mathpzc{Filt}}_{\mathcal{S}}(\mathpzc{E})$ or $\mathpzc{Seq}(\mathpzc{E})$ precisely if $gr(f)$ is an equivalence. Hence, once again, they present the same $(\infty,1)$-categories.
\end{rem}

Again we often have a Quillen equivalence.
\begin{prop}
Let $\mathcal{S}$ and $\mathcal{R}$ be filtering monomorphic classes with $\mathcal{S}\subset\mathcal{R}$ such that the filtering  and homotopically complete model structures exists on both $\overline{\mathpzc{Filt}}_{\mathcal{R}}(\mathpzc{E})$ and $\overline{\mathpzc{Filt}}_{\mathcal{S}}(\mathpzc{E})$. Suppose further that cofibrations are contained in $\mathcal{S}$, that $\mathcal{S}$ is left-cancellable, and that functorial $\mathcal{S}$-images exist.. Then the adjunction
$$\adj{I_{\mathcal{S};\mathcal{R}}}{\overline{\mathpzc{Filt}}_{R}(\mathpzc{E})}{\overline{\mathpzc{Filt}}_{\mathcal{S}}(\mathpzc{E})}{R_{\mathcal{S};\mathcal{R}}}$$
is a Quillen equivalence where both sides are equipped with the homotopically complete model structure.
\end{prop}
\begin{proof}
The fact that cofibrations are contained in $\mathcal{S}$ means that $\overline{\mathpzc{Filt}}_{\mathcal{R}}(\mathpzc{E})$ and $\overline{\mathpzc{Filt}}_{\mathcal{S}}(\mathpzc{E})$ have the same cofibrant objects. In particular for $X$ cofibrant in $\overline{\mathpzc{Filt}}_{\mathcal{R}}(\mathpzc{E})$, we have $gr(X)\cong gr(I_{\mathcal{S};\mathcal{R}}(f))$. Thus a map $f:X\rightarrow Y$ between cofibrant objects in $\overline{\mathpzc{Filt}}_{\mathcal{R}}(\mathpzc{E})$ is a graded equivalence if and only if $I_{\mathcal{S};\mathcal{R}}(f)$ is a graded equivalence. Thus the adjunction on the Bousfield localisations at derived graded equivalences is a Quillen equivalence.
\end{proof}
\begin{thm}[\cite{gwilliam2018enhancing}, Proposition 3.31]\label{thm:htpcompletegwilliam}
Let $\mathpzc{E}$ be a stable, combinatorial, left proper model category. The homotopically complete filtered model structure exists on $\mathpzc{Seq}(\mathpzc{E})$. Moreover in this case the model structure is the left Bousfield localisation at maps of the form $const(0)\rightarrow const(K)$, where $K$ is a cofibrant generator of $\mathpzc{E}$, and $const(0)$ is the constant filtered object with $const(K)_{top}=const(K)_{i}=K$ for all $i\in\mathbb{Z}$, with the maps $a_{i}$ and $\alpha_{i}$ being the identity.
\end{thm}
\subsubsection{Monoidal Model Structures}
Suppose that $(\mathpzc{E},\otimes,k)$ is a monoidal model category. Let $\mathcal{S}$ be a filtering class such that the filtering model structure exists on $\mathpzc{Filt}_{\mathcal{S}}(\mathpzc{E})$ (resp. $\overline{\mathpzc{Filt}}_{\mathcal{S}}(\mathpzc{E})$) and is combinatorial. Suppose further that the model structure on $\mathpzc{E}$ is monoidal, and that the filtered monoidal structure on $\mathpzc{Filt}_{\mathcal{S}}(\mathpzc{E})$ (resp. $\overline{\mathpzc{Filt}}_{\mathcal{S}}(\mathpzc{E})$) is associative. It follows from the fact that the functor $filt_{top}$ (resp. $filt$) is strong monoidal, and $\mathpzc{Gr}_{\bullet}(\mathpzc{E})$ (resp. $\mathpzc{Gr}(\mathpzc{E})$) is a monoidal model category, that $\mathpzc{Filt}_{\mathcal{S}}(\mathpzc{E})$ (resp. $\overline{\mathpzc{Filt}}_{\mathcal{S}}(\mathpzc{E})$) is a monoidal model category.
\section{Filtered and Graded Objects in Exact Categories}
In this section we fix a complete and cocomplete exact category $\mathpzc{E}$.
\subsection{Graded and Pointed Graded Objects}
As diagram categories, the categories $\mathpzc{Gr}(\mathpzc{E})$ and $\mathpzc{Gr}_{\bullet}(\mathpzc{E})$ have natural exact structures, in which a map $f:\coprod_{n}X\rightarrow \coprod_{n}Y$ is an admissible epimorphism/ admissible monomorphism if for each $n\in\mathbb{Z}$ (resp. $n\in\mathbb{Z}_{\bullet}$ is an admissible epimorphism/ admissible monomorphism. \newline
\\
\subsubsection{Compatible Model Structures on Graded Objects}
For a class of objects $\mathfrak{O}$ in $\mathpzc{E}$ we define by $\mathpzc{Gr}(\mathfrak{O})$ the class of objects in $\mathpzc{Gr}(\mathpzc{E})$ of the form $\coprod_{i\in I}A_{i}$ where each $A_{i}\in\mathfrak{O}$. Since exactness is degree-wise the following is clear.
\begin{prop}
Let $\mathpzc{E}$ be an exact category and let $(\mathfrak{L},\mathfrak{R})$ be a cotorsion pair on $\mathpzc{E}$. Then $(\mathpzc{Gr}(\mathfrak{L}),\mathpzc{Gr}(\mathfrak{R}))$ is a cotorsion pair on $\mathpzc{Gr}(\mathpzc{E})$. Moreover if $(\mathfrak{L},\mathfrak{R})$ is $dg_{*}$-compatible then so is $(\mathpzc{Gr}(\mathfrak{L}),\mathpzc{Gr}(\mathfrak{R}))$. Finally if $(\mathfrak{L},\mathfrak{R})$ is monoidally $dg_{*}$-compatible then so is $(\mathpzc{Gr}(\mathfrak{L}),\mathpzc{Gr}(\mathfrak{R}))$.
\end{prop}
The model structure induced on $Ch_{*}(\mathpzc{Gr}(\mathpzc{E}))$ is the component-wise one of Section \ref{modelgraded}. One can of course repeat all this for $\mathpzc{Gr}_{\bullet}(\mathpzc{E})$.
\subsection{Filtered Objects in Exact Categories}
In this section we consider categories of the form $\mathpzc{Filt}_{\mathcal{S}}(\mathpzc{E})$ where $\mathpzc{E}$ is an exact category and $\mathcal{S}$ a filtering monomorphic class.  We will typically be interested in the cases $\mathcal{S}=\textbf{RegMon}$ and $\mathcal{S}=\textbf{AdMon}$.
%
%
Since regular monomorphisms in a quasi-abelian category are left-cancellable, we have the following useful result.
\begin{prop}\label{filteredlimits}
Let $\mathpzc{E}$ be a complete and cocomplete  weakly $(\aleph_{0};\textbf{RegMon})$-elementary exact category whose underlying additive category is quasi-abelian, and such that $\textbf{RegMon}$ is $\aleph_{0}$-closed. The pairs of functors below are adjunction
$$\adj{I_{\mathpzc{Mor}(\mathpzc{E});\textbf{RegMon}}}{\mathpzc{Seq}_{top}(\mathpzc{E})}{\mathpzc{Filt}_{\textbf{RegMon}}(\mathpzc{E})}{R_{\mathpzc{Mor}(\mathpzc{E});\textbf{RegMon}}}$$
$$\adj{I_{\mathpzc{Mor}(\mathpzc{E});\textbf{RegMon}}}{\mathpzc{Seq}(\mathpzc{E})}{\overline{\mathpzc{Filt}}_{\textbf{RegMon}}(\mathpzc{E})}{R_{\mathpzc{Mor}(\mathpzc{E});\textbf{RegMon}}}$$
is an adjunction. In particular, in this case $\mathpzc{Filt}_{\textbf{RegMon}}(\mathpzc{E})$ and $\overline{\mathpzc{Filt}}_{\textbf{RegMon}}(\mathpzc{E})$ are complete and cocomplete.
\end{prop}

\subsubsection{Exactness Properties of Categories of Filtered Objects}
We shall see that in general categories of filtered objects do not have exact structures. However we can still define the notion of an exact sequence of filtered objects.
\begin{prop}\label{filtkercoker}
Let 
\begin{displaymath}
\xymatrix{
0\ar[r] &(A_{top},\alpha_{i},a_{i})\ar[r]^{f} &(B_{top},\beta_{i},b_{i})\ar[r]^{g} & (C_{top},\gamma_{i},c_{i})\ar[r] & 0
}
\end{displaymath}
be a null sequence in $\mathpzc{Filt}_{\mathcal{S}}(\mathpzc{E})$. It is a kernel-cokernel pair if for each $i\in\mathbb{Z}$ and $i=top$.
\begin{displaymath}
\xymatrix{
0\ar[r] & A_{i}\ar[r]^{f_{i}} & B_{i}\ar[r]^{g_{i}} & C_{i}\ar[r] & 0
}
\end{displaymath}
is a kernel-cokernel pair. If $\mathpzc{E}$ is a abelian then the converse is true.
\end{prop}
\begin{proof}
By Proposition \ref{filtdiag}  
if for each $i\in\mathbb{Z}$ and $i=top$.
\begin{displaymath}
\xymatrix{
0\ar[r] & A_{i}\ar[r]^{f_{i}} & B_{i}\ar[r]^{g_{i}} & C_{i}\ar[r] & 0
}
\end{displaymath}
is a kernel-cokernel pair then 
\begin{displaymath}
\xymatrix{
0\ar[r] &(A_{top},\alpha_{i},a_{i})\ar[r]^{f} &(B_{top},\beta_{i},b_{i})\ar[r]^{g} & (C_{top},\gamma_{i},c_{i})\ar[r] & 0
}
\end{displaymath}
is a kernel-cokernel pair. 
Now suppose that $\mathpzc{E}$ is abelian, and that
\begin{displaymath}
\xymatrix{
0\ar[r] &(A_{top},\alpha_{i},a_{i})\ar[r]^{f} &(B_{top},\beta_{i},b_{i})\ar[r]^{g} & (C_{top},\gamma_{i},c_{i})\ar[r] & 0
}
\end{displaymath}
is a kernel-cokernel pair. Then $A_{i}\rightarrow B_{i}$ is a kernel of $B_{i}\rightarrow C_{i}$. Moreover $C_{i}=Im(B_{i}\rightarrow B_{top}\big\slash A_{top})$. Therefore $B_{i}\rightarrow C_{i}$ is an epimorphism, and we are done.
\end{proof}
\begin{defn}\label{filtexact}
We say that a null sequence 
\begin{displaymath}
\xymatrix{
0\ar[r] &(A_{top},\alpha_{i},a_{i})\ar[r] &(B_{top},\beta_{i},b_{i})\ar[r] & (C_{top},\gamma_{i},c_{i})\ar[r] & 0
}
\end{displaymath}
in $\mathpzc{Filt}_{\mathcal{S}}(\mathpzc{E})$ is exact if for each $i\in\mathbb{Z}$ and $i=top$ the null sequence
\begin{displaymath}
\xymatrix{
0\ar[r] & A_{i}\ar[r] & B_{i}\ar[r] & C_{i}\ar[r] & 0
}
\end{displaymath}
is an exact sequence in $\mathpzc{E}$. A null-sequence as above in $\overline{\mathpzc{Filt}}_{\mathcal{S}}(\mathpzc{E})$ is said to be exact if each sequence $0\rightarrow A_{i}\rightarrow B_{i}\rightarrow C_{i}\rightarrow 0$ is exact for all $i\in\mathbb{Z}$.
\end{defn}
\begin{defn}
\begin{enumerate}
\item
A map $f:A\rightarrow B$ in $\mathpzc{Filt}_{\mathcal{S}}(\mathpzc{E})$ or  is said to be an \textbf{admissible monomorphism} if there is an exact sequence
\begin{displaymath}
\xymatrix{
0\ar[r] & A\ar[r]^{f} & B\ar[r] & C\ar[r] & 0
}
\end{displaymath}
in $\mathpzc{Filt}_{\mathcal{S}}(\mathpzc{E})$.
\item
A map $g:B\rightarrow C$ in $\mathpzc{Filt}_{\mathcal{S}}(\mathpzc{E})$ is said to be an \textbf{admissible epimorphism} if there is an exact sequence
\begin{displaymath}
\xymatrix{
0\ar[r] & A\ar[r] & B\ar[r]^{g} & C\ar[r] & 0
}
\end{displaymath}
in $\mathpzc{Filt}_{\mathcal{S}}(\mathpzc{E})$.
\end{enumerate}
Likewise one defines admissible monomorphisms and admissible epimorphisms in $\overline{\mathpzc{Filt}_{\mathcal{S}}}(\mathpzc{E})$.
\end{defn}

\begin{prop}
Let $\mathpzc{E}$ be a complete and cocomplete exact category. Then $\mathpzc{Seq}_{top}(\mathpzc{E})$ and $\mathpzc{Seq}(\mathpzc{E})$ are complete and cocomplete exact categories with the classes of admissible monomorphisms and admissible epimorphisms defined above.
\end{prop}
The classes of admissible monomorphisms and epimorphisms are clearly stable under composition, and contain isomorphisms. We claim that admissible epimorphisms are stable under pullback.
\begin{prop}\label{pullbackpresmon}
Let
\begin{displaymath}
\xymatrix{
Y\ar[r]\ar[d]^{f} & X\ar[d]^{g}\\
\widetilde{Y}\ar[r] & \widetilde{X}
}
\end{displaymath}
be a commutative diagram where the vertical maps are admissible monomorphisms and the horizontal maps are admissible epimorphisms. Let
\begin{displaymath}
\xymatrix{
A\ar[r]\ar[d]^{h} & X\ar[d]^{g}\\
\widetilde{A}\ar[r] & \widetilde{X}
}
\end{displaymath}
be a diagram where $h$ is an admissible monomorphism. Then the map $A\times_{X}Y\rightarrow \widetilde{A}\times_{\widetilde{X}}\widetilde{Y}$ is an admissible monomorphism.
\end{prop}
\begin{proof}
There is a diagram of exact sequences
\begin{displaymath}
\xymatrix{
0\ar[r] & A\times_{X}Y\ar[r]\ar[d] & A\oplus Y\ar[r]\ar[d] & X\ar[r]\ar[d] & 0\\
0\ar[r] & \widetilde{A}\times_{\widetilde{X}}\widetilde{Y}\ar[r] & \widetilde{A}\oplus \widetilde{Y}\ar[r] & \widetilde{X}\ar[r] & 0
}
\end{displaymath}
The middle vertical map is an admissible monomorphism, so the first is as well.
\end{proof}
\begin{cor}\label{filtpullbackclosed}
Let $g:B\rightarrow C$ be an admissible epimorphism in $\mathpzc{Filt}_{\textbf{AdMon}}(\mathpzc{E})$ and let $f:A\rightarrow C$ be any morphism. Then the pullback $A\times_{C}B$ exists and the map $A\times_{C}B\rightarrow A$ is an admissible epimorphism.
\end{cor}
\begin{proof}
By Proposition \ref{filtdiag} and Proposition \ref{pullbackpresmon} the pullback is given by $(A\times_{C}B)_{i}=A_{i}\times_{C_{i}}B_{i}$ for $i\in\mathbb{Z}$ and $i=top$. with the maps $A_{i}\times_{C_{i}}B_{i}\rightarrow A_{j}\times_{C_{j}}B_{j}$ for $j>i$ being the obvious ones.
\end{proof}
The situation for pushouts of admissible monomorphisms is less satisfying. In general it only holds when $\mathpzc{E}$ is quasi-abelian.
\begin{prop}
Suppose that $\mathpzc{E}$ is a quasi-abelian category. Then $\mathpzc{Filt}_{\textbf{AdMon}}(\mathpzc{E})$ is an exact category. If $\mathpzc{E}$ is abelian then $\mathpzc{Filt}_{\textbf{AdMon}}(\mathpzc{E})$ is a quasi-abelian category. Finally, if $\mathpzc{E}$ is weakly $(\aleph_{0};\textbf{Admon})$-elementary, then all of this is true for $\overline{\mathpzc{Filt}}_{\textbf{AdMon}}(\mathpzc{E})$. 
 \end{prop}
\begin{proof}
It remains to show that pushouts of admissible monomorphisms are admissible monomorphisms. Let
\begin{displaymath}
\xymatrix{
(A_{top},\alpha_{i},a_{i})\ar[d]\ar[r]^{(f_{top},f_{i})} & (B_{top},\beta_{i},b_{i})\ar[d]\\
(X_{top},\chi_{i},x_{i})\ar[r]^{(f'_{top},f_{i})} & (Y_{top},\xi_{i},y_{i})
}
\end{displaymath}
be a pushout diagram with $(f_{top},f_{i})$ an admissible monic. By Proposition \ref{filteredadj}, the diagram
\begin{displaymath}
\xymatrix{
A_{top}\ar[r]^{f_{top}}\ar[d] & B_{top}\ar[d]\\
X_{top}\ar[r]^{f'_{top}} & Y_{top}
}
\end{displaymath}
is a pushout diagram. The filtration on $Y$ is given by $Y_{i}=Im(B_{i}\oplus X_{i}\rightarrow Y_{top})$. It remains to see that $X_{i}\rightarrow Im(B_{i}\oplus X_{i})$ is an admissible monomorphism. But $X_{i}\rightarrow Im(B_{i}\oplus X_{i}\rightarrow Y_{top})$ coincides with the composition $X_{i}\rightarrow X_{top}\rightarrow Y_{top}$.  $X_{top}\rightarrow Y_{top}$ is an admissible monomorphism as the pushout of an admissible monomorphism. The claim when $\mathpzc{E}$ is abelian follows immediately from Proposition \ref{filtkercoker}. The claim for exhaustively filtered objects also follows from the fact that in weakly $(\aleph_{0};\textbf{AdMon})$-elementary quasi-abelian categories we get isomorphisms 
$$lim_{\rightarrow} Im(B_{i}\oplus X_{i}\rightarrow Y_{top})\cong Im(lim_{\rightarrow} B_{i}\oplus X_{i}\rightarrow Y_{top})\cong Im(B_{top}\oplus X_{top}\rightarrow Y_{top})\cong Y_{top}$$
\end{proof}
 The categories $\reallywidehat{\mathpzc{Filt}}_{\mathcal{S}}(\mathpzc{E})$ and $\overline{\mathpzc{Filt}}_{\mathcal{S}}(\mathpzc{E})$ may considered as subcategories of $\mathpzc{Filt}_{\mathcal{S}}(\mathpzc{E})$ in the following sense.
\begin{prop}
Let
\begin{displaymath}
\xymatrix{
0\ar[r] & (A_{top},\alpha_{i},a_{i})\ar[r]^{(f_{top},f_{i})} & (B_{top},\beta_{i},b_{i})\ar[r]^{(g_{top},g_{i})} & (C_{top},\gamma_{i},c_{i})\ar[r] &0
}
\end{displaymath}
be a short exact sequence of in $\mathpzc{Filt}_{\mathcal{S}}(\mathpzc{E})$. 
\begin{enumerate}
\item
Suppose that $\mathpzc{E}$ is weakly $(\aleph_{0};\mathcal{S})$-elementary. If $(A_{top},\alpha_{i},a_{i})$ and $(C_{top},\gamma_{i},c_{i})$ are exhaustive then so is $(B_{top},\beta_{i},b_{i})$
\item
If $\mathcal{S}\subset\textbf{AdMon}$ $(A_{top},\alpha_{i},a_{i})$ and $(C_{top},\gamma_{i},c_{i})$ are complete then so is $(B_{top},\beta_{i},b_{i})$.
\end{enumerate}
\end{prop}
\begin{proof}
\begin{enumerate}
\item
Consider the diagram of short exact sequences
\begin{displaymath}
\xymatrix{
0\ar[r] & \textrm{lim}_{\rightarrow} A_{i}\ar[d]\ar[r] & \textrm{lim}_{\rightarrow} B_{i}\ar[d]\ar[r] &\textrm{lim}_{\rightarrow} C_{i}\ar[d]\ar[r] & 0\\
0\ar[r] & A_{top}\ar[r] & B_{top}\ar[r] & C_{top}\ar[r] & 0
}
\end{displaymath}
The two outer vertical maps are isomorphisms so the middle one is as well.
\item
By passing to a right abelianisation we may assume that $\mathpzc{E}$ is abelian. By the $3\times 3$ lemma we get for each $n\in\mathbb{Z}$ an exact sequence
$$0\rightarrow A_{top}\big\slash A_{n}\rightarrow B_{top}\big\slash B_{n}\rightarrow C_{top}\big\slash C_{n}\rightarrow0$$
Taking projective limits commutes with kernels, so we get a commutative diagram
\begin{displaymath}
\xymatrix{
0\ar[r] & A_{top}\ar[d]\ar[r] & B_{top}\ar[r]\ar[d] & C_{top}\ar[d]\ar[r] & 0\\
0\ar[r] & \lim_{\leftarrow_{n}} A_{top}\big\slash A_{n}\ar[r] & \lim_{\leftarrow_{n}} B_{top}\big\slash B_{n}\ar[r] &  \lim_{\leftarrow_{n}} C_{top}\big\slash C_{n} & 
}
\end{displaymath}
in which the top and bottom rows are exact, and the first and last maps are isomorphisms. The Snake Lemma implies that the middle map is also an isomorphism, as required.
\end{enumerate}
\end{proof}
\begin{cor}
If $\mathpzc{E}$ is weakly $(\aleph_{0};\mathcal{S})$-elementary and $\mathcal{S}$ is $\aleph_{0}$-closed, then $\overline{\mathpzc{Filt}}_{\mathcal{S}}(\mathpzc{E})$ is an extension-closed, coreflective, exact subcategory of $\mathpzc{Filt}_{\mathcal{S}}(\mathpzc{E})$.
\end{cor}
In general $\mathpzc{Filt}_{\mathcal{S}}(\mathpzc{E})$ and $\overline{\mathpzc{Filt}}_{\mathcal{S}}(\mathpzc{E})$ are not exact. However when $\mathpzc{D}$ is an exact category we can and will say that a functor $F:\mathpzc{Filt}_{\mathcal{S}}(\mathpzc{E})\rightarrow\mathpzc{D}$ or $F:\overline{\mathpzc{Filt}}_{\mathcal{S}}(\mathpzc{E})\rightarrow\mathpzc{D}$ is exact if it sends an exact sequence as defined in Definition \ref{filtexact} to an exact sequence in $\mathpzc{D}$. Likewise one defines exact functors $F:\mathpzc{D}\rightarrow\mathpzc{Filt}_{\mathcal{S}}(\mathpzc{E})$ or $F:\mathpzc{D}\rightarrow\overline{\mathpzc{Filt}}_{\mathcal{S}}(\mathpzc{E})$.

\begin{rem}
Although the categories $\mathpzc{Filt}_{\mathcal{S}}(\mathpzc{E})$ and $\overline{\mathpzc{Filt}}_{\mathcal{S}}(\mathpzc{E})$ may not be exact, they are strongly left exact in the sense of \cite{bazzoni2013one} Definition 3.2.
\end{rem}
\begin{example}
It is clear that the functors $(-)_{l},F_{i}$, and $\textrm{filt}$ are exact functors.
\end{example}
\subsection{Exactness of \textbf{AdMon}-Filtered Objects}
In this subsection we consider the special case that $\mathcal{S}\subseteq\textbf{AdMon}$. As we shall see, in many instances this is better behaved than the general case.
\subsubsection{Complete Objects}
We begin by analysing complete objects.
\begin{lem}
Let $\mathpzc{E}$ be an exact category with enough projectives. Then the inclusion functors 
$\mathpzc{Filt}_{\textbf{AdMon}}(\mathpzc{E})\rightarrow\reallywidehat{\mathpzc{Filt}}_{\textbf{AdMon}}(\mathpzc{E})$ and $\overline{\mathpzc{Filt}}_{\textbf{AdMon}}(\mathpzc{E})\rightarrow\reallywidehat{\overline{\mathpzc{Filt}}}_{\textbf{AdMon}}(\mathpzc{E})$ have left adjoints. 
\end{lem}
\begin{proof}
For each $n$ the map $A_{top}\big\slash A_{n}\rightarrow A_{top}\big\slash A_{n+1}$ is an admissible epimorphism. Let $P$ be projective. Then $Hom(P,A_{top}\big\slash A_{n})$ is a Mittag-Leffler system of abelian groups. The maps $A_{top}\rightarrow A_{top}\big\slash A_{n}$ are also admissible epimorphisms. Thus $Hom(P,A_{top})\rightarrow Hom(P,\lim_{\leftarrow_{n}}A_{top}\big\slash A_{n})$ is a epimorphism, so  $A_{top}\rightarrow\lim_{\leftarrow_{n}}A_{top}\big\slash A_{n}$ is an admissible epimorphism. In particular for each $n$ the map $\widetilde{A}_{n}\rightarrow\widetilde{A}_{top}$ is an admissible monomorphism, and by the Obscure Lemma each $\widetilde{A}_{n}\rightarrow \widetilde{A}_{n+1}$ is an admissible monomorphism. Thus $\widetilde{A}$ is a well-defined object of $\mathpzc{Filt}_{\textbf{AdMon}}(\mathpzc{E})$. Moreover for any projective $P$ we have $Hom(P,\widetilde{A})\cong\widetilde{Hom(P,A)}$. Thus $\tilde{A}$ is complete, since completion is well-defined for abelian groups. 
\end{proof}
\begin{prop}
Any object of $\reallywidehat{\mathpzc{Filt}}_{\textbf{AdMon}}(\mathpzc{E}))$ (resp. $\reallywidehat{\overline{\mathpzc{Filt}}}_{\textbf{AdMon}}(\mathpzc{E})$) is a projective limit of objects in $\mathpzc{Filt}^{+}_{\textbf{AdMon}}(\mathpzc{E})$ (resp. $\overline{\mathpzc{Filt}}^{+}_{\textbf{AdMon}}(\mathpzc{E})$).
\end{prop}
\begin{proof}
Let $A=(A_{top},\alpha_{i},a_{i})$ be a filtered object. Denote by $A^{\ge n}$ the filtered object with $A^{\ge n}_{top}=A_{top}\big\slash A_{n-1}$, $A^{\ge n}_{m}=0$ for $m<n$, and $A^{\ge n}_{m}=A_{m}\big\slash A_{n-1}$ for $m\ge n$. The maps $\alpha_{i}^{\ge n}$ and $a_{i}^{\ge n}$ are defined in the obvious way. There is a natural map $A^{\ge n-1}\rightarrow A^{\ge n}$. Moreover there is a natural isomorphism $\widetilde{A}\cong\textrm{lim}_{\leftarrow_{n}}A^{\ge n}$. In particular if $A$ is complete then $A\cong\textrm{lim}_{\leftarrow_{n}}A^{\ge n}$. 
\end{proof}
We introduce a concept which will be useful for dealing with graded equivalences of complete filtered objects later.
\begin{defn}
A filtered object $(A_{top},\alpha_{i},a_{i})$ in $\mathpzc{Seq}_{top}(\mathpzc{E})$ is said to be \textbf{completion acyclic} if for any $m\in\mathbb{Z}$, the projective sequence $A_{m}\big\slash A_{m-n}$ is $\lim_{\leftarrow}$-acyclic. 
\end{defn}
In particular any bounded-below filtered object is completion acyclic. 
\begin{prop}
Suppose that $\mathpzc{E}$ has enough projectives. Then any filtered object $(A_{top},\alpha_{i},a_{i})$ in $\mathpzc{Filt}_{\textbf{AdMon}}(\mathpzc{E})$ is completion acyclic.
\end{prop}
\begin{proof}
This follows immeditaley from Proposition \ref{prop:injadel}, and the fact that $A_{m}\big\slash A_{-+n}\rightarrow A_{m}\big\slash A_{m-n-1}$ is an admssible epimorphism.
\end{proof}
\subsubsection{Exactness and the Associated Graded Functor}
In this section we show that often one can determine when a map is an admissible epimorphism/ admissible monomorphism by looking at the associated graded map.
\begin{prop}\label{assgradmod}
Let $\mathcal{S}$ be a class contained in $\textbf{AdMon}$, and let $f:A\rightarrow B$ be a map in $\mathpzc{Filt}_{\mathcal{S}}(\mathpzc{E})$. Then
\begin{enumerate}
\item
 If $f$ is an admissible monomorphism  then $\textrm{gr}_{top}(f)$ is an admissible monomorphism. The converse is true if the filtrations on $A$ and $B$ are complete, $C=Coker(f)$ exists in $\mathpzc{Filt}_{\mathcal{S}}(\mathpzc{E})$, and $A$ is completion acyclic.
\item
 If $f$ is an admissible epimorphism  then $\textrm{gr}_{top}(f)$ is an admissible epimorphism. The converse is true if the filtrations on $A$, and $B$ are complete, and for each $m\in\mathbb{Z}$ and $m=top$ the sequence $K$ given by the kernel of $A_{m}\big\slash A_{m-n}\rightarrow B_{m}\big\slash B_{m-n}$ is $\lim_{\leftarrow}$-acyclic.
 \item
 If a null sequence
 $$0\rightarrow A\rightarrow B\rightarrow C\rightarrow 0$$
 in $\mathpzc{Filt}_{\mathcal{S}}(\mathpzc{E})$ is exact  then
 $$0\rightarrow\textrm{gr}_{top}(A)\rightarrow\textrm{gr}_{top}(B)\rightarrow\textrm{gr}_{top}(B)\rightarrow0$$
is exact. The converse is true if the filtration on $A$ is completion acyclic and all of $A,B,C$ are complete. 
\end{enumerate}
\end{prop}
\begin{proof}
The argument (particularly for the second claim) is a generalisation of \cite{calaque2021lie} Lemma 2.16 (2). Let us prove the first claim, the others are similar. The statement that $f$ being an admissible monomorphism implies that $gr_{top}(f)$ is follows  by definition and by the $3\times 3$ lemma. Let us prove the (partial) converse statement.  Suppose that $gr(f)_{top}$ is an admissible monomorphism. Let $C$ be the cokernel of $f$. For each $n,m\in\mathbb{Z}$ (or $m=top$) we have an exact sequence
\begin{displaymath}
\xymatrix{
0\ar[r] & A_{m}\big\slash A_{m-n}\ar[r] & B_{m}\big\slash B_{m-n}\ar[r] & C_{m}\big\slash C_{m-n}\ar[r] & 0
}
\end{displaymath}
Taking the projective limit over $n$, and noting that the filtrations are complete, with $A$ completion acyclic gives that
\begin{displaymath}
\xymatrix{
0\ar[r] & A_{m}\ar[r] & B_{m}\ar[r] & C_{m}\ar[r] & 0
}
\end{displaymath}
is exact for each $m\in\mathbb{Z}$, and $m=top$.
\end{proof}

\begin{rem}
If $A$ and $B$ are bounded below then the statements in the previous Proposition all become if and only if statements. For the second claim, if we let $K_{m,n}$ denote the kernel of $A_{m}\big\slash A_{m-n}\rightarrow B_{m}\big\slash B_{m-n}$. The Snake Lemma implies that $K_{m,n}\rightarrow K_{m,n+1}$ is an admissible epimorphism. Thus if projective limits of admissible epimorphic systems are exact, e.g. if $\mathpzc{E}$ has enough projectives, then the second statement of the previous proposition is also if and only if whenever $A$ and $B$ are complete. 
\end{rem}
\begin{cor}
Let $\mathcal{S}$ be a class of morphisms contained in $\textbf{AdMon}$. A map $f$ in $\overline{\mathpzc{Filt}}_{\mathcal{S}}(\mathpzc{E})$ between objects with bounded below filtrations is an admissible monomorphism (resp. epimorphism) if and only if $\textrm{gr}(f)$ is. Moreover a null sequence  
 $$0\rightarrow A\rightarrow B\rightarrow C\rightarrow 0$$
 in $\overline{\mathpzc{Filt}}_{\mathcal{S}}(\mathpzc{E})$ in which all objects  have bounded below filtrations is exact if and only if 
 $$0\rightarrow\textrm{gr}(A)\rightarrow\textrm{gr}(B)\rightarrow\textrm{gr}(B)\rightarrow0$$
is exact.
\end{cor}

\subsubsection{Generators and Projectives}
In this subsection we discuss what it means for a collection of objects in $\mathpzc{Filt}_{\mathcal{S}}(\mathpzc{E})$ to be a (projective) generating collection.
\begin{prop}\label{filtgen}
Let $\mathcal{G}$ be an admissible generating set in $\mathpzc{E}$. Then for any object $A$ of $\mathpzc{Filt}_{\mathcal{S}}(\mathpzc{E})$ (resp. $\overline{\mathpzc{Filt}}_{\mathcal{S}}(\mathpzc{E})$) there is an object $X$ of $\bigoplus_{i\in\mathbb{Z}^{<}_{top}}F_{i}(\mathcal{G})$ (resp. $\bigoplus_{i\in\mathbb{Z}}F_{i}(\mathcal{G})$) and an admissible epimorphism $X\rightarrow A$.  
\end{prop}
\begin{proof}
Let $(A_{top},\alpha_{i},a_{i})$ be a filtered object. For each $i\in\mathbb{Z}$ or $i=top$ pick some $G_{i}\in\mathcal{G}$ and an admissible epimorphism $G_{i}\twoheadrightarrow A_{i}$. Then $(\bigoplus_{i\in\mathbb{Z}}F_{i})G_{i}\oplus F_{top}G_{top}\rightarrow A$ is an admissible epimorphism. The claim for exhaustively filtered objects is similar.
\end{proof}
Although $\mathpzc{Filt}_{\mathcal{S}}(\mathpzc{E})$ is not an exact category, we still have the notion of a projective object.
\begin{defn}
An object $P$ of $\mathpzc{Filt}_{\mathcal{S}}(\mathpzc{E})$ (resp. $\overline{\mathpzc{Filt}}_{\mathcal{S}}(\mathpzc{E})$) is said to be \textbf{projective} if the functor $Hom(P,-)$ is exact. 
\end{defn}

The following is clear.
\begin{prop}\label{fiproj}
Let $P$ be a projective object in $\mathpzc{E}$, and $i\in\mathbb{Z}$. Then $F_{i}P$ is projective in both $\mathpzc{Filt}_{\mathcal{S}}(\mathpzc{E})$ and $\overline{\mathpzc{Filt}}_{\mathcal{S}}(\mathpzc{E})$ for any filtering class of morphisms $\mathcal{S}$. $F_{top}P$ is projective in $\mathpzc{Filt}_{\mathcal{S}}(\mathpzc{E})$.
\end{prop}
Using the fact that Proposition \ref{adproj} clearly works in this more general context, we can quite easily classify projective objects when $\mathcal{S}\subseteq\textbf{AdMon}$.
\begin{prop}\label{filtproj}
Let $\mathcal{S}\subseteq\textbf{AdMon}$ and assume that $\mathpzc{E}$ is a weakly $(\aleph_{0};\mathcal{S})$-elementary exact category. If a filtered object $(A_{top},\alpha_{i},a_{i})$ is projective in $\mathpzc{Filt}_{\mathcal{S}}(\mathpzc{E})$ then $A_{top}\big\slash A_{i}$  is projective in $\mathpzc{E}$ for all $i\in\Z$. In  $\overline{\mathpzc{Filt}}^{+}_{\mathcal{S}}(\mathpzc{E})$ $A$ is projective if and only if  $A_{top}\big\slash A_{i}$ is projective for each $i\in\mathbb{Z}$.
\end{prop}
\begin{proof}
The first assertion is a consequence of the fact that the functor $Q_{i}:\mathpzc{Filt}_{\mathcal{S}}(\mathpzc{E})\rightarrow\mathpzc{E}$ is left adjoint to the exact functor $F_{i+1}:\mathpzc{E}\rightarrow\mathpzc{Filt}_{\mathcal{S}}(\mathpzc{E})$. Thus $Q_{i}(A_{top},\alpha_{i},a_{i})=A_{top}\big\slash A_{i}$ is projective. The second assertion is a consequence of Corollary \ref{trans}, and the fact that the functor $\overline{(-)}$ is left adjoint to an exact functor, and so preserves projectives.
\end{proof}
Using Propositions \ref{filtcomp}, \ref{filtgen}, and \ref{fiproj} we get the following corollary.
\begin{cor}
Let $\mathpzc{E}$ be an elementary quasi-abelian category. Then $\overline{\mathpzc{Filt}}_{\textbf{AdMon}}(\mathpzc{E})$ is an elementary exact category. If $\mathpzc{E}$ is abelian then it is an elementary quasi-abelian category.
\end{cor}
In particular for the case $\mathpzc{E}=\mathpzc{Ab}$ is the category of abelian groups, this recovers Proposition 3.1.5 of \cite{qacs}.
\subsubsection{Filtered Objects in Monoidal Exact Categories}
Recall that the functor $\textrm{filt}_{top}:\mathpzc{Gr}_{\bullet}(\mathpzc{E})\rightarrow\mathpzc{Filt}_{\mathcal{S}}(\mathpzc{E})$ is strong monoidal. On the other hand the associated graded functor $gr_{top}:\mathpzc{Filt}_{\mathcal{S}}(\mathpzc{E})\rightarrow\mathpzc{Gr}(\mathpzc{E})$ is unfortunately only lax monoidal. However we have the following.

\begin{prop}\label{gradestronmon}
For $1\le j\le k$ let $A^{j}=(A_{top}^{j},\alpha^{j},a^{j})$ be a filtered object.
\begin{enumerate}
\item
Suppose that for each $n$ the map
$$\bigoplus_{i_{1}+\ldots+i_{k}=n}A^{1}_{i_{1}}\otimes\ldots\otimes A^{k}_{i_{k}}\rightarrow A^{1}_{top}\otimes\ldots\otimes A^{k}_{top}$$
is admissible. Then the map $\bigotimes_{j=1}^{k}\textrm{gr}(A^{j})\rightarrow\textrm{gr}\Bigr(\bigotimes_{j=1}^{k}A^{j}\Bigr)$ is an admissible epimorphism.\\
\item
If for each $1\le j\le k$ and each $i\in\mathbb{Z}$ the map each map $A^{j}_{i}\rightarrow A^{j}_{i+1}$ is a pure monomorphism, then the map $\bigotimes_{j=1}^{k}\textrm{gr}(A^{j})\rightarrow\textrm{gr}\Bigr(\bigotimes_{j=1}^{k}A^{j}\Bigr)$ is an isomorphism. 
\end{enumerate}
\end{prop}
\begin{proof}
\begin{enumerate}
\item
Let $I_{n}$ denote the image of the map $\bigoplus_{i_{1}+\ldots+i_{k}=n}A^{1}_{i_{1}}\otimes\ldots\otimes A^{k}_{i_{k}}\rightarrow A^{1}_{top}\otimes\ldots\otimes A^{k}_{top}$. By the obscure lemma the map $I_{n}\rightarrow I_{n+1}$ is an admissible monomorphism. Moreover the map $\bigoplus_{i_{1}+\ldots+i_{k}=n}A^{1}_{i_{1}}\otimes\ldots\otimes A^{k}_{i_{k}}\rightarrow I_{n}$ is an admissible epimorphism. Hence the map  $\bigoplus_{i_{1}+\ldots+i_{k}=n+1}A^{1}_{i_{1}}\otimes\ldots\otimes A^{k}_{i_{k}}\rightarrow I_{n+1}\big\slash I_{n}$ is an admissible epimorphism. The obscure lemma then implies the result.
\item
Suppose now that for each $1\le k\le n$ and each $i\in\mathbb{Z}$ the map $A^{k}_{i}\rightarrow A^{k}_{i+1}$  is a pure monomorphism. Equivalently $0\rightarrow A^{k}_{i}\rightarrow A^{k}_{i+1}\rightarrow\textrm{gr}_{i+1}(A^{k})$ is a pure exact sequence. By tensoring there is an induced $n$-dimensional chain complex which is exact along each axis. There is an acyclic sequence
$$\bigoplus_{l=1}^{k}A_{i_{1}+1}\otimes\ldots\otimes A_{i_{l}}\otimes\ldots\otimes A_{i_{k}+1}\rightarrow A_{i_{1}+1}\otimes\ldots\otimes A_{i_{k}+1}\rightarrow\textrm{gr}_{i_{1}+1}(A)\otimes\ldots\otimes\textrm{gr}_{i_{k}+1}(A)\rightarrow 0$$
Moreover this is a pure exact sequence. Hence there is a pure exact sequence.
$$0\rightarrow\sum_{l=1}^{k}A_{i_{1}+1}\otimes\ldots\otimes A_{i_{l}}\otimes\ldots\otimes A_{i_{k}+1}\rightarrow A_{i_{1}+1}\otimes\ldots\otimes A_{i_{k}+1}\rightarrow\textrm{gr}_{i_{1}+1}(A)\otimes\ldots\otimes\textrm{gr}_{i_{k}+1}(A)\rightarrow 0$$
This completes the proof.
\end{enumerate}
\end{proof}
\begin{defn}
A filtered object $H$ in $\mathpzc{Filt}_{\mathcal{S}}(\mathpzc{E})$ is said to be \textbf{flat} if the functor $H\otimes(-):\mathpzc{Filt}_{\mathcal{S}}(\mathpzc{E})\rightarrow\mathpzc{Filt}_{\mathcal{S}}(\mathpzc{E})$ is exact. 
\end{defn}
\begin{prop}\label{filtflat}
Let $X$ be a flat object of $\mathpzc{E}$ such that $X\otimes f\in\mathcal{S}$ whenever $f\in\mathcal{S}$. Then for any $i\ge0$, $F_{i}X$ is a flat object of $\mathpzc{Filt}_{\mathcal{S}}(\mathpzc{E})$.
\end{prop}
\begin{proof}
If $X$ is flat and $A=(A_{top},\alpha_{i},a_{i})$ is an object of $\mathpzc{Filt}_{\mathcal{S}}(\mathpzc{E})$, then $(F_{i}X\otimes A)_{j}$ is $0$ for $j<i$ and $X\otimes A_{j}$ for $j\ge i$. From the definition of exact sequences in $\mathpzc{Filt}_{\mathcal{S}}\mathpzc{E}$ it is clear that $F_{i}X\otimes A$ is exact.
\end{proof}
We cannot precisely classify flat objects. However we have the following.
\begin{prop}
Let $\mathcal{S}\subset\textbf{AdMon}$. Suppose that for any object $A$ of $\mathpzc{E}$, and any exact sequence $0\rightarrow X\rightarrow Y\rightarrow Z\rightarrow 0$ with $X\rightarrow Y$ in $\mathcal{S}$, the sequence
$$0\rightarrow Im_{\mathcal{S}}(X\otimes A\rightarrow Y\otimes A)\rightarrow Y\otimes A\rightarrow Z\otimes A\rightarrow 0$$
is exact. If a filtered object $(H_{top},t_{i},h_{i})$ is flat then $H_{top}$, and $H_{top}\big\slash H_{i}$ are flat. 
\end{prop}
\begin{proof}
Suppose that $(H_{top},t_{i},h_{i})$ is flat. Let
\begin{displaymath}
\xymatrix{
0\ar[r] & A\ar[r] & B\ar[r] & C\ar[r] & 0
}
\end{displaymath}
be an exact sequence in $\mathpzc{E}$.
Then
\begin{displaymath}
\xymatrix{
0\ar[r] & F_{0}A\ar[r] & F_{0}B\ar[r] & F_{0}C\ar[r] & 0
}
\end{displaymath}
is exact in $\mathpzc{Filt}_{\mathcal{S}}(\mathpzc{E})$. Therefore
\begin{displaymath}
\xymatrix{
0\ar[r] & H\otimes  F_{0}A\ar[r] & H\otimes  F_{0}B\ar[r] & H\otimes F_{0}C\ar[r] & 0
}
\end{displaymath}
is exact. In particular
\begin{displaymath}
\xymatrix{
0\ar[r] & H_{top}\otimes A\ar[r] & H_{top}\otimes B\ar[r] & H_{top}\otimes C\ar[r] & 0
}
\end{displaymath}
is exact. Hence $H_{top}$ is flat. Moreover, by assumption we have the following diagram.
\begin{displaymath}
\xymatrix{
& 0\ar[d] & 0\ar[d] & 0\ar[d] &\\
0\ar[r] & \textrm{Im}_{\mathcal{S}}(H_{i}\otimes A\rightarrow H_{top}\otimes A)\ar[r]\ar[d] & \textrm{Im}_{\mathcal{S}}(H_{i}\otimes B\rightarrow H_{top}\otimes B)\ar[r]\ar[d] & \textrm{Im}_{\mathcal{S}}(H_{i}\otimes C\rightarrow H_{top}\otimes C)\ar[r]\ar[d] & 0\\
0\ar[r] & H_{top}\otimes A\ar[r]\ar[d] & H_{top}\otimes B\ar[r]\ar[d] & H_{top}\otimes C \ar[r]\ar[d] & 0\\
0\ar[r] & \Bigr(H_{top}\big\slash H_{i}\Bigr)\otimes A\ar[r]\ar[d] & \Bigr(H_{top}\big\slash H_{i}\Bigr)\otimes B\ar[r]\ar[d] & \Bigr(H_{top}\big\slash H{i}\Bigr)\otimes C \ar[r]\ar[d] & 0\\
& 0 & 0 & 0 &
}
\end{displaymath}
The columns are exact, and the top two rows are exact by assumption. Therefore the third row is exact, so $H_{top}\big\slash H_{i}$ is flat.
\end{proof}
The assumptions of the propositions are satisfied, for example, if $\mathpzc{E}$ is quasi-abelian and $\mathcal{S}=\textbf{AdMon}$. The assumptions of Proposition \ref{filtflat} are satisfied for $\mathcal{S}=\mathpzc{Mor}(\mathpzc{E})$
We then get the following.
\begin{cor}
\begin{enumerate}
\item
If $\mathpzc{E}$ is a monoidal elementary exact category then $\mathpzc{Seq}_{top}(\mathpzc{E})$ and $\mathpzc{Seq}(\mathpzc{E})$ are monoidal elementary exact categories.
\item
If $\mathpzc{E}$ is a monoidal elementary quasi-abelian category then  $\overline{\mathpzc{Filt}}_{\textbf{AdMon}}(\mathpzc{E})$ is a monoidal elementary exact category. 
\end{enumerate}
\end{cor}
\subsubsection{Model Structures on Categories of Filtered Objects}
As we have seen, the categories $\mathpzc{Filt}_{\mathcal{S}}(\mathpzc{E})$ and $\overline{\mathpzc{Filt}}_{\mathcal{S}}(\mathpzc{E})$ in general only have exact structures when $\mathpzc{E}$ is quasi-abelian and $\mathcal{S}=\textbf{AdMon}$. However one is often still able to equip these categories with natural model structures, even when $\mathpzc{E}$ is a more general exact category. We shall assume that $\mathpzc{E}$ is weakly $(\aleph_{0};\mathcal{S})$-elementary and that $\mathcal{S}$ is $\aleph_{0}$-closed throughout this section. Denote by $Ch_{*}(\mathcal{S})$ the class of morphisms in $Ch_{*}(\mathpzc{E})$ consisting of morphisms $f_{\bullet}:X_{\bullet}\rightarrow Y_{\bullet}$ such that $f_{n}$ is in $\mathcal{S}$ for each $n\in\Z$. We consider the categories $\overline{\mathpzc{Filt}}_{Ch_{*}(\mathcal{S})}(Ch_{*}(\mathpzc{E}))\cong Ch(\overline{\mathpzc{Filt}}_{\mathcal{S}}(\mathpzc{E}))$ and $\mathpzc{Filt}_{Ch_{*}(\mathcal{S})}(Ch_{*}(\mathpzc{E}))\cong Ch(mathpzc{Filt}_{\mathcal{S}}(\mathpzc{E}))$. 
Let $\mathfrak{O}$ be a class of objects in $\mathpzc{E}$. Denote by $\mathpzc{Filt}_{\mathcal{S}}(\mathfrak{O})$ the class of objects in $\mathpzc{Filt}_{\mathcal{S}}(\mathpzc{E})$ of the form $(A_{top},\alpha_{i},a_{i})$ such that $A_{top}\in\mathfrak{O}$, $A_{i}\in\mathfrak{O}$ for each $i\in\mathbb{Z}$ and $\textrm{gr}(A_{top},\alpha_{i},a_{i})$ is in $\mathpzc{Gr}(\mathfrak{O})$. 
\begin{prop}\label{filteredthick}
Suppose that $\mathfrak{O}$ is extension closed, $\mathcal{S}\subset\textbf{AdMon}$, $(A_{top},\alpha_{i},a_{i})\in\mathpzc{Filt}^{+}_{\mathcal{S}}(\mathpzc{E})$and $gr(A)_{top}\in\mathfrak{O}$. Then for each $i\in\mathbb{Z}$ we have $A_{i}\in\mathfrak{O}$. In particular if $(A_{top},\alpha_{i},a_{i})\in\overline{\mathpzc{Filt}}_{\mathcal{S}}(\mathpzc{E})$, and $\mathfrak{O}$ is closed under  $(\aleph_{0};\mathcal{S})$-extensions, then $(A_{top},\alpha_{i},a_{i})\in\overline{\mathpzc{Filt}}_{\mathcal{S}}(\mathfrak{O})$ if and only if $\textrm{gr}(A)\in\mathpzc{Gr}(\mathfrak{O})$. If further $\mathfrak{O}$ is closed under taking cokernels of maps in $\mathcal{S}$, then $(A_{top},\alpha_{i},a_{i})\in\overline{\mathpzc{Filt}}^{+}_{\mathcal{S}}(\mathfrak{O})$ if and only if $A_{i}\in\mathfrak{O}$ for each $0\le i<\infty$.  
\end{prop}
\begin{proof}
Without loss of generality we may assume that $A_{i}=0$ for $i<0$. Suppose that $\textrm{gr}(A)\in\mathpzc{Gr}(\mathfrak{O})$. In particular  $A_{0}\in\mathfrak{O}$. Moreover for each $i\ge0$ there is an exact sequence
$$0\rightarrow A_{i}\rightarrow A_{i+1}\rightarrow \textrm{gr}_{i}(A)\rightarrow 0$$
Since $\textrm{gr}_{i+1}(A)\in\mathfrak{O}$, an easy induction gives $A_{j}\in\mathfrak{O}$ for each $0\le j<\infty$. If $A$ is exhaustively filtered then $A_{top}$ is an $(\aleph_{0};\mathcal{S})$-extension of objects in $\mathfrak{O}$, so the second claim follows. The final claim follows again from the exact sequence above. 
\end{proof}
Let $(\mathfrak{L},\mathfrak{R})$ be a cotorsion pair on $\mathpzc{E}$. In particular $\mathfrak{L}$ and $\mathfrak{R}$ are extension closed. Since $\mathfrak{L}$ is closed under transfinite extensions by Proposition \ref{filteredthick} we get the following.
\begin{cor}
Suppose that $\mathpzc{E}$ is weakly $(\aleph_{0};\mathcal{S})$-elementary. Let $A\in\overline{\mathpzc{Filt}}^{+}_{\mathcal{S}}(\mathpzc{E})$. Then $\textrm{gr}(A)\in\mathpzc{Gr}(\mathfrak{L})$ if and only if $A\in\overline{\mathpzc{Filt}}_{\mathcal{S}}(\mathfrak{L})$. 
\end{cor}
Moreover Lemma \ref{transgeneral} implies the following.
\begin{prop}\label{filteredsplit}
Let $A\in\overline{\mathpzc{Filt}}^{+}_{\mathcal{S}}(\mathfrak{L})$ and $B=(B_{top},\beta_{i},b_{i})\in\overline{\mathpzc{Filt}}^{+}_{\mathcal{S}}(\mathpzc{E})$ be such that $B_{i}\in\mathfrak{R}$ for each $i\in\mathbb{Z}$. Then any filtered exact sequence
$$0\rightarrow B\rightarrow C\rightarrow A\rightarrow 0$$
splits. 
\end{prop}
\begin{defn}
We say that a morphism $f$ in $\overline{\mathpzc{Filt}}_{\mathcal{S}}(\mathpzc{E})$ (resp. $\mathpzc{Filt}_{\mathcal{S}}(\mathpzc{E})$) is a \textbf{filtered quasi-isomorphism} if $f_{i}$ is a weak equivalence for each ${i\in\mathbb{Z}}$ (resp. for each $i\in\mathbb{Z}$ and $i=top$).
\end{defn}
\begin{defn}
A complex $A\in\mathpzc{Filt}_{\mathcal{S}}(\mathpzc{E})$ is said to be \textbf{quotient acyclic} if $A_{top}\big\slash A_{n}$ is acyclic for each $n$.
\end{defn}
This will allow us to detect acyclic complete objects using the associated graded functors under mild assumptions. Note that any acyclic complex is completion acyclic.
\begin{example}
If $A\in\mathpzc{Filt}_{\textbf{AdMon}}(\mathpzc{E})$ is bounded below then it is acyclic if and only if it is quotient acyclic.
\end{example}
\begin{prop}
\begin{enumerate}
\item
Let $\mathpzc{E}$ be an exact category with enough projectives. Then a complex $A$ in $\reallywidehat{\mathpzc{Filt}}_{\textbf{AdMon}}(Ch(\mathpzc{E}))$ is quotient acyclic if and only if it is acyclic. 
\item
If $\mathpzc{E}$ is weakly $(\aleph_{0};\textbf{AdMon})$-elementary then a complex $A$ in $\overline{\mathpzc{Filt}}_{\textbf{AdMon}}(Ch(\mathpzc{E}))$ is quotient acyclic if and only if $gr(A)$ is acyclic.
\end{enumerate}
\end{prop}
\begin{proof}
\begin{enumerate}
\item
Suppose $A$ is quotient acyclic. Then each $A\big\slash A_{n}$ is acyclic. Moreover each $A_{top}\big\slash A_{n}\rightarrow A\big\slash A_{n+1}$ is an admissible epimorphism. Thus for each projective $P$, $Hom(P,\lim_{\leftarrow_{n}}A_{top}\big\slash A_{n})\cong\lim_{\leftarrow_{n}} Hom(P,A_{top}\big\slash A_{n})$ is acyclic. Hence $A_{top}\cong \lim_{\leftarrow_{n}}A_{top}\big\slash A_{n}$ is acyclic. This also implies that each $A_{n}$ is also acyclic.
\item
This proof is a generalisation is a modification of \cite{calaque2021lie} Lemma 2.16 (1). Suppose $\mathpzc{E}$ is weakly $(\aleph_{0};\textbf{AdMon})$-elementary, and let $A$ be such that $gr(A)$ is acyclic. Each $A_{n}\rightarrow A_{n+1}$ is an admissible monomorphism and an equivalence. Since $\mathpzc{E}$ is weakly $(\aleph_{0};\textbf{AdMon})$-elementary  the transfinite composition $A_{n}\rightarrow A_{top}$ is an admissible monomorphism which is an equivalence. Thus $A_{top}\big\slash A_{n}$ is acyclic.
\end{enumerate}
\end{proof}
\begin{cor}\label{cor:grequivcomp}
Let $f:X\rightarrow Y$ be a map in $\reallywidehat{\overline{\mathpzc{Filt}}}_{\textbf{AdMon}}(Ch(\mathpzc{E}))$. Suppose that $\mathpzc{E}$ has enough projectives and is weakly $(\aleph_{0};\textbf{AdMon})$-elementary. Then $f$ is an equivalence if and only if $gr(f)$ is an equivalence.
\end{cor}
\begin{cor}\label{prop:samegraded}
Suppose that $\mathpzc{E}$ has enough projectives and is weakly $(\aleph_{0};\textbf{AdMon})$-elementary. Let $f:X\rightarrow Y$ be a graded equivalence $\overline{\mathpzc{Filt}}_{\textbf{AdMon}}(Ch(\mathpzc{E}))$. Then $\reallywidehat{f}:\reallywidehat{X}\rightarrow \reallywidehat{Y}$ is an equivalence in $\reallywidehat{\overline{\mathpzc{Filt}}}_{\textbf{AdMon}}(Ch(\mathpzc{E}))$.
\end{cor}
\begin{proof}
This follows immediately from the fact that the natural map $gr(A)\rightarrow gr(\reallywidehat{A})$ is an isomorphism.
\end{proof}
Let $(\mathfrak{L},\mathfrak{R})$ be a $dg_{*}$-compatible cotorsion pair on $\mathpzc{E}$ and consider the induced model structure on $Ch_{*}(\mathpzc{E})$. 
\begin{thm}\label{filtmod}
Suppose that $\mathpzc{E}$ is a weakly $\mathcal{S}$-elementary exact category and that $(\mathfrak{L},\mathfrak{R})$ is a $dg_{*}$-compatible cotorsion pair for $*\in\{\emptyset,\ge0\}$ and that acyclic cofibrations are degree-wise split. Then the transferred model structure exists on $\mathpzc{Filt}_{Ch_{*}(\mathcal{S})}(Ch_{*}(\mathpzc{E}))$ and $\overline{\mathpzc{Filt}}_{Ch_{*}(\mathcal{S})}(Ch_{*}(\mathpzc{E}))$. If $g$ is a filtered (acyclic) cofibration then $\textrm{gr}(g)$ is a graded (acyclic) cofibration. 
\end{thm}
\begin{proof}
We prove the claim for exhaustive filtered objects, the non-exhaustive case being similar. Let us first show that transfinite compositions of pushouts of coproducts of maps of the form $\textrm{filt}(f)$, where $f$ is a generating cofibration in $Ch_{*}(\mathpzc{Gr}(\mathpzc{E}))$, exist in $\overline{\mathpzc{Filt}}_{Ch_{*}(\mathcal{S})}(Ch_{*}(\mathpzc{E}))$. Indeed such a map $f$ is in each homological degree a split monomorphism. Since by assumption $\mathcal{S}$ is closed under direct sums, by Proposition \ref{filtdiag} these colimits exist and the functor $\sum_{n\in\mathbb{Z}}(-)_{n}$ commutes with them. It therefore suffices to show that transfinite compositions of pushouts of coproducts of maps of the form $\textrm{filt}(f)$ where $f$ is a generating acyclic cofibration is a weak eauivalence. But since such colimits are computed degree-wise and $\sum_{n\in\mathbb{Z}}(-)_{n}$ commutes with such colimits this is clear.  The claim about the associated graded functor follows from the fact that $\textrm{gr}$ sends $cell(\textrm{filt}(I))$ and $cell(\textrm{filt}(J))$ to $cell(I)$ and $cell(J)$ respectively, where $I$ is a collection of generating cofibrations in $Ch_{*}(\mathpzc{Gr}(\mathpzc{E}))$, and $J$ is a collection of generating acyclic cofibrations in $Ch_{*}(\mathpzc{Gr}(\mathpzc{E}))$. 
\end{proof}
\begin{prop}
The class of cofibrant/ trivially cofibrant) objects in the transferred model structure of Theorem \ref{filtmod} on $\overline{\mathpzc{Filt}}_{\mathcal{S}}(Ch_{*}(\mathpzc{E}))$ is $\overline{\mathpzc{Filt}}_{\mathcal{S}}(\mathfrak{C})$ (resp. $\overline{\mathpzc{Filt}}_{\mathcal{S}}(\mathfrak{C}\cap\mathfrak{W})$) where $\mathfrak{C}$ (resp $\mathfrak{W}\cap\mathfrak{C}$) is the class of cofibrant (resp. trivially cofibrant) objects in $Ch_{*}(\mathpzc{E})$. In particular a bounded below exhaustively filtered object $A$ is cofibrant/ trivially cofibrant if and only $gr(A)$ is cofibrant/ trivially cofibrant as a graded object.
\end{prop}
\begin{proof}
The first claim follows from the fact that the objects described are the cofibrations (trivial cofibrations) in the projective model structure (for functors) on $\mathpzc{Seq}(Ch_{*}(\mathpzc{E}))$ (resp. $\mathpzc{Seq}_{top}(Ch_{*}(\mathpzc{E}))$), and the computation of a transfinite composition $\textrm{lim}_{\rightarrow_{\alpha<\lambda}}X_{\alpha}$ where $X_{0}\in\overline{\mathpzc{Filt}}_{\mathcal{S}}(\mathpzc{E})$ (resp. $\mathpzc{Filt}_{\mathcal{S}}(\mathpzc{E})$) and each $X_{\alpha}\rightarrow X_{\alpha+1}$ is a pushout of a generating cofibration/ acyclic cofibration is the same in both $\mathpzc{Seq}(Ch_{*}(\mathpzc{E}))$ (resp. $\mathpzc{Seq}_{top}(Ch_{*}(\mathpzc{E}))$) and  $X_{0}\in\overline{\mathpzc{Filt}}_{\mathcal{S}}(\mathpzc{E})$ (resp. $\mathpzc{Filt}_{\mathcal{S}}(\mathpzc{E})$).
\end{proof}
\begin{thm}\label{thm:filtmonoidaxiom}
Suppose that $\mathpzc{E}$ is a weakly $\mathcal{S}$-elementary exact category, with $\mathcal{S}$ $\aleph_{0}$-closed, left cancelling, and such that functorial $\mathcal{S}$-images exist. Suppose further that $(\mathfrak{L},\mathfrak{R})$ is a monoidally $dg_{*}$-compatible cotorsion pair for $*\in\{\emptyset,\ge0\}$. Suppose further that $\mathpzc{Filt}_{\mathcal{S}}(\mathpzc{E})$ is a monoidal category with the induced monoidal structure, and that the induced model structure on $Ch_{*}(\mathpzc{E})$ has a collection of generating cofibrations which are split exact in each degree. The transferred model structure on $\overline{\mathpzc{Filt}}_{Ch_{*}(\mathcal{S})}(Ch_{*}(\mathpzc{E}))$ is monoidal. If $Ch_{*}(\mathpzc{E})$ has maps of the form $0\rightarrow D^{n}(F)$, with $F\in\mathfrak{L}$, as generating acyclic cofibrations, then $\overline{\mathpzc{Filt}}{Ch_{*}(\mathcal{S})}(Ch_{*}(\mathpzc{E}))$ satisfies the monoid axiom.
\end{thm}

\begin{proof}
We have already explained why $\overline{\mathpzc{Filt}}_{Ch_{*}(\mathcal{S})}(Ch_{*}(\mathpzc{E}))$ is a monoidal model category. Now let us prove the monoid axiom. A transfinite composition of pushouts of tensor products of objects with generating acyclic cofibrations will be of the form $A\rightarrow A\oplus Y$, where $Y$ is a direct sum of objects of the form $X\otimes F_{i}(D^{n}(L))$ for some $L\in\mathfrak{L}$. $Y$ is clearly trivially cofibrant, so $A\rightarrow A\oplus Y$ is a trivial cofibration. 
\end{proof}
\subsubsection{Model Structures for Complete Objects}
Let $\mathpzc{E}$ be a weakly idempotent complete exact category with enough projectives. Since projective limits of admissibly epimorphic projective sequences in $\mathpzc{E}$ are exact by Proposition \ref{prop:projML}, we immediately get the following.
\begin{prop}
Let $\mathpzc{E}$ be a weakly idempotent complete exact category with enough projectives. Let $A=(A_{top},\alpha_{i},a_{i})$ be an object of $Ch(\mathpzc{Filt}_{\textbf{AdMon}}(\mathpzc{E}))$. Then the map $\textrm{lim}_{\leftarrow_{n}}A_{top}\big\slash A_{n}\rightarrow\textrm{holim}_{\leftarrow_{n}}A_{top}\big\slash A_{n}$ is an equivalence. In particular if $A$ is complete then it is homotopically complete, and $A$ is homotopically complete if and only if $\textrm{lim}_{\leftarrow_{n}}A_{top}\big\slash A_{n}$ is acyclic. 
\end{prop}
\begin{prop}
Let $\mathpzc{E}$ be an elementary exact category, and let $A=(A_{top},\alpha_{i},a_{i})$ be an object of $Ch_{*}(\mathpzc{Filt}_{\textbf{AdMon}}(\mathpzc{E}))$ for $*\in\{\ge0,\emptyset\}$. Then $A$ is homotopically complete if and only if $\textrm{holim}_{\leftarrow_{n}} A_{n}$ is trivial.
\end{prop}
\begin{proof}
For the stable model category $Ch(\mathpzc{E})$ this follows from \cite{gwilliam2018enhancing} Section 2.7. For $Ch_{\ge0}(\mathpzc{E})$, it follows because all the necessary computations happen in $Ch(\mathpzc{E})$. 
\end{proof}
Let $\mathpzc{E}$ be an elementary quasi-abelian category. Then $Ch(\overline{\mathpzc{Filt}}_{\textbf{AdMon}}(\mathpzc{E}))$ is a combinatorial, left proper model category when equipped with the projective model structure. The left Bousfield localisation of $Ch(\overline{\mathpzc{Filt}}_{\textbf{AdMon}}(\mathpzc{E}))$ at maps of the form $0\rightarrow const(P)$ where $P$ is a projective generator, and $const(P)$ is the constant filtered object with $const(P)_{i}=const(P)_{top}=P$ for all $\in\mathbb{Z}$ exists. The adjunction
$$\adj{I_{\textbf{AdMon};\mathpzc{Mor}(\mathpzc{E})}}{\mathpzc{Seq}(Ch(\mathpzc{E}))}{\overline{\mathpzc{Filt}}_{\textbf{AdMon}}(Ch(\mathpzc{E}))}{R_{\textbf{AdMon};\mathpzc{Mor}(\mathpzc{E})}}$$
is a Quillen equivalence. $I_{\textbf{AdMon};\mathpzc{Mor}(\mathpzc{E})}$ is the identity on the maps $0\rightarrow const(P)$, so there is an induced Quillen equivalence of the localised model structures by \cite{hirschhorn2009model}.  The equivalences for the localised model structure on $\mathpzc{Seq}(Ch(\mathpzc{E}))$ are the derived graded equivalences by Theorem \ref{thm:htpcompletegwilliam}.  Since the adjunction is a Quillen equivalence, the equivalences for the localisation of right-hand side are also the derived graded equivalences. Since in $\overline{\mathpzc{Filt}}_{\textbf{AdMon}}(Ch(\mathpzc{E}))$ the associated graded functor preserves equivalences, the weak equivalences are in fact all graded equivalences. We have proven the following.
\begin{thm}
The homotopically complete filtered model structure exists on $\overline{\mathpzc{Filt}}_{\textbf{AdMon}}(Ch(\mathpzc{E}))$. A map $f$ is an equivalence if and only if $gr(f)$ is an equivalence. 
\end{thm}
 On the other hand, we know that  $\reallywidehat{\overline{\mathpzc{Filt}}}_{\textbf{AdMon}}(\mathpzc{E})$ is an extension closed subcategory of $\overline{\mathpzc{Filt}}_{\textbf{AdMon}}(\mathpzc{E})$. Thus it is an exact category. Moreover  $F_{i}P$ is projective in $\reallywidehat{\mathpzc{Filt}}_{\textbf{AdMon}}(\mathpzc{E})$ for $P$ projective and $i\in\mathbb{Z}$ or $i=top$. Using that $P$ is projective and so $Hom(P,-)$ commutes with completion, it is easy to see that if $P$ is tiny, then $F_{i}$ is tiny in $\reallywidehat{\mathpzc{Filt}}_{\textbf{AdMon}}(\mathpzc{E})$. Thus $\reallywidehat{\mathpzc{Filt}}_{\textbf{AdMon}}(\mathpzc{E})$ and $\reallywidehat{\overline{\mathpzc{Filt}}}_{\textbf{AdMon}}(\mathpzc{E})$ are elementary quasi-abelian categories. 
\begin{thm}
Let $\mathpzc{E}$ be an elementary quasi-abelian category. Then the complete model structure exists on $Ch(\reallywidehat{\overline{\mathpzc{Filt}}}_{\textbf{AdMon}}(\mathpzc{E}))$, and it is combinatorial. Moreover the adjunction
$$\adj{\reallywidehat{(-)}}{Ch(\overline{\mathpzc{Filt}}_{\textbf{AdMon}}(\mathpzc{E}))}{Ch(\reallywidehat{\overline{\mathpzc{Filt}}}_{\textbf{AdMon}}(\mathpzc{E}))}{J}$$
 is a Qullen equivalence when the category on the left-hand side is equipped with the homotopically complete monoidal structure.
\end{thm}
\begin{proof}
$J$ clearly preserves fibrations. By Corollary \ref{cor:grequivcomp} and Proposition \ref{prop:samegraded}, $\reallywidehat{(-)}$ and $J$ both preserve and reflect all equivalences. It follows that the Quillen adjunction is in fact a Quillen equivalence.
\end{proof}
Now suppose that $\mathpzc{E}$ is closed symmetric monoidal elementary quasi-abelian category. Then $\overline{\mathpzc{Filt}}_{\textbf{AdMon}}(\mathpzc{E})$ is closed symmetric monoidal elementary. $R(\underline{Hom}_{filt}(-,-))$ defines an internal hom on $\overline{\mathpzc{Filt}}_{\textbf{AdMon}}(\mathpzc{E})$. Indeed the functor $\underline{Hom}(X,-)$ is kernel preserving, so for $A$ and $B$ objects of $\overline{\mathpzc{Filt}}_{\textbf{AdMon}}(\mathpzc{E})$, $R(\underline{Hom}_{filt}(A,B))$ is in $\overline{\mathpzc{Filt}}_{\textbf{AdMon}}(\mathpzc{E})$. As for vector spaces over a field in \cite{calaque2021lie} Section 2.1, we then have the following.
\begin{prop}
Let $\mathpzc{E}$ be a closed symmetric monoidal elementary quasi-abelian category. If $Y$ is an object in $\reallywidehat{\overline{\mathpzc{Filt}}}_{\textbf{AdMon}}(\mathpzc{E})$ and $X$ is any object in $\overline{\mathpzc{Filt}}_{\textbf{AdMon}}(\mathpzc{E})$ then $\underline{Hom}_{filt}(X,Y)$ is in $\reallywidehat{\overline{\mathpzc{Filt}}}_{\textbf{AdMon}}(\mathpzc{E})$. In particular $(\reallywidehat{\overline{\mathpzc{Filt}}}_{\textbf{AdMon}}(\mathpzc{E}),\reallywidehat{-\otimes-},F_{0}(k),\underline{Hom}_{filt})$ is a closed symmetric monoidal elementary quasi-abelian category.
\end{prop}
\begin{proof}
Since $Y$ is complete, we may write it as $Y\cong\textrm{lim}_{\leftarrow_{n}}Y^{\ge n}$. \newline
Then $\underline{Hom}_{filt}(X,Y)\cong\textrm{lim}_{\leftarrow_{n}}\underline{Hom}_{filt}(X,Y^{\ge n})$. $\underline{Hom}_{filt}(X,Y^{\ge n})$ is also a bounded-below filtered object, and hence complete. As a reflective subcategory $\reallywidehat{\overline{\mathpzc{Filt}}}_{\textbf{AdMon}}(\mathpzc{E})$ is closed under colimits in $\overline{\mathpzc{Filt}}_{\textbf{AdMon}}(\mathpzc{E})$. The result follows by abstract nonsense. 
\end{proof}
\chapter{Homotopical Algebra in Exact Categories}\label{homtopalgexact}
In this final chapter we show that monoidal elementary exact categories are good settings in which to do homotopical algebra. In particular we show that they are naturally homotopical algebra contexts in the sense of \cite{toen2004homotopical}. e also establish Dold-Kan equivalences for algebras over operads. As will be explained in future work, over $\mathbb{Q}$ this essentially implies that for derived geometry relative to $\mathpzc{E}$ one can work with either simplicial objects are non-negatively graded complexes, and over $\mathbb{Z}$ justifies working with simplicial objects.
\section{Algebra in Monoidal (Model) Categories}
Before specialising to complexes in exact categories, let us establish some general results concerning the existence of model structures on categories of algebras over operads. We first recall basic facts concerning algebra in arbitrary monoidal categories.
Throughout this section $(\mathpzc{C},\otimes,k)$ is a monoidal category, with monoidal functor $\otimes$ and $(\mathpzc{M},\bullet_{\mathpzc{M}})$ is a unital left $\mathpzc{C}$-module.  Because we want to deal with operads as associative monoids in a certain non-symmetric monoidal category, we \textit{will not assume} that the monoidal structure is symmetric. We shall assume that $\mathpzc{C}$ is finitely complete and cocomplete, and that the tensor product commutes with coproducts. What follows is largely standard. Much of it can be found in \cite{koren} for example.

\subsection{Associative Monoids}
We denote the category of (unital) associative monoids internal to $\mathpzc{C}$ by $\mathpzc{Alg}_{\mathfrak{Ass}}(\mathpzc{C})$. There is a faithful forgetful functor $|-|_{\textit{Ass}}:\mathpzc{Alg}_{\mathfrak{Ass}}(\mathpzc{C})\rightarrow\mathpzc{C}$. If $\mathpzc{C}$ has countable products then $|-|$ has a left adjoint $T$ which can be constructed explicitly. Namely for $V\in\mathpzc{C}$, set 
$$T_{n}(V)=V^{\otimes n}$$
$$T(V)=\bigoplus_{n=0}^{\infty} T_{n}(V)$$
where by definition $T_{0}(V)=V^{\otimes 0}=k$. Now $\otimes$ preserves colimits in each variable, so
$$T(V)\otimes T(V)\cong\bigoplus_{m,n=0}^{\infty}T_{m}(V)\otimes T_{n}(V)$$
The multiplication 
$$m:T(V)\otimes T(V)\rightarrow T(V)$$
is defined on the summand $T_{m}(V)\otimes T_{n}(V)$ by the composition
$$T_{m}(V)\otimes T_{n}(V)\cong T_{m+n}(V)\rightarrow T(V)$$
where the isomorphism $T_{m}(V)\otimes T_{n}(V)=V^{\otimes m}\otimes V^{\otimes n}\cong V^{\otimes(m+n)}= T_{m+n}(V)$ is the natural isomorphism. The identity is given by the inclusion $e:k=T_{0}(V)\rightarrow T(V)$. $m$ and $e$ endow $T(V)$ with the structure of a unital associative monoid. It is clear that $V\rightarrow T(V)$ is functorial in $V$, and it is straightforward to check that $T$ is left adjoint to $|-|$.

\subsubsection{Commutative Monoids}
If $\mathpzc{C}$ is symmetric monoidal then we denote the category of (unital) commutative monoids by $\mathpzc{Alg}_{\mathfrak{Comm}}(\mathpzc{C})$.  If $\mathpzc{C}$ has finite coequalizers and countable coproducts then the forgetful functor $|-|_{\textit{Comm}}:\mathpzc{Alg}_{\mathfrak{Comm}}(\mathpzc{C})\rightarrow\mathpzc{C}$ has a left-adjoint, which can be constructed explicitly as follows. The symmetric group on $n$ letters $\Sigma_{n}$ acts on $T_{n}(V)=V^{\otimes n}$. Let $S_{n}(V)=T_{n}(V)_{\Sigma_{n}}$ be the coinvariants for this action. We then set
$$S(V)=\bigoplus_{n=0}^{\infty} S_{n}(V)$$
The associative monoid structure on $T(V)$ descends to an associative monoid structure on $S(V)$. One checks easily that it is commutative and that it is a left adjoint.

\subsection{Modules}

Fix objects $A$ and $B$ of $\mathpzc{Alg}_{\mathfrak{Ass}}(\mathpzc{C})$. We denote by ${}_{A}\mathpzc{Mod}(\mathpzc{M})$ the category of left modules for $A$ in $\mathpzc{M}$. There is a forgetful functor $|-|_{{}_{A}}:{}_{A}\mathpzc{Mod}(\mathpzc{M})\rightarrow\mathpzc{M}$. This functor has a left adjoint. It sends an object $E$ to the object $ A\bullet E$ with the obvious left action of $A$. The adjunction
$$\adj{A\bullet(-)}{\mathpzc{M}}{{}_{A}\mathpzc{Mod}(\mathpzc{M})}{|-|_{A}}$$
is called the \textbf{free-forgetful adjunction}.

If $\mathpzc{M}$ is a right $\mathpzc{C}$-module we denote by  $\mathpzc{Mod}_{A}(\mathpzc{M})$ the category of right modules for $A$  in $\mathpzc{M}$. If $\mathpzc{M}$ is a $\mathpzc{C}$-bimodule, we denote by ${}_{A}\mathpzc{Mod}_{B}(\mathpzc{M})$ the category of $A-B$ bimodules.
Regard $\mathpzc{C}$ as a bimodule over itself, and let $\mathpzc{M}$ be a left $\mathpzc{C}$-module.
Let $E$ be a right $A$-module in $\mathpzc{C}$ with action morphism
$$a_{E}:E\otimes_{\mathpzc{C}} A\rightarrow E$$
and $F$ a left $A$-module in $\mathpzc{M}$ with action morphism
$$a_{F}:A\bullet_{\mathpzc{M}} F\rightarrow F$$
If the category $\mathpzc{C}$ has finite coequalisers, then we define
$$E\bullet_{A}F$$
to be the coequaliser of the maps
\begin{displaymath}
\xymatrix{
E\otimes_{\mathpzc{C}} A\bullet_{\mathpzc{M}} F\ar@/^1.0pc/[rr]^{a_{E}}\ar@/_1.0pc/[rr]^{a_{F}}& & E\bullet_{\mathpzc{M}} F
}
\end{displaymath}
This defines a bifunctor
$$\bullet_{A}:\mathpzc{Mod}_{A}(\mathpzc{C})\times {}_{A}\mathpzc{Mod}(\mathpzc{M})\rightarrow\mathpzc{M}$$
If $E$ is a $B-A$ bimodule in $\mathpzc{C}$, $F$ an $A$ bimodule in $\mathpzc{M}$, then $E\otimes_{A}F$ is naturally a $B$-module, i.e. $\bullet_{A}$ gives a bifunctor
$${}_{B}\mathpzc{Mod}_{A}(\mathpzc{C})\times {}_{A}\mathpzc{Mod}(\mathpzc{M})\rightarrow{}_{B}\mathpzc{Mod}(\mathpzc{M})$$
In particular if $\mathpzc{C}$ is symmetric monoidal, $\mathpzc{M}$ a left $\mathpzc{C}$-module, and $A$ is a commutative monoid then this gives a bifunctor
$${}_{A}\mathpzc{Mod}(\mathpzc{C})\times {}_{A}\mathpzc{Mod}(\mathpzc{M})\rightarrow {}_{A}\mathpzc{Mod}(\mathpzc{M})$$
If we regard $\mathpzc{C}$ as a bimodule over itself, then this endows ${}_{A}\mathpzc{Mod}(\mathpzc{C})$ with a symmetric monoidal structure, and ${}_{A}\mathpzc{Mod}(\mathpzc{M})$ with a left ${}_{A}\mathpzc{Mod}(\mathpzc{C})$-module structure.
Suppose that $\alpha:A\rightarrow B$ is a map of commutative monoids in $\mathpzc{C}$. Then $B$ is an object of $\mathpzc{Alg}_{\mathfrak{Comm}}({}_{A}\mathpzc{Mod}(\mathpzc{C}))$. Thus we get an extension of scalars adjunction.
$$\adj{B\bullet_{A}(-)}{{}_{A}\mathpzc{Mod}(\mathpzc{M})}{{}_{B}\mathpzc{Mod}(\mathpzc{M})}{|-|_{\alpha}}$$
Suppose further that the monoidal structure on $\mathpzc{C}$ is closed, and let $\underline{\textrm{Hom}}(-,-)$ denote the internal hom functor. Then one can also construct an internal hom, $\underline{\textrm{Hom}}_{A}(-,-)$ functor on ${}_{A}\mathpzc{Mod}(\mathpzc{C})$ by a similar method as used to construct $\otimes_{A}$. This makes $({}_{A}\mathpzc{Mod}(\mathpzc{C}),\otimes_{A},\underline{\textrm{Hom}}_{A}(-,-),A)$ a closed monoidal category. See for example \cite{koren} for details.

\subsubsection{Adjunctions of Categories of Modules}
In this subsection we let $(\mathpzc{C},\otimes_{\mathpzc{C}},k_{\mathpzc{C}})$ and $(\mathpzc{D},\otimes_{\mathpzc{D}},k_{\mathpzc{D}})$ be monoidal categories, $(\mathpzc{M},\bullet_{\mathpzc{M}})$ a left $\mathpzc{C}$-module, and $(\mathpzc{N},\bullet_{\mathpzc{N}})$ a left $\mathpzc{D}$-module. 
\begin{defn}
A pair of functors
$$R:\mathpzc{D}\rightarrow\mathpzc{C},\; \overline{R}:\mathpzc{N}\rightarrow\mathpzc{M}$$
is said to be \textbf{lax monoidal} if there are natural transformations 
$$\epsilon:k_{\mathpzc{C}}\rightarrow R(k_{\mathpzc{D}})$$
$$\mu:R(-)\otimes_{\mathpzc{C}}R(-)\rightarrow R(-\otimes_{\mathpzc{N}}-)$$
$$\nu:R(-)\bullet_{\mathpzc{M}}\overline{R}(-)\rightarrow\overline{R}(-\bullet_{\mathpzc{N}}-)$$
satisfying obvious associativity and unitality axioms (see e.g. \cite{kerodon} Section 2.1.5). If $\epsilon$ and $\mu$, and $\nu$ are isomorphisms then the pair of functors is said to be \textbf{strong monoidal}.  One defines an \textbf{oplax monoidal pair of functors} dually. 
\end{defn}
Note that in particular $R$ is a lax monoidal functor between monoidal categories. 
\begin{defn}
A pair of adjunctions
$$\adj{L}{\mathpzc{C}}{\mathpzc{D}}{R}$$
$$\adj{\overline{L}}{\mathpzc{M}}{\mathpzc{N}}{\overline{R}}$$
is said to be \textbf{lax monoidal} if the pair $(R,\overline{R})$ is lax monoidal.
\end{defn}
Let 
$$\adj{L}{\mathpzc{C}}{\mathpzc{D}}{R}$$
$$\adj{\overline{L}}{\mathpzc{M}}{\mathpzc{N}}{\overline{R}}$$
be a lax monoidal pair of adjunctions. By doctrinal adjunction (\cite{kelly1974doctrinal}) $(L,\overline{L})$ is an oplax monoidal pair of functors - we spell out the argument here. Let $\epsilon$, $\mu$, and $\nu$ be the maps realising $(R,\overline{R})$ as lax monoidal. Define $\epsilon^{\vee}:L(k_{\mathpzc{C}})\rightarrow k_{\mathpzc{D}}$ to be the adjoint of $\epsilon$, $\mu^{\vee}$ to be the composition
\begin{displaymath}
\xymatrix{
L(-\otimes_{\mathpzc{C}}-)\ar[r] & L(RL(-)\otimes_{\mathpzc{C}}RL(-))\ar[rr]^{L(\mu_{L(-),L(-)})} & &LR(L(-)\otimes_{\mathpzc{D}} L(-))\ar[r] & L(-)\otimes_{\mathpzc{D}} L(-)
}
\end{displaymath}
 and $\nu^{\vee}$ to be the composition
\begin{displaymath}
\xymatrix{
\overline{L}(-\bullet_{\mathpzc{M}}-)\ar[r] & \overline{L}(RL(-)\bullet_{\mathpzc{M}}\overline{R}\overline{L}(-))\ar[rr]^{L(\nu_{\overline{L}(-),L(-)})} & &\overline{L}\overline{R}(L(-)\bullet_{\mathpzc{N}} \overline{L}(-))\ar[r] & L(-)\bullet_{\mathpzc{N}} \overline{L}(-)
}
\end{displaymath}
where the first map comes from the unit transformations and the last map from the counit transformation. \newline
\\
Now let 
$$\adj{L}{\mathpzc{C}}{\mathpzc{D}}{R}$$
$$\adj{\overline{L}}{\mathpzc{M}}{\mathpzc{N}}{\overline{R}}$$
be a lax monoidal pair of adjunctions, and $T$ an associative monoid in $\mathpzc{D}$. By lax monoidality, $R(T)$ is an associative monoid in $\mathpzc{C}$. Moreover $\overline{R}$ induces a well-defined functor
$$\overline{R}_{T}:{}_{T}\mathpzc{Mod}(\mathpzc{N})\rightarrow{}_{R(T)}\mathpzc{Mod}(\mathpzc{M})$$
Exactly as in \cite{schwede} Page 305, this functor has a left adjoint constructed as follows. For $M$ an $R(T)$-module in $\mathpzc{M}$ let 
$$\alpha_{M}:T\bullet_{\mathpzc{N}}(\overline{L}(R(T)\bullet_{\mathpzc{M}} M))\rightarrow T\bullet_{\mathpzc{N}}\overline{L}(M)$$
be induced by the multiplication map $R(T)\bullet_{\mathpzc{M}} M\rightarrow M$, and let $\beta_{M}$ denote the composition
\begin{displaymath}
\xymatrix{
\overline{L}(R(T)\bullet_{\mathpzc{M}}M)\ar[r]^{\nu^{\vee}_{R(T),M}} & LR(T)\bullet_{\mathpzc{N}}\overline{L}(M)\ar[r] & T\bullet_{\mathpzc{N}}\overline{L}(M)
}
\end{displaymath}
Denote by $\overline{L}_{T}(M)$ the coequalizer of the maps $\alpha_{M}$ and $\beta_{M}$. $\overline{L}_{T}$ is a functorial construction, and is left adjoint to $\overline{R}_{T}$.
\begin{rem}
If $R(T)\bullet_{\mathpzc{M}}M$ is a free $S$-module, then $\overline{L}_{T}(R(T)\bullet_{\mathpzc{M}}M)\cong T\bullet_{\mathpzc{N}}M$.  
\end{rem}
If $\alpha:S\rightarrow R(T)$ is a map of associative monoids, then by composition with the extension of scalars adjunction we get a composite adjunction
$$\adj{L_{\alpha}}{{}_{S}\mathpzc{Mod}(\mathpzc{M})}{{}_{T}\mathpzc{Mod}(\mathpzc{N})}{R_{\alpha}}$$
\section{Operads}
Here we will recall some facts about operads and their algebras. We shall fix a \textit{symmetric} monoidal category $(\mathpzc{C},\otimes,k)$, and assume now that the tensor product commutes with \textit{all colimits} in each variable. Most of the definitions and claims in this section can be found in \cite{loday2012algebraic}.
\subsection{Non-Symmetric Sequences}
Consider the category $\mathpzc{Gr}_{\mathbb{N}_{0}}(\mathpzc{C})\defeq\mathpzc{Fun}(\mathbb{N}_{0},\mathpzc{C})$ where $\mathbb{N}_{0}$ is the discrete category of non-negative integers. 
\begin{defn}
Let $M$ and $N$ be objects of $\mathpzc{Gr}_{\mathbb{N}_{0}}(\mathpzc{C})$. The \textbf{tensor product} of $M$ and $N$, denoted $M\otimes N$, is the graded object defined by
$$(M\otimes N)(n)=\bigoplus_{i+j=n}(M(i)\otimes N(j))$$
\end{defn}
\begin{defn}
\begin{enumerate}
\item
The graded object $I_{0}$ is defined by $I_{0}(i)=0$ for $i\neq 0$ and $I_{0}(0)=k$.
\item
The graded object $I$ is defined by $I(i)=0$ for $i\neq 1$ and $I(1)=k$.
\end{enumerate}
\end{defn}
\begin{defn}
Let $M$ and $N$ be objects of $\mathpzc{Gr}_{\mathbb{N}_{0}}(\mathpzc{C})$. The \textbf{composite product} of $M$ and $N$ is the graded object defined by
$$M\circ_{ns}N\defeq\bigoplus_{k\ge0}(M(k)\otimes N^{\otimes k}(n))$$
\end{defn}
\begin{prop}
$(\mathpzc{Gr}_{\mathbb{N}_{0}}(\mathpzc{C}),\otimes,I_{0})$ and $(\mathpzc{Gr}_{\mathbb{N}_{0}}(\mathpzc{C}),\circ,I)$ are monoidal categories.
\end{prop}
 \subsection{Symmetric Sequences}
Let $(\mathpzc{C},\otimes, k)$ be a symmetric monoidal category with all small coproducts. Note that in this case $(\mathpzc{Gr}_{\mathbb{N}_{0}}(\mathpzc{C}),\otimes,I_{0})$ is a symmetric monoidal category.
\subsubsection{Discrete Groups in Monoidal Categories}
We denote by $k[-]:\mathpzc{Set}\rightarrow\mathpzc{C}$ the functor which sends a set $S$ to the object $k[S]=\bigoplus_{S}k$. If $f:S\rightarrow T$ is a map of sets, then $k[f]:k[S]\rightarrow k[T]$ is the morphism which sends the copy of $k$ indexed by $s\in S$ to the copy indexed by $f(s)\in T$. Objects and morphisms in the essential image of the functor $k[-]:\mathpzc{Set}\rightarrow\mathpzc{C}$ will be called \textbf{discrete}.
\begin{prop}
Let $(\mathpzc{C},\otimes,k)$ be a monoidal category. Suppose that $\otimes$ preserves all coproducts. Endow $\mathpzc{Set}$ with its Cartesian monoidal structure. Then the functor $k[-]:\mathpzc{Set}\rightarrow\mathpzc{C}$ is strong monoidal.
\end{prop}
\begin{proof}
Let $S$ and $T$ be sets. Then
$$
\Bigr(\coprod_{S}k\Bigr)\otimes\Bigr(\coprod_{T}k\Bigr)
\cong\coprod_{S}\coprod_{T} k\otimes k
\cong\coprod_{S}\coprod_{T} k
\cong\coprod_{S\times T}k
$$
\end{proof}
In particular $\mathpzc{Set}\rightarrow\mathpzc{C}$ sends groups to Hopf monoids. If $G$ is a group we call $k[G]$ the group monoid of $G$ in $\mathpzc{C}$. 
\subsection{The Category of $\Sigma$-Modules}
We denote by $k[\Sigma]$ the monoid in $(\mathpzc{Gr}_{\mathbb{N}_{0}}(\mathpzc{C}),\otimes,I_{0})$ defined as follows. In degree $n$ it is given by the monoid $k[\Sigma_{n}]$, the free monoid on the symmetric group in $n$ letters. It is an associative monoid in the monoidal category $(\mathpzc{Gr}_{\mathbb{N}_{0}}(\mathpzc{C}),\otimes,I)$.
\begin{defn}
The \textbf{category of } $\Sigma$-\textbf{modules} in $\mathpzc{C}$, denoted $\mathpzc{Mod}_{\Sigma}(\mathpzc{C})$  is the category of right $k[\Sigma]$-modules.
\end{defn}
\begin{defn}
Let $M$ and $N$ be two $\Sigma$-modules. The \textbf{composite product} of $M$ and $N$, denoted $M\circ N$ is defined by 
$$M\circ N(n)=\bigoplus_{k\ge0}(M(k)\otimes_{\Sigma_{k}}N^{\otimes k}(n)$$
\end{defn}
 \begin{prop}
 $(\mathpzc{Mod}_{\Sigma},\circ,I)$ is a monoidal category.
 \end{prop}
 \subsection{Operads and Algebras}
 \begin{defn}
 \begin{enumerate}
 \item
The category of \textbf{non-symmetric operads} is the category of associative monoids $\mathpzc{Alg}_{\mathfrak{Ass}}(\mathpzc{Gr}_{\mathbb{N}_{0}}(\mathpzc{C}),\circ_{ns},I)$. 
 \item
The category of \textbf{symmetric operads} is the category of associative monoids $\mathpzc{Alg}_{\mathfrak{Ass}}(\mathpzc{Mod}_{\Sigma}(\mathpzc{C}),\circ,I)$. 
 \end{enumerate}
 \end{defn}
 \begin{enumerate}
 \item
 $\mathpzc{Alg}_{\mathfrak{Ass}}(\mathpzc{C})$ is the category of algebras over the non-symmetric operad with $\mathfrak{Ass}(n)=k$ for all $n$.
 \item
  $\mathpzc{Alg}_{\mathfrak{Comm}}(\mathpzc{C})$ is the category of algebras over the symmetric operad with $\mathfrak{Comm}(n)=k$ for all $n$, regarded as a trivial $\Sigma_{n}$-module.
  \item
  In the additive setting there is an operad $\mathfrak{Lie}$ such that $\mathpzc{Alg}_{\mathfrak{Lie}}(\mathpzc{C})$ is the category of Lie algebras.
  \end{enumerate}
By thinking of objects of $\mathpzc{C}$ as graded objects concentrated in degree $0$, we may regard $\mathpzc{C}$ as a full subcategory of both $\mathpzc{Gr}_{\mathbb{N}_{0}}(\mathpzc{C})$ and $\mathpzc{Mod}_{\Sigma}(\mathpzc{C})$. This makes $\mathpzc{C}$ into a left $(\mathpzc{Gr}_{\mathbb{N}_{0}}(\mathpzc{C}),\circ_{ns},I)$-module and a left $(\mathpzc{Mod}_{\Sigma}(\mathpzc{C}),\circ,I)$-module 
 \begin{defn}
 Let $\mathfrak{P}$ be either a symmetric or non-symmetric operad in $\mathpzc{C}$. The \textbf{category of} $\mathfrak{P}$-\textbf{algebras}, denoted $\mathpzc{Alg}_{\mathfrak{P}}(\mathpzc{C})$ is the category ${}_{\mathfrak{P}}\mathpzc{Mod}(\mathpzc{C})$ (where in the non-symmetric case $\mathpzc{C}$ is a left $(\mathpzc{Gr}(\mathpzc{C}),\circ_{ns},I)$-module, and in the symmetric case $\mathpzc{C}$ is a left $(\mathpzc{Mod}_{\Sigma}(\mathpzc{C}),\circ,I)$-module ).
\end{defn}
\subsubsection{Colimits of Algebras}
In this section we recall from \cite{harper2010homotopy} how to compute certain colimits in the category of aglebras over an operad. 
 \begin{prop}[ \cite{harper2010homotopy} Proposition 7.28]
 Let $\mathfrak{P}$ be a non-symmetric operad and $X$ a $\mathfrak{P}$-algebra. There exists an object $\mathfrak{P}_{X}$ in $\mathpzc{Gr}_{\mathbb{N}_{0}}(\mathpzc{C})$ together with, for any $Y\in\mathpzc{Gr}_{\mathbb{N}_{0}}(\mathpzc{C})$, an isomorphism, natural in $X$ and $Y$,
 $$X\coprod(\mathfrak{P}\circ_{ns}Y)\cong\mathfrak{P}_{X}\circ_{ns}(Y)$$
 \end{prop}
 As in \cite{harper2010homotopy} Definition 7.31, for $s:A\rightarrow B$ a map in $\mathpzc{Gr}_{\mathbb{N}_{0}}(\mathpzc{C})$, we define $Q^{t}_{q}(s)$ for $t\ge 1$ and $0\le q\le t$ as follows. $Q_{0}^{t}(s)\defeq A^{\otimes t}$, $Q^{t}_{t}(s)\defeq B^{\otimes t}$ and for $0<q<t$, $Q^{t}_{q}(s)$ is defined by the pushout. 
 \begin{displaymath}
 \xymatrix{
 (X^{\otimes(t-q)}\otimes Q^{q}_{q-1}(s))^{\oplus\binom{t}{q}}\ar[d]\ar[r] & Q_{q-1}^{t}(s)\ar[d]\\
  (X^{\otimes(t-q)}\otimes B^{\otimes q})^{\oplus\binom{t}{q}}\ar[r] & Q^{t}_{q}(s)
 }
 \end{displaymath}
 where the top map is the obvious projection, and the left-hand map is induced by natural map $Q^{q}_{q-1}(s)\rightarrow B^{\otimes q}$. 
 \begin{prop}[\cite{harper2010homotopy}  Proposition 7.32]
 Let $s:A\rightarrow B$ be a map in $\mathpzc{C}$, and let $X$ be a $\mathfrak{P}$-algebra. Consider a pushout diagram
 \begin{displaymath}
 \xymatrix{
 \mathfrak{P}(A)\ar[d]^{\mathfrak{P}(s)}\ar[r] & X\ar[d]\\
 \mathfrak{P}(B)\ar[r] & P
 }
 \end{displaymath}
 Then $P$ is naturally isomorphic to a filtered colimit
 $$P\cong\textrm{lim}_{\rightarrow_{n}}X_{n}$$
 where $X_{0}=X$, and for $n\ge 1$ the map $X_{n-1}\rightarrow X_{n}$ is given by the pushout diagram in $\mathpzc{C}$
 \begin{displaymath}
 \xymatrix{
 \mathfrak{P}_{X}(n)\otimes Q^{n}_{n-1}(s)\ar[d]\ar[r] & X_{n-1}\ar[d]\\
 \mathfrak{P}_{X}(n)\otimes B^{\otimes n}\ar[r] & X_{n}
 }
 \end{displaymath}
 \end{prop}
 The filtration $P\cong\textrm{lim}_{\rightarrow_{n}}X_{n}$ will be called the \textbf{standard filtration}. 
 There is also a version of this when $\mathfrak{P}$ is a symmetric operad in $\mathpzc{C}$. As in \cite{harper2010homotopy} Proposition 7.6 for $X$ a $\mathfrak{P}$-algebra, there is a $\Sigma$-module $\mathfrak{P}_{X}$ in $\mathpzc{C}$ such that for any $\Sigma$-module $Y$ in $\mathpzc{C}$, there is an isomorphism, natural in $X$ and $Y$,
 $$X\coprod(\mathfrak{P}\circ Y)\cong\mathfrak{P}_{X}\circ Y$$
 The pushout $P$ of a map $\mathfrak{P}\circ s:\mathfrak{P}\circ A\rightarrow\mathfrak{P}\circ B$ along a map $\mathfrak{P}\circ A\rightarrow X$ can also be computed using a standard filtration 
  $$P\cong\textrm{lim}_{\rightarrow_{n}}X_{n}$$ where the map $X_{n-1}\rightarrow X_{n}$ is given by pushout along $ \mathfrak{P}_{X}(n)\otimes_{\Sigma_{n}} Q^{n}_{n-1}(s)\rightarrow  \mathfrak{P}_{X}(n)\otimes_{\Sigma_{n}} B^{\otimes n}$
  \subsubsection{Adjunctions of Operads and Algebras}
  Let $\mathpzc{C}$ and $\mathpzc{D}$ be symmetric monoidal categories, and let 
  $$\adj{L}{\mathpzc{C}}{\mathpzc{D}}{R}$$
  be a lax monoidal adjunction. There are two obvious induced lax monoidal pairs of adjunctions.
  $$\adj{L_{ns}}{(\mathpzc{Gr}_{\mathbb{N}_{0}}(\mathpzc{C}),\otimes,I_{0})}{(\mathpzc{Gr}_{\mathbb{N}_{0}}(\mathpzc{D}),\otimes,I_{0})}{R_{ns}},\;\;\; \adj{L}{\mathpzc{C}}{\mathpzc{D}}{R}$$
    $$\adj{L_{ns}}{(\mathpzc{Gr}_{\mathbb{N}_{0}}(\mathpzc{C}),\circ_{ns},I)}{(\mathpzc{Gr}_{\mathbb{N}_{0}}(\mathpzc{D}),\circ_{ns},I)}{R_{ns}},\;\;\; \adj{L}{\mathpzc{C}}{\mathpzc{D}}{R}$$
    Thus if $\mathfrak{T}$ is a non-symmetric operad in $\mathpzc{D}$, $\mathfrak{P}$ a non-symmetric operad in $\mathpzc{C}$, and $\alpha:\mathfrak{P}\rightarrow R(\mathfrak{T})$ a map of operads, then we get an adjunction
    $$\adj{L_{\alpha}}{\mathpzc{Alg}_{\mathfrak{P}}(\mathpzc{C})}{\mathpzc{Alg}_{\mathfrak{T}}(\mathpzc{D})}{R_{\alpha}}$$
    Consider the monoid $k_{\mathpzc{D}}[\Sigma]$ in $(\mathpzc{Gr}_{\mathbb{N}_{0}}(\mathpzc{D}),\otimes,I_{0})$. Using the map $k_{\mathpzc{C}}[\Sigma]\rightarrow R(k_{\mathpzc{D}}[\Sigma])$ we get an adjunction.
    $$\adj{L_{\Sigma}}{\mathpzc{Mod}_{\Sigma}(\mathpzc{C})}{\mathpzc{Mod}_{\Sigma}(\mathpzc{D})}{R_{\Sigma}}$$
    If $R$ is a symmetric monoidal functor, then $R_{\Sigma}$ is in fact a lax monoidal functor  $(\mathpzc{Mod}_{\Sigma}(\mathpzc{D}),\circ,I)\rightarrow(\mathpzc{Mod}_{\Sigma}(\mathpzc{C}),\circ,I)$, and we get the following lax monoidal pair of adjunctions.
        $$\adj{L_{\Sigma}}{(\mathpzc{Mod}_{\Sigma}(\mathpzc{C}),\circ,I)}{(\mathpzc{Mod}_{\Sigma}(\mathpzc{D}),\circ,I)}{R_{\Sigma}},\;\;\; \adj{L}{\mathpzc{C}}{\mathpzc{D}}{R}$$
So again if $\mathfrak{T}$ is a symmetric operad in $\mathpzc{D}$, $\mathfrak{P}$ a symmetric operad in $\mathpzc{C}$, and $\alpha:\mathfrak{P}\rightarrow R(\mathfrak{T})$ a map of operads, then we get an adjunction
    $$\adj{L_{\alpha}}{\mathpzc{Alg}_{\mathfrak{P}}(\mathpzc{C})}{\mathpzc{Alg}_{\mathfrak{T}}(\mathpzc{D})}{R_{\alpha}}$$
  \begin{rem}
  It is important to keep in mind that $L$ is in general not lax symmetric monoidal. However it is oplax symmetric monoidal. In particular if $X$ is a right $\Sigma_{n}$-module and $Y$ a left $\Sigma_{n}$-module in $\mathpzc{C}$, we get a map $L(X\otimes_{\Sigma_{n}}Y)\rightarrow L(X)\otimes_{\Sigma_{n}}L(Y)$. This is the map in Definition 2.3.2 of \cite{white2019homotopical}.
  \end{rem}

\section{Model Categories of Monoids and Algebras}
Let $(\mathpzc{C},\otimes_{\mathpzc{C}},k)$ be a monoidal category, and $(\mathpzc{M},\bullet_{\mathpzc{M}})$ be a left $\mathpzc{C}$-module which is also a combinatorial model category. Again, we are not assuming that $\otimes$ is symmetric, nor that $\bullet$ commutes with any colimits a priori.
\subsection{Existence of Model Structures}
\begin{defn}
An associative monoid $R$ in $\mathpzc{C}$ is said to be \textbf{admissible in $\mathpzc{M}$} if the transferred model structure along the free-forgetful adjunction
$$\adj{R\otimes(-)}{\mathpzc{M}}{{}_{R}\mathpzc{Mod}(\mathpzc{M})}{|-|_{R}}$$
exists on ${}_{R}\mathpzc{Mod}(\mathpzc{M})$. 
\end{defn}
We will use the following slightly generalised version of \cite{schwede} Definition 3.3.
\begin{defn}
Let $R$ be an associative monoid in $\mathpzc{C}$.  A collection $\mathcal{S}$ of weak equivalences in $\mathpzc{M}$ is said to satisfy the $R$-\textbf{monoid axiom} if 
\begin{enumerate}
\item
every map of the form $R\bullet_{\mathpzc{M}} s$ where $s\in\mathcal{S}$ is a $h$-cofibration.
\item
$\mathpzc{M}$ is weakly $\mathcal{S}^{R}$-elementary, where $\mathcal{S}$ is the class of pushouts of maps of the form $R\bullet_{\mathpzc{M}} s$ for $s\in\mathcal{S}$.
\end{enumerate}
\end{defn}
In particular, a monoidal model category satisfies the monoid axiom of \cite{schwede} precisely if the class of acyclic cofibrations satisfies the $R$-monoid axiom for any associative monoid $R$, where we regard $\mathpzc{C}$ as a left-module over itself. 
\begin{prop}
Suppose that the class of acyclic cofibrations satisfies the $R$-monoid axiom, and that the forgetful functor $|-|_{R}:{}_{R}\mathpzc{Mod}(\mathpzc{M})\rightarrow\mathpzc{M}$ commutes with transfinite compositions. Then $R$ is admissible. Suppose further that $|-|_{R}$ commutes with all colimits and the class of cofibrations satisfies the $R$-monoid axiom. Then ${}_{R}\mathpzc{Mod}(\mathpzc{M})$ is left proper.
\end{prop}
\begin{proof}
The first claim is essentially \cite{schwede} Theorem 4.1/ Remark 4.2, and follows immediately from Theorem \ref{transfer}. The second follows from Proposition \ref{prop:leftproppushouttrans}.
\end{proof}
\subsubsection{Model Structures on Algebras Over Operads}
We now specialise to the situation that $\mathpzc{C}$ is a monoidal category which is also a monoidal category, and we consider $\mathpzc{C}$ as a left $(\mathpzc{Gr}_{\mathbb{N}_{0}},\circ_{ns},I)$-module or a left $(\mathpzc{Mod}_{\Sigma},\circ,I)$-module.
\begin{notation}
Let $\mathfrak{P}$ be an operad, $X$ a $\mathfrak{P}$-algebra, and $\mathcal{S}$ be a class of maps in $\mathpzc{C}$. 
\begin{enumerate}
\item
Write $\textrm{Sull}_{\mathfrak{P}}(\mathcal{S};X)$ for the class of algebras which can be obtained as a transfinite composition 
$$X_{0}\rightarrow X_{1}\rightarrow\ldots\rightarrow X_{\alpha}\rightarrow\ldots$$
where $X_{0}=X$, and each $X_{\alpha}\rightarrow X_{\alpha+1}$ is a pushout of a map of the form $\mathfrak{P}\circ s$, where $s\in\mathcal{S}$. 
\item
For a functor $L:\mathpzc{C}\rightarrow\mathpzc{D}$, denote by $L(\mathcal{S})^{\mathfrak{P};X}$ the class of maps of the form $L(Y_{n-1})\rightarrow L(Y_{n})$ where $Y_{n-1}\rightarrow Y_{n}$ is a map appearing in the standard filtration of the pushout of an algebra $Y\in\textrm{Sull}_{\mathfrak{P}}(\mathcal{S};X)$ along a map of the form $\mathfrak{P}(s)$, for $s\in\mathcal{S}$.
\end{enumerate}
\end{notation}
The following definition is based on a comment after Remark 6.1.3 on Page 36 \cite{white2018bousfield}, where we are also relaxing the condition that $\mathpzc{C}$ be a monoidal model category.
\begin{defn}\label{defn:weakPalgeb}
Let $\mathfrak{P}$ be a non-symmetric (symmetric) operad, and $\mathcal{S}$ a class of maps in $\mathpzc{C}$ and $L:\mathpzc{C}\rightarrow\mathpzc{D}$ a functor where $\mathpzc{D}$ is also a combinatorial model category. A collection of maps $\mathcal{S}$ in $\mathpzc{C}$ is said to \textbf{satisfy the weak }$\mathfrak{P}$-\textbf{algebra axiom relative to }$(L;X)$ if for any $s\in\mathcal{S}$
\begin{enumerate}
\item
the square below is a homotopy pushout for any $n\ge1$.
 \begin{displaymath}
\vcenter{ \xymatrix{
L(\mathfrak{P}_{X}(n)\otimes Q_{n-1}^{n}(s))\ar[d]\ar[r] & L(X_{n-1})\ar[d] \\
L(\mathfrak{P}_{X}(n)\otimes B^{\otimes n})\ar[r] & L(X_{n}) &
 }}
 \left(
 \vcenter{\xymatrix{
 L(\mathfrak{P}_{X}(n)\otimes_{\Sigma_{n}} Q_{n-1}^{n}(s))\ar[d]\ar[r] & L(X_{n-1})\ar[d]\\
 L(\mathfrak{P}_{X}(n)\otimes_{\Sigma_{n}} B^{\otimes n})\ar[r] & L(X_{n})}
 }
   \right)
 \end{displaymath}
 \item
$\mathpzc{D}$ is weakly $L(\mathcal{S})^{\mathfrak{P};X}$-elementary. 
\end{enumerate}
$\mathcal{S}$ is said to \textbf{satisfy the weak }$\mathfrak{P}$-\textbf{algebra axiom relative to $X$} if it satisfies the weak $\mathfrak{P}$-algebra axiom relative to $(Id_{\mathpzc{C}};X)$ and it is said to \textbf{satisfy the weak }$\mathfrak{P}$-\textbf{algebra axiom} if it satisfies the weak $\mathfrak{P}$-algebra axiom relative to $X$ for all $X\in\mathpzc{Alg}_{\mathfrak{P}}(\mathpzc{C})$.
\end{defn}
White and Yau prove in  \cite{white2018bousfield} Theorem 6.1.1 that when a monoidal model category satisfies a slightly stronger condition that the weak $\mathfrak{P}$-algebra axiom relative to the class of cofibrant $\mathfrak{P}$-algebras (called the $\mathfrak{P}$-algebra axiom in  \cite{white2017model} Definition C.1), then the operad $\mathfrak{P}$ is semi-admissible, that is, there is a transferred \textit{semi-model} structure on $\mathpzc{Alg}_{\mathfrak{P}}(\mathpzc{C})$ (for a semi-model structure, one only requires the left lifting property of trivial cofibrations with cofibrant domain against fibrations, and only maps with cofibrant domain can be factored as a trivial cofibration followed by a fibration). White and Yau actually prove this for coloured operads, and mention that their proof holds under the more general hypotheses of Definition \ref{defn:weakPalgeb} (though we also relax the requirement that $\mathpzc{M}$ be a monoidal model category). For completeness we give the proof here.
%
\begin{thm}
Suppose that $\mathpzc{C}$ is a combinatorial model category, which is also a symmetric monoidal category. Let $\mathfrak{P}$ be either a symmetric or non-symmetric operad in $\mathpzc{C}$ such that the class of acyclic cofibrations in $\mathpzc{C}$ satisfies the weak $\mathfrak{P}$-algebra axiom. Then $\mathfrak{P}$ is admissible.
\end{thm}
\begin{proof}
 Let $\mathcal{S}$ denote the class of acyclic cofibrations. We need to show any map of $\mathfrak{P}$-algebras of the form $X\rightarrow Y$ which is obtained as a transfinite composition of pushouts of maps of the form $\mathfrak{P}(f)$, for $f\in\mathcal{S}$, is an equivalence. Such a map can be written, as a transfinite composition, computed in $\mathpzc{C}$, of maps in $\mathcal{S}^{\mathfrak{P}}$. Since homotopy pushouts of equivalences are equivalences, any map in $\mathcal{S}^{\mathfrak{P}}$ is an equivalence. Moreover homotopy colimits of equivalences are equivalences, and the result is proven.
\end{proof}
One can also show that if the class of acyclic cofibrations satisfies the weak $\mathfrak{P}$-algebra axiom relative to cofibrant $\mathfrak{P}$-algebras then one gets a semi-model structure, as in \cite{white2018bousfield}. One just needs to repeat the proof of the theorem assuming that $X$ is a cofibrant $\mathfrak{P}$-algebra.\newline
\\
We have the following useful trick for categories enriched over $\mathbb{Q}$. It follows immediately from the fact that for any right $\Sigma_{n}$ module $X$ and any map $f$ of left $\Sigma_{n}$-modules, $X\otimes_{\Sigma_{n}}f$ is a retract of $X\otimes f$.
\begin{prop}
Let $\mathpzc{C}$ be a combinatorial model category which is enriched over $\mathbb{Q}$, and is also a symmetric monoidal category. Let $\mathfrak{P}$ be a symmetric operad in $\mathpzc{C}$. Suppose when regarded as a non-symmetric operad the class of acyclic cofibrations in $\mathpzc{C}$ satisfies the weak $\mathfrak{P}$-algebra axiom. Then the class of acyclic cofibrations satisfies the weak $\mathfrak{P}$-algebra axiom when it is regarded as a symmetric operad. 
\end{prop}
\subsection{Equivalences of Model Categories of Algebras}
In this section we establish when Quillen equivalences of model categories with monoidal structures lift to Quillen equivalences modules. Our proofs are modifications of the proofs of \cite{schwede2003equivalences}  Theorem 3.12, Proposition 3.16, and particularly for the case of algebras over operads, \cite{white2019homotopical} Theorem A.
\begin{thm}\label{thm:monoidequiv}
Let $(\mathpzc{C},\otimes_{\mathpzc{C}})$ and $(\mathpzc{D},\otimes_{\mathpzc{D}})$ be monoidal categories, $(\mathpzc{M},\bullet_{\mathpzc{M}})$ a $\mathpzc{C}$-module and $(\mathpzc{M},\bullet_{\mathpzc{N}})$ a $\mathpzc{D}$-module where $\mathpzc{M}$ and $\mathpzc{N}$ are combinatorial model categories. Let
$$\adj{L}{\mathpzc{C}}{\mathpzc{D}}{R}$$
$$\adj{\overline{L}}{\mathpzc{M}}{\mathpzc{N}}{\overline{R}}$$
be a lax monoidal pair of adjunctions such that the bottom adjunction is a Quillen equivalence of model categories. Let $S$ be an associative monoid in $\mathpzc{C}$, $T$ an associative monoid in $\mathpzc{D}$, and $\alpha:S\rightarrow R(T)$ a map of monoids such that 
\begin{enumerate}
\item
$S$ is admissible in $\mathpzc{M}$
\item
$T$ is admissible in $\mathpzc{N}$
\item
For any cofibrant $B$ in ${}_{S}\mathpzc{Mod}(\mathpzc{M})$ the map $\chi_{B}:\mathbb{L}L|B|_{S}\rightarrow |L_{\alpha}B|_{T}$ which is adjoint to the underlying map in $\mathpzc{M}$ of the unit $|B|_{S}\rightarrow R(|L_{\alpha}B|_{T})$ is an equivalence.
\end{enumerate}
Then the adjunction
$$\adj{L_{\alpha}}{{}_{S}\mathpzc{Mod}(\mathpzc{M})}{{}_{T}\mathpzc{Mod}(\mathpzc{M})}{R_{\alpha}}$$
is a Quillen equivalence. 
\end{thm}
\begin{proof}
It is clearly a Quillen adjunction. Let $B$ be a cofibrant $S$-module, and $Y$ a fibrant $T$-module. We need to show that a map $L_{\alpha}B\rightarrow Y$ is an equivalence if and only if $B\rightarrow R_{\alpha}Y$ is an equivalence. Let $C\rightarrow|B|_{S}$ be a cofibrant replacement. Then $LC\rightarrow|L_{\alpha}B|_{T}$ is an equivalence by assumption. Note also that $|Y|_{T}$ is fibrant in $\mathpzc{N}$. Now since 
$$\adj{\overline{L}}{\mathpzc{M}}{\mathpzc{N}}{\overline{R}}$$
is a Quillen equivalence, we have $C\rightarrow R|Y|_{T}\cong |R_{\alpha}Y|_{S}$ is an equivalence if and only if $LC\rightarrow |Y|_{T}$ is an equivalence. By the two-out-of-three property this implies that $|L_{\alpha}B\rightarrow Y|_{T}$ is an equivalence if and only if $|B\rightarrow R_{\alpha}Y|_{S}$ is an equivalence, which completes the proof.
\end{proof}
We immediately get the following.
\begin{cor}\label{cor:strictequiv}
Let $(\mathpzc{C},\otimes_{\mathpzc{C}})$ and $(\mathpzc{D},\otimes_{\mathpzc{D}})$ be monoidal categories, $(\mathpzc{M},\bullet_{\mathpzc{M}})$ a left $\mathpzc{C}$-module and $(\mathpzc{N},\bullet_{\mathpzc{N}})$ a left $\mathpzc{D}$-module which are both combinatorial model categories. Let
$$\adj{L}{\mathpzc{M}}{\mathpzc{N}}{R}$$
$$\adj{\overline{L}}{\mathpzc{M}}{\mathpzc{N}}{\overline{R}}$$
be a lax monoidal pair of adjunctions such that the bottom adjunction is a Quillen equivalence of model categories. Suppose that $\overline{L}$ and $L$ are strictly monoidal. Let $\alpha:S\rightarrow R(T)$  be an isomorphism of  associative monoids in $\mathpzc{C}$, where $S$ and $T$ are admissible. Then the adjunction
$$\adj{L_{\alpha}}{{}_{S}\mathpzc{Mod}(\mathpzc{M})}{{}_{T}\mathpzc{Mod}(\mathpzc{N})}{R_{\alpha}}$$
is a Quillen equivalence.
\end{cor}
\subsubsection{Equivalences of Categories of Alegbras over Operads}
Here we once again specialise to algebras over operads. Let $(\mathpzc{C},\otimes,k)$ be a closed monoidal category which is also a combinatorial model category. Our results here are adaptations of  \cite{white2019homotopical} to slightly more general conditions. Essentially, where in the commutative cubes below White/ Yau's axioms implies that the front and back faces are homotopy pushouts, we shall impose these conditions.
\begin{lem}\label{lem:unitequivop}
Let  $\mathfrak{P}$ be a non-symmetric (resp. symmetric) operad $Y$ be an object of $\textrm{Sull}_{\mathfrak{P}}(\mathcal{S};X)$ where
\begin{enumerate}
\item
$LX\rightarrow L_{\alpha}(X)$ is an equivalence.
\item
The collection of maps $\mathcal{S}$ satisfies the weak $\mathfrak{P}$-algebra axiom relative to $X$, and relative to $(L;X)$.
\item
The collection of maps $\{L(s):s\in\mathcal{S}\}$satisfies the weak $\mathfrak{T}$-algebra axiom relative to $L(X)$.
\item
For any $s:A\rightarrow B\in\mathcal{S}$ any $X'\in\textrm{Sull}_{\mathfrak{P}}(\mathcal{S};X)$, and any $n\ge0$ the maps
$$L(\mathfrak{P}_{X'}(n+1)\otimes Q(s)^{n+1}_{n})\rightarrow\mathfrak{T}_{X'}(n+1)\otimes Q^{n+1}_{n}(L(s))\textrm{ (resp.  }L(\mathfrak{P}_{X'}(n+1)\otimes_{\Sigma_{n+1}} Q(s)^{n+1}_{n})\rightarrow\mathfrak{T}_{X'}(n+1)\otimes Q^{n+1}_{n}(L(s))\textrm{)}$$
$$L(\mathfrak{P}_{X'}(n+1)\otimes B^{\otimes(n+1)})\rightarrow\mathfrak{T}_{X'}(n+1)\otimes L(B)^{\otimes(n+1)}\textrm{ (resp.  }L(\mathfrak{P}_{X'}(n+1)\otimes_{\Sigma_{n+1}} B^{\otimes(n+1)})\rightarrow\mathfrak{T}_{X'}(n+1)\otimes B^{\otimes(n+1)}\textrm{)}$$
are equivalences.
\end{enumerate}
Then the map 
$$LY\rightarrow L_{\alpha}Y$$
is an equivalence.
\end{lem}
\begin{proof}
We first prove that for any map $s:A\rightarrow B$ in $\mathcal{S}$, and any object $X$ such that $LX\rightarrow L_{\alpha}(X)$ is an equivalence, in any pushout diagram
\begin{displaymath}
\xymatrix{
\mathfrak{P}(A)\ar[r]\ar[d]^{\mathfrak{P}(s)} & X\ar[d]^{s'}\\
\mathfrak{P}(B)\ar[r] & X'
}
\end{displaymath}
we have that $LX'\rightarrow L_{\alpha}X'$ is an equivalence.  In this case $L_{\alpha}X'$ is given by the pushout diagram
\begin{displaymath}
\xymatrix{
\mathfrak{T}(A)\ar[r]\ar[d]^{\mathfrak{P}(s)} & L_{\alpha}(X)\ar[d]^{s'}\\
\mathfrak{T}(B)\ar[r] & L_{\alpha}(X')
}
\end{displaymath}
Consider the standard filtrations $\textrm{lim}_{\rightarrow_{n}}X_{n}$ and $\textrm{lim}_{\rightarrow_{n}}\widetilde{X}_{n}$ of $X'$ and $L_{\alpha}X'$ respectively.  $L(X')$ is given by a transfinite composition $\textrm{lim}_{\rightarrow_{n}}L(X_{n})$ where $L(X_{n})\rightarrow L(X_{n+1})$ is given by the pushout
\begin{displaymath}
\xymatrix{
L(\mathfrak{P}_{X}(n+1)\otimes Q(s)_{n}^{n+1}\ar[d]\ar[r] & L(X_{n})\ar[d]\\
L(\mathfrak{P}_{X}(n+1)\otimes B^{n+1})\ar[r]  & L(X_{n+1})\\
}
\end{displaymath}
The map 
$$LX'\rightarrow L_{\alpha}X'$$ is given by the colimit
\begin{displaymath}
\xymatrix{
L(X_{0})\ar[d]\ar[r] & L(X_{1})\ar[d]\ar[r] & \ldots\ar[r] & L(X_{n})\ar[r] \ar[d]& \ldots\\
\widetilde{X}_{0}\ar[r] & \widetilde{X}_{1}\ar[r] & \ldots\ar[r] & \widetilde{X}_{n}\ar[r] & \ldots
}
\end{displaymath}
where $X_{0}=X$ and $\widetilde{X}_{0}=L_{\alpha}X$. We prove by induction that each map $L(X_{n})\rightarrow\widetilde{X}_{n}$ is an equivalence. Then since $\mathpzc{N}$ is both weakly $\{L(s):s\in\mathcal{S}\}^{\mathfrak{T}}$-elementary and weakly $L(\mathcal{S})^{\mathfrak{P}}$-elementary, this would give that $LX'\rightarrow L_{\alpha}X'$ is an equivalence. Note that $L(X_{0})\rightarrow\widetilde{X}_{0}$ is an equivalence by assumption. Also by assumption, in the commutative cube below
\begin{displaymath}
\xymatrix{
&L(\mathfrak{P}_{X}(n+1)\otimes Q(s)_{n}^{n+1})\ar[dl]\ar[rr]\ar[dd] & & L(X_{n})\ar[dl]\ar[dd]\\
\mathfrak{T}_{X}(n+1)\otimes Q(L(s))_{n}^{n+1}\ar[rr]\ar[dd] &  &\widetilde{X}_{n}\ar[dd]\\
& L(\mathfrak{P}_{X}(n+1)\otimes B^{\otimes(n+1)})\ar[rr]\ar[dl] &  & L(X_{n+1})\ar[dl]\\
\mathfrak{T}_{X}(n+1)\otimes L(B)^{\otimes(n+1)}\ar[rr]& &\widetilde{X}_{n+1}
}
\end{displaymath}
the front and back faces are homotopy pushouts. By assumption the back-to-front maps on the left-hand face are equivalences. By the induction hypothesis the top right back-to-front map is an equivalence. Thus the bottom right back-to-front map is also an equivalence.
Now since $L$ and $L_{\alpha}$ both commute with filtered (in fact all) colimits, the forgetful functor from $\mathpzc{Alg}_{\mathfrak{T}}(\mathpzc{N})$ to $\mathpzc{N}$ commutes with filtered colimits, and $\mathpzc{N}$ is both weakly $\{L(s):s\in\mathcal{S}\}^{\mathfrak{T}}$-elementary and weakly $L(\mathcal{S})^{\mathfrak{P}}$-elementary, $L_{\alpha}Y$ is a transfinite composition of maps in the first class, and $LY$ a transfinite composition of maps in the second, the map $LY\rightarrow L_{\alpha}Y$ is an equivalence.
\end{proof}
\begin{cor}
Let $(\mathpzc{C},\otimes_{\mathpzc{C}})$ and $(\mathpzc{D},\otimes_{\mathpzc{D}})$ be (symmetric) monoidal categories, which are both combinatorial
model categories. Let
$$\adj{L}{\mathpzc{C}}{\mathpzc{D}}{R}$$
be a lax monoidal adjunction which is a Quillen equivalence of model categories. Let $\mathfrak{P}$ be an admissible (symmetric) operad in $\mathpzc{C}$, $\mathfrak{T}$ an admissible (symmetric) operad in $\mathpzc{D}$, and $\alpha:\mathfrak{P}\rightarrow R(\mathfrak{T})$ a map of operads. Finally, suppose that there is a generating set $I$ of cofibrations in $\mathpzc{C}$ such that 
\begin{enumerate}
\item
$L$ preserves equivalence between the underlying objects of cofibrant $\mathfrak{P}$-algebras.
\item
The map $\mathbb{L}L(\mathfrak{P}(0))\rightarrow\mathfrak{T}(0)$ is an equivalence.
\item
$I$ satisfies the weak $\mathfrak{P}$-algebra axiom relative to $\mathfrak{P}(0)$ and relative to $(L;\mathfrak{P}(0))$.
\item
$\{L(f):f\in I\}$ satisfies the weak $\mathfrak{T}$-algebra axiom relative to $\mathfrak{T}(L(0))$. 
\item
Whenever $X'$ is a cofibrant $\mathfrak{P}$-algebra, $s\in I$, and $n\ge 0$, the maps
$$L(\mathfrak{P}_{X'}(n+1)\otimes Q(s)^{n+1}_{n})\rightarrow\mathfrak{T}_{X'}(n+1)\otimes Q^{n+1}_{n}(L(s))\textrm{(resp.  }L(\mathfrak{P}_{X'}(n+1)\otimes_{\Sigma_{n+1}} Q(s)^{n+1}_{n})\rightarrow\mathfrak{T}_{X'}(n+1)\otimes Q^{n+1}_{n}(L(s))\textrm{)}$$
$$L(\mathfrak{P}_{X'}(n+1)\otimes B^{\otimes(n+1)})\rightarrow\mathfrak{T}_{X'}(n+1)\otimes L(B)^{\otimes(n+1)}\textrm{(resp.  }L(\mathfrak{P}_{X'}(n+1)\otimes_{\Sigma_{n+1}} B^{\otimes(n+1)})\rightarrow\mathfrak{T}_{X'}(n+1)\otimes B^{\otimes(n+1)}\textrm{)}$$
are equivalences.
\end{enumerate}
Then the adjunction
$$\adj{L_{\alpha}}{\mathpzc{Alg}_{\mathfrak{P}}(\mathpzc{M})}{\mathpzc{Alg}_{\mathfrak{T}}(\mathpzc{N})}{R_{\alpha}}$$
is a Quillen equivalence.
\end{cor}
\begin{rem}
We are implicitly using here the natural transformation $L(Q^{n}_{t}(s))\rightarrow Q^{n}_{t}(L(s))$ coming from the oplax monoidal structure of $L$.
\end{rem}
\begin{proof}
The assumptions and Lemma \ref{lem:unitequivop} imply that for any cofibrant algebra $X$, the map
$$\mathbb{L}L|X|_{\mathfrak{P}}\rightarrow |L_{\alpha}X|_{\mathfrak{T}}$$
is an equivalence. Then the result follows from Theorem \ref{thm:monoidequiv}.
\end{proof}
\subsubsection{Commutative Monoids and Properness}
Consider the commutative operad $\mathfrak{Comm}$. For $X$ a commutative monoid, the $\Sigma$-module $\mathfrak{Comm}_{A}$ is given by  $\mathfrak{Comm}_{X}(n)=X$. In particular for $Z$ a left $\Sigma_{n}$-module, $\mathfrak{Comm}_{X}(n)\otimes_{\Sigma_{n}}Z\cong (Z)_{\Sigma_{n}}$. Thus a class of maps $\mathcal{S}$ satisfies the weak $\mathfrak{Comm}$-axiom relative to $X$ if the square below is a homotopy pushout for any $n\ge1$ and any $0<q<n$.
 \begin{displaymath}
 \xymatrix{
 X\otimes (Q_{n-1}^{n}(s))_{\Sigma_{n}}\ar[d]\ar[r] & X_{n-1}\ar[d]\\
X\otimes (B^{\otimes n})_{\Sigma_{n}}\ar[r] & X_{n}
 }
 \end{displaymath}
 and $\mathpzc{C}$ is $\mathcal{S}^{\mathfrak{Comm};X}$-elementary.
 For $\mathcal{S}$ the class of acyclic cofibrations, this is implied by White's commutative monoid axiom, and for $\mathcal{S}$ the class of cofibrations, this is implied by White's strong commutative monoid axiom \cite{white2017model} Definition 3.1. In \cite{white2017model} Theorem 4.17, White gives conditions under which the model category $\mathpzc{Alg}_{\mathfrak{Comm}}(\mathpzc{C})$ is left proper. Following the strategy of White's proof, let us prove a generalisation. First we will need a definition.
 \begin{defn}
 Let $\mathpzc{C}$ be a symmetric monoidal category equipped which is also a model category. An object $X$ of $\mathpzc{C}$ is said to be $K$-\textbf{flat} if for any equivalence $f$, $X\otimes f$ is also an equivalence.
 \end{defn}
\begin{thm}
Suppose
\begin{enumerate}
\item
$\mathfrak{Comm}$ is admissible.
\item
$\mathpzc{E}$ is left proper.
\item
If $f:X\rightarrow Y$ is an acyclic cofibration, then $(Q^{n+1}_{n}(f))_{\Sigma_{n+1}}$ and $(Y^{\otimes n+1})_{\Sigma_{n+1}}$ are $K$-flat for all $n\ge 0$.
\item
The class $\mathcal{S}$ of cofibrations in $\mathpzc{C}$ satisfies the weak $\mathfrak{Comm}$-algebra axiom.
\end{enumerate}
Then the category $\mathpzc{Alg}_{\mathfrak{Comm}}(\mathpzc{E})$ is left proper.
\end{thm}
\begin{proof}
It suffices to prove that for $g:X\rightarrow Y$ a cofibration in $\mathpzc{E}$, the map $S(g):S(X)\rightarrow S(Y)$ is a $h$-cofibration. Cosider a commutative diagram
\begin{displaymath}
\xymatrix{
S(X)\ar[d]^{S(g)}\ar[r] & A\ar[d]^{g'}\ar[r]^{f} & B\ar[d]\\
S(Y)\ar[r] & A'\ar[r]^{f'} & B'
}
\end{displaymath}
in which both squares are pushouts and $f$ is an equivalence. We need to show that $f'$ is an equivalence. The map $f'$ is given by the colimit of the maps on standard filtrations
\begin{displaymath}
\xymatrix{
A_{0}\ar[d]^{f}\ar[r] & A_{1}\ar[d]^{f_{1}}\ar[r] & \ldots\ar[r] & A_{n}\ar[d]^{f_{n}}\ar[r] & \ldots\\
B_{0}\ar[r] & B_{1}\ar[r] & \ldots\ar[r] & B_{n}\ar[r] & \ldots
}
\end{displaymath}
where $A_{0}=A$, $B_{0}=B$, and $A_{n+1}$, $B_{n+1}$ fit into commutative cubes.
\begin{displaymath}
\xymatrix{
& A\otimes (Q^{n+1}_{n}(g))_{\Sigma_{n+1}}\ar[dl]\ar[rr]\ar[dd]& & A_{n}\ar[dl]\ar[dd]\\
B\otimes (Q^{n+1}_{n}(g))_{\Sigma_{n+1}}\ar[rr]\ar[dd]&  &B_{n}\ar[dd]\\
&A\otimes(Y^{\otimes n+1})_{\Sigma_{n+1}}\ar[rr]\ar[dl] &  & A_{n+1}\ar[dl]\\
B\otimes(Y^{\otimes n+1})_{\Sigma_{n+1}}\ar[rr]& &B_{n+1}
}
\end{displaymath}
Since $(Q^{n+1}_{n}(g))_{\Sigma_{n+1}}$ and $(Y^{\otimes n+1})_{\Sigma_{n+1}}$ are $K$-flat, the horizontal maps on the left face are equivalences. The map $A_{n}\rightarrow B_{n}$ is an equivalence by the inductive step. The back-left and front-left maps are left proper by assumption, and the front and back faces are pushouts, and therefore homotopy pushouts. Hence $A_{n+1}\rightarrow B_{n+1}$ is an equivalence. 
\end{proof}
Suppose for a moment that $\mathpzc{C}$ be a pointed model category. The following result will be important for us.
\begin{lem}
Let $\mathpzc{C}$ is a pointed model category. Suppose that the category $\mathpzc{Alg}_{\mathfrak{Comm}}(\mathpzc{C})$ is equipped with the tranferred model structure (resp. is left proper). Then the category $\mathpzc{Alg}_{\mathfrak{Comm}}^{nu}(\mathpzc{C})$ of non-unital commutative monoids is equipped with the trasferred model structure (resp. is left proper).
\end{lem}
\begin{proof}
Let $X_{0}\rightarrow \textrm{lim}_{\rightarrow_{\alpha<\lambda}}X_{\alpha}$ be a transfinite composition of pushouts in $\mathpzc{Alg}_{\mathfrak{Comm}}^{nu}(\mathpzc{C})$ of maps of the form $S^{nu}(f)$, with $f$ a trivial cofibration. Then $k\coprod X_{0}\rightarrow \textrm{lim}_{\rightarrow_{\alpha<\lambda}}k\coprod X_{\alpha}$ is a transfinite composition of pushouts in $\mathpzc{Alg}_{\mathfrak{Comm}}(\mathpzc{C})$ of maps of the form $S(f)$, with $f$ a trivial cofibration. Thus $k\coprod X_{0}\rightarrow  \textrm{lim}_{\rightarrow_{\alpha<\lambda}}k\coprod X_{\alpha}$ is an equivalence. As a retract of this map, $X_{0}\rightarrow \textrm{lim}_{\rightarrow_{\alpha<\lambda}}X_{\alpha}$ is an equivalence.\newline
\\
Suppose now $\mathpzc{Alg}_{\mathfrak{Comm}}(\mathpzc{C})$ is left proper. Consider a pushout.
\begin{displaymath}
\xymatrix{
S^{nu}(X)\ar[d]^{S^{nu}(g)}\ar[r] & A\ar[d]^{g'}\ar[r]^{f} & B\ar[d]\\
S^{nu}(Y)\ar[r] & A'\ar[r]^{f'} & B'
}
\end{displaymath}
in which both squares are pushouts and $f$ is an equivalence.
Then 
\begin{displaymath}
\xymatrix{
S(X)\ar[d]^{S(g)}\ar[r] &k\coprod A\ar[d]^{k\coprod g'}\ar[r]^{k\coprod f} & k\coprod B\ar[d]\\
S(Y)\ar[r] & k\coprod A'\ar[r]^{k\coprod f'} & k\coprod B'
}
\end{displaymath}
is a diagram in $\mathpzc{Alg}_{\mathfrak{Comm}}(\mathpzc{C})$ with both squares pushouts and $k\coprod f$ an equivalence. Thus $k\coprod f'$ is an equivalence. So, as a retract of $k\coprod f'$, $f'$ is an equivalence.
\end{proof}
\section{Homotopical Algebra in Exact Categories}
\textcolor{red}{
In this section we let $(\mathpzc{E},\otimes,k)$ be a complete and cocomplete symmetric monoidal exact category which is quasi-elementary. We do not assume that projectives are flat, nor that the tensor product of two projectives is projective. Then the projective model structure exists on $Ch_{\ge0}(\mathpzc{E})$, $Ch(\mathpzc{E})$, $Ch_{\ge0}(\mathpzc{E})$, $\textbf{s}\mathpzc{E}$, and $\textbf{cs}\mathpzc{E}$. These are not, in general, monoidal model categories. However the cofibrations in all of these categories are degree-wise/ level-wise split, and hence $h$-cofibrations (and therefore left proper). Moreover the tensor product of a trivially cofibrant object with any object is still trivial. Thus if $f$ is an acyclic cofibration, then any pushout of a tensor product of $f$ with any object will still be a weak equivalence. Finally, if $\mathpzc{E}$ is weakly $\textbf{AdMon}$-elementary as an exact category, then they are all weakly $\textbf{AdMon}$-elementary as model categories. Thus we get the following.
\begin{thm}\label{thm:adopch}
Let $\mathpzc{M}$ be one of the additive model categories $Ch_{\ge0}(\mathpzc{E})$, $Ch(\mathpzc{E})$, $Ch_{\ge0}(\mathpzc{E})$, $\textbf{s}\mathpzc{E}$, $\textbf{cs}\mathpzc{E}$, and let $\mathfrak{P}$ be a non-symmetric operad in $\mathpzc{M}$. Then $\mathfrak{P}$ is admissible. If $\mathpzc{M}$ is enriched over $\mathbb{Q}$ then any symmetric operad is also admissible. 
\end{thm}
}
%
%
%

\subsection{The Monoidal Dold-Kan Correspondence}
Following \cite{schwede2003equivalences} closely, in this section we show that the Dold-Kan equivalence is a monoidal Quillen equivalence, and deduce equivalences of categories of algebras over operads. We will also generalise the work of \cite{castiglioni2004cosimplicial} on the cosimplicial Dold-Kan correspondence.
Let $\mathpzc{E}$ be an idempotent complete additive category. Suppose that it is equipped with an additive monoidal structure $\otimes:\mathpzc{E}\times\mathpzc{E}\rightarrow\mathpzc{E}$. As explained in the introduction $Ch_{\ge0}(\mathpzc{E})$ has a natural monoidal structure. $\textbf{s}\mathpzc{E}$ also has a monoidal structure defined level-wise: if $X_{\bullet}$ and $Y_{\bullet}$ are objects of $\textbf{s}\mathpzc{E}$ then $X_{\bullet}\otimes Y_{\bullet}$ is the simplicial object which in level $n$ is $X_{n}\otimes Y_{n}$. In the Dold-Kan adjunction
$$\adj{\Gamma}{Ch_{\ge0}(\mathpzc{E})}{\textbf{s}\mathpzc{E}}{N}$$
both the left and right adjoints are lax and oplax monoidal. Indeed it suffices to show that the right adjoint is both lax and oplax monoidal, then $\Gamma$ inherits the same structures via doctrinal adjunction. For $A$ and $B$ objects of $\textbf{s}\mathpzc{E}$, define the \textbf{Alexander-Whitney map}
$$\Delta_{A,B}:C(A\otimes B)\rightarrow C(A)\otimes C(B)$$
on the factor $A_{n}\otimes B_{n}$ by
$$\bigoplus_{p+q=n}\widetilde{d^{p}}\otimes d_{0}^{q}$$
where $\widetilde{d^{p}}$ is induced by the map 
$$[p]\rightarrow [p+q], i\mapsto i$$
 and $d_{0}^{q}$ is induced by the map 
 $$[q]\rightarrow[p+q], i\mapsto i+p$$
 Define the \textbf{Eilenberg-Zilber map}
 $$\nabla_{A,B}:C(A)\otimes C(B)\rightarrow C(A\otimes B)$$
 on the factor $A_{p}\otimes B_{q}$ by
 $$\sum_{(\mu,\nu)}\textrm{sign}(\mu,\nu)s_{\nu}\otimes s_{\mu}$$
 where the sum is over all $(p,q)$-shuffles $(\mu,\nu)=(\mu_{1},\ldots,\mu_{p},\nu_{1},\ldots,\nu_{q})$, and 
 $$s_{\mu}\defeq s_{\mu_{p}-1}\circ \ldots\circ s_{\mu_{2}-1}\circ s_{\mu_{1}-1}$$
and similarly for $s_{\nu}$. Both $\Delta_{A,B}$ and $\nabla_{A,B}$ are functorial in $A$ and $B$, and restrict to maps
$$\overline{\Delta}_{A,B}:N(A\otimes B)\rightarrow N(A)\otimes N(B)$$
 $$\overline{\nabla}_{A,B}:N(A)\otimes N(B)\rightarrow N(A\otimes B)$$
 \begin{rem}\label{rem:laxsymm}
 It is shown in e.g. \cite{Weibel} 8.5.4 that for simplicial abelian groups $\overline{\nabla}$ is lax symmetric monoidal, and the proof works for arbitrary additive categories with kernels.
 \end{rem}
\begin{thm}[\cite{kerodon}]
Let $A$ and $B$ be simplicial objects.
\begin{enumerate}
\item
$\overline{\Delta}_{A,B}\circ\overline{\nabla}_{A,B}=Id_{N(A)\otimes N(B)}$
\item
 The Eilenberg-Zilber map 
$$\overline{\Delta}_{A,B}:N(A\otimes B)\rightarrow N(A)\otimes N(B)$$
is a homotopy equivalence.
\end{enumerate}
\end{thm}
\begin{proof}
\begin{enumerate}
\item
This can be proven exactly as in \cite{kerodon} Section 2.5.8. 
\item
This can be proven  as in \cite{kerodon} Section 2.5.8, with some small adjustments. It suffices to show that for any complex $M$, the map
\begin{displaymath}
\xymatrix{
N(A)\otimes M\cong N(A)\otimes N(\Gamma(M))\ar[rr]^{\;\;\;\;\overline{\nabla}_{A,\Gamma(M)}}& & N(A\otimes\Gamma(M))
}
\end{displaymath}
is a homotopy equivalence. $M$ can be written as an $(\aleph_{0};\textbf{SplitMon})$-colimit of bounded above complexes $M^{n}$, where $M^{n}_{k}=M_{k}$ for $k\le n$, and $M^{n}_{k}=0$ for $k>n$. Thus we may assume that $M$ is bounded above, and prove the claim by induction on the length of $M$. Let $M$ be of length $n$. Denote by $M_{n}[n]$ the complex given by $M_{n}$ concentrated in degree $n$, and let $M'$ denote the kernel of the map $M\rightarrow M_{n}[n]$. We get degree-wise split exact sequence of complexes and simplicial objects respectively
$$0\rightarrow M'\rightarrow M\rightarrow M_{n}[n]\rightarrow 0$$
$$0\rightarrow \Gamma(M')\rightarrow \Gamma(M)\rightarrow \Gamma(M_{n}[n])\rightarrow 0$$
Thus tensoring the first sequence with a complex and the second with a simplicial object will still result in a degree-wise split exact sequence. In particular get a commutative diagram of degree-wise split exact sequences
\begin{displaymath}
\xymatrix{
0\ar[r] & N(A)\otimes M'\ar[d]\ar[r] & N(A)\otimes M\ar[d]\ar[r] & N(A)\otimes M_{n}[n]\ar[d]\ar[r] & 0\\
0\ar[r] & N(A\otimes\Gamma(M'))\ar[r] & N(A\otimes\Gamma(M))\ar[r] & N(A\otimes\Gamma(M_{n}[n]))\ar[r] & 0
}
\end{displaymath}
If both the first and last vertical maps are homotopy equivalences then the middle one will be as well. It follows by induction that we may assume that $M$ is of the form $Y[n]\cong N(\mathbb{Z}[\Delta^{n})]\otimes Y$ for some $n\ge0$ and some object $Y$ of $\mathpzc{E}$. By a similar inductive argument, we may also assume that $A$ is of the form $N(\mathbb{Z}[\Delta^{m}])\otimes X$ for some $m\ge0$ and some object $X$ of $\mathpzc{E}$. Thus we reduce to showing that the map $\overline{\nabla}_{N(\mathbb{Z}[\Delta^{m}])\otimes X,N(\mathbb{Z}[\Delta^{n}])\otimes Y}$, which is given by the composition $$(N(\mathbb{Z}[\Delta^{m}])\otimes X)\otimes (N(\mathbb{Z}[\Delta^{n}])\otimes Y)\cong N(\mathbb{Z}[\Delta^{m}])\otimes N(\mathbb{Z}[\Delta^{n}])\otimes X\otimes Y\rightarrow N(\mathbb{Z}[\Delta^{m}\otimes\Delta^{n}])\otimes X\otimes Y$$
is a homotopy equivalence. By the classical result for abelian groups, e.g. \cite{kerodon} Theorem 2.5.7.14, the map 
$$N(\mathbb{Z}[\Delta^{m}])\otimes N(\mathbb{Z}[\Delta^{n}])\rightarrow N(\mathbb{Z}[\Delta^{m}\otimes\Delta^{n}])$$
is a homotopy equivalence. Thus 
$$(N(\mathbb{Z}[\Delta^{m}])\otimes X)\otimes (N(\mathbb{Z}[\Delta^{n}])\otimes Y)\cong N(\mathbb{Z}[\Delta^{m}])\otimes N(\mathbb{Z}[\Delta^{n}])\otimes X\otimes Y\rightarrow N(\mathbb{Z}[\Delta^{m}\otimes\Delta^{n}])\otimes X\otimes Y$$
is also a homotopy equivalence
\end{enumerate}
\end{proof}
Recall the oplax structure inherited by the functor $\Gamma$, which we denote by $\nabla^{c}$, is defined on complexes $X$ and $Y$ by the composition
\begin{displaymath}
\xymatrix{
\Gamma(X\otimes Y)\ar[r]&\Gamma(N\Gamma(X)\otimes N\Gamma(Y))\ar[rr]^{\Gamma(\nabla_{\Gamma(X),\Gamma(Y)}) }& & \Gamma N(\Gamma(X)\otimes\Gamma(Y))\ar[r] & \Gamma(X)\otimes\Gamma(Y)
}
\end{displaymath}
Since all maps in the composite are homotopy equivalences, we get the following.
\begin{cor}
Let $X$ and $Y$ be objects in $Ch_{\ge0}(\mathpzc{E})$. Then the map
$$\nabla^{c}_{X,Y}:\Gamma(X\otimes Y)\rightarrow \Gamma(X)\otimes\Gamma(Y)$$
is a homotopy equivalence. 
\end{cor}
\subsubsection{The Cosimplicial Monoidal Dold-Kan equivalence}
Let $\mathpzc{E}$ be an cocomplete additive category equipped with a closed monoidal structure, and let $A,B\in Ch_{\le0}(\mathpzc{E})$. Define 
$$\nu:QA\otimes QB\rightarrow Q(A\otimes B)$$
on $A_{n}\otimes T^{i}(V^{n})\otimes B_{m}\otimes T^{j}(V^{m}) $ 
by
$$\nu=Id_{A_{n}}\otimes Id_{B_{m}}\otimes\mu+(-1)^{n}Id_{A_{n}}\otimes d_{B}\otimes\mu\circ(\theta\otimes Id)$$
The following is proven formally as in \cite{castiglioni2004cosimplicial}, Theorem 5.2.
\begin{lem}
$\nu$ is a natural isomorphism, and makes $Q:Ch_{\le0}(\mathpzc{E})\rightarrow\textbf{cs}\mathpzc{E}$ a strong monoidal functor.
\end{lem}
\subsubsection{Dold-Kan Equivalences for Algebras}
The results in this section are obtained in \cite{white2019homotopical} for the category of $k$-modules where $k$ is a unital commutative ring. The Dold-Kan equivalence for algebras was of course famously proved in \cite{schwede2003equivalences} for the category of abelian groups, and the dual Dold-Kan correspondence for abelian groups in \cite{castiglioni2004cosimplicial}. 
Let $(\mathpzc{E},\otimes,\underline{Hom},k)$ be a complete and cocomplete closed symmetric monoidal $\textbf{AdMon}$-elementary exact category. We suppose that $Ch_{\ge0}(\mathpzc{E})$ and $\textbf{s}\mathpzc{E}$ are equipped with the projective model structures. Let $\mathfrak{T}$ be an admissible non-symmetric operad in $\textbf{s}\mathpzc{E}$, $\mathfrak{P}$ an admissible non-symmetric operad in $Ch_{\ge0}(\mathpzc{E})$, and $\alpha:\mathfrak{P}\rightarrow N(\mathfrak{T})$ a homotopy equivalence. Then we have the following.
\begin{thm}[The Simplicial Dold-Kan Equivalence  for Algebras]\label{thm:simpDKA}
The adjunction
$$\adj{\Gamma_{\alpha}}{\mathpzc{Alg}_{\mathfrak{P}}(Ch_{\ge0}(\mathpzc{E}))}{\mathpzc{Alg}_{\mathfrak{T}}(\textbf{s}\mathpzc{E})}{N_{\alpha}}$$
is a Quillen equivalence. 
\end{thm}
Now suppose that  $\mathpzc{E}$ is enriched over $\mathbb{Q}$, let $\mathfrak{T}$ be an admissible symmetric operad in $\textbf{s}\mathpzc{E}$, $\mathfrak{P}$ an admissible symmetric operad in $Ch_{\ge0}(\mathpzc{E})$, and $\alpha:\mathfrak{P}\rightarrow N(\mathfrak{T})$ a homotopy equivalence of symmetric operads.  The maps
$$L(\mathfrak{P}_{X'}(n+1)\otimes Q(s)^{n+1}_{n})\rightarrow\mathfrak{T}_{X'}(n+1)\otimes Q^{n+1}_{n}(L(s))\;\; L(\mathfrak{P}_{X'}(n+1)\otimes B^{\otimes(n+1)}\rightarrow\mathfrak{T}_{X'}(n+1)\otimes L(B)^{\otimes(n+1)}$$
are homotopy equivalences.
Since $\mathpzc{E}$ is enriched over $\mathbb{Q}$, for any $n$ and any $\mathfrak{P}$-algebra $X$, the maps
$$L(\mathfrak{P}_{X'}(n+1)\otimes_{\Sigma_{n+1}} Q(s)^{n+1}_{n})\rightarrow\mathfrak{T}_{X'}(n+1)\otimes Q^{n+1}_{n}(L(s))\;\; L(\mathfrak{P}_{X'}(n+1)\otimes_{\Sigma_{n+1}} B^{\otimes(n+1)}\rightarrow\mathfrak{T}_{X'}(n+1)\otimes L(B)^{\otimes(n+1)}$$
are retracts of the first pair of maps, and are therefore also homotopy equivalences. Thus we get the following
\begin{thm}[The Simplicial Dold-Kan Equivalence  for Symmetric Algebras]\label{thm:simpDKA}
The adjunction
$$\adj{\Gamma_{\alpha}}{\mathpzc{Alg}_{\mathfrak{P}}(Ch_{\ge0}(\mathpzc{E}))}{\mathpzc{Alg}_{\mathfrak{T}}(\textbf{s}\mathpzc{E})}{N_{\alpha}}$$
is a Quillen equivalence for $\mathfrak{P}$ and $\mathfrak{T}$ symmetric operads as above.
\end{thm}
Just as for $\Sigma$-cofibrant (coloured) operads in  \cite{white2019homotopical} Section 4.3, in general when $\mathpzc{E}$ is not enriched over $\mathbb{Q}$, one can work with $\Sigma$-K-flat symmetric operads, though we will not go into this here.

$Ch_{\le0}(\mathpzc{E})$ is equipped with a combinatorial model structure. Again we assume that homotopy equivalences are weak equivalences.  We equip $\textbf{cs}\mathpzc{E}$ with the model structure transferred along the dual Dold-Kan equivalence. The functor $Q$ is strongly monoidal. Thus by Corollary \ref{cor:strictequiv} we get the following.
\begin{thm}\label{thm:simpCDKA}
Let $\mathfrak{P}$ be an admissible non-symmetric operad in $Ch_{\le0}(\mathpzc{E})$, $\mathfrak{T}$ be an admissible non-symmetric operad in $\textbf{cs}(\mathpzc{E})$, and $\alpha:\mathfrak{P}\rightarrow H(\mathfrak{T})$ a homotopy equivalence of operads. Then the adjunction
$$\adj{Q_{\alpha}}{\mathpzc{Alg}_{\mathfrak{P}}(Ch_{\le0})(\mathpzc{E})}{\mathpzc{Alg}_{\mathfrak{T}}(\textbf{cs}\mathpzc{E})}{H_{\alpha}}$$
is a Quillen equivalence. 
\end{thm}
\subsection{Homotopical Algebra Contexts}
Before concluding let us make a connection with geometry. Recall that in \cite{toen2004homotopical} To{\"e}n and Vezzosi introduce an abstract categorical framework in which one can `do' homotopical algebra, namely a homotopical algebra context. Let us recall the truncated definition (for the category $\mathsf{C}_{0}$ in \cite{toen2004homotopical} we always take $\mathsf{C}=\mathsf{C}_{0}$).
\begin{defn}\label{Defn:HA context}
Let $\mathpzc{M}$ be a combinatorial symmetric monoidal model category. We say that $\mathpzc{M}$ is an \textbf{homotopical algebra context} (or HA context) if for any $A\in\mathpzc{Alg}_{\mathfrak{Comm}}(\mathpzc{M})$.
\begin{enumerate}
\item
The model category $\mathpzc{M}$ is proper, pointed and for any two objects $X$ and $Y$ in $\mathpzc{M}$ the natural morphisms
$$QX\coprod QY\rightarrow X\coprod Y\rightarrow RX\times RY$$
are equivalences.
\item
$Ho(\mathpzc{M})$ is an additive category.
\item
With the transferred model structure and monoidal structure $-\otimes_{A}-$, the category ${}_{A}\mathpzc{Mod}$ is a combinatorial, proper, symmetric monoidal model category.
\item
For any cofibrant object $M\in{}_{A}\mathpzc{Mod}$ the functor
$$-\otimes_{A}M:{}_{A}\mathpzc{Mod}\rightarrow{}_{A}\mathpzc{Mod}$$
preserves equivalences.
\item
With the transferred model structures $\mathpzc{Alg}_{\mathfrak{Comm}}({}_{A}\mathpzc{Mod})$ and $\mathpzc{Alg}_{\mathfrak{Comm}_{nu}}({}_{A}\mathpzc{Mod})$ are combinatorial proper model categories.
\item
If $B$ is cofibrant in $\mathpzc{Alg}_{\mathfrak{Comm}}({}_{A}\mathpzc{Mod})$ then the functor
$$B\otimes_{A}-:{}_{A}\mathpzc{Mod}\rightarrow{}_{B}\mathpzc{Mod}$$
preserves equivalences.
\end{enumerate}
\end{defn}
\begin{thm}\label{HAcont}
Let $(\mathpzc{E},\otimes,\underline{\textrm{Hom}},k)$ be a locally presentable closed projectively monoidal exact category which is $\textbf{AdMon}$-elementary. Then $Ch_{\ge0}(\mathpzc{E})$ and $Ch(\mathpzc{E})$ are homotopical algebra context. \end{thm}
\begin{proof}
The natural maps
$$QX\coprod QY\rightarrow X\coprod Y\rightarrow RX\times RY$$
are clearly equivalences. All that remains to prove is the final property. Now if $B$ is a cofibrant $A$-algebra then it is a retract of the free $A$-algebra on a cofibrant $A$-module. But the free $A$-algebra on a cofibrant $A$-module is cofibrant as an $A$-module. Hence $B\otimes_{A}(-)$ preserves equivalences by 4). 
\end{proof}

\appendix

\chapter{Model Categories}\label{modelapp}
\section{Weak factorisation Systems and Model Structures}
Here we briefly recall the definition of a model structure by means of weak factorisation systems. Details can be found in \cite{Riehl}.

\begin{defn}
Let $\mathcal{C}$ be a class of morphisms in a category $\mathpzc{M}$. A morphism $f$ in $\mathpzc{M}$ is said to have the \textbf{left lifting property} with respect to $\mathcal{C}$ if in any diagram of the form
\begin{displaymath}
\xymatrix{
A\ar[d]^{f}\ar[r] & C\ar[d]^{c}\\
B\ar[r] & D
}
\end{displaymath}
with $c\in C$, there exists a morphism $h:B\rightarrow C$ such that the following diagram commutes
\begin{displaymath}
\xymatrix{
A\ar[d]^{f}\ar[r] & C\ar[d]^{c}\\
B\ar[r]\ar@{-->}[ur]^{h} & D
}
\end{displaymath}
We denote the class of all morphisms which have the left lifting property with respect to $\mathcal{C}$ by $\;^{\llp}\mathcal{C}$. Dually one defines the morphisms having the right lifting property with respect to $\mathcal{C}$. The class of all such morphisms is denoted $\mathcal{C}^{\llp}$.
\end{defn}

The following is straightforward

\begin{prop}\label{liftingwf}
Let $\mathcal{C}$ be a class of morphisms in a category $\mathpzc{M}$. Then $\;^{\llp}\mathcal{C}$ is closed under retracts, pushouts and transfinite composition (whenever they exist). 
\end{prop}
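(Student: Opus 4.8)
The final statement to prove is Proposition \ref{liftingwf}: the class $\;^{\llp}\mathcal{C}$ of morphisms with the left lifting property against a fixed class $\mathcal{C}$ is closed under retracts, pushouts, and transfinite composition. This is a completely standard and formal fact about lifting properties, so the plan is to verify each of the three closure properties directly by unwinding the relevant universal property and exhibiting the required lift. Throughout, let $f$ denote a morphism (or family of morphisms) in $\;^{\llp}\mathcal{C}$ and let $c \in \mathcal{C}$ be an arbitrary test morphism against which we must solve the lifting problem.

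For \textbf{retracts}, suppose $g$ is a retract of $f$ in the arrow category, so we have a commutative diagram exhibiting $g$ as a retract: maps $A' \to A \to A'$ and $B' \to B \to B'$ composing to identities and commuting with $f$ and $g$. Given a lifting problem for $g$ against $c$, I would precompose the square with the section $A' \to A$, $B' \to B$ to produce a lifting problem for $f$ against $c$; solving that (using $f \in \;^{\llp}\mathcal{C}$) yields a diagonal $h$, and then postcomposing $h$ with the retraction $B \to B'$ gives the desired lift for $g$. One checks commutativity of the two triangles using the retract identities. For \textbf{pushouts}, suppose $f': A \to B$ is the pushout of $f: A_0 \to B_0$ along a map $A_0 \to A$. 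A lifting problem for $f'$ against $c$ consists of maps $A \to C$ and $B \to D$; restricting along $A_0 \to A$ and using the induced map $B_0 \to B \to D$ gives a lifting problem for $f$, whose solution $B_0 \to C$ together with the original $A \to C$ induces, by the universal property of the pushout $B = A \sqcup_{A_0} B_0$, a unique map $B \to C$. I would then verify this map is a genuine lift, i.e. that its composites with $f'$ and with $c$ agree with the prescribed maps, again using the pushout universal property (uniqueness of the induced map into $D$ forces the lower triangle to commute).

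For \textbf{transfinite composition}, suppose $X: \lambda \to \mathcal{M}$ is a continuous chain (for some ordinal $\lambda$) with each successor map $X_\alpha \to X_{\alpha+1}$ in $\;^{\llp}\mathcal{C}$, and let $f = X_0 \to X_\lambda = \operatorname{colim}_{\alpha<\lambda} X_\alpha$ be the transfinite composite. Given a lifting problem for $f$ against $c$, the strategy is to build the diagonal lift $X_\lambda \to C$ by transfinite induction, constructing compatible lifts $h_\alpha: X_\alpha \to C$: the base case is the given map $X_0 \to C$; successor steps solve a lifting problem for $X_\alpha \to X_{\alpha+1}$ against $c$ using the inductive hypothesis $h_\alpha$; and limit steps use continuity of $X$ (so that $X_\beta = \operatorname{colim}_{\alpha<\beta} X_\alpha$) together with the universal property of the colimit to glue the previously constructed $h_\alpha$ into a map $h_\beta$. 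Taking $\beta = \lambda$ and using continuity once more produces $h_\lambda: X_\lambda \to C$, and the colimit universal property guarantees it is the required lift.

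None of these arguments involves any genuine difficulty; they are pure diagram chasing in an arbitrary category. The only mildly delicate point, and the one I would present most carefully, is the \emph{compatibility} bookkeeping in the transfinite case: one must check that the lifts $h_\alpha$ are coherent with the structure maps $X_\alpha \to X_{\alpha'}$ so that the colimit step is legitimate, and that the construction does not break at limit ordinals. Since the statement is so standard I would keep the proof brief, referring the reader to \cite{Riehl} for the full details while indicating the inductive construction for the transfinite composition as the one step worth spelling out.
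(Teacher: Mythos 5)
Your argument is the standard one and is precisely what the paper's cited reference (\cite{Riehl}, Lemma 11.1.4) carries out; the paper itself offers no proof beyond that citation, so your proposal simply supplies the expected diagram chase. One small caution: in the retract step the directions are reversed as written --- the lifting problem for $g$ is transferred to one for $f$ by precomposing with the retraction maps $A \to A'$ and $B \to B'$ (not the section), and the resulting lift $h\colon B \to C$ is then precomposed (not postcomposed) with the section $B' \to B$ to obtain the lift for $g$ --- but the overall shape of the argument, and the pushout and transfinite-composition steps, are correct.
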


\begin{proof}
See \cite{Riehl} Lemma 11.1.4.
\end{proof}

\begin{defn}
A \textbf{weak factorisation system} on a category $\mathcal{C}$ is a pair $(\mathcal{L},\mathcal{R})$ such that
\begin{enumerate}
\item
Any map in $\mathcal{C}$ can be factored as a map in $\mathcal{L}$ followed by a map in $\mathcal{R}$.
\item
$\mathcal{L}=\;^{\llp}\mathcal{R}$ and $\mathcal{R}=\mathcal{L}^{\llp}$.
\end{enumerate}
A weak factorisation system is said to be \textbf{functorial} if the factorisation in $(1)$ can be made functorial.
\end{defn}

We can now give a definition of the notion of a model structure in terms of weak factorisation systems.

\begin{defn}
A \textbf{model structure} on a category $\mathpzc{M}$ is a collection of three wide subcategories $(\mathcal{C},\mathcal{F},\mathcal{W})$ such that
\begin{enumerate}
\item
The class $\mathcal{W}$ satisfies the $2$-out-of-$6$ property (see \cite{Riehl}).
 \item
 Both $(\mathcal{C}\cap\mathcal{W},\mathcal{F})$ and $(\mathcal{C},\mathcal{W}\cap\mathcal{F})$ are weak factorisation systems.
 \end{enumerate}
\end{defn}

We do not assume completeness or cocompleteness of $\mathpzc{M}$.

\begin{defn}
A model structure on a category $\mathpzc{M}$ is said to be \textbf{functorial} if the factorisation systems are functorial.
\end{defn}

\begin{defn}
A \textbf{(functorial) model category} is a category together with a (functorial) model structure.
\end{defn}

\section{Cofibrant Generation}

We state here our conventions regarding cofibrant generation. These are largely slightly modified definitions from \cite{hoveybook} Chapter 2.

\begin{defn}
If $I$ is a collection of maps in category $\mathpzc{C}$, we denote by $cell(I)$ the collection of transfinite compositions of pushouts of maps in $I$. 
\end{defn}

\begin{defn}
If $I$ is a collection of maps in category $\mathpzc{C}$, we say that $I$ satisfies the small object argument if any transfinite composition of pushouts of morphisms in $I$ exists, and any morphism $f$ has a factorisation $f=h\circ g$ where $g\in cell(I)$ and $h\in I^{\llp}$. 
\end{defn}

\begin{defn}
Let $\mathpzc{C}$ be a category. A weak factorisation system $(\mathcal{L},\mathcal{R})$ on $\mathpzc{C}$ is said to be \textbf{cofibrantly small} if there is a set $I$ of maps in $\mathcal{L}$ such that $\mathcal{R}=I^{\llp}$. $I$ is called a set of \textbf{generating morphisms}. If in addition $I$ admits the small object argument then the weak factorisation system is said to be \textbf{cofibrantly generated}. If $\mathpzc{C}$ is locally presentable and cofibrantly generated, then the weak factorisation system is said to be \textbf{combinatorial}. A model category $(\mathcal{C},\mathcal{W},\mathcal{F})$ is said to be cofibrantly small/ cofibrantly generated/ combinatorial if both the weak factorisation systems $(\mathcal{C},\mathcal{F}\cap\mathcal{W})$ and $(\mathcal{C}\cap\mathcal{W},\mathcal{F})$ are cofibrantly small/ cofibrantly generated/ combinatorial.
\end{defn}

\begin{rem}
A cofibrantly small weak factorisation system (resp. model structure) on a locally presentable category is automatically combinatorial. 
\end{rem}

\section{Homotopy Colimits}
Let $\mathpzc{C}$ be a combinatorial model category, and $\mathcal{I}$ a small category. The category of functors $\mathpzc{Fun}(\mathcal{I},\mathpzc{C})$ can be equipped with the projective model structure \cite{Riehl} Theorem 12.3.2. The functor $\textrm{colim}_{\mathcal{I}}:\mathpzc{Fun}(\mathcal{I},\mathpzc{C})\rightarrow\mathpzc{C}$ is left Quillen (\cite{Riehl}  Corollary 5.1.3). For a functor $F\in\mathpzc{Fun}(\mathcal{I},\mathpzc{C})$, the \textbf{homotopy colimit} of $F$, denoted $\textrm{hocolim}_{\mathcal{I}}$ (or $\textrm{holim}_{\rightarrow_{\mathcal{I}}}$ for filtered $\mathcal{I}$) is the derived functor of $\textrm{colim}_{\mathcal{I}}$ applied to $F$. One defines homotopy limits dually, and denotes them by $\textrm{holim}_{\mathcal{I}}$,  (or $\textrm{holim}_{\leftarrow_{\mathcal{I}}}$ for cofiltered $\mathcal{I}$). 
\subsection{Homotopy Pushouts and Properness}
Properness is a useful condition to have on model categories and on maps in model categories.
\begin{defn}[\cite{white2014monoidal} Definition 8.1.]\label{defn:hcof}
\begin{enumerate}
\item
A map $f:X\rightarrow Y$ in a model category $\mathpzc{C}$
is said to be a $h$-\textbf{cofibration} if whenever 
\begin{displaymath}
\xymatrix{
X\ar[d]^{f}\ar[r]^{g} & A\ar[d]\ar[r]^{w}.& B\ar[d]\\
Y\ar[r]^{g'} & A'\ar[r]^{w'} & B'
}
\end{displaymath}
is a commutative diagram in which both squares are pushouts, and $w$ a weak equivalence, then $w'$ is a weak equivalence. 
\item
A model category is said to be \textbf{left proper} if cofibrations are $h$-cofibrations.
\end{enumerate}
One defines $h$-\textbf{fibrations} of maps and \textbf{right properness} of model categories dually.
\end{defn}
\begin{prop}[\cite{batanin2013homotopy} Proposition 1.6]
A map $f:X\rightarrow Y$ in a left proper model category $\mathpzc{C}$ is a $h$-cofibration if and only if any pushout diagram
\begin{displaymath}
\xymatrix{
X\ar[d]^{f}\ar[r] & A\ar[d]\\
Y\ar[r] & P
}
\end{displaymath}
is a homotopy pushout.
\end{prop}
This motivates the following definition.
\begin{defn}\label{defn:lprop}
Let $\mathpzc{C}$ be a model category and $\mathcal{P}\subset\mathpzc{C}$ a subcategory $\mathpzc{C}$. A map $f:X\rightarrow Y$ in $\mathpzc{C}$ is said to be \textbf{left proper relative to }$\mathcal{P}$ if any pushout diagram
\begin{displaymath}
\xymatrix{
X\ar[d]^{f}\ar[r] & A\ar[d]\\
Y\ar[r] & P
}
\end{displaymath}
with $A\in\mathcal{P}$ is a homotopy pushout. If $\mathcal{P}=\mathpzc{C}$, then $f$ is said to be \textbf{left proper}. One defines relative right-properness dually.
\end{defn}
In particular, in a proper model category $h$-cofibrations are left proper. Let us make some straightforward observations.
\begin{prop}\label{prop:pushoutstableftprop}
Let $\mathcal{S}$ be a pushout-stable class of maps such that any pushout of a weak equivalence along a map in $\mathcal{S}$ is still a weak equivalence. Then any map in $\mathcal{S}$ is a $h$-cofibration.
\end{prop}
The following is just a consequence of the $2$-out-of-$3$ property of weak equivalences.
\begin{prop}\label{prop:pushoutstabweq}
Let $f$ be a weak equivalence such that any pushout of $f$ is a weak equivalence. Then $f$ is left proper, and a $h$-cofibration.
\end{prop}
\subsubsection{Stable Model Categories}
Let $\mathpzc{C}$ be a combinatorial pointed model category with initial-terminal object $0$. The \textbf{suspension functor} $\Sigma:\textrm{Ho}(\mathpzc{C})\rightarrow\textrm{Ho}(\mathpzc{C})$ assigns to an object $X$ the homotopy pushout $0\coprod^{\mathbb{L}}_{X}0$.
\begin{defn}[\cite{hoveybook}, Definition 7.1.1]\label{defn:stable}
A pointed model category $\mathpzc{C}$ is said to be \textbf{stable} if the functor $\Sigma$ is an auti-equivalence.
\end{defn}

\subsection{Homotopy Transfinite Compositions}
\begin{defn}\label{defn:weaklyelmodel}
Let $\mathcal{S}$ be a collection of maps in a co-complete model category $\mathpzc{C}$. $\mathpzc{C}$ is said to be \textbf{weakly }$\mathcal{S}$-\textbf{elementary} if for any ordinal $\lambda$, and any functor $X\in\mathpzc{Fun}_{\mathcal{S}}(\lambda,\mathpzc{C})^{cocont}$, the colimit $\textrm{colim}_{\alpha<\lambda}X_{\alpha}$ is a homotopy colimit. 
\end{defn}
\begin{example}
\begin{enumerate}
\item
If $\mathcal{S}$ is the class of cofibrations then $\mathpzc{C}$ is weakly $\mathcal{S}$-elementary.
\item
If $\mathcal{S}$ is a class of weak equivalences such that any transfinite composition of maps in $\mathcal{S}$ is a weak equivalence, then it follows from the $2$-out-of-$3$ property that $\mathpzc{C}$ is weakly $\mathcal{S}$-elementary.
\end{enumerate}
\end{example}
The following is straightforward.
\begin{prop}\label{prop:leftproppushouttrans}
Let $\mathcal{S}$ be a class of maps in a model category $\mathpzc{C}$. Suppose that any map in $\mathcal{S}$ is a $h$-cofibration, and that $\mathpzc{C}$ is weakly $\mathcal{S}^{p}$-elementary, where $\mathcal{S}^{p}$ is the class of pushouts of maps in $\mathpzc{C}$. Then any map obtained as a transfinite composition of pushouts of maps in $\mathcal{S}$ is a $h$-cofibration.
\end{prop}

\section{Monoidal Model Categories}\label{monoidalmod}
\begin{defn}
Let $\mathpzc{M},\mathpzc{N},\mathcal{P}$ be model categories. A bifunctor $-\otimes-:\mathpzc{M}\times\mathpzc{N}\rightarrow\mathcal{P}$ is said to be \textbf{left Quillen} if whenever $i:m\rightarrow m'$ and $j:n\rightarrow n'$ are cofibrations then so is $i\hat{\otimes}j$, and it is an acyclic cofibration if either $i$ or $j$ is. Here $i\hat{\otimes}j$ is the following map
\begin{displaymath}
\xymatrix{
m\otimes n\ar[r]^{i\otimes 1}\ar[d]^{1\otimes j} & m'\otimes n\ar[d] \ar@/^2.0pc/[ddr]^{1\otimes j}&\\
m\otimes n'\ar[r]\ar@/_2.0pc/[drr]^{i\otimes 1} & P\ar[dr]^{i\hat{\otimes}j} &\\
& & m'\otimes n'
}
\end{displaymath}
where the square is a pushout.
\end{defn}

\begin{defn}
A \textbf{(closed) monoidal model category} is a (closed) symmetric monoidal category $(\mathcal{V},\otimes,k)$ ($(\mathcal{V},\otimes,k,\underline{Hom})$) with a model structure so that the monoidal product is a left Quillen bifunctor, and the maps 
$$Q(k)\otimes v\rightarrow k\otimes v\cong v$$
and
$$v\otimes Q(k)\rightarrow v\otimes k\cong v$$
are weak equivalences whenever $v$ is cofibrant. Here $Q$ is the cofibrant replacement functor.
\end{defn}

Another condition that is often asked of a monoidal model category is that it satisfies the so-called monoid axiom. Under certain additional technical assumptions on the model category, this guarantees the existence of a model structure on the category of algebras over any cofibrant operad. 

\begin{defn}
A monoidal model category $(\mathcal{V},\otimes,k)$ is said to satisfy the \textbf{monoid axiom} if every morphism which is obtained as a transfinite composition of pushouts of tensor products of acyclic cofibrations with any object is a weak equivalence.
\end{defn} 

\subsection{Two-Variable Quillen Adjunctions}\label{twovarquill}
Let $(\mathpzc{C},\mathpzc{D},\mathpzc{E})$ be a triple of model categories. Recall (Section 10.1 \cite{Riehl}) that a two-variable adjunction on $(\mathpzc{C},\mathpzc{D},\mathpzc{E})$ is a triple of functors
$$\otimes:\mathpzc{C}\times \mathpzc{D}\rightarrow\mathpzc{E}\;\;\; Hom_{l}:\mathpzc{C}^{op}\times\mathpzc{E}\rightarrow\mathpzc{D}\;\;\; Hom_{r}:\mathpzc{D}{^op}\times\mathpzc{E}\rightarrow\mathpzc{C}$$
together with natural isomorphisms
$$Hom_{\mathpzc{E}}(c\otimes d,e)\cong Hom_{\mathpzc{C}}(c,Hom_{l}(d,e))\cong Hom_{\mathpzc{D}}(d,Hom_{r}(c,e))$$
The following is from Section 11.4 in \cite{Riehl}.
\begin{defn}
Let $(\mathpzc{C},\mathpzc{D},\mathpzc{E})$ be model categories. A two-variable ajdunction on  $(\otimes,Hom_{l},Hom_{r})$ on $(\mathpzc{C},\mathpzc{D},\mathpzc{E})$ is said to be a \textbf{Quillen two-variable adjunction} if $\otimes$ is a left Quillen bifunctor.
\end{defn}
\begin{defn}
Let $\mathpzc{C},\mathpzc{D}$ be model categories. If there exists a Quillen two-variable adjunction on $(\mathpzc{C},\mathpzc{D},\mathpzc{D})$ then $\mathpzc{D}$ is said to be a $\mathpzc{C}$-model category.
\end{defn}
In particular, following Definition 11.4.4 of \cite{Riehl}, a simplicial model category is just a $\mathpzc{sSet}$-model category. The following is clear.
\begin{prop}\label{twovarcont}.
Let $(\otimes,Hom_{l},Hom_{r})$ be a two-variable Quillen adjunction on model categories $(\mathpzc{C},\mathpzc{D},\mathpzc{E})$. Let 
$$\adj{L}{\mathpzc{C}'}{\mathpzc{C}}{R}$$
be a Quillen adjunction. Then $(L(-)\otimes L(-),Hom_{l}(L,-),R\circ Hom_{r})$ is a Quillen two-variable adjunction on $(\mathpzc{C}',\mathpzc{D},\mathpzc{E})$. In particular if $\mathpzc{D}$ is a $\mathpzc{C}$-model category then it is also a $\mathpzc{C'}$-model category. 
\end{prop}

\section{Transferred Model Structures}\label{transfersec}

\begin{defn}\label{transferdef}
Let $\mathpzc{C}$ and $\mathpzc{D}$ be categories with $\mathpzc{C}$ a model category. Suppose $F:\mathpzc{C}\rightarrow\mathpzc{D}$ and $G:\mathpzc{D}\rightarrow\mathpzc{C}$ are functors with $F\dashv G$. If it exists, the \textbf{transferred model structure} on $\mathpzc{D}$ is the one defined as follows.
\begin{enumerate}
\item
A map $f$ in $\mathpzc{D}$ is a weak equivalence precisely if $G(f)$ is a weak equivalence in $\mathpzc{C}$.
\item
A map $f$ in $\mathpzc{D}$ is a fibration precisely if $G(f)$ is a fibration in $\mathpzc{C}$.
\item
A map $f$ in $\mathpzc{D}$ is a cofibration precisely if it has the left lifting property with respect to acyclic cofibrations.
\end{enumerate}
\end{defn}

\begin{rem}
If the transferred model structure exists on $\mathpzc{D}$ then $F\dashv G$ is a Quillen adjunction.
\end{rem}

We need the following important result, which is essentially Theorem 3.3 in \cite{crans1995quillen}. Although the conditions are marginally different, the proof is identical.
\begin{thm}\label{transfer}
 Suppose $F:\mathpzc{C}\rightarrow\mathpzc{D}$ and $G:\mathpzc{D}\rightarrow\mathpzc{C}$ are functors with $F\dashv G$. Suppose that $\mathpzc{C}$ is finitely complete and cocomplete cofibrantly generated model category with generating cofibrations $I$ and generating acyclic cofibrations $J$, and that $\mathpzc{D}$ is a category having finite limits, sequential colimits, and pushouts along maps of the form $F(f)$ where $f$ is a coproduct of maps in $I\cup J$. Let $\lambda$ (resp $\lambda'$) be an ordinal such that domains of generating cofibrations (resp. domains of generating acyclic cofibrations) are $\lambda$-presented (resp. $\lambda'$-presented) relative to pushouts of coproducts of maps in $I$ (resp. $J$). Suppose that if $c$ is a generating cofibration (resp. generating acyclic cofibration) then the domain of $F(c)$ is $\lambda$-presented (resp. $\lambda'$-presented ) relative to pushouts of coproducts of maps of the form $F(f)$ where $f$ is a generating cofibration (resp. generating acyclic cofibration). Then the transferred model structure on $\mathpzc{E}$ exists if and only if the weak equivalences in $\mathpzc{D}$ contain any sequential colimit of pushouts of maps of the form $F(g)$, where $g$ is a generating trivial cofibration in $\mathpzc{C}$. 
 \end{thm}

\begin{cor}\label{transferalg}
Let $\mathpzc{C}$ and $\mathpzc{D}$ be categories, with $\mathpzc{C}$ a cococomplete combinatorial model category and $\mathpzc{D}$ having finite limits and all colimits. Suppose $F:\mathpzc{C}\rightarrow\mathpzc{D}$ and $G:\mathpzc{D}\rightarrow\mathpzc{C}$ are functors with $F\dashv G$. If  $G$ preserves filtered colimits, then the transferred model structure on $\mathpzc{D}$ exists if and only if the weak equivalences in $\mathpzc{D}$ contain any map of the form $F(g)$, where $g$ is a generating trivial cofibration in $\mathpzc{C}$. Moreover the transferred model structure is cofibrantly generated.
\end{cor}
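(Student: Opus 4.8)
The plan is to deduce this immediately from Theorem \ref{transfer}: the only difference between the two statements is that the hypothesis ``$F$ preserves tiny objects'' has been replaced by ``$G$ preserves filtered colimits,'' so the first step is to show that the latter implies the former, and the second step is to reconcile the two phrasings of the colimit condition on weak equivalences.

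First I would verify the implication on tiny objects. Recall that an object $X$ of $\mathpzc{D}$ is tiny precisely when $\textrm{Hom}_{\mathpzc{D}}(X,-)$ commutes with filtered colimits. Given a tiny $X$ and a filtering inductive system $(Y_{i})_{i}$ in $\mathpzc{E}$, I would compute, using the adjunction $F\dashv G$, then that $G$ preserves filtered colimits, and finally the tininess of $X$,
\begin{align*}
\textrm{Hom}_{\mathpzc{E}}\Bigl(F(X),\textrm{lim}_{\rightarrow_{i}}Y_{i}\Bigr) &\cong\textrm{Hom}_{\mathpzc{D}}\Bigl(X,G\bigl(\textrm{lim}_{\rightarrow_{i}}Y_{i}\bigr)\Bigr)\\
&\cong\textrm{Hom}_{\mathpzc{D}}\Bigl(X,\textrm{lim}_{\rightarrow_{i}}G(Y_{i})\Bigr)\\
&\cong\textrm{lim}_{\rightarrow_{i}}\textrm{Hom}_{\mathpzc{D}}(X,G(Y_{i}))\\
&\cong\textrm{lim}_{\rightarrow_{i}}\textrm{Hom}_{\mathpzc{E}}(F(X),Y_{i})
\end{align*}
and check that the composite agrees with the canonical comparison map, which is routine from naturality of the adjunction isomorphisms. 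This shows $F(X)$ is tiny, so $F$ preserves tiny objects and the standing hypothesis of Theorem \ref{transfer} is met.

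With this in hand the two directions follow formally. For the ``if'' direction, if the weak equivalences of $\mathpzc{E}$ contain every transfinite composition of pushouts of the maps $F(g)$, then in particular they contain every sequential (that is, $\omega$-indexed) such colimit, which is exactly the condition demanded by Theorem \ref{transfer}; hence the transferred model structure exists and is cellular. For the ``only if'' direction I would argue as in the remark following Theorem \ref{transfer}: once the transferred structure exists, $F\dashv G$ is a Quillen adjunction, so each $F(g)$ is an acyclic cofibration, and by Proposition \ref{liftingwf} the class of acyclic cofibrations is closed under pushout and transfinite composition; thus every transfinite composition of pushouts of the $F(g)$ is again an acyclic cofibration, in particular a weak equivalence.

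The only genuine subtlety is the mismatch between the ``sequential colimit'' appearing in Theorem \ref{transfer} and the ``transfinite composition'' stated here, and I expect this to be the one point requiring care. It is harmless because the implications run in the convenient directions: the ``if'' direction needs only the weaker sequential condition, which is implied by the stated transfinite one, while the ``only if'' direction produces the stronger transfinite condition for free from left Quillenness. No input beyond Theorem \ref{transfer}, the adjunction computation above, and the closure properties of Proposition \ref{liftingwf} is needed.
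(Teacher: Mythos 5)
Your proof is correct and takes the route the paper intends: the paper offers no proof, labelling the statement an ``immediate corollary'' of Theorem \ref{transfer}, and the details you supply (the adjunction computation showing that $G$ preserving filtered colimits forces $F$ to preserve tiny objects, plus the observation that the ``only if'' direction follows from left Quillenness together with the closure properties of Proposition \ref{liftingwf}, exactly as in the remark following Theorem \ref{transfer}) are precisely the ones being elided. Nothing further is needed.
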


\begin{rem}
Note that in \cite{crans1995quillen} it is actually proved that if an adjunction satisfying the above condition then the transferred model structure exists. The converse is clear however since as a left Quillen functor $F$ preserves acyclic cofibrations and colimits. 
\end{rem}
\backmatter
\bibliographystyle{amsalpha}
\bibliography{revisedarxiv.bib}

\end{document}